\newcommand{\nc}{\newcommand}
 \nc{\cl}{\centerline}
 \nc{\SL}{{\rm SL}}
 \nc{\hatQ}{{\hat Q}}
 \nc{\hatb}{{\hat b}}
 \nc{\sgn}{{\rm sgn}}
 \nc{\alg}{{\rm alg}}
 \nc{\barV}{{\overline V}}
 \nc{\hatY}{{\hat Y}}
 \nc{\bareta}{{\bar\eta}}
  \nc{\barzeta}{{\bar\zeta}}
   \nc{\barQ}{{\overline Q}}
 \nc{\hyperalgebra}{{\rm hyperalgebra }}
\newcommand{\JI}{{\rm JI}}
\nc{\barv}{{\bar v}}
\newcommand{\can}{{\rm can}}
\newcommand{\haty}{{\hat y}}
\newcommand{\bary}{{\bar y}}
 \nc{\ev}{{\rm ev}}
  \nc{\pt}{{\rm pt}}
 \nc{\seee}{\mathbb C}
  \newcommand{\st}{{\rm st}}
    \newcommand{\add}{{\rm add}}
 \nc{\hatlambda}{{\hat\lambda}}
 \nc{\daggerlambda}{{\lambda^\dagger}}
\newcommand{\bara}{{\bar a}}
\newcommand{\barb}{{\bar b}}
     \newcommand{\barL}{{\bar L}}
    \newcommand{\len}{{\rm len}}
\newcommand{\val}{{\rm val}}
  \newcommand{\height}{{\rm ht}}
\nc\diag{{\rm diag}}
\renewcommand{\vert}{{\,|\,}}
\nc{\hatL}{{\hat L}}
\nc{\barE}{{\bar   E}}
\nc{\D}{{\mathcal D}}
\nc{\E}{{\mathcal E}}
\nc{\F}{{\mathcal F}}
\nc{\FF}{{\mathcal F}}
\nc{\I}{{\mathcal I}}
\nc{\even}{{\rm e}}
\nc{\ep}{\epsilon}
\nc{\odd}{{\rm o}}
\nc{\Coker}{{\rm Coker}}
\nc{\olE}{{\overline E}}
\nc{\indBG}{{\rm ind}_B^G\,}
\nc{\indHG}{{\rm ind}_H^G\,}
\nc{\que}{{\mathbb Q}}
\nc{\barlambda}{{\bar\lambda}}
\nc{\barmu}{{\bar\mu}}
\nc{\barnu}{{\bar\nu}}
\nc{\bartau}{{\bar\tau}}
\nc{\barm}{{\bar m}}
\nc{\divind}{{\rm div.ind}}
\nc{\tl}{{\tilde{\lambda}}}
\nc{\dar}{\downarrow}
\nc{\Sym}{{\rm Sym}}
\nc{\Symm}{{\rm Sym}}
\newcommand{\q}{\quad}
\newcommand{\real}{{\mathbb R}}
\renewcommand{\mod}{{\rm mod}}
\newcommand{\St}{{\rm St}}
\newcommand{\Sp}{{\rm Sp}}
\newcommand{\Lie}{{\rm Lie}}
\newcommand{\bs}{\bigskip}
\renewcommand{\vert}{\,|\,}
\renewcommand{\sgn}{{\rm sgn}}
\newcommand{\ind}{{\rm ind}}
\renewcommand{\vert}{\,|\,}
\newcommand{\zed}{{\mathbb Z}}
\newcommand{\Ext}{{\rm Ext}}
\newcommand{\End}{{\rm End}}
\newcommand{\Hom}{{\rm Hom}}
\renewcommand{\mod}{{\rm mod}}
\newcommand{\GL}{{\rm GL}}
\renewcommand{\mod}{{\rm{mod}}}
\renewcommand{\char}{{\rm char}}
\nc{\geom}{{\rm geom}}
\nc{\rep}{{\rm rep}}
\newcommand{\hy}{{\rm hy}}
  \newcommand{\chat}{{\hat{c}}}
 \renewcommand{\v}{v} 
 \newcommand{\w}{w}
     \newcommand{\x}{{x_i^{\rm st}}} 
\newcommand{\y}{{y_j^{\rm st}}} 
  \newcommand{\z}{{z_k^{\rm st}}} 
      \newcommand{\px}{{x_i^{\rm pt}}}
\newtheorem{definition}{Definition}[section]
\newtheorem{proposition}[definition]{Proposition}
\newtheorem{theorem}[definition]{Theorem}
\newtheorem{lemma}[definition]{Lemma}
\newtheorem{corollary}[definition]{Corollary}
\newtheorem{example}[definition]{Example}
\newtheorem{remark}[definition]{Remark}
\begin{document}


\centerline{\bf First degree cohomology of  Specht modules  and}

\centerline{\bf   extensions of symmetric powers.}

\bigskip

\centerline{Stephen Donkin  and Haralampos Geranios}

\bigskip

{\it Department of Mathematics, University of York, York YO10 5DD}\\

\medskip

{\tt stephen.donkin@york.ac.uk,  haralampos.geranios@york.ac.uk}

\bs

\bs

\centerline{24 March   2017}
\bs\bs\bs

\section*{Abstract}

\q  Let $\Sigma_d$ denote the symmetric group of degree  $d$ and let  $K$   be a field of positive characteristic  $p$. For $p>2$ we give an explicit description of the first cohomology group    $H^1(\Sigma_d,\Sp(\lambda))$, of the Specht module $\Sp(\lambda)$ over $K$,  labelled by a partition $\lambda$ of $d$. 

\q We also give a sufficient condition for the cohomology to be non-zero for $p=2$ and we find a lower bound for the dimension.

\q The cohomology of Specht modules has been considered in many papers including  \cite{Hem}, \cite{HN2}, \cite{KN} and  \cite{Web}.

\q Our method is to proceed by comparison with the cohomology for the   general linear group $G(n)$ over $K$ and then to reduce to the calculation of\\
  $\Ext^1_{B(n)}(S^d E,K_\lambda)$, where $B(n)$ is a  Borel subgroup of $G(n)$, where $S^dE$ denotes the $d$th symmetric power of the natural module $E$ for $G(n)$ and  $K_\lambda$ denotes the one dimensional $B(n)$-module with weight $\lambda$.

\q  The main new input is the description of module extensions by: extensions sequences, coherent triples of extension sequences and coherent  multi-sequences of extension sequences,  and the  detailed calculation of the possibilities for such sequences. These sequences  arise from the action of    divided powers elements  in the negative part of the hyperalgebra of $G(n)$. 

\q Our methods are valid also in the quantised context and we aim to  treat this in a separate paper.

\section*{Introduction}

\q   Let $K$ be an algebraically closed field and let $p$ be the characteristic of $K$. Let $n$ be a positive integer.   We write $G(n)$ for the general linear group over $K$ of degree $n$, write $B(n)$ for the Borel subgroup consisting of lower triangular matrices and $T(n)$ for the maximal torus consisting of diagonal matrices.
The weights of a $G(n)$-module or a $B(n)$-module (or $T(n)$-module)  will be computed with respect to $T(n)$. 

\q We set $X(n)={\Bbb Z}^n$ and let $X^+(n)$ be the set of dominant weights, i.e., the set of $\lambda=(\lambda_1,\ldots,\lambda_n)\in X(n)$ such that $\lambda_1\geq \lambda_2\geq \cdots\geq \lambda_n$.  We write $\Lambda(n)$ for the set of polynomial weights, i.e., the set of $\lambda=(\lambda_1,\ldots,\lambda_n)\in X(n)$ such that $\lambda_1,\ldots,\lambda_n\geq 0$ and we write $\Lambda^+(n)$ for the set of dominant polynomial weights, i.e., $X^+(n)\bigcap \Lambda(n)$.  The degree of a polynomial weight $\lambda=(\lambda_1,\ldots,\lambda_n)$ is $\lambda_1+\cdots+\lambda_n$. For $d\geq 0$ we  write $\Lambda(n,d)$ for the set of polynomial weights of degree $d$ and $\Lambda^+(n,d)$ for $\Lambda(n,d)\bigcap X^+(n)$, the set of polynomial dominant weights of degree $d$.
For $1\leq r\leq n$ we define $\ep_r=(0,\ldots,0,1,0,\ldots,0)\in X(n)$  (with $1$ appearing in the $r$th position).

\q We write $E$ for the natural module and $S^dE$ for its $d$th symmetric power, for $d\geq 0$.  For $\lambda\in X(n)$ we write $K_\lambda$ for the one dimensional $B(n)$-module with weight $\lambda$.  For each $\lambda\in \Lambda^+(n)$ we have the induced module $G(n)$-module  $\nabla(\lambda)=\ind_{B(n)}^{G(n)}K_\lambda$.  In particular we have $\nabla(d,0,\ldots,0)=S^d E$.    Moreover, by   a result of    Cline, Parshall, Scott, van der Kallen,  see e.g., \cite{RAG}   II, 4.7 Corollary a)  we have the following.

\begin{theorem}  For  $i\geq 0$ and $\lambda,\mu\in \Lambda^+(n)$  we have  
$$\Ext^i_{G(n)}(\nabla(\mu),\nabla(\lambda))=\Ext^i_{B(n)}(\nabla(\mu),K_\lambda)$$
and   in particular  
$$\Ext^i_{G(n)}(S^dE,,\nabla(\lambda))=\Ext^i_{B(n)}(S^dE,K_\lambda)$$
if $\lambda\in \Lambda^+(n,d)$.

\end{theorem}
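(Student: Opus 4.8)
The statement is a special case of a general comparison theorem, and I would organise the proof around the Grothendieck spectral sequence of a composite functor. The restriction functor from $G(n)$-modules to $B(n)$-modules is exact, and $\ind_{B(n)}^{G(n)}$ is its right adjoint; hence $\ind_{B(n)}^{G(n)}$ is left exact and carries injective $B(n)$-modules to injective $G(n)$-modules. Frobenius reciprocity gives a natural isomorphism $\Hom_{G(n)}(M,\ind_{B(n)}^{G(n)}N)\cong\Hom_{B(n)}(M,N)$ for $M$ a $G(n)$-module and $N$ a $B(n)$-module. Thus $\Hom_{B(n)}(M,-)$ factors as $\Hom_{G(n)}(M,-)\circ\ind_{B(n)}^{G(n)}$, and the Grothendieck spectral sequence yields
$$\Ext^p_{G(n)}\bigl(M,R^q\ind_{B(n)}^{G(n)}N\bigr)\Longrightarrow\Ext^{p+q}_{B(n)}(M,N).$$

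The one genuinely substantial ingredient — and the step I expect to be the main obstacle — is Kempf vanishing: for a dominant weight $\lambda\in\Lambda^+(n)$ one has $R^q\ind_{B(n)}^{G(n)}K_\lambda=0$ for all $q>0$, while $R^0\ind_{B(n)}^{G(n)}K_\lambda=\nabla(\lambda)$ by definition of $\nabla(\lambda)$. Everything else in the argument is formal homological algebra; this is the point where positive characteristic genuinely enters and where one must invoke the geometry of the flag variety (or an equivalent substitute). Granting it, we put $N=K_\lambda$ into the spectral sequence above: every row with $q>0$ is identically zero, so the spectral sequence collapses onto the line $q=0$, giving an isomorphism $\Ext^i_{G(n)}(M,\nabla(\lambda))\cong\Ext^i_{B(n)}(M,K_\lambda)$ for every $i\geq0$ and every $G(n)$-module $M$.

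Finally I would specialise $M$. Taking $M=\nabla(\mu)$ with $\mu\in\Lambda^+(n)$ gives the first displayed equality. For the second, recall from the Introduction that $\nabla(d,0,\dots,0)=S^dE$; taking $M=S^dE$ and any $\lambda\in\Lambda^+(n,d)$ then yields $\Ext^i_{G(n)}(S^dE,\nabla(\lambda))=\Ext^i_{B(n)}(S^dE,K_\lambda)$, as required. (One could alternatively run the argument by resolving $\nabla(\lambda)$ by a complex of acyclic induced modules, but the composite-functor spectral sequence is the most economical route, and in any case the content is concentrated entirely in Kempf vanishing.)
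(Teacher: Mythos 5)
Your argument is correct and is exactly the standard proof of this result: the paper itself does not prove Theorem 0.1 but cites \cite{RAG} II, 4.7 Corollary a) (going back to Cline--Parshall--Scott--van der Kallen), and the proof given there is precisely the Grothendieck spectral sequence for $\Hom_{B(n)}(M,-)=\Hom_{G(n)}(M,-)\circ\ind_{B(n)}^{G(n)}$ together with Kempf vanishing, just as you describe. The specialisations to $M=\nabla(\mu)$ and to $M=S^dE=\nabla(d,0,\ldots,0)$ are correctly handled.
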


\q We denote by $\Sigma_d$ the symmetric group of degree $d$.   For each partition $\lambda$ of $d$ we have the Specht module $\Sp(\lambda)$ over $K$, for $\Sigma_d$.

\q Recall that, for $n\geq d$ and $\lambda,\mu\in \Lambda^+(n,d)$ we have \\
$\Ext^i_{G(n)}(\nabla(\mu),\nabla(\lambda))=\Ext^i_{\Sigma_d}(\Sp(\mu),\Sp(\lambda))$,  for $i=0$ and  $p\neq 2$ and for  $i=1$  and $p\neq 2,3$ , by \cite{HTT}, Proposition 10.5 (i) and Proposition 10.6 (i), or  \cite{HN}. Our interest in this is paper is in degrees $0$ and $1$ so the relevant result for us is the following.

\begin{proposition} For $\lambda\in \Lambda^+(n,d)$ we have:

(i) $\Hom_{G(n)}(S^d E,\nabla(\lambda))=H^0(\Sigma_d,\Sp(\lambda))$; and 

(ii) $\dim H^1(\Sigma_d,\Sp(\lambda))\geq \dim \Ext^1_{G(n)}(S^d E,\nabla(\lambda))$, with equality if $p\neq 2$.

\end{proposition}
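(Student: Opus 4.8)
The plan is to connect the symmetric group cohomology $H^*(\Sigma_d, \Sp(\lambda))$ with the general linear group cohomology $\Ext^*_{G(n)}(S^dE, \nabla(\lambda))$ via the Schur functor, for $n \geq d$. Recall that the Schur algebra $S(n,d)$ has $\Sigma_d$-modules arising through the functor $f = \Hom_{S(n,d)}(Ke^{\otimes d}, -)$, where $e$ is an idempotent, and that $f$ sends $\nabla(\lambda)$ to $\Sp(\lambda)$ for $\lambda \in \Lambda^+(n,d)$ and sends $S^dE = \nabla(d,0,\ldots,0)$ to the trivial module $K$ (the sign representation convention notwithstanding, $S^dE$ corresponds to the top weight whose Specht module is the trivial module). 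The first task is to recall the general adjunction/comparison machinery (as in the Erdmann–Nakano or James framework) giving a map $\Ext^i_{S(n,d)}(S^dE, \nabla(\lambda)) \to \Ext^i_{\Sigma_d}(K, \Sp(\lambda)) = H^i(\Sigma_d, \Sp(\lambda))$, and to identify $\Ext^i_{G(n)} = \Ext^i_{S(n,d)}$ in this polynomial degree-$d$ situation.

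First I would dispose of degree $i=0$: the functor $f$ is exact and full on the relevant subcategory, and on homomorphism spaces between a projective-like object and an arbitrary module it is an isomorphism; more directly, $\Hom_{G(n)}(S^dE, \nabla(\lambda)) = \Hom_{S(n,d)}(S^dE,\nabla(\lambda))$, and since $S^dE$ corresponds under $f$ to $K$ with $f$ inducing a bijection on these Hom-spaces (this is classical — $S^dE$ is a direct summand of $E^{\otimes d}$ realizing the trivial $\Sigma_d$-module, and $\Hom$-spaces out of $\nabla(d)$ are unchanged), we get part (i) as an equality, valid in all characteristics.

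For part (ii), the strategy is a spectral-sequence or direct long-exact-sequence comparison. The standard fact is that $f$ applied to a short exact sequence of polynomial $G(n)$-modules stays exact, and applied to an injective resolution of $\nabla(\lambda)$ produces a complex computing something related to $H^*(\Sigma_d,\Sp(\lambda))$; the discrepancy is controlled because $f$ need not send injectives to injectives. Concretely, one uses that there is always an inflation-type map $\Ext^i_{G(n)}(S^dE,\nabla(\lambda)) \to \Ext^i_{\Sigma_d}(K,\Sp(\lambda))$ which in degree $1$ is \emph{injective}: a nonsplit extension $0 \to \nabla(\lambda) \to M \to S^dE \to 0$ of polynomial modules maps under $f$ to $0 \to \Sp(\lambda) \to f(M) \to K \to 0$, and this cannot split because a splitting would, on the level of the (faithful, on this subcategory) functor data, force the original to split — here one invokes that $\Hom_{\Sigma_d}(K, f(M)) \hookrightarrow$ is computed by $f$ from $\Hom_{G(n)}(S^dE, M)$ via (i), so a $\Sigma_d$-splitting lifts. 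This gives the inequality $\dim H^1 \geq \dim \Ext^1_{G(n)}$.

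For the equality when $p \neq 2$, I would cite the already-quoted result of Hemmer–Nakano (or \cite{HTT}, Propositions 10.5–10.6, with \cite{HN}) that for $p \neq 2$ the Schur functor induces isomorphisms $\Ext^i_{G(n)}(\nabla(\mu),\nabla(\lambda)) \cong \Ext^i_{\Sigma_d}(\Sp(\mu),\Sp(\lambda))$ for $i=0,1$ (the $p \neq 3$ caveat in the general statement is not needed here because $\mu = (d)$ and $\Sp(\mu)=K$ sits in a favorable position — or simply because we only assert $\geq$ in general and equality under the cited hypothesis), and specialize $\mu = (d)$, using $\Sp(d) = K$ and $\nabla(d,0,\ldots,0)=S^dE$. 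The main obstacle is making the $p=2$ inequality airtight: the Schur functor is exact but not faithful in general, so one must argue carefully that a nonsplit polynomial extension of $S^dE$ by $\nabla(\lambda)$ stays nonsplit after applying $f$ — the cleanest route is to observe that the functor $f$ has a left adjoint/there is a counit, or to use that $S^dE = \nabla(d,0,\ldots,0)$ has a good filtration and $\Ext^1_{G(n)}(S^dE, \nabla(\lambda))$ is detected on the $\Sigma_d$-side because any $G(n)$-extension $M$ is itself a polynomial module of degree $d$ in the image of the "inverse Schur functor" up to the relevant $\Ext$-degree, so $f$ loses no information in degree $\leq 1$ on extensions with top term $\nabla(d)$. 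I would write this out using the explicit statement that $\Hom_{\Sigma_d}(K, f(-))$ and $\Ext^1_{\Sigma_d}(K, f(-))$ receive maps from their $G(n)$-counterparts that are respectively iso and injective, the injectivity in degree $1$ being exactly the content needed.
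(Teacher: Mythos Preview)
Your approach differs from the paper's in direction, and has two genuine gaps.

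\textbf{Direction of the comparison map.} You apply the Schur functor $f$ directly to extensions, aiming for an \emph{injection} $\Ext^1_{G(n)}(S^dE,\nabla(\lambda)) \hookrightarrow H^1(\Sigma_d,\Sp(\lambda))$. The paper instead works with the left adjoint $g=Se\otimes_{eSe}-$ of $f$, proves the key Lemma that $gK = gM(d) \cong S^dE$, and uses the Grothendieck spectral sequence $\Ext^i_S(L^jgX,Y)\Rightarrow \Ext^*_{eSe}(X,fY)$. The resulting 5-term exact sequence yields a \emph{surjection} $H^1(\Sigma_d,\Sp(\lambda)) \twoheadrightarrow \Ext^1_{G(n)}(S^dE,\nabla(\lambda))$ in all characteristics, which gives the inequality immediately.

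\textbf{Gap in your injectivity argument.} You argue that if $0\to\nabla(\lambda)\to M\to S^dE\to 0$ is nonsplit then $0\to\Sp(\lambda)\to fM\to K\to 0$ is nonsplit, because ``$\Hom_{\Sigma_d}(K,f(M))$ is computed by $f$ from $\Hom_{G(n)}(S^dE,M)$ via (i)''. But (i) as stated is only the identification $\Hom_{G(n)}(S^dE,\nabla(\lambda))=H^0(\Sigma_d,\Sp(\lambda))$; you need the natural isomorphism $\Hom_{G(n)}(S^dE,-)\cong \Hom_{\Sigma_d}(K,f(-))$ on \emph{all} polynomial modules, which is exactly the statement $gK\cong S^dE$. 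You gesture at this (``$f$ has a left adjoint/there is a counit'') but never establish it. The paper's Lemma does precisely this work, via a dimension argument comparing with characteristic zero.

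\textbf{Gap at $p=3$.} For the equality when $p\neq 2$ you cite the Hemmer--Nakano/\cite{HTT} isomorphism $\Ext^1_{G(n)}(\nabla(\mu),\nabla(\lambda))\cong \Ext^1_{\Sigma_d}(\Sp(\mu),\Sp(\lambda))$, but as the paper itself notes, that result for $i=1$ requires $p\neq 2,3$. Your parenthetical (``the $p\neq 3$ caveat is not needed here because $\mu=(d)$ sits in a favorable position'') is not an argument. The paper handles $p=3$ cleanly: from the 5-term sequence the obstruction to equality is $\Hom_{G(n)}(LgK,\nabla(\lambda))$, and one shows $LgK=0$ by checking $\Ext^1_{\Sigma_d}(K,M(\tau))=H^1(\Sigma_\tau,K)=0$ for all $\tau$, via Shapiro's Lemma, which holds for all $p>2$.
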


\q  This follows in most cases by the above results or  the paper  \cite{KN} of Kleshchev and Nakano.  We also give a proof, via the arguments of \cite{HTT}, Section 10, in Appendix I.

\q The detailed description of $\Ext^1_{B(n)}(S^dE, K_\lambda)$ is quite complicated in general. However there are a couple of striking consequences that we now give. Throughout the analysis the notions of what we call James pairs and James partitions are of paramount importance.  We suppose that $K$ has prime characteristic $p$. Let $(a,b)$ be a two part partition. Let $v=\val_p(a+1)$ be the $p$-adic valuation of $a+1$. Let $\beta$ denote the $p$-adic length of $b$, i.e., $\beta$ is the non-negative integer such that $b=\sum_{i=0}^\beta b_ip^i$, with $0\leq b_i<p$ and $b_\beta\neq 0$. We call $(a,b)$ a James pair if $\beta<v$ (equivalently $b<p^v$, equivalently ${a+i\choose i}=0$, modulo $p$, for $1\leq i\leq b$). We call a partition $\lambda=(\lambda_1,\ldots,\lambda_n)$ of length $n\geq 2$ a James partition if $(\lambda_r,\lambda_{r+1})$ is James for all $1\leq r<n$.

\q Let $\lambda=(\lambda_1,\ldots,\lambda_n)$ be a James partition and let $l_i=\len_p(\lambda_i)$ and $v_i=\val_p(\lambda_i+1)$, for $1\leq i\leq n$.   We define the  {\em segments} of $\lambda$ to be  the equivalence classes of $\{1,\ldots,n\}$ for the relation $r=s$ if and only if $l_r=l_s$. For $1\leq r,s\leq n$ we define $r$ and $s$ to be adjacent if they belong  to the same segment or if $1<r<n$, $s=r+1$, and $r+1$ is the only element in its segment  and $\lambda_r=p^{v_{r-1}}-1$. We define the  $p$-segments of $\lambda$ to be  the equivalent classes of $\{1,\ldots,n\}$ for the equivalence relation generated by adjacency.

\q The following, obtained in Section 7,  is a complete description of the dimensions of the extension spaces for James partitions. 

\begin{theorem} Let $\lambda$ be a James partition or degree $d$  of length $n\geq 2$ and let $N\geq n$. 

(i) If $l_1=l_2$ then $\dim \Ext^1_{B(N)}(S^d E, K_\lambda)$  is   the number of $p$-segments of $\{1,\ldots,n\}$.

(ii) If $l_1>l_2$ then $\dim \Ext^1_{B(N)}(S^d E, K_\lambda)$  is one less than  the number of $p$-segments of $\{1,\ldots,n\}$.

\end{theorem}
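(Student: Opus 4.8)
The plan is to compute $\Ext^1_{B(N)}(S^d E, K_\lambda)$ for a James partition $\lambda$ by setting up a resolution or filtration that allows an inductive peeling-off of the rows of $\lambda$. First I would recall the standard device for computing $\Ext^\bullet_{B(N)}(S^d E, -)$: since $S^d E = \nabla(d,0,\ldots,0)$ has a well-understood structure, and since $K_\lambda$ is one-dimensional, one wants to use the parabolic/Levi decomposition of $B(N)$ and the fact that $E$ restricted to a smaller general linear group splits off the last coordinate line. Concretely, writing $E = E' \oplus L$ where $E'$ is the natural module for $G(N-1)$ sitting inside $G(N)$ and $L = K_{\ep_N}$, the symmetric power decomposes as $S^d E = \bigoplus_{j=0}^d S^{d-j} E' \otimes L^{\otimes j}$ as a module for the relevant subgroup, and this feeds into a spectral sequence (or long exact sequence) relating $\Ext^1_{B(N)}(S^d E, K_\lambda)$ to the extension groups over $B(N-1)$ with the partition $\lambda$ truncated. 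This is exactly the mechanism by which James pairs enter: the condition $\beta < v$ for $(\lambda_r,\lambda_{r+1})$ is precisely the condition that forces certain binomial coefficients ${\lambda_r + i \choose i}$ to vanish mod $p$, which kills the ``obvious'' maps and forces the extension classes to propagate.

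The key steps, in order, would be: (1) reduce, via Theorem 1.2 and the Levi decomposition, to an inductive statement about $\Ext^1$ over smaller Borel subgroups, tracking how the degree $d$ and the weight $\lambda$ change when one removes the bottom row; (2) use the James pair condition to show that at each step the "diagonal" contribution (the part of $S^d E$ matching the next coordinate of $\lambda$) contributes either nothing or a single new one-dimensional extension class, according to whether the two rows being compared lie in the same $p$-segment; (3) organize this bookkeeping by segments and then by $p$-segments — within a single segment all $l_i$ are equal, and the adjacency condition $\lambda_r = p^{v_{r-1}} - 1$ is exactly the numerical coincidence that allows an extension class from one segment to "leak" into the next, so the $p$-segments are precisely the blocks across which the counting is additive; (4) finally, handle the boundary case $l_1 = l_2$ versus $l_1 > l_2$ separately — the top row is special because there is no row above it to compare against, so in case (ii) the first $p$-segment fails to contribute its class, giving the "one less" in (ii).

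I expect the main obstacle to be step (2)–(3): controlling precisely \emph{when} an extension class survives the inductive step and when two consecutive pieces merge into one class. This is where the detailed combinatorics of $p$-adic expansions is unavoidable — one must show that a nonzero class in $\Ext^1_{B(N-1)}(S^{d'}E', K_{\lambda'})$ either lifts uniquely to $\Ext^1_{B(N)}(S^d E, K_\lambda)$, or dies, or combines with a new class, and the trichotomy is governed entirely by the relations between $l_{r-1}, l_r, v_{r-1}$ appearing in the definition of adjacency. Presumably this is where the machinery advertised in the introduction — extension sequences and coherent triples/multi-sequences of extension sequences coming from the divided power elements in the negative hyperalgebra — does the real work: the divided powers $F^{(k)}$ acting on $S^d E$ produce the explicit maps whose kernels and images realize the merging of segments, and proving the classification amounts to showing these are the only coherent possibilities. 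A secondary technical point is checking independence of $N$ (that the answer stabilizes for $N \geq n$), which should follow from the fact that adding further trivial coordinates to $\lambda$ contributes only James pairs $(0,0)$ that carry no new data.
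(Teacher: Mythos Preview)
Your proposal is more of a research plan than a proof, and its outer framework differs substantially from what the paper actually does. The paper never sets up a Levi decomposition or a spectral sequence. Having already identified $\Ext^1_{B(N)}(S^d E, K_\lambda)$ with the space $E(\lambda)$ of coherent multi-sequences (Proposition 4.5 --- for James $\lambda$ the dimensions coincide), the entire argument of Section 7 is combinatorial and internal to that space. The inductive mechanism is a \emph{restriction of multi-sequences}: if $l_{m+1} > l_{m+2} = \cdots = l_n$, one studies the forgetful map $E(\lambda) \to E(\lambda_1,\ldots,\lambda_{m+1})$, shows it is surjective (Lemma 7.9) with kernel of dimension at most one (Lemma 7.10), and pins down exactly when the kernel is nonzero (Proposition 7.11). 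The dichotomy in Proposition 7.11 --- the kernel vanishes precisely when the last segment is the singleton $\{n\}$ with $\lambda_{n-1} = p^{v_{n-2}} - 1$ --- is exactly the adjacency condition defining $p$-segments, and the theorem (Corollary 7.13) follows by a short induction on the number of segments. So the paper peels off the last \emph{segment}, not the last row, and does so in the linear-algebra category of multi-sequences rather than via module-theoretic restriction.

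Your proposed route has a concrete obstacle you have not addressed: the splitting $S^d E \cong \bigoplus_j S^{d-j} E' \otimes L^{\otimes j}$ holds only over $B(N-1) \times G_m$, not over $B(N)$; over $B(N)$ one has merely a filtration, and the subquotients are not themselves symmetric powers for $B(N)$. To recover $B(N)$-extensions from $B(N-1)$-data you would need something like Hochschild--Serre for the normal abelian subgroup $U' = \prod_{j<N} U(N,j)$, and computing its $E_2$-page and differentials would land you in precisely the binomial-coefficient calculations the paper carries out directly via the relations (E), (T1)--(T3b). You are right that the divided-power action and the resulting coherent multi-sequences are where the content lives; the point is that the paper uses them \emph{instead of}, not inside, any spectral-sequence layer, and all of your steps (2)--(4) --- which you correctly flag as the real work --- are exactly Lemmas 7.7--7.10 and Proposition 7.11.
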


\q In the non-James case the results are not so easy to describe but the following general result, Proposition 9.4(i) and (ii), is significant.

\begin{proposition} Let $\lambda=(\lambda_1,\ldots,\lambda_n)$ be a non-James partition of length $n$ and let $N\geq n$.  Let  $r$ be minimal such that $(\lambda_r,\lambda_{r+1})$ is not James. 

(i) If $n\geq r+2$ then there is an injective linear map from $\Ext^1_{B(N)}(S^dE, K_\lambda)$ to $\Ext^1_{B(N)}(S^{d_0}E,K_\mu)$, where $\mu$ is the $3$-part partition $(\lambda_r,\lambda_{r+1},\lambda_{r+2})$ and $d_0$ is the degree of $\mu$;

(ii) if $r=n-1$ then there is an injective linear map from $\Ext^1_{B(N)}(S^dE, K_\lambda)$ to $\Ext^1_{B(N)}(S^{d_0}E,K_\mu)$, where $\mu$ is the $2$-part partition $(\lambda_{n-1},\lambda_n)$ and $d_0$ is the degree of $\mu$.
\end{proposition}

\q This leads to the following, see Theorem 12.1. 

\begin{theorem} Let $\lambda$ be a non-James partition or degree $d$  of length $n\geq 2$ and let $N\geq n$.  Then we have 

$$\dim \Ext^1_{B(N)}(S^d E, K_\lambda)\leq 1.$$

\end{theorem}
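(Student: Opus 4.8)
The plan is to derive the bound from the preceding Proposition by reducing to partitions with at most three parts. Let $\lambda=(\lambda_1,\ldots,\lambda_n)$ be non-James of degree $d$ and let $r$ be minimal with $(\lambda_r,\lambda_{r+1})$ not James, as in that Proposition. If $n\geq r+2$ then part (i) produces an injective linear map from $\Ext^1_{B(N)}(S^dE,K_\lambda)$ to $\Ext^1_{B(N)}(S^{d_0}E,K_\mu)$, where $\mu=(\lambda_r,\lambda_{r+1},\lambda_{r+2})$ has leading pair $(\mu_1,\mu_2)=(\lambda_r,\lambda_{r+1})$ not James; if $r=n-1$ then part (ii) gives the analogous injection with $\mu=(\lambda_{n-1},\lambda_n)$ a two-part partition that is not James. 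Hence it suffices to prove $\dim\Ext^1_{B(N)}(S^{d_0}E,K_\mu)\leq 1$ in the two cases (a) $\mu=(\mu_1,\mu_2)$ with $(\mu_1,\mu_2)$ not James, and (b) $\mu=(\mu_1,\mu_2,\mu_3)$ with $(\mu_1,\mu_2)$ not James.

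For case (a) I would invoke the rank-one analysis carried out earlier, in which $\Ext^1_{B(N)}(S^{d_0}E,K_{(\mu_1,\mu_2)})$ is identified with a space of extension sequences attached to the single pair $(\mu_1,\mu_2)$. Such a sequence is rigid apart from the choice of one scalar, so that the failure of the James condition at the only available pair contributes at most one degree of freedom; thus the space has dimension at most $1$, which settles (a).

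Case (b) is the heart of the matter and the step I expect to be the main obstacle. Here one uses the description of $\Ext^1_{B(N)}(S^{d_0}E,K_\mu)$ as the space of coherent triples of extension sequences for the two adjacent pairs $(\mu_1,\mu_2)$ and $(\mu_2,\mu_3)$, the coherence relations recording the interaction of the divided power elements for the simple roots $\ep_1-\ep_2$, $\ep_2-\ep_3$ and for $\ep_1-\ep_3$ in the negative part of the hyperalgebra of $G(3)$. The argument then has two ingredients. First, because $(\mu_1,\mu_2)$ is not James, the first component of any coherent triple already lies in an at most one-parameter family, exactly as in case (a). Second, a case analysis of the admissible extension sequences for $(\mu_2,\mu_3)$ — organised according to whether that pair is James and according to the $p$-adic digit patterns of $\mu_2$ and $\mu_3$ — must show that the coherence conditions impose only further linear constraints and never introduce an independent new parameter; that is, the lower pair can cut down but never enlarge the solution space. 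The delicate point, and the reason the reduction is made to three parts rather than directly to two, is precisely ruling out the a priori scenario in which a class supported on $(\mu_2,\mu_3)$ survives the coherence constraints independently of one coming from the non-James top pair. Once this is done the two ingredients combine to give $\dim\Ext^1_{B(N)}(S^{d_0}E,K_\mu)\leq 1$, and with the reduction above the theorem follows.
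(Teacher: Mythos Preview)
Your reduction via Proposition 0.4 to the two cases (a) a non-James two-part partition and (b) a three-part partition with non-James top pair is exactly the paper's approach (Theorem 12.1, using Proposition 9.4). Case (a) is disposed of by the two-part classification (Lemma 5.11): a non-James pair is either split or pointed, so $\dim E(\mu_1,\mu_2)\leq 2$ and hence $\dim\Ext^1\leq 1$.

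Your description of case (b), however, misdiagnoses what the case analysis actually establishes. You frame it as: the first component $(x_i)$ lies in an at most one-parameter family and the lower pair ``can cut down but never enlarge'', with the delicate point being to \emph{rule out} a class supported on $(\mu_2,\mu_3)$ surviving independently. But such classes are \emph{not} ruled out. In several of the non-split cases (Proposition 10.14(iv),(v) and Proposition 11.16(v), for instance) the non-standard coherent triple has $(x_i)=0$ and is carried entirely by $(y_j)$ and $(z_k)$. In particular when $(\mu_1,\mu_2)$ is split one has $\dim E(\mu_1,\mu_2)=1$ while $\dim E(\mu)=2$, so the projection to the first component has nontrivial kernel and the inequality $\dim E(\mu)\leq\dim E(\mu_1,\mu_2)$ you implicitly appeal to is false.

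What the paper does in Sections 10 and 11 is a direct and lengthy case analysis (organised by whether $(\mu_1,\mu_2)$ is split or pointed, and then by the relations among $v$, $w$, $\beta$, $\gamma$) showing in each case that after subtracting a multiple of the standard triple the remaining coherent triples form an at most one-dimensional space, whence $\dim E(\mu)\leq 2$. There is no clean structural shortcut of the kind you outline; the bound is obtained configuration by configuration. Your plan is correct in that a case analysis is what is required, but the expected conclusion of that analysis is not ``no new parameter from the lower pair'' --- it is simply ``$\dim E(\mu)\leq 2$'', established separately in each case.
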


\q

\q In Theorems 12.30 and 12.31 we describe all non-James, non-split  partitions, i.e. partitions $\lambda$ with $\dim \Ext^1_{B(N)}(S^d E, K_\lambda)=1$ (where $d$ is the degree of $\lambda$ and $N$ is at least the number of parts of $\lambda$).

\q Via Proposition 0.2,  these results also give the dimensions of the first  cohomology  spaces of Specht modules for  $p\neq 2$.

\q We mention some of the other highlights of the paper.   

\q It is important for us to establish a close relationship between modules for the hyperalgebra of $B(n)$ and rational $B(n)$-modules. What we do is in fact to give some version of Verma's Conjecture for $B(n)$-modules. Since the arguments also give a proof of Verma's Conjecture (for all semisimple, simply connected algebraic groups) we include, in Section 1, a new proof (for earlier proofs see the papers by Sullivan, \cite{sullivansimply} and Cline, Parshall and Scott, \cite{CPS}).

\q We give, in Section 3, a simple proof of James's Theorem on the fixed points of Specht modules, \cite{James}, 24.4 Theorem, via our formalism in terms of divided powers operators. 

\q Our investigation into the nature of coherent multi-sequence starts in Section 5, where we determine all extension sequences for two part partitions.  As a corollary we recover the result of Erdmann, \cite{Erdmann}, (3.6) Theorem, describing the first extension groups between Weyl modules for ${\rm SL}_2(K)$.

\q As a by-product of  our approach to extension spaces via coherent multi-sequences, we give, in Section 9, a short proof of the main result of Weber, \cite{Web}, Theorem 1.2, which says that $H^1(\Sigma_d,\Sp(\lambda))=0$ if the partition $\lambda$ contains two distant non-James pairs.

\q In Section 13, using our main result on extensions, we give a counterexample to a conjecture of Hemmer on the cohomology of Specht modules.

\q We now describe the layout of the paper. In Section 1 we describe the link between modules for the hyperalgebra and rational modules, which is the starting point of our investigation.  In Secton 2 this is taken further to describe relations between the divided powers of operators and to give a description of the hyperalgebra by generators and relations.

\q In Section 3 we give, as a warm-up exercise in the use of the divided powers operations, a proof of James's Theorem on invariants of Specht modules.

\q In Section 4 we define extension multi-sequences  make explicit the link between certain extensions of $B(n)$-modules and coherent multi-sequences. 

\q In Section 5 we determine the basic extension sequences, i.e, those sequence arising via two part partitions.   The result is  that  a two part partition falls into one of three types: it  is either split,  or James,  or what we call pointed.  This trichotomy is the basis of the analysis for $3$-part  partitions which we give in later sections.

\q In Section 6 we give the definition of a coherent triple in terms of certain relations between basic extension sequences. These relations are used many times in the sections which follow, particularly to determine extension groups for $3$-part partitions.

\q In Section 7 we first determine all coherent triples for a James $3$-part partition. We then use this to determine the dimension of $\Ext^1_{B(n)}(S^dE,K_\lambda)$ (and hence $H^1(\Sigma_d,\Sp(\lambda)))$, for any  James partition $\lambda$. We find in particular that if $\lambda$ is a   James partitions of length $n$ then the extension space has dimension at most $n-1$, and that this bound is achieved.

\q In Section 8 we calculate the extension sequence for $3$ part partitions $(a,b,c)$ in which $(a,b)$ is James and $(b,c)$ is not. 

\q In Section 9, we use the results given so far to describe various general results including Proposition 0.4 above, that the cohomology group, for a non-James partition $\lambda$  is embedded in the cohomology for a certain $3$ part or $2$ part partition  made up of  consecutive parts of $\lambda$.

\q In  Section 10 we describe all coherent triples for $3$ part partitions $(a,b,c)$ in which $(a,b)$ is split. 

\q In  Section 11 we describe all coherent triples for $3$ part partitions $(a,b,c)$ in which $(a,b)$ is pointed. 

\q In Section 12 we obtain our main result for extension groups. The final result turns out to be much simpler than one might have anticipated, in view of the complexity coming from the analysis of  coherent triples.  

\q In Section 13 we comment on some  questions raised by Hemmer, \cite{Hem}. 

\q In Appendix I we   give a proof of Proposition 0.2, based on \cite{HTT}, Section 10.

\q Finally, in Appendix II, we list together, for the convenience of the reader, the complete set of relations for coherent  multi-sequences that are used throughout.

\bs\bs\bs\bs


\section{Divided powers operators, hyperaglebras and Verma's Conjecture.}

\q In calculating extensions of $B(n)$-modules it is technically much simpler to work with the action of the hyperalgebra,  more precisely with\\
  $(\hy(U(n)),T(n))$-modules, where $\hy(U(n))$ is the hyperalgebra of the group $U(n)$ of lower unitriangular matrices in $G(n)$, than the group $B(n)$ itself. In this section we describe the hyperalgebra of $U(n)$. We shall also need a version of Verma's Conjecture relating rational modules and modules for the hyperalgebra of a semisimple group. In fact the approach we take here leads to a new proof, which we include here, of the usual version of Verma's Conjecture.

\q Let $G$ be a linear  algebraic group over $K$. We write $K[G]$ for the coordinate algebra of $G$. Then $K[G]$ is naturally a Hopf algebra, in particular a coalgebra,  and the full linear dual $K[G]^*$ is thus naturally an associative $K$-algebra.  If $V$ is a rational $G$-module then $V$ is naturally a $K[G]^*$-module. The action is given as follows. If $V$ has basis $\{v_i \vert i\in I\}$ and the elements $f_{ij}\in K[G]$,   are defined by the formulas  $gv_i=\sum_{j\in I} f_{ji}(g)v_j$, for $g\in G$, $i,j\in I$, then, for $\gamma\in K[G]^*$, we have  $\gamma(v_i)=\sum_{j\in J} \gamma(f_{ji})v_j$.

\q Let $M$ be the augmentation ideal of $K[G]$, i.e.,   \\ 
$M=\{f\in K[G]\vert f(1)=0\}$.   Then the  hyperalgebra (or algebra of distributions) $\hy(G)$ is defined by  
$$\hy(G)=\{\gamma\in K[G]^* \vert \gamma(M^r)=0 \hbox{ for some } r\geq 0\}.$$
Then $\hy(G)$ is a  Hopf subalgebra of the Hopf dual of $K[G]$.  For a left $\hy(G)$-module $V$ we have the space of \lq\lq fixed points" 
$$H^0(\hy(G),V)=\{v\in V \vert Xv=\ep(X)v  \hbox{ for all } X\in \hy(G)\}$$
where $\ep:\hy(G)\to K$ is the augmentation map of the Hopf algebra $\hy(G)$. i.e., the map given by $\ep(X)=X(1_{K[G]})$, for $X\in \hy(G)$.

\q Let  $e_{ij}$, $1\leq i,j\leq n$, be the matrix units  of the algebra $M(n)$  of $n\times n$ matrices with entries in $K$ and let $I_n$ denote the identity matrix. For $1\leq r,s\leq n$ with $r\neq s$,  we define $u(r,s)(\xi)=I_n+ \xi e_{rs}\in G(n)$, for $\xi\in K$,  and define $U(r,s)$ to the the one dimensional unipotent subgroup $\{u(r,s)(\xi) \vert \xi\in K\}$ of $G(n)$.

\q The coordinate algebra $K[U(r,s)]$ is the free polynomial algebra in one variable $x(r,s)$, where $x(r,s)(u(r,s)(\xi))=\xi$, for $\xi\in K$. The hyperalgebra $\hy(U(r,s))$ has $K$-basis $X(r,s)_i$, $i\geq 0$, where $X(r,s)_i(x(r,s)^j)=\delta_{ij}$ (the Kronecker delta), for $i,j\geq 0$. We shall call these basis elements  the {\em divided powers operators}.

\q If $V$ is a rational $U(r,s)$-module and $v\in V$ and $v_0,v_1,\ldots \in V$ are such that  
$$u(r,s)(\xi)v=\sum_{i\geq 0} \xi^i v_i$$
for $\xi\in K$,  then $X(r,s)_i v= v_i$.    From the fact that $u(r,s)(\nu)u(r,s)(\xi)=u(r,s)(\nu+\xi)$, for $\nu,\xi\in K$, we deduce that 
$$X(r,s)_i X(r,s)_j v= {i+j \choose  i} X(r,s)_{i+j}v$$
for $v\in V$, $i,j\geq 0$.  Taking $V=K[G(n)]$, the left regular module and  evaluating at $1$, we deduce that 
$$X(r,s)_i X(r,s)_j = {i+j \choose  i} X(r,s)_{i+j}$$
for all $i,j\geq 0$.

\q One easily deduces that a linear map between $U(r,s)$-modules is a module homomorphism if and only if it commutes with the action of all $X(r,s)_i$.

\q We write $U(n)$ for the unipotent radical of $B(n)$, i.e., the group of all lower unitriangular matrices.   

\q Suppose that $1\leq s<r\leq n$. Restriction of functions $K[U(n)]$ to $K[U(r,s)]$ is a surjection of Hopf algebras which induces a monomorphism of Hopf algebras $\hy(U(r,s))\to \hy(U(n))$, by which we identity $\hy(U(r,s))$ with a Hopf subalgebra of $\hy(U(n))$.

\q Furthermore multiplication 
$$U(2,1)\times U(3,1)\times U(3,2)\times \cdots \times U(n,n-1)\to U(n)$$ is an isomorphism of varieties and induces $K$-space isomorphism 
$$\hy(U(2,1))\otimes \hy(U(3,1))\otimes \hy(U(3,2))\otimes \cdots \otimes \hy(U(n,n-1))\to \hy((U(n)).$$

\q Hence the elements 
$$X(2,1)_{i(2,1)}X(3,1)_{i(3,1)}X(3,2)_{i(3,2)}\ldots X(n,n-1)_{i(n,n-1)}\eqno{(*)}$$
with $i(2,1),i(3,1),i(3,2),\ldots, i(n,n-1)\geq 0$, form a $K$-basis of $\hy(U(n))$ (a version of the Poincar\'e-Birkhoff-Witt Theorem).

\begin{lemma} (i) A linear map between $U(n)$-modules is a $U(n)$-module homomorphism if and only if it commutes with the action of all $X(r+1,r)_i$, with $1\leq r<n$, $i> 0$.

(ii)    A linear map between $B(n)$-modules is a $B(n)$-module homomorphism if and only if it is a $T(n)$-module homomorphism and commutes with the action of all $X(r+1,r)_i$, with $1\leq r < n$, $i> 0$.

\end{lemma}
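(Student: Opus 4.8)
The plan is to deduce both parts from the structural facts about $\hy(U(n))$ already established: the PBW-type basis $(*)$ of $\hy(U(n))$ in terms of the divided powers operators $X(r,s)_i$, the generation of $\hy(U(r,s))$ by the $X(r,s)_i$, and the already-proved fact that a linear map between $U(r,s)$-modules commutes with the $U(r,s)$-action if and only if it commutes with all $X(r,s)_i$. The key reduction is that it suffices to work with the \emph{simple roots}, i.e. with the subgroups $U(r+1,r)$, because the $X(r+1,r)_i$ generate $\hy(U(n))$ as an algebra.

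For part (i), let $\phi\colon V\to W$ be a linear map between rational $U(n)$-modules. If $\phi$ is a $U(n)$-module homomorphism it is in particular a $U(r+1,r)$-module homomorphism for each $r$, hence commutes with every $X(r+1,r)_i$ by the $U(r,s)$-case. Conversely, suppose $\phi$ commutes with all $X(r+1,r)_i$. First I would observe that $\phi$ automatically commutes with $X(r+1,r)_0 = 1$, and that the relation $X(r+1,r)_iX(r+1,r)_j={i+j\choose i}X(r+1,r)_{i+j}$ lets one pass freely between the $X(r+1,r)_i$ and the subalgebra they generate. The main point is then that the subalgebra of $\hy(U(n))$ generated by $\{X(r+1,r)_i : 1\le r<n,\ i\ge 0\}$ is all of $\hy(U(n))$: indeed every basis element $X(s,r)_i$ with $s>r+1$ can be expressed in terms of the simple-root divided powers via the commutation/PBW relations (this is where one uses that multiplication $U(2,1)\times\cdots\times U(n,n-1)\to U(n)$ is an isomorphism of varieties, dualized), so the $X(r+1,r)_i$ generate. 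Since $\phi$ commutes with a generating set of $\hy(U(n))$, it commutes with all of $\hy(U(n))$. Finally, commuting with all of $\hy(U(n))$ is equivalent to being a $U(n)$-module homomorphism: a rational $U(n)$-module is the same as a locally finite $\hy(U(n))$-module of the appropriate type (this is the standard dictionary, or a consequence of the Verma-type results of this section), and on each finite-dimensional submodule the action of $U(n)$ is recovered from that of $\hy(U(n))$ by the formula $u(r,s)(\xi)v=\sum_i \xi^i X(r,s)_i v$.

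For part (ii), write $B(n)=T(n)U(n)$ with $U(n)$ normal. A linear map $\phi\colon V\to W$ between rational $B(n)$-modules is a $B(n)$-homomorphism if and only if it is simultaneously a $T(n)$-homomorphism and a $U(n)$-homomorphism; the \lq\lq only if\rq\rq\ direction is immediate by restriction, and for the \lq\lq if\rq\rq\ direction one checks $\phi(bv)=b\phi(v)$ by writing $b=tu$ and using that $\phi$ intertwines both factors (no compatibility issue arises because one is just composing the two intertwining properties). Now apply part (i) to the $U(n)$-homomorphism condition: it is equivalent to $\phi$ commuting with all $X(r+1,r)_i$, $1\le r<n$, $i>0$. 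Combining, $\phi$ is a $B(n)$-module homomorphism if and only if it is a $T(n)$-module homomorphism and commutes with all $X(r+1,r)_i$, as claimed.

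The step I expect to be the real obstacle is the claim that the simple-root divided powers $X(r+1,r)_i$ generate $\hy(U(n))$ as an algebra, equivalently that every $X(s,r)_i$ with $s-r\ge 2$ lies in the subalgebra they generate. One must extract enough commutation relations among the $X(s,r)_i$ from the group law on $U(n)$ (essentially Chevalley-type commutator formulas, dualized into the hyperalgebra) to see that, for instance, a suitable bracket of $X(r+2,r+1)_j$ with $X(r+1,r)_i$ produces $X(r+2,r)_{i+j}$ up to a unit scalar and lower terms, and then to run an induction on $s-r$. This is routine in characteristic $0$ but in characteristic $p$ one has to be careful that the relevant binomial coefficients are units; the honest argument is to work with $\hy(U(n))$ as a coalgebra dual and use the variety isomorphism to control which monomials $(*)$ appear. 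Everything else — the $T(n)$/$U(n)$ decomposition of $B(n)$-homomorphisms and the passage between $\hy(U(n))$-module maps and $U(n)$-module maps — is standard and short.
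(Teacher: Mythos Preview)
Your proposal is correct but takes a longer route than the paper. The paper's argument is much more direct: once $\phi$ commutes with every $X(r+1,r)_i$, one simply applies the formula $u(r+1,r)(\xi)v=\sum_{i\geq 0}\xi^i X(r+1,r)_i v$ to get $\phi(u(r+1,r)(\xi)v)=u(r+1,r)(\xi)\phi(v)$, so $\phi$ is a $U(r+1,r)$-module map for each $r$; since the simple-root subgroups $U(r+1,r)$ generate $U(n)$ as a \emph{group}, $\phi$ is a $U(n)$-map, and (ii) follows at once from (i) and $B(n)=T(n)U(n)$. The step you singled out as the real obstacle --- that the $X(r+1,r)_i$ generate $\hy(U(n))$ as an algebra --- is thereby bypassed entirely: the paper works at the group level (where generation is elementary) rather than the hyperalgebra level. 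Your approach is valid (and the hyperalgebra generation you worry about is in fact established later, in Section~2, via the commutation relations), but it invokes both that generation result and the passage back from $\hy(U(n))$-maps to $U(n)$-maps, neither of which the paper needs here. The payoff of the paper's route is brevity and elementariness; your route would be the natural one if no algebraic group were in sight and one had only the hyperalgebra to work with.
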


\begin{proof}  
Let $V$ and $W$ be $B(n)$-modules and let $\phi:V\to W$ be a linear map. Certainly if $\phi$ is a $B(n)$-module map then it is a $k[B(n)]^*$-module map and hence commutes with the action of all $X(r+1,r)_i$, $1\leq r<n$, $i> 0$.  Suppose, on the other hand that $\phi$ commutes with the action of all $X(r+1,r)_i$.  Then, for $v\in V$ and $\xi\in K$, we have 
\begin{align*}\phi(u(r+1,r)(\xi)v)&=\phi(\sum_{i\geq 0} \xi^i X(r+1,r)_iv)=\sum_{i\geq 0} \xi^i X(r+1,r)_i \phi(v)\cr
&=u(r+1,r)(\xi) \phi(v). 
\end{align*}
Hence $\phi$ is a module homomorphism for the groups  $U(r+1,r)$, $1\leq r<n$, and hence for the group  they generate, namely $U(n)$. This proves  (i) and now (ii) is clear.

\end{proof}

By a similar, but simpler argument, one  also has the following.

\newpage

\begin{lemma} (i) A subspace of a $U(n)$-module is a $U(n)$-submodule if and only if it is stabilised by all divided power operators 
$X(r+1,r)_i$, for $1\leq r<n$, $i> 0$.

(ii) A $U(n)$-module is trivial if and only if all divided power operators, \\ 
$X(r+1,r)_i$, for $1\leq r<n$, $i>0$, act as zero. 

(iii)  A subspace of a $B(n)$-module is a $B(n)$-submodule if and only if it is a $T(n)$-submodule and stabilised by all divided power operators $X(r+1,r)_i$, for $1\leq r<n$, $i> 0$.

\end{lemma}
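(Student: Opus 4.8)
The plan is to run the argument in close parallel to the proof of Lemma 1.1, using that $U(n)$ is generated by the root subgroups $U(r+1,r)$, $1\le r<n$, and that a rational $U(r+1,r)$-module is polynomially controlled by the divided powers $X(r+1,r)_i$. In all three parts the implication from the module-theoretic hypothesis to the statement about divided powers is immediate, since $\hy(U(n))\subseteq K[U(n)]^*$ acts on every rational $U(n)$-module; so the content lies in the reverse implications.

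For (i), suppose a subspace $W$ of a rational $U(n)$-module $V$ is stabilised by all $X(r+1,r)_i$ with $i>0$. First note that $X(r+1,r)_0$ is the counit of $\hy(U(r+1,r))$, hence acts as the identity, so $W$ is stabilised by all $X(r+1,r)_i$ with $i\ge 0$. Fix $w\in W$ and $1\le r<n$, and define $w_0,w_1,\ldots$ by $u(r+1,r)(\xi)w=\sum_{i\ge 0}\xi^iw_i$; as in the excerpt $w_i=X(r+1,r)_iw\in W$, and only finitely many $w_i$ are nonzero since $V$ is rational. Hence $u(r+1,r)(\xi)w\in W$ for all $\xi\in K$, so $W$ is stable under each root subgroup $U(r+1,r)(K)$, hence under the abstract subgroup they generate, namely $U(n)(K)$. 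Finally, since $U(n)(K)$ is Zariski dense in $U(n)$, a $U(n)(K)$-stable subspace is a rational $U(n)$-submodule: writing $\pi:V\to V/W$ for the quotient and the comodule map of $w$ as $\sum_j w_j\otimes f_j$ with the $f_j\in K[U(n)]$ linearly independent, the relation $\sum_j f_j(g)\,\pi(w_j)=0$ for all $g\in U(n)(K)$ forces $\pi(w_j)=0$ for every $j$, i.e. the comodule map carries $W$ into $W\otimes K[U(n)]$.

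Part (ii) follows from the same polynomial identity: $V$ is trivial as a $U(n)$-module iff every $u(r+1,r)(\xi)$ acts as the identity (the root subgroups generate $U(n)$), iff $\sum_{i\ge 0}\xi^iX(r+1,r)_iv=v$ for all $v\in V$, $\xi\in K$ and all $1\le r<n$, iff $X(r+1,r)_iv=0$ for all $i>0$. For part (iii), use $B(n)=T(n)U(n)$, so that a subspace of a $B(n)$-module is a $B(n)$-submodule iff it is stable under both $T(n)(K)$ and $U(n)(K)$; stability under $U(n)(K)$ is governed by (i), while stability under $T(n)(K)$ is equivalent to being a $T(n)$-submodule by the usual weight-space argument (a rational $T(n)$-module is the direct sum of its weight spaces, and a Vandermonde argument shows a $T(n)(K)$-stable subspace is the sum of its intersections with those weight spaces).

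The only step requiring a little care — and the natural candidate for the main obstacle — is the passage from stability under the abstract group $U(n)(K)$ to being an honest rational submodule; but this is exactly the standard density argument indicated above, and everything else is a direct transcription of the reasoning already used for Lemma 1.1.
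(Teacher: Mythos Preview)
Your proof is correct and follows exactly the approach the paper intends: the paper does not give a detailed proof of this lemma but simply remarks ``By a similar, but simpler argument, one also has the following,'' referring back to the proof of Lemma~1.1, and your argument is precisely that similar-but-simpler transcription. The density argument you include is correct but slightly more than is needed here, since the paper works over an algebraically closed field $K$ (so $U(n)(K)$ already coincides with $U(n)$ and a $U(n)(K)$-stable subspace of a rational module is automatically a rational submodule); still, making this step explicit does no harm.
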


\q We shall consider the category of $(\hy(U(n)),T(n))$-modules. By a \\
 $(\hy(U(n)),T(n))$-module we mean a vector space $V$, say,  which is a $\hy(U(n))$-module and $T(n)$-module in such a way that $X(r,s)_i V^\alpha  \subseteq  V^{\alpha+i(\ep_r-\ep_s)}$,  for all $\alpha \in X(n)$, $1\leq r,s\leq n$ with $r>s$ and $i>0$.  If $V$ and $W$ are $(\hy(U(n)),T(n))$-modules then a map $\phi:V\to W$ is a morphism of $\hy(U(n))$-modules and $T(n)$-modules.  If $V$ is a $B(n)$-module then we write $V^\hy$, for $V$ regarded as a $T(n)$-module and $\hy(U(n))$-module as above. Thus we have an exact functor $V\mapsto V^\hy$, from finite dimensional  $B(n)$-modules to finite dimensional  $(\hy(U(n)),T(n))$-modules.  We shall need that this is an equivalence of categories.

 \q In fact the  argument we shall give is valid for an arbitrary reductive group over $K$ and we give  it in that context.  So now let $G$ be a reductive group over $K$. Let $T$ be a maximal torus and let $W=N_G(T)/T$ be the Weyl group (where $N_G(T)$ is the normaliser of $T$ in $G$). Let $X(T)$ be the character group of $T$. Then $W$ acts naturally on $X(T)$. We pick a positive definite symmetric $W$-invariant bilinear form on $\real\otimes_\zed X(T)$.  Now $T$ acts naturally on the Lie algebra $\Lie(G)$ of $G$ and the set of 
 $\Phi$ of non-zero weights is a root system in the $\real$-span, $E$ say, of $\Phi$.  We choose a set  of positive roots $\Phi^+=\{\alpha_1,\ldots,\alpha_N\}$ (where $N=|\Phi^+|$). We have the usual partial order on $X(T)$. Thus for $\lambda,\mu\in X(T)$ we write  $\lambda\geq \mu$ if $\lambda-\mu$ is a sum of positive roots.

\begin{remark}  \rm We shall use the following elementary fact.  If  $\mu$ is a weight of a $B$-module $V$ then there exists a one dimensional submodule isomorphic to $K_\lambda$
for some $\lambda\in X(T)$ with $\lambda\leq \mu$. (This follows for example from \cite{D4}, (1.4.4), or \cite{RAG}, II, 4.9 Lemma.)
\end{remark}

\q  The height of an element $\mu\in X(T)$ with $\mu\geq 0$  is defined by the condition that $\mu$ is a sum of $\height(\mu)$ simple roots. For $\mu\geq 0$ we define $P(\mu)$ to be the number of ways of writing $\mu$ as a sum of positive roots, i.e., the number of tuples $(m_\alpha)_{\alpha\in \Phi^+}$ such that $\mu=\sum_{\alpha\in \Phi^+} m_\alpha \alpha$.
 
 \q For $\alpha\in \Phi$ let $U_\alpha$ be the corresponding root subgroup.  Let $G_\add$ be the additive group of $K$, regarded as an algebraic group, in the natural way.  For $\alpha\in \Phi$, we choose an isomorphism of algebraic groups $\phi_\alpha:G_\add\to U_\alpha$ such that $t\phi_\alpha(\xi)t^{-1}=\phi_\alpha(\alpha(t) \xi)$, for $\xi\in K$. 
 
 \q There are uniquely determined elements $X_{\alpha,i}$, $i\geq 0$,  of the hyperalgebra of $U_\alpha$ such that if $V$ is a rational $U_\alpha$-module and $v\in V$ then $\phi_\alpha(\xi)v=\sum_{i\geq 0} \xi^i X_{\alpha,i} v$. We denote by $B$  the negative Borel subgroup of $G$ and denote by $U$ be the unipotent radical of $B$.   Identifying $\hy(U_\alpha)$ with a subalgebra of $\hy(U)$, for $\alpha\in \Phi^-$, we have, by the argument of (*) above,  that $\hy(U)$ has a $K$-basis consisting of the elements
 $$X_{-\alpha_1,i_1}X_{-\alpha_2,i_2}\ldots X_{-\alpha_N,i_N}$$
 with $i_1,\ldots,i_N$ running over the non-negative integers.
 
 \q For $\lambda\in X(T)$ we denote by $K_\lambda$ the one dimensional $B$-module on which $T$ acts with weight $\lambda$. We shall use the fact that if $\Ext^1_B(K_\lambda,K_\mu)\neq 0$, for $\lambda,\mu\in X(T)$, then $\lambda>\mu$ (see e.g. \cite{RAG}).

  \q We shall need a version of Lemma 1.2 in the more general situation. We leave the details to the reader.

 \begin{lemma} (i) A subspace of a $U$-module is a $U$-submodule if and only if it is stabilised by all divided power operators 
$X_{-\alpha,i}$, for $\alpha\in \Phi^+$ , $i>0$.

(ii) A $U$-module is trivial if and only if all divided power operators, \\ 
$X_{-\alpha,i}$, for $\alpha\in \Phi^+$, $i>0$, act as zero. 

(iii)  A subspace of a $B$-module is a $B$-submodule if and only if it is a $T$-submodule and stabilised by all divided power operators $X_{-\alpha,i}$, for $\alpha\in \Phi^+$, $i>0$.

\end{lemma}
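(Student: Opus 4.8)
The plan is to deduce this general Lemma 1.5 from the already-established Lemma 1.2 by an essentially formal argument, replacing the explicit indexing of root subgroups $U(r+1,r)$ by simple roots with the abstract setup of positive roots $\alpha\in\Phi^+$ and their root subgroups $U_\alpha\subseteq G$. The key point carried over from Lemma 1.2 (via its proof) is the identity $\phi_\alpha(\xi)v=\sum_{i\geq 0}\xi^i X_{-\alpha,i}v$ for a rational $U_{-\alpha}$-module $V$, together with the fact that a linear map commutes with the $U_{-\alpha}$-action if and only if it commutes with all $X_{-\alpha,i}$, which is proved exactly as in the proof of Lemma 1.2 using $\phi_\alpha(\nu)\phi_\alpha(\xi)=\phi_\alpha(\nu+\xi)$.

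For part (i), let $F$ be a $U$-submodule-candidate subspace of a $U$-module $V$. If $F$ is a $U$-submodule then in particular it is stabilised by each one-dimensional subgroup $U_{-\alpha}$, $\alpha\in\Phi^+$, hence by each $X_{-\alpha,i}$. Conversely, if $F$ is stabilised by all $X_{-\alpha,i}$, then from the displayed expansion $\phi_\alpha(\xi)v=\sum_i\xi^iX_{-\alpha,i}v$ we get $\phi_\alpha(\xi)F\subseteq F$ for all $\xi\in K$ and all $\alpha\in\Phi^+$; since the subgroups $U_{-\alpha}$, $\alpha\in\Phi^+$, generate $U$ (as $U$ is the unipotent radical of the negative Borel $B$, directly spanned by the negative root subgroups), $F$ is a $U$-submodule. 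Part (ii) is the special case $F=0$ in the appropriate sense: a $U$-module $V$ on which all $X_{-\alpha,i}$ ($i>0$) act as zero has $\phi_\alpha(\xi)v=v$ for all $v$, $\xi$, $\alpha$, so every vector is $U$-fixed, i.e. $V$ is trivial; the converse is immediate since on a trivial module $\phi_\alpha(\xi)$ acts as the identity, forcing $X_{-\alpha,i}=0$ for $i>0$. For part (iii), a $B$-submodule is in particular a $T$-submodule and a $U$-submodule, hence stabilised by all $X_{-\alpha,i}$ by (i); conversely a $T$-submodule stabilised by all $X_{-\alpha,i}$ is a $U$-submodule by (i), and since $B=TU$ (semidirect product, $T$ normalising $U$), a subspace stable under both $T$ and $U$ is stable under $B$.

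The only genuine point requiring care — and the step I would flag as the main obstacle, modest as it is — is the passage from "stabilised by each $U_{-\alpha}$" to "stabilised by $U$" in positive characteristic: one must invoke that $U$ is generated as an algebraic group by the negative root subgroups $U_{-\alpha}$ (equivalently that the product morphism $\prod_{\alpha\in\Phi^+}U_{-\alpha}\to U$ is an isomorphism of varieties, which is exactly the statement used just above the lemma to get the PBW basis of $\hy(U)$), and that a subspace of a rational module stable under a generating family of closed subgroups is stable under the group they generate. This is standard (cf. the treatment in \cite{RAG}), so I would simply cite it and leave the remaining bookkeeping to the reader, as the authors indicate.
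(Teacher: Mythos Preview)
Your proposal is correct and is precisely the argument the paper intends: the authors explicitly say they ``leave the details to the reader'' as a version of Lemma 1.2 in the general reductive setting, and your proof is exactly that transplantation. One trivial notational slip: you write $\phi_\alpha(\xi)v=\sum_i\xi^iX_{-\alpha,i}v$, but in the paper's conventions the isomorphism onto $U_{-\alpha}$ is $\phi_{-\alpha}$, so the identity should read $\phi_{-\alpha}(\xi)v=\sum_i\xi^iX_{-\alpha,i}v$; this does not affect the substance.
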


 \q By a $(\hy(U),T)$-module we mean a vector space $V$, say, which is a $T$-module and $\hy(U)$-module in such a way that $X_{-\alpha,i}V^\lambda\subseteq V^{\lambda-i\alpha}$, for all $\lambda\in X(T)$, $\alpha\in \Phi^+$, $i\geq 0$.  If $V$ is a $B$-module we write $V^\hy$ for the corresponding $(\hy(U),T)$-module.  We write $\mod(B)$ for the category of finite dimensional rational $B$-module and $\mod(\hy(U),T)$ for the category of finite dimensional $(\hy(U),T)$-modules. We have, as above, an  exact functor 
  $F_B:\mod(B)\to \mod(\hy(U),T)$, sending $V\in \mod(B)$ to $V^\hy\in \mod(\hy(U),T)$.

  \begin{proposition} The functor $F_B:\mod(B)\to \mod(\hy(U),T)$ is an equivalence of categories. 
\end{proposition}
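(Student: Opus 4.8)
\emph{Proof idea.}
The plan is to check that $F_B$ is exact (already observed), faithful, full, and essentially surjective. Faithfulness is immediate, since $F_B$ does not alter underlying spaces or linear maps. For fullness one needs the morphism analogue, for a general reductive $G$, of Lemma 1.1: a $T$-equivariant linear map $\phi$ between rational $B$-modules which commutes with all $X_{-\alpha,i}$, $\alpha$ simple, $i>0$, is a $B$-module homomorphism. The proof is that of Lemma 1.1: expanding $\phi_{-\alpha}(\xi)v=\sum_i\xi^iX_{-\alpha,i}v$ shows $\phi$ is $U_{-\alpha}$-equivariant for each simple $\alpha$, hence $U$-equivariant (the $U_{-\alpha}$, $\alpha$ simple, generate $U$), hence $B$-equivariant. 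Since a morphism in $\mod(\hy(U),T)$ commutes with all $X_{-\alpha,i}$, $\alpha\in\Phi^+$, in particular with those for simple $\alpha$, this shows $F_B$ is full, hence fully faithful; in particular $F_B$ is an isomorphism on $\Hom$-groups and injective on every $\Ext$-group.

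For essential surjectivity I argue by induction on $\dim V$ for $V\in\mod(\hy(U),T)$. If $V\ne0$, choose a weight $\mu$ of $V$ that is minimal for $\le$. Every nonidentity PBW monomial strictly lowers weights, so $\hy(U)^+V^\mu=0$; since $T$ acts on $V^\mu$ by the scalar character $\mu$, any line $L\subseteq V^\mu$ is then a $(\hy(U),T)$-submodule isomorphic to $F_B(K_\mu)$. By induction $V/L\cong F_B(\bar W)$ for some $\bar W\in\mod(B)$, so $V$ represents a class in $\Ext^1_{\mod(\hy(U),T)}(F_B(\bar W),F_B(K_\mu))$. It therefore suffices to show that the natural comparison map $\Ext^1_B(\bar W,K_\mu)\to\Ext^1_{\mod(\hy(U),T)}(F_B(\bar W),F_B(K_\mu))$ is surjective: a $B$-extension mapping onto $[V]$ has $F_B$-image isomorphic, as an extension and hence as an object, to $V$.

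To prove that $\Ext^1_B(M,K_\mu)\to\Ext^1_{\mod(\hy(U),T)}(F_B(M),F_B(K_\mu))$ is surjective for all $M\in\mod(B)$, I induct on $\dim M$ using the long exact $\Ext$-sequences attached to $0\to M'\to M\to K_\nu\to0$, with $K_\nu$ a one-dimensional quotient of $M$, together with a four/five lemma chase; the inputs are the isomorphism on $\Hom$, the inductive hypothesis for $M'$, and the base case $M=K_\nu$, for which one additionally needs the comparison map on $\Ext^2_B(K_\nu,K_\mu)$ to be injective. Since $B=U\rtimes T$ with $T$ linearly reductive, $\Ext^i_B(K_\nu,K_\mu)$ identifies with the $(\nu-\mu)$-weight space of $H^i(U,K)$ under the conjugation action of $T$, and likewise with $U$ replaced by $\hy(U)$; so the base case reduces to the assertion that the natural map $H^i(U,K)\to H^i(\hy(U),K)$ of graded $T$-modules is an isomorphism for $i\le1$ and injective for $i=2$.

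This cohomological comparison in degrees $\le 2$ is where the real work lies, and will be the main obstacle. The plan is to filter $U$ by its descending central series, each successive quotient being a vector group central in the next stage, and to compare the Lyndon--Hochschild--Serre-type spectral sequences for $U$ and for $\hy(U)$ — using that $\hy(U)$ is free over $\hy(Z)$ with $\hy(U)/\hy(U)\hy(Z)^+\cong\hy(U/Z)$ for a central $Z\cong G_\add^{\,k}$ — reducing by induction on $\dim U$ to the case $U\cong G_\add$. For $U=G_\add$ the comparison is an explicit computation: $H^1$ is $\bigoplus_{j\ge0}K$ on both sides, spanned respectively by the Frobenius twists and by the duals of the primitives $X_{-\alpha,p^j}$, with matching $T$-weights, and the degree-$2$ statement follows from the known description of $H^\bullet(G_\add,K)$ together with the parallel computation of $\Ext^\bullet_{\hy(G_\add)}(K,K)$. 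Tracking the $T$-weights through, and comparing the two spectral sequences, is the delicate part; everything else is formal. (One expects in fact that $H^i(U,K)\to H^i(\hy(U),K)$, and hence all the comparison maps on $\Ext$-groups, are isomorphisms in every degree — which would also re-prove Verma's Conjecture — but only degrees $\le 2$ are needed here.)
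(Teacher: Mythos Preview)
Your strategy differs substantially from the paper's. The paper argues directly: for each $r\ge 0$ it constructs an explicit $B$-module $Q_r$, the largest $B$-submodule of the injective hull $K[U]$ of the trivial module whose weights all have height $\le r$, and shows by a weight-multiplicity count that $\dim Q_r^\mu=P(\mu)$ (Kostant's partition function) for all such $\mu$. Given $V\in\mod(\hy(U),T)$, one reduces (by passing to a summand and twisting) to the case of simple trivial socle and weights of height $\le r$; then $V^*$ is a cyclic $\hy(U)$-module generated in weight zero, hence a quotient of $\hy(U)/I_r$, where $I_r$ is the span of PBW monomials of height exceeding $r$. The same dimension count forces $\hy(U)/I_r\cong (Q_r^*)^{\hy}$, so $V$ embeds in $Q_r^{\hy}$, and then Lemma~1.4(iii) shows $V$ is already a $B$-submodule of $Q_r$. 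No $\Ext$-comparison or spectral sequence is used.

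Your route has gaps beyond the acknowledged ``main obstacle''. The claim that full faithfulness of $F_B$ makes it ``injective on every $\Ext$-group'' holds for $\Ext^1$ but not for $\Ext^n$ with $n\ge 2$: the Yoneda equivalence relation on $n$-fold extensions in the target category may pass through objects not in the essential image, so the zig-zag need not lift. You later treat $\Ext^2$-injectivity as a separate input, so perhaps this was a slip, but it should be corrected. More substantively, identifying $\Ext^i$ in $\mod(\hy(U),T)$ with a $T$-weight space of $\Ext^i_{\hy(U)}(K,K)$ presupposes that you can compute derived functors there, which requires some control over injectives (or projectives) in the category of finite-dimensional $(\hy(U),T)$-modules and their behaviour under forgetting the $T$-grading; this is not automatic for an infinite-dimensional Hopf algebra. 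Finally, the low-degree comparison $H^i(U,K)\to H^i(\hy(U),K)$ that you postpone to the end is essentially as strong as the proposition itself (indeed it would follow from it), so unless the descending-central-series spectral-sequence argument is actually carried out, with the differentials on both sides matched, you have relocated the difficulty rather than resolved it. The paper's construction of $Q_r$ is precisely what allows one to bypass this cohomological comparison entirely.
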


\begin{proof}  It is enough to prove that for any $V\in \mod(\hy(U),T)$ there exists $Z\in \mod(B)$ such that $Z=V^\hy$.

 \q We first produce a $B$-module that will be of great use to us.  Multiplication  $T\times U\to B$ is an isomorphism of varieties. We put 
 $$A=\{f\in K[B] \vert f(tu)=f(u)   \hbox { for all } t\in T, u\in U\}.$$
  Then $A$ is a left $B$-module summand of $K[B]$ and has trivial socle $K$. Thus $A$ is the injective hull of $K$, as a $B$-module.   The map  $\pi:A\to K[U]$, given by restriction of functions is an isomorphism of $K$-algebra.  The group $T$ acts on $U$ by conjugation and induces on $K[U]$ the structure of a left $T$-module. For $f\in A$, $t\in T$, $u\in U$, we have 
  $$\pi (tf)(u)=f(ut)=f(tt^{-1} u t)=(tf)(t^{-1} u t)=f(t^{-1} u t)=(t\pi(f))(u).$$
   Hence $\pi$ is an isomorphism of $T$-modules. Let $M=\{f\in K[U] \vert f(1)=0\}$. For $r\geq 0$, the  $r$th power  $M^r$ of the ideal $M$  is a $T$-submodule of $K[U]$.   By complete reducibility we have  that $K[U]$ and  $\bigoplus_{r\geq 0} M^r/M^{r+1}$ are isomorphic as $T$-modules.    Now  $K[U]$ is a polynomial algebra so that the natural map from the $r$th symmetric power    $S^r(M/M^2)$ to $M^r/M^{r+1}$ is an isomorphism, for $r\geq 0$.   Furthermore $M/M^2$ is $T$-module isomorphic to $\Lie(U)^*$, the dual of the Lie algebra of $U$. The weights of $\Lie(U)$ are $-\alpha_1,\ldots,-\alpha_N$, and hence the weights of $\Lie(U)^*$ are $\alpha_1,\ldots,\alpha_N$, each occurring with multiplicity one.  It follows that for $\mu\in X(T)$, we have  $\dim K[U]^\mu$ is $P(\mu)$, if $\mu\geq 0$ and is $0$ otherwise.
  
  \q We regard $K[U]$ as a $B$-module via the isomorphism $\pi:A\to K[U]$.   We fix $r\geq 0$.   Let  $Q_r$ be the largest $B$-submodule of $K[U]$ with weights of height at most $r$. We claim that  $\dim Q_r^\mu= P(\mu)$, for $\mu\in X(T)$ with $\height(\mu)\leq r$.   Suppose that $\nu\in X(T)$ and $K_\nu$ appears in the socle of $K[U]/Q_r$.  Then there is a submodule $H$ of $K[U]$ containing $Q_r$ such that $H/Q_r$ is isomorphic to $K_\nu$. Since $H$ is not contained in $Q_r$ we have $\height(\nu)>r$. Now if $\tau$ is any weight of $K[U]/Q_r$ then we must have $\tau\geq \nu$ for some $\nu\in X(T)$ such that $K_\nu$ appears in the socle of $K[U]/Q_r$, by Remark 1.3.  Hence we have $\height(\tau)\geq \height(\nu)>r$. Hence we have $\dim Q_r^\tau=\dim K[U]^\tau= P(\tau)$, for all $\tau\geq 0$ with $\height(\tau)\leq r$.

 \q We now show that any finite dimensional $(\hy(B),T)$-module $V$ embeds in a module of the form $Z^\hy$, for some $B$-module $Z$.  Suppose $V_1,V_2$ are non-zero $(\hy(B),T)$-submodules such that $V_1\bigcap V_2=0$. Thus $V$ embeds in $V/V_1\oplus V/V_2$ so that if 
 $V/V_1$ embeds in $Z_1^\hy$ and $V/V_2$ embeds in $Z_2^\hy$, for $B$-modules $Z_1,Z_2$ then $V$ embeds in $Z_1^\hy\oplus Z_2^\hy=(Z_1\oplus Z_2)^\hy$.  Hence we may suppose that $V$ has simple socle $L$, say.  Then $L$ is isomorphic to $K_\lambda^\hy$, for some $\lambda\in X(T)$. If $V\otimes L^*$ embeds in $Z^\hy$, for some $B$-module $Z$ then, since $V$ is isomorphic to $(V\otimes L^*)\otimes L$, it embeds in $Z^\hy\otimes K_\lambda^\hy=(Z\otimes  K_\lambda)^\hy$.  Hence, replacing $V$ by $V\otimes L^*$, we may assume that socle of $V$ is a one dimensional trivial module. Thus all weights of $V$ are $\geq 0$ and we choose $r$ such that $\height(\mu)\leq r$, for every  weight $\mu$ of $V$. 
 
\q The dual  module $H=V^*$ has simple head the one dimensional simple module. In particular $H$ is generated by a weight vector $h_0$, say, of weight $0$.  Hence $H$ is spanned by the elements $X_{-\alpha_1,i_1}\ldots X_{-\alpha_N,i_N}h_0$ such that $\height(i_1\alpha_1+\cdots+i_N\alpha_N)\leq r$.  We write $I_r$ for the $K$-span of the elements  $X_{-\alpha_1,i_1}\ldots X_{-\alpha_N}$ such that $\height(i_1\alpha_1+\cdots+i_N\alpha_N)> r$. We have a $\hy(U)$-module surjection $\phi:\hy(U)\to H$, $\phi(X)=Xh_0$, for $X\in \hy(U)$. Let $J_H$ be the kernel of $\phi$, i.e., the annihilator of $h_0$. Then we have $I_r\subseteq J_H$ and $\hy(U)/J_H \cong H$, in particular we have 
$$\dim H\leq \dim \hy(U)/ I_r=\sum_\mu P(\mu)$$
where the sum is over all $\mu\in X(T)$ with $\mu\geq 0$ and $\height(\mu)\leq r$.

\q This applies in particular to $Q_r^*$ and since $\dim Q_r^*= \dim \hy(U)/I_r$ we have that $I_r$ is the annihilator of an element of weight zero in $Q_r^*$, in particular $I_r$ is a left ideal and, regarding $\hy(U)/I_r$ as a $(\hy(U),T)$-module, in such a way that $X_{-\alpha_1,i_1}\ldots X_{-\alpha_N,i_N}+ I_r$ has weight $-(i_1\alpha_1+\cdots + i_N\alpha_N)$, we have $Q_r^*\cong \hy(U)/I_r$. Hence we have that $H=\hy(U)/J$ is an epimorphic image of $Q_r^*$ and so $V$ embeds in $Q_r^\hy$. 

\q Thus we have that a $(\hy(U),T)$-module $V$ may be identified with a submodule of a module of the form $Z^\hy$, for some $B$-module $Z$. But then, for $\alpha\in \Phi^+$, we have $\phi_{-\alpha}(\xi)v=\sum_{i\geq 0} \xi^i X_{-\alpha,i} v$, for $\xi\in K$, $v\in V$. Hence $V$ is a $B$-submodule of $Z$ and $V=V_0^\hy$, where $V_0$ is the space $V$, regarded as a $B$-submodule of $Z$.

\end{proof}

\section*{Verma's Conjecture}

\q We note that in fact the above Proposition gives a new proof of the so-called Verma's conjecture. For other proofs see \cite{sullivansimply} and \cite{CPS}.  We take $G$ to be semisimple and simply connected.  Then a $G$-module $V$ is naturally a $\hy(G)$-module giving an exact functor $F_G:\mod(G)\to \mod(\hy(G))$, where $\mod(\hy(G))$ denotes the category of finite dimensional $\hy(G)$-modules.  Less formally, for a $G$-module $V$, we will say also that $V$ is regarded as a $\hy(G)$-module. The  action of $X_{\alpha,i}$, for $\alpha\in \Phi$, $i\geq 0$, on $v\in V$, is given by the formula $x_{\alpha}(\xi)v=\sum_{i\geq 0} \xi^i x_{\alpha,i}v$. Similarly a rational $T$-module may be regarded as a $\hy(T)$-module.  The set $\Hom_{K-\alg}(\hy(T),K)$,  of 
$K$-algebra homomoprhisms from $\hy(T)$ to $K$, is naturally a group (since $\hy(T)$ is a Hopf algebra). 
For  $\lambda\in X(T)$, one has a $k$-algebra homomorphism $\lambda':\hy(T)\to K$, given by $hv=\lambda(h)v$, for $v\in T_\lambda$. Then map $X(T)\to \Hom_{K-\alg}(\hy(T),K)$, sending $\lambda$ to $\lambda'$ is an injective homomorphism.  We say that a $K$-algebra homomoprhism from  $\hy(T)$ to $K$ is integral if it has the form $\lambda'$ for some $\lambda\in X(T)$. We identify $\lambda\in X(T)$ with $\lambda\in \Hom_{K-\alg}(\hy(T),K)$ and in this way identify $X(T)$ with the group of integral $K$-algebra homomorphisms from $\hy(T)$ to $K$.  For a finite dimensional $\hy(G)$-module $V$ we have $V=\oplus_{\lambda\in X(T)} V^\lambda$, where $V^\lambda=\{v\in V\vert hv=\lambda(h) \hbox{ for all } h\in H\}$. Furthermore if $V$ is simple then, it follows from the classification of finite dimensional simple modules, that  $V$ is isomorphic to $F_G(M)$, for some finite dimensional $G$-module $M$.  For further details see e.g.,  \cite{Haboush}.

\q We shall say that a finite dimensional $\hy(G)$-module $V$ is rational if  is isomorphic to $F_G(M)$, for some rational $G$-module $M$. This is equivalent to the statement that the vector space $V$ may be given the structure of a rational $G$-module in such a way that $V^\lambda$ is the $\lambda$ weight space for $V$, regarded as a $T$-module and $x_\alpha(\xi)v=\sum_{i\geq 0} \xi^i X_{\alpha,i} v$, for $\alpha\in \Phi$, $\xi\in K$, $v\in V$.

\q We shall need the following.

\begin{lemma} (i) For a finite dimensional  $\hy(G)$-module $V$ we have  \\
$H^0(\hy(G),V)=H^0(\hy(U),V)^T$.

(ii) For finite dimensional  $\hy(G)$-modules $V_1,V_2$  we have $\Hom_{\hy(G)}(V_1,V_2)=\Hom_{\hy(U)}(V_1,V_2)^T$.

\end{lemma}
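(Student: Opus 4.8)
The plan is to reduce (ii) to (i) by a routine Hom--tensor manoeuvre, and then to prove (i) using the triangular decomposition of $\hy(G)$ followed by a rank-one reduction. For the reduction of (ii): put the natural $\hy(G)$-module structure on $\Hom_K(V_1,V_2)\cong V_1^*\otimes V_2$ coming from the comultiplication and antipode of the Hopf algebra $\hy(G)$. A standard computation shows that $\phi\in\Hom_K(V_1,V_2)$ is fixed by $\hy(G)$ (i.e. $X\phi=\ep(X)\phi$ for all $X$) exactly when $\phi$ is $\hy(G)$-linear, and the same with $\hy(U)$ in place of $\hy(G)$; moreover $\phi$ has weight $0$ as a $T$-module element exactly when $\phi$ preserves weight spaces, i.e. is $T$-linear. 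Hence $H^0(\hy(G),\Hom_K(V_1,V_2))=\Hom_{\hy(G)}(V_1,V_2)$ and $H^0(\hy(U),\Hom_K(V_1,V_2))^T=\Hom_{\hy(U)}(V_1,V_2)^T$, so (ii) is precisely (i) applied to $V=\Hom_K(V_1,V_2)$.

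For (i), the inclusion $H^0(\hy(G),V)\subseteq H^0(\hy(U),V)^T$ is immediate: $\hy(U)$ and $\hy(T)$ are Hopf subalgebras of $\hy(G)$, and a vector fixed by $\hy(T)$ lies in the weight-$0$ space. For the reverse inclusion I would use the triangular decomposition of the hyperalgebra, namely that multiplication gives an isomorphism $\hy(U)\otimes\hy(T)\otimes\hy(U^+)\to\hy(G)$, where $U^+$ is the unipotent radical of the Borel subgroup opposite to $B$; in particular $\hy(G)$ is generated as an algebra by the three subalgebras $\hy(U),\hy(T),\hy(U^+)$. Since $\ep$ is an algebra homomorphism, a vector is fixed by all of $\hy(G)$ as soon as it is fixed by each of these three subalgebras. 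So, given $v\in H^0(\hy(U),V)^T$ --- which is already fixed by $\hy(U)$ and, being of weight $0$, by $\hy(T)$ --- the only remaining task is to prove that $X_{\alpha,i}v=0$ for all $\alpha\in\Phi^+$ and $i>0$.

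To that end, fix $\alpha\in\Phi^+$ and restrict the action to the subalgebra $\hy(\SL_2)$ generated by the $X_{\pm\alpha,i}$, $i\geq 0$. Since $v$ has $G$-weight $0$, its $\SL_2$-weight $\langle 0,\alpha^\vee\rangle$ is $0$, and $X_{-\alpha,i}v=0$ for $i>0$ by hypothesis; by the triangular decomposition of $\hy(\SL_2)$ these two facts give $\hy(\SL_2)\cdot v=\hy(U_\alpha)\cdot v$, a finite-dimensional $\hy(\SL_2)$-submodule all of whose weights are $\geq 0$. Such a module must be concentrated in weight $0$, hence trivial, and therefore $X_{\alpha,i}v=0$ for all $i>0$. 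As $\alpha$ was arbitrary, $v$ is fixed by $\hy(U^+)$, and (i) follows.

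The one substantive point --- and the step I expect to require the most care --- is the assertion that a finite-dimensional $\hy(\SL_2)$-module all of whose weights are $\geq 0$ is concentrated in weight $0$. This is precisely where finite-dimensionality is essential (it fails for $\hy(\SL_2)$ acting on itself), and I would establish it either by invoking the standard fact that every finite-dimensional $\hy(\SL_2)$-module is a rational $\SL_2$-module, whose formal character is then invariant under the nontrivial Weyl reflection, or, if one prefers to stay inside the hyperalgebra, by a direct manipulation of Kostant's commutation relations $e^{(m)}f^{(n)}=\sum_{k}f^{(n-k)}\binom{h-m-n+2k}{k}e^{(m-k)}$ to show that $e^{(i)}v=0$ for $i>0$ whenever $f^{(i)}v=0$ for $i>0$ and $v$ has weight $0$.
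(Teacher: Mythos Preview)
Your proof is correct and follows the same overall strategy as the paper: reduce (ii) to (i) via $\Hom_K(V_1,V_2)\cong V_1^*\otimes V_2$, use the triangular decomposition $\hy(U^+)\otimes\hy(T)\otimes\hy(U)\to\hy(G)$, and then argue that a finite-dimensional module with only non-negative weights must be trivial.

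The one genuine difference is in how the last step is executed. You work root by root, restricting to each $\hy(\SL_2^\alpha)$ and showing $\hy(U_\alpha)v$ is a finite-dimensional $\hy(\SL_2)$-module with weights $\geq 0$, hence concentrated in weight $0$. The paper instead works globally: it observes that $\hy(G)v_0=\hy(U^+)v_0$ is a finite-dimensional $\hy(G)$-module with only weights $\geq 0$, and then invokes the classification of simple $\hy(G)$-modules (stated just before the lemma: every simple is of the form $F_G(M)$ for a rational $G$-module $M$) to conclude that its only composition factor is the trivial module. The paper's route is a bit more direct since it uses a fact already on the table and avoids the rank-one detour; your route has the advantage of being more self-contained, since the $\SL_2$ case can be handled by the explicit Kostant relation you mention without appealing to the general classification.
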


\begin{proof}
 (i)  Certainly $H^0(\hy(G),V)\leq H^0(\hy(U),V)^T$.  Let $U^+$ be the unipotent radical of the positive Borel subgroup $G$. Then we have that the multiplication map $\hy(U^+)\otimes \hy(T)\otimes  \hy(U)\to \hy(G)$ is a vector space isomorphism. Let  $v_0\in H^0(\hy(U),V)^T$.  Then we have that $\hy(G)v_0=\hy(U^+)\hy(T)v_0$ and $\hy(T)$ acts by scalar multiplication on $Kv_0$ so that $\hy(G)v_0=\hy(U^+)v_0$. Moreover $\hy(U^+)$ is a spanned by elements of the form $X_{\alpha_1,i_1}\cdots X_{\alpha_N,i_N}$ so that the module 
$\hy(G)v_0$ has only weights $\geq 0$. Hence the only composition factor of $\hy(G)v_0$ is the trivial module and so $\hy(G)v_0$ is trivial. Hence $v_0\in H^0(\hy(G),V)$ and $H^0(\hy(G),V)=H^0(\hy(U),V)^T$. 

(ii) This follows from (i) by identifying $V_1^*\otimes V_2$ with $\Hom_K(V_1,V_2)$ in the usual way.

\end{proof}

\begin{theorem} (Verma's Conjecture) The functor \\
$F_G:\mod(G)\to \mod(\hy(G))$ is an equivalence of categories.

\end{theorem}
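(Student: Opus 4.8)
The plan is to check that $F_G$ is faithful, full and essentially surjective. Faithfulness is immediate, as $F_G$ changes neither underlying vector spaces nor linear maps. For fullness, let $M_1,M_2\in\mod(G)$ and let $\phi\colon F_G(M_1)\to F_G(M_2)$ be a homomorphism of $\hy(G)$-modules. Commuting with $\hy(T)$, $\phi$ preserves weight spaces, so it is a homomorphism of $T$-modules; commuting with $\hy(U)$, it lies in $\Hom_{\hy(U)}(M_1,M_2)^T$ (cf.\ Lemma 1.6(ii)), and hence, by Proposition 1.5, it is a homomorphism of $B$-modules. Proposition 1.5 and Lemma 1.6 hold equally with $U$, $B$ replaced throughout by $U^+$, $B^+$ (their proofs only fix a choice of positive system, which one is free to reverse), so the same reasoning shows $\phi$ is a homomorphism of $B^+$-modules. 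Since $G$ is semisimple it is generated by $U$ and $U^+$, and a fortiori by $B$ and $B^+$; as $\{g\in G\mid \phi(gv)=g\phi(v)\text{ for all }v\}$ is a subgroup containing $B$ and $B^+$, $\phi$ is a homomorphism of $G$-modules.

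The substance is essential surjectivity: every finite dimensional $\hy(G)$-module is rational. Note first that $F_G$, being exact and faithful, preserves kernels; and that the fullness just established extends to arbitrary rational $G$-modules, since a cyclic $\hy(G)$-submodule of a rational $G$-module lies in a finite dimensional $G$-submodule. It therefore suffices to prove $(\star)$: every finite dimensional $\hy(G)$-module $V$ admits a monomorphism of $\hy(G)$-modules into $F_G(Z)$ for some rational $G$-module $Z$. Indeed, granting $(\star)$, choose $V\hookrightarrow F_G(Z)$, set $C=F_G(Z)/V$, choose $C\hookrightarrow F_G(Z')$, and let $\theta\colon F_G(Z)\to F_G(Z')$ be the composite $F_G(Z)\twoheadrightarrow C\hookrightarrow F_G(Z')$; by fullness $\theta=F_G(\bar\theta)$ for a homomorphism $\bar\theta\colon Z\to Z'$, and then $V=\ker\theta=F_G(\ker\bar\theta)$ is a $G$-submodule of $Z$, hence rational.

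To prove $(\star)$ I would pass to socles. The $\hy(G)$-module $\soc_{\hy(G)}V$ is semisimple, hence rational by the classification of finite dimensional simple $\hy(G)$-modules — the one point at which simple connectedness is used — say $\soc_{\hy(G)}V=F_G(\sigma)$. A basis of $\sigma$ and the associated coefficient functions embed $\sigma$, and hence $F_G(\sigma)$, into $F_G(K[G])^{\oplus m}$, where $m=\dim\sigma$ and $K[G]$ carries the left regular representation. If $F_G(K[G])$ is an injective object of $\mod(\hy(G))$, this embedding of $\soc_{\hy(G)}V$ extends to a homomorphism $\psi\colon V\to F_G(K[G])^{\oplus m}$, and $\psi$ is injective because $\ker\psi$, were it nonzero, would meet $\soc_{\hy(G)}V$, on which $\psi$ is injective; this gives $(\star)$ (with $Z$ a finite dimensional $G$-submodule of $K[G]^{\oplus m}$ containing $V$).

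Hence the whole theorem reduces to the single non-formal assertion, which I expect to be the main obstacle: \emph{$F_G(K[G])$ is injective in $\mod(\hy(G))$}, equivalently, the finite dual $\hy(G)^\circ$ is $K[G]$. Proposition 1.5 disposes of the Borel and the torus, but the passage to all of $G$ requires genuine input: $\hy(G)^\circ=K[G]$ already fails for $G$ not simply connected. One standard route is to use that $\hy(G)$ is the directed union of the distribution algebras $\hy(G_r)$ of the Frobenius kernels $G_r$, each a finite dimensional Hopf algebra with dual $K[G_r]$, and that for $G$ simply connected the resulting system of coalgebras reassembles $K[G]$, forcing $F_G(K[G])$ to be $\hy(G)$-injective. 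With this, the reduction above shows $F_G$ is essentially surjective, so $F_G$ is an equivalence of categories.
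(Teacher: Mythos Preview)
Your formal reductions are sound: faithfulness is immediate, the fullness argument via Proposition 1.5 applied to both Borels is correct, and the reduction of essential surjectivity to $(\star)$ via a two-step embedding and fullness is clean. The gap is exactly where you flag it: you do not prove that $F_G(K[G])$ is injective for finite dimensional $\hy(G)$-modules, and your Frobenius-kernel sketch is not a proof. The assertion $\hy(G)^\circ = K[G]$ is essentially equivalent to Verma's conjecture itself, and the route you gesture at --- reassembling $K[G]$ from the $K[G_r]$ --- is the approach of Cline, Parshall and Scott, one of the earlier proofs the paper cites. So your proposal, if completed, would reproduce an existing argument rather than the new one the paper supplies; as it stands, it reduces the theorem to an assertion of comparable depth.

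The paper's argument bypasses the injectivity question entirely. Given a finite dimensional $\hy(G)$-module $V$, Proposition 1.5 already equips the underlying $(\hy(U),T)$-module with a rational $B$-module structure $\barV$. The key step is to show that the evaluation map $\ev_{\barV}\colon \ind_B^G \barV \to \barV$ is a $B$-module isomorphism. This holds when $V$ is simple (simple $\hy(G)$-modules are rational, and for a rational $G$-module $M$ one has $R^1\ind_B^G M=0$ and $\ev_M$ an isomorphism), and it propagates to arbitrary $V$ by induction on dimension via a short exact sequence and the five lemma. Transporting the $G$-structure along $\ev_{\barV}$ yields a rational $G$-module $Z$ on the same vector space, whose underlying $(\hy(U),T)$-module coincides with that of $V$; Lemma 1.6(ii) then forces $F_G(Z)=V$ as $\hy(G)$-modules. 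The point is that Proposition 1.5 already does the hard work at the level of $B$, and induction from $B$ to $G$ --- a purely rational operation --- supplies the rest, so one never needs to compare $\hy(G)^\circ$ with $K[G]$.
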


\begin{proof} It is enough to prove that a finite dimensional $\hy(G)$-module is rational. Let $V$ be a finite dimensional $\hy(G)$-module. We regard $V$ as a $(\hy(U),T)$-module. By Proposition 1.5 the space $V$, regarded as a $(\hy(U),T)$-module has the structure of a $B$-module, i.e., the is a $B$-module action  on $V$ such that $x_\alpha(\xi)v=\sum_{i\geq 0} \xi^i X_{\alpha,i} v$, for $\alpha\in \Phi^+$, $\xi\in K$, $v\in V$.  We write $\barV$ for the $K$-space $V$ regarded as a $B$-module via this action.  

\q  The induction functor $\ind_B^G:\mod(B)\to \mod(G)$ is left exact and we have the right derived functors $R^i\ind_B^G:\mod(B)\to \mod(G)$, for $i\geq 0$. If $M$ is a $G$-module then we have that $R^1\ind_B^G M=0$  and, by \cite{tata}, Exercise 2.1.4 for example,  the evaluation map $\ev_M:\ind_B^G M\to M$ is a $B$-module isomorphism.

\q We claim that the evaluation map $\ev_{\barV}:\ind_B^G \barV\to \barV$ is an isomorphism.   If $V$ is irreducible then it has the form $F_G(M)$ for some simple $G$-module $M$ and then $\barV$ is isomorphic to the restriction of $M$ to $B$, and hence $\ev_{\barV}:\ind_B^G \barV\to \barV$ is an isomorphism. Now assume that $V$ is not irreducible, let $L$ be an irreducible $\hy(G)$-submodule and let $Q=V/L$. Then we have a commutative diagram

$$\begin{matrix}
0&\rightarrow&\ind_B^G  \barL  &\rightarrow &\ind_B^G \barV  &\rightarrow  &\ind_B^G \barQ &\rightarrow 0 \cr
&&\downarrow&&\downarrow&&\downarrow \cr
0&\rightarrow &\barL &\rightarrow &\barV &\rightarrow  &\barQ &\rightarrow 0 \cr
\end{matrix}
$$
of $B$-modules, with rows exact (since $R^1\ind_B^G \barL=0$), where the vertical maps are the evaluation maps. We may assume, by induction on dimension, that the first and third vertical maps are isomorphisms and hence, so too is the middle map $\ev_\barV:\ind_B^G \barV\to \barV$.

\q This proves the claim and therefore that the action of $B$ on $\barV$ extends to a $G$-module action. Let $Z$ be the vector space $V$ regarded as a $G$-module via this action. Then we have $Z^\lambda=V^\lambda$, for $\lambda\in X(T)$,  and $x_{-\alpha}(\xi)z=\sum_{i\geq 0} \xi^i X_{-\alpha,i}z$, for $\alpha\in \Phi^+$, $\xi\in K$, $z\in Z$. Hence, as a $(\hy(U),T)$-module, $Z$ is precisely $V$. The identity map $Z\to V$ is an isomorphism of $(\hy(U),T)$-modules and hence, by Lemma 1.6(ii), an isomorphism of $\hy(G)$-modules. Thus we have $V=F_G(Z)$ and we are done.

\end{proof}

\bs\bs\bs\bs


\section{Generators and Relations}

\bs To make further progress we give generators and relations for the hyperalgebra  $\hy(U(n))$ of $U(n)$.  First fix $1\leq r,s\leq n$, with $r\neq s$. Then, for $\eta,\zeta\in K$, we have $u(r,s)(\eta)u(r,s)(\zeta)=u(r,s)(\eta+\zeta)$. Hence, for a rational $G$-module $V$ and $v\in V$, we have 
$$u(r,s)(\eta)u(r,s)(\zeta)v=u(r,s)(\eta+\zeta)v$$ and hence
$$\sum_{i\geq 0,j\geq 0}\eta^i\zeta^j X(r,s)_i X(r,s)_jv=\sum_{k\geq 0} (\eta+\zeta)^k X(r,s)_kv.$$
Thus,  for $i,j\geq 1$,  we have 
$$X(r,s)_i X(r,s)_jv={i+j\choose i} X(r,s)_{i+j}v.$$
Applying this to the regular  module $K[G]$ and evaluating at $1$, we obtain
$$X(r,s)_i X(r,s)_j={i+j\choose i} X(r,s)_{i+j}.$$

\q   Since we are interested in the hyperalgebra of $U(n)$  we are interested in the operators $X(r,s)_i$ with $n\geq r>s\geq 1$, and $i>0$. For ease of calculation we will find it useful to set $Y(r,s)_i=X(s,r)_i$, for $1\leq r<s\leq n$, $i>0$. We thus have 

$$Y(r,s)_i Y(r,s)_j={i+j\choose i} Y(r,s)_{i+j}\eqno{(1)}$$

\q We now  fix $1\leq r<s< t\leq n$.  Then it is easy to check that we have
$$u(t,s)(\eta) u(s,r)(\zeta)=u(s,r)(\zeta)u(t,r)(\eta\zeta)u(t,s)(\eta)$$
for $\eta,\zeta\in K$. Hence, for any $G$-module $V$ and any $v\in V$ we have
$$u(t,s)(\eta) u(s,r)(\zeta)v=u(s,r)(\zeta)u(t,r)(\eta\zeta)u(t,s)(\eta)v$$
and therefore
$$\sum_{j,i\geq 0} \eta^j \zeta^i X(t,s)_j X(s,r)_iv=\sum_{h,i,j\geq 0}\zeta^{i+h}   \eta^{j+h} X(s,r)_i X(t,r)_h X(t,s)_jv.$$
Hence for all  $i,j\geq 0$ we have
$$X(t,s)_j X(s,r)_iv=\sum_{h=0}^{{\rm min}\{i,j\}}X(s,r)_{i-h}X(t,r)_h X(t,s)_{j-h}v$$
for all $v\in V$. Applying this to the regular module $K[G]$ and evaluating at the identity we have

$$X(t,s)_j X(s,r)_i=\sum_{h=0}^{{\rm min}\{i,j\}}X(s,r)_{i-h}X(t,r)_h X(t,s)_{j-h}.$$
Hence we have
$$Y(s,t)_j Y(r,s)_i=\sum_{h=0}^{{\rm min}\{i,j\}}Y(r,s)_{i-h}Y(r,t)_h Y(s,t)_{j-h} \eqno{(2)}$$

for $1\leq r<s< t\leq n$ and $i,j\geq 1$.

\q Moreover, we have $u(t,r)(\zeta)u(s,r)(\eta)=u(s,r)(\eta)u(t,r)(\zeta)$ and \\
$u(t,r)(\zeta)u(t,s)(\eta)=u(t,s)(\eta)u(t,r)(\zeta)$ from which we deduce that

$$Y(r,s)_iY(r,t)_j=Y(r,t)_jY(r,s)_i \eqno{(3)}$$
and
$$Y(s,t)_iY(r,t)_j=Y(r,t)_jY(s,t)_i \eqno{(4)}$$
for all $1\leq r<s<t\leq n$, $i,j>0$.

\q Now suppose that $1\leq q,r,s,t\leq n$ are distinct. Then we have \\
$u(q,r)(\eta)u(s,t)(\zeta)=u(s,t)(\zeta)u(q,r)(\eta)$ and by the argument above we obtain
$$Y(q,r)_j Y(s,t)_i=Y(s,t)_iY(q,r)_j \eqno{(5)}$$
for all $1\leq q,r,s,t\leq n$, with $q,r,s,t$  distinct and  $q<r$, $s<t$  and $i, j\geq 1$.

\q Now it is easy to see that relations (1)-(5)   may be used to express any monomial  in $Y(r,s)_i$ (with $1\leq r,s\leq n$, $r\neq s$, $i\geq 1$) as a linear combination of monomials of the form

$$Y(1,2)_{i(1,2)}Y(1,3)_{i(1,3)}Y(2,3)_{i(2,3)}\ldots Y(n-1,n)_{i(n-1,n)}$$
with $i(1,2),\ldots,i(n-1,n)$ non-negative integers. 

\q Hence we have:

\begin{proposition} For the  generators $Y(r,s)_i$, $1\leq r<s\leq n$, $i\geq 1$  the  relations  (1)-(5) above form a set of defining relations for $\hy(U(n))$.

\end{proposition}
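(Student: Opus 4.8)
The plan is to show two things: first, that the elements
$$Y(1,2)_{i(1,2)}Y(1,3)_{i(1,3)}Y(2,3)_{i(2,3)}\ldots Y(n-1,n)_{i(n-1,n)}$$
with non-negative exponents span $\hy(U(n))$ and are linearly independent, so that any set of relations allowing us to rewrite an arbitrary monomial as a linear combination of such ordered monomials is automatically a complete (defining) set; and second, that relations (1)--(5) do achieve this rewriting. The first point is essentially the PBW statement already recorded in the paper: the isomorphism of varieties $U(2,1)\times U(3,1)\times\cdots\times U(n,n-1)\to U(n)$ induces the $K$-space isomorphism of tensor products of the $\hy(U(r,s))$, and in the $Y$-notation the pieces $\hy(U(r,s))$ have $K$-basis $Y(r,s)_0=1, Y(r,s)_1, Y(r,s)_2,\dots$; thus the ordered monomials displayed above (with the factors taken in the fixed order $(1,2),(1,3),(2,3),\dots,(n-1,n)$, which matches the chosen order of root subgroups) form a genuine $K$-basis of $\hy(U(n))$.

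The substantive step is the rewriting algorithm, and this is where I expect the main work to lie. One sets up a total order (a ``straightening order'') on the index pairs $(r,s)$, $r<s$ — the order induced by the list $(1,2),(1,3),(2,3),\ldots,(n-1,n)$ — and shows that any monomial in the $Y(r,s)_i$ can be brought into the prescribed ordered form by repeated application of (1)--(5). Relation (1) merges two adjacent factors with the same index pair. Relations (3), (4), (5) allow two adjacent factors with ``non-interfering'' index pairs to be transposed (commuting root subgroups). Relation (2) handles the genuinely non-commuting case $Y(s,t)_jY(r,s)_i$ with $r<s<t$, replacing it with a sum of terms $Y(r,s)_{i-h}Y(r,t)_hY(s,t)_{j-h}$; crucially the new factors $Y(r,s)$, $Y(r,t)$, $Y(s,t)$ all occur, in each term, in an order compatible with the straightening order, and the ``cross'' factor $Y(r,t)_h$ appears with a strictly smaller total exponent-sum ($h\le\min\{i,j\}$, and $h<i+j$ unless $h=0$). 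The termination argument is therefore by induction on a suitable complexity measure — for instance the multiset of total degrees of the factors, or a lexicographic combination of (number of ``inversions'' relative to the straightening order) and (total degree) — each application of (2) strictly decreasing it, and (1), (3), (4), (5) not increasing the degree while reducing the number of inversions.

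The only real obstacle is bookkeeping: one must check that the three new factors produced by a single application of (2) can always themselves be straightened without reintroducing the configuration one just removed, i.e.\ that the rewriting genuinely terminates rather than cycling. This is handled by the choice of complexity measure above: every step either strictly decreases the total degree (an application of (2) that actually produces a cross term) or leaves the total degree fixed and strictly decreases the inversion count (an application of (1), (3), (4), (5), or the $h=0$ term of (2), which is just the commutation $Y(s,t)_jY(r,s)_i = Y(r,s)_iY(r,t)_0Y(s,t)_j = Y(r,s)_iY(s,t)_j$ up to the already-ordered factors). Since the ordered monomials are a basis, the resulting expression is unique, so (1)--(5) suffice to present $\hy(U(n))$; no further relations are needed. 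Once this is in place the proposition follows immediately, since we have exhibited that $\hy(U(n))$ is the quotient of the free algebra on the $Y(r,s)_i$ by the two-sided ideal generated by (1)--(5) — surjectivity of that quotient onto $\hy(U(n))$ is clear, and the straightening shows the quotient has dimension at most (hence exactly) that of the PBW basis.
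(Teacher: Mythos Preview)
Your approach is exactly the paper's: the paper records the PBW basis (via the variety isomorphism) and then simply asserts that ``it is easy to see that relations (1)--(5) may be used to express any monomial in $Y(r,s)_i$ as a linear combination'' of ordered monomials, declaring the proposition an immediate consequence. You have supplied the straightening argument the paper leaves implicit.

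Two small repairs to your termination measure. First, the lexicographic pair should be (total degree, inversion count), in that order, not (inversions, degree) as you write in the parenthetical; your subsequent sentence (``strictly decreases the total degree \ldots\ or leaves the total degree fixed and strictly decreases the inversion count'') is precisely the analysis for the correct ordering, so this is only a slip of phrasing. Second, relation (1) does \emph{not} decrease the inversion count: merging $Y(r,s)_iY(r,s)_j$ into ${i+j\choose i}Y(r,s)_{i+j}$ leaves both degree and inversions unchanged, since the two merged factors carry the same index pair (no inversion between them) and have identical inversion relations with every other factor. The clean fix is to separate the procedure: first sort using (2)--(5), which terminates by (degree, inversions)-lex exactly as you argue; then collapse adjacent equal-pair factors using (1), which terminates because the number of factors strictly decreases. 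Alternatively, append ``number of factors'' as a third lexicographic component. With either adjustment your argument goes through.
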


\bs\bs\bs\bs


\section{James's Theorem on invariants of Specht modules.}

\q We are interested in the degree $1$ cohomology of  Specht modules.  However, we should mention that the degree $0$ situation, namely the invariants of the Specht modules,  was solved by James, see  \cite{James}, 24.4 Theorem.  As a  warm-up exercise we deduce this from a result of $B(n)$-module homomorphisms using the divided powers operators.

\q It is convenient to describe some notation and recall Lucas's formula at this point. In this section, for $a,b\geq 0$, the binomial coefficient ${a\choose b}$ will be understood to lie in the field $K$ (of positive characteristic $p$), i.e., we write simply ${a\choose b}$ for ${a\choose b}1_K$.

\bs

\bf Notation: \rm We write a non-negative  integer  $a$ in base $p$ as $a=\sum_{i\geq 0} p^i a_i$, with $0\leq a_i<p$.  If $a$ is positive its $p$ length $\len_p(a)$ is the largest non-negative integer $l$ such that $a_l\neq 0$ and its $p$-adic valuation $\val_p(a)$ is the maximum positive integer $v$ such that $p^v$ divides $a$.

\bs

\q  We recall that for a positive integers  $a$ and $b$ written in base $p$ as $a=\sum_{i\geq 0} a_i p^i$ and $b=\sum_{i\geq 0} b_ip^i$ we have 
$${a\choose b}={a_0\choose b_0}{a_1\choose b_1}\cdots {a_m\choose b_m} \eqno{(\hbox{Lucas's Formula)}}$$
where $m$ is the maximum of $\len_p(a)$ and $ \len_p(b)$.

\bs

\q For a non-negative integer $a$ we define $\bara$ by the formula $a=a_0+p\bara$.   By Lucas's formula, for non-negative integers $a=a_0+p\bara$, $b=b_0+p\barb$ we have ${a\choose b}={a_0\choose b_0}{\bara\choose \barb}$.

\begin{remark} \rm For non-negative integers $a,b$, written in base $p$  as above,  we have ${a+b\choose b}\neq 0$ if and only if $a_i+b_i\leq p-1$ for all $i\geq 0$. To see this we write $a=a_0+p\bara$, $b=b_0+p\barb$, as above.  If $a_0+b_0\geq p$ then $(a+b)_0<b_0$ and ${a+b\choose b}=0$ by Lucas's Formula. If $a_0+b_0\leq p-1$ then ${(a+b)_0\choose b_0}\neq 0$ and ${a\choose b}={(a+b)_0\choose b_0}{\bara+\barb\choose \barb}$ so  the result follows by induction.
\end{remark}

\q We first describe a condition that will be of crucial importance to us for the rest of the paper.

\begin{definition} A two part partition $(a,b)$, with $b>0$, will be called a {\em James partition}  if ${a+i\choose i}= 0$ for all $1\leq i \leq b$.  A partition $\lambda=(\lambda_1,\ldots,\lambda_n)$ of length $n$ will be called a James partition if $(\lambda_r,\lambda_{r+1})$ is James, for all $1\leq r<n$. We adopt the convention that a partition of length one is James.

\end{definition}

\q By a two part partition we mean a partition with precisely two non-zero parts.

\begin{remark} \rm A two part partition  $(a,b)$  is a James partition if and only if $b<p^{\val_p(a+1)}$.   The James condition is that for all  $1\leq m\leq b$ we have ${a+m\choose m}=0$, i.e., writing $m=\sum_{i\geq 0} m_ip^i$, we have $a_i+m_i>p-1$ for some $i$. This can only happen if $a_0=\cdots=a_{\len_p(b)}=p-1$, i.e., $\val_p(a+1)\geq \len_p(b)$, i.e., $b<p^{\val_p(a+1)}$.
\end{remark}

\q We establish some notation involving the symmetric powers of the natural module that will be in force for rest of the paper.   The module $E$ has the basis $e_1,\ldots,e_n$, where $e_i$ is the column vector of length $n$ with $1$ in the $i$th position and zeroes elsewhere.   For $\alpha=(\alpha_1,\ldots,\alpha_n)\in \Lambda(n,d)$ we set $e^\alpha=e_1^{\alpha_1} e_2^{\alpha_2}\ldots e_n^{\alpha_n}\in S^d E$.  

\q Let $1\leq s,t\leq n$ with $s\neq t$.  We calculate  the effect of $X(s,t)_i$ on $S^d E$. We have 
$$u(s,t)(\xi)e_i=\begin{cases} e_i +   \xi e_s,   &  \hbox{\  if\ }  i=t; \cr
e_i, & \hbox{\  if\ }  i\neq t.
\end{cases}.$$

Since $U(n)$ acts on $S(E)$ by algebra automorphisms we have 
$$u(s,t)(\xi)e^\alpha=e_1^{\alpha_1}\ldots e_{t-1}^{\alpha_{t-1}}(e_t+\xi e_s)^{\alpha_t}e_{t+1}^{\alpha_{t+1}}\ldots e_n^{\alpha_n}$$ 

which is 
$$\sum_{i=0}^{\alpha_t} {\alpha_t\choose i} \xi^i  e^{\alpha+i(\ep_s-\ep_t)}.
$$

Hence we get
$$X(s,t)_i e^\alpha=\begin{cases}  {\alpha_t\choose i}  e^{\alpha+i(\ep_s-\ep_t)}, 
 &\hbox{if \,}  i\leq \alpha_t; \cr
0, &\hbox{otherwise}.
\end{cases}
$$
in particular, for $1\leq r<n$, $i>0$, we have
$$Y(r,r+1)_i e^\alpha=\begin{cases}  {\alpha_r\choose i}  e^{\alpha-i(\ep_r-\ep_{r+1})}, 
 &\hbox{if \,}  i\leq \alpha_r; \cr
0, &\hbox{otherwise.}
\end{cases}
$$

\q Let $\lambda$ be a partition of degree $d$ with at most $n$ parts. We consider the space $\Hom_{B(n)}(S^d E,K_\lambda)$.   We have the  largest trivial $U(n)$-module quotient $H_0(U(n),S^d E)$.  Thus $H_0(U(n),S^d E)=S^d E/N$, where $N$ is the smallest $U(n)$ submodule of $S^d E$ with trivial quotient.  Thus, by Lemma 1.2 (iii), we have 
$$H_0(U(n),S^d E)= S^d E/N$$ 
where

$$N=\sum_{\substack{1\leq r<n, i >0, \\ \alpha\in \Lambda(n,d)}}  K Y(r,r+1)_i e^\alpha.$$

\q The weight spaces in $S^d E$, and hence in $N$ and $S^d E/N$ are at most one dimensional.  Thus $N$ is the $K$-span of the $e^\beta$ with $(\beta_1,\ldots,\beta_n)\in \Lambda(n,d)$ such that $e^\beta$ is a scalar multiple of $X(r,r+1)_i e^\alpha$, for some $\alpha=(\alpha_1,\ldots,\alpha_n)\in \Lambda(n,d)$ and $1\leq r<n$, $i>0$. By considering weights we see that we must have  $\alpha=\beta+i(\ep_r-\ep_{r+1})$ and $Y(r,r+1)_i e^\alpha={\beta_r+i\choose i}e^\beta$.  Hence $e^\beta\in N$ if and only if we have there exists some $1\leq r< n$ and $0<i \leq \beta_{r+1}$ such that ${\beta_r+i\choose i}\neq 0$,

\q Hence $H_0(U(n),S^d E)$ has $K$-basis $e^\mu+N$, where $\mu$ runs though the set of James partitions of $n$ of degree $r$.  Now we have  
$\Hom_B(S^d E,K_\lambda)=\Hom_{T(n)}(H_0(U(n),S^dE),K_\lambda)$, and so by Proposition 0.1  we obtain the following result.

\begin{proposition} For a partition $\lambda$ of degree $d$ we have
$$\Hom_B(S^d E,K_\lambda)=\begin{cases} K, & \hbox{\, if \,} \lambda \hbox{ if a James partition};\cr
0, & \hbox{otherwise}
\end{cases}
$$
or
$$\Hom_{G(n)}(S^d E,\nabla(\lambda))=\begin{cases} K, & \hbox{\, if \,} \lambda \hbox{ if a James partition};\cr
0, & \hbox{otherwise.}
\end{cases}
$$
\end{proposition}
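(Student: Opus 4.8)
The plan is to describe the largest trivial $U(n)$-quotient of $S^dE$ as a $T(n)$-module, read off the $\Hom$ space, and then pass to $G(n)$ via Theorem 0.1. Since $K_\lambda$ is a trivial $U(n)$-module, every $B(n)$-homomorphism $S^dE\to K_\lambda$ vanishes on the kernel $N$ of the quotient map $S^dE\to H_0(U(n),S^dE)$ and hence factors uniquely through it, so
$$\Hom_{B(n)}(S^dE,K_\lambda)=\Hom_{T(n)}(H_0(U(n),S^dE),K_\lambda).$$
It therefore suffices to determine $H_0(U(n),S^dE)=S^dE/N$ as a $T(n)$-module.

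First I would identify $N$. By Lemma 1.2 a $U(n)$-module is trivial exactly when every divided power operator $Y(r,r+1)_i$ ($1\leq r<n$, $i>0$) acts as zero, so the smallest $U(n)$-submodule with trivial quotient is $N=\sum K\,Y(r,r+1)_i e^\alpha$, the sum being over all $\alpha\in\Lambda(n,d)$, $1\leq r<n$, $i>0$. The crucial structural point is that the $e^\alpha$, $\alpha\in\Lambda(n,d)$, form a $T(n)$-weight basis of $S^dE$ with pairwise distinct weights; hence $N$ is spanned by a subset of this basis, and the cosets $e^\mu+N$ with $e^\mu\notin N$ form a weight basis of $S^dE/N$.

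The combinatorial heart of the matter is then to describe $\{\beta\in\Lambda(n,d):e^\beta\notin N\}$. Using $Y(r,r+1)_i e^\alpha={\alpha_r\choose i}e^{\alpha-i(\ep_r-\ep_{r+1})}$ (for $i\leq\alpha_r$) and comparing weights, one finds $e^\beta\in N$ iff there exist $1\leq r<n$ and $1\leq i\leq\beta_{r+1}$ with ${\beta_r+i\choose i}\neq 0$ in $K$; equivalently, $e^\beta\notin N$ iff every consecutive pair $(\beta_r,\beta_{r+1})$ satisfies the James condition of Definition 3.2. I would then check, using Remark 3.1 (Kummer's theorem), that this forces $\beta$ to be weakly decreasing, hence a genuine James partition: if $\beta_r<\beta_{r+1}$, set $v=\val_p(\beta_r+1)$; then $1\leq p^v\leq\beta_r+1\leq\beta_{r+1}$, and ${\beta_r+p^v\choose p^v}\neq 0$ because there is no carry in $\beta_r+p^v$ (the $v$-th base-$p$ digit of $\beta_r$ is at most $p-2$), so $e^\beta\in N$. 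Thus $e^\beta\notin N$ precisely when $\beta$ is a James partition of degree $d$ with at most $n$ parts, and then $e^\beta+N$ has $T(n)$-weight $\beta$.

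Finally, $\Hom_{T(n)}(H_0(U(n),S^dE),K_\lambda)$ is one-dimensional if $\lambda$ occurs among these weights and zero otherwise, that is, it equals $K$ exactly when $\lambda$ is a James partition; this gives the first displayed formula, and the second follows at once from the equality $\Hom_{G(n)}(S^dE,\nabla(\lambda))=\Hom_{B(n)}(S^dE,K_\lambda)$ of Theorem 0.1. I expect the only genuinely delicate step to be the third paragraph: matching the statement that $e^\beta$ is a nonzero scalar multiple of some $Y(r,r+1)_i e^\alpha$ with the numerical James condition, and in particular noting that failure of the James condition (including the case $\beta_r<\beta_{r+1}$) is always witnessed by a single suitable choice of $i$.
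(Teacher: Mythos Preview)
Your proof is correct and follows essentially the same route as the paper: compute $H_0(U(n),S^dE)=S^dE/N$ via Lemma~1.2, use the one-dimensionality of weight spaces to see that $N$ is spanned by those $e^\beta$ for which some ${\beta_r+i\choose i}\neq 0$ with $1\leq i\leq\beta_{r+1}$, and then read off $\Hom_{T(n)}(S^dE/N,K_\lambda)$ and apply Theorem~0.1. The one place you go slightly beyond the paper is in explicitly checking that the condition $e^\beta\notin N$ forces $\beta$ to be weakly decreasing; the paper passes over this silently (it follows from Remark~3.3, since ${\beta_r+i\choose i}=0$ for all $1\leq i\leq\beta_{r+1}$ gives $\beta_{r+1}<p^{\val_p(\beta_r+1)}\leq\beta_r+1$), but your direct argument via $i=p^{\val_p(\beta_r+1)}$ is a clean alternative.
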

By  Proposition 0.2 , we have James's result,  \cite{James}, 24.4 Theorem: 
\begin{proposition} For a partition $\lambda$ of degree $d$ we have
$$H^0(\Sigma_d,\Sp(\lambda))=\begin{cases} K, & \hbox{\, if \,} \lambda \hbox{  is a James partition};\cr
0, & \hbox{otherwise.}
\end{cases}
$$
\end{proposition}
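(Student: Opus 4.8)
\q The plan is to obtain this as an immediate consequence of the preceding proposition together with the comparison result Proposition 0.2(i); in effect all the real work has already been done. Fix an integer $n$ with $n\geq d$, so that $\lambda\in\Lambda^+(n,d)$ and the hypotheses of Proposition 0.2 are satisfied. Then Proposition 0.2(i) gives $H^0(\Sigma_d,\Sp(\lambda))=\Hom_{G(n)}(S^dE,\nabla(\lambda))$, while the preceding proposition evaluates the right-hand side: it is $K$ if $\lambda$ is a James partition and $0$ otherwise. Composing these two identifications yields the stated dichotomy, and the answer is visibly independent of the chosen $n$.

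\q Should one prefer an argument not invoking Proposition 0.2, the route would be as follows. First establish the isomorphism $H^0(\Sigma_d,\Sp(\lambda))\cong\Hom_{G(n)}(S^dE,\nabla(\lambda))$ directly, via the Schur functor relating polynomial $G(n)$-modules and $\Sigma_d$-modules (this is precisely the content of Appendix I, following \cite{HTT}, Section 10). Then carry out the weight-space computation already recorded above: the weights of $S^dE$ are the elements of $\Lambda(n,d)$, each of multiplicity one, and by Lemma 1.2(iii) the largest trivial $U(n)$-quotient $H_0(U(n),S^dE)$ has $K$-basis the cosets $e^\mu+N$ with $\mu$ ranging over the James partitions of degree $d$ with at most $n$ parts, since $e^\beta$ dies in the quotient exactly when ${\beta_r+i\choose i}\neq 0$ for some $1\leq r<n$ and $0<i\leq\beta_{r+1}$. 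Dualising, $\Hom_{B(n)}(S^dE,K_\lambda)=\Hom_{T(n)}(H_0(U(n),S^dE),K_\lambda)$ is one-dimensional precisely when $\lambda$ is James, and Proposition 0.1 transports this from $B(n)$ to $G(n)$.

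\q I do not expect a genuine obstacle. The only point needing attention is bookkeeping: one must take $n$ at least $d$ so that both Proposition 0.2 and the earlier $\Hom$-calculation apply to $\lambda$, and then observe that neither the vanishing nor the value $K$ depends on $n$. All the substantive machinery --- the equivalence of Section 1, the generators and relations of Section 2, the binomial combinatorics pinning down James partitions, and the Specht-module comparison of Appendix I --- is already in place, so this proposition is a short assembly of earlier results.
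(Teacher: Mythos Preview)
Your proof is correct and takes exactly the same approach as the paper: the paper simply says ``By Proposition 0.2, we have James's result'' immediately after stating the preceding proposition, which is precisely your argument of combining Proposition 0.2(i) with Proposition 3.4. Your additional paragraphs outlining the alternative direct route are accurate but go beyond what the paper provides at this point.
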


\bs\bs\bs\bs


\section{Extension sequences and module extensions}

\q We now  wish to examine the possible $B(n)$-module extensions of $S^d E$ by a one dimensional module.   

\q We fix a partition $\lambda$  with degree $d$ and having at most $n$ parts.    Let $V$ be an extension of $S^d E$ by $K_\lambda$. Thus $V$ contains a one dimensional $B(n)$-submodule $V_\infty$ isomorphic to $K_\lambda$ and $V/V_\infty$ is isomorphic to $S^d E$. We fix $0\neq v_\infty\in V_\infty$ and an isomorphism $\theta: V/V_\infty\to S^d E$.  Note that $\theta$ is determined up to multiplication by a non-zero scalar since 
$$\End_{B(n)}(S^d E,S^d E)=\End_{G(n)}(S^d E, S^d E)=K.$$

\q For $\alpha\in \Lambda(n,r)$, $\alpha\neq \lambda$ there is a unique element  $v_\alpha$ of weight $\alpha$ such that $\theta(v_\alpha+V_\infty)=e^\alpha$. We also choose an element $v_\lambda\in V$  of weight $\lambda$ such that $\theta(v_\lambda+V_\infty)=e^\lambda$. 

\q Let $1\leq r<s\leq  n$. Suppose  $\alpha\in \Lambda(n,r)$, $0<i\leq \alpha_r$ Then we have 
$$Y(r,s)_i e^\alpha={\alpha_r\choose i}e^{\alpha-i(\ep_r-\ep_s)}$$
by Section 2 and hence,  applying $\theta$ to $Y(r,s)_iv_\lambda+V_\infty$ we see that 
 there exist scalars $y(r,s)_i\in K$ such that

$$Y(r,s)_i v_\alpha=\begin{cases} {\alpha_r\choose i}v_{\alpha-i(\ep_r-\ep_s)}+y(r,s)_i v_\infty, & \hbox{if \q }   \alpha=\lambda+i(\ep_r-\ep_s);\cr
{\alpha_r\choose i}v_{\alpha-i(\ep_r-\ep_s)},&  \hbox{if \q }   \alpha\neq\lambda+i(\ep_r-\ep_s).
\end{cases}$$

\q The scalars $y(r,s)_i$ are of paramount importance for our analysis of extensions of symmetric powers. 

\bs

\begin{definition} We call the sequence $(y(r,s)_i)_{1\leq i\leq \lambda_s}$ the $(r,s)$-extension sequence determined by the  $V,v_\infty$ and $\theta$.  We call the collection of sequences $(y(r,s)_i)_ {1\leq r<s \leq n, 1\leq i\leq  \lambda_s}$, the extension multi-sequence determined by $V,v_\infty$ and $\theta$.
\end{definition}

\q Of course we can take for $V$ the split extension $V=S^dE\oplus K_\lambda$ and $V_\infty=K_\lambda=Kv_\infty$.  We take $\theta:V/V_\infty\to S^d E$ to be the natural map (induced by the projection from  $V=S^d E\oplus V_\infty$ onto the first summand).  We take $v_\lambda=e^\lambda -v_\infty$.  Then, for $0<i\leq \lambda_s$, we have

\begin{align*}Y(r,s)_i v_{\lambda+i(\ep_r-\ep_s)}&=Y(r,s)_ie^{\lambda+i(\ep_r-\ep_s)}\cr
&={\lambda_r+i\choose i}e^\lambda={\lambda_r+i\choose i}v_\lambda+{\lambda_r+i\choose i}v_\infty
\end{align*}

\begin{definition} The extension multi-sequence produced in this way will be called the standard multi-sequence for $\lambda$ and denoted $(y(r,s)_i^\st)$. Specifically we have  $y(r,s)_i={\lambda_r+i\choose i}$, for $1\leq r<s\leq n$ and $1\leq i\leq \lambda_s$. A multiple of the standard multi-sequence will be called a standard multi-sequence.
\end{definition}

\q We continue with an analysis of extension multi-sequences arising from a general  module extension $V$.

\q Suppose $i,j\geq 1$ and $i+j\leq \lambda_s$. Then by Section 2, (1), we have 
$$Y(r,s)_iY(r,s)_jv_{\lambda+(i+j)(\ep_r-\ep_s)}={i+j\choose i}Y(r,s)_{i+j}v_{\lambda+(i+j)(\ep_r-\ep_s)}$$
i.e.,
$${\lambda_r+i+j\choose j}Y(s,t)_i v_{\lambda+i(\ep_s-\ep_t)}={i+j\choose i}Y(s,t)_{i+j}v_{\lambda+(i+j)(\ep_s-\ep_t)}$$
and taking the coefficients of $v_\infty$ we get: 
$${\rm (E)} \hskip 10pt  {\lambda_r+i+j\choose j}y(r,s)_j={i+j\choose i}y(r,s)_{i+j}, \hskip 55pt  i,j\geq 1, i+j\leq \lambda_s.$$

\q It will be important to know all sequences satisfying this relation (E) and we classify them in the next section. 

\q Now we consider a triple $1\leq r<s< t\leq n$ and consider the relations that exist between the extension sequences $(y(r,s)_i), (y(s,t)_j)$ and $(y(r,t)_k)$.  We take $1\leq i\leq \lambda_s$, $1\leq k\leq \lambda_t$. By Section 2, (3), we have
$$Y(r,s)_iY(r,t)_kv_{\lambda+(i+k)\ep_r-i\ep_s-k\ep_t}=Y(r,t)_kY(r,s)_iv_{\lambda+(i+k)\ep_r-i\ep_s-k\ep_t}$$
i.e.,
$${\lambda_r+i+k\choose k}Y(r,s)_iv_{\lambda+i(\ep_r-\ep_s)}={\lambda_r+i+k\choose i}Y(r,t)_kv_{\lambda+k(\ep_r-\ep_t)}$$
and taking the coefficients of $v_\infty$ we get: 
$${\rm (T1)} \hskip 10pt {\lambda_r+i+k\choose k}y(r,s)_i={\lambda_r+i+k\choose i}y(r,t)_k, \hskip 5pt 1\leq i\leq \lambda_s, 1\leq k\leq \lambda_t.$$

\q We take now $j,k\geq 1$ with $j+k\leq \lambda_t$.  By Section 2, (4), we have
$$Y(s,t)_jY(r,t)_kv_{\lambda+k\ep_r+j\ep_s-(j+k)\ep_t}=Y(r,t)_kY(s,t)_jv_{\lambda+k\ep_r+j\ep_s-(j+k)\ep_t}$$
i.e.,
$${\lambda_r+k\choose  k}Y(s,t)_jv_{\lambda_r+j(\ep_s-\ep_t)}={\lambda_s+j\choose j}Y(r,t)_kv_{\lambda+k(\ep_r-\ep_t)}$$
and taking the coefficients of $v_\infty$ we get: 
$${\rm (T2)}\hskip 10pt {\lambda_r+k\choose k}y(s,t)_j={\lambda_s+j\choose j}y(r,t)_k,  \hskip 40pt j,k\geq 1, j+k\leq \lambda_t.$$

\q We now take $1\leq i\leq j\leq \lambda_t$ and use relation Section 2, (2). We have 

\begin{align*}Y(s,t)_j &Y(r,s)_iv_{\lambda+i(\ep_r-\ep_s)+j(\ep_s-\ep_t)}\cr
=\sum_{h=0}^i &Y(r,s)_{i-h}Y(r,t)_h Y(s,t)_{j-h}v_{\lambda+i(\ep_r-\ep_s)+j(\ep_s-\ep_t)}
\end{align*}
i.e., \begin{align*} {\lambda_r+i\choose i}&Y(s,t)_jv_{\lambda+j(\ep_s-\ep_t)}\cr
=&\sum_{h=0}^i{\lambda_s-i+j\choose j-h}Y(s,t)_{i-h}Y(r,t)_hv_{\lambda+i(\ep_r-\ep_s)+h(\ep_s-\ep_t)}\cr
=&\sum_{0\leq h< i}{\lambda_s-i+j\choose j-h}{\lambda_r+i\choose h}Y(r,s)_{i-h}v_{\lambda+(i-h)(\ep_r-\ep_s)}\cr
+&{\lambda_s-i+j\choose j-i}Y(r,t)_iv_{\lambda+i(\ep_r-\ep_t)}
\end{align*}
and taking coefficients of $v_\infty$ we get
\begin{align*}  {\rm (T3a)}\hskip 10pt   &{\lambda_r+i\choose i}y(s,t)_j\cr
&= \sum_{0\leq h< i}{\lambda_s-i+j\choose j-h}{\lambda_r+i\choose h}y(r,s)_{i-h}+{\lambda_s-i+j\choose j-i}y(r,t)_i,\cr
& \phantom{} \hskip 200pt  \hbox{ for } 1\leq i\leq j\leq \lambda_t.
\end{align*}

\q Now suppose that $1\leq j\leq \lambda_t$, $j<i\leq \lambda_s+j$.  Then we have \\
$\lambda+(\ep_r-\ep_s)+j(\ep_s-\ep_t)\in \Lambda(n,r)$ and, using 
  Section 2, (2) we have
\begin{align*}Y(s,t)_j &Y(r,s)_iv_{\lambda+i(\ep_r-\ep_s)+j(\ep_s-\ep_t)}\cr
=\sum_{h=0}^j &Y(r,s)_{i-h}Y(r,t)_h Y(s,t)_{j-h}v_{\lambda+i(\ep_r-\ep_s)+j(\ep_s-\ep_t)}
\end{align*}
i.e.,
\begin{align*}
{\lambda_r+i\choose i}&Y(s,t)_jv_{\lambda+j(\ep_s-\ep_t)}\cr
&=\sum_{h=0}^i{\lambda_s-i+j\choose j-h} Y(s,t)_{i-h}Y(r,t)_hv_{\lambda+i(\ep_r-\ep_s)+h(\ep_s-\ep_t)}\cr
&=\sum_{h=0}^i{\lambda_s-i+j\choose j-h} {\lambda_r+i\choose h}
Y(s,t)_{i-h}v_{\lambda+(i-h)(\ep_r-\ep_s)}
\end{align*}
and taking coefficients of $v_\infty$ we get
\begin{align*}  {\rm (T3b)}\hskip 10pt   {\lambda_r+i\choose i}&y(s,t)_j= \sum_{h=0}^j {\lambda_s-i+j\choose j-h}{\lambda_r+i\choose h}y(r,s)_{i-h},\cr
& \phantom{} \hskip 120pt  \hbox{ for } 1\leq j\leq \lambda_t, j<i\leq \lambda_s+j.
\end{align*}

\q Solutions to the equations  (T1)-(T3b) (i.e., what will be called coherent triples) will be analysed extensively in subsequence sections.  There remains one further equation, coming from the commuting relation Section 2,(5).   So suppose $1\leq q,r,s,t \leq n$ and distinct and $q<r$, $s<t$. Then, for $1\leq i\leq \lambda_t$, $1\leq j\leq \lambda_r$,  we obtain
$$Y(q,r)_jY(s,t)_iv_{\lambda+i(\ep_s-\ep_t)+j(\ep_q-\ep_r)}=Y(s,t)_iY(q,r)_jv_{\lambda+i(\ep_s-\ep_t)+j(\ep_q-\ep_r)}$$
i.e.,
$${\lambda_s+i \choose i} Y(q,r)_jv_{\lambda+j(\ep_q-\ep_r)}={\lambda_q+j\choose j} Y(s,t)_i v_{\lambda+i(\ep_s-\ep_t)}$$
and taking coefficients of $v_\infty$ we obtain the \lq\lq commuting condition",
\begin{align*} {\rm (C)} \hskip 10pt {\lambda_s+i \choose i} y(q,r)_j&={\lambda_q+j\choose j} y(s,t)_i, \cr
&\hbox{ for } 1\leq q,r,s,t\leq n \hbox{ distinct with} q<r, s<t.
\end{align*}

\begin{proposition} Let $\lambda$ be a partition and let $(y(r,s)_i)$ be an extension multi-sequence for $\lambda$. Then the elements $y(r,s)_i\in K$ satisfy the equations (E),(T1)-(T3b) and  (C) above. Conversely, given elements $y(r,s)_i\in K$, $1\leq r<s\leq n$, $1\leq i\leq \lambda_s$, satisfying  (E),(T1)-(T3b) and  (C) then $(y(r,s)_i)$ is an extension multi-sequence for some extension of $S^dE$ by $K_\lambda$.
\end{proposition}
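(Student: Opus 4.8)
The plan is to prove the two directions separately, the forward direction being essentially a bookkeeping exercise and the converse requiring a genuine construction. For the forward direction: given an extension $V$ of $S^dE$ by $K_\lambda$, fix $v_\infty$, $\theta$, and the weight vectors $v_\alpha$ as in the setup preceding Definition 4.1, so that the scalars $y(r,s)_i$ are defined. The relations (E), (T1)--(T3b) and (C) were \emph{derived} in the text above precisely by applying the hyperalgebra relations (1)--(5) of Section 2 to suitable weight vectors $v_\mu$ in $V$ and reading off the coefficient of $v_\infty$; so for this direction there is nothing further to do beyond observing that those derivations are valid for an arbitrary extension $V$, not just the split one. (Strictly, one should remark that each identity used — e.g. $Y(r,s)_iY(r,s)_jv=\binom{i+j}{i}Y(r,s)_{i+j}v$ — holds in $V$ because $V$ is a $B(n)$-module, hence a $\hy(U(n))$-module, and the relations hold in $\hy(U(n))$ by Proposition 2.1.)

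For the converse, suppose we are given scalars $y(r,s)_i\in K$ satisfying all the relations. First I would \emph{construct} the candidate module: let $V$ be the $K$-vector space with basis $\{v_\alpha \mid \alpha\in\Lambda(n,d)\}\cup\{v_\infty\}$, declare $v_\alpha$ to have weight $\alpha$ and $v_\infty$ to have weight $\lambda$ (this defines the $T(n)$-action), and define the action of each generator $Y(r,s)_i$ by the formula displayed just before Definition 4.1, namely $Y(r,s)_i v_\alpha = \binom{\alpha_r}{i} v_{\alpha-i(\ep_r-\ep_s)}$, plus the extra term $y(r,s)_i v_\infty$ exactly when $\alpha=\lambda+i(\ep_r-\ep_s)$, and $Y(r,s)_i v_\infty = 0$. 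One then checks this is compatible with the weight grading in the sense required of a $(\hy(U(n)),T(n))$-module, i.e. $Y(r,s)_iV^\alpha\subseteq V^{\alpha-i(\ep_r-\ep_s)}$, which is immediate. The crucial point is to verify that these operators satisfy \emph{all} the defining relations (1)--(5) of $\hy(U(n))$ from Proposition 2.1. Since $S^dE$ already satisfies these relations (it is a genuine $B(n)$-module), when one expands out relation (k) applied to $v_\alpha$ the ``$S^dE$-part'' (the $v_\beta$ coefficients) automatically matches; the only thing that can fail is the coefficient of $v_\infty$, and that coefficient vanishes on both sides precisely because the $y(r,s)_i$ were assumed to satisfy the corresponding equation among (E), (T1)--(T3b), (C). Here one uses that (E) handles relation (1), (T1) handles (3), (T2) handles (4), (T3a)/(T3b) handle (2), and (C) handles (5); relation (1) for distinct pairs and the trivial cases are automatic. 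This makes $V$ into a $(\hy(U(n)),T(n))$-module, hence by Proposition 1.5 a genuine $B(n)$-module.

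Finally I would identify $V$ as an extension: $Kv_\infty$ is a $B(n)$-submodule isomorphic to $K_\lambda$ (all $Y(r,s)_i$ kill $v_\infty$ and $T(n)$ acts by $\lambda$), and the quotient $V/Kv_\infty$ has basis the images of the $v_\alpha$ with the action $\binom{\alpha_r}{i}v_{\alpha-i(\ep_r-\ep_s)}$, which is exactly the action on $S^dE$ computed in Section 3 under $v_\alpha\mapsto e^\alpha$; so $V/Kv_\infty\cong S^dE$. Taking $\theta$ to be this isomorphism and $v_\infty$ as chosen, the extension multi-sequence of $(V,v_\infty,\theta)$ is visibly $(y(r,s)_i)$, as required.

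The main obstacle is the bookkeeping in the converse: one must be careful that relations (2) come in the two regimes (T3a) with $i\le j$ and (T3b) with $j<i\le\lambda_s+j$, that the various binomial-coefficient identities used to pass from the generic hyperalgebra relation to its ``$v_\infty$-coefficient'' are exactly the defining equations and not something weaker, and that no relation among the generators has been overlooked — i.e. that (E), (T1)--(T3b), (C) together with the automatic cases genuinely exhaust (1)--(5). Given the care already taken in deriving these equations in the text, this is a finite check rather than a conceptual difficulty, but it is where an error would hide.
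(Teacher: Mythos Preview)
Your proposal is correct and follows essentially the same approach as the paper: the forward direction is handled by noting that the equations were already derived in the preceding text, and the converse proceeds by constructing the vector space $V$ with basis $\{v_\alpha\}\cup\{v_\infty\}$, defining the $\hat Y(r,s)_i$ operators by the displayed formula, invoking Proposition 2.1 to obtain a $\hy(U(n))$-module, and then Proposition 1.5 to promote it to a $B(n)$-module. Your explicit identification of which equation among (E), (T1)--(T3b), (C) handles which hyperalgebra relation (1)--(5), and your observation that the ``$S^dE$-part'' matches automatically so only the $v_\infty$-coefficient needs checking, are details the paper leaves implicit but which are exactly right.
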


\begin{proof} We have already proved the first statement, in deriving (E),(T1)-(T3b) and  (C) above form an extension of $S^dE$ by $K_\lambda$. 

\q Now suppose that we have elements $y(r,s)_i\in K$, $1\leq r<s\leq n$, $1\leq i\leq \lambda_s$, satisfying  (E),(T1)-(T3b) and  (C). We  take a vector space $V(y)$ with basis elements $v_\alpha(y)$, $\alpha\in \Lambda(n,d)$, and $v_\infty(y)$. We make $V(y)$ into a $T$-module giving $v_\alpha(y)$ weight $\alpha$, for $\alpha\in \Lambda(n,d)$, and giving $v_\infty(y)$ weight $\lambda$. We define $K$-linear endomorphisms $\hatY(r,s)_i$,  of $V(y)$, for $1\leq r<s\leq n$, $1\leq i\leq \lambda_s$, by 

$$\hatY(r,s)_iv_\alpha(y)=\begin{cases} {\alpha_r\choose i}v_{\alpha-(\ep_r-\ep_s)}(y)+y(r,s)_iv_\infty(y), &\hbox{ if }   i\leq \alpha_s, \alpha= \lambda+i(\ep_r-\ep_s);\cr
{\alpha_r\choose i}v_{\alpha-(\ep_r-\ep_s)},   &\hbox{ if }   i\leq \alpha_s,\alpha\neq  \lambda+i(\ep_r-\ep_s);\cr
0,& \hbox{otherwise}
\end{cases}
$$
and making $\hatY(r,s)_i$ act as $0$ on $v_\infty(y)$. Then the relations (E),(T1)-(T3b) and  (C) guarantee that the endomorphisms $\hatY(r,s)_i$ satisfy the relations (1)-(5) of Section 2. Hence by Proposition 2.1, there is a $\hy(U)$-module action on $V(y)$ with $Y(r,s)_i$ acting via $\hatY(r,s)_i$, $1\leq r<s\leq n$, $1\leq i\leq \lambda_s$.  Moreover the $T$ action makes $V(y)$ into a $(\hy(U),T)$-module and hence, by Proposition 1.5, a $B$-module. Clearly the extension multi-sequence associated to $V(y)$,  with $v_\alpha=v_\alpha(y)$, $\alpha\in \Lambda(n,d)$ and $v_\infty=v_\infty(y)$ is $(y(r,s)_i)$. 

\end{proof}

\bf Notation: \rm It follows from Proposition 4.3  that  the set of all extension  multi-sequences for a partition  $\lambda$ forms a $K$-vector space, we denote this space by $E(\lambda)$.

\bs

\q We are now going to produce a $K$-space surjection $E(\lambda)\to \Ext^1(S^dE,K_\lambda)$, whose kernel is the space of the standard coherent multi-sequence. For $K_\lambda$ we take the field $K$ with $T$ acting with weight $\lambda$ and $Y(r,s)_i$ acting as $0$, for $1\leq r<s\leq n$, $1\leq i\leq \lambda_s$. For $y\in E(\lambda)$ let $V(y)$ be the $B$-module constructed via $y$, as above. We have a short exact sequence of $B$-modules
$$0\to K_\lambda \xrightarrow{i_y}  V(y)\xrightarrow{\pi_y} S^dE\to 0$$
where $i_y(1)=v_\infty(y)$ and  $\pi_y(v_\alpha(y))=e^\alpha$, for $\alpha\in \Lambda(n,d)$.

\q Starting with any $B$-module extension 
$$0\to K_\lambda \xrightarrow{i}  V \xrightarrow{\pi} S^dE\to 0$$
we have  $v_\infty=i(1)$ and $v_\alpha\in V$ such that $\pi(v_\alpha)=e^\alpha$ and we get an extension multi-sequence $y=(y(r,s)_i)$ given by 
$$Y(r,s)_iv_{\lambda+i(\ep_r-\ep_s)}={\lambda_r+i\choose i}v_\lambda+y(r,s)_iv_\infty$$
for $1\leq i\leq \lambda_s$. Now it is easy to see that the $B$-module map $\phi:V\to V(y)$ such that $\phi(v_\alpha)=v_\alpha(y)$, $\alpha\in \Lambda(n,d)$, and $\phi(v_\infty)=v_\infty(y)$, induces an isomorphisms of extensions (in the sense of for example  \cite{ARS}, Section I.5, or \cite{rotman},  Chapter 7) between the sequences $0\to K_\lambda \xrightarrow{i}  V \xrightarrow{\pi} S^dE\to 0$ and $0\to K_\lambda \xrightarrow{i_y}  V(y)\xrightarrow{\pi_y} S^dE\to 0$.  In particular every extension of $S^dE$ by $K_\lambda$ is equivalent  to one arising via an element of $E(\lambda)$.

\q Now suppose that $y,y'\in E(\lambda)$ give short exact sequences  $0\to K_\lambda \xrightarrow{i_y}  V(y)\xrightarrow{\pi_y} S^dE\to 0$ and $0\to K_\lambda \xrightarrow{i_{y'}}  V(y)\xrightarrow{\pi_{y'}} S^dE\to 0$.  If these sequences are isomorphic then there is a $B$-module isomorphism $\phi:V(y)\to V(y')$ such that the diagram 
 
$$
\begin{matrix} 0&\to   & K_\lambda& \xrightarrow{i_y} & V(y) & \xrightarrow{\pi_y} & S^dE & \to &0\cr
&& \downarrow &&\downarrow && \downarrow\cr
 0&\to   & K_\lambda& \xrightarrow{i_{y'}} & V(y') & \xrightarrow{\pi_{y'}} & S^dE & \to &0\cr
\end{matrix}$$
commutes (where the first and third vertical maps are the identities and the middle vertical map is $\phi$).  Commutativity implies that $\phi(v_\infty(y))=v_\infty(y')$ and also that $\phi(v_\alpha(y))=v_\alpha(y')$, for $\lambda\neq \alpha\in \Lambda(n,d)$. Moreover, we must have $\phi(v_\lambda(y))=v_\lambda(y')+cv_\infty(y')$, for some $c\in K$.  The condition for $\phi$ to be a module homomorphism reduces to the condition
$$\phi(Y(r,s)_iv_{\lambda+i(\ep_r-\ep_s)}(y))=Y(r,s)_i\phi(v_{\lambda+i(\ep_r-\ep_s)}(y))$$
i.e.,
$$\phi({\lambda_r+i\choose i}v_\lambda(y)+y(r,s)_iv_\infty(y))=Y(r,s)_iv_{\lambda+i(\ep_r-\ep_s)}(y')
$$
i.e.,
$${\lambda_r+i\choose i}(v_\lambda(y')+cv_\infty(y'))+y(r,s)_iv_\infty(y')={\lambda_r+i\choose i}v_\lambda(y')+y'(r,s)_iv_\infty(y')
$$
i.e.,
$$c{\lambda_r+i\choose i}+y(r,s)_i=y'(r,s)_i$$
for all $1\leq r<s\leq n$, $1\leq i\leq \lambda_s$.  So we have shown the following result.

\begin{lemma} Every extension $0\to K_\lambda\to V\to S^dE\to 0$ is equivalent to one of the form $0\to K_\lambda \xrightarrow{i_y}  V(y)\xrightarrow{\pi_y} S^dE\to 0$. Moreover, for  $y,y'\in E(\lambda)$ the associated short exact sequences 
$0\to K_\lambda \xrightarrow{i_y}  V(y)\xrightarrow{\pi_y} S^dE\to 0$ and $0\to K_\lambda \xrightarrow{i_{y'}}  V(y)\xrightarrow{\pi_{y'}} S^dE\to 0$ are equivalent  if and only if $y'-y$ is a multiple of the standard multi-sequence.

\end{lemma}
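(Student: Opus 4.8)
The plan is to establish the two assertions of Lemma 4.6 essentially by unwinding the constructions already set up in Section 4. The first assertion---that every extension $0\to K_\lambda\to V\to S^dE\to 0$ is equivalent to one of the form $0\to K_\lambda \xrightarrow{i_y} V(y)\xrightarrow{\pi_y} S^dE\to 0$---has in fact already been argued in the paragraph preceding the statement: starting from an arbitrary $B$-module extension, one chooses $v_\infty = i(1)$ and weight vectors $v_\alpha$ lifting $e^\alpha$ (and a chosen $v_\lambda$), reads off the scalars $y(r,s)_i$ from the action of $Y(r,s)_i$ on $v_{\lambda+i(\ep_r-\ep_s)}$, notes that by Proposition 4.3 the resulting multi-sequence $y$ lies in $E(\lambda)$, and then checks that $\phi(v_\alpha)=v_\alpha(y)$, $\phi(v_\infty)=v_\infty(y)$ defines a $B$-module map $V\to V(y)$ realizing an isomorphism of extensions. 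So for this part I would simply recall that argument and cite Proposition 4.3 for the fact that $y\in E(\lambda)$.

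For the second assertion I would argue both implications of the equivalence. Suppose first that the two sequences attached to $y,y'\in E(\lambda)$ are equivalent, so there is a $B$-isomorphism $\phi:V(y)\to V(y')$ making the standard ladder diagram commute. Commutativity with $i_y,i_{y'}$ forces $\phi(v_\infty(y))=v_\infty(y')$; commutativity with $\pi_y,\pi_{y'}$ together with the fact that all nontrivial weight spaces are one-dimensional forces $\phi(v_\alpha(y))=v_\alpha(y')$ for $\alpha\neq\lambda$ and $\phi(v_\lambda(y))=v_\lambda(y')+c\,v_\infty(y')$ for a unique scalar $c\in K$. Applying $\phi$ to the defining relation $Y(r,s)_i v_{\lambda+i(\ep_r-\ep_s)}(y)={\lambda_r+i\choose i}v_\lambda(y)+y(r,s)_i v_\infty(y)$ and comparing with the corresponding relation for $y'$ yields, coefficient by coefficient on $v_\infty$, the identity $c{\lambda_r+i\choose i}+y(r,s)_i = y'(r,s)_i$ for all $1\leq r<s\leq n$, $1\leq i\leq \lambda_s$; since $y(r,s)_i^\st={\lambda_r+i\choose i}$ this says precisely that $y'-y = c\cdot y^\st$, a multiple of the standard multi-sequence. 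This chain of equalities is exactly the computation displayed just above the lemma, so I would present it compactly.

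Conversely, suppose $y'-y = c\,y^\st$ for some $c\in K$. I would then define $\phi:V(y)\to V(y')$ by $\phi(v_\alpha(y))=v_\alpha(y')$ for $\lambda\neq\alpha\in\Lambda(n,d)$, $\phi(v_\infty(y))=v_\infty(y')$, and $\phi(v_\lambda(y))=v_\lambda(y')+c\,v_\infty(y')$, and verify that $\phi$ is a $T$-module map (immediate, since $v_\infty$ and $v_\lambda$ both carry weight $\lambda$) and commutes with every $Y(r,s)_i$: on $v_\alpha(y)$ with $\alpha\neq\lambda+i(\ep_r-\ep_s)$ this is trivial since the action introduces no $v_\infty$ term and $\phi$ is the identity on the relevant basis vectors, while on $v_{\lambda+i(\ep_r-\ep_s)}(y)$ the required identity $\phi(Y(r,s)_i v_{\lambda+i(\ep_r-\ep_s)}(y))=Y(r,s)_i\phi(v_{\lambda+i(\ep_r-\ep_s)}(y))$ is again exactly the displayed computation read in the reverse direction, using $y'(r,s)_i - y(r,s)_i = c{\lambda_r+i\choose i}$. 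By Lemma 1.2(ii) $\phi$ is then a $B$-module homomorphism, and it is manifestly bijective (upper-triangular with respect to the evident basis), hence an isomorphism of extensions. I do not anticipate a genuine obstacle here: the whole content is bookkeeping, and the one point that needs a word of care is the uniqueness/existence of the scalar $c$ and the reduction, via one-dimensionality of the nontrivial weight spaces of $S^dE$, of an arbitrary equivalence $\phi$ to the normalized form $\phi(v_\alpha(y))=v_\alpha(y')$, $\phi(v_\lambda(y))=v_\lambda(y')+c\,v_\infty(y')$ --- which is precisely where the rigidity $\End_{B(n)}(S^dE)=K$ recorded at the start of Section 4 does its work.
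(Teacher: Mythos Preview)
Your proposal is correct and follows essentially the same approach as the paper: the argument for both assertions is precisely the computation displayed in the text immediately preceding the lemma, and you have faithfully recapitulated it, being slightly more explicit than the paper about the converse direction (defining $\phi$ from the scalar $c$ and invoking Lemma~1.1(ii) to verify it is a $B$-module map). One small remark: the key structural fact you need for the normalized form of $\phi$ is the one-dimensionality of the weight spaces $V(y')^\alpha$ for $\alpha\neq\lambda$, which you do state; the observation $\End_{B(n)}(S^dE)=K$ is not really what is doing the work here, so you could drop that final clause.
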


\q We now follow a standard procedure (cf.  \cite{ARS}, Section I.5, or \cite{rotman},  Chapter 7) to relate extensions of $S^dE$ by $K_\lambda$  to elements of  $\Ext^1_B(S^d E,K_\lambda)$ and from this produce a linear epimoprhism  $E(\lambda)$ to $\Ext^1_B(S^d E,K_\lambda)$ with kernel consisting of the standard coherent multi-sequences.

\q We fix an injective module $I$ containing $K_\lambda$.  Let $Q=I/K_\lambda$ and let $\nu:I\to Q$ be the natural map.  Applying $\Hom_B(S^d E,-)$ to the short exact sequence $0\to K_\lambda \to I\to Q\to 0$ we obtain an exact sequence 
$$\Hom_B(S^dE,I)\to \Hom_B(S^dE,Q)\to \Ext^1_B(S^dE,K_\lambda)$$
 so we may identify $\Ext^1_B(S^dE,K_\lambda)$ with  
$\Hom_B(S^dE,Q)/\Hom_B(S^dE,Q)_0$, where 
$$\Hom_B(S^dE,Q)_0=\{\nu\circ\theta \vert \theta\in \Hom_B(S^dE,I)\}.$$

\q  We write $E(S^dE, K_\lambda)$ for the set of equivalence classes of $B$-module extensions  
$$ 0\to K_\lambda\xrightarrow{i}  V\to S^dE\xrightarrow{\pi}  0\eqno{({\mathcal E)}}.$$
 For such an extension we have, by injectivity, a homomoprhism $\theta:V\to I$ such that $\theta\circ i: K_\lambda\to I$ is inclusion.  We obtain a homomorphism \\
   ${\bar\theta}:S^dE\to Q$, given by ${\bar\theta}(\pi(v))=\theta(v)+K_\lambda$, $v\in V$, and hence an element \\
    ${\bar\theta}+\Hom_B(S^dE,Q)_0$ of $\Hom_B(S^dE,Q)/\Hom_B(S^dE,Q)_0$. One checks that this element depends only on the equivalence class $[{\mathcal E}]$ of ${\mathcal E}$. Hence we have  a map $\Phi:E(S^dE,K_\lambda)\to \Hom_B(S^dE,Q)/\Hom_B(S^dE,Q)_0$ and, for extensions ${\mathcal E}$ and ${\mathcal F}$ we have $\Phi([{\mathcal E}])=\Phi([{\mathcal F}])$ if and only if ${\mathcal E}$ and ${\mathcal F}$ are equivalent.
 
 \q Now for $y\in E(\lambda)$ we have the extension $0\to K_\lambda \xrightarrow{i_y}\ V(y)\xrightarrow{\pi_y}  S^dE\to 0$, which we now call ${\mathcal E}(y)$.  Hence we get a map \\
 $\Psi:E(\lambda)\to \Hom_B(S^dE,Q)/\Hom_B(S^dE,Q)_0$, defined by $\Psi(y)=\Phi([{\mathcal E}(y)])$. It is easy to check that $\Phi([{\mathcal E}(y+y')])=\Phi([{\mathcal E}(y)])+\Phi([{\mathcal E}(y')])$, for $y,y'\in E(\lambda)$ and that  $\Phi([{\mathcal E}(cy)])=c\Phi([{\mathcal E}(y)])$, for $y\in E(\lambda)$, $y\in E(\lambda)$. Hence we have a $K$-linear map $\Psi:E(\lambda)\to  \Hom_B(S^dE,Q)/\Hom_B(S^dE,Q)_0$. It is the composte of two surjections and hence surjective. Moreover, for $y\in E(\lambda)$ we have $\Psi(y)=0$ if and only if $\Phi([{\mathcal E}(y)])=0$, i.e.,  if and only if ${\mathcal E}(y)$ is split, which by Lemma 1.3, is if and only if $y$ is  a standard coherent multi-sequence. Since $\Hom_B(S^dE,Q)/\Hom_B(S^dE,Q)_0$ is isomorphic to $\Ext^1(S^dE,K_\lambda)$ we have shown the following.
 
 \begin{proposition} (i) There is a surjective $K$-linear map from $E(\lambda)$ to\\
  $\Ext^1(S^dE,K_\lambda)$ with kernel spanned by the standard multi-sequence.
  
  (ii) We have
  $$\dim \Ext^1_B(S^d E, K_\lambda)= \begin{cases} \dim E(\lambda), & \hbox{ if } \lambda \hbox{ is  James}\cr
  \dim E(\lambda) -1,  & \hbox{ if } \lambda \hbox{ is  not James.}\
 \end{cases}$$
  \end{proposition}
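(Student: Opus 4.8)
The plan is to combine the two results already established in the build-up. First, Lemma~4.5 gives a bijection between equivalence classes of $B$-module extensions $0\to K_\lambda\to V\to S^dE\to 0$ and the quotient $E(\lambda)/(\text{standard multi-sequences})$, and then the paragraph following it upgrades this to the $K$-linear surjection $\Psi:E(\lambda)\to \Hom_B(S^dE,Q)/\Hom_B(S^dE,Q)_0\cong \Ext^1_B(S^dE,K_\lambda)$, whose kernel is exactly the set of standard multi-sequences. So part~(i) is essentially already proved; I would simply assemble these observations into one statement, being careful to note that the kernel is the \emph{span} of the standard multi-sequence $y(r,s)_i^\st=\binom{\lambda_r+i}{i}$, which is a single element of $E(\lambda)$ (it does lie in $E(\lambda)$ since it arises from the genuine split extension, as checked in Definition~4.2).

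For part~(ii), by the rank--nullity theorem applied to the surjection $\Psi$ we get
$$\dim\Ext^1_B(S^dE,K_\lambda)=\dim E(\lambda)-\dim(\ker\Psi),$$
so everything reduces to computing $\dim(\ker\Psi)$, which is $\dim$ of the span of the standard multi-sequence. This is $1$ unless the standard multi-sequence is the zero element of $E(\lambda)$, in which case it is $0$. Thus I need to show: the standard multi-sequence $y(r,s)_i^\st=\binom{\lambda_r+i}{i}$ (for $1\le r<s\le n$, $1\le i\le\lambda_s$) is identically zero in $K$ if and only if $\lambda$ is a James partition.

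The key computation is therefore the equivalence
$$\binom{\lambda_r+i}{i}=0 \text{ in } K \text{ for all } 1\le r<s\le n,\ 1\le i\le\lambda_s \iff \lambda \text{ is James}.$$
Unwinding: since for fixed $r$ the relevant range of $i$ is $1\le i\le \lambda_s$ for some $s>r$, and the binding constraint is $s=r+1$ (giving the largest such $\lambda_s$ among $s>r$, as $\lambda$ is a partition), the left side says $\binom{\lambda_r+i}{i}=0$ for all $1\le i\le\lambda_{r+1}$ and all $1\le r\le n-1$. By Definition~3.3 that is precisely the assertion that each pair $(\lambda_r,\lambda_{r+1})$ is James, i.e.\ that $\lambda$ is a James partition. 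One subtlety to address: when $\lambda_n=0$ or more generally when $\lambda_s=0$ the constraint for that $s$ is vacuous, and the convention from Definition~3.3 that short partitions are James must be invoked; I should also note that if $\lambda$ has exactly one part then $E(\lambda)=0$ trivially and both sides of the claimed formula reduce to the James case. I do not expect a serious obstacle here --- the only care needed is bookkeeping over which pairs $(r,s)$ give the strongest condition and handling the degenerate ranges --- so the main content of the proof really is the quotation of Lemma~4.5 and the $\Psi$-construction, with part~(ii) following by a one-line rank count plus the elementary James-pair reformulation.
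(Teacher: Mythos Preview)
Your proposal is correct and takes essentially the same approach as the paper: part~(i) is exactly the construction of $\Psi$ already carried out in the text preceding the proposition (the paper simply writes ``Hence we have shown the following''), and your rank--nullity argument for part~(ii), together with the observation that the standard multi-sequence vanishes precisely when every pair $(\lambda_r,\lambda_{r+1})$ is James, is the intended reasoning (the paper leaves this step implicit). Note only a minor numbering slip: the equivalence-class result you cite is Lemma~4.4, not 4.5, and the James definition is 3.2.
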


  \q This prompts the following definition.

  \begin{definition}   We shall say that a partition $\lambda$ is split if every coherent multi-sequence  for $\lambda$ is a multiple of the standard sequence.
  \end{definition}
  
  \q Thus  a partition $\lambda$ is split if and only if we have $\Ext^1_{B(N)}(S^d E,K_\lambda)=0$ (where $d$ is the degree of $\lambda$, where $N$ is at least the number of parts of $\lambda$ and $E$ is the natural $G(N)$-module).

 \bs\bs\bs\bs


\section{Extension sequences for two part partitions}

\q Let $\lambda=(\lambda_1,\ldots,\lambda_n)$ be a partition of length at least $2$. 
We fix $1\leq r<s\leq n$ and set $a=\lambda_s$, $b=\lambda_t$.  According to Section 4, (E), we have 
$${a+i+j\choose j}y(r,s)_i={i+j\choose j}y(r,s)_{i+j}$$
for all $i,j\geq 1$ with $i+j\leq b$. 

\bs

\begin{definition} Let $(a,b)$ be a two part partition. By an {\em extension sequence} for $(a,b)$ we mean a sequence $(x_i)=(x_1,\ldots,x_b)$ of elements of $K$ such that 
$${a+i+j\choose j}x_i={i+j\choose j}x_{i+j}$$
for all $i,j\geq 1$ with $i+j\leq b$.
\end{definition}

\q Since a general extension multi-sequence is a family  of extension sequences it is important to thoroughly understand the possible extension sequences.  We describe all such in this section. We then specialise to the case of $n=2$ to recover the main result of Erdmann's paper, \cite{Erdmann}

\q We now make some elementary observations on  extension sequences.

\begin{remark} \rm Suppose $(x_i)$ is an extension sequence for a  two part partition $(a,b)$  and $c+sp^k\leq b$, with  $0<c<p^k$. Then we have 
$${a+c+sp^k\choose c}x_{sp^k}={a+c+sp^k\choose sp^k}x_c={c+sp^k\choose sp^k}x_{c+sp^k}$$
and moreover, we have ${c+sp^k\choose sp^k}=1$, by Lucas's formula. So we deduce
$$x_{c+sp^k}={a+c+sp^k\choose c}x_{sp^k}={a+c+sp^k\choose sp^k}x_c.$$
\end{remark}

\begin{lemma} Suppose  that $(a,b)$ is a two part partition.  An extension sequence for $(a,b)$  which is $0$ at all powers of $p$ is identically $0$.
\end{lemma}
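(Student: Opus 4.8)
The plan is to show that if $(x_i)$ is an extension sequence for $(a,b)$ with $x_{p^k}=0$ for every $p^k\leq b$, then $x_c=0$ for all $1\leq c\leq b$, by induction on the number of nonzero digits in the base-$p$ expansion of $c$. Write $c=\sum_{k}c_kp^k$ with $0\leq c_k<p$; the induction is on $\sum_k c_k$, the digit sum of $c$. The base case is when the digit sum is a single prime power's worth, or more precisely when $c$ has the form $sp^k$ with $0<s<p$; then I would use relation (E) to compare $x_{sp^k}$ with $x_{p^k}$. Specifically, applying (E) with $i=p^k$, $j=(s-1)p^k$ (valid since $i+j=sp^k\leq b$), we get ${a+sp^k\choose (s-1)p^k}x_{p^k}={sp^k\choose (s-1)p^k}x_{sp^k}$, and ${sp^k\choose (s-1)p^k}={sp^k\choose p^k}=s\neq 0$ in $K$ by Lucas, so $x_{sp^k}=0$ since $x_{p^k}=0$.

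For the inductive step, suppose $c$ has digit sum $>1$ and more than one nonzero digit; pick the lowest nonzero digit position, so we can write $c=c'+sp^k$ with $0<s<p$, $0<c'$, $c'<p^k$ (here $sp^k$ is the lowest-order nonzero "block" and $c'$ collects the higher digits — wait, I need $c'<p^k$, so actually I take $sp^k$ to be the part coming from digit position $k$ being the lowest nonzero one and $c'$ the rest: then $c'$ is divisible by $p^{k+1}$, not $<p^k$). Let me instead split the other way: let $k$ be the lowest position with $c_k\neq 0$, set $c'' = c - c_kp^k$ (the higher part, divisible by $p^{k+1}$) and note $c_kp^k < p^{k+1}\le c''/\text{(something)}$ — the cleaner route is Remark~5.3: with $c=c_0'+sp^{k}$ in the form required there, i.e. taking the low block $c_0' = c \bmod p^{k}$... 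Here is the correct decomposition: let $k=\len_p(c)$ be the top position, write $c = c' + c_kp^k$ with $c' = c - c_kp^k < p^k$ and $c_k p^k \le c \le b$. Apply Remark~5.3 with its "$c$" equal to our $c'$ and "$sp^k$" equal to $c_kp^k$: it gives $x_{c} = x_{c'+c_kp^k} = {a+c'+c_kp^k\choose c'}x_{c_kp^k} = {a+c\choose c'}x_{c_kp^k}$. Now $c_kp^k$ has strictly smaller digit sum than $c$ (it dropped all the digits of $c'$, and $c'\neq 0$ unless $c$ itself is a single-block $c_kp^k$, which is the base case), so by the inductive hypothesis $x_{c_kp^k}=0$, hence $x_c=0$.

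The only genuine point requiring care is making sure the hypotheses of Remark~5.3 are met at each step — namely $0<c'<p^k$ and $c'+c_kp^k\le b$ — and that the recursion strictly decreases the induction quantity; both are immediate from $k=\len_p(c)$ and $c\le b$. I expect the main (very minor) obstacle to be simply bookkeeping the base-$p$ digits so that the reduction $c\mapsto c_kp^k$ is valid and terminating; there is no analytic or structural difficulty. Once the induction is set up this way, the conclusion $x_1=\cdots=x_b=0$ follows immediately, since every index has a finite base-$p$ expansion and the induction bottoms out at the single-block case handled above (and the all-digits-zero case is vacuous as $c\geq 1$).
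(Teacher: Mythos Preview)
Your argument is correct and follows essentially the same route as the paper. Both proofs combine Remark~5.2 (to strip off lower digits and reduce an arbitrary index to one of the form $sp^k$) with a single application of relation~(E) at the single-digit indices; the paper frames this as induction on $b$ and reduces $x_b$ via $x_{(s-1)p^k}$, whereas you induct on the digit sum and reduce $x_{sp^k}$ directly via $x_{p^k}$, which is marginally cleaner but not a different idea.
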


\begin{proof} Suppose $x_1,\ldots,x_b$ is such a sequence.  By induction it is enough to prove that $x_b=0$ and we may assume that $b=sp^k$, for some $1<s\leq p-1$. Taking $i=(s-1)p^k$, $j=p^k$ in the defining relation we get ${sp^k\choose p^k}x_{sp^k}=0$, and ${sp^k\choose p^k}=s\neq 0$ so that $x_{sp^k}=0$, as required.
\end{proof}

\begin{remark} Let $(a,b)$ be a two part partition.  We have the standard extension sequence $(x_i)$ for $(a,b)$ given by  $x^\st_i={a+i\choose i}$, $1\leq i\leq b$. 
\end{remark}

\begin{remark}  \rm A two  part partition $(a,b)$ is James if and only if the standard extension sequence for $(a,b)$ is the zero sequence.
\end{remark}

\begin{remark}   \rm There is a non-zero extension sequence for a James partition $(a,b)$. To see this consider the sequence of integers $N_i={a+i\choose i}$, $1\leq i\leq b$.  Note that these integers satisfy the condition 
$${a+i+j\choose j}N_i={i+j\choose i}N_{i+j}$$
for all $i,j\geq 1$ with $i+j\leq b$. Let $h$ be the largest positive integer such that $p^h$ divides all $N_i$ and set 
$N_i'=N_i/p^h$, for $1\leq i\leq b$.     The we also have 
$${a+i+j\choose j}N_i'={i+j\choose i}N_{i+j}'$$
for all $i,j\geq 1$ with $i+j\leq b$ so that taking $N_1',\dots,N _b'$ modulo $p$ we obtain a non-zero  extension sequence. We call this the {\em canonical extension sequence}. We shall explore this in greater depth (for James partitions of any length) in Section 7.  In particular we have $E(a,b)\neq 0$.
\end{remark}

\q It is easy to describe the extension sequences for a James partition. 

\begin{lemma} Let $(a,b)$ be a two part James partition and let $\beta=\len_p(b)$. A sequence $x_1,\ldots, x_b$ is an extension sequence if and only if 
$$x_{p^\beta}=2x_{2p^\beta}=\cdots=b_\beta x_{b_\beta p^\beta}$$
and $x_i=0$ when $i$ is not  a multiple of $p^\beta$.
\end{lemma}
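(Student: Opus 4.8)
The plan is to analyze the defining relation $\binom{a+i+j}{j}x_i=\binom{i+j}{j}x_{i+j}$ for an extension sequence on a James pair $(a,b)$, exploiting that $(a,b)$ James means $\val_p(a+1)>\beta=\len_p(b)$, i.e.\ the base-$p$ digits $a_0,\dots,a_\beta$ are all equal to $p-1$. First I would record the two halves of the claim separately: (1) if $i$ is not a multiple of $p^\beta$ with $1\le i\le b$, then $x_i=0$; and (2) the values $x_{p^\beta},x_{2p^\beta},\dots,x_{b_\beta p^\beta}$ satisfy $x_{kp^\beta}=\tfrac{1}{k}x_{p^\beta}$ in $K$ (equivalently $kx_{kp^\beta}=x_{p^\beta}$ for $1\le k\le b_\beta$), and these are the only constraints. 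The converse direction — that any sequence of this shape is an extension sequence — should then be a direct verification using Lucas's formula and Remark 3.2, checking that every $\binom{a+i+j}{j}$ appearing in the relation vanishes whenever $i+j$ is divisible by $p^\beta$ but $i$ is not, so both sides are forced to agree.

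For part (1): if $1\le i\le b$ and $p^\beta\nmid i$, write $i=c+sp^k$ with $0<c<p^k$ for the appropriate $k<\beta$ (take $k$ minimal with a nonzero digit below level $\beta$; more simply, let $p^k$ be the smallest power of $p$ dividing $i$, so $k<\beta$). By Remark 5.3 we have $x_{c+sp^k}=\binom{a+c+sp^k}{sp^k}x_c$ once I also control the lower term, but the cleaner route is: since $(a,b)$ is James, $a+1$ is divisible by $p^\beta$, so $a_k=p-1$, and choosing $j$ so that $i+j$ has a digit carry at level $k$ forces $\binom{a+i+j}{j}$ to vanish by Remark 3.2 while $\binom{i+j}{j}\ne 1$ is avoidable — I would instead apply Lemma 5.6 (an extension sequence zero at all powers of $p$ is identically zero) combined with Remark 5.3 to propagate from the powers of $p$. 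Concretely: Remark 5.3 expresses $x_{c+sp^k}$ in terms of $x_{sp^k}$ and $x_c$ with binomial coefficients; if $k<\beta$ then $\binom{a+c+sp^k}{sp^k}=0$ because $a$ has digit $p-1$ at levels $\le\beta$ so there is a carry, hence $x_i=\binom{a+i+j}{j}x_{sp^k}$-type relations collapse everything below level $p^\beta$ to zero. I would organize this as an induction on $i$, using Remark 5.3 to reduce $x_i$ to multiples of $x_{sp^k}$ with $k<\beta$, and then Lemma 5.6 restricted to the truncated sequence $x_1,\dots,x_{p^\beta-1}$ (a James pair again) to conclude those vanish.

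For part (2): with $i$ and $i+j$ both multiples of $p^\beta$, say $i=mp^\beta$, $j=m'p^\beta$, the relation becomes $\binom{a+(m+m')p^\beta}{m'p^\beta}x_{mp^\beta}=\binom{(m+m')p^\beta}{m'p^\beta}x_{(m+m')p^\beta}$. Since $(a,b)$ is James, $a=p^\beta\cdot(\text{something})+(p^\beta-1)$... more precisely $a+1\equiv 0\bmod p^\beta$, so by Lucas the digits of $a$ below level $\beta$ are all $p-1$ and $\binom{a+(m+m')p^\beta}{m'p^\beta}$ reduces, via Lucas applied digit-block-wise above level $\beta$, to $\binom{a^{(\beta)}+m+m'}{m'}$ where $a^{(\beta)}$ is $a$ shifted down; and since $m+m'\le b_\beta\le p-1$ while $a^{(\beta)}$ has trailing digit... one checks $\binom{a^{(\beta)}+m+m'}{m'}$ need not vanish, but the key simplification is $\binom{(m+m')p^\beta}{m'p^\beta}=\binom{m+m'}{m'}$ by Lucas. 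So the relation is $\binom{m+m'}{m'}x_{(m+m')p^\beta}=(\text{nonzero scalar})\,x_{mp^\beta}$; taking $m=1,m'=k-1$ and inducting on $k$ gives $kx_{kp^\beta}$ is a fixed multiple of $x_{p^\beta}$, yielding the stated chain of equalities. The main obstacle I anticipate is the bookkeeping in part (1): carefully justifying via Remark 3.2 and Lucas that the relevant binomial coefficients $\binom{a+i+j}{j}$ vanish precisely because of the James digit pattern $a_0=\dots=a_\beta=p-1$, and setting up the induction so that Lemma 5.6 can be invoked on the appropriate sub-sequence. Everything else is routine manipulation with Lucas's formula.
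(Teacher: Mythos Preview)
Your overall strategy matches the paper's, but part (1) has a genuine gap. You plan to invoke Lemma 5.3 (which you call ``Lemma 5.6'') on the truncated sequence $x_1,\ldots,x_{p^\beta-1}$, but that lemma requires the sequence to already vanish at all powers of $p$ below $p^\beta$, and you never explain how to achieve this. The paper supplies the missing step by a specific choice in the defining relation: taking $i=(p-1)p^{\beta-1}$ and $j=p^{\beta-1}$ gives
\[
\binom{a+p^\beta}{p^{\beta-1}}\,x_{(p-1)p^{\beta-1}}=\binom{p^\beta}{p^{\beta-1}}\,x_{p^\beta}=0,
\]
and since $(a+p^\beta)_{\beta-1}=a_{\beta-1}=p-1$, the left coefficient equals $p-1\neq 0$, forcing $x_{(p-1)p^{\beta-1}}=0$. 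Combined with the chain $x_{p^{\beta-1}}=2x_{2p^{\beta-1}}=\cdots$ (your part (2), the paper's relation $(*)$), this kills all multiples of $p^{\beta-1}$ below $p^\beta$; induction on $\beta$ applied to the James pair $(a,p^\beta-1)$ then gives $x_i=0$ for all $i<p^\beta$. Your appeal to Remark 5.2 (which you call ``Remark 5.3'') does not substitute for this: that remark relates $x_{c+sp^\beta}$ to $x_c$ and $x_{sp^\beta}$, which is useful for indices above $p^\beta$ (and this is exactly how the paper uses it afterwards), but it gives no leverage on $x_i$ with $i<p^\beta$.

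For the converse, your direct verification would work but the paper is more economical: once the forward direction shows the space of extension sequences has dimension at most one, Remark 5.6 supplies a nonzero element (the canonical sequence), so the space is exactly one-dimensional and automatically of the stated form --- no case-by-case verification of the defining relation is needed. Your part (2) is essentially correct and matches the paper's derivation of $(*)$, though your discussion of the coefficient $\binom{a+(m+m')p^\beta}{m'p^\beta}$ is muddled; the paper simply applies the defining relation with $i=(s-1)p^k$, $j=p^k$ and reads off via Lucas that $(a+sp^k)_k=s-1$, yielding $(s-1)x_{(s-1)p^k}=sx_{sp^k}$ directly.
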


\begin{proof}  Let $k\leq \beta$ and suppose that $1<s\leq b_k$. Then, taking $i=(s-1)p^k$ and $j=p^k$ in the defining condition for an extension sequence,  we have 
$${a+sp^k\choose p^k}x_{(s-1)p^k}={sp^k\choose (s-1)p^k}x_{sp^k}.$$
Now $k<\val_p(a+1)$ so we have  $a_k=p-1$ and therefore $(a+sp^k)_k=s-1$ and hence, by Lucas's formula,
$(s-1)x_{(s-1)p^k}=sx_{sp^k}$ and so we get 
$$x_{p^k}=2x_{2p^k}=\cdots=b_k x_{b_k p^k}.\eqno{(*)}$$

\q We now apply the defining condition with $i=(p-1)p^{\beta-1}$ and $j=p^{\beta-1}$. We get 
$${a+p^\beta\choose p^{\beta-1}}x_{(p-1)p^{\beta-1}}={p^\beta\choose (p-1)p^{\beta-1}}x_{p^\beta}=0.$$
But ${a+p^\beta\choose p^{\beta-1}}=(a+p^\beta)_{\beta-1}=p-1\neq 0$ so that $x_{(p-1)p^{\beta-1}}=0$ 
and so by (*), $x_{sp^{\beta-1}}=0$ for $1\leq s\leq p-1$.  But now $p^\beta-1<b$ and $(a,p^\beta-1)$ is James so we may assume,  by induction, that $x_i=0$ for $1\leq i<p^\beta$.  Now Remark 5.2 gives that in fact $x_i=0$ for all $i\leq b$, with $i$ not a multiple of $p^\beta$. Furthermore the space of extension sequences for $(a,b)$ is at most one dimensional, by (*). However, we know, by Remark 5.6, that there is a non-zero extension sequence, it follows that the space of such sequences is one dimensional and is as described in the statement of the Lemma.
\end{proof}

\begin{remark}    Lemma 5.7 implies that an extension sequence for a two part James partition is a multiple of the canonical sequence.
\end{remark}

\begin{definition}  A two part partition $(a,b)$ will be called a pointed partition if  $b$ can be written in the form $b=\hatb+p^\beta$, with $\beta=\len_p(b)$ with   $\hatb<p^{\val_p(a+1)}$   and $\beta>\val_p(a+1)$.  In this case we call the extension sequence $(x^\pt_i)$, $1\leq i\leq b$, defined by 
$$x^\pt_i=\begin{cases} 1, & \hbox { if } i=p^\beta;\cr
0, & \hbox{ otherwise}.
\end{cases}$$
the point sequence for $(a,b)$.

\end{definition}

\begin{remark}  \rm  Note that the point sequence is non-standard since, with the above notation, the $p^v$th term in the standard sequence  ${a+p^v\choose p^v}=a_v+1$ is non-zero, where $v=\val_p(a+1)$.

\end{remark}

\begin{lemma} Suppose that $(a,b)$ is a two part partition which is not James.  Let $v=\val_p(a+1)$ and $\beta=\len_p(b)$. 
Then $(a,b)$ is split unless we may write $b=\hatb+p^\beta$, where $\hatb<p^v<p^\beta$   and  in this case  $E(a,b)$  is spanned by the standard sequence and the point sequence.
\end{lemma}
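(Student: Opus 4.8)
The plan is to analyze an extension sequence $(x_i)_{1\le i\le b}$ for a non-James two part partition $(a,b)$ by extracting information at prime powers, exactly as in the proof of Lemma 5.7, and then invoking Lemma 5.3 and Remark 5.2 to propagate that information to all indices. Write $v=\val_p(a+1)$ and $\beta=\len_p(b)$; since $(a,b)$ is not James we have $\beta\ge v$. First I would rerun the argument of Lemma 5.7 in the range $k<v$: for such $k$ we have $a_k=p-1$, so taking $i=(s-1)p^k$, $j=p^k$ in the defining relation (E) gives $(s-1)x_{(s-1)p^k}=sx_{sp^k}$ for $1<s\le b_k$, hence a single common value $x_{p^k}=2x_{2p^k}=\cdots$ governs the terms at multiples of $p^k$ up to $b_kp^k$; and, just as in Lemma 5.7, pushing $k$ up to the boundary forces $x_{sp^{k}}=0$ for all $k<v$ and all admissible $s$. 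So the only indices where $(x_i)$ can be nonzero are multiples of $p^v$. This is the part that mirrors the James case; the new phenomenon is what happens at exponents $k$ with $v\le k\le\beta$, where $a_k<p-1$ and the multiplicativity relation behaves differently.

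Next I would treat the exponents $k$ in the range $v\le k<\beta$. Here I expect to show, using relation (E) with $i,j$ suitable multiples of $p^k$, together with the already-established vanishing below $p^v$ and Remark 5.2 (which expresses $x_{c+sp^k}$ in terms of $x_{sp^k}$ and $x_c$ whenever $0<c<p^k$), that any term $x_{sp^k}$ with $s\ge 1$ and $k$ in this range is forced to be a scalar multiple of the standard value ${a+sp^k\choose sp^k}$ — equivalently, that $(x_i)$ agrees with a multiple of the standard sequence on the block of indices strictly below $p^\beta$. The key point is that, because $v\le k$, the coefficient ${a+p^k\choose p^k}=a_k+1$ occurring in the standard sequence is nonzero, so there is genuine rigidity: a relation of the form ${a+i+j\choose j}x_i={i+j\choose j}x_{i+j}$ with both binomials nonzero pins down the ratio. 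Carrying this out carefully over all $k$ in $[v,\beta)$ and over the carries handled by Remark 5.2 should show that $x_i={\rm const}\cdot{a+i\choose i}$ for all $i<p^\beta$.

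It then remains to analyze $x_{p^\beta}$ and, if $b_\beta>1$, the further terms $x_{2p^\beta},\dots,x_{b_\beta p^\beta}$, and the mixed terms $x_{cp^\beta+\text{lower}}$. Subtracting the appropriate multiple of the standard sequence, we reduce to the case where $x_i=0$ for all $i<p^\beta$; I would then show that $x_{p^\beta}$ is an unconstrained free parameter precisely when $b$ has the pointed form $b=\hatb+p^\beta$ with $\hatb<p^v<p^\beta$, and otherwise is forced to $0$. Concretely: if $\beta=v$ (so $a_\beta\ne p-1$), or if the digit structure of $b$ below $p^\beta$ is too large (i.e. $\hatb\ge p^v$), then some instance of (E) — of the shape used in Lemma 5.3 or in the $s>1$ step of Lemma 5.7 — links $x_{p^\beta}$ to a lower term known to vanish via a nonzero binomial coefficient, forcing $x_{p^\beta}=0$ and hence, by Lemma 5.3, the whole residual sequence to vanish; in the pointed case those obstructing relations all have vanishing binomial coefficients, the point sequence $(x^{\pt}_i)$ of Definition 5.10 is checked directly to satisfy (E), and a dimension count (the residual space is $\le 1$-dimensional by the same reasoning, and is realized by the point sequence) finishes it. Finally one notes $b_\beta>1$ cannot occur together with the pointed condition, and even if $b_\beta=1$ the carries are handled by Remark 5.2, so $E(a,b)$ is spanned by the standard sequence and the point sequence in the pointed case and by the standard sequence alone otherwise.

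The main obstacle I anticipate is the bookkeeping in the middle range $v\le k<\beta$: one must simultaneously track the "common value" relations coming from the digits $a_k<p-1$, the carry relations of Remark 5.2, and the need to match against the standard sequence rather than merely prove one-dimensionality — this is where the non-James case genuinely diverges from Lemma 5.7 and where a clean induction on $b$ (peeling off the top digit $b_\beta p^\beta$ and reducing to the partition $(a,b-p^\beta)$ or to $(a,p^\beta-1)$) will likely be needed to keep the argument manageable.
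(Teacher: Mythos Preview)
Your direct approach to the middle range has a concrete error: you assert that for $v \le k < \beta$ the coefficient ${a+p^k \choose p^k} = a_k + 1$ is nonzero, but this is only guaranteed at $k = v$. For $k > v$ the digit $a_k$ may equal $p-1$ (e.g.\ $p=3$, $a=6$, $v=0$, $k=1$, where ${9 \choose 3} \equiv 0$), and then you get no rigidity at that level. So the ``direct'' part of your plan collapses and you are thrown back onto the induction you mention only at the end. There is also a gap in your treatment of $\beta = v$: you claim some instance of (E) links $x_{p^\beta}$ to a lower term via a nonzero binomial, but every relation (E) with $i+j = p^v$ has ${p^v \choose j} = 0$ on the $x_{p^v}$ side, so nothing forces $x_{p^v}$ to vanish from below. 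In this case your subtraction ``to make $x_i=0$ for $i<p^\beta$'' is vacuous (the standard sequence already vanishes there), so you have not yet used the standard sequence at all; the fix is to subtract the multiple of the standard that kills $x_{p^v}$ itself (possible since $x^{\st}_{p^v} = a_v + 1 \ne 0$), after which Lemma 5.3 finishes immediately.

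The paper's proof is considerably simpler than your plan: a one-step induction $b \to b-1$, with three cases according to whether $(a,b-1)$ is James, split, or pointed. If $(a,b-1)$ is split, subtract the standard to make $x_i = 0$ for $i < b$; then Lemma 5.3 gives $(x_i) = 0$ unless $b = p^\beta$, and in that remaining case $b-1 \ge p^v$ forces $\beta > v$, so $(a,b)$ is pointed with $\hat b = 0$. If $(a,b-1)$ is pointed, write $b-1 = c + p^t$ with $c < p^v < p^t$, subtract both the standard and point sequences for $(a,b-1)$ to zero out $x_i$ for $i<b$, and observe that $b=p^\beta$ would give $p^{\beta-1}-1 < p^v < p^{\beta-1}$, impossible. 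If $(a,b-1)$ is James then $b = p^v$, and the crossover relation (the same one used in Lemma 5.7) forces $x_{(p-1)p^{v-1}}=0$, hence $x_i=0$ for $i<b$ and $E(a,b)$ is one-dimensional. Your Step 1 is correct but entirely subsumed by this; the paper never isolates the range below $p^v$ separately.
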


\begin{proof}

First assume that  $b=\hatb+p^\beta$, with $\hatb< p^v<p^\beta$.  We check that the point sequence $x_1,\ldots,x_b$ is indeed  an   extension sequence.   If $\hatb=0$ then we need only to check the defining relation for $i+j=p^\beta$ and in that case ${i+j\choose i}=0$ and $x_i=0$ (for $i<p^\beta$) so the relation holds.

\q So we suppose that $\hatb>0$. We may  assume inductively that $x_1,\ldots,x_{b-1}$ is an extension sequence and again we need only to check the defining relations with $i+j=b$.  Moreover $x_b=0$, as  $b\neq p^\beta$,  and so we must check that ${a+b\choose j}x_i$ is zero for $b=i+j$.   Since $x_i=0$ for $i\neq p^\beta$ we may assume that  $i=p^\beta$, $j=\hatb$ and we must check that ${a+b\choose \hatb}=0$. We choose $k$ minimal such that $\hatb_k\neq 0$. Then $a_k=p-1$ (since $\hatb<p^v$) and hence  $(a+b)_k=b_k-1$ so that ${(a+b)_k\choose \hatb_k}=0$ and ${a+b\choose \hatb}=0$, by Lucas's Formula.

\q We now prove that  $(a,b)$   is split unless it has the above form and in that case the space of extensions sequences is spanned by those described above. If $b=1$ then $(a,b)$ is not pointed and (since $(a,b)$ is not James) we have $v=0$ and $x_1^\st\neq 0$ so that an extension sequence $x_1$ is a multiple of the standard sequence.

\q We now consider the case  $b>1$. Let $x_1,\ldots,x_b$ be an extension sequence for $(a,b)$. 

\q Assume that $(a,b-1)$ is not James. If  $(a,b-1)$ is  split then $x_1,\ldots,x_{b-1}$ is a standard sequence  and hence, subtracting a standard sequence  for $(a,b)$ we may assume that $x_i=0$ for $i<b$. Hence, by Lemma 5.3 $(x_i)$ is the zero sequence unless $b=p^\beta$.  But we have $p^\beta-1\geq p^v$  so that $\beta>v$ and $(a,b)$ is of the required form and the extension sequence is also of the required form.  If $(a,b-1)$ is not split we may assume inductively that we can  write $b-1=c+p^t$ with $c<p^v <p^t$. Moreover, after subtracting a  linear combination of the standard solution and the point solution for $(a,b-1)$, we may assume that $x_i=0$ for all $i<b$. If $b$ is not a power of $p$ this implies that $x_b$ is also zero by Remark 5.2.  Hence we can assume that $b=p^\beta$. Then we get $p^{\beta-1}-1< p^v <p^{\beta-1}$, and this is not possible.

\q It remains to consider the case in which $(a,b-1)$ is James.  So we have $b-1<p^v$  and, since $(a,b)$ is not James, $b=p^v$.   By  Lemma 5.7   we have
$$x_{p^{v-1}}=2x_{2p^{v-1}}=\cdots=(p-1)x_{(p-1)p^{v-1}}$$
and $x_i=0$ for $i<b$ and $i$ not divisible by  $p^{v-1}$. 

\q Taking $i=(p-1)p^{v-1}$, $j=p^{v-1}$ in the defining relation we get
$${a+p^v\choose p^{v-1}}x_{(p-1)p^{v-1}}=0.$$
Moreover $a_{v-1}=p-1$ so that ${a+p^v\choose p^{v-1}}=p-1$, by Lucas's Formula, and 
 $x_{(p-1)(p^{v-1})}=0$. Hence we have $x_i=0$ for $0<i<b=p^v$.  Hence the space of extensions sequences for $(a,b)$ is at most one dimensional. Hence the space of extension sequences consists of standard solutions and $(a,b)$ is split.
\end{proof}

\q We now take $n=2$ and consider the extensions for the general linear group $G(2)$, over $K$. Let $(r,s), (t,u)\in \Lambda^+(n,r)$ with $r+s=t+u$. Then we have $\Ext^1_{G(2)}(\nabla(r,s),\nabla(t,u))=0$ unless $(r,s) >  (t,u)$.  Then have $s < u$. Moreover, we have $\nabla(r,s)=\nabla(r-s,0)\otimes D^{\otimes s}$ and \\
$\nabla(t,u)=\nabla(t-s,,u-s)\otimes D^{\otimes s}$, where $D$ is the one dimensional determinant module.  Hence we have 
$$\Ext^1_{G(2)}(\nabla(r,s),\nabla(t,u))
=\Ext^1_{G(2)}(\nabla(r-s,0),\nabla(t-s,u-s))$$ i.e.,
$$\Ext^1_{G(2)}(\nabla(r,s),\nabla(t,u))=\Ext^1_{G(2)}(S^d E, \nabla(t-s,u-s)).$$
where $d=r-s$.  Hence, from Lemmas 5.7  and 5.11, we have the following.

\begin{corollary}   Let $(r,s), (t,u)$ be two part partitions of the same degree. Then 
$$\Ext^1_{G(2)}(\nabla(r,s),\nabla(t,u))=\begin{cases} K, & \hbox{ if }  (t-s,u-s) \hbox{ is James  or pointed};\cr
0, & otherwise
\end{cases}
$$
i.e, 
$$\Ext^1_{G(2)}(\nabla(r,s),\nabla(t,u))=
\begin{cases} K, & \hbox{ if } u-s< p^v\cr
 &\hbox{ or } u-s-p^l < p^v< p^l;\cr
0, & \hbox{ otherwise}
\end{cases}
$$
where $v=\val_p(t-s+1)$ and $l=\len_p(u-s)$.
\end{corollary}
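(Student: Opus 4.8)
\q The statement is a short corollary of material already in place, so my plan is to assemble the pieces and keep the bookkeeping straight. The discussion preceding the statement has already reduced matters, via the twist by the one dimensional determinant module $D$, to $\Ext^1_{G(2)}(\nabla(r,s),\nabla(t,u))=\Ext^1_{G(2)}(S^dE,\nabla(t-s,u-s))$ with $d=r-s$, valid whenever $(r,s)>(t,u)$ (in which case $s<u$), while $\Ext^1_{G(2)}(\nabla(r,s),\nabla(t,u))=0$ when $(r,s)\not>(t,u)$; and in the latter case $u\le s$, so $(t-s,u-s)$ has non-positive second part, is not a genuine two part partition, and is therefore neither James nor pointed, so there is nothing to prove. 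Thus I would assume $(r,s)>(t,u)$, set $(a,b)=(t-s,u-s)$, note that $a\ge b>0$ and $a+b=d$ so that $(a,b)\in\Lambda^+(2,d)$ is a two part partition, and apply Theorem 0.1 with $\mu=(d,0)$ (so $\nabla(\mu)=S^dE$) to obtain $\Ext^1_{G(2)}(S^dE,\nabla(a,b))=\Ext^1_{B(2)}(S^dE,K_{(a,b)})$.

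\q Next I would feed this into Proposition 4.5(ii). Since for $n=2$ the only index pair is $(1,2)$, an extension multi-sequence for $(a,b)$ is a single $(1,2)$-extension sequence and the relations (T1)-(T3b) and (C) of Section 4 are vacuous; hence $E(a,b)$ is precisely the space of extension sequences for $(a,b)$ of Definition 5.1, and Proposition 4.5(ii) gives
$$\dim\Ext^1_{B(2)}(S^dE,K_{(a,b)})=\begin{cases}\dim E(a,b),&\hbox{if }(a,b)\hbox{ is James};\\ \dim E(a,b)-1,&\hbox{if }(a,b)\hbox{ is not James}.\end{cases}$$
It then suffices to compute $\dim E(a,b)$ in the three cases of the two part trichotomy.

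\q If $(a,b)$ is James, then by Remark 5.6, Lemma 5.7 and Remark 5.8 the space $E(a,b)$ is one dimensional (spanned by the canonical sequence), so $\dim\Ext^1_{B(2)}(S^dE,K_{(a,b)})=1$. If $(a,b)$ is not James, Lemma 5.11 applies: either $(a,b)$ is split, so that $E(a,b)$ is spanned by the standard sequence, which is non-zero since $(a,b)$ is not James (Remark 5.5), giving $\dim E(a,b)=1$ and hence $\dim\Ext^1=0$; or $(a,b)$ is pointed, so that $E(a,b)$ is spanned by the standard and the point sequence, which are linearly independent (the point sequence being non-standard by Remark 5.10), giving $\dim E(a,b)=2$ and hence $\dim\Ext^1=1$. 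Collecting, $\Ext^1_{G(2)}(\nabla(r,s),\nabla(t,u))=K$ exactly when $(a,b)=(t-s,u-s)$ is James or pointed and is $0$ otherwise, which is the first displayed formula. For the second form I would translate the trichotomy arithmetically: by Remark 3.3, $(a,b)$ is James iff $b<p^{\val_p(a+1)}$, i.e. $u-s<p^v$ with $v=\val_p(t-s+1)$; and by Definition 5.9, $(a,b)$ is pointed iff $b=\hatb+p^{\len_p(b)}$ with $\len_p(b)>\val_p(a+1)$ and $\hatb<p^{\val_p(a+1)}$, i.e. $u-s-p^l<p^v<p^l$ with $l=\len_p(u-s)$. (These two conditions are mutually exclusive, since one forces $\len_p(b)<v$ and the other $\len_p(b)>v$.)

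\q I expect no real obstacle: every substantive ingredient is imported (Theorem 0.1, Proposition 4.5, and the two part classification Lemmas 5.7 and 5.11). The only points needing care are checking that the determinant twist really lands $(a,b)$ in degree $d$ with both parts strictly positive, so that it is a legitimate two part partition to which Lemma 5.11 applies, and applying the James/not-James distinction of Proposition 4.5(ii) correctly, so that James ($\dim E(a,b)=1$) and pointed ($\dim E(a,b)=2$) each yield a one dimensional $\Ext$ while split ($\dim E(a,b)=1$) yields zero.
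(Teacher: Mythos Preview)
Your proposal is correct and follows essentially the same route as the paper: the determinant twist reduction to $\Ext^1_{G(2)}(S^dE,\nabla(t-s,u-s))$ is the discussion immediately preceding the corollary, and the paper then simply says ``Hence, from Lemmas 5.7 and 5.11, we have the following.'' You have unpacked this by making the passage through $\Ext^1_{B(2)}$ (Theorem 0.1) and the identification of $E(a,b)$ with the space of extension sequences (Proposition 4.5(ii)) explicit, and by citing the supporting Remarks 5.5, 5.8, 5.10 for the dimension counts; this is exactly the intended argument, just written out in full.
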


\q We now consider extensions for ${\rm SL}_2(K)$, as in \cite{Erdmann}. For $r\geq 0$ we write  simply $\nabla(r)$ for the $r$th symmetric power of the natural representation. We have $\Ext^1_{{\rm SL}_2(K)}(\nabla(r),\nabla(s))=0$ unless $r-s=2m$ for some positive integer  $m$. Moreover, in this case we have $\Ext^1_{{\rm SL}_2(K)}(\nabla(r),\nabla(s))=\Ext^1_{G(2)}(\nabla(r,0),\nabla(s+m,m))$. Hence we obtain the following result of Erdmann, \cite{Erdmann},   (3.6) Theorem. 

\begin{corollary} For $r,s\geq 0$ we have 
$$\Ext^1_{{\rm SL}_2(K)}(\nabla(r),\nabla(s))
=\begin{cases} K, &\hbox{ if } r-s=2m \hbox{ is  positive even and } m < p^v \cr
& \hbox{ or } m-p^l<p^v<p^l;\cr
0, & \hbox{ otherwise.}
\end{cases}
$$
where $v=\val_p(s+m+1)$ and $l=\len_p(m)$.

\end{corollary}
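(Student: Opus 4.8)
The plan is to obtain this directly from Corollary 5.13 by means of the two reductions already recorded above: that $\Ext^1_{\SL_2(K)}(\nabla(r),\nabla(s))=0$ unless $r-s$ is a positive even integer, say $r-s=2m$, and that in the remaining case $\Ext^1_{\SL_2(K)}(\nabla(r),\nabla(s))=\Ext^1_{G(2)}(\nabla(r,0),\nabla(s+m,m))$. The first of these immediately accounts for the ``otherwise'' branch of the asserted formula, so the entire content is the evaluation of $\Ext^1_{G(2)}(\nabla(r,0),\nabla(s+m,m))$ when $r-s=2m$ with $m\geq 1$.

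For this I would apply Corollary 5.13 with source module $\nabla(r,0)=S^rE$ and target module $\nabla(s+m,m)$; these have the common degree $r=s+2m$, and Corollary 5.13 (or, if one prefers, the reduction carried out in its proof to $\Ext^1_{G(2)}(S^dE,\nabla(\mu))$ for the appropriate two-part $\mu$) then applies. In the notation of that corollary the source pair is ``$(r,s)$'' with ``$s$''$\,=0$ and the target pair is ``$(t,u)$'' with ``$t$''$\,=s+m$, ``$u$''$\,=m$; hence the parameters specialise to $v=\val_p(t-s+1)=\val_p(s+m+1)$ and $l=\len_p(u-s)=\len_p(m)$, and the pair $(t-s,\,u-s)$ controlling the James/pointed dichotomy is exactly $(s+m,\,m)$. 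Corollary 5.13 therefore gives that $\Ext^1_{G(2)}(\nabla(r,0),\nabla(s+m,m))$ equals $K$ when $m<p^v$ or $m-p^l<p^v<p^l$, and vanishes otherwise; feeding this through the second reduction yields precisely the stated description of $\Ext^1_{\SL_2(K)}(\nabla(r),\nabla(s))$.

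There is no genuine obstacle here: all the substance lives upstream, in Lemmas 5.7 and 5.11 together with the determinant-twist identity $\nabla(r,s)\cong\nabla(r-s,0)\otimes D^{\otimes s}$ and the triviality of $D$ on $\SL_2(K)$. The only care required is the parameter bookkeeping, and in particular keeping the ``$s$'' of Corollary 5.13 (which here is $0$) distinct from the ``$s$'' of the present statement. If one wished to avoid quoting the stated reduction, the one point to expand would be the identity $\Ext^1_{\SL_2(K)}(\nabla(r),\nabla(s))=\Ext^1_{G(2)}(\nabla(r,0),\nabla(s+m,m))$: since $\nabla(r,0)$ and $\nabla(s+m,m)$ are both polynomial $G(2)$-modules of degree $r$, an $\SL_2(K)$-extension of $\nabla(r)$ by $\nabla(s)$ lifts uniquely to a polynomial $G(2)$-extension of $\nabla(r,0)$ by $\nabla(s+m,m)$ (for instance via the Schur-algebra description of polynomial modules), while restriction along $\SL_2(K)\hookrightarrow G(2)$ undoes the determinant twist, and these operations are mutually inverse $K$-linear maps.
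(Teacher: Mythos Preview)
Your proposal is correct and follows exactly the paper's approach: the paper derives this corollary from the preceding $G(2)$ result via the two reductions you cite (vanishing unless $r-s=2m>0$, and $\Ext^1_{\SL_2(K)}(\nabla(r),\nabla(s))=\Ext^1_{G(2)}(\nabla(r,0),\nabla(s+m,m))$). One bookkeeping slip: the $G(2)$ result you invoke is Corollary~5.12, not 5.13 (the latter is the statement you are proving).
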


\q In fact A. Parker has given a precise recursive description of all higher extensions  $\Ext^i_{{\rm SL}_2(K)}(\nabla(r),\nabla(s))$, $r, s\geq 0$,  in \cite{Parker}, Section 5 (in the equivalent dual formulation for Weyl modules).

\bs\bs\bs\bs

\newpage


\section{Coherent Triples}

In the sections that follow, particularly Sections 8, 10, and 11 we shall be working extension sequences for $3$ part partitions. We here abstract the notion of a coherent triple from the relations given in Section 4.

\begin{definition}

We fix a three part partition $\lambda=(a,b,c)$. By a   coherent triple of extension sequences for $\lambda$ we mean a triple  \\
 $((x_i)_{1\leq i\leq b}, (y_j)_{1\leq j\leq c}, (z_k)_{1\leq i\leq c})$ of extension sequences satisfying the following relations:

\begin{align*}
&{\rm (T1)}\     {{a+i+k}\choose{k}}x_i={{a+i+k}\choose{i}}z_k,  \hskip 85pt 1\leq i\leq b, 1\leq k\leq c; \\ \cr
&{\rm (T2)}\    {{a+k}\choose{k}}y_j={{b+j}\choose{j}}z_k, \hskip 110pt 1\leq j,k\leq c,   j+k\leq c; \\ \cr
&{\rm (T3a)}\    {{a+i}\choose{i}}y_j=\sum_{s=0}^{i-1}{{b+j-i}\choose{j-s}}{{a+i}\choose{s}}x_{i-s}+{{b+j-i}\choose{j-i}}z_i,  \hskip 0pt 1\leq  i\leq j\leq c;\\ \cr
&{\rm (T3b)}\   {{a+i}\choose{i}}y_j=\sum_{s=0}^{j}{{b+j-i}\choose{j-s}}{{a+i}\choose{s}}x_{i-s},  \hskip 30pt 1\leq j\leq c,  j<i\leq b+j. \\ \cr
\end{align*}

\end{definition}

\q Thus if $\lambda=(\lambda_1,\ldots,\lambda_n)$ is a partition of length $n$ and $(y(t,u)_i)$ is a coherent multi-sequence for $\lambda$ then for 
$1\leq q<r<s\leq n$ we may extract the coherent triple $(x_i),(y_j),(z_k)$ for $(\lambda_q,\lambda_r,\lambda_s)$  given by $x_i=y(q,r)_i$, $y_j=y(r,s)_j$,  and $z_k=y(q,s)_k$,    for $1\leq i\leq r$,  $1\leq j,k\leq \lambda_s$.

\begin{remark} \rm  In any coherent triple $((x_i),(y_j),(z_k))$ the extension sequence $(z_k)$ is determined by $(x_i)$ and $(y_j)$. (Take $i=j$ in relation (3a).) 

\end{remark}

\q  We have the space $E(\lambda)$ of all coherent triples for $\lambda=(a,b,c)$. It will emerge (as a result of explicit calculation)  that in fact $E(\lambda)$ is at most two dimensional.

\bs

\q Note that for   $\lambda=(a,b,c)$ the standard multi-sequence for $\lambda$ gives the standard triple 
$x_i^\st={a+i\choose i}$, $1\leq i\leq b$,  $y_j^\st={b+j\choose j}$, $1\leq j\leq c$, $z_k^\st={a+k\choose k}$, $1\leq k\leq c$.

\bs\bs\bs\bs


\section{Extensions dimensions for James partitions}

\q Recall that for a partition $\lambda$ we write $E(\lambda)$ for the space of extension multi-sequences for $\lambda$. In this section we  determine the dimension of $E(\lambda)$, for $\lambda$ a James partition. Note that, by Proposition 4.5 this is the dimension of $\Ext^1_{B(N)}(S^d E,K_\lambda)$  (where $d$ is the degree of $\lambda$, $N$ is at least the number of parts of $\lambda$ and $E$ is the natural $G(N)$-module).

\begin{definition}  
A  partition  $\lambda$  will be called constrained if   $\Ext^1_{B(N)}(S^d E,K_\lambda)$  is at most one dimensional (where $d$, $N$ and $E$ are as above).
\end{definition}

\q For a James partition $\lambda$, this is equivalent to the condition that $E(\lambda)$ is at most one dimensional.

\q  We extend the notion of the definition of the canonical multi-sequence defined in Section 5 for a  two part James partition to an arbitrary James  partition $\lambda=(\lambda_1,\ldots,\lambda_n)$ of length $n\geq 2$. 

 \begin{definition} Let  $\lambda=(\lambda_1,\ldots,\lambda_n)$ be a James partition with $n\geq 2$ parts.  We define the James index $\JI(\lambda)$ to be the largest positive integer $k$ such that $p^k$ divides all integers ${\lambda_i+j\choose j}$, for $1\leq i<n$ and $1\leq j\leq \lambda_{i+1}$.
 \end{definition}

 \begin{definition}    Let $\lambda=(\lambda_1,\ldots,\lambda_n)$ be a James partition of length $n$ then we have the canonical coherent multi-sequence $(y^\can(r,s)_i)$, where $y^\can(r,s)_i$, for $1\leq r<s\leq n$, $1\leq i\leq \lambda_s$,  is obtained by taking ${\lambda_r+i\choose i}/p^{\JI(\lambda)}$ modulo $p$.
 
 \end{definition}

\q  We begin by deriving a concrete description of the canonical multi-sequence.

\q Let $(a,b)$ be a James $2$-part partition. We put $v=\val_p(a+1)$, $w=\val_p(b)$. We write $a=(p^v-1)+p^vA$ and $b=p^w B$.  Thus we have 
$${a+b\choose b-1}={(p^v-1)+p^vA+p^wB\choose p^wB-1}={p^wB-1+p^v(A+1)\choose p^wB -1}$$
and hence 
$${a+b\choose b-1}\equiv 1 \hskip 10pt \hbox{ mod $p$}  \eqno{(1)}$$
 by Lucas's Formula.
 
 \q Now we have
 $${a+b\choose b}={a+1 \over b}{a+b\choose b-1}$$
 which shows that $\val_p{a+b\choose b}\geq v-w$ and also gives 
 $$p^wB{a+b\choose b}=p^v(A+1){a+b\choose b-1}$$
 so that 
 $$B\left[ { {a+b\choose b} \over  p^{v-w}}\right]=(A+1){a+b\choose b-1}$$
 so  from (1) we get 
 $$b_w \left[ { {a+b\choose b} \over  p^{v-w}}\right] \equiv a_v+1  \hskip 10pt \hbox{ mod $p$}. \eqno{(2)}$$
 In particular ${a+b\choose b}/p^{v-w}$ is non-zero modulo $p$ so that 
 $$\val_p{a+b\choose b}=v-w \eqno{(3).}$$

 \q We now assume that $\lambda=(\lambda_1,\ldots,\lambda_n)$ is a partition of length $n\geq 2$. We adopt the following notation:  $v_i=\val_p(\lambda_i+1)$, for $1\leq i<n$, and  $l_i=\len_p(\lambda_i)$ for $2\leq i\leq n$. In the light of (2),(3) we now have have the following description of the James index.
 
 \begin{proposition}  Let $\lambda=(\lambda_1,\ldots,\lambda_n)$ be a James partition with $n\geq 2$ parts. Then $\JI(\lambda)$ is the minimum  of $v_1-l_2,\ldots,v_{n-1}-l_n$.  Moreover we have
 $$y^\can(r,s)_i=\begin{cases} ((\lambda_r)_{v_r}+1)/ t, & \hbox{ if }  v_r-l_s=\JI(\lambda) \hbox{ and } i=tp^{l_s}; \cr
 0, & \hbox{ otherwise.} 
 \end{cases}$$
 
 \end{proposition}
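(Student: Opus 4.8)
The plan is to reduce everything to the two-part formulas (1), (2), (3) established just above. The first observation I would record is that if $\lambda=(\lambda_1,\ldots,\lambda_n)$ is James then so is the two-part partition $(\lambda_r,\lambda_s)$ for every $1\leq r<s\leq n$: the pair $(\lambda_r,\lambda_{r+1})$ is James, so $\lambda_{r+1}<p^{v_r}$, and $\lambda_s\leq\lambda_{r+1}$ then gives $\lambda_s<p^{v_r}$, which is the James condition for $(\lambda_r,\lambda_s)$. In particular $(\lambda_r,i)$ is James for every $1\leq i\leq\lambda_s$, so applying (3) to it yields
$$\val_p{\lambda_r+i\choose i}=v_r-\val_p(i),\qquad 1\leq i\leq\lambda_s,$$
and applying (2) yields, writing $w=\val_p(i)$ and noting that the $w$th base-$p$ digit of $i$ is $i/p^w\bmod p$,
$$\frac{i}{p^w}\cdot\left[{\lambda_r+i\choose i}\Big/p^{\,v_r-w}\right]\equiv(\lambda_r)_{v_r}+1\pmod p.$$

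Next I would compute $\JI(\lambda)$. By definition it is the minimum, over $1\leq i<n$ and $1\leq j\leq\lambda_{i+1}$, of $\val_p\binom{\lambda_i+j}{j}$, which by the valuation formula equals the minimum over $i$ of $v_i-\max\{\val_p(j)\mid 1\leq j\leq\lambda_{i+1}\}$. Since $j\leq\lambda_{i+1}$ forces $\val_p(j)\leq\len_p(j)\leq\len_p(\lambda_{i+1})=l_{i+1}$, with equality attained at $j=p^{\,l_{i+1}}$, this maximum is $l_{i+1}$, giving $\JI(\lambda)=\min(v_1-l_2,\ldots,v_{n-1}-l_n)$; note each $v_i-l_{i+1}\geq 1$ by the James condition, so $\JI(\lambda)$ is indeed a positive integer.

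Finally, for the formula for $y^\can(r,s)_i\equiv\binom{\lambda_r+i}{i}/p^{\JI(\lambda)}\pmod p$, I would first pin down when it is nonzero: exactly when $\val_p\binom{\lambda_r+i}{i}=\JI(\lambda)$, i.e. when $v_r-\val_p(i)=\JI(\lambda)$. Using $\val_p(i)\leq\len_p(i)\leq\len_p(\lambda_s)=l_s$ (because $i\leq\lambda_s$) and $l_s\leq l_{r+1}$, one gets $v_r-\val_p(i)\geq v_r-l_s\geq v_r-l_{r+1}\geq\JI(\lambda)$, so equality forces $\val_p(i)=l_s$ and $v_r-l_s=\JI(\lambda)$; conversely the condition $1\leq i\leq\lambda_s$ with $\val_p(i)=l_s$ is precisely $i=tp^{\,l_s}$ with $1\leq t\leq(\lambda_s)_{l_s}$ (and every such $t$ is prime to $p$). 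For the value, substituting $i=tp^{\,l_s}$ and $w=l_s$ into the displayed consequence of (2) gives $t\cdot y^\can(r,s)_i\equiv(\lambda_r)_{v_r}+1\pmod p$, hence $y^\can(r,s)_i=((\lambda_r)_{v_r}+1)/t$, as asserted. I expect the only point needing care to be the bookkeeping that ties the James index — defined solely through consecutive pairs — to the valuations $\val_p\binom{\lambda_r+i}{i}$ for arbitrary $r<s$; this is exactly what the inherited-Jamesness remark and the monotonicity $l_s\leq l_{r+1}$ take care of, and the rest is direct substitution into (2) and (3).
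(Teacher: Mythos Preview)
Your proof is correct and follows precisely the route the paper intends: the paper states that the proposition holds ``in the light of (2), (3)'' without further detail, and your argument is exactly the unpacking of that remark --- applying (3) to compute all the relevant $p$-adic valuations, minimising to identify $\JI(\lambda)$, and then substituting into (2) to read off the nonzero values. The only step not explicit in the paper is your observation that $(\lambda_r,\lambda_s)$ is James for all $r<s$, which justifies applying (2), (3) to the non-consecutive pairs; this is needed and your treatment of it is fine.
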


\q We fix a three part James partition $(a,b,c)$ and study the corresponding coherent triples. We set $v=\val_p(a+1), w=\val_p(b+1)$ and $\beta=\len_p(b), \gamma=\len_p(c)$.

\q We specialise the above to the case of $(a,b,c)$.   (For a two part partition the canonical sequence, at least up to multiplication by scalars, is given by Lemma 5.8.)

 \begin{proposition}  Let $\lambda=(a,b,c)$. Then   $\JI(\lambda)$ is the minimum of $v-\beta$ and $w-\gamma$.   Moreover the canonical coherent triple $(x^\can_i),(y^\can_j),(z^\can_k)$ for $\lambda$ is given as follows:

   \begin{align*}  x^\can_i=\begin{cases}
   (a_v+1)/t, & \hbox{ if } v-\beta=\JI(\lambda) \hbox{ and } i=tp^\beta \hbox{\,  for some\, } 1\leq t\leq b_\beta ;\cr
   0, & \hbox{ otherwise}
   \end{cases}
    \end{align*}
    \begin{align*}  y_j^\can=\begin{cases}
   (b_w+1)/t, & \hbox{ if } w-\gamma=\JI(\lambda) \hbox{ and } j=tp^\gamma \hbox{\,  for some\, } 1\leq t\leq c_\gamma  ;\cr
   0, & \hbox{ otherwise}
   \end{cases}
    \end{align*}
        \begin{align*}  z_k^\can=\begin{cases}
   (a_v+1)/t, & \hbox{ if } v-\gamma=\JI(\lambda) \hbox{ and } k=tp^\gamma \hbox{\,  for some\, } 1\leq t\leq c_\gamma  ;\cr
   0, & \hbox{ otherwise.}
   \end{cases}
    \end{align*}

 \end{proposition}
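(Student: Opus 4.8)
The plan is to deduce Proposition 7.11 from the general formula for the James index and canonical multi-sequence established in Proposition 7.8, specialised to the case $n=3$ with $\lambda = (a,b,c)$, together with the explicit two-part computations in formulas (2) and (3) of this section. For $\lambda = (a,b,c)$ the relevant indices in Proposition 7.8 are $v_1 = v = \val_p(a+1)$ and $v_2 = w = \val_p(b+1)$, while $l_2 = \len_p(b) = \beta$ and $l_3 = \len_p(c) = \gamma$. Proposition 7.8 then gives $\JI(\lambda) = \min\{v_1 - l_2,\, v_2 - l_3\} = \min\{v - \beta,\, w - \gamma\}$, which is the first assertion. For the canonical triple, I would recall from Section 6 that the triple extracted from the canonical multi-sequence is $x_i^\can = y^\can(1,2)_i$, $y_j^\can = y^\can(2,3)_j$, $z_k^\can = y^\can(1,3)_k$, so I simply read off the three cases of the formula in Proposition 7.8 with the substitutions $(r,s) = (1,2), (2,3), (1,3)$ respectively.

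The three cases then work out as follows. For $x^\can$: here $(r,s) = (1,2)$, so $v_r = v_1 = v$ and $l_s = l_2 = \beta$; the condition $v_r - l_s = \JI(\lambda)$ becomes $v - \beta = \JI(\lambda)$, and $i = t p^{l_s} = t p^\beta$ with the numerator $(\lambda_r)_{v_r} + 1 = a_v + 1$, giving $x_i^\can = (a_v+1)/t$. The range $1 \le t \le b_\beta$ comes from the requirement $i \le \lambda_2 = b$ together with $b_\beta \ne 0$ (the top $p$-adic digit of $b$). For $y^\can$: $(r,s) = (2,3)$, so $v_r = v_2 = w$, $l_s = l_3 = \gamma$, numerator $(\lambda_2)_{v_2} + 1 = b_w + 1$, index condition $w - \gamma = \JI(\lambda)$, and $j = t p^\gamma$ with $1 \le t \le c_\gamma$. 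For $z^\can$: $(r,s) = (1,3)$, so $v_r = v_1 = v$, $l_s = l_3 = \gamma$, numerator $a_v + 1$, and the index condition is $v_r - l_s = v - \gamma = \JI(\lambda)$, with $k = t p^\gamma$, $1 \le t \le c_\gamma$.

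I expect the main point requiring care is to verify that Proposition 7.8, as stated for a general James partition $\lambda = (\lambda_1,\ldots,\lambda_n)$, really does apply on the nose here: one needs that $(a,b,c)$ being a James three-part partition makes both two-part sub-partitions $(a,b)$ and $(b,c)$ James (which is exactly the definition of a James partition), so that formulas (1)--(3) — derived for a general two-part James partition — are legitimately available for each consecutive pair. Once this is in place, no further computation is needed; the three displays in the statement are literally the three instances of the single formula in Proposition 7.8, and the constraint ranges on $t$ are just the conditions $i \le \lambda_s$ rewritten in $p$-adic form using that $\beta = \len_p(b)$ and $\gamma = \len_p(c)$ are the top digits. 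In short, the proof is: \emph{apply Proposition 7.8 with $n = 3$ and expand the three cases $(r,s) \in \{(1,2),(2,3),(1,3)\}$}, and the only thing to check is that the hypotheses of 7.8 transfer correctly.
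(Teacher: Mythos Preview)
Your approach is correct and is exactly what the paper does: the text immediately preceding this proposition reads ``We specialise the above to the case of $(a,b,c)$'', and the result is stated without further proof as the $n=3$ instance of the preceding general proposition (which you call 7.8; in the paper's numbering it is Proposition 7.4). Your unpacking of the three cases $(r,s)\in\{(1,2),(2,3),(1,3)\}$ and the range constraints on $t$ is accurate, so there is nothing to add.
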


\begin{remark}    \rm Suppose $\beta>\gamma$. Then  $v=\len_p(b+p^\gamma)$ if and only if $b=p^v-1$.  Certainly  $v=\len_p(b+p^\gamma)$ if  $b=p^v-1$. Now suppose $v=\len_p(b+p^\gamma)$.  We write $b=(p^w-1)+p^wB$. Then we have $b+p^\gamma=(p^\gamma-1)+p^w(B+1)$ so that $\len_p(B+1)=v-w$.  If $B$ does not have the form $p^k-1$ for some $k$ then also $\len_p(B)=v-w$ and $\len_p(b)=v$, which is impossible since $\beta<v$ by the James condition. Hence we have $B=p^k-1$ and $\len_p(B+1)=k=v-w$ so that $b=(p^w-1)+p^w(p^{v-w}-1)=p^v-1$.

\end{remark}

\q We note that in checking the coherence  conditions for a multi-sequence $(y(r,s)_i)$ of extension sequences  for a James  partition sequence $\lambda$ it is sufficient to check coherent of all relevant triples since in the  commuting relation  (C), for a commuting quadruple,  both sides are zero (by the James condition).

\begin{lemma}    (i) If $\beta=\gamma$ then $\lambda$ is constrained and   \\ 
$x^\can_{p^\beta}=z^\can_{p^\gamma}=(a_{\beta+1}+1)y^\can_{p^\gamma}.$

(ii) If $\beta>\gamma$ and $(a,b+p^\gamma)$ is not  James   then $\lambda$ is constrained and $(y^\can_j)=0$,  $(z^\can_k)=0$.

(iii)  If $\beta>\gamma$ and $(a,b+p^\gamma)$ is James  then $\dim E(\lambda)=2$ and in any coherent triple we have $(z_k)=0$ and $(x_i)$ and $(y_j)$ are multiples of the canonical sequences, for $(a,b)$ and $(b,c)$.

\end{lemma}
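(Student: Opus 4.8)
Since $\lambda$ is a James partition, $(a,b)$ and $(b,c)$ are James, and (as $c\le b<p^{v}$) so is $(a,c)$. The plan is to use Lemma 5.7 to pin down each of the three constituent extension sequences of a coherent triple up to a scalar, and then to extract the relations between these scalars from a few well-chosen instances of the relations (T1)--(T3b) of Definition 6.1, evaluated at small indices and at the index $i=b+1$ just above the support of $(x_i)$.

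\textbf{First steps.} By Lemma 5.7 (and Remark 5.8), in a coherent triple $((x_i),(y_j),(z_k))$ the sequence $(x_i)$ (resp. $(y_j)$, $(z_k)$) is a scalar multiple of the canonical two-part sequence, supported on the multiples of $p^{\beta}$ (resp. $p^{\gamma}$, $p^{\gamma}$) and determined by its value at $p^{\beta}$ (resp. $p^{\gamma}$, $p^{\gamma}$); and by Remark 6.2, $(z_k)$ is determined by $(x_i)$ and $(y_j)$. Hence $(x,y,z)\mapsto(x_{p^{\beta}},y_{p^{\gamma}})$ embeds $E(\lambda)$ into $K^{2}$, so $\dim E(\lambda)\le2$, while $\dim E(\lambda)\ge1$ because the canonical coherent triple (whose explicit shape was computed above) is nonzero. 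I would then put $i=j=tp^{\gamma}$ in (T3a): as $\gamma\le\beta<v$ we have $\binom{a+tp^{\gamma}}{tp^{\gamma}}=0$, so the left side vanishes. If $\gamma<\beta$ (cases (ii), (iii)) the surviving terms all have index $tp^{\gamma}-s<p^{\gamma+1}\le p^{\beta}$, so $(z_k)=0$. If $\gamma=\beta$ (case (i)) only $s=0$ survives and $z_{p^{\gamma}}=-\binom{b}{p^{\gamma}}x_{p^{\gamma}}=-b_{\gamma}x_{p^{\gamma}}=x_{p^{\beta}}$, because $(b,c)$ James forces $\gamma<w$, whence $b_{\gamma}=p-1$.

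\textbf{The decisive relation: (T3b) at $i=b+1$, $j=p^{\gamma}$.} In case (i), $\gamma<w$ forces $b+1=p^{w}=p^{\beta+1}$; Lucas's formula collapses the right side to $x_{p^{\gamma}}\,(=x_{p^{\beta}})$ and identifies the left side as $\binom{a+p^{\beta+1}}{p^{\beta+1}}y_{p^{\gamma}}=(a_{\beta+1}+1)\,y_{p^{\gamma}}$ (which is $0$ if $\beta+1<v$ and $a_{v}+1$ if $\beta+1=v$). So $(a_{\beta+1}+1)y_{p^{\gamma}}=x_{p^{\beta}}$; with the previous step this yields precisely $x^{\can}_{p^{\beta}}=z^{\can}_{p^{\gamma}}=(a_{\beta+1}+1)y^{\can}_{p^{\gamma}}$, and shows every coherent triple is determined by $y_{p^{\gamma}}$, so $\dim E(\lambda)=1$ and $\lambda$ is constrained. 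In case (ii), the Remark on when $b=p^{v}-1$ gives $b+1=p^{v}$ and $\beta=v-1$; since $\gamma\le v-2$, no index $p^{v}-s$ with $1\le s\le p^{\gamma}$ is a nonzero multiple of $p^{v-1}$, so the right side vanishes and the left side equals $(a_{v}+1)y_{p^{\gamma}}$, forcing $(y_j)=0$. With $(z_k)=0$ this leaves $\dim E(\lambda)=1$, so $\lambda$ is constrained; and $\JI(\lambda)=v-\beta=1<w-\gamma$, so the $(2,3)$- and $(1,3)$-entries of the canonical multi-sequence are $0$, i.e. $(y^{\can}_j)=(z^{\can}_k)=0$.

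\textbf{Case (iii) and the main obstacle.} Here $b+p^{\gamma}<p^{v}$ and $b\ne p^{v}-1$. From $w=\val_{p}(b+1)\ge\gamma+1$ write $b+1=p^{\gamma+1}m'$; then $b+p^{\gamma}<p^{v}$ forces $m'\le p^{\,v-\gamma-1}-1$, and since $c<p^{\gamma+1}$ we get $b+c<p^{\gamma+1}(m'+1)-1\le p^{v}$. Consequently every index $i$ occurring in (T1), (T3a), (T3b) satisfies $1\le i\le b+c<p^{v}$, so $\binom{a+i}{i}=0$; thus (T2) holds trivially and (T1), (T3a), (T3b) (with $(z_k)=0$) become conditions on $(x_i)$ alone, which I must check hold for \emph{every} extension sequence of $(a,b)$ -- equivalently, that the triple $(\text{canonical sequence of }(a,b),\,0,\,0)$ is coherent. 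For (T1) and (T3a) the indices are too small for any nonzero contribution; the genuine point -- which I expect to be the main obstacle -- is (T3b), to be deduced from the integral form of relation (2) of Section~2 by a Kummer-type estimate showing that the terms of index exceeding $b$ are divisible by a sufficiently high power of $p$, the inequality $b+c<p^{v}$ just proved being exactly what makes this work. (When $\JI(\lambda)=v-\beta$ this instance of (T3b) is already contained in the coherence of the canonical multi-sequence, so only the case $\JI(\lambda)=w-\gamma$ needs the separate computation.) Granting this, $(x_i)$ and $(y_j)$ range independently over the two lines of canonical sequences, $(z_k)=0$, and $\dim E(\lambda)=2$.
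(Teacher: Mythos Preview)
Your treatment of cases (i) and (ii) is correct and essentially matches the paper's argument: the paper also uses (T3b) at $i=p^{\beta+1}$ (equivalently $i=b+1$), $j=p^\gamma$ as the decisive relation, after first pinning down $z_{p^\gamma}$ in terms of $x_{p^\beta}$. The only cosmetic difference is that the paper subtracts a multiple of the canonical triple first, while you work directly; the computations are the same.

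In case (iii), your high-level structure is right and the inequality $b+c<p^{v}$ is a genuinely useful observation (it makes the left-hand sides of (T3a), (T3b) vanish identically, so $(y_j)$ is indeed unconstrained). But there is a gap in the verification that $(x^{\can}_i,0,0)$ is coherent. Your claim that in (T1) ``the indices are too small'' is not justified: in (T1) the index $i$ ranges over $1\le i\le b$, so $x_i$ can be nonzero at $i=tp^\beta$, and you must show $\binom{a+tp^\beta+k}{k}=0$ for $1\le k\le c$. For (T3b) you only sketch a ``Kummer-type estimate'' working with integral binomials; this is not carried out, and in fact is considerably more delicate than necessary because the sum in (T3b) involves indices $i-s>b$ where $x_{i-s}$ is undefined (taken as $0$), so the clean Vandermonde identity does not directly apply.

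The paper's argument for (iii) is more direct and avoids these difficulties. For each $t$ with $1\le t\le b_\beta$, one has $\val_p(a+tp^\beta+1)\ge\beta>\gamma$, so $(a+tp^\beta,c)$ is James; this immediately gives (T1), and in (T3b) it kills every term with $s\neq 0$ (since $\binom{a+i}{s}=\binom{a+tp^\beta+s}{s}=0$ for $1\le s\le c$). The surviving $s=0$ term is $\binom{b-tp^\beta+j}{j}x_{tp^\beta}$, and this vanishes because $(b-tp^\beta,c)$ is also James (one checks $\val_p(b-tp^\beta+1)\ge\min(w,\beta)>\gamma$). This two-line observation replaces your proposed Kummer estimate entirely.
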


\begin{proof}  We leave it to the reader to check that in (i) and (ii) the canonical triples are as described.

(i) We have $w\leq \beta+1\leq v$ so that $v-\beta\geq w-\gamma$. Hence $(y^\can_k)$ is non-zero and, subtracting a multiple of the canonical sequence from an arbitrary coherent triple we can obtain a coherent triple $(x_i),(y_j),(z_k)$ in which $(y_j)=0$.  Now $a_\beta=p-1$ so that, taking $i=k=p^\beta$ in (T1), we obtain $x_{p^\beta}=z_{p^\beta}$.  

\q  We note that $\beta=\gamma<w$ implies that $b=p^{\beta+1}-1$. We take $j=p^\beta$, $i=p^{\beta+1}$ in relation (T3b).  The only non-zero term $x_{i-s}$ is obtained by taking $s=p^\beta$ and we obtain 
${a+p^{\beta+1}\choose p^\beta}x_{(p-1)p^\beta}=0$. Moreover, we have ${a+p^{\beta+1}\choose p^\beta}=(p-1)$, by Lucas's Formula. So we have $x_{(p-1)p^\beta}=0$ and hence $(x_i)=0$ and therefore also $(z_k)=0$. Hence $(x_i),(y_j),(z_k)$ is the zero triple and every coherent triple is a multiple of the canonical triple.

(ii) We have $\len_p(b+p^\gamma)\leq \beta+1$ and, since $(a,b)$ is James but  $(a,b+p^\gamma)$ is not James we have $\len_p(b+p^\gamma)=\beta+1\geq v$. Hence $\beta+1=v$ and so $b=p^v-1$, by Remark 7.6 and so $\beta+1=v=w$. Hence $v-\beta<w-\gamma,v-\gamma$ so that $(x^\can_i)\neq 0$.  Subtracting a multiple of the canonical triple  from an arbitrary coherent triple we can thus obtain a coherent triple $(x_i),(y_j),(z_k)$ in which $(x_i)=0$.  Taking $i=j=p^\gamma$ in (T1) we obtain $z_{p^\gamma}=0$ and hence the sequence $(z_k)=0$. Taking $j=p^\gamma$, $i=p^{\beta+1}$ in (T3b) we get ${a+p^{\beta+1}\choose p^{\beta+1}}y_{p^\gamma}=0$, i.e., $(a_v+1)y_{p^\gamma}=0$, therefore $y_{p^\gamma}=0$ and hence the sequence $(y_j)$ is $0$.  Hence $(x_i),(y_j),(z_k)$ is the zero triple and every coherent triple is a multiple of the canonical triple.

(iii) We show first that in this case we have that in any coherent triple $(z_k)=0$. Since $(a,c)$ is James it is enough to show that $z_{p^\gamma}=0$. Now this follows by (T1) for $i=k=p^\gamma$, using the fact that $x_{p^\gamma}=0$, since $\beta>\gamma$ and $(a,b)$ is James.  Therefore $\dim E(\lambda)\leq 2$ and so it is enough to show that there is a coherent triple $(x_i),(y_j),(z_k)$ in which  $(x_i)$ non-zero and $(y_j)=0$, $(z_k)=0$ and one in which $(x_i)=0$, $(y_j)$ is non-zero and $(z_k)=0$.

\q We first check that the triple $(x_i),(y_j),(z_k)$ with 
$$x_i=\begin{cases}1/t,  &\hbox{ if } i=tp^\beta, 1\leq t\leq b_\beta; \cr
0, & \hbox{ otherwise}
\end{cases}$$
and $(y_j)=0$, $(z_k)=0$ is coherent. 

\q For (T1) we require ${a+tp^\beta+k\choose k}=0$, for $1\leq k\leq c$, $1\leq t\leq b_\beta$. This is true since $(a+tp^\beta,c)$ is James, as $\val_p(a+tp^\beta+1)\geq \beta>\gamma$. In (T2) both sides are $0$. For (T3a) we require 
$$\sum_{s=0}^{i-1} {b+j-i\choose j-s}{a+i\choose s}x_{i-s}=0$$
for $1\leq i\leq j\leq c$. However, if in this equation $x_{i-s}\neq 0$ then we have $i-s=tp^\beta$, for some $1\leq t\leq b_\beta$ and so $i\geq p^\beta$ and this is impossible since $i\leq c$ and $\gamma<\beta$. Hence (T3a) holds. 

\q For (T3b) we require 
$$\sum_{s=0}^{j} {b+j-i\choose j-s}{a+i\choose s}x_{i-s}=0$$
for $1\leq j\leq c$, $j<i\leq b+j$. Now, $x_{i-s}$ is zero unless $i-s=tp^\beta$ and then ${a+i\choose s}={a+tp^\beta+s\choose s}$ is zero for $s\neq 0$ (as $(a+tp^\beta,c)$ is James). Hence the condition to be checked is ${b+j-tp^\beta\choose j}=0$ for $1\leq j\leq c$, and this is true since $(b-tp^\beta,c)$ is James.


\q It remains to check that the triple 
$(x_i),(y_j),(z_k)$ with $(x_i)=0$, with 
$$y_j=\begin{cases}1/t,  &\hbox{ if } j=tp^\gamma, 1\leq t\leq c_\gamma; \cr
0, & \hbox{ otherwise}
\end{cases}$$
and $(z_k)=0$,  is coherent. 

\q Certainly (T1) is satisfied since both sides are $0$.  Also in  (T2) both sides are zero since $(a,c)$ and $(b,c)$ are James. We also have  that (T3a) holds since both sides are $0$ again (since $(a,c)$ is James).  Finally, (T3b) reduces to the condition that ${a+i\choose i}=0$ for $tp^\gamma< i\leq b+tp^\gamma$, $1\leq t\leq c_\gamma$.   Now if $a_h+i_h\geq p$ for some $0\leq h<v$ then ${a+i\choose i}=0$ so we may assume that $i=p^vI$, for some $I\neq 0$.  We get 
$$p^vI\leq b+tp^\gamma<p^{\beta+1}-1+p^w=(p^w-1)+p^{\beta+1}.$$
This gives $v\leq \beta+1$ but $\beta<v$ (since $\lambda$ is James) and so $v=\beta+1$ and $I=1$. Hence, $\len_p(b+p^\gamma)=\beta+1=v$. But this is impossible since $(a,b+p^\gamma)$ is James.

\end{proof}

\begin{lemma}  If $l_2=\cdots=l_n$ then $\lambda$ is constrained.
\end{lemma}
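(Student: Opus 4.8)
The plan is to argue by induction on $n$. The base case $n=2$ is immediate: for a two-part partition the only coherence relation is (E), so $E(\lambda)$ coincides with the space of extension sequences of the two-part James partition $(\lambda_1,\lambda_2)$, which is at most one-dimensional by Lemma 5.7; hence $\lambda$ is constrained. For $n\ge 3$, take an arbitrary coherent multi-sequence $(y(r,s)_i)$ for $\lambda$; restricting to the indices $\{2,\dots,n\}$ produces a coherent multi-sequence for $\mu=(\lambda_2,\dots,\lambda_n)$ (the relations (E), (T1)-(T3b), (C) for $\mu$ being instances of those for $\lambda$), and $\mu$ is a James partition of length $n-1$ with $\len_p(\mu_2)=\cdots=\len_p(\mu_{n-1})$, so $\dim E(\mu)\le 1$ by the inductive hypothesis. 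It therefore suffices to prove that the restriction map $\rho\colon E(\lambda)\to E(\mu)$ is injective.

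The argument for injectivity rests on two facts. First, for every $1\le r<s\le n$ the pair $(\lambda_r,\lambda_s)$ is a two-part James partition (since $\lambda_s\le\lambda_{r+1}<p^{\val_p(\lambda_r+1)}$), so by Lemma 5.7 the sequence $(y(r,s)_i)$ is completely determined by the single scalar $\alpha_{rs}:=y(r,s)_{p^{\len_p(\lambda_s)}}$. Second, fixing $1<r<s\le n$ and extracting the coherent triple $(y(1,r)_i),(y(r,s)_j),(y(1,s)_k)$ for the three-part James partition $(\lambda_1,\lambda_r,\lambda_s)$, the hypothesis $l_2=\cdots=l_n$ gives $\len_p(\lambda_r)=\len_p(\lambda_s)=l_2$, so we land in the case $\beta=\gamma$ of Lemma 7.7(i); hence $(\lambda_1,\lambda_r,\lambda_s)$ is constrained and, its canonical triple being non-zero by Proposition 7.5, every coherent triple for it is a scalar multiple of the canonical one. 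Multiplying the identity $x^\can_{p^\beta}=z^\can_{p^\gamma}=((\lambda_1)_{\beta+1}+1)\,y^\can_{p^\gamma}$ of Lemma 7.7(i) by that scalar yields
$$\alpha_{1r}=\alpha_{1s}=c\,\alpha_{rs},\qquad c:=(\lambda_1)_{l_2+1}+1,$$
with $c$ the same for all $r,s$ since $\beta+1=l_2+1$ throughout.

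To conclude, suppose $\rho(y)=0$. Then $\alpha_{rs}=0$ for all $2\le r<s\le n$, so the displayed relation forces $\alpha_{1r}=\alpha_{1s}=0$ for every such pair; since $n\ge 3$ this gives $\alpha_{1t}=0$ for all $t\in\{2,\dots,n\}$, whence $y(1,t)_i=0$ for all $t$ and $i$ by the first fact, and therefore $y=0$. Thus $\rho$ is injective, $\dim E(\lambda)\le\dim E(\mu)\le 1$, and $\lambda$ is constrained.

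The step I expect to require the most care is the appeal to Lemma 7.7(i): one must observe that the hypothesis $l_2=\cdots=l_n$ is precisely what makes the two lower parts of every extracted three-part partition have equal $p$-length (so that $\beta=\gamma$), and that ``constrained'' together with non-vanishing of the canonical triple promotes the relation among its canonical terms to the same relation among the terms of an arbitrary coherent triple. The induction, the passage to $\mu$, and the scalar bookkeeping are routine.
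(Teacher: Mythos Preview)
Your proof is correct. Both your argument and the paper's proceed by induction on $n$ and use Lemma 7.7(i) as the key input, but the organisation differs. The paper first subtracts a multiple of the canonical multi-sequence for $\lambda$ to force $(y(n-1,n)_i)=0$, then applies the inductive hypothesis to $(\lambda_2,\ldots,\lambda_n)$ to conclude that the whole restriction vanishes, and finally invokes Lemma 7.7(i) on the single triple $(\lambda_1,\lambda_2,\lambda_3)$ to kill $(y(1,2)_i)$. You instead prove directly that the restriction $E(\lambda)\to E(\lambda_2,\ldots,\lambda_n)$ is injective, by applying Lemma 7.7(i) uniformly to every triple $(\lambda_1,\lambda_r,\lambda_s)$ with $2\le r<s\le n$ and extracting the relation $\alpha_{1r}=\alpha_{1s}=c\,\alpha_{rs}$ with $c=(\lambda_1)_{l_2+1}+1$ independent of $r,s$. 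Your route avoids having to identify which entry of the canonical multi-sequence for $\lambda$ is non-zero, at the cost of checking that the canonical triple for each $(\lambda_1,\lambda_r,\lambda_s)$ is non-zero (which is immediate from Proposition 7.5 once one notes $w-\gamma=\JI$ there). Both arguments are clean; yours is a touch more structural, the paper's a touch more concrete.
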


\begin{proof}  We proceed by induction on $n$. Certainly the result holds for $n=2,3$, by Lemmas 5.8 and Lemma 7.7(i). Now assume $n\geq 4$ and the result holds for shorter partitions. Note that $\JI(\lambda_1,\ldots,\lambda_n)=v_{n-1}-l$, where $l=l_2=\cdots=l_n$ and so $y^\can(n-1,n)_{p^l}\neq 0$. 

\q  By subtracting a multiple of the canonical multi-sequence from an arbitrary coherent multi-sequence  we can obtain a coherent multi-sequence $(y(r,s)_i)$ in which  $(y(n-1,n)_i)=0$. By applying the inductive hypothesis to the partition $(\lambda_2,\ldots,\lambda_n)$ we obtain $(y(r,s)_i)=0$ for all $2\leq r<s\leq n$. In particular the sequences $(y(2,3)_i), (y(3,4)_i)$, $ \ldots  , (y(n-1,n)_i)$ are zero. But now applying Lemma 7.7(i) to the coherent triple $(y(1,2)_i), (y(2,3)_j), (y(1,3)_k))$ we obtain that 
$(y(1,2)_i)=0$. Hence the sequences  $(y(1,2)_i), (y(3,4)_i)$, $\ldots, (y(n-1,n)_i)$ are all zero and $(y(r,s)_i)$ is the zero multi-sequence. Hence every coherent multi-sequence is a multiple of the canonical multi-sequence and we are done.
\end{proof}

\q Note that for  $1\leq m\leq n$  we have a natural linear map \\
$E(\lambda)\to E(\lambda_1,\ldots,\lambda_m)$, which we call restriction, sending a coherent multi-sequence $(y(r,s)_i)$, $1\leq r<s\leq n$, $1\leq i\leq \lambda_s$, to the coherent multi-sequence $(y(r,s)_i)$, $1\leq r<s\leq m$, $1\leq i\leq \lambda_s$.

\begin{lemma} Suppose that $1\leq m\leq n-2$ and that $l_{m+1}>l_{m+2}$. Then restriction  $E(\lambda)\to E(\lambda_1,\ldots,\lambda_{m+1})$ is surjective.

\end{lemma}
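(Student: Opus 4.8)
The plan is to show that the \emph{extension by zero} of a coherent multi-sequence on the first $m+1$ parts is again coherent. Explicitly, given a coherent multi-sequence $(y'(r,s)_i)_{1\le r<s\le m+1}$ for $(\lambda_1,\dots,\lambda_{m+1})$, set $y(r,s)_i=y'(r,s)_i$ for $s\le m+1$ and $y(r,s)_i=0$ for $s\ge m+2$, and check the relations (E), (T1)--(T3b), (C); by Proposition 4.3 this exhibits $(y(r,s)_i)$ as a coherent multi-sequence for $\lambda$ restricting to $y'$. Recall that every pair $(\lambda_a,\lambda_b)$ with $a<b$ of parts of a James partition is itself James (as $\lambda_b\le\lambda_{a+1}<p^{v_a}$). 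Thus (E) for the new, identically zero, sequences is trivial; (C) holds automatically, since for a James pair $(\lambda_a,\lambda_b)$ and $1\le i\le\lambda_b$ one has $\binom{\lambda_a+i}{i}=0$, so both sides of (C) are zero (this is the observation recorded just before Lemma 7.7); and every triple relation all of whose indices are $\le m+1$ is already satisfied by $y'$. A triple $q<r<s$ with $r\ge m+2$ has $y(q,r)=y(r,s)=y(q,s)=0$, so its relations read $0=0$. Hence the entire problem reduces to a single case: a triple $q<r\le m+1<s$, where $x_i:=y(q,r)_i=y'(q,r)_i$ is the only possibly nonzero sequence and $y_j:=y(r,s)_j=z_k:=y(q,s)_k=0$; one must show the right-hand sides of (T1), (T3a), (T3b) vanish ((T2) then reads $0=0$).

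This will follow from a handful of elementary facts; write $\ell=l_{m+2}$. The James condition forces $l_2\ge l_3\ge\cdots\ge l_n$, so $l_r\ge l_{m+1}>\ell$, and since $\lambda$ is a partition every $s\ge m+2$ satisfies $1\le k\le\lambda_s\le\lambda_{m+2}<p^{\ell+1}$. By Lemma 5.7 applied to the James pair $(\lambda_q,\lambda_r)$, the sequence $y'(q,r)$ is supported on the multiples $tp^{l_r}$ with $1\le t\le(\lambda_r)_{l_r}$. From the James pairs $(\lambda_q,\lambda_r)$ and $(\lambda_r,\lambda_{r+1})$ we get $v_q>l_r\ge\ell+1$ and $v_r>l_{r+1}\ge\ell$, so $\lambda_q\equiv\lambda_r\equiv-1\pmod{p^{\ell+1}}$, and since $l_r>\ell$ the same congruence persists for $\lambda_q+tp^{l_r}$ and for $\lambda_r-tp^{l_r}$. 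The one further ingredient is the Lucas-type remark that if a nonnegative integer $N$ has all of its $\ell+1$ lowest base-$p$ digits equal to $p-1$, then $\binom{N+m}{m}=0$ for every $m$ with $1\le m<p^{\ell+1}$ (a carry occurs at the lowest nonzero digit of $m$).

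Granting these, the three checks are short. In (T1) the right side vanishes because $z_k=0$, and if $x_i\ne0$ then $i=tp^{l_r}$, so $\lambda_q+i\equiv-1\pmod{p^{\ell+1}}$ and $\binom{\lambda_q+i+k}{k}=0$ for $1\le k\le\lambda_s$. In (T3a) every summand contains a factor $x_{i-h}$ with $1\le i-h\le i\le\lambda_s<p^{l_r}$, hence $x_{i-h}=0$, while the remaining term carries the factor $z_i=0$. In (T3b), a summand with $h\ge1$ contains $\binom{\lambda_q+i}{h}$, which vanishes: if $x_{i-h}\ne0$ then $i-h$ is a multiple of $p^{l_r}$, so the base-$p$ digits of $\lambda_q+i$ in positions below $l_r$ are those of $\lambda_q$, all equal to $p-1$; and the single surviving $h=0$ summand is $\binom{\lambda_r+j-i}{j}x_i$, which vanishes because if $x_i\ne0$ then $\lambda_r-i=\lambda_r-tp^{l_r}\equiv-1\pmod{p^{\ell+1}}$, so $\binom{(\lambda_r-tp^{l_r})+j}{j}=0$ for $1\le j\le\lambda_s$.

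The main obstacle is the base-$p$ bookkeeping underlying the second paragraph: one must verify carefully that the congruences $\lambda_q\equiv\lambda_r\equiv-1\pmod{p^{\ell+1}}$ genuinely follow from the James conditions on the two consecutive pairs together with the strict drop $l_{m+1}>l_{m+2}$, and that the carry in the Lucas remark indeed lands at the lowest nonzero digit of $m$. Everything else is routine; note in particular that no induction on $n$ is needed, since extension by zero disposes of all parts beyond $m+1$ at once and the estimate $\lambda_s<p^{\ell+1}$ is uniform over $s\ge m+2$.
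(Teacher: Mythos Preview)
Your approach is the same as the paper's: extend the given multi-sequence by zero and verify coherence. The paper dispatches the mixed triples $q<r\le m+1<s$ by invoking Lemma 7.7(ii),(iii), whereas you verify (T1)--(T3b) directly via Lucas-type arguments; this is a legitimate and self-contained alternative, and your reductions for (T1), (T2), (T3a), and the $h=0$ term of (T3b) are correct.

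There is, however, a slip in your treatment of the $h\ge 1$ terms of (T3b). You write that when $i-h=tp^{l_r}$, ``the base-$p$ digits of $\lambda_q+i$ in positions below $l_r$ are those of $\lambda_q$, all equal to $p-1$''. This is false: since $i=tp^{l_r}+h$ with $1\le h<p^{\ell+1}$, adding $i$ to $\lambda_q$ does disturb the low digits (a carry chain starts at the lowest nonzero digit of $h$). The correct statement is that the digits of $\lambda_q+(i-h)=\lambda_q+tp^{l_r}$ in positions $0,\dots,l_r-1$ are those of $\lambda_q$ (all $p-1$), because $v_q>l_r$ and $tp^{l_r}$ contributes nothing below position $l_r$. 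Now your Lucas remark applies with $N=\lambda_q+tp^{l_r}$ and $m=h$ (noting $1\le h\le j\le\lambda_s<p^{\ell+1}$) to give
\[
\binom{\lambda_q+i}{h}=\binom{(\lambda_q+tp^{l_r})+h}{h}=0,
\]
which is the conclusion you want. With this one-line fix the argument goes through.
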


\begin{proof} Let $(y(r,s)_i)$  ($1\leq r <s\leq m+1$) be a coherent multi-sequence for $(\lambda_1,\ldots,\lambda_{m+1})$. We define a multi-sequence $(\haty(r,s)_i)$, $1\leq r<s\leq n$, $1\leq i\leq \lambda_s$,  for $\lambda$ by 
$$\haty(r,s)_i=\begin{cases}y(r,s)_i, & \hbox{ if }  1\leq r<s\leq m+1,  1\leq i\leq \lambda_s;\cr
0, & \hbox{ otherwise.}
\end{cases}$$

\q We claim that $(\haty(r,s)_i)$ is coherent.  To do this we check that for all $1\leq r < s<t \leq n$, the triple $(\haty(r,s)_i), (\haty(s,t)_j), (\haty(r,t)_k)$ is coherent. 

\q If $1\leq r<s<t\leq m+1$ this is true since $(y(r,s)_i)$ is a coherent multi-sequence. Suppose now that $1\leq r<s\leq m+1$ and $m+1<t\leq n$. Then the sequences $(\haty(s,t)_j)$ and $(\haty(r,t)_k)$ are identically $0$. Moreover we have $l_s\geq l_{m+1}> l_{m+2}\geq l_t$ so that $l_s>l_t$ and the triple  $(\haty(r,s)_i), (\haty(s,t)_j), (\haty(r,t)_k)$ is coherent, by Lemma 7.7(ii) and (iii).  Finally, if $s> m+1$ then $(\haty(r,s)_i), (\haty(s,t)_j), (\haty(r,t)_k)$ is the zero triple (which is coherent).

\q Thus $(\haty(r,s)_i)$ is a coherent multi-sequence mapping to $(y(r,s)_i)$ and the restriction map is surjective.

\end{proof}

\begin{lemma} If $1\leq m\leq n-2$ and $l_{m+2}=\cdots=l_n$ then the kernel of restriction $E(\lambda)\to E(\lambda_1,\ldots,\lambda_{m+1})$ has dimension at most $1$.

\end{lemma}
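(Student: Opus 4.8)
The plan is to analyze an arbitrary element of the kernel of the restriction map $E(\lambda)\to E(\lambda_1,\ldots,\lambda_{m+1})$ and show that it is determined, up to a scalar, by a single value. So let $(y(r,s)_i)$ be a coherent multi-sequence for $\lambda=(\lambda_1,\ldots,\lambda_n)$ whose restriction to $(\lambda_1,\ldots,\lambda_{m+1})$ vanishes; that is, $y(r,s)_i=0$ whenever $1\le r<s\le m+1$. We must control all the remaining sequences $y(r,s)_i$ with $s\ge m+2$. Since by hypothesis $l_{m+2}=\cdots=l_n$, the sub-partition $(\lambda_{m+2},\ldots,\lambda_n)$ has all $p$-lengths equal, so it is constrained by Lemma 7.8. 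Hence the restriction of our multi-sequence to $(\lambda_{m+2},\ldots,\lambda_n)$ is a multiple of the canonical multi-sequence for that sub-partition; after subtracting a suitable multiple of the canonical multi-sequence of $\lambda$ (note $\JI(\lambda)=v_{n-1}-l$ with $l=l_{m+2}=\cdots=l_n$, so the canonical multi-sequence restricts nontrivially to the tail) we may further assume that $y(r,s)_i=0$ for all $m+2\le r<s\le n$ as well. It then remains to show that under both vanishing assumptions the whole multi-sequence is zero; this gives that the kernel of restriction is spanned by (the class of) the canonical multi-sequence of $\lambda$, hence is at most one-dimensional.

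The core of the argument is therefore to kill the ``mixed'' sequences $y(r,s)_i$ with $r\le m+1<s$. First I would handle those with $r\le m+1$ and $s=m+2$. Consider, for each such $r$, a triple involving indices straddling the jump $l_{m+1}>l_{m+2}$: for $r<m+1$ take the triple $(y(r,m+1)_\bullet, y(m+1,m+2)_\bullet, y(r,m+2)_\bullet)$, in which the first sequence is already zero. Because $l_{m+1}>l_{m+2}$, Lemma 7.7(ii)--(iii) describes the coherent triples for $(\lambda_r,\lambda_{m+1},\lambda_{m+2})$: either it is constrained with $(y_j)=(z_k)=0$, or $\dim E=2$ and in every coherent triple $(z_k)=0$ while $(x_i),(y_j)$ are multiples of canonical sequences. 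Since here $(x_i)=(y(r,m+1)_\bullet)=0$, coherence forces $(z_k)=(y(r,m+2)_\bullet)=0$, and also $(y(m+1,m+2)_\bullet)=0$ in case (iii) while in case (ii) it is automatically $0$. This disposes of all $y(r,m+2)_\bullet$ with $r\le m+1$, and of $y(m+1,m+2)_\bullet$. One then propagates: having shown $y(r,s)_i=0$ for all $r<s$ with $s\le m+2$, repeat with $m+2$ in place of $m+1$ --- but now using that $l_{m+2}=l_{m+3}=\cdots$, so the relevant triples are James triples with all $p$-lengths equal and Lemma 7.7(i) applies: from $x_{p^\beta}=z_{p^\gamma}=(a_{\beta+1}+1)y^\can_{p^\gamma}$ and the vanishing of the first two entries one extracts vanishing of the third sequence. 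Iterating through $s=m+3,\ldots,n$ using Lemma 7.7(i) and the restriction/coherence bookkeeping kills every remaining mixed sequence.

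The main obstacle I anticipate is the bookkeeping in the propagation step: one must order the triples carefully so that at each stage two of the three sequences in the triple being used are already known to vanish, and one must check that the ``already zero'' entries are precisely the ones that the relevant case of Lemma 7.7 needs in order to conclude that the third sequence vanishes (in particular keeping straight which of the three slots of a coherent triple $(\lambda_q,\lambda_r,\lambda_s)$ corresponds to which pair $(q,r),(r,s),(q,s)$, via the extraction described after Definition 6.1). A secondary subtlety is verifying that after the two normalizing subtractions we really have subtracted only multiples of a single multi-sequence (the canonical one of $\lambda$), so that the conclusion is a bound of $1$ rather than $2$; this uses that the canonical multi-sequence of $\lambda$ restricts to (a multiple of) the canonical multi-sequence of the tail $(\lambda_{m+2},\ldots,\lambda_n)$, which follows from the explicit formula in Proposition 7.5 together with $\JI(\lambda)=\JI(\lambda_{m+2},\ldots,\lambda_n)=v_{n-1}-l$ under the hypothesis $l_{m+2}=\cdots=l_n$. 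Once these dependencies are checked the conclusion $\dim\ker\bigl(E(\lambda)\to E(\lambda_1,\ldots,\lambda_{m+1})\bigr)\le 1$ is immediate.
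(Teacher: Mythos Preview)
Your approach has a genuine gap. You propose to take an element $(y(r,s)_i)$ of the kernel, subtract a multiple of the canonical multi-sequence $y^\can$ of $\lambda$ so as to annihilate the tail values $y(r,s)_i$ for $m+2\le r<s\le n$, and then argue that the resulting multi-sequence vanishes. The problem is that $y^\can$ is in general \emph{not} in the kernel of the restriction $E(\lambda)\to E(\lambda_1,\ldots,\lambda_{m+1})$: whenever $v_r-l_s=\JI(\lambda)$ for some $r<s\le m+1$, the entry $y^\can(r,s)_{p^{l_s}}$ is nonzero by Proposition~7.4. For a concrete instance take $\lambda=(p^2-1,p^2-1,p-1,p-1,p-1)$ with $m=1$: here $\JI(\lambda)=1=v_1-l_2$ and $y^\can(1,2)_{p}=1\neq 0$. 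Consequently, after your subtraction the head entries $y(r,s)_i$ with $s\le m+1$ need no longer vanish, so your ``both vanishing assumptions'' are simply not available and the propagation argument via Lemma~7.7 cannot get started. The conclusion that the kernel is spanned by $y^\can$ is therefore unjustified; indeed it is often false, since $y^\can$ need not lie in the kernel at all.

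The paper's proof avoids all of this with a single observation. Besides the restriction to the head there is a second restriction map $\rho:E(\lambda)\to E(\lambda_{m+1},\ldots,\lambda_n)$, sending $(y(r,s)_i)$ to its sub-family indexed by $m+1\le r<s\le n$. The combined map
\[
E(\lambda)\;\longrightarrow\;E(\lambda_1,\ldots,\lambda_{m+1})\oplus E(\lambda_{m+1},\ldots,\lambda_n)
\]
is injective, because between them the two targets capture every consecutive sequence $(y(r,r+1)_i)$, $1\le r\le n-1$, and these determine the whole multi-sequence (Remark~6.2, applied inductively). Hence the kernel of the head-restriction embeds in $E(\lambda_{m+1},\ldots,\lambda_n)$, and the latter has dimension at most $1$ by Lemma~7.8: the hypothesis $l_{m+2}=\cdots=l_n$ is precisely the hypothesis of that lemma applied to the partition $(\lambda_{m+1},\ldots,\lambda_n)$. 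No subtraction and no case analysis through Lemma~7.7 is needed.
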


\begin{proof}  In addition to the restriction map $E(\lambda)\to E(\lambda_1,\ldots,\lambda_{m+1})$ we also have the  natural map $\rho:E(\lambda)\to E(\lambda_{m+1},\ldots,\lambda_n)$, taking a coherent multi-sequence $(y(r,s)_i)$, $1\leq r<s\leq n$, $1\leq i\leq \lambda_s$ for $\lambda$  to the coherent multi-sequence $(\bary(r,s)_i)$, $1\leq r<s\leq n-m$, $1\leq i\leq \lambda_{m+s}$,  for $(\lambda_{m+1},\ldots,\lambda_n)$, where $\bary(r,s)_i=y(m+r,m+s)_i$, for $1\leq r<s\leq n-m$, $1\leq i\leq \lambda_{m+s}$. 

\q We obtain an injective map $E(\lambda)\to E(\lambda_1,\ldots,\lambda_{m+1})\oplus E(\lambda_{m+1},\ldots,\lambda_n)$. But  $\dim E(\lambda_{m+1},\ldots,\lambda_n)=1$, by Lemma 7.8   and the kernel of restriction $E(\lambda)\to E(\lambda_1,\ldots,\lambda_{m+1})$ embeds in $E(\lambda_{m+1},\ldots,\lambda_n)$ so we are done.

\end{proof}

\q The next result essentially determines the dimension of the extension spaces for James partitions.

\begin{proposition}   Suppose that $1\leq m\leq n-2$ and $l_{m+1}>l_{m+2}=\cdots=l_n$. Then we have
$$\dim E(\lambda)=\begin{cases}
\dim E(\lambda_1,\ldots,\lambda_{m+1}), & \hbox{ if } m+2 =n \hbox{ and } \lambda_{m+1}=p^{v_m}-1;\cr
\dim E(\lambda_1,\ldots,\lambda_{m+1})+1, & \hbox{ otherwise.}
\end{cases}$$

\end{proposition}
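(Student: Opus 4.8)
The two preceding lemmas do most of the work. Since $l_{m+1}>l_{m+2}$, Lemma 7.9 gives that the restriction $R\colon E(\lambda)\to E(\lambda_1,\ldots,\lambda_{m+1})$ is surjective, and since $l_{m+2}=\cdots=l_n$, Lemma 7.10 gives $\dim\ker R\le 1$; hence $\dim E(\lambda)=\dim E(\lambda_1,\ldots,\lambda_{m+1})+\dim\ker R$ with $\dim\ker R\in\{0,1\}$, and everything reduces to deciding when $\ker R=0$.

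The plan is first to describe $\ker R$ explicitly. Put $\mu=(\lambda_{m+1},\ldots,\lambda_n)$; by Lemma 7.8 the space $E(\mu)$ is one dimensional, spanned by the nonzero canonical multi-sequence $y^\can$ of $\mu$. Let $(y(r,s)_i)\in\ker R$, so $y(r,s)_i=0$ for $r<s\le m+1$. For a pair with $r\le m<s$ (so $s\ge m+2$), apply relation (T3a) with $i=j$ to the coherent triple of $(\lambda_r,\lambda_{m+1},\lambda_s)$ extracted from $(y(r,s)_i)$; since its $(r,m+1)$-component vanishes this gives $y(r,s)_i=\binom{\lambda_r+i}{i}\,y(m+1,s)_i$, and $y(m+1,s)$, being an extension sequence for the James pair $(\lambda_{m+1},\lambda_s)$, is supported on multiples of $p^{\len_p(\lambda_s)}$ by Lemma 5.8, while $\lambda_s\le\lambda_{r+1}<p^{v_r}$ forces $\len_p(\lambda_s)<v_r$, so $\binom{\lambda_r+i}{i}=0$ there and $y(r,s)=0$. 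Thus every element of $\ker R$ is the extension by zero of a scalar multiple of $y^\can$; by linearity, $\dim\ker R=1$ exactly when the extension by zero $\tilde y$ of $y^\can$ is a coherent multi-sequence for $\lambda$.

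Next I would test coherence of $\tilde y$. Its nonzero pairs all lie inside $\mu$, so the triples inside $\mu$ are coherent and the rest, apart from those $(r,s,t)$ with $r\le m<s<t\le n$, are zero triples; for such a triple the extracted coherent triple of $(\lambda_r,\lambda_s,\lambda_t)$ is $\bigl((0),(y^\can(s-m,t-m)_j),(0)\bigr)$, and unwinding (T1)--(T3b) one checks this is coherent iff $y^\can(s-m,t-m)=0$ or $\binom{\lambda_r+i}{i}=0$ for $1\le i\le\lambda_s+(\lambda_t)_lp^l$, where $l=l_{m+2}=\cdots=l_n$. The James condition forces $v_1\ge v_2\ge\cdots\ge v_{n-1}$, so it suffices to treat $r=m$, where the condition becomes $\lambda_s+(\lambda_t)_lp^l<p^{v_m}$. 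By Proposition 7.3, $y^\can(s-m,t-m)\ne0$ forces $v_s=\min(v_{m+1},\ldots,v_{n-1})=v_{n-1}$, and a base-$p$ carrying argument (the carry from adding $(\lambda_t)_lp^l$ to $\lambda_s$ dies at position $v_s$) shows $\lambda_s+(\lambda_t)_lp^l<p^{v_m}$ unless $v_s=v_m$. Using monotonicity of the $v_i$, Remark 7.6, and $l_{m+1}>l_{m+2}$ (which rules out $\lambda_{m+2}=p^{v_m}-1$, hence $v_{m+2}=v_m$, once $n>m+2$), the case $v_s=v_m$ compatible with $v_s=v_{n-1}$ occurs only when $n=m+2$ and $\lambda_{m+1}=p^{v_m}-1$. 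In that one case the triple $(m,m+1,m+2)$ is not coherent, because $i=p^{v_m}$ lies in the range of (T3b) and $\binom{\lambda_m+p^{v_m}}{p^{v_m}}\ne 0$; so $\ker R=0$. In every other case $\tilde y$ is coherent and $\dim\ker R=1$. This is precisely the asserted formula.

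I expect the main difficulty to be the last step: proving that $\lambda_s+(\lambda_t)_lp^l<p^{v_m}$ can fail (while $y^\can(s-m,t-m)\ne 0$) only in the listed boundary case. This is not one hard estimate but a careful assembly of several facts --- monotonicity of the $v_i$, the fact that $l_{m+1}>l_{m+2}$ prevents $\lambda_{m+2}=p^{v_m}-1$ when $n>m+2$, and Remark 7.6 identifying $\lambda_{m+1}=p^{v_m}-1$ as exactly the obstruction to $(\lambda_m,\lambda_{m+1}+p^l)$ being James --- together with the base-$p$ carry bookkeeping.
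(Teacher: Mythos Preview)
Your proposal is correct and follows essentially the same strategy as the paper: both reduce via Lemmas 7.9 and 7.10 to deciding whether the extension-by-zero $\tilde y$ of the canonical multi-sequence of $\mu=(\lambda_{m+1},\ldots,\lambda_n)$ lies in $E(\lambda)$, and both identify the unique obstruction $n=m+2$, $\lambda_{m+1}=p^{v_m}-1$.

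The organisational difference is worth noting. The paper splits into three cases and invokes Lemma 7.7 as a black box (case (ii) of 7.7 to kill the kernel when $n=m+2$ and $\lambda_{m+1}=p^{v_m}-1$; case (iii) of 7.7 to verify coherence of $\tilde y$ otherwise). Your approach is more uniform: you first prove directly (via (T3a) with $i=j$ on the triple $(\lambda_r,\lambda_{m+1},\lambda_s)$) that \emph{every} element of $\ker R$ vanishes on crossing pairs, hence equals a multiple of $\tilde y$, and then test coherence of $\tilde y$ by explicit base-$p$ carry analysis. This avoids the three-way case split and makes it transparent that the sole obstruction is the bound $\len_p(\lambda_s+(\lambda_t)_l p^l)<v_m$ together with $y^{\can}(s-m,t-m)\neq 0$. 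One small imprecision: when $s=m+1$ and $v_{m+1}\le l_{m+1}$ the carry does die at $v_s$ but $\len_p$ of the sum is $l_{m+1}$, not $v_{m+1}$; however $l_{m+1}<v_m$ by the James condition, so the bound still holds and your conclusion is unaffected.
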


\begin{proof} First suppose that $m+2=n$ and $\lambda_{m+1}=p^{v_m}-1$. Let $(y(r,s)_i)$ be a coherent multi-sequence in the kernel of restriction $E(\lambda)\to E(\lambda_1,\ldots,\lambda_{m+1})$.  Thus the sequences $(y(1,2)_i), (y(2,3)_i), \ldots, (y(m,m+1)_i)$ are identically $0$.   We have the coherent triple $(y(m,m+1)_i), (y(m+1,m+2)_j),$ $(y(m,m+2)_k)$.   Moreover, $((y(m,m+1)_i)$ is identically $0$  and 
$l_{m+1}>l_{m+2}$ so that, by Lemma 7.7(ii),  $(y(m+1,m+2)_j)$ is $0$. Hence the sequences  $(y(1,2)_i), (y(2,3)_i),\ldots,$ \\$ (y(n-1,n)_i)$ are $0$ and so $(y(r,s)_i)$ is the zero multi-sequence. Thus the kernel  of the restriction  is $0$ in this case and by Lemma 7.9, we have $\dim E(\lambda)= \dim E(\lambda_1,\ldots,\lambda_{m+1})$.

\q We next consider the case $v_m>\len_p(\lambda_{m+1}+p^{l_{m+2}})$.  We have the canonical multi-sequence $(y^\can(r,s)_i)$ for $(\lambda_{m+1},\ldots,\lambda_n)$ and we make a multi-sequence $(y(r,s)_i)$ for $\lambda$ by putting
$y(m+r,m+s)_i=y^\can(r,s)_i$, for $1\leq r<s\leq n-m$, $1\leq i\leq \lambda_s$, and all other terms $0$. We check that this is a coherent multi-sequence and hence the kernel of restriction $E(\lambda)\to E(\lambda_1,\ldots,\lambda_{m+1})$ is not zero. 

\q We have to check that each triple $(r,s,t)$ with $1\leq r<s<t\leq n$ gives a coherent triple of extension sequences $(y(r,s)_i), (y(s,t)_j),(y(r,t)_k)$.

\q  If $t\leq m$ then this is the zero triple (which is coherent).

\q  Suppose next that $1\leq r<s\leq m$, $m+1\leq t$. Again $(y(r,s)_i), (y(s,t)_j),$ $(y(r,t)_k)$ is the zero triple and so coherent. 

\q Suppose now that $1\leq r\leq m$, $m+1\leq s<t\leq n$.  Then the sequences $(y(r,s)_i)$ and $(y(r,t)_k)$ are identically $0$ and since we have
$$v_r\geq v_m> \len_p(\lambda_{m+1}+p^{l_{m+2}})\geq \len_p(\lambda_s+p^{l_t})$$
the triple $(y(r,s)_i), (y(s,t)_j),(y(r,t)_k)$ is coherent by Lemma 7.7(iii).  

\q Finally if $m+1\leq r<s<t\leq n$ then we may write $r=r_0+m$, $s=s_0+m$, $t=t_0+m$ and 
$$((y(r,s)_i), (y(s,t)_j),(y(r,t)_k))=((y^\can(r_0,s_0)_i), (y^\can(s_0,t_0)_j), (y^\can(r_0,t_0)_k))$$
which is certainly coherent. Hence $(y(r,s)_i)$  is a coherent multi-sequence in the kernel of restriction  $E(\lambda)\to E(\lambda_1,\ldots,\lambda_{m+1})$ and so by Lemmas 7.9 and 7.10 we have $\dim E(\lambda)=\dim E(\lambda_1,\ldots,\lambda_{m+1})+1$.

\q It remains to consider the case $m+2<n$ and $\lambda_{m+1}=p^{v_m}-1$.  Once again we use the canonical multi-sequence $(y^\can(r,s)_i)$ for $(\lambda_{m+1},\ldots,\lambda_n)$ to define the multi-sequence $(y(r,s)_i)$ for $\lambda$ with $y(m+r_0,m+s_0)_i=y^\can(r_0,s_0)_i$, for $1\leq r_0<s_0\leq n-m$, $1\leq i\leq \lambda_{m+s_0}$, and all other terms $0$.

\q We need to check that $(r,s,t)$, with $1\leq r<s<t\leq n$, gives a coherent triple $((y(r,s)_i), (y(s,t)_j),(y(r,t)_k))$. 

\q If $1\leq r<s\leq m$ then this is the zero triple.

\q Now suppose that $1\leq r\leq m$, $s=m+1<t\leq n$.  Then $(y(r,m+1)_i)$ and $(y(r,t)_k)$ are identically $0$. Now $l_{m+2}<l_{m+1}$ so that $\lambda_{m+2}\neq \lambda_{m+1}$ so that $\lambda_{m+2}<p^{v_m}-1$ and so $v_{m+2}<v_m$. Thus we have
$$v_{m+1}-l_t=v_m-l_t>v_{m+2}-l_t\geq v_{n-1}-l_n=\JI(\lambda_{m+1},\ldots,\lambda_n).$$ 
Writing $t=m+t_0$ we thus have that $(y^\can(1,t_0)_i)$ is the zero sequence, by Proposition 7.4  and hence $(y(m+1,t)_i)$ is the zero sequence and \\
$((y(r,m+1)_i), (y(m+1,t)_j), (y(r,t)_k))$ is the zero triple, and hence coherent. 

\q Now suppose that $r\leq m$, $m+1<s<t\leq n$. Again we have that the sequences $(y(r,s)_i))$ and $(y(r,t)_k)$ are zero.  We claim that $v_r>\len_p(\lambda_s+p^{l_t})$ and hence, by Lemma 7.7(iii), the triple $((y(r,s)_i), (y(s,t)_j),(y(r,t)_k))$ is coherent.  We have $v_r\geq v_m$ and $\len_p(\lambda_s+p^{l_t})\leq \len_p(\lambda_{m+2}+p^{l_t})$ so it is enough to check that $v_{m+1}>\len_p(\lambda_{m+2}+p^{l_t})$.   If this fails then we have $\lambda_{m+2}=p^{v_{m+1}}-1=p^{v_m}-1$ and this is not possible since $l_{m+1}>l_{m+2}$. 

\q Finally if $m+1\leq r<s<t\leq n$ then we may write $r=m+r_0$, $s=m+s_0$, $t=m+t_0$ and 
$$((y(r,s)_i), (y(s,t)_j),(y(r,t)_k))=((y^\can(r_0,s_0)_i), (y^\can(s_0,t_0)_j), (y^\can(r_0,t_0)_k))$$
which is certainly coherent. 

\q Thus the kernel of restriction $E(\lambda)\to E(\lambda_1,\ldots,\lambda_{m+1})$ is not zero and by 
Lemmas 7.8 and 7.9 we have $\dim E(\lambda)=\dim E(\lambda_1,\ldots,\lambda_{m+1})+1$.
\end{proof}

\begin{definition} Let $\lambda=(\lambda_1,\ldots,\lambda_n)$ be a James partition of length $n$. We define the  {\em segments} of $\lambda$ to be  the equivalence classes of $\{1,\ldots,n\}$ for the relation $r=s$ if and only if $l_r=l_s$. For $1\leq r,s\leq n$ we define $r$ and $s$ to be adjacent if they belong  to the same segment or if $1<r<n$, $s=r+1$ and $r+1$ is the only element in its segment  and $\lambda_r=p^{v_{r-1}}-1$. We define the  $p$-segments of $\lambda$ to be  the equivalent classes of $\{1,\ldots,n\}$ for the equivalence relation generated by adjacency.
\end{definition}

\q For a James partition $\lambda$ we define $e(\lambda)=\dim E(\lambda)$.

\begin{corollary} Let $\lambda=(\lambda_1,\ldots,\lambda_n)$ be a James partition of length $n\geq 2$.

(i) If $l_1=l_2$ then $e(\lambda)$ is   the number of $p$-segments of $\{1,\ldots,n\}$.

(ii) If $l_1>l_2$ then $e(\lambda)$ is one less than  the number of $p$-segments of $\{1,\ldots,n\}$.

\end{corollary}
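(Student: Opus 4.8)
The plan is to prove (i) and (ii) together by strong induction on $n$, peeling off the last segment of $\lambda$ and invoking Proposition 7.11. Since $\lambda$ is a partition its parts are weakly decreasing, so $l_1\geq l_2\geq\cdots\geq l_n$ and the segments of $\lambda$ are consecutive intervals on which $l$ is constant, with $l$ strictly decreasing from one segment to the next. Write the last segment as $\{j,j+1,\ldots,n\}$, so $l_j=\cdots=l_n$ and, if $j\geq 2$, $l_{j-1}>l_j$. Note that $j=1$ means $\lambda$ is a single segment (and then $l_1=l_2$), while $j=2$ means $l_1>l_2=\cdots=l_n$; in both of these cases $l_2=\cdots=l_n$.

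\textbf{Base cases $j\leq 2$.} Here Lemma 7.8 shows $\lambda$ is constrained, so $\dim E(\lambda)\leq 1$; and the canonical multi-sequence is non-zero by construction (one divides the integers $\binom{\lambda_i+t}{t}$ by $p^{\JI(\lambda)}$ and reduces mod $p$, and by the very definition of $\JI(\lambda)$ not all resulting residues vanish), so $e(\lambda)=1$. If $j=1$ there is a single segment, hence a single $p$-segment (a second-kind adjacency requires at least two segments), and $l_1=l_2$, so (i) holds. If $j=2$ the segments are $\{1\}$ and $\{2,\ldots,n\}$; the only consecutive pair crossing between them is $(1,2)$, and the second-kind adjacency $1\sim 2$ fails because it would require $1<r<n$ with $r=1$; so there are two $p$-segments, and since $l_1>l_2$ we get (ii). In particular $n=2$ is settled.

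\textbf{Inductive step $j\geq 3$.} Put $m=j-2$, so $1\leq m\leq n-2$ and $l_{m+1}>l_{m+2}=\cdots=l_n$. Let $\mu=(\lambda_1,\ldots,\lambda_{m+1})$, a James partition of length $m+1\geq 2$ with the same first two $l$-values as $\lambda$, hence lying in the same case (i)/(ii). Proposition 7.11 gives $e(\lambda)=e(\mu)+\ep$, where $\ep=0$ if $m+2=n$ and $\lambda_{m+1}=p^{v_m}-1$ (with $v_m=\val_p(\lambda_m+1)$), and $\ep=1$ otherwise. The crux is the matching combinatorial statement: the number of $p$-segments of $\lambda$ equals the number of $p$-segments of $\mu$ plus the same $\ep$. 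One first checks that the $p$-segment partition of $\{1,\ldots,m+1\}$ is the same computed inside $\mu$ or inside $\lambda$: the segments of $\lambda$ contained in $\{1,\ldots,m+1\}$ are precisely those of $\mu$ (using $l_{m+1}>l_{m+2}$), and a second-kind adjacency $r\sim r+1$ with $r+1\leq m+1$ depends only on data ($\lambda_r$, $v_{r-1}$, and the segment of $r+1$) left unchanged on passing to $\mu$. One then analyses the last segment $\{m+2,\ldots,n\}$: its elements are pairwise adjacent, and the sole consecutive pair that could link it to $\{1,\ldots,m+1\}$ is $(m+1,m+2)$; the adjacency $m+1\sim m+2$ holds exactly when $1<m+1<n$ (automatic, as $m+1\geq 2$ and $m+1=j-1\leq n-1$), $m+2$ is the only member of its segment (i.e.\ $m+2=n$), and $\lambda_{m+1}=p^{v_m}-1$ — that is, exactly when $\ep=0$. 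So if $\ep=0$ the block $\{n\}$ merges into the $p$-segment of $m+1$ (count unchanged), and if $\ep=1$ it is a separate $p$-segment (count $+1$). Combining $e(\lambda)=e(\mu)+\ep$ with the induction hypothesis for $\mu$, and using that the additive constant ($0$ or $-1$) relating $e$ to the number of $p$-segments is governed identically by $l_1=l_2$ versus $l_1>l_2$ for $\mu$ and for $\lambda$, statements (i) and (ii) for $\lambda$ follow.

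\textbf{Main obstacle.} The extension-theoretic input is ready to hand (Proposition 7.11, Lemma 7.8, and the non-vanishing of the canonical multi-sequence), so the real work is the purely combinatorial lemma of the previous paragraph: one must verify carefully that the dichotomy appearing in Proposition 7.11 — namely "$m+2=n$ and $\lambda_{m+1}=p^{v_m}-1$" versus its negation — coincides with the dichotomy deciding whether the final segment of $\lambda$ merges with the preceding $p$-segment. Everything else is bookkeeping with the weakly decreasing sequence $l_1,\ldots,l_n$ and the definition of adjacency.
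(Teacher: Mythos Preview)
Your proposal is correct and follows essentially the same approach as the paper: induction on $n$, with Lemma 7.8 handling the case where the last segment is all of $\{1,\ldots,n\}$ or $\{2,\ldots,n\}$, and Proposition 7.11 driving the inductive step by peeling off the final segment. Your write-up is in fact more careful than the paper's on the combinatorial side --- you explicitly verify that the $p$-segment partition on $\{1,\ldots,m+1\}$ is unchanged on restriction to $\mu$, and that the dichotomy in Proposition 7.11 matches the adjacency condition $m+1\sim m+2$ --- whereas the paper treats these as evident.
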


\begin{proof}  We argue by induction on $n$.

(i) If $l_1=l_2=\cdots=l_n$ then the result is true by Lemma 7.8, in particular the result holds for $n=2$. Suppose now that there are at least $2$ segments and let $1\leq r\leq n$ be such that $l_r>l_{r+1}=\ldots=l_n$. Then $r\geq 2$. If $n-r\geq 2$ then we have $e(\lambda)=e(\lambda_1,\ldots,\lambda_r)+1$, by Proposition  7.11,  and inductively we have that this is one more than the  number of $p$-segments in $(\lambda_1,\ldots,\lambda_r)$, i.e., the number of $p$-segments in $\lambda$. So we may assume that $r=n-1$. If $\lambda_r\neq p^{v_{r-1}}-1$ then by Proposition 7.11,  we have $e(\lambda)=e(\lambda_1,\ldots,\lambda_{n-1})+1$, i.e.,  one more than the number of $p$-segments in $(\lambda_1,\ldots,\lambda_{n-1})$ and this is the number of $p$-segments in $\lambda$. If $\lambda_r= p^{v_{r-1}}-1$ then, by Proposition 7.11 we have $e(\lambda)=e(\lambda_1,\ldots,\lambda_{n-1})$, the number of $p$-segments in $(\lambda_1,\ldots,\lambda_{n-1})$, and this is also the number of $p$-segments in $\lambda$.

(ii) If $n=2$ we have $e(\lambda)=1$ and $\lambda$ has two $p$-segments so the result is correct. If $\lambda$ has two segments then we have $l_1> l_2=\ldots=l_n$ so $\lambda$ also has two $p$-segments and the result is correct, by Lemma 7.8. Now suppose that $\lambda$ has at least three $p$-segments and that $1\leq r<n$ is such that $l_r>l_{r+1}=\cdots=l_n$. Then we have $r\geq 2$.  If $n-r\geq 2$ then, by Proposition 7.11, we have $e(\lambda)=e(\lambda_1,\ldots,\lambda_r)+1$, which is the number of $p$-segments of $(\lambda_1,\ldots,\lambda_r)$ and this is one less than the number of $p$-segments of $\lambda$, as required.  So we may suppose that $r=n-1$.  Again if $\lambda_r\neq p^{v_{r-1}}-1$ then Proposition 7.11 gives $e(\lambda)=e(\lambda_1,\ldots,\lambda_{n-1})+1$, which is the number of $p$-segments of $(\lambda_1,\ldots,\lambda_{n-1})$ and hence one less than the number of $p$-segments of $\lambda$.  If $\lambda_r\neq p^{v_{r-1}}-1$ then Proposition 7.11 gives $e(\lambda)=e(\lambda_1,\ldots,\lambda_{n-1})$ which is one less than the number of $p$-segments of $(\lambda_1,\ldots,\lambda_{n-1})$ and hence also one less than the number of $p$-segments of $\lambda$.

\end{proof}

\bs\bs\bs\bs


\section{Three part partitions $(a,b,c)$ with $(a,b)$  James and $(b,c)$ not James}

\q We fix a three part partition $\lambda=(a,b,c)$.

\q We set  $\beta=\len_p(b)$, $\gamma=\len_p(c)$,  $\v=\val_p(a+1)$ and $\w=\val_p(b+1)$.  We have the standard coherent triple of extension sequences given $(x_i)$, $(y_j)$, $(z_k)$, given by $x_i^\st={a+i\choose i}$, $y_j^\st={b+j\choose j}$, $z_k^\st={a+k\choose k}$, for $1\leq i\leq b$, $1\leq j,k\leq c$.

\begin{lemma} Assume that $(a,b)$ is James  and $(b,c)$ is not James.  Then 
 $\lambda$ is non-split if and only if $(b,c)$ is  pointed and  $\v>\len_p(b+p^\gamma)$. In this case $E(\lambda)$ is spanned by the standard triple and the triple with  $(x_i)=0$, $(z_k)=0$ and $(y_j)$ given by 
$$y_j=\begin{cases} 1, & \hbox{ if } j=p^\gamma;\cr
0, & \hbox{ otherwise.}
\end{cases}
$$

\end{lemma}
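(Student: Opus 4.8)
The plan is to take an arbitrary coherent triple $((x_i),(y_j),(z_k))$ for $\lambda=(a,b,c)$, prove that necessarily $(x_i)=0$ and $(z_k)=0$, and thereby identify $E(\lambda)$ with a subspace of the space of extension sequences of the two part partition $(b,c)$. Write $v=\val_p(a+1)$, $w=\val_p(b+1)$, $\beta=\len_p(b)$, $\gamma=\len_p(c)$. Two facts are used constantly: since $(a,b)$ is James, $c\le b<p^{v}$, so $(a,c)$ is James too (Remark 3.3), whence $\binom{a+i}{i}=0$ for $1\le i\le b$; and since $(b,c)$ is \emph{not} James, $c\ge p^{w}$.

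\textbf{Step 1: $(x_i)=0$.} By Lemma 5.7 and Remark 5.8 the sequence $(x_i)$ is a scalar multiple of the canonical sequence of $(a,b)$, hence is supported on the multiples $tp^\beta$, $1\le t\le b_\beta$, with $t\,x_{tp^\beta}=x_{p^\beta}$; so it is enough to show $x_{p^\beta}=0$. One has $w\le\beta$ always, and I would treat $w<\beta$ and $w=\beta$ separately. For $w<\beta$, apply relation (T3b) with $i=b_\beta p^\beta$, $j=p^{w}$: this is admissible since $1\le p^{w}\le c$ and $p^{w}<p^\beta\le b_\beta p^\beta\le b+j$; the left hand side $\binom{a+b_\beta p^\beta}{b_\beta p^\beta}y_{p^{w}}$ vanishes because $(a,b_\beta p^\beta)$ is James, and on the right only the $s=0$ term survives (a nonzero $x_{b_\beta p^\beta-s}$ would force $p^\beta\mid s$, impossible for $1\le s\le p^{w}<p^\beta$), leaving $\binom{(b\bmod p^\beta)+p^{w}}{p^{w}}\,x_{b_\beta p^\beta}=0$; a one line Lucas computation — the base $p$ digits of $b$ below position $w$ are $p-1$ and the digit at position $w$ is $\le p-2$, which is just $w=\val_p(b+1)$ — evaluates this coefficient to $b_w+1\ne 0$ in $K$, so $x_{p^\beta}=0$. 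For $w=\beta$ one has $c\ge p^\beta$, hence $\gamma=\beta$, and $1\le b_\beta\le p-2$; here I would combine (T1) with $i=k=p^\beta$ (coefficient $\binom{a+2p^\beta}{p^\beta}=1$), giving $x_{p^\beta}=z_{p^\beta}$, with (T3a) with $i=j=p^\beta$ (coefficient $\binom{b}{p^\beta}=b_\beta$), giving $b_\beta x_{p^\beta}+z_{p^\beta}=0$; hence $(b_\beta+1)x_{p^\beta}=0$ and $x_{p^\beta}=0$.

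\textbf{Step 2: reduction and conclusion.} Given $(x_i)=0$, taking $i=j$ in (T3a) yields $\binom{a+i}{i}y_i=z_i$ for $1\le i\le c$, and the left side is $0$ since $(a,c)$ is James, so $(z_k)=0$ as well. With $(x_i)=(z_k)=0$, relations (T1), (T2), (T3a) hold automatically (every remaining coefficient of a $y_j$ is a binomial $\binom{a+i}{i}$ or $\binom{a+k}{k}$ with argument $\le c$, which vanishes), and (T3b) collapses to the conditions $\binom{a+i}{i}y_j=0$ for $1\le j\le c$, $b<i\le b+j$; conversely any extension sequence $(y_j)$ for $(b,c)$ satisfying these conditions gives a coherent triple $((0),(y_j),(0))$. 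Hence $E(\lambda)$ is isomorphic to the subspace $W\subseteq E(b,c)$ defined by those conditions. Now $W$ contains the standard sequence $(\binom{b+j}{j})_j$ (the standard triple is coherent), which is nonzero (Remark 5.5), so $\dim E(\lambda)\ge 1$; if moreover $(b,c)$ is not pointed, Lemma 5.11 gives $E(b,c)=K\!\cdot\!(\binom{b+j}{j})_j$, so $\dim E(\lambda)=1$ and $\lambda$ is split. If $(b,c)$ is pointed, $E(b,c)$ is spanned by the standard sequence and the point sequence $y_j=\delta_{j,p^\gamma}$ (Lemma 5.11, Remark 5.10), and the point sequence lies in $W$ iff $\binom{a+i}{i}=0$ for all $b<i\le b+p^\gamma$, i.e.\ (since $\binom{a+i}{i}=0$ already for $1\le i\le b$, and $\binom{a+p^{v}}{p^{v}}=a_v+1\ne 0$, cf.\ Remark 3.3) iff $b+p^\gamma<p^{v}$, i.e.\ iff $v>\len_p(b+p^\gamma)$. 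Thus $\dim E(\lambda)=2$ precisely when $(b,c)$ is pointed and $v>\len_p(b+p^\gamma)$, and then $E(\lambda)$ is spanned by the standard triple and $((0),(\delta_{j,p^\gamma})_j,(0))$; as $\lambda$ is not James, Proposition 4.5(ii) turns $\dim E(\lambda)=2$ into $\Ext^1_{B(N)}(S^dE,K_\lambda)\ne 0$, i.e.\ $\lambda$ non-split, which is exactly the claim.

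\textbf{Main obstacle.} The substantive part is Step 1: choosing the instances of (T1), (T3a), (T3b) so as to cover all shapes of $b$, and checking via Lucas's formula (and Remark 3.1) that the chosen instance is legal, that all but one term on the relevant side drop out, and that the surviving binomial coefficient is a unit of $K$. The interplay of $\val_p(b+1)$, $\len_p(b)$, $\len_p(c)$ — excluding $b=p^{\beta+1}-1$ and isolating $b=2p^\beta-1$ (which only occurs for $p>2$, where $b_\beta+1=2$ is still a unit) — is the delicate bookkeeping; everything afterwards is routine tracking of which binomial coefficients vanish by the James condition.
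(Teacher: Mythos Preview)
Your proof is correct and follows essentially the same route as the paper's: show $(x_i)=0$ by combining (T3b) (case $w<\beta$) or (T1) and (T3a) (case $w=\beta$) with the structure of extension sequences for the James pair $(a,b)$, deduce $(z_k)=0$, and then reduce to whether the point sequence for $(b,c)$ satisfies the residual (T3b) constraint. The only cosmetic differences are that you use $i=b_\beta p^\beta$ rather than $i=p^\beta$ in the first case (both work), you obtain $(z_k)=0$ uniformly from (T3a) with $i=j$ rather than case by case, and you phrase the conclusion as an identification of $E(\lambda)$ with a subspace $W\subseteq E(b,c)$, which is a clean way to package the endgame.
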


 \begin{proof}    We claim  that in any coherent triple $(x_i),(y_j),(z_k)$ we have $(x_i)=0$ and $(z_k)=0$. Since $(a,b)$ and $(a,c)$ are James it is enough to show that $x_{p^\beta}=0$ and that $z_{p^\gamma}=0$. Suppose first that $w<\beta$. Now $\beta\geq\gamma \geq w$ (since $(b,c)$ is not James) so we may apply relation (T3b) in the case $j=p^w$, $i=p^\beta$. We get 
 $$0={{b+p^w-p^\beta}\choose{p^w}}x_{p^\beta}$$
 (since $x_i=0$ for $i<p^\beta$). By Lucas's Formula we get $(b_w+1)x_{p^\beta}=0$ so that $x_{p^\beta}=0$ and $(x_i)=0$.  Also by (T3a) for $i=j=p^\gamma$ we have that $z_p^{\gamma}=0$ and so $(z_k)=0$.
 
 \q We now suppose $\beta\leq w$. Since  $\beta\geq\gamma\geq w$ this implies $\beta=w$ and $\beta=\gamma$.  Since $(a,b)$ is James we have $a_\beta=p-1$ and (T1) with $i=k=p^\beta$ gives 
 $x_{p^\beta}=z_{p^\beta} .$
 But (T3a), with $i=j=p^\beta$ gives $0={b\choose p^\beta}x_{p^\beta}+z_{p^\beta}=0$, i.e., $b_\beta x_{p^\beta}+z_{p^\beta}=0$ and hence $(b_\beta+1)x_{p^\beta}=0$. Now $b_\beta=b_w\neq p-1$ so that $(b_\beta+1)\neq 0$ and so $x_{p^\beta}=0$ and $(x_i)$ and $(z_k)$ are the zero sequences. 
 
 \q  Assume that $(b,c)$ is split. Then we can subtract a multiple of the standard triple from an arbitrary coherent triple for $\lambda$ to obtain a coherent triple 
   $(x_i),(y_j),(z_k)$ in which $(y_j)=0$. But,  then by the claim, $(x_i)=0$ and $(z_k)=0$ and hence   $(x_i),(y_j),(z_k)$  is the zero triple. Hence $E(\lambda)$ consists of multiples of the standard triple and $\lambda$ is split.

  \q Now suppose that $(b,c)$ is pointed. After subtracting a multiple of the standard triple from any coherent triple we can obtain a coherent triple with $(x_i)=0$, $(z_k)=0$ and $y_j=0$ for $j\neq p^\gamma$.
  
  \q  We consider the triple of extension sequences given by $(x_i)=0$, $(z_k)=0$ and
$$y^\pt_j=\begin{cases} 1, & \hbox{ if } j=p^\gamma;\cr
0, & \hbox{ otherwise.}
\end{cases}.
$$

\q Then (T1),(T2),(T3a) hold trivially for this triple and so it is coherent if and only if (T3b) holds, i.e., if and only if ${a+i\choose i}y^\pt_j=0$ for all $1\leq j\leq c$, $j<i\leq b+j$, i.e., if and only if ${a+i\choose i}=0$ for all $p^\gamma<i\leq b+p^\gamma$, i.e., if and only if $a\geq b+p^\gamma$ and $(a,b+p^\gamma)$ is James, i.e., if and only if $v>\len_p(b+p^\gamma)$.  It follows that $E(\lambda)$ is spanned by the standard solution if $v=\len_p(b+p^\gamma)$ and by the standard solution and the point solution just constructed if $v>\len_p(b+p^\gamma)$. 
 
 \end{proof}

\bs\bs\bs\bs


\section{General results for $n$-rows}

\q Let $\lambda=(\lambda_1,\ldots,\lambda_n)$ be a partition of length $n\geq 4$.  Let $(y(r,s)_i)$, be a coherent extension multi-sequence for $\lambda$. We shall use the commuting relation, Section 4, (C), in the following special case.

\q For $1\leq  r< s< n$ with $s\neq r+1$ we have

$$
(C')  \   {\lambda_s+j\choose j} y(r,r+1)_i={{\lambda_r+i}\choose{i}}y(s,s+1)_j,\  1\leq  i\leq \lambda_{r+1}, 1\leq j\leq \lambda_{s+1}.
$$

We shall use the notation    $v_i=\val_p(\lambda_i+1)$, for  $1\leq i\leq n-1$ and $l_i=\len_p(\lambda_i)$, for  $2\leq i\leq n$. 

\begin{lemma} Suppose that $1\leq r<n$ and  $(\lambda_r,\lambda_{r+1})$ is not James.  Let $(y(t,u)_i)$ be  a coherent multi-sequence for $\lambda$ and suppose that $(y(r,r+1)_i)$ is either zero or the point extension sequence. Then, for $1\leq s<n$ with  $s\neq r-1,r,r+1$  the extension sequence  $(y(s,s+1)_j)$  is zero.
\end{lemma}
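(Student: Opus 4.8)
The plan is to exploit the commuting relation $(C')$, which applies precisely because $s\neq r-1,r,r+1$ and $n\geq 4$, so that the pairs $\{r,r+1\}$ and $\{s,s+1\}$ are disjoint and non-adjacent. Fix such an $s$ with $1\leq s<n$. We want to show $(y(s,s+1)_j)=0$ for all $1\leq j\leq \lambda_{s+1}$. Since $(y(s,s+1)_j)$ is itself an extension sequence for the two-part partition $(\lambda_s,\lambda_{s+1})$, by Lemma 5.3 it suffices to show that it vanishes at all powers of $p$ up to $\lambda_{s+1}$, i.e.\ $y(s,s+1)_{p^k}=0$ whenever $p^k\leq \lambda_{s+1}$.

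The key step is to feed into $(C')$ a well-chosen index $i$ on the $(r,r+1)$ side for which the binomial coefficient $\binom{\lambda_r+i}{i}$ is \emph{non-zero} modulo $p$ while simultaneously $y(r,r+1)_i=0$. If $(y(r,r+1)_i)$ is the zero sequence this is automatic for any $i$, and taking $i=1$ (or any $i$ with $\binom{\lambda_r+i}{i}\neq 0$; such $i$ exists, e.g.\ $i=1$ when $\lambda_r\not\equiv p-1$, and more generally $i=p^{v_r}$ gives $\binom{\lambda_r+p^{v_r}}{p^{v_r}}=(\lambda_r)_{v_r}+1\neq 0$) immediately yields $\binom{\lambda_r+i}{i}\,y(s,s+1)_j=0$, hence $y(s,s+1)_j=0$ for all $j$. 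If instead $(y(r,r+1)_i)$ is the point extension sequence for $(\lambda_r,\lambda_{r+1})$, then by Definition 5.10 it is supported only at $i=p^{\beta_r}$ where $\beta_r=\len_p(\lambda_{r+1})$, and $(\lambda_r,\lambda_{r+1})$ is pointed with $\beta_r>v_r=\val_p(\lambda_r+1)$. Here I take $i=p^{v_r}$: since $v_r<\beta_r$ we have $i=p^{v_r}<p^{\beta_r}$, so $y(r,r+1)_{p^{v_r}}=0$, while $\binom{\lambda_r+p^{v_r}}{p^{v_r}}=(\lambda_r)_{v_r}+1\neq 0$ by Lucas's formula (using $v_r=\val_p(\lambda_r+1)$, so $(\lambda_r)_{v_r}\neq p-1$). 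Plugging this $i$ and an arbitrary $j=p^k\leq \lambda_{s+1}$ into $(C')$ gives $\binom{\lambda_{s+1}+p^k}{p^k}\,\big((\lambda_r)_{v_r}+1\big)=\binom{\lambda_r+p^{v_r}}{p^{v_r}}\,y(s,s+1)_{p^k}\cdot 0$...

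wait — I must be careful about which side of $(C')$ carries the unknown. Reading $(C')$: $\binom{\lambda_s+j}{j}\,y(r,r+1)_i=\binom{\lambda_r+i}{i}\,y(s,s+1)_j$. With my choice $y(r,r+1)_i=0$ and $\binom{\lambda_r+i}{i}\neq 0$, the right-hand side must vanish, forcing $y(s,s+1)_j=0$ for \emph{every} $1\leq j\leq \lambda_{s+1}$ — no reduction to prime-power indices is even needed. So the argument is cleaner than anticipated: one single application of $(C')$ with the right $i$ does it in both cases. The main (minor) obstacle is simply verifying that an index $i$ with $\binom{\lambda_r+i}{i}\not\equiv 0\pmod p$ and $y(r,r+1)_i=0$ always exists; $i=p^{v_r}$ works uniformly, using that $v_r=\val_p(\lambda_r+1)$ guarantees $(\lambda_r)_{v_r}+1\neq 0$ in $K$, and that in the pointed case the point sequence's unique nonzero entry sits at $p^{\beta_r}$ with $\beta_r>v_r$. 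This completes the proof.
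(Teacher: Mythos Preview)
Your proof is correct and takes essentially the same approach as the paper: choose $i=p^{v_r}$ in the commuting relation $(C')$, observe that $y(r,r+1)_{p^{v_r}}=0$ in both the zero and pointed cases (the latter because the point sequence is supported at $p^{l_{r+1}}$ with $l_{r+1}>v_r$), and use $\binom{\lambda_r+p^{v_r}}{p^{v_r}}=(\lambda_r)_{v_r}+1\neq 0$ to force $y(s,s+1)_j=0$ for all $j$. The only point you might make explicit is that $p^{v_r}\leq\lambda_{r+1}$ (so that $i=p^{v_r}$ is a legal index), which follows from $(\lambda_r,\lambda_{r+1})$ not being James.
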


\begin{proof}  If  $(\lambda_r,\lambda_{r+1})$ is pointed and $(y(r,r+1)_i)$ is the point sequence we have $v_r<l_{r+1}$ so in all cases we have $y(r,r+1)_{p^{v_r}}=0$. We  have the commuting relation
$${\lambda_s+j\choose j}y(r,r+1)_{p^{v_r}}={\lambda_r+p^{v_r}\choose p^{v_r}}y(s,s+1)_j$$
for $1\leq j\leq \lambda_{s+1}$. The left hand side is $0$ and $\lambda_r$ has $v_r$th component in the base $p$ expansion different from  $p-1$ so  Lucas's Formula ${\lambda_r+p^{v_r}\choose p^{v_r}}$ is non-zero and therefore  $(y(s,s+1)_j)=0$.

\end{proof}

We give now a short proof of Weber's main theorem, \cite{Web} Theorem 1.2.
 
\begin{lemma} Let $\lambda=(\lambda_1,\dots,\lambda_n)$ be a partition with $n\geq 5$. Assume that there exist two pairs of consecutive rows of $\lambda$, say  $(\lambda_r,\lambda_{r+1})$ and $(\lambda_s,\lambda_{s+1})$, with $r+2<s<n$ which are not James pairs. Then  $\lambda$ is split.

\end{lemma}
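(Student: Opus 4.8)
The plan is to show that every coherent multi-sequence $(y(t,u)_i)$ for $\lambda$ vanishes identically in its ``diagonal'' entries $y(t,t+1)_i$, and then to bootstrap from there to conclude that the whole multi-sequence is a multiple of the standard one (hence $\lambda$ is split, by Proposition 4.5, since $\lambda$ is non-James). The key structural tool is the commuting relation $(C')$ together with Lemma 9.1 above: whenever $(\lambda_r,\lambda_{r+1})$ is a non-James pair and the diagonal sequence $(y(r,r+1)_i)$ is either zero or the point sequence, all diagonal sequences $(y(s,s+1)_j)$ with $s\neq r-1,r,r+1$ are forced to be zero.

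First I would apply Lemma 5.11 to each of the two non-James pairs: for a non-James two-part partition the extension sequences are spanned by the standard sequence and (possibly) the point sequence. So after subtracting a suitable multiple of the standard multi-sequence for $\lambda$, I may assume the coherent multi-sequence $(y(t,u)_i)$ has $(y(r,r+1)_i)$ equal to either $0$ or the point sequence for $(\lambda_r,\lambda_{r+1})$; similarly $(y(s,s+1)_i)$ is either $0$ or the point sequence. Now invoke Lemma 9.1 with the index $r$: since $n\geq 5$ and $r+2<s<n$, the index $s$ satisfies $s\neq r-1,r,r+1$, so we conclude $(y(s,s+1)_j)=0$. But then, running Lemma 9.1 the other way with index $s$ (whose diagonal sequence is now known to be zero, which is in particular ``either zero or the point sequence''), we get that $(y(t,t+1)_i)=0$ for all $t\neq s-1,s,s+1$; in particular, since $r\neq s-1,s,s+1$ (again because $r+2<s$), we get $(y(r,r+1)_i)=0$ as well. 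Combining the two applications: the only diagonal sequences that could fail to vanish are those indexed by $t\in\{s-1,s+1\}\cap\{r-1,r+1\}$, which is empty since $s>r+2$. Hence \emph{all} diagonal sequences $(y(t,t+1)_i)$, $1\le t<n$, are identically zero.

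It then remains to deduce that the entire multi-sequence vanishes. For this I would argue by induction along each triple: by Remark 6.2 (or relation (T3a) with $i=j$), in any coherent triple $((x_i),(y_j),(z_k))$ the sequence $(z_k)$ is determined by $(x_i)$ and $(y_j)$; applied to the triples $(y(t-1,t)_i),(y(t,t+1)_j),(y(t-1,t+1)_k))$ this gives $(y(t-1,t+1)_k)=0$ once the two diagonal neighbours vanish. Then relation (T1) (or iterating (T3a)) propagates the vanishing from the pairs $(t,t+1)$ and the ``distance-two'' pairs $(t,t+2)$ to all pairs $(t,u)$ with $u>t$ by induction on $u-t$: in the triple for $(\lambda_t,\lambda_{t+1},\lambda_u)$, with $(y(t,t+1)_i)=0$ and $(y(t+1,u)_k)=0$ known inductively, relation (T1) forces $(y(t,u)_k)=0$. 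Hence $(y(t,u)_i)=0$ for all $t<u$, i.e. the original coherent multi-sequence was a multiple of the standard one, so $E(\lambda)$ is one-dimensional (spanned by the standard sequence) and $\lambda$ is split.

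The main obstacle I anticipate is the bookkeeping in the final propagation step: one must be careful that the inductive order (first all $(t,t+1)$, then all $(t,t+2)$, then increasing $u-t$) genuinely closes, i.e. that the relevant triple relation always has the two ``shorter'' sequences already known to vanish and isolates the longer one with a nonzero binomial coefficient — and relation (T1) does exactly this, since ${\lambda_t+i+k\choose i}$ need not vanish even though ${\lambda_t+i+k\choose k}$ might. A secondary point requiring care is the degenerate boundary behaviour of Lemma 9.1 when $r=1$ (so there is no index $r-1$), but this only removes a case from the exclusion list and does not weaken the conclusion. Everything else is a direct appeal to Lemmas 5.11, 9.1, Remark 6.2 and the triple relations (T1)--(T3b) of Section 6.
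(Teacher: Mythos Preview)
Your core argument---normalise at $r$, apply Lemma 9.1 twice to kill all diagonal sequences $(y(t,t+1)_i)$, then propagate---is exactly the paper's proof. Two small points: first, the claim ``similarly $(y(s,s+1)_i)$ is either $0$ or the point sequence'' is not justified (you only subtracted one multiple of the standard multi-sequence, which normalises at $r$ but not at $s$), though you never actually use it, since Lemma 9.1 at $r$ already gives $(y(s,s+1)_j)=0$ outright.

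The genuine gap is in your propagation step. You correctly use Remark 6.2 (relation (T3a) with $i=j$) for distance two, but then switch to (T1) for larger distances, and this fails. With $(x_i)=0$, relation (T1) only gives ${\lambda_t+i+k\choose i}z_k=0$; to conclude $z_k=0$ you would need, for each $k$, some $1\le i\le\lambda_{t+1}$ with ${\lambda_t+i+k\choose i}\ne 0$, i.e.\ that $(\lambda_t+k,\lambda_{t+1})$ is not James---and this can certainly fail (e.g.\ $p=3$, $\lambda_t=7$, $\lambda_{t+1}=5$, $k=1$). The fix is simply to use Remark 6.2 throughout the induction on $u-t$: in the triple for $(\lambda_t,\lambda_{t+1},\lambda_u)$, (T3a) with $i=j$ reads $z_j={a+j\choose j}y_j-\sum_s{b\choose j-s}{a+j\choose s}x_{j-s}$, so $(x_i)=0$ and $(y_j)=0$ force $(z_k)=0$ with no binomial coefficient to worry about. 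The paper's proof leaves this step implicit, writing only ``hence $(y(t,u)_i)$ is the zero multi-sequence''.
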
 

\begin{proof}  After subtracting a  standard multi-sequence from an arbitrary coherent multi-sequence  we can obtain a multi-sequence $(y(t,u)_i)$) in which either $(y(r,r+1)_i)=0$ (if $(\lambda_r,\lambda_{r+1})$ is split) or $(y(r,r+1)_i)$ is a multiple of the point sequence (if $(\lambda_r,\lambda_{r+1})$ is pointed).   By the above Lemma we have $(y(t,t+1)_j)=0$ for $t\neq r-1,r,r+1$.   In particular we have $(y(s,s+1)_j)=0$. Applying the above Lemma again  with $s$ in place of $r$ we get $(y(t,t+1)_j)=0$ if $t\neq s-1,s,s+1$.   Thus we get $(y(t,t+1)_j)=0$ for all $1\leq t<n$ and hence $(y(t,u)_i)$ is the zero multi-sequence for $\lambda$. We have shown  that we can  obtain the zero multi-sequence by subtracting  from an arbitrary coherent multi-sequence a multiple of the standard multi-sequence. Hence $E(\lambda)$ consists of multiples of the standard multi-sequence and $\lambda$ is split.

\end{proof}

We formulate this result  in the following way. 

\begin{remark} \rm
 Let $\lambda=(\lambda_1,\dots,\lambda_n)$ be a partition which is not James. Let $r$ be minimal such that $(\lambda_r,\lambda_{r+1})$  is not  James. If $\lambda$ is  not split then either:

(i)  the partitions  $(\lambda_1,\dots,\lambda_r)$ and $(\lambda_{r+3},\dots, \lambda_n)$ (if $n\geq r+3$) are James;  or \\
(ii)   $(\lambda_1,\dots,\lambda_{n-1})$ is  James  but $(\lambda_{n-1},\lambda_n)$  is not.
\end{remark}

We show now that whether or not  $\lambda$ is split  depends heavily  on the triple $(\lambda_r,\lambda_{r+1},\lambda_{r+2})$ in  the first case and on the pair  $(\lambda_{n-1},\lambda_n)$ in  the second case. 

\begin{proposition}

Let $\lambda=(\lambda_1,\dots,\lambda_n)$ be a partition which is not James and $r\leq n-1$ minimal such that $(\lambda_r,\lambda_{r+1})$ is not a James pair.

(i) If $r\leq n-2$,  then the restriction map $E(\lambda)\to E(\lambda_r,\lambda_{r+1},\lambda_{r+2})$ is injective. 

\q Equivalently,  if $(y(t,u)_i)$ is a coherent multi-sequence in which the extension sequences $(y(r,r+1)_i)$ and $(y(r+1,r+2)_i)$ are zero then $(y(t,u)_i)$ is identically zero. In  particular if $(\lambda_{r},\lambda_{r+1},\lambda_{r+2})$ is split  then $\lambda$ is split.

(ii) If $r=n-1$, then the restriction map $E(\lambda)\to E(\lambda_{n-1},\lambda_n)$ is injective. 

\q Equivalently,  if $(y(t,u)_i)$ is a coherent multi-sequence in which the extension sequence $(y(n-1,n)_i)$ is zero then $(y(t,u)_i)$ is identically zero. In  particular if $(\lambda_{n-1},\lambda_n)$ is split then $\lambda$ is split.

(iii) If $r=n-1$, we also have that the restriction map \\ $E(\lambda)\to E(\lambda_{n-2},\lambda_{n-1},\lambda_n)$ is injective. 

\q Equivalently,  if $(y(t,u)_i)$ is a coherent multi-sequence in which the extension sequences $(y(n-2,n-1)_i)$ and $(y(n-1,n)_i)$ is zero then $(y(t,u)_i)$ is identically zero. In  particular if $(\lambda_{n-2},\lambda_{n-1},\lambda_n)$ is split  then $\lambda$ is split.

\end{proposition}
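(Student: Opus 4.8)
The plan is to prove parts (i) and (ii) directly and then deduce (iii) formally. For the formal deduction: the restriction $E(\lambda)\to E(\lambda_{n-2},\lambda_{n-1},\lambda_n)$ followed by the restriction $E(\lambda_{n-2},\lambda_{n-1},\lambda_n)\to E(\lambda_{n-1},\lambda_n)$ that sends a coherent triple $(x,y,z)$ to its middle term $y$ is exactly the restriction map appearing in (ii); once (ii) is established this composite is injective, hence so is its first factor, which is the assertion of (iii). (When $r=n-1$ and $n=2$ the maps in (ii) and (iii) are essentially the identity, so in case (ii) I may assume $n\geq 3$; similarly $r\leq n-2$ forces $n\geq 3$ in case (i).)

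So I would fix a coherent multi-sequence $(y(t,u)_i)$ for $\lambda$, assuming $(y(r,r+1)_i)=(y(r+1,r+2)_i)=0$ in case (i) and $(y(n-1,n)_i)=0$ in case (ii), and show it is identically zero. The first step is to show every consecutive sequence $(y(t,t+1)_i)$, $1\leq t\leq n-1$, vanishes. Since $(\lambda_r,\lambda_{r+1})$ is not James and $(y(r,r+1)_i)=0$, Lemma 9.1 kills $(y(t,t+1)_i)$ for every $t\neq r-1,r,r+1$; the hypothesis kills $t=r$, and in case (i) also $t=r+1$ (in case (ii) there is no index $r+1$ in the range $1\leq t\leq n-1$). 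The only remaining sequence is $(y(r-1,r)_i)$, and here I would appeal to Section 8: by minimality of $r$ the pair $(\lambda_{r-1},\lambda_r)$ is James while $(\lambda_r,\lambda_{r+1})$ is not, so the coherent triple for $(\lambda_{r-1},\lambda_r,\lambda_{r+1})$ extracted from $(y(t,u)_i)$ is of the shape studied there, and the opening claim in the proof of Lemma 8.1 — which holds for any coherent triple of that shape — forces its first component $(y(r-1,r)_i)$ to be zero. If $r=1$ this last step is vacuous.

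It then remains to show that a coherent multi-sequence all of whose consecutive sequences vanish is identically zero, which I would do by induction on $u-t$. The case $u-t=1$ is what was just proved. For $u-t=d\geq 2$, extract the coherent triple for $(\lambda_t,\lambda_{t+1},\lambda_u)$, with components $(y(t,t+1)_i)$, $(y(t+1,u)_j)$ and $(y(t,u)_k)$: the first vanishes by the previous step and the second by the inductive hypothesis (index difference $d-1$), and then relation (T3a) with $i=j$,
$${\lambda_t+i\choose i}\,y(t+1,u)_i=\sum_{s=0}^{i-1}{\lambda_{t+1}\choose i-s}{\lambda_t+i\choose s}\,y(t,t+1)_{i-s}+y(t,u)_i,$$
forces $y(t,u)_i=0$ for all $1\leq i\leq\lambda_u$. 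This completes the induction, hence parts (i) and (ii), and (iii) follows as above.

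I do not expect any single step to be genuinely hard: the substantive work is already packaged into Lemma 9.1 and the binomial analysis of Section 8. The main thing needing care is the bookkeeping — tracking precisely which consecutive sequences are eliminated by Lemma 9.1, which by the hypotheses, and which by the Section 8 argument — together with the clean handling of the boundary cases $r=1$ (no sequence $(y(r-1,r)_i)$) and $r=n-1$ (no index $r+2$, and $n=2$ trivial).
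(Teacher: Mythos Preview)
Your proof is correct and follows essentially the same route as the paper: use Lemma 9.1 together with the claim in the proof of Lemma 8.1 to kill all consecutive sequences, then use (T3a) with $i=j$ (this is exactly Remark 6.2) to conclude that the whole multi-sequence vanishes. The only cosmetic differences are that the paper leaves the ``consecutive determines all'' step implicit (pointing to Remark 6.2) and treats (ii) and (iii) by the same argument as (i) rather than deducing (iii) formally from (ii) as you do; your factorisation argument for (iii) is a nice shortcut.
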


\begin{proof}

We give the details of the proof of (i). The proofs of (ii) and (iii) are similar. Let $(y(t,u)_i)$ be a coherent  multi-sequence for $\lambda$ in which $(y(r,r+1)_i)$ and $(y(r+1,r+2)_i)$ are zero. By Lemma 9.1 we have then that $(y(t,t+1)_i)=0$ for $t\neq r-1$. Moreover, by Lemma 8.1 we get that $(y(r-1,r)_i)=0$, since $(\lambda_{r-1},\lambda_r)$ is James. Hence we have $(y(t,t+1)_i)=0$ for all $1\leq t<n$ and  $(y(t,u)_i)$ is the zero multi-sequence. This proves everything.
\end{proof}

\q We finish this section  with the case where in which the   first pair of rows of $\lambda$ which is not a James pair is the last two rows.

\begin{proposition}   Suppose $n\geq 3$ and  that $\lambda=(\lambda_1,\dots,\lambda_n)$ is not a James partition, but $(\lambda_1,\ldots,\lambda_{n-1})$ is James.

\q The partition   $\lambda$ is non-split if and only if $(\lambda_{n-1},\lambda_n)$ is  pointed  and  $v_{n-2}>\len_p(\lambda_{n-1}+p^{l_n})$.  In this case 
$E(\lambda)$ is spanned by the standard multi-sequence and the point sequence $(y^\pt(r,s)_i)$,  given by 
$$y^\pt(r,s)_i=\begin{cases} 1, & \hbox{ if } (r,s)=(n-1,n), i=p^{l_n};\cr
0, & \hbox{ otherwise.}
\end{cases}
$$
\end{proposition}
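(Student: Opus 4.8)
The plan is to reduce the whole statement to the three-part analysis of Lemma 8.1 applied to the tail triple $\mu=(\lambda_{n-2},\lambda_{n-1},\lambda_n)$, using the injectivity of the restriction map from Proposition 9.5(iii).

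Since $(\lambda_1,\dots,\lambda_{n-1})$ is James and $\lambda$ is not, the first non-James pair of $\lambda$ is $(\lambda_{n-1},\lambda_n)$, so Proposition 9.5(iii) applies (with $r=n-1$) and gives that restriction $E(\lambda)\to E(\mu)$ is injective; in particular $\dim E(\lambda)\le\dim E(\mu)$, and it sends the standard multi-sequence of $\lambda$ to the standard triple of $\mu$. Now $(\lambda_{n-2},\lambda_{n-1})$ is a consecutive pair of a James partition, hence James, while $(\lambda_{n-1},\lambda_n)$ is not, so Lemma 8.1 applies to $\mu$: writing $L=\len_p(\lambda_{n-1}+p^{l_n})$ (so that in the notation there $\gamma=\len_p(\lambda_n)=l_n$ and $v=\val_p(\lambda_{n-2}+1)=v_{n-2}$), the triple $\mu$ is non-split exactly when $(\lambda_{n-1},\lambda_n)$ is pointed and $v_{n-2}>L$, and otherwise $\dim E(\mu)=1$. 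If that condition fails, then $1\le\dim E(\lambda)\le\dim E(\mu)=1$ (the lower bound because $\lambda$ is not James, so its standard multi-sequence is non-zero), whence $\lambda$ is split; and if $\lambda$ is non-split then $\dim E(\mu)\ge\dim E(\lambda)\ge 2$, so $\mu$ is non-split and Lemma 8.1 forces $(\lambda_{n-1},\lambda_n)$ pointed and $v_{n-2}>L$. This proves the ``only if'' half and reduces the remaining work to: under the hypothesis that $(\lambda_{n-1},\lambda_n)$ is pointed and $v_{n-2}>L$, exhibit a coherent multi-sequence for $\lambda$ which is not a multiple of the standard one; combined with $\dim E(\lambda)\le\dim E(\mu)=2$ this will also give $E(\lambda)=\langle y^\st,y^\pt\rangle$.

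The candidate is $(y^\pt(r,s)_i)$ from the statement, vanishing except for $y^\pt(n-1,n)_{p^{l_n}}=1$; its restriction to the pair $(n-1,n)$ is the point sequence of the pointed pair $(\lambda_{n-1},\lambda_n)$, which by Lemma 5.11 is linearly independent of the standard extension sequence there, so $y^\pt$ is not a multiple of $y^\st$. By Proposition 4.3 it then suffices to check that $y^\pt$ is coherent, i.e. that every extracted triple is a coherent triple and that all the commuting relations (C) hold. I would first record two elementary facts about the James partition $(\lambda_1,\dots,\lambda_{n-1})$: first, its valuations are non-increasing, $v_1\ge v_2\ge\cdots\ge v_{n-2}$ --- because the James condition gives $\len_p(\lambda_{i+1})<v_i$ while always $v_{i+1}\le\len_p(\lambda_{i+1})+1$ --- so the hypothesis $v_{n-2}>L$ upgrades to $v_r>L$ for all $r\le n-2$; and second, since $\lambda$ is decreasing, $(\lambda_r,\lambda_{n-1})$ and $(\lambda_r,\lambda_n)$ are James for every $r\le n-2$, their second parts being $\le\lambda_{r+1}<p^{v_r}$.

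With these in hand the verification is routine. Every extracted triple of $y^\pt$ not involving the pair $(n-1,n)$ is the zero triple, hence coherent; the triples that do involve it are precisely $(\lambda_r,\lambda_{n-1},\lambda_n)$ with $r\le n-2$, whose extracted triple is $(0,y^\pt,0)$, and since $(\lambda_r,\lambda_{n-1})$ is James, $(\lambda_{n-1},\lambda_n)$ is pointed, and $v_r>L$, Lemma 8.1 applied to $(\lambda_r,\lambda_{n-1},\lambda_n)$ says this is a coherent triple. For a relation (C) attached to a quadruple $(q,r,s,t)$ of distinct indices with $q<r$, $s<t$, at most one of $(q,r),(s,t)$ can equal $(n-1,n)$: if $(q,r)=(n-1,n)$ then one side of (C) is zero and the other is a multiple of ${\lambda_s+i\choose i}$ with $1\le i\le\lambda_t$ and $s<t\le n-2$, which vanishes because $(\lambda_s,\lambda_t)$ is James; the case $(s,t)=(n-1,n)$ is symmetric, and if neither holds both sides are zero. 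Hence $y^\pt$ is coherent, $\dim E(\lambda)=2$, and $E(\lambda)=\langle y^\st,y^\pt\rangle$. The one ingredient genuinely new beyond the three-part case is the monotonicity of the $v_i$: without it, the single inequality $v_{n-2}>L$ would not suffice to make all the triples $(\lambda_r,\lambda_{n-1},\lambda_n)$ coherent, and this is where I expect the only real subtlety to lie.
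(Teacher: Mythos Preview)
Your proof is correct and follows essentially the same approach as the paper's: reduce via the injective restriction map to the three-part case handled by Lemma 8.1, then verify directly that the point multi-sequence is coherent by checking (C) on the James pairs and invoking Lemma 8.1 on each triple $(\lambda_r,\lambda_{n-1},\lambda_n)$ using $v_r\ge v_{n-2}>L$. Note that what you cite as ``Proposition 9.5(iii)'' is in fact Proposition 9.4(iii); otherwise your argument matches the paper's, and your explicit justification of the monotonicity $v_1\ge\cdots\ge v_{n-2}$ (via $v_{i+1}\le l_{i+1}+1\le v_i$) is a point the paper uses without comment.
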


\begin{proof} 

Let $\lambda$ be non-split. By Proposition 9.4(iii) and Lemma 8.1 we can assume that  $(\lambda_{n-1},\lambda_n)$ is pointed and $v_{n-2}> \len_p(\lambda_{n-1}+p^{l_n})$. It remains to prove that  the point multi-sequence $(y^\pt(r,s)_i)$, as described in the statement of the lemma, is coherent.

\q Notice that for $(s,t)$ with $s<t<n-1$ the pairs $(s,t)$ and $(n-1,n)$ are related via the commuting relation (C). Since for any such $(s,t)$ the partition $(\lambda_s,\lambda_t)$ is James and $(y(s,t)_i)=0$ we have that both sides in the commuting relation are zero and so this relation is satisfied.

\q Hence it remains to check the coherence conditions for the triples \\
$(\lambda_q,\lambda_{n-1},\lambda_{n-2})$ with $q\leq n-2$. For $q\leq n-2$ we have that \\
$v_q\geq v_{n-2}> \len_p(\lambda_{n-1}+p^{l_n})$. Therefore by Lemma 8.1 our multi-sequence satisfies the coherent conditions for all these triples.

\end{proof}

\q Therefore,  from now on,  we need only  consider a partition $\lambda$ which contains a non-split  triple  $(\lambda_r,\lambda_{r+1},\lambda_{r+2})$ such that $(\lambda_r,\lambda_{r+1})$ is not James but   $(\lambda_1,\dots,\lambda_r)$ and $(\lambda_{r+3},\dots, \lambda_n)$ are James. We shall take up our analysis in these terms in Section 12, after dealing with the remaining cases of $3$-part partitions in Sections 10 and 11.

\bs\bs\bs\bs


\section{Partitions $(a,b,c)$ with $(a,b)$ split.}

\renewcommand{\ell}{{\rm len}}

\renewcommand{\v}{v}

\renewcommand{\w}{w}

 \q We continue  our analysis of $E(\lambda)$ for a $3$ part partition $\lambda=(a,b,c)$.  We have already dealt with the case in which $\lambda$ is James, in Section $7$ and with the case where $\lambda$ is not James and $(a,b)$ is James in Section 8. So we assume from now on that $(a,b)$ is not James. In this section we consider the case in which $(a,b)$ is split.

 \q As usual we write $\beta=\ell_p(b)$ and $\gamma=\ell_p(c)$ for $p$-adic lengths of $b$ and $c$ and $\v=\val_p(a+1)$ and $\w=\val_p(b+1)$ for the $p$-adic evaluations of $a+1$ and $b+1$.

\q Since $(a,b)$ is not James we have $\beta\geq \v$.  Moreover since $(a,b)$ is split,  it is not pointed.

\begin{lemma} If  $(a,c)$ is James, i.e., $\gamma<v$,  then $\lambda=(a,b,c)$ is split.
\end{lemma}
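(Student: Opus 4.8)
The plan is to take an arbitrary coherent triple $((x_i),(y_j),(z_k))$ for $\lambda=(a,b,c)$ and show it is a scalar multiple of the standard triple; this is exactly the assertion that $\lambda$ is split. Since we are in the case $(a,b)$ split, the sequence $(x_i)$ (an extension sequence for the pair $(a,b)$, by relation (E)) is a scalar multiple of the standard extension sequence ${a+i\choose i}$ for $(a,b)$. So after subtracting the corresponding multiple of the standard coherent triple for $\lambda$ — whose $x$-component is precisely $x_i^\st={a+i\choose i}$ — I may assume $(x_i)=0$; it then suffices to prove $(y_j)=0$ and $(z_k)=0$ for this (still coherent) triple.

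For the numerics: since $(a,b)$ is not James we have $\beta=\len_p(b)\ge v$, so in particular $b\ge p^v$; and since $\gamma=\len_p(c)<v$ we have $v\ge 1$, so $p\mid a+1$ and, as $\val_p(a+1)=v$, the $v$-th base-$p$ digit $a_v$ of $a$ satisfies $a_v\le p-2$, whence by Lucas's Formula ${a+p^v\choose p^v}=a_v+1\ne 0$ in $K$. Now for each $j$ with $1\le j<p^v$ (and $j\le c$) I apply relation (T3b) with $i=p^v$, which is admissible since $j<p^v$ and $p^v\le b\le b+j$: as $(x_i)=0$ the whole right-hand side vanishes, leaving ${a+p^v\choose p^v}y_j=0$, hence $y_j=0$. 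In particular $(y_j)$ vanishes at every power of $p$ that is at most $c$, since every such power is at most $p^\gamma<p^v$. As $(y_j)$ is an extension sequence for $(b,c)$, Lemma 5.3 now forces $(y_j)=0$ identically. Finally $(z_k)=0$ by Remark 6.2 (relation (T3a) with $i=j$ expresses each $z_i$ through the $x$- and $y$-sequences, which now both vanish). Hence every coherent triple for $\lambda$ is a multiple of the standard triple, i.e.\ $\lambda$ is split.

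I do not expect a genuine obstacle here. The two points that need a little care are that indices $j$ with $p^v\le j\le c$ cannot be treated directly by (T3b) but only through Lemma 5.3 — which is legitimate precisely because no power of $p$ lies in the interval $[p^v,c]$, as $\gamma<v$ — and the routine digit computation behind ${a+p^v\choose p^v}\ne 0$. The hypothesis $\gamma<v\le\beta$ is what does the real work: it simultaneously makes $i=p^v$ an admissible index in (T3b) and keeps the attached binomial coefficient nonzero, so that the vanishing of $(x_i)$ propagates first to $(y_j)$ and then to $(z_k)$.
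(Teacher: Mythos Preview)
Your proof is correct and follows essentially the same approach as the paper's: subtract a standard triple to kill $(x_i)$, then use (T3b) with $i=p^v$ to kill $(y_j)$, and finally invoke Remark 6.2 for $(z_k)$. The only difference is that you take a slight detour through Lemma 5.3 that is not needed: since $\gamma<v$ forces $c<p^v$, your (T3b) argument already gives $y_j=0$ for \emph{every} $1\le j\le c$, not merely at powers of $p$. There are simply no indices $j$ with $p^v\le j\le c$, so the ``care'' you flag in the last paragraph is vacuous and Lemma 5.3 can be dropped.
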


\begin{proof}   Let $(x_i),(y_j),(z_k)$ be a coherent triple for $\lambda=(a,b,c)$. Since $(a,b)$ is split after subtracting a multiple of the standard triple we obtain a coherent triple, which we also denote $(x_i),(y_j),(z_k)$, in which $(x_i)=0$. By Remark 6.2 It is enough to show that $(y_j)=0$.

\q Since $\gamma<v$ we have that $c<p^v$ and so we may take $i=p^v$ in (T3b) to get 

$${a+p^v\choose p^v}y_{j}=0$$

and ${a+p^v\choose p^v}=a_v+1\neq 0$ so that $y_{j}=0$ for $j\leq c$. 

\q We have shown that from any coherent triple  we may subtract a multiple of the standard coherent triple to obtain  the zero triple,  i.e.,  all coherent triples are standard, $E(\lambda)$ is one dimensional,  and  $\lambda$ is split.

 

\end{proof}

\q We therefore assume  now on  that $\gamma\geq\v$, i.e., $(a,c)$ is not James.

\begin{lemma}

If  $(a+p^\v,b)$ is not James then $\lambda=(a,b,c)$ is split.

\end{lemma}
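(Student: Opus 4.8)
The plan is to prove the lemma by induction on $c$, feeding the two-part information about $(a,b)$ and the hypothesis on $(a+p^\v,b)$ into the relations (T1)--(T3b) for a coherent triple of $(a,b,c)$.

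If $c<p^\v$ then $(a,c)$ is James and Lemma 10.1 already gives that $\lambda=(a,b,c)$ is split, so I may assume $c\geq p^\v$ and that $(a,b,c')$ is split for every $c'<c$ (this being Lemma 10.1 when $c'<p^\v$ and the inductive hypothesis otherwise). Let $(x_i),(y_j),(z_k)$ be a coherent triple for $(a,b,c)$. Since $(a,b)$ is split, the extension sequence $(x_i)$ is a scalar multiple of the standard one, so after subtracting a multiple of the standard triple I may assume $(x_i)=0$; then relation (T3a) with $i=j=k$ (Remark 6.2) forces $z_k={a+k\choose k}y_k$ for all $k$. For $c\geq 2$ the truncation $\bigl((x_i)_{i\le b},(y_j)_{j\le c-1},(z_k)_{k\le c-1}\bigr)$ is again a coherent triple, now for $(a,b,c-1)$, which is split; since its $x$-part is $0$ whereas the standard triple for $(a,b,c-1)$ has non-zero $x$-part $\bigl({a+i\choose i}\bigr)_{i\le b}$ (because $(a,b)$ is not James, Remark 5.5), this truncation is the zero triple. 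Hence $y_j=z_j=0$ for $j<c$, and everything reduces to showing $y_c=0$.

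For that last point I would split into two cases. If $c>p^\v$, then $z_{p^\v}=0$ by the previous step, so (T3a) with $i=p^\v$, $j=c$ reads ${a+p^\v\choose p^\v}y_c={b+c-p^\v\choose c-p^\v}z_{p^\v}=0$; as ${a+p^\v\choose p^\v}=a_\v+1$ is non-zero in $K$, this gives $y_c=0$. If $c=p^\v$, then $z_{p^\v}=(a_\v+1)y_{p^\v}$, and (T1) with parameter $k=p^\v$ and $i$ ranging over $1\le i\le b$ gives ${a+i+p^\v\choose i}z_{p^\v}=0$, i.e. $(a_\v+1){a+p^\v+i\choose i}y_{p^\v}=0$ for all such $i$; since $(a+p^\v,b)$ is not James there is some $1\le i_0\le b$ with ${a+p^\v+i_0\choose i_0}\neq 0$, forcing $y_{p^\v}=0$. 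In either case $y_c=0$, hence $z_c=0$, the triple vanishes, $E(\lambda)$ is spanned by the standard triple, and $\lambda$ is split.

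The one genuinely new ingredient is the case $c=p^\v$, and this is where the hypothesis ``$(a+p^\v,b)$ not James'' is used: the crux is to recognise that the family of equations (T1) with $k=p^\v$ is, after dividing by the unit ${a+p^\v\choose p^\v}=a_\v+1$ (which is non-zero precisely because $\v=\val_p(a+1)$ forces $a_\v\neq p-1$), exactly the James condition for $(a+p^\v,b)$. The remaining steps are routine bookkeeping: checking that a truncation of a coherent triple again satisfies (E) and (T1)--(T3b), handling the degenerate case $c=1$ directly in the final step (there is nothing to truncate), and keeping track of whether Lemma 10.1 or the inductive hypothesis supplies the splitness of $(a,b,c-1)$.
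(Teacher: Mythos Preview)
Your proof is correct, but it takes a different route from the paper's. The paper argues directly: after subtracting a standard triple to make $(x_i)=0$, it shows $z_{p^t}=0$ for every $t\leq\gamma$ by applying (T1) with three sub-cases ($t<v$ via $i=k=p^t$; $t=v$ via the non-James hypothesis on $(a+p^v,b)$; $t>v$ via $i=p^v$), then invokes Lemma~5.3 to conclude $(z_k)=0$, and finally uses (T3a)/(T3b) with $i=p^v$ to kill each $y_{p^t}$.

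Your inductive argument on $c$ trades this case analysis for the observation that truncating a coherent triple for $(a,b,c)$ gives one for $(a,b,c-1)$. This lets you kill $y_j,z_j$ for $j<c$ in one stroke (since the truncated triple is a multiple of standard with zero $x$-part, hence zero), and then you only need to handle the top term $y_c$. The two cases $c>p^v$ and $c=p^v$ in your final step correspond respectively to the paper's use of (T3a) and to its crucial use of (T1) at $k=p^v$. Your approach is a bit slicker in that it avoids the three-way split on $t$ and does not need Lemma~5.3; the paper's approach has the advantage of being non-inductive and making transparent exactly which relations carry the weight at each power of $p$.
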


\begin{proof}  We must show that every coherent triple is standard. After subtracting  a multiple of the standard triple from an arbitrary coherent triple  we obtain a coherent triple $(x_i), (y_j),(z_k)$ with $(x_i)=0$ and it is enough to prove that this triple is zero.

\q Taking $i=k=p^t<p^v$ in relation (T1) we get 
$$0={{a+p^t+p^t}\choose{p^t}}z_{p^t}$$
and, since $a_t=p-1$ (as $t<v$) we get $(a+p^t+p^t)_t=1$ and hence, 
by Lucas's Formula,  ${a+p^t+p^t\choose p^t}=1$ and 
 $z_{p^t}=0$.  Now suppose $v\leq t\leq \gamma$. From (T1), we get 
$$0={a+i+p^t\choose p^t}z_{p^t}$$
for $1\leq i\leq b$. If $t=v$ then  we have  ${a+p^v+i\choose i}\neq 0$ for some $1\leq i\leq b$ (since $(a+p^v,b)$ is not James) and we get $z_{p^v}=0$.  If $v<t$ then for  $i=p^v$ we have 
${a+i+p^t\choose i}={a+p^v+p^t\choose p^v}=a_v+1\neq 0$, and again $z_{p^t}=0$. Hence $(z_k)=0$. 

\q For $1\leq t<v$, choosing $i=p^v$, $j=p^t$ in (T3b) we  get $y_{p^t}=0$. For $v\leq t\leq \gamma$ we  take $i=p^v$, $j=p^t$ in (T3a) to get $y_{p^t}=0$. Hence $(y_j)=0$ and we are done.

\end{proof}

\begin{remark}  \rm  (i) Therefore form now on we can assume that $\val_p(a+p^v+1)>\beta$ (i.e. $(a,b+p^v)$ is James), which means that $a_t=p-1$ for $0\leq t \leq \beta, t\neq v$ and $a_v=p-2$.

(ii) Recall that we are assuming that $\gamma\geq v$.

\end{remark}

These assumptions will be in force for the rest of this section.

\begin{lemma} Assume that $(a,b)$ is split. Under the assumptions of Remark 10.3, in any coherent triple $(x_i),(y_j),(z_k)$ for $\lambda=(a,b,c)$ we have: 

(i) $x_i=0$ for  $i\neq p^\v$; and 

(ii) $z_k=0$ for $k\neq p^\v$.

\end{lemma}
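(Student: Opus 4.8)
The plan is to prove (i) and (ii) separately, reducing each to the classification of two-part extension sequences together with a single instance of relation (T1). For (i): in any coherent triple $((x_i),(y_j),(z_k))$ for $\lambda=(a,b,c)$ the sequence $(x_i)$ is an extension sequence for the two-part partition $(a,b)$, and since $(a,b)$ is split it must be a scalar multiple of the standard sequence $x^\st_i={a+i\choose i}$. By Remark 10.3 the base-$p$ digits of $a$ are $a_t=p-1$ for $0\le t\le\beta$ with $t\ne v$, and $a_v=p-2$. So, by Remark 3.1, for $1\le i\le b$ (so that $\len_p(i)\le\beta$) we have ${a+i\choose i}\ne 0$ only if $i_t=0$ for every $t\le\beta$ with $t\ne v$ and $i_v\le 1$, i.e. only if $i=p^v$; since $p^v\le p^\beta\le b$ this index lies in range, so $x_i=0$ for all $i\ne p^v$, giving (i).

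For (ii): similarly $(z_k)$ is an extension sequence for $(a,c)$, which is not a James pair because $\gamma\ge v$ (Remark 10.3). By Lemma 5.11, $(z_k)$ lies in the span of the standard sequence $z^\st_k={a+k\choose k}$ and --- in the exceptional case where $c=\hat c+p^\gamma$ with $\hat c<p^v<p^\gamma$ --- the point sequence for $(a,c)$, which is $1$ at $k=p^\gamma$ and $0$ elsewhere. Since $b\ge c$ we have $\gamma\le\beta$, so the digit argument of (i) applies verbatim and gives $z^\st_k=0$ for $1\le k\le c$ with $k\ne p^v$. Hence $z_k=0$ for every $k\ne p^v$ except possibly $k=p^\gamma$, and that possibility arises only when $\gamma>v$.

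It remains to kill $z_{p^\gamma}$ in the case $\gamma>v$. Here I would apply (T1) with $i=p^v$ and $k=p^\gamma$ (both in range). Adding $p^v$ to $a$ merely changes the digit $a_v=p-2$ into $p-1$ with no carry, so $a+p^v$ has digit $p-1$ at every position $0,\dots,\beta$; adding $p^\gamma$ then triggers a carry cascade upward starting at position $\gamma$, so that $a+p^v+p^\gamma$ has $\gamma$th digit $0$ and (the position $v<\gamma$ being untouched) $v$th digit $p-1$. By Lucas's formula ${a+p^v+p^\gamma\choose p^\gamma}=0$ while ${a+p^v+p^\gamma\choose p^v}=p-1\ne 0$ in $K$, so (T1) reads $0=(p-1)\,z_{p^\gamma}$, forcing $z_{p^\gamma}=0$; this completes (ii).

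The argument is essentially base-$p$ bookkeeping, and I expect no serious obstacle. The one point that needs care is that the index $k=p^\gamma$ is invisible to the standard sequence of $(a,c)$ when $\gamma>v$ yet is carried by the point sequence, so it cannot be disposed of by the two-part classification alone --- the single (T1) computation above is exactly what removes it. One must also track the carries in the additions underlying the binomial coefficients and use the inequality $\gamma\le\beta$ (from $b\ge c$), which is what lets the digit description of $a$ in Remark 10.3 constrain $(z_k)$ just as it constrains $(x_i)$.
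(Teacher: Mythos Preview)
Your proof is correct and follows essentially the same approach as the paper: reduce (i) to the fact that $(x_i)$ is standard and use the digit description of $a$ from Remark 10.3; reduce (ii) to the standard/point decomposition of $(z_k)$ for $(a,c)$ and then kill $z_{p^\gamma}$ via a single instance of (T1). The only difference is the choice of indices in that (T1) step: the paper takes $i=k=p^\gamma$ (using $x_{p^\gamma}=0$ from part (i) and ${a+2p^\gamma\choose p^\gamma}=1$), whereas you take $i=p^v$, $k=p^\gamma$ and exploit that ${a+p^v+p^\gamma\choose p^\gamma}=0$ so the left side vanishes automatically --- a slight variation, but the argument is the same in substance.
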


\begin{proof}     Suppose  $1\leq i\leq b$. If  ${a+i\choose i}\neq 0$ then we have $a_h+i_h\leq p-1$ for all $0\leq h\leq \beta$, by Remark 3.1.  But by  Remark 10.3 the only possibility is $i=p^v$.   This proves that $x_i^\st=0$, for $1\leq i\leq b$, $i\neq p^v$ and also proves  that $z_k^\st=0$, for $1\leq k\leq c$, $k\neq p^v$.  Moreover, since $(a,b)$ is split $(x_i)$ is a multiple of the standard sequence  and we have (i).

\q Since $(a,c)$ is not James, it is either split or pointed. If $(a,c)$ is split then $(z_k)$ is a multiple of the standard sequence and hence $z_k=0$ for $k\neq p^v$. If $(a,c)$ is pointed then  $(z_k)$ is a linear combination of the sequences  $(z_k^\st)$ and $(z_k^\pt)$.  Now   $z_k^\st=z_k^\pt=0$ for $k\neq p^v,p^\gamma$.  If $\gamma=v$ we are done. If $\gamma>v$ we take $i=j=p^\gamma$ in (T1) to get 
$${a+p^\gamma+p^\gamma\choose p^\gamma}z_{p^\gamma}=0.$$
Now $a_\gamma=p-1$ and we get ${a+p^\gamma+p^\gamma\choose p^\gamma}=1$ so that $z_{p^\gamma}=0$ and we are done.

\end{proof}

\begin{remark}  \rm Assume that $(a+p^v,b)$ is James. If $(x_i),(y_j),(z_k)$ is a triple of extension sequences for $\lambda$ and $x_i=0$ for $i\neq p^v$ and $z_k=0$ for  $k\neq p^v$ then relation (T1) is satisfied. In fact both sides of the equation (T1) are zero. If $x_i\neq 0$ then $i=p^v$ and in that case ${a+i+k\choose k}=0$ (since $(a+p^v,b)$ is James). This shows that the left hand side is $0$.  The argument also shows that the right hand side is $0$.

\end{remark}

 \begin{remark} \rm
 
 Assume that $(a,b)$ is split. Under the assumptions of Remark 10.3, we have by Lemma 10.4 that in any coherent triple $(x_i), (y_j), (z_k)$ for $\lambda=(a,b,c)$,  $x_i=0$ for $i\neq p^v$. Here we give a very simple expression of the relations (T3a) and (T3b) for these partitions.  If a term ${a+i\choose s}x_{i-s}\neq 0$ appearing in the right hand side is non-zero then we have $i-s=p^v$ so that ${a+i\choose s}={a+p^v+s\choose s}\neq 0$. Since $s\leq c$ and $(a+p^v,  c)$ is James the only possibility is $s=0$. Hence the equations take the following forms

$${a+i\choose i}y_j={b+j-i\choose j}x_i+{b+j-i\choose j-i}z_i\eqno{(*)}$$
for $1\leq i\leq j\leq c$, and 
$${a+i\choose i}y_j={b+j-i\choose j}x_i\eqno{(**)}$$
for $1\leq j\leq c$, $j<i\leq b+j$.

\q It will be easier sometimes to use these simplified forms of (T3a) and (T3b) in order to prove that certain partitions $\lambda$ are not split.

\end{remark}

\begin{lemma} If  $(a,b)$ is split and $(b,c)$ is James then $\lambda=(a,b,c)$ is non-split if and only if $\gamma=v$, $c_v=1$ and $\len_p(b+p^v)<\val_p(a+p^v+1)$. 

\q In this case $E(\lambda)$ is spanned by the standard triple and the  triple $(x_i),(y_j),(z_k)$ with $(x_i)=0$, $y_{p^v}=1$, $z_{p^v}=-1$ and $y_k=z_k=0$ for $k\neq p^v$.

\end{lemma}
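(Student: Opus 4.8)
The plan is to imitate the structure of Lemmas 10.1, 10.2 and 10.4: reduce to coherent triples with $(x_i)=0$, and then determine the surviving data using relations (T1)--(T3b) together with the rigid base-$p$ shape of $a$ forced by Remark 10.3 (namely $a_h=p-1$ for $0\leq h\leq\beta$ with $h\neq v$, and $a_v=p-2$). First I would observe that, $(a,b)$ being split, in any coherent triple $(x_i)$ is a multiple of the standard sequence, and ${a+p^v\choose p^v}=a_v+1=p-1\neq 0$; so after subtracting a multiple of the standard triple I may assume $(x_i)=0$. For such a triple Lemma 10.4(ii) gives $z_k=0$ for $k\neq p^v$, and $(y_j)$, being an extension sequence for the James pair $(b,c)$, is a multiple of the canonical sequence, so by Lemma 5.7 it is supported on the multiples of $p^\gamma$ with $y_{p^\gamma}=2y_{2p^\gamma}=\cdots=c_\gamma y_{c_\gamma p^\gamma}$. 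Thus a triple with $(x_i)=0$ is determined by the single scalar $y_{p^\gamma}$ (with $z_{p^v}$ then forced), so $\dim E(\lambda)\leq 2$ and it remains to decide when a non-zero such triple exists.

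Next I would record the reductions of the relations. From the shape of $a$ and $c\leq b$, Lucas's Formula gives ${a+k\choose k}=0$ for $1\leq k\leq c$ with $k\neq p^v$, whereas ${a+p^v\choose p^v}=p-1$; also ${b+j\choose j}=0$ for $1\leq j\leq c$ because $(b,c)$ is James. With these and $(x_i)=0$: (T1) holds automatically by Remark 10.5; (T2) with $k=p^v$ forces $y_j=0$ for $1\leq j\leq c-p^v$; and the simplified relations (T3a), (T3b) of Remark 10.6 become ${a+p^v\choose p^v}y_j={b+j-p^v\choose j-p^v}z_{p^v}$ for $p^v\leq j\leq c$, and ${a+i\choose i}y_j=0$ for $j<i\leq b+j$; in particular, taking $j=i=p^v$, one gets $z_{p^v}=(p-1)y_{p^v}$.

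Then I would run the case analysis on a hypothetical non-zero triple (so $y_{p^\gamma}\neq 0$, since otherwise $(y_j)=0$ and $z_{p^v}=(p-1)y_{p^v}=0$). The (T2) constraint forces $p^\gamma>c-p^v$, hence $c<p^\gamma+p^v\leq 2p^\gamma$, so $c_\gamma=1$ and $(y_j)$ is supported at $p^\gamma$ alone. If $\gamma>v$ then $y_{p^v}=0$, hence $z_{p^v}=0$, and the (T3a) relation for $j=p^\gamma$ forces $y_{p^\gamma}=0$, a contradiction; so $\gamma=v$ and $c_v=1$, whence $c<2p^v$, the (T2) constraint is vacuous, (T3a) only gives $z_{p^v}=(p-1)y_{p^v}$, and the sole remaining obstruction is the (T3b) condition ${a+i\choose i}=0$ for all $p^v<i\leq b+p^v$. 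Conversely, when $\gamma=v$, $c_v=1$ and this (T3b) condition holds, I would check that the triple with $(x_i)=0$, $y_{p^v}=1$, $z_{p^v}=-1=p-1$ and all other terms $0$ is coherent --- the only non-trivial verifications being (T3b), and (T2), (T3a) for $j>p^v$, the latter holding because $(b,c-p^v)$ and $(a+p^v,c-p^v)$ are James pairs --- and is not a multiple of the standard triple, giving $\dim E(\lambda)=2$ with the stated basis; otherwise $\dim E(\lambda)=1$ and $\lambda$ is split.

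The main obstacle is converting the (T3b) condition ``${a+i\choose i}=0$ for $p^v<i\leq b+p^v$'' into the clean statement $\len_p(b+p^v)<\val_p(a+p^v+1)$. This is a base-$p$ computation: using the digits of $a$ one shows $\val_p(a+p^v+1)=h_0$, where $h_0$ is the least index $>\beta$ with $a_{h_0}\neq p-1$, and that the smallest $i>p^v$ with ${a+i\choose i}\neq 0$ is exactly $p^{h_0}$ --- any such $i$ must have all base-$p$ digits in positions $0,\dots,\beta$ equal to $0$ except possibly a digit $\leq 1$ in position $v$, and must have digit $0$ in every position $h$ with $\beta<h<h_0$. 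Hence the condition is equivalent to $b+p^v<p^{h_0}=p^{\val_p(a+p^v+1)}$, i.e. to $\len_p(b+p^v)<\val_p(a+p^v+1)$.
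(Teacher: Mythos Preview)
Your proposal is correct and follows essentially the same route as the paper's proof: reduce to $(x_i)=0$, use the simplified relations of Remark~10.6 together with the James structure of $(y_j)$ to force $\gamma=v$ and $c_v=1$, pin down the unique candidate triple, and then identify the (T3b) obstruction with the condition $\len_p(b+p^v)<\val_p(a+p^v+1)$. The only cosmetic difference is that you lean on (T2) with $k=p^v$ and on Lemma~10.4(ii) to cut down the support of $(y_j)$ and $(z_k)$ early, whereas the paper extracts the same constraints directly from (T3a); both routes are equivalent.
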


\begin{proof}  Assume that $\lambda$ is non-split. Then there exists a non-standard coherent triple $(x_i),(y_j),(z_k)$  for $\lambda$ and, subtracting a multiple of the standard triple we  obtain a non-zero coherent triple, also denoted $(x_i),(y_j),(z_k)$, in which $(x_i)=0$.  Taking $i=j=p^v$ in (T3a) we obtain, via Remark 10.3, that   $(p-1)y_{p^v}=z_{p^v}$, i.e., $y_{p^v}+z_{p^v}=0$. If $\gamma>v$ then $y_{p^v}=0$ since $(b,c)$ is James and so $z_{p^v}=0$. Moreover, taking $i=p^\v$ and $j=p^\gamma$ in (T3a) we get that $y_{p^\gamma}=0$ and so $\lambda$ is split. Therefore we must have that $v=\gamma$. 

\q Now if $p^v< tp^v\leq c$ then, taking $i=p^v$, $j=tp^v$ in (T3a) we get $-y_{tp^v}=0$ since $(b,c)$ is James. But $(y_j)$ is an extension sequence for the James partition $(b,c)$ so this is only possible if $(y_j)=0$. But then $(x_i),(y_j),(z_k)$ is the zero triple and $\lambda$ is split. Hence (if $\lambda$ is non-split) we must have  $c_v=1$.  

\q Moreover, we have $y_k=z_k=0$ for $k\neq p^v$ and $y_{p^v}+z_{p^v}=0$. Hence $\lambda$ is non-split if and only if $E(\lambda)$ is spanned by the standard coherent triple and the triple  $(x_i),(y_j),(z_k)$ with $(x_i)=0$, $y_{p^v}=1$, $z_{p^v}=-1$ and $y_k=z_k=0$ for $k\neq p^v$. 

\q  It remains to determine when this triple  is coherent. The condition (T1) is satisfied by Remark 10.5. The condition (T2) is that ${a+k\choose k}=0$ for $p^v+k\leq c$ (using the fact that $(b,c)$ is James). This implies that $k<p^v$, since $c_v=c_\gamma=1$ and so (T2)  holds. 

\q We now consider (T3a).  For $i=j$ it is enough to check  (T3a) for  $i=j=p^v$ and we have already done  this. For $i<j$ the right hand side is always zero (since $(b,c)$ is James) and  it is enough to take $j=p^v$. But then  ${a+i\choose i}=0$ so that (T3a) holds.   

\q Now (T3b) holds if and only if we have
$${a+i\choose i}=0$$
for $p^v<i\leq p^v+b$.  We claim that this is exactly the condition for \\
$(a+p^v,b+p^v)$ to be James, i.e.,  $\len_p(b+p^v)<\val_p(a+p^v+1)$.   First assume that this condition holds.  Then we get $a_t=p-1$ for all $p^v<p^t\leq p^v+b$ and so $(a+p^v)_t=p-1$ for all $p^t\leq b+p^v$, i.e., $(a+p^v,b+p^v)$ is James.   We leave it to the reader to check the reverse and thus complete the proof.

\end{proof}

\q Therefore from now on we consider cases in which  $\gamma\geq\w$ (i.e., $(b,c)$ is not James). Recall that we also always assuming  that $\gamma\geq \v$.

\begin{remark} \rm Assume that  $\gamma\geq \w$ (and so $(b,c)$  is not James). Then  after subtracting  a multiple of a standard solution from an arbitrary coherent triple we  can get a coherent triple $(x_i),(y_j),(z_k)$ in which  $y_j=0$ for $j\neq p^\gamma$ (with $y_{p^\gamma}$  also $0$ if $(b,c)$ is split).

\end{remark}
 
\begin{lemma}

If $\gamma\geq \w>\v$ then $\lambda=(a,b,c)$ is split.

\end{lemma}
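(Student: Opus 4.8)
The plan is to prove that every coherent triple for $\lambda=(a,b,c)$ is a scalar multiple of the standard triple. Since $(a,b)$ is split, we may subtract a suitable multiple of the standard triple from an arbitrary coherent triple to reduce to a coherent triple $(x_i),(y_j),(z_k)$ with $(x_i)=0$, and it then suffices to show such a triple vanishes. Two things are immediate: by Lemma 10.4 we have $z_k=0$ for all $k\neq p^\v$; and since $\beta\geq\v$ gives $p^\v\leq b$ and $\gamma\geq\v$ gives $p^\v\leq c$, while $\binom{a+p^\v}{p^\v}=p-1\neq0$ by Remark 10.3, feeding $i=p^\v$ into the simplified relation $(**)$ of Remark 10.6 (with $x_{p^\v}=0$) yields $y_j=0$ for every $1\leq j<p^\v$.

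The key step, and the only place $\w>\v$ is used, is to force $z_{p^\v}=0$. Because $\v<\w=\val_p(b+1)$, the $\v$th base-$p$ digit of $b$ equals $p-1$, so adding $p^\v$ carries out of position $\v$ and Lucas's Formula gives $\binom{b+p^\v}{p^\v}=0$. Meanwhile $(b,c)$ is not James (as $\gamma\geq\w$), so by Lemma 5.11 the extension sequence $(y_j)$ is a combination $e(y_j^\st)+f(y_j^\pt)$ of the standard and point sequences for $(b,c)$, with $f=0$ unless $(b,c)$ is pointed; since $\gamma\geq\w>\v$ forces $p^\gamma\neq p^\v$, this gives $y_{p^\v}=e\binom{b+p^\v}{p^\v}=0$. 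Then relation $(*)$ of Remark 10.6 with $i=j=p^\v$ reads $\binom{a+p^\v}{p^\v}y_{p^\v}=\binom{b}{0}z_{p^\v}$, i.e.\ $z_{p^\v}=(p-1)y_{p^\v}=0$; combined with Lemma 10.4 this gives $(z_k)=0$.

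Finally, $(*)$ with $i=p^\v$, $j=p^\w$ (valid since $p^\v\leq p^\w\leq p^\gamma\leq c$) gives $(p-1)y_{p^\w}=\binom{b+p^\w-p^\v}{p^\w-p^\v}z_{p^\v}=0$, so $y_{p^\w}=0$; but $\binom{b+p^\w}{p^\w}=b_\w+1\neq0$ in $K$ (as $b_\w\leq p-2$ by the definition of $\w$) and $p^\w\neq p^\gamma$, so $e=0$, and in the pointed case $(*)$ with $i=p^\v$, $j=p^\gamma$ gives $y_{p^\gamma}=0$, hence $f=0$. Thus $(y_j)=0$, the reduced triple is zero, every coherent triple is a multiple of the standard one, and $\lambda$ is split. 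The main obstacle is the middle step: the vanishing of $z_{p^\v}$ is not visible from the triple relations alone, and one must combine the binomial identity $\binom{b+p^\v}{p^\v}=0$ (which holds precisely because $\w>\v$) with the classification of extension sequences for the non-James two-part partition $(b,c)$.
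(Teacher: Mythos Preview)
Your proof is correct and takes a genuinely different normalization from the paper's. The paper begins by subtracting a multiple of the standard triple so as to kill $y_j$ for all $j\neq p^\gamma$ (Remark 10.8), keeping $x_{p^v}$ potentially alive; it then uses (T3a) with $i=p^v$, $j=p^w$ and the vanishing $\binom{b+p^w-p^v}{p^w-p^v}=0$ to force $x_{p^v}=0$, after which $z_{p^v}=0$ and $y_{p^\gamma}=0$ follow quickly. You instead exploit that $(a,b)$ is split to kill $(x_i)$ outright, and then analyse $(y_j)$ via the classification of extension sequences for the non-James pair $(b,c)$ (Lemma 5.11): the key observation $\binom{b+p^v}{p^v}=0$ (from $w>v$, so $b_v=p-1$) gives $y_{p^v}=0$, whence $z_{p^v}=0$ from $(*)$ at $i=j=p^v$, and the rest follows. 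Your route trades a delicate binomial computation for an appeal to the two-part structure theory; both are short, and yours fits naturally with the running assumption of Section 10 that $(a,b)$ is split.

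One small presentational point: your assertion ``$p^w\neq p^\gamma$'' in the final paragraph is only literally true when $\gamma>w$. In the boundary case $\gamma=w$ the pair $(b,c)$ is split (it cannot be pointed since pointed requires $w<\gamma$), so $f=0$ already and $y_{p^w}=e(b_w+1)$ regardless; your conclusion $e=0$ is therefore correct in both cases, but it would be cleaner to say so explicitly.
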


\begin{proof} By Remark 10.8, we can subtract a standard triple from an arbitrary coherent triple to obtain a coherent triple $(x_i),(y_j),(z_k)$ in which $y_j=0$ for $j\neq p^\gamma$. Also by Lemma 10.4,  we have  $x_i=0$ for $i\neq p^\v$ and $z_k=0$ for $k\neq p^\v$. Hence, it is enough to prove that $x_{p^\v}=z_{p^\v}=y_{p^\gamma}=0$.

\q First notice that $y_{p^\w}=0$. This is certainly true if $\gamma>w$.   If $\gamma=\w$ then $(b,c)$ is split and so we have again that $y_{p^\w}=0$.
 
\q Setting now $i=p^\v$ and $j=p^\w$ in relation (T3a) we have that, 

$$0={{b+p^\w-p^\v}\choose{p^\w}}x_{p^\v}+{{b+p^\w-p^\v}\choose{p^\w-p^\v}}z_{p^\v}.$$

Moreover, we have that ${{b+p^\w-p^\v}\choose{p^\w}}=(b_\w+1)$. Also, ${{b+p^\w-p^\v}\choose{p^\w-p^\v}}=0$ as $(b-p^\v+p^\w)_\v=p-2$ and $(p^\w-p^\v)_\v=p-1$. Hence we conclude that $(b_\w+1)x_{p^\v}=0$ and so $x_{p^\v}=0$.  Now taking  $i=j=p^\v$ in  (T3a)  we get that $z_{p^\v}=0$, since $y_{p^\v}=x_{p^\v}=0$.

\q  Finally, for $i=p^\v$ and $j=p^\gamma$ in (T3a) we get again that ${{a+p^\v}\choose{p^\v}}y_{p^\gamma}=0$, and since $a_\v=p-2$ we obtain $-y_{p^\gamma}=0$ and  we are done, i.e., we have shown that $E(\lambda)$ is spanned by the standard triple.

\end{proof}

Therefore we can assume from now on that $\gamma\geq \v\geq \w$.

\newpage

\begin{lemma}

If $\gamma>\v>\w$ then $\lambda=(a,b,c)$ is split.

\end{lemma}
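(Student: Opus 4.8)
The plan is to prove, as in the earlier lemmas of this section, that every coherent triple $(x_i),(y_j),(z_k)$ for $\lambda=(a,b,c)$ is a multiple of the standard triple. Since $\gamma>w$ the pair $(b,c)$ is not James, so by Remark 10.8 we may subtract a suitable multiple of the standard triple and assume $y_j=0$ for $j\neq p^\gamma$; by Lemma 10.4 we also have $x_i=0$ for $i\neq p^v$ and $z_k=0$ for $k\neq p^v$. Thus the whole triple is pinned down by the three scalars $x_{p^v}$, $z_{p^v}$, $y_{p^\gamma}$, and it suffices to show all three vanish. Throughout we are in the situation of Remark 10.3 (so $a_v=p-2$ and $a_t=p-1$ for $0\le t\le\beta$ with $t\neq v$), and $\beta\geq v$ gives $b\geq p^v$; also $p^w<p^v<p^\gamma\leq c$.

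I would first kill $x_{p^v}$ by feeding $i=p^v$, $j=p^w$ into the simplified form (**) of (T3b) from Remark 10.6. This index pair is admissible ($1\le p^w\le c$ and $p^w<p^v\le b+p^w$), and $y_{p^w}=0$, so (**) reads $0={b+p^w-p^v\choose p^w}\,x_{p^v}$. The one substantive point is that ${b+p^w-p^v\choose p^w}=b_w+1\neq 0$: by Lucas's Formula this binomial is exactly the $w$-th base-$p$ digit of $b+p^w-p^v$; adding $p^w$ to $b$ changes only digit $w$, from $b_w$ to $b_w+1$ with no carry (as $\val_p(b+1)=w$ forces $b_w\le p-2$), while subtracting $p^v$ only disturbs digits $\geq v>w$ (via a borrow if $b_v=0$), so digit $w$ is left equal to $b_w+1$. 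Hence $x_{p^v}=0$.

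Next I would use (*), the simplified (T3a) of Remark 10.6, with $i=j=p^v$: this gives ${a+p^v\choose p^v}y_{p^v}={b\choose p^v}x_{p^v}+z_{p^v}$, where $y_{p^v}=0$ (since $p^v\neq p^\gamma$) and $x_{p^v}=0$, so $z_{p^v}=0$. Finally (*) with $i=p^v$, $j=p^\gamma$ gives ${a+p^v\choose p^v}y_{p^\gamma}={b+p^\gamma-p^v\choose p^\gamma}x_{p^v}+{b+p^\gamma-p^v\choose p^\gamma-p^v}z_{p^v}=0$, and since $a_v=p-2$ we have ${a+p^v\choose p^v}=a_v+1=p-1\neq 0$, whence $y_{p^\gamma}=0$. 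Therefore the reduced triple is zero, every coherent triple is a multiple of the standard one, and $\lambda=(a,b,c)$ is split.

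The only delicate point is the base-$p$ digit bookkeeping for ${b+p^w-p^v\choose p^w}$ in the second step — specifically, checking that the borrow possibly produced when $b_v=0$ never reaches position $w$ — but this is just the kind of Lucas's-Formula computation already carried out in the proof of Lemma 10.9, and poses no real difficulty.
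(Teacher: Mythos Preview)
Your proof is correct and follows essentially the same approach as the paper: reduce to a triple supported only at $x_{p^v}$, $z_{p^v}$, $y_{p^\gamma}$ via Remark 10.8 and Lemma 10.4, then kill these three scalars using the relations. The only difference is the order: the paper first kills $z_{p^v}$ directly from (T2) with $j=p^w$, $k=p^v$ (using $p^v+p^w\leq c$), whereas you first kill $x_{p^v}$ via (T3b) and then deduce $z_{p^v}=0$ from (T3a) with $i=j=p^v$; both routes are equally valid and the remaining steps coincide.
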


\begin{proof}
Again after subtracting  a standard triple from an arbitrary coherent triple  we can obtain a coherent triple $(x_i),(y_j),(z_k)$ in which  $y_j=0$ for $j\neq p^\gamma$. Also by Lemma 10.4 we have that $x_i=0$ for $i\neq p^\v$ and $z_k=0$ for $k\neq p^\v$ and so  it is enough to prove that $x_{p^\v}=z_{p^\v}=y_{p^\gamma}=0$.

\q Since $\gamma>\v>\w$ we have that $p^\v+p^\w\leq c$. Applying relation  (T2) with  $j=p^\w$ and $k=p^\v$,  we have 

$$0={{b+p^\w}\choose{p^\w}}z_{p^\v}=(b_\w+1)z_{p^\v},$$  

and  so $z_{p^\v}=0$. 

\q Now taking  $i=p^\v$ and $j=p^\w$ in relation (T3b) we get

$$0={{b+p^\w-p^\v}\choose{p^\w}}x_{p^\v}=(b_\w+1)x_{p^\v}$$

and so $x_{p^\v}=0$.

\q Finally, taking  $i=p^\v$ and $j=p^\gamma$ in (T3a) we get ${{a+p^\v}\choose{p^\v}}y_{p^\gamma}=0$, and since $a_\v=p-2$,  we have that $-y_{p^\gamma}=0$. Hence $E(\lambda)$ is spanned by the standard triple.

\end{proof}

Therefore we are left with the following three cases $\gamma=\v=\w$, $\gamma=\v>\w$ and $\gamma>\v=\w$.

Note that if $\gamma=v=w$ then $(b,c)$ is split.

\begin{lemma} Suppose that $(a,b)$ is split and that $(b,c)$ is split. Then $\lambda=(a,b,c)$ is non-split if and only if  $(a+p^v,b)$ is James and :

(i) if $b_v=0$ then $\val_p(b-p^v+1)>\gamma$ (in particular $b-p^v\geq c$);  

(ii) if $b_v\neq 0$ then $\gamma=v=w$ and $c_\gamma=1$.

\q  If $\lambda$ is non-split then $E(\lambda)$ is spanned by the standard triple and the triple $(x_i),(y_j),(z_k)$ with $(y_j)=0$, $x_i=0$ for $i\neq p^v$, $z_k=0$ for $k\neq p^v$, $x_{p^v}=1$ and $z_{p^v}=-b_v$.

\end{lemma}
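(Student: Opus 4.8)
The plan is to use the reductions already set up in this section. The standing hypotheses are: $(a,b)$ is split, hence $\beta\geq v$; $(a+p^v,b)$ is James, for otherwise $\lambda$ is split by Lemma 10.2; $(a,c)$ is not James, hence $\gamma\geq v$; we are in one of the cases $\gamma=v=w$, $\gamma=v>w$, $\gamma>v=w$; and here in addition $(b,c)$ is split. Given an arbitrary coherent triple $(x_i),(y_j),(z_k)$, I would first subtract a multiple of the standard triple so that, by Remark 10.8 together with splitness of $(b,c)$, $(y_j)=0$; Lemma 10.4 then gives $x_i=0$ for $i\neq p^v$ and $z_k=0$ for $k\neq p^v$. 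Feeding $i=j=p^v$ into relation (T3a) and using these vanishings gives $b_v x_{p^v}+z_{p^v}=0$. Hence every coherent triple is a combination of the standard triple and the triple $T$ of the statement ($x_{p^v}=1$, $z_{p^v}=-b_v$, all other entries $0$), which is visibly a triple of extension sequences (using that $(a+p^v,b)$, and hence $(a+p^v,c)$, is James); since $(y_j^\st)={b+j\choose j}$ is not identically zero ($(b,c)$ not being James), the two are independent, so $\lambda$ is non-split precisely when $T$ is itself a coherent triple, and then $E(\lambda)$ is as described. Everything now reduces to deciding when $T$ is coherent.

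The second step is to translate coherence of $T$ into arithmetic. Relation (T1) holds automatically by Remark 10.5, and via the reduced forms of (T3a) and (T3b) in Remark 10.6 every instance of (T2), (T3a), (T3b) not involving the index $p^v$ becomes $0=0$. What remains is: from (T2), $b_v{b+j\choose j}=0$ for $1\leq j\leq c-p^v$; from (T3b), ${b-p^v+j\choose j}=0$ for $1\leq j<p^v$; and from (T3a), ${b+j-p^v\choose j}=b_v{b+j-p^v\choose j-p^v}$ for $p^v<j\leq c$.

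The rest is Lucas's formula, case by case. Testing (T3b) at $j=p^w$ when $w<v$ gives ${b-p^v+p^w\choose p^w}=b_w+1\neq 0$ (as $b_w\neq p-1$), a contradiction; so coherence of $T$ forces $w=v$, which kills the case $\gamma=v>w$. Once $w=v$ the digits of $b$ below position $v$ are all $p-1$, so (T3b) is then automatic. Suppose $b_v\neq 0$, so $b_v(b_v+1)\neq 0$. If $c\geq 2p^v$ then $j=p^v$ is a legal index in (T2) and ${b+p^v\choose p^v}=b_v+1\neq 0$, contradicting (T2); hence $c<2p^v$, and with $c\geq p^\gamma\geq p^v$ this forces $\gamma=v$ and $c_\gamma=1$, which is (ii). Conversely, when $\gamma=v=w$ and $c_\gamma=1$ one has $c=p^v+c'$ with $c'<p^v$, and then the low string of $p-1$'s in $b$ kills the binomials in the remaining instances of (T2) and (T3a), so $T$ is coherent. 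Now suppose $b_v=0$, so $z_{p^v}=0$, (T2) is vacuous, and (T3a) reduces to ${b+j-p^v\choose j}=0$ for $p^v<j\leq c$. Letting $t_0$ be the least index $>v$ with $b_{t_0}\neq 0$, a carry count for the addition of $j$ to $b-p^v$ shows this holds for all $j$ in range exactly when $t_0>\gamma$, i.e. when $b_v=b_{v+1}=\cdots=b_\gamma=0$; given $w=v$ this is precisely $\val_p(b-p^v+1)>\gamma$, which is (i) (and then $p^{\gamma+1}\mid b-p^v+1>c$, giving the parenthetical $b-p^v\geq c$).

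The main obstacle will be this last carry analysis, in both directions. For the ``only if'' one must exhibit a violation when $t_0\leq\gamma$ by testing (T3a) at $j=p^{t_0}$ and checking that adding $p^{t_0}$ to $b-p^v$ produces no carry (so the binomial is nonzero); for the ``if'' one must check, for every $j$ with $p^v<j\leq c$, that adding $j$ to $b-p^v$ does produce a carry, which splits into a few subcases according to where the lowest nonzero digit of $j$ lies relative to the borrow in $b-p^v$. Each such step is elementary, but getting the exhaustive bookkeeping right — across $b_v=0$ versus $b_v\neq 0$ and across the configurations of $(v,w,\gamma)$ — is the substantive part; the collapse of (T1), (T2), (T3b) and the reductions of the first two paragraphs are routine once $w=v$ has been pinned down.
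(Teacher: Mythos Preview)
Your proof is correct and follows essentially the same route as the paper's: subtract the standard triple so that $(y_j)=0$, invoke Lemma 10.4 to reduce to the two unknowns $x_{p^v}$ and $z_{p^v}$, use (T3a) at $i=j=p^v$ to obtain $z_{p^v}=-b_v x_{p^v}$, and then determine exactly when the resulting triple $T$ is coherent by analysing (T3b), (T2) and (T3a) via Lucas's formula, splitting according to whether $b_v=0$ or $b_v\neq 0$. Your direct tests (e.g.\ plugging $j=p^w$ into (T3b) to force $w=v$, and $j=p^v$ into (T2) to force $c<2p^v$ when $b_v\neq 0$) are minor reorganisations of the paper's argument (which instead phrases these as ``$(b-p^v,p^v-1)$ is James'' and ``$(b,c-p^v)$ is James''), but the content is the same.
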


\begin{proof} Assume that $\lambda$ is non-split. Let $(x_i),(y_j),(z_k)$ be a non-standard coherent triple for $\lambda$.  By subtracting a multiple of the standard triple we obtain a non-zero coherent triple, also denoted $(x_i),(y_j),(z_k)$,  in which $(y_j)=0$ and by Lemma 10.4 $x_i=0$ for $i\neq p^v$ and $z_k=0$ for $k\neq p^v$.  We apply (T3a) in the form of (*) of Remark 10.6, with $i=j=p^v$ (which we may do since $\gamma\geq v$) and we get ${b\choose p^v}x_{p^v}+z_{p^v}=0$. Hence we must have that $x_{p^v}\neq 0$ and, after scaling, we may assume that $x_{p^v}=1$ and $z_{p^v}=-b_v$. Now we just need to check the conditions for this triple  to be  coherent.

\q We check (T3b) in the form of (**) of Remark 10.6. This is clearly satisfied unless $i=p^v$, in which case we have that (T3b) will be satisfied if and only if, for all $1\leq j<p^v$ we have ${b-p^v+j\choose j}=0$, i.e., $(b-p^v,p^v-1)$ is James (in particular $b\geq 2p^v-1$).  We now assume this. Notice that this gives directly that $w\geq v$ and since $v\geq w$ we have that $v=w$.

\q Condition (T1) is satisfied by Remark 10.5.

\q We consider the cases:  (i) $b_v=0$ and; (ii) $b_v\neq 0$.

\bs

Case (i).  Suppose $b_v=0$. The condition (T2) is satisfied trivially.  In (T3a) (in the form of (*) of Remark 10.6) we can assume that $i=p^v$ and the condition is that ${b+j-p^v\choose j}=0$ for all $p^v\leq j\leq c$. However, we also have that $(b-p^v,p^v-1)$ is James and ${b+p^v-p^v\choose p^v}=b_v=0$ so the condition is precisely that $(b-p^v,c)$ is James, i.e., $\val_p(b-p^v+1)>\gamma$ (in particular $b- p^v\geq c$).

\bs

Case (ii). Suppose $b_v\neq 0$. Then (T2) gives ${b+j\choose j}=0$ for $1\leq j\leq c-p^v$, i.e., $(b,c-p^v)$ is James.   Note that this implies $\len_p(c-p^v)<\len_p(c)$ (since $(b,c)$ is not James) and since $\gamma\geq v= w$ we get that $v=\gamma$ and $c_\gamma=1$. 
We now consider (T3a), in the form of (*) of Remark 10.6. Once again we only need to consider the case $i=p^v$  and the condition is then ${b+j-p^v\choose j}-{b+j-p^v\choose j-p^v}b_v=0$ for $p^v \leq j\leq c$.  The condition holds if $j=p^v$.  Writing $j=p^v+t$, with $1\leq t\leq c-p^v\leq p^v-1$ the condition is ${b+t\choose t+p^v}={b+t\choose t}b_v$, for $1\leq t\leq c-p^v$ or, since $(b,c-p^v)$ is James,   ${b+t\choose t+p^v}=0$, for $1\leq t\leq c-p^v$.
So let $1\leq t\leq c-p^v$. Hence $\len_p(t)\leq \len_p(c-p^v)<v$ (since $\gamma=v$ and $c_v=1$).  Now we have ${(b+t)_i\choose t_i}=0$, for some $0\leq i<v$, by Lucas's Formula and the fact that $(b,c-p^v)$ is James. But then, since $i<v$ we have $(t+p^v)_i=t_i$ and so ${b+t\choose t+p^v}=0$, by Lucas's Formula again.

\end{proof}

\q We  have dealt with the situation in which  $(b,c)$ is split  and we move on to the case in which  $(b,c)$ is  pointed.

\begin{lemma}

Assume that $(b,c)$ is pointed  and $\gamma=\v>\w$. Then $\lambda=(a,b,c)$ is non-split  if and only if $\len_p(b+p^v)<\val_p(a+p^v+1)$.

 In that  case $E(\lambda)$ is spanned by the standard triple and the triple $(x_i),(y_j),(z_k)$ with $(x_i)=0$, $y_j=z_k=0$ for $j,k\leq c$ and $j,k\neq p^\v$, and $y_{p^\v}=1$, $z_{p^\v}=-1$. 

\end{lemma}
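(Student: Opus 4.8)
The plan is to follow the template of Lemmas 10.7 and 10.13. First I would reduce to analysing coherent triples in which $(x_i)=0$: since $(a,b)$ is split, any coherent triple differs from a standard one by a triple with $(x_i)=0$, and by Lemma 10.4(ii) we may further assume $z_k=0$ for $k\neq p^\v$. I would then extract, from the various relations, the constraints on the remaining free parameters $y_{p^\v}$, $y_{p^\gamma}=y_{p^\v}$ (here $\gamma=\v$) and $z_{p^\v}$. Applying (T3a) with $i=j=p^\v$ — permissible since $\gamma\geq\v$ — and using $a_\v=p-2$ (Remark 10.3), together with $x_{p^\v}=0$, gives $(p-1)y_{p^\v}=z_{p^\v}$, i.e. $y_{p^\v}+z_{p^\v}=0$. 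Since $(b,c)$ is pointed, Lemma 5.11 (applied to the extension sequence $(y_j)$ for $(b,c)$) shows $y_j=0$ for $j\neq p^\gamma=p^\v$, so the space of such triples is at most one-dimensional, spanned by the candidate triple with $y_{p^\v}=1$, $z_{p^\v}=-1$ and all other entries $0$.

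Next I would verify that this candidate triple is coherent precisely when $\len_p(b+p^\v)<\val_p(a+p^\v+1)$, i.e. when $(a+p^\v,b+p^\v)$ is James. Relation (T1) holds automatically by Remark 10.5 (we are under the standing assumption, Remark 10.3, that $(a,b+p^\v)$ is James, so $(a+p^\v,b)$ is James). For (T2): with $x_i=0$ throughout, $z_k=0$ for $k\neq p^\v$, and $y_j=0$ for $j\neq p^\v$, the only potentially nonzero instance has $k=p^\v$, and one checks ${a+p^\v\choose p^\v}y_j = {b+j\choose j}z_{p^\v}$ reduces to $(a_\v+1)y_j = {b+j\choose j}(-1)$, with $j+p^\v\leq c$; since $\gamma=\v$ forces $c_\gamma=1$, so $j<p^\v$ and $y_j=0$, we need ${b+j\choose j}=0$ for $1\leq j\leq c-p^\v$, which follows from $(b,c)$ pointed (so $(b,c-p^\v)$ is James). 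For (T3a), using the simplified form (*) of Remark 10.6 — valid since $(a+p^\v,b)$ is James and $x_i=0$ — the equation becomes ${a+i\choose i}y_j = {b+j-i\choose j-i}z_i$; the only nonzero left side occurs at $i=p^\v$, and there we recover $(p-1)y_j = -z_{p^\v}\delta_{i,j}$ for $i=j=p^\v$, already checked, while for $i=p^\v<j$ the left side is $(a_\v+1)y_j=0$ (as $y_j=0$ for $j\neq p^\v$). For (T3b) in form (**): ${a+i\choose i}y_j={b+j-i\choose j}x_i$; the right side vanishes ($x_i=0$), and the left side is nonzero only at $i=p^\v$, where we need $y_j=0$ for $p^\v<? $ — but actually $j<i$ here so $j<p^\v$ forces $y_j=0$ automatically, so (T3b) imposes nothing new. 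Wait: I need to re-examine which relation encodes the James condition $\len_p(b+p^\v)<\val_p(a+p^\v+1)$; I expect it to emerge from the $i>j$ range, analogous to the endgame of Lemma 10.7, where one gets ${a+i\choose i}=0$ for $p^\v<i\leq p^\v+b$ — equivalently $(a+p^\v,b+p^\v)$ James.

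Finally I would assemble: if $(a+p^\v,b+p^\v)$ is James the candidate triple is coherent and non-standard (it is non-standard since, e.g., $z^\st_{p^\v}={a+p^\v\choose p^\v}=a_\v+1=p-1\neq -1$ in general — more cleanly, $x^\st_{p^\v}=p-1\neq 0$ whereas here $x_{p^\v}=0$, so it cannot be a scalar multiple of the standard triple), hence $\dim E(\lambda)=2$ and $\lambda$ is non-split; conversely if $\lambda$ is non-split, a non-zero triple with $(x_i)=0$ exists, its coherence forces the James condition by the (T3b)-type computation above, and we have shown any such triple is a scalar multiple of the displayed one, so $E(\lambda)$ is exactly as stated.

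The main obstacle I anticipate is the bookkeeping in verifying (T3a) and (T3b) for the candidate triple: one must carefully track, over the full ranges $1\leq i\leq j\leq c$ and $1\leq j\leq c$, $j<i\leq b+j$, exactly which binomial coefficients survive modulo $p$ under the standing hypotheses ($a_t=p-1$ for $0\leq t\leq\beta$, $t\neq\v$; $a_\v=p-2$; $\gamma=\v>\w$; $c_\gamma=1$), and confirm that the single nontrivial constraint that remains is precisely $\len_p(b+p^\v)<\val_p(a+p^\v+1)$. This is the same kind of Lucas-formula casework carried out in Lemmas 10.7 and 10.13, so it should go through, but it requires care — in particular using Remark 7.6 / the structure of $b$ to rule out spurious cases.
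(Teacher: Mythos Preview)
Your overall strategy matches the paper's, but there is a concrete gap in the reduction step. After subtracting a multiple of the standard triple to force $(x_i)=0$, you invoke Lemma 5.11 to conclude $y_j=0$ for $j\neq p^\gamma=p^{\v}$. Lemma 5.11 does not say this: it only says that any extension sequence for the pointed pair $(b,c)$ is a linear combination of the standard sequence and the point sequence. After your normalization the remaining $(y_j)$ may still carry a nonzero standard component, and you have not ruled this out. The fix is to apply (T3b) with $i=p^{\v}$ and $j=p^{\w}$ (legal since $\w<\v$): with $(x_i)=0$ the right side vanishes and the left side gives $(a_{\v}+1)y_{p^{\w}}=0$, hence $y_{p^{\w}}=0$; since $y^{\st}_{p^{\w}}=b_{\w}+1\neq 0$ while $y^{\pt}_{p^{\w}}=0$, the standard coefficient must be zero. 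This is exactly the computation the paper performs, but with the roles reversed: the paper normalizes via Remark 10.8 so that $y_j=0$ for $j\neq p^{\v}$ first, and then uses this same relation to deduce $x_{p^{\v}}=0$.

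There is also a smaller omission in your (T3a) check for $i=p^{\v}<j$: you verify the left side is zero but not the right side ${b+j-p^{\v}\choose j-p^{\v}}z_{p^{\v}}$. Writing $j=p^{\v}+t$ with $1\leq t\leq \hat c<p^{\w}$, this reduces to ${b+t\choose t}=0$, which holds because $(b,\hat c)$ is James. After your self-correction, your identification of (T3b) with $j=p^{\v}$, $p^{\v}<i\leq b+p^{\v}$, as the source of the condition $\len_p(b+p^{\v})<\val_p(a+p^{\v}+1)$ is correct; note that in (T3b) one has $j<i$, so the relevant case is $j=p^{\v}$ (not $j<p^{\v}$ as your first pass suggested), and the condition ${a+i\choose i}=0$ for $p^{\v}<i\leq b+p^{\v}$ is precisely the statement that $(a+p^{\v},b+p^{\v})$ is James.
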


\begin{proof} Assume that $\lambda$ is not split. Since $(b,c)$ is pointed we may write $c=\chat +p^\gamma$, with $\chat<p^w<p^\gamma$.  After subtracting a standard triple from an arbitrary coherent triple and using Lemma 10.4 we can obtain a coherent triple $(x_i),(y_j),(z_k)$ in which $x_i=0$, $y_j=0$ and $z_k=0$ for $i,j,k\neq p^v$. Moreover,  taking $i=p^v$, $j=p^w$, in the form (**) of (T3b) from Remark 10.6, we get
$$0={{b+p^\w-p^\v}\choose{p^\w}}x_{p^\v}=(b_\w+1)x_{p^\v}$$
and so $x_{p^\v}=0$. 

\q Taking $i=j=p^v$  in the form (*) of (T3a) from Remark 10.6, we get 
$${{a+p^\v}\choose{p^\v}}y_{p^\v}=z_{p^\v}$$
and, since $a_v=p-2$, we get $y_{p^v}+z_{p^v}=0$.  Hence $\lambda$ is non-split if and only if the triple $(x_i),(y_j),(z_k)$ with $(x_i)=0$, $y_j=z_k=0$ for $j,k\leq c$ and $j,k\neq p^\v$, and $y_{p^\v}=1$, $z_{p^\v}=-1$ is coherent.

\q Certainly condition (T1) is satisfied.  Note that in (T2) the left hand side is $0$ since we either have $j\neq p^v$ or $1\leq k\leq \chat<p^v$. Similarly the right hand side is $0$ so that (T2) is satisfied.

\q We now consider (T3a)  in the form (*) from Remark 10.6. For $i=j$ we need only to consider $i=j=p^v$, and this we have already done. So we assume that $i<j$. We may assume that $i=p^v$ or $j=p^v$.  For $i=p^v<j\leq c $  since $\gamma=v$ we have  $j=p^v+t$ with $1\leq t\leq \chat$. So the condition reads $0={b+j-p^v\choose j-p^v}={b+t\choose t}$ i.e., $(b,\chat)$ is James, but this is true by hypothesis, since $\chat<p^w$. For $1\leq i<j=p^v$ both sides are $0$  and the condition holds. 

\q Finally, we consider (T3b) in the form (**) from Remark 10.6.  The condition is simply that ${a+i\choose i}=0$ for all $p^v<i\leq b+p^v$.  Now we already have ${a+p^v+p^h\choose p^h}=a_h+1=0$ for $1\leq h<v$ and ${a+p^v+p^v\choose p^v}=a_v+2=0$. Moreover for  $p^v<p^h\leq p^v+b$ we have ${a+p^h\choose p^h}={a+p^v+p^h\choose p^h}$, so the condition implies that ${a+p^v+t\choose t}=0$ for all $t=p^h\leq b+p^v$, i.e., $(a+p^v,b+p^v)$ is James.  Moreover it is easy to reverse the argument to show that if $(a+p^v,b+p^v)$ is James then (T3b) holds.

\end{proof}

\begin{lemma}

Assume that $(b,c)$ is pointed  and $\gamma>\v=\w$. Then $\lambda=(a,b,c)$ is non-split if and only if $(a+p^v,b)$ is James and either:

(i) $\val_p(b-p^v+1)>\gamma$; or 

(ii) $\val_p(b-p^v+1)=\gamma$ and $\len_p(b+p^\gamma)<\val_p(a+p^v+1)$.

If $\lambda$ is  non-split then  $E(\lambda)$ is spanned by the standard triple and the triple $(x_i),(y_j),(z_k)$ with $(z_k)=0$, $x_i=0$ for $i\neq p^v$, $x_{p^v}=1$, $y_j=0$ for $j\neq p^\gamma$ and $y_{p^v}=-b_\gamma$.

\end{lemma}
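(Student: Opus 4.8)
The plan is to follow the pattern of Lemmas 10.12--10.14: use the standing hypotheses of the section to reduce an arbitrary coherent triple to a normal form carried by two scalars, pin that normal form down, and then read off the precise coherence conditions. Recall that, since $(a,b)$ is split and (by Remark 10.3) $(a+p^\v,b)$ is James, we have $a_t=p-1$ for $0\le t\le\beta$ with $t\neq\v$ and $a_\v=p-2$, so we may use the simplified forms $(*)$, $(**)$ of (T3a), (T3b) from Remark 10.6 throughout, together with Lucas's Formula. As $(b,c)$ is pointed and $\w=\v$, write $c=\chat+p^\gamma$ with $\chat<p^\v<p^\gamma$.

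First I would reduce. Assume $\lambda$ is non-split, take a non-standard coherent triple, subtract a multiple of the standard triple, and apply Remark 10.8 (legitimate since $(b,c)$ is not James) to obtain a coherent triple $(x_i),(y_j),(z_k)$ with $y_j=0$ for $j\neq p^\gamma$; by Lemma 10.4 also $x_i=0$ for $i\neq p^\v$ and $z_k=0$ for $k\neq p^\v$, so the triple is carried by $x_{p^\v},y_{p^\gamma},z_{p^\v}$, not all zero. Relation (T2) with $j=k=p^\v$ (permissible since $2p^\v\le p^\gamma\le c$), using $y_{p^\v}=0$ and ${b+p^\v\choose p^\v}=b_\v+1\neq0$ (as $b_\v\neq p-1$ since $\w=\v$), gives $z_{p^\v}=0$. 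Then $(*)$ with $i=j=p^\v$ reads $b_\v x_{p^\v}=0$; if $x_{p^\v}=0$ then $(*)$ with $i=p^\v$, $j=p^\gamma$ and $a_\v=p-2$ forces $y_{p^\gamma}=0$, contradicting non-triviality, so $b_\v=0$, i.e. $\val_p(b-p^\v+1)>\v$, and $x_{p^\v}\neq0$. Rescaling to $x_{p^\v}=1$, the same instance of $(*)$ gives $y_{p^\gamma}=-{b+p^\gamma-p^\v\choose p^\gamma}$.

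Next I would extract the conditions. The remaining instances of (T1) and of $(*)$, $(**)$ must hold for this triple. Writing $b=p^\v M-1$ with $p\nmid M$ and evaluating the binomial coefficients via Lucas's Formula in terms of the $p$-adic digits of $a$, $b$, $M$, one finds: (T1) with $i=p^\v$ gives ${a+p^\v+k\choose k}=0$ for all $1\le k\le c$, forcing $\val_p(a+p^\v+1)>\gamma$; the instance of $(*)$ with $i=p^\v$ and $j=p^{\val_p(b-p^\v+1)}$ forces $\val_p(b-p^\v+1)\ge\gamma$, whereupon ${b+p^\gamma-p^\v\choose p^\gamma}=b_\gamma$ so that $y_{p^\gamma}=-b_\gamma$; and $(**)$ with $j=p^\gamma$ gives ${a+i\choose i}=0$ for all $p^\gamma<i\le b+p^\gamma$, which is equivalent to $\len_p(b+p^\gamma)<\val_p(a+p^\v+1)$ and is automatic when $b_\gamma=0$. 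Splitting according to whether $\val_p(b-p^\v+1)>\gamma$ or $\val_p(b-p^\v+1)=\gamma$ now yields precisely alternatives (i) and (ii), and the stated basis of $E(\lambda)$ follows. For the converse, assuming $(a+p^\v,b)$ is James and (i) or (ii) holds, take the triple with $(z_k)=0$, $x_{p^\v}=1$ (other $x_i=0$) and $y_{p^\gamma}=-b_\gamma$ (other $y_j=0$): (T1) holds by Remark 10.5; (T2) holds since both sides vanish, the left because $y_j\neq0$ only at $j=p^\gamma$, where $j+k\le c$ forces $1\le k\le\chat<p^\v$ and hence ${a+k\choose k}=0$; and $(*)$, $(**)$ are verified instance by instance by Lucas's Formula, splitting into cases (i) and (ii) exactly as in the proof of Lemma 10.14 and using $\len_p(b+p^\gamma)<\val_p(a+p^\v+1)$ in case (ii).

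I expect the main obstacle to be the digit bookkeeping of the third step: deciding which of the formally infinitely many instances of (T1), (T3a), (T3b) are non-vacuous for a triple supported at $p^\v$ and $p^\gamma$, and checking that the resulting collection of vanishing and non-vanishing conditions on binomial coefficients collapses exactly to alternatives (i) and (ii). Once the normal form is isolated, the rest is routine and parallels Lemmas 10.12--10.14.
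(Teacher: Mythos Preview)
Your proposal is correct and follows essentially the same route as the paper's proof: reduce via Lemma 10.4 and Remark 10.8 to a triple supported at $x_{p^\v}$, $y_{p^\gamma}$, $z_{p^\v}$; kill $z_{p^\v}$ via (T2) with $j=k=p^\v$; use $(*)$ with $i=p^\v$ and various $j$ to force $\val_p(b-p^\v+1)\ge\gamma$ and $y_{p^\gamma}=-b_\gamma x_{p^\v}$; then check coherence of the resulting triple case by case. The only differences are cosmetic---you isolate $b_\v=0$ as an intermediate step and defer the full inequality $\val_p(b-p^\v+1)\ge\gamma$ to the ``conditions'' paragraph, whereas the paper extracts the latter directly from the instances $i=p^\v$, $1\le j<p^\gamma$ of $(*)$ and $(**)$---and your reference to ``Lemma 10.14'' should be to Lemmas 10.11--10.12 (10.14 is the summarising Proposition).
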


\begin{proof} Assume that $\lambda$ is not split. We write $c=\chat+p^\gamma$ with $\chat<p^v<p^\gamma$.

\q By Lemma 10.4, from an arbitrary coherent triple we may subtract a multiple of the standard triple to obtain a coherent triple $(x_i),(y_j),(z_k)$ with 
 $x_i=0$ for $i\neq p^\v$, $z_k=0$ for $k\neq p^\v$ and $y_j=0$ for $j\neq p^\gamma$.

\q We show first that $z_{p^\v}=0$. Indeed, since $\v<\gamma$ the relation (T2) applies for $j=k=p^\v$ and we get 

$$0={{b+p^\v}\choose{p^\v}}z_{p^\v}=(b_\v+1)z_{p^\v}$$

and since $\v=\w$ we have  $b_\v+1\neq 0$ and so $z_{p^\v}=0$.

\q Further, taking $i=p^v$, $j=p^\gamma$ in relation (T3a), in the form (*) from Remark 10.6, we get 
$${a+p^v\choose p^v}y_{p^\gamma}={b+p^\gamma-p^v\choose p^\gamma}x_{p^v}\eqno{(\dagger)}.$$
Since  $a_v=p-2$ the left hand side is $-y_{p^\gamma}$. Thus if $x_{p^v}=0$ then $y_{p^\gamma}=0$ and $(x_i),(y_j),(z_k)$ is the zero triple.  Hence if $\lambda$ is non-split there exists a non-standard  triple with $x_{p^v}\neq 0$.

\q Now taking $i=p^v\leq j<p^\gamma$ in (T3a), in the form (*) from Remark 10.6,  we deduce that if $\lambda$ is non-split then ${b-p^v+j\choose j}=0$.  Moreover, if $1\leq j<i=p^v$ in (T3b),in the form (**) from Remark 10.6, we have $y_j=0$ and so ${b-p^v+j\choose j}=0$.  Thus, if $\lambda$ is non-split, then ${b-p^v+j\choose j}=0$ for all $j<p^\gamma$, i.e., $\val_p(b-p^v+1)\geq \gamma$.  We now assume that this is the case. 

\q But now ($\dagger$) gives $-y_{p^\gamma}=b_\gamma x_{p^v}$. Thus $\lambda$ is non-split if and only if the triple
 $(x_i),(y_j),(z_k)$, with $(z_k)=0$, $x_i=0$ for $i\neq p^v$, $x_{p^v}=1$, $y_j=0$ for $j\neq p^\gamma$ and $y_{p^\gamma}=-b_\gamma$, is coherent, and in this case $E(\lambda)$ is spanned by the standard triple and  this triple.  It remains to check when this triple is coherent.
 
 \q Certainly (T1) is satisfied.  Condition (T2) is satisfied, since either $y_j=0$ or $j=p^\gamma$ and then ${a+k\choose k}=0$ as $1\leq k\leq \chat<p^v$.

 \q In equation (T3a), in the form (*) from Remark 10.6,  the left hand side is $0$ unless $i=p^v$ and $ j=p^\gamma$ and we have already checked this case. Thus we require that in all   other cases  the right hand side is $0$. As usual, it is enough to check the case in which $i=p^v$ and the condition is then (if $\lambda$ is non-split)  ${b+j-p^v\choose j}=0$ for all $p^v\leq j\leq c$, $j\neq p^\gamma$.  We have ${b-p^v+j\choose j}=0$ for $1\leq j<p^\gamma$, since we are assuming $\val_p(b-p^v+1)\geq \gamma$.  Hence the condition is that for all $1\leq t\leq \chat$ we have ${b-p^v+p^\gamma+t\choose p^\gamma +t}=0$. However, we may write $b=p^v-1+p^\gamma B$, for some $B\geq 1$, and so ${b-p^v+p^\gamma+t\choose p^\gamma +t}={t-1+p^\gamma B+p^\gamma\choose t+p^\gamma}$, which from Lucas's Formula  is   ${t-1 \choose t}(B+1)$ and therefore $0$. Hence (T3a) is satisfied.
 
 \q Finally we  consider (T3b) in the form (**) from Remark 10.6. If the right hand side is non-zero then $i=p^v$ and in this case the right hand side is ${b+j-p^v\choose j}$, with $j<p^v$. But $\val_p(b-p^v+1)\geq \gamma$ so that ${b+j-p^v\choose j}=0$. Hence the right hand side is always zero. If $\val_p(b-p^v+1)>\gamma$ then $b_\gamma=0$ and so $y_{p^\gamma}=0$. Thus in this case the left hand side is also $0$ and the coherence conditions are satisfied. This proves part (i).
 
\q  So we can assume that $\val_p(b-p^v+1)=\gamma$. As the right hand side in (T3b) is always $0$ the condition to be checked is ${a+i\choose i}y_j=0$ for $1\leq j<i\leq b+j$, i.e., ${a+i\choose i}=0$ for $p^\gamma <i\leq b+ p^\gamma$.   If ${a+i\choose i}\neq 0$ then we have $a_h+i_h\leq p-1$ for all $h\geq 0$, by Remark 3.1. Hence by Remark 10.3 we have $i=i_vp^v+p^{\beta+1}I$, for some $I$ and $i_v=0$ or $1$.   Since $p^\gamma<i$ we must have $I\neq 0$.  But also $i\leq b+p^\gamma$ gives $p^{\beta+1}I\leq p^{\beta+1}-1+p^\gamma<2p^{\beta+1}$ and so  $I=1$.  Hence (T3b) is not satisfied if and only if $p^{\beta+1}\leq b+p^\gamma$ and ${a+p^{\beta+1}\choose p^{\beta+1}}\neq 0$, i.e. $\len_p(b+p^\gamma)=\val_p(a+p^v+1)$. So we must have that $\len_p(b+p^\gamma)<\val_p(a+p^v+1)$ and this proves part (ii).

\end{proof}

\q We summarise  our findings from this section in the following result.

\begin{proposition}
Let $(a,b,c)$ be a partition with $(a,b)$  split. Then $\lambda=(a,b,c)$ is non split if and only if $(a+p^v,b)$ is James  and one of the following holds.

(i) We have $\gamma\geq \v=\w$ and $\val_p(b-p^v+1)>\gamma$. In this case $E(\lambda)$ is spanned by the standard triple and the coherent triple $(x_i),(y_j),(z_k)$ with $x_{p^v}=1, x_i=0$ for $i\neq p^v$ and $(y_j)=0$, $(z_k)=0$.

(ii) We have  $\gamma=\v=\w$,  $b_\v\neq 0$, and $c_v=1$. In this case $E(\lambda)$ is spanned by the standard triple and the coherent triple $(x_i),(y_j),(z_k)$ with $x_{p^v}=1, x_i=0$ for $i\neq p^v$, with $(y_j)=0$ and $z_{p^\v}=-b_v, z_k=0$ for $k\neq p^v$.

(iii) We have $\gamma>\v=\w$,  $\val_p(b-p^v+1)=\gamma$, $c=\chat +p^\gamma$ with $\chat<p^\v$ and $\ell_p(b+p^\gamma)<\val_p(a+p^\v+1)$. In this case $E(\lambda)$ is spanned by the standard triple and the coherent triple $(x_i),(y_j),(z_k)$ with $x_{p^v}=1, x_i=0$ for $i\neq p^v$, with $y_{p^v}=-b_\gamma, y_j=0$ for $j\neq p^\gamma$ and with $(z_k)=0$.

(iv) We have $\gamma=\v< \w$,  $c_\gamma=1$ and $\ell_p(b+p^\v)<\val_p(a+p^\v+1)$. In this case $E(\lambda)$ is spanned by the standard triple and the coherent triple $(x_i),(y_j),(z_k)$ with $(x_i)=0$, with $y_{p^v}=1, y_j=0$ for $j\neq p^v$ and with $z_{p^v}=-1,z_k=0$ for $k\neq p^v$.

(v) We have $\gamma=\v>\w$,  $c=\chat +p^\v$ with $\chat <p^\w$ and $\ell_p(b+p^\v)<\val_p(a+p^\v+1)$. In this case $E(\lambda)$ is spanned by the standard triple and the coherent triple $(x_i),(y_j),(z_k)$ with $(x_i)=0$, with $y_{p^v}=1, y_j=0$ for $j\neq p^v$ and with $z_{p^v}=-1,z_k=0$ for $k\neq p^v$.

\end{proposition}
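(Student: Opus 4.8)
The plan is to assemble the statement from the lemmas already established in this section; there is essentially no fresh computation, only organisation. For the ``if'' direction I would simply note that, in each of (i)--(v), the triple $(x_i),(y_j),(z_k)$ written in the statement is precisely the non-standard coherent triple exhibited, under the hypothesis that $(a+p^v,b)$ is James, in the relevant one of Lemmas 10.7, 10.11, 10.12 and 10.13, so that $\lambda$ is non-split. The content lies in the ``only if'' direction, which I would run as a case analysis.

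So suppose $\lambda$ is non-split. Since $(a,b)$ is split it is not James, hence $\beta\geq v$. First I would dispose of the two coarse obstructions: by Lemma 10.1 we cannot have $\gamma<v$, so $\gamma\geq v$; and by Lemma 10.2 the pair $(a+p^v,b)$ must be James. The latter places us in the situation of Remark 10.3 (so $a_t=p-1$ for $0\leq t\leq\beta$ with $t\neq v$, and $a_v=p-2$), and then Lemma 10.4 guarantees that in every coherent triple $x_i=0$ for $i\neq p^v$ and $z_k=0$ for $k\neq p^v$. From here the argument divides according to whether the two-part partition $(b,c)$ is James, split, or pointed.

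If $(b,c)$ is James, then $\gamma<w$, and Lemma 10.7 shows non-splitness forces $\gamma=v$ (so $v<w$), $c_v=1$ and $\len_p(b+p^v)<\val_p(a+p^v+1)$; this is case (iv). If $(b,c)$ is split (hence not James, so $\gamma\geq w$), Lemmas 10.9 and 10.10 show $\lambda$ is split whenever $\gamma\geq w>v$ or $\gamma>v>w$, leaving $\gamma=v=w$, $\gamma=v>w$, $\gamma>v=w$, all covered by Lemma 10.11, which forces $v=w$ and yields either $b_v=0$ with $\val_p(b-p^v+1)>\gamma$, or $b_v\neq 0$ with $\gamma=v=w$ and $c_\gamma=1$ --- these become cases (i) and (ii). If $(b,c)$ is pointed, then $\gamma=v=w$ is excluded (such a partition is split), and Lemmas 10.9 and 10.10 again handle $\gamma\geq w>v$ and $\gamma>v>w$; what remains is $\gamma=v>w$, treated by Lemma 10.12 (case (v)), and $\gamma>v=w$, treated by Lemma 10.13, which gives $\val_p(b-p^v+1)\geq\gamma$ and then the dichotomy $\val_p(b-p^v+1)>\gamma$ versus $\val_p(b-p^v+1)=\gamma$ with $\len_p(b+p^\gamma)<\val_p(a+p^v+1)$, the latter being case (iii).

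The part I expect to require care --- and the main obstacle --- is folding the overlapping descriptions into the single clean five-way list, specifically seeing that Lemma 10.11(i) and Lemma 10.13(i) both land in case (i), where $(y_j)=(z_k)=0$. For Lemma 10.11(i) this is immediate, since its hypothesis $b_v=0$ makes $z_{p^v}=-b_v=0$ in the triple it produces. For Lemma 10.13(i) one uses that, as recorded in its proof, $\val_p(b-p^v+1)>\gamma$ forces $b_\gamma=0$, so the coefficient $-b_\gamma$ appearing in that triple vanishes and only $x_{p^v}=1$ survives. Once that reconciliation is in place, checking exhaustiveness is mechanical: every ordering of $\gamma,v,w$ consistent with $\gamma\geq v$, with $(a+p^v,b)$ James, and with the trichotomy on $(b,c)$ occurs in exactly one of the cited lemmas, as the walk-through above records.
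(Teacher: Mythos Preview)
Your proposal is correct and is essentially the paper's own proof, only spelled out in greater detail: the paper merely writes ``Part (i) follows by Lemma 10.11(i) and Lemma 10.13(i). Part (ii) is Lemma 10.11(ii) and part (iii) is Lemma 10.13(ii). Now (iv) is Lemma 10.7 and (v) is Lemma 10.12.'' Your careful reconciliation of the overlapping descriptions (noting that $b_v=0$, respectively $b_\gamma=0$, collapses the non-standard triples of Lemma 10.11(i) and Lemma 10.13(i) into the single form of case (i)) makes explicit what the paper leaves to the reader.
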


\begin{proof}
Part (i) follows by Lemma 10.11 (i) and Lemma 10.13 (i). Part (ii) is Lemma 10.11 (ii) and part (iii) is Lemma 10.13 (ii). Now (iv) is Lemma 10.7 and (v) is Lemma 10.12.

\end{proof}

\bs\bs\bs\bs


\section{Partitions $(a,b,c)$ with $(a,b)$  pointed.}

\q We continue with our analysis of $E(\lambda)$ for a $3$-part partition $\lambda=(a,b,c)$. We deal here with the remaining case in which $(a,b)$ is pointed.

 \q As usual we write $\beta=\ell_p(b)$ and $\gamma=\ell_p(c)$ for the $p$-adic lengths of $b$ and $c$ and $v=\val_p(a+1)$ and $w=\val_p(b+1)$ for the $p$-adic evaluations of $a+1$ and $b+1$.

\q Since  $(a,b)$ is pointed and may write  $b=\hat{b}+p^\beta$ with $\hat{b}<p^\v<p^\beta$.  Notice  that here we  have that $w\leq v$ since $b_v=0$.

\begin{lemma}

If  $\gamma< w$, i.e., $(b,c)$ is James,  then    $\lambda=(a,b,c)$ is non-split and $E(\lambda)$ is spanned by the standard triple and the triple $(x_i),(y_j),(z_k)$ with $x_{p^\beta}=1$,  $x_i=0$ for $i\neq p^\beta$ and  $(y_j)=0$, $(z_k)=0$.

\end{lemma}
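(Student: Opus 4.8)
The plan is to compute $E(\lambda)$ outright and show it is $2$-dimensional, spanned by the standard triple and by the triple $T^\pt=(x^\pt_i,0,0)$, where $(x^\pt_i)$ is the point extension sequence of $(a,b)$ (so $x^\pt_{p^\beta}=1$ and $x^\pt_i=0$ otherwise). I would first record the standing data and the arithmetic facts that do all the work. Since $(a,b)$ is pointed we have $b=\hatb+p^\beta$ with $\hatb<p^v<p^\beta$, $\beta=\len_p(b)$, and $w=\val_p(b+1)\leq v$; the hypothesis $\gamma<w$ says exactly $c<p^w$, i.e. $(b,c)$ is James. The three facts I would use are: (a) $\binom{b+j}{j}=0$ for $1\leq j\leq c$ (as $(b,c)$ is James); (b) $\binom{\hatb+j}{j}=0$ for $1\leq j\leq c$, which follows from (a) by Lucas's Formula since $j\leq c<p^w\leq p^v<p^\beta$ and $\hatb<p^\beta$ force $\binom{b+j}{j}=\binom{\hatb+j+p^\beta}{j}=\binom{\hatb+j}{j}$; and (c) $(a+p^\beta,c)$ is James, because $v=\val_p(a+1)<\beta$ gives $\val_p(a+p^\beta+1)=v$ and $c<p^w\leq p^v$, so $\binom{a+p^\beta+k}{k}=0$ for $1\leq k\leq c$. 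I would also note that $(a,c)$ is James ($c<p^w\leq p^v$), whence $z^\st_k=\binom{a+k}{k}=0$ and $y^\st_j=\binom{b+j}{j}=0$ for $1\leq j,k\leq c$; so the standard triple is $(x^\st_i,0,0)$ with $x^\st_i=\binom{a+i}{i}$ the (nonzero) standard sequence for $(a,b)$, and $T^\pt$ is genuinely different from it because $x^\pt$ and $x^\st$ are independent in $E(a,b)$ (Remark 5.10).

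Next I would check that $T^\pt$ is a coherent triple. Relation (E) for $(x^\pt_i)$ holds since it is the point sequence of $(a,b)$ (Lemma 5.11). For (T1), the left-hand side $\binom{a+i+k}{k}x^\pt_i$ is nonzero only at $i=p^\beta$, where it equals $\binom{a+p^\beta+k}{k}=0$ by (c); the right-hand side is $0$ since $(z_k)=0$. Relations (T2) and (T3a) are then immediate: in (T2) both sides vanish, and in (T3a) the left side vanishes while on the right $z_i=0$ and every occurring $x^\pt_{i-s}$ has index $1\leq i-s\leq i\leq c<p^\beta$, hence is $0$. The only real computation is (T3b): the left side is $0$, and on the right a term is nonzero only when $i-s=p^\beta$, i.e. $i=p^\beta+s$ with $0\leq s\leq j$, and then (using $b-p^\beta=\hatb$) that term is $\binom{\hatb+j-s}{j-s}\binom{a+p^\beta+s}{s}$; this is $\binom{\hatb+j}{j}=0$ by (b) when $s=0$, and has the factor $\binom{a+p^\beta+s}{s}=0$ by (c) when $1\leq s\leq j\leq c$. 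So $T^\pt$ is coherent, giving $\dim E(\lambda)\geq 2$.

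Then I would prove the reverse bound $\dim E(\lambda)\leq 2$ by a reduction. Given any coherent triple $(x_i),(y_j),(z_k)$, since $(a,b)$ is pointed Lemma 5.11 writes $(x_i)$ as a combination of $(x^\st_i)$ and $(x^\pt_i)$; subtracting the matching multiple of the standard triple I may assume $x_i=\xi x^\pt_i$ for a scalar $\xi$, i.e. $x_i=0$ for $i\neq p^\beta$. Applying (T3b) with $i=p^v$ and $1\leq j\leq c$ (legitimate since $j<p^v\leq b+j$), every index $p^v-s$ satisfies $1\leq p^v-s\leq p^v<p^\beta$, so the right-hand side vanishes; as $\binom{a+p^v}{p^v}=a_v+1\neq 0$ this forces $y_j=0$ for all $1\leq j\leq c$. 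Feeding this into Remark 6.2 (relation (T3a) with $i=j$) gives, for $1\leq i\leq c$, $z_i=\binom{a+i}{i}y_i-\sum_{s=0}^{i-1}\binom{b}{i-s}\binom{a+i}{s}x_{i-s}=0$, since $y_i=0$ and each $x_{i-s}$ with $1\leq i-s\leq c<p^\beta$ vanishes. Hence the reduced triple is $\xi\,T^\pt$, so $E(\lambda)=\langle\text{standard triple},\,T^\pt\rangle$ is exactly $2$-dimensional. Finally, $\lambda$ is not James (because $(a,b)$ is not James), so by Proposition 4.5(ii) $\dim\Ext^1_{B(N)}(S^dE,K_\lambda)=\dim E(\lambda)-1=1$; in particular $\lambda$ is non-split, and $E(\lambda)$ is as asserted.

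I expect the main obstacle to be the term-counting in (T3b) for $T^\pt$: one must pin down precisely which convolution terms survive when $(x_i)$ is supported only at $p^\beta$, and then see that each surviving term is annihilated by exactly one of the two James conditions — the $s=0$ terms by the digit-shift identity $\binom{b+j}{j}=\binom{\hatb+j}{j}$, and the $s\geq 1$ terms by $\val_p(a+p^\beta+1)=v$. Everything else is routine.
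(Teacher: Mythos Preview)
Your proof is correct and follows essentially the same approach as the paper: reduce to $x_i=0$ for $i\neq p^\beta$, force $(y_j)=(z_k)=0$, and verify the point triple is coherent. The only tactical differences are that you kill all $y_j$ at once via (T3b) with $i=p^v$ (the paper does just $y_{p^\gamma}$ and invokes the James structure of $(b,c)$), and you obtain $(z_k)=0$ from (T3a) with $i=j$ rather than from (T1) with $i=k=p^\gamma$; both routes are equally short.
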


\begin{proof} Let $(x_i),(y_j),(z_k)$ be a coherent triple for $\lambda$. Now  $(a,b)$ is pointed and by subtracting a multiple of the standard triple, we obtain a coherent triple, which we also denote $(x_i),(y_j),(z_k)$, in which $x_i=0$ for $i\neq p^\beta$.

\q We claim that $(y_j)=0$ and $(z_k)=0$. Now $(b,c)$ is James and since $\gamma<w\leq v$ so is  $(a,c)$. Thus it is enough to prove that $y_{p^\gamma}=z_{p^\gamma}=0$. 

\q From (T1) with  $i=k=p^\gamma$  we get
$$0={{a+p^\gamma+p^\gamma}\choose{p^\gamma}}z_{p^\gamma}$$

and since $\gamma<v$ we have  $a_\gamma=p-1$  so ${{a+p^\gamma+p^\gamma}\choose{p^\gamma}}=1$ and  $z_{p^\gamma}=0$.

\q From  (T3b)  with  $i=p^v$ and $j=p^\gamma$ we get

$${{a+p^v}\choose{p^v}}y_{p^\gamma}=0$$

and so $(a_v+1)y_{p^\gamma}=0$. Since $a_v+1\neq 0$ we get  $y_{p^\gamma}=0$ as required.

\q Thus we will be done provided that we show that the triple  $(x_i),(y_j),(z_k)$ described in the statement of the Lemma is coherent.  For (T1) we require ${a+p^\beta +k\choose k}=0$ for $1\leq k\leq c$, i.e., $(a+p^\beta,c)$ is James. But we have \\
$\val_p(a+p^\beta+1)=v\geq w>\gamma$, so this is indeed the case.

\q In (T2) both sides are $0$.

\q We have $c<p^w< p^\beta$ so that  $x_i=0$ for $1\leq i\leq c$ and therefore (T3a) holds. 

\q A summand on the  right hand side of (T3b) can only be non-zero if  $i-s=p^\beta$ and moreover, since ${a+p^\beta+s\choose s}=0$, for $1\leq s\leq c$ (as $(a+p^\beta,c)$ is James) so we may restrict attention to the case  $s=0$.  Hence the condition (T3b) is that  ${b+j-p^\beta\choose j}=0$ for $j\leq  c$. 
But $\val_p(b+1-p^\beta)= \val_p(b+1)>\gamma$ so that $(b-p^\beta,c)$ is James and the condition is satisfied.

\end{proof}

\begin{lemma} If  $w\leq \gamma<v$ then  $\lambda=(a,b,c)$ is split.
\end{lemma}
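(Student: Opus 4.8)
The plan is to show that $E(\lambda)$ is spanned by the standard triple, so that $\lambda=(a,b,c)$ is split. First I would record the numerical constraints in force: since $(a,b)$ is pointed we have $\hat b<p^v<p^\beta$ and $v<\beta$ (Definition~5.9); the hypothesis gives $w\le\gamma<v$, hence also $c<p^v$ and $p^v<b$; and $(a,c)$ is James since $\gamma<v$. Now take an arbitrary coherent triple $((x_i),(y_j),(z_k))$ for $\lambda$. Its first sequence $(x_i)$ lies in $E(a,b)$, which by Lemma~5.11 is spanned by the standard and point sequences. So, after subtracting a suitable multiple of the standard coherent triple, I may assume $x_i=0$ for $i\neq p^\beta$ and $x_{p^\beta}=\delta$ for some scalar $\delta$; it then suffices to prove $\delta=0$, $(y_j)=0$ and $(z_k)=0$, for then the original triple was a multiple of the standard one.

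The second step is to kill $(z_k)$ and then $(y_j)$ using the relations of Definition~6.1. For $(z_k)$: apply (T1) with $i=k=p^\gamma$; since $p^\gamma<p^\beta$ we have $x_{p^\gamma}=0$, and as $a_\gamma=p-1$ (because $\gamma<v=\val_p(a+1)$) Lucas's Formula gives $\binom{a+2p^\gamma}{p^\gamma}=1$, so $z_{p^\gamma}=0$; since $(z_k)$ is an extension sequence for the two part James pair $(a,c)$, it is determined by its value at $p^\gamma$ (Lemmas~5.7 and~5.3), hence $(z_k)=0$. For $(y_j)$: apply (T3b) with $i=p^v$, which is legitimate because $1\le j\le c<p^v$ forces $j<p^v$ and $p^v<b\le b+j$; every term $x_{p^v-s}$ on the right vanishes since $0<p^v-s\le p^v<p^\beta$, so the right-hand side is $0$, while the left-hand side is $\binom{a+p^v}{p^v}y_j=(a_v+1)y_j$ with $a_v+1\neq 0$ (as $v=\val_p(a+1)$). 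Hence $y_j=0$ for all $1\le j\le c$.

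The final step uses the remaining freedom to kill $\delta$. Apply (T3b) with $i=p^\beta$ and $j=p^w$; this is allowed since $p^w\le p^\gamma\le c$, $p^w<p^\beta$ (because $w\le v<\beta$), and $p^\beta\le b\le b+p^w$. The left-hand side is $\binom{a+p^\beta}{p^\beta}y_{p^w}=0$, and on the right only the $s=0$ term survives (all other $x_{p^\beta-s}$ vanish), giving $\binom{b+p^w-p^\beta}{p^w}\delta=\binom{\hat b+p^w}{p^w}\delta=0$. To conclude $\delta=0$ I would note that $\hat b+1\le p^v<p^\beta$, so adding $p^\beta$ does not disturb the low $p$-adic digits and $\val_p(\hat b+1)=\val_p(b+1)=w$; thus $\hat b_w\neq p-1$ and Lucas's Formula gives $\binom{\hat b+p^w}{p^w}=\hat b_w+1\neq 0$. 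Therefore $\delta=0$, the original triple was a multiple of the standard triple, and $\lambda$ is split.

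I expect the only real obstacle to be bookkeeping: checking that each invocation of (T1) or (T3b) respects its index ranges, and correctly reading off the $p$-adic digits ($a_\gamma=p-1$, $a_v\neq p-1$, $\val_p(\hat b+1)=w$) that force the relevant binomial coefficients to be nonzero. The one non-mechanical choice is to feed the specific index $j=p^w$ into (T3b) with $i=p^\beta$ in the last step; that is exactly where the hypothesis $w\le\gamma$ (equivalently, $(b,c)$ not James) enters, and it is what distinguishes this case from Lemma~11.1.
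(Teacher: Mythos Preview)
Your proof is correct and follows essentially the same approach as the paper's: subtract a standard triple to reduce to $x_i=0$ for $i\neq p^\beta$, use (T1) with $i=k=p^\gamma$ to kill $(z_k)$, use (T3b) with $i=p^v$ to kill $(y_j)$, and finally use (T3b) with $i=p^\beta$, $j=p^w$ to kill $x_{p^\beta}$. Your version is slightly more explicit in verifying the index ranges and the identity $\binom{b+p^w-p^\beta}{p^w}=\hat b_w+1=b_w+1$, but the substance is identical.
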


\begin{proof} Subtracting a multiple of the standard triple from an arbitrary coherent triple for $\lambda$ we obtain a coherent triple $(x_i),(y_j),(z_k)$ with $x_i=0$ for $i\neq p^\beta$. Since $\gamma<v$  the partition $(a,c)$ is James so that, to prove $(z_k)=0$, it suffices to prove that $z_{p^\gamma}=0$. Now $\gamma<v< \beta$ so, we may apply (T1) with $i=k=p^\gamma$, which gives
$$0={{a+p^\gamma+p^\gamma}\choose{p^\gamma}}z_{p^\gamma}$$
and,  again since $\gamma<v$,  we have  $a_\gamma=p-1$ so that  ${{a+p^\gamma+p^\gamma}\choose{p^\gamma}}=1$ and  $z_{p^\gamma}=0$.

\q Taking $i=p^v$ in (T3b) we get ${a+p^v\choose p^v}y_j=0$ for $1\leq j\leq c$ (as $c<p^v$). Thus we have  $(a_v+1)y_j=0$ and therefore $y_j$, for $1\leq j\leq c$, i.e., $(y_j)=0$.

\q Finally, taking $i=p^\beta$, $j=p^w$ in (T3b) we get 
$$0={{b+p^w-p^\beta}\choose{p^w}}x_{p^\beta}=(b_w+1)x_{p^\beta}$$
and since  $b_w+1\neq0$ we get $x_{p^\beta}=0$ and therefore $(x_i)=0$. We have shown that all triples in  $E(\lambda)$ are standard,   i.e., $\lambda$ is split.
\end{proof}

\q We therefore  assume from now on that $\gamma\geq v$ and so $w\leq v\leq  \gamma\leq \beta$.

\begin{lemma} If $\val_p(a+p^v+1)<\beta$ then   $\lambda=(a,b,c)$ is split.
\end{lemma}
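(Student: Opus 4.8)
The statement to prove is: if $(a,b)$ is pointed and, under the standing assumptions $w \le v \le \gamma \le \beta$, we have $\val_p(a+p^v+1) < \beta$, then $\lambda = (a,b,c)$ is split. The strategy is the same pattern used throughout Section 10 and earlier in Section 11: start with an arbitrary coherent triple $(x_i),(y_j),(z_k)$, subtract a suitable multiple of the standard triple (using that $(a,b)$ is pointed, so that the $(a,b)$-extension sequences are spanned by the standard and point sequences), and reduce to showing the remaining triple is zero.

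**Key steps.**

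First I would write down what the hypothesis $\val_p(a+p^v+1) < \beta$ buys us combinatorially. Since $(a,b)$ is pointed we have $b = \hatb + p^\beta$ with $\hatb < p^v < p^\beta$; set $u = \val_p(a+p^v+1)$, so $v < u < \beta$ (recall $u > v$ always since adding $p^v$ to $a$ clears the bit at position $v$ and carries). Concretely $a_h = p-1$ for $0 \le h < v$, $a_v = p-1$, and then $a_{v}, a_{v+1}, \ldots, a_{u-1}$ are forced by the carry pattern so that $a_h = p-1$ for $v < h < u$ but $a_u \ne p-1$. The point is that $p^u \le b$ (because $u < \beta$) and $\binom{a+p^u}{p^u} = a_u + 1 \ne 0$. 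Then, after subtracting a multiple of the standard triple so that $x_i = 0$ for $i \ne p^\beta$ (possible since $(x_i)$ lies in the span of standard and point sequences for $(a,b)$), I would exploit relation (T1) and the relations (T3a)/(T3b) to kill $x_{p^\beta}$, $(z_k)$, and $(y_j)$ in turn. For $z$: take $i = k = p^\gamma$ in (T1); since $\gamma \ge v$ one needs to check $\binom{a+2p^\gamma}{p^\gamma} \ne 0$, which follows because (if $\gamma < u$) $a_\gamma = p-1$ gives the binomial $1$, and if $\gamma \ge u$ one argues via the higher bit structure — I would handle whichever subcase is forced by $\gamma \le \beta$ and the pointed condition on $(a,b)$. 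For $y$: the natural move is to feed $i = p^u$ into (T3b) or (T3a), using $\binom{a+p^u}{p^u} = a_u + 1 \ne 0$, which forces $y_j = 0$ for the relevant $j$. For $x_{p^\beta}$: take $i = p^\beta$, $j = p^w$ in (T3b); the only surviving term on the right is $\binom{b + p^w - p^\beta}{p^w} x_{p^\beta} = (b_w+1)x_{p^\beta}$ by Lucas, and $b_w + 1 \ne 0$ since $w = \val_p(b+1)$ means $b_w \ne p-1$. Combining, the triple is zero, so $E(\lambda)$ consists of multiples of the standard triple and $\lambda$ is split.

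**Main obstacle.**

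The delicate point is the bookkeeping when $\gamma \ge u$, i.e. when $c$ is long enough that its top bit sits at or above the position $u$ where $a$'s bit pattern "breaks". In that regime the clean Lucas reductions $\binom{a + 2p^\gamma}{p^\gamma} = 1$ may fail, and I would need to be careful about which power of $p$ to plug into (T1)/(T3a)/(T3b) to get a nonzero binomial coefficient — likely using $i = p^v$ or $i = p^u$ rather than $i = p^\gamma$, and checking that the indices stay in the allowed ranges $1 \le j,k \le c$ and $1 \le i \le b + j$. I expect this case analysis (roughly: $\gamma < u$ versus $\gamma \ge u$, and within each, the position of $w$ relative to $v$) to be the bulk of the work, but each individual verification is a short Lucas-formula computation of the type already performed repeatedly in Sections 10 and 11, so no genuinely new idea should be required.
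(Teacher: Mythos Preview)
Your overall strategy (subtract a multiple of the standard triple so that $x_i=0$ for $i\ne p^\beta$, then kill $z$, $y$, $x$ via (T1), (T3a), (T3b)) is the right one and matches the paper. However, the execution has two genuine gaps.

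\textbf{Digit structure and the parameter $u$.} The claim ``$v<u$ always'' is false: with $u=\val_p(a+p^v+1)$ one has $u=v+\val_p(m+1)$ where $a+1=p^v m$, so $u=v$ whenever $a_v\ne p-2$. (You also wrote $a_v=p-1$, but $v=\val_p(a+1)$ forces $a_v\ne p-1$.) More importantly, the detour through $u$ is unnecessary: the paper works with $i=p^v$ throughout, using only that $\binom{a+p^v}{p^v}=a_v+1\ne 0$.

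\textbf{The treatment of $(z_k)$.} Taking $i=k=p^\gamma$ in (T1) does not kill $(z_k)$. Even for $z_{p^\gamma}$ alone this can fail: if $\gamma=\beta$ the relation involves the as-yet-nonzero $x_{p^\beta}$, and if $\gamma=v$ with $a_v=p-2$ the coefficient $\binom{a+2p^\gamma}{p^\gamma}$ vanishes. Worse, you never address $z_{p^t}$ for $t<\gamma$, which by Lemma 5.3 is required. The paper's argument handles each $t\le\gamma$ separately: for $t<v$, (T1) with $i=k=p^t$ and $a_t=p-1$ gives $z_{p^t}=0$; for $t>v$, (T1) with $i=p^v$, $k=p^t$ gives $(a_v+1)z_{p^t}=0$; and for the crucial case $t=v$, (T1) with $k=p^v$ and \emph{variable} $i<p^\beta$ gives $\binom{a+p^v+i}{i}z_{p^v}=0$, and the hypothesis $\val_p(a+p^v+1)<\beta$ is invoked exactly here to guarantee some such $i$ makes the coefficient nonzero. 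This is the only place the hypothesis is used.

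Once $(z_k)=0$, the paper kills $x_{p^\beta}$ via (T1) with $i=p^\beta$, $k=p^v$ (cleaner than your (T3b) route, which would need $y_{p^w}=0$ first), and then $(y_j)=0$ via (T3a)/(T3b) with $i=p^v$. No case split on $\gamma$ versus $u$ is needed; the ``main obstacle'' you identified does not arise.
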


\begin{proof}     Since $(a,b)$ is pointed  we may subtract a standard triple from an arbitrary coherent triple to obtain a coherent triple $(x_i),(y_j),(z_k)$ in which $x_i=0$ for $i\neq p^\beta$.   We show first that $(z_k)=0$.  By Lemma 5.3  it is enough to show that $z_{p^t}=0$ for $t\leq \gamma$. For $t<v$ this follows directly from  relation (T1)  with  $i=k=p^t$  since $a_t=p-1$.
   Taking  $k=p^v$ in relation (T1) we have that

$$0={{a+i+p^v}\choose{i}}z_{p^v}$$

for  $1\leq i<p^\beta$. Since $\val_p(a+p^v+1)<\beta$ the is some $1\leq i < p^\beta$ such that 
 ${{a+i+p^v}\choose{i}}\neq0$ and hence   $z_{p^v}=0$. Now suppose that $t>v$.  Taking  $k=p^t$ and $i=p^v$ in  (T1) we have 

$${{a+p^v+p^t}\choose{p^v}}z_{p^t}=0$$

and since ${{a+p^v+p^t}\choose{p^v}}=a_v+1\neq0$ we conclude that $z_{p^t}=0$ and hence $(z_k)=0$.
  
 \q We now show  that $(x_i)=0$. Setting $i=p^\beta$ and $k=p^v$ in  (T1) we have 

$${{a+p^\beta+p^v}\choose{p^v}}x_{p^\beta}=0$$

i.e., $(a_v+1)x_{p^\beta}=0$ so that    $x_{p^\beta}=0$ and hence $(x_i)=0$.

\q Finally,  setting $i=p^v$ in (T3a) and (T3b) we get that $(a_v+1)y_j=0$ for  $1\leq  j\leq c$ and hence $(y_j)=0$. 

\q We have shown that all triples in  $E(\lambda)$ are standard,   i.e., $\lambda$ is split.

\end{proof}

\begin{remark} \rm
{\rm{ (i) Therefore from now on we can assume that $\val_p(a+p^v+1)\geq \beta$, which means that $a_t=p-1$ for $0\leq t<\beta$, $t\neq v$ and $a_v=p-2$.}

(ii) Recall  that we  are also assuming   $w\leq v\leq \gamma\leq \beta$.

\q These assumptions will be in force for the rest of this section.}

\end{remark}

\begin{lemma}
Assume that $(a,b)$ is  pointed. Under the assumptions of Remark 11.4, in any coherent triple $(x_i), (y_j), (z_k)$ for $\lambda=(a,b,c)$  we have:

(i) $x_i=0$ for $i\neq p^\v,p^\beta$;

(ii) $z_k=0$ for  $k\neq p^\v,p^\beta$;

(iii) $z_k=0$ for  $k\neq p^\v,$ if $\gamma<\beta$;

\end{lemma}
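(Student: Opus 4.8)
The plan is to use the fact that both $(x_i)$ and $(z_k)$ are extension sequences for two-part partitions whose possibilities are completely pinned down by Lemma 5.11, together with Remark 3.1 and the explicit base-$p$ digits of $a$ recorded in Remark 11.4; only one coherence relation, namely (T1), will be invoked, and only at the very end to finish part (iii).

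For (i): since $(a,b)$ is pointed, Lemma 5.11 says $(x_i)$ is a linear combination of the standard sequence $x_i^{\rm st}={a+i\choose i}$ and the point sequence, which is supported at $p^\beta$. For the standard sequence, Remark 3.1 gives ${a+i\choose i}\neq 0$ iff $a_h+i_h\leq p-1$ for all $h$; by Remark 11.4(i) this forces $i_h=0$ for $0\leq h<\beta$ with $h\neq v$, and $i_v\leq 1$, while $1\leq i\leq b=\hat b+p^\beta<p^v+p^\beta<2p^\beta$ forces $i_h=0$ for $h>\beta$ and $i_\beta\leq 1$, with $i-p^\beta\leq\hat b<p^v$ when $i_\beta=1$, hence $i=p^\beta$ in that case. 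So $x_i^{\rm st}=0$ unless $i\in\{p^v,p^\beta\}$, and (i) follows.

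For (ii) and (iii): the chain $k\leq c\leq b<p^v+p^\beta<2p^\beta$ holds for every admissible index $k$, and $(a,c)$ is not James because $\gamma\geq v$, so by Lemma 5.11 it is split or pointed. The same digit count as in (i) shows the standard sequence $z_k^{\rm st}={a+k\choose k}$ vanishes unless $k\in\{p^v,p^\beta\}$, and when $\gamma<\beta$, so $c<p^{\gamma+1}\leq p^\beta$, unless $k=p^v$. If $(a,c)$ is pointed its extra point sequence is supported at $p^\gamma$; when $\gamma=\beta$ this contributes nothing beyond $\{p^v,p^\beta\}$, so (ii) is already established in that case. When $\gamma<\beta$ and $(a,c)$ is pointed, pointedness forces $\gamma>v$, hence $p^\gamma\notin\{p^v,p^\beta\}$, and it remains to eliminate a possible $z_{p^\gamma}\neq 0$. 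For this apply (T1) with $i=k=p^\gamma$, legitimate since $p^\gamma<p^\beta\leq b$ and $p^\gamma\leq c$, which gives ${a+2p^\gamma\choose p^\gamma}\,x_{p^\gamma}={a+2p^\gamma\choose p^\gamma}\,z_{p^\gamma}$; here $x_{p^\gamma}=0$ by part (i) since $v<\gamma<\beta$, and ${a+2p^\gamma\choose p^\gamma}$ equals $(a_\gamma+2)\bmod p=1\neq 0$ because $a_\gamma=p-1$ by Remark 11.4(i). Thus $z_{p^\gamma}=0$, which proves both (ii) and (iii).

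The only genuine point of care is the base-$p$ digit bookkeeping, in particular remembering to use $c\leq b<p^v+p^\beta$ to kill the top digit of each index; without this constraint the standard sequence for a split $(a,c)$ could in principle be nonzero at $p^v+p^\beta$ and the clean conclusion would fail. Everything else is a routine application of Lemma 5.11, Remark 3.1 and Remark 11.4, exactly as in the $(a,b)$-split analysis of Lemma 10.4.
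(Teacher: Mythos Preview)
Your proof is correct and follows essentially the same approach as the paper's: reduce to the support of the standard and point sequences via Lemma 5.11, use the digit description of $a$ from Remark 11.4 together with Remark 3.1 to locate where the standard sequence can be nonzero, and finish the $v<\gamma<\beta$ pointed case with (T1) at $i=k=p^\gamma$. The paper's argument is slightly terser but identical in substance.
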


\begin{proof}  Let $1\leq i\leq b$. If ${a+i\choose i}\neq 0$ then we have $a_h+i_h\leq p-1$ for all $0\leq h\leq \beta$. Hence we must have $i_v=0$ or $1$ and $i_h=0$ for $0\leq h<  \beta$, $h\neq v$. Moreover, $b_\beta=1$ so we must have $i=p^v,p^\beta$ or $i=p^v+p^\beta$. However, the last case is not possible since $b=\hatb+p^\beta<p^v+p^\beta$.
This proves (i) since $(x_i)$ is a linear combination of the standard sequence and the point sequence. It also proves $z_k^\st=0$ for $k\neq p^v,p^\beta$  since $c\leq b$. Hence also $z_k=0$ for $k\neq p^v,p^\beta$  if $(a,c)$ is split. 

\q Now $v\leq \gamma$ so $(a,c)$ is not James and hence is pointed if it is not split. In this case $v<\gamma$. If $\gamma=\beta$ then since $(z_k)$ is a linear combination of the standard extension sequence and the point sequence we also get 
 $z_k=0$ for $k\neq p^v,p^\beta$.  Now suppose $v<\gamma<\beta$.  Applying  (T1) with $i=j=p^\gamma$ we get $0={a+p^\gamma+p^\gamma\choose p^\gamma}z_{p^\gamma}$.  But $a_\gamma=p-1$ so this gives $z_{p^\gamma}=0$. This proves (ii) and (iii).

\end{proof}

We consider now the situation in which  $\beta=\gamma$.

\begin{lemma}
Assume that $\beta=\gamma$. Then $E(\lambda)$ is spanned by the standard triple and triples  $(x_i),(y_j),(z_k)$ in which  $x_i=0$ for $i\neq p^\v$, $y_j=0$ for $j\neq p^\beta$  and  $z_k=0$ for $k\neq p^\beta$.
\end{lemma}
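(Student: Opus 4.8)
The plan is to prove that every coherent triple for $\lambda=(a,b,c)$ is the sum of a scalar multiple of the standard triple $(x_i^\st),(y_j^\st),(z_k^\st)$ and a coherent triple of the special shape in the statement (that is, one whose $x$-part is concentrated at $p^v$ and whose $y$- and $z$-parts are concentrated at $p^\beta$); this gives the asserted spanning at once. So fix a coherent triple $(x_i),(y_j),(z_k)$. By Lemma 11.5 (valid under the standing assumptions of Remark 11.4) we already know $x_i=0$ for $i\neq p^v,p^\beta$ and $z_k=0$ for $k\neq p^v,p^\beta$, and the same vanishing holds for the standard triple; hence the only coordinates that can violate the special conditions are $x_{p^\beta}$, $z_{p^v}$ and the values $y_j$ with $j\neq p^\beta$, and the task is to pin all of these down in terms of one scalar.

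First, since $\beta=\gamma\geq v\geq w$ the pair $(b,c)$ is not James, so by Lemma 5.11 it is split or pointed; in either case the point sequence of $(b,c)$, when it occurs, is supported at $p^{\len_p(c)}=p^\beta$, so there is a scalar $\nu$ with $y_j=\nu\,y_j^\st$ for all $j\neq p^\beta$. Next put $\mu=z_{p^v}/(a_v+1)$, which is legitimate because $a_v+1=p-1\neq0$ in $K$ by Remark 11.4 and because $p^v\leq p^\beta\leq c$. Now consider the coherent triple $(x_i-\mu x_i^\st),(y_j-\mu y_j^\st),(z_k-\mu z_k^\st)$. Its $z$-coordinate at $p^v$ is $z_{p^v}-\mu(a_v+1)=0$, so by Lemma 11.5 its $z$-part is concentrated at $p^\beta$; it then suffices to show $x_{p^\beta}=\mu x_{p^\beta}^\st$ and $\nu=\mu$.

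For the first, I would use relation (T1) with $i=p^\beta$ (allowed since $p^\beta\leq b$) and $k=p^v$. A Lucas-formula computation using $a_t=p-1$ for $t<\beta$, $t\neq v$, and $a_v=p-2$ gives $\binom{a+p^\beta+p^v}{p^v}=a_v+1$ and $\binom{a+p^\beta+p^v}{p^\beta}=a_\beta+1=x_{p^\beta}^\st$ in $K$ --- the last equality holding whether or not $a_\beta=p-1$, so no case division is needed --- whence (T1) reads $(a_v+1)x_{p^\beta}=(a_\beta+1)z_{p^v}$, that is $x_{p^\beta}=\mu x_{p^\beta}^\st$. For the second, I would use relation (T3a) with $i=j=p^v$: by Lemma 11.5 the only possibly nonzero term of the sum on the right is the $s=0$ term, whose coefficient $\binom{b}{p^v}$ vanishes because $b=\hat b+p^\beta$ with $\hat b<p^v$ forces $b_v=0$; so $(a_v+1)y_{p^v}=z_{p^v}$ and hence $y_{p^v}=\mu$, while $y_{p^v}^\st=\binom{b+p^v}{p^v}=b_v+1=1$ and $y_{p^v}=\nu\,y_{p^v}^\st=\nu$ by the previous paragraph, giving $\nu=\mu$ and therefore $y_j=\mu y_j^\st$ for all $j\neq p^\beta$. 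The only real labour is the Lucas bookkeeping together with the verification that each index invoked lies in its allowed range, which follows from $w\leq v<\beta=\gamma$, $p^\beta\leq b$ and $c\geq p^\gamma=p^\beta$; I do not expect any genuine obstacle beyond this routine check.
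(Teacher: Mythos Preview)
Your proof is correct and follows essentially the same approach as the paper. The only minor technical difference is that the paper normalises first by subtracting a multiple of the standard triple to make $y_j=0$ for $j\neq p^\gamma$, then applies (T2) with $j=k=p^v$ (which is valid since $2p^v\leq p^\beta\leq c$) to obtain $(b_v+1)z_{p^v}=0$ and hence $z_{p^v}=0$, whereas you derive the same relation $(a_v+1)y_{p^v}=z_{p^v}$ from (T3a) with $i=j=p^v$ and then match scalars; both routes use (T1) with $i=p^\beta$, $k=p^v$ in the same way to handle $x_{p^\beta}$.
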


\begin{proof}  Since $v<\gamma=\beta$ we have that $(b,c)$ is not James and hence is either split or pointed. Hence after subtracting a multiple of the standard triple from an arbitrary coherent triple for $\lambda$ we can  obtain a coherent triple $(x_i),(y_j),(z_k)$  in which  $y_j=0$ for $j\neq p^\gamma$. 
By Lemma 11.5  we have   $x_i=0$ for $i\neq p^\v,p^\beta$ and $z_k=0$ for $k\neq p^\v, p^\beta$. 

\q We now show that $z_{p^\v}=x_{p^\beta}=0$. Since $\v< \beta$ we have  $p^\v+p^\v\leq c$ and we may apply (T2) with  $j=k=p^\v$ to obtain 
  $${{b+p^\v}\choose{p^\v}}z_{p^\v}=0.$$
  Since $(a,b)$ is pointed we have 
 $b_\v=0$ so that ${{b+p^\v}\choose{p^\v}}=1$ and hence $z_{p^\v}=0$.
 
 \q Now taking $i=p^\beta$, $k=p^v$ in (T1) we obtain 
 $${{a+p^\beta+p^\v}\choose{p^\v}}x_{p^\beta}=0.$$
  Hence  $(a_\v+1)x_{p^\beta}=0$ and so $x_{p^\beta}=0$ and we are done.
  
  \end{proof}

 \begin{lemma} If $\beta=\gamma$ and $w<v$ then  $\lambda=(a,b,c)$ is split.
 \end{lemma}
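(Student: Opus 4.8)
The plan is to reduce to Lemma 11.6 and then kill the remaining free parameters. Since $\beta=\gamma$, Lemma 11.6 says that $E(\lambda)$ is spanned by the standard triple together with the coherent triples $(x_i),(y_j),(z_k)$ for which $x_i=0$ for $i\neq p^v$, $y_j=0$ for $j\neq p^\beta$, and $z_k=0$ for $k\neq p^\beta$. So I would show that every such triple is zero, i.e. that $x_{p^v}=y_{p^\beta}=z_{p^\beta}=0$; this forces $E(\lambda)$ to consist of multiples of the standard triple, so $\lambda$ is split.

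First I would record the base-$p$ data. By Remark 11.4, $a_t=p-1$ for $0\le t<\beta$ with $t\neq v$, and $a_v=p-2$. As $(a,b)$ is pointed, write $b=\hatb+p^\beta$ with $\hatb<p^v<p^\beta$; then $b_t=\hatb_t$ for $t<\beta$, $b_\beta=1$, and $b_t=0$ for $v\le t<\beta$, while $w=\val_p(b+1)=\val_p(\hatb+1)$ gives $b_w\neq p-1$. Note also that $v<\gamma=\beta$ and, by hypothesis, $w<v$, so $w<v<\beta=\gamma$ and $p^\beta\le c$.

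The central step is to obtain $x_{p^v}=0$ from relation (T3b) with $i=p^v$, $j=p^w$, which is a legitimate choice exactly because $w<v$ (then $p^w<p^v\le b+p^w$ and $p^w\le c$). Its left-hand side ${a+p^v\choose p^v}y_{p^w}$ is $0$ since $p^w\neq p^\beta$, and on the right only the $s=0$ summand ${b+p^w-p^v\choose p^w}x_{p^v}$ survives. Now $b-p^v=\hatb+(p^\beta-p^v)$ has $w$th digit $b_w$ (the borrow from subtracting $p^v$ touches only digits $\ge v$), so $b+p^w-p^v$ has $w$th digit $b_w+1\not\equiv 0$, whence by Lucas's Formula ${b+p^w-p^v\choose p^w}=b_w+1\neq 0$ and therefore $x_{p^v}=0$. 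I expect this digit bookkeeping to be the only delicate point: if $w=v$ the borrow would alter the $w$th digit, the coefficient would vanish, and indeed Lemma 11.6 leaves $x_{p^v}$ genuinely free in that case.

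With $x_{p^v}=0$ the rest is routine. Relation (T1) with $i=p^v$, $k=p^\beta$ becomes ${a+p^v+p^\beta\choose p^\beta}x_{p^v}={a+p^v+p^\beta\choose p^v}z_{p^\beta}$, whose left side is $0$ and whose coefficient ${a+p^v+p^\beta\choose p^v}=(a+p^v+p^\beta)_v=p-1$ is nonzero (as $a_v=p-2$ and adding $p^\beta$ does not affect position $v$), so $z_{p^\beta}=0$. Relation (T3a) with $i=p^v$, $j=p^\beta$ has right-hand side a combination of the $x_{p^v-s}$ ($0\le s\le p^v-1$) and $z_{p^v}$, all zero since $x_{p^v}=0$ and $p^v\neq p^\beta$, and left-hand side ${a+p^v\choose p^v}y_{p^\beta}=(p-1)y_{p^\beta}$, so $y_{p^\beta}=0$. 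Hence the triple is zero, $E(\lambda)$ is spanned by the standard triple, and $\lambda$ is split.
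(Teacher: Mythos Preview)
Your proof is correct and follows essentially the same approach as the paper: reduce via Lemma 11.6, use (T3b) with $i=p^v$, $j=p^w$ to kill $x_{p^v}$, then (T1) with $i=p^v$, $k=p^\beta$ to kill $z_{p^\beta}$, and finally (T3a) with $i=p^v$, $j=p^\beta$ to kill $y_{p^\beta}$. Your digit bookkeeping is slightly more detailed than the paper's, but the logic and the sequence of relations are identical.
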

 
 \begin{proof} Let $(x_i),(y_j),(z_k)$ be a coherent triple for $\lambda$.   By Lemma 11.6, we can subtract a standard triple to obtain a coherent triple, which we also denote $(x_i),(y_j),(z_k)$ in which  $x_i=0$ for $i\neq p^\v$, $y_j=0$ for $j\neq p^\beta$ and  $z_k=0$ for $k\neq p^\beta$. 
 
 \q Taking $i=p^\v$ and $j=p^w$ in relation (T3b) we get 
    $$0={{b+p^w-p^\v}\choose{p^w}}x_{p^\v}$$
   and since ${{b+p^w-p^\v}\choose{p^w}}=b_w+1\neq 0$ we get  $x_{p^\v}=0$.
   
      \q Now taking   $k=p^\beta$ and $i=p^\v$ in (T1) we have  
   
   $$0={{a+p^\v+p^\beta}\choose{p^\v}}z_{p^\beta}=(a_\v+1)z_{p^\beta}$$

 and since $a_\v+1\neq 0$, we get  $z_{p^\beta}=0$.
 
\q  Finally, taking  $i=p^\v$ and $j=p^\beta$ in   (T3a) we get  $(a_\v+1)y_{p^\beta}=0$, and so $y_{p^\beta}=0$. Hence $(x_i),(y_j),(z_k)$ is the zero triple and $E(\lambda)$ contains only standard triples, i.e.,  $\lambda$ is split.

\end{proof}


\begin{lemma}

Suppose  $\beta=\gamma$. If $\lambda=(a,b,c)$ is non-split  then either $\val_p(a+p^v+1)>\beta$ or $\val_p(a+p^v+p^\beta+1)>\beta$.

\end{lemma}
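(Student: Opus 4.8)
The plan is to prove the contrapositive: assuming $\val_p(a+p^v+1)\le\beta$ and $\val_p(a+p^v+p^\beta+1)\le\beta$, I will show that $\lambda$ is split. The first step is to convert these two conditions into a single digit condition on $a$. By Remark 11.4 we already have $\val_p(a+p^v+1)\ge\beta$, with $a_t=p-1$ for $0\le t<\beta$, $t\ne v$, and $a_v=p-2$; consequently every base-$p$ digit of $a+p^v$ in positions $0,\dots,\beta-1$ equals $p-1$, and an elementary carrying argument shows that $\val_p(a+p^v+1)>\beta$ precisely when $a_\beta=p-1$, while $\val_p(a+p^v+p^\beta+1)>\beta$ precisely when $a_\beta=p-2$. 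Hence the hypothesis is equivalent to $a_\beta\le p-3$, and in particular $a_\beta+1$, $a_\beta+2$ and $p-1$ are all nonzero in $K$; moreover, by Lucas's Formula, ${a+2p^\beta\choose p^\beta}=a_\beta+2$, ${a+p^v+p^\beta\choose p^\beta}=a_\beta+1$ and ${a+p^v\choose p^v}=p-1$, since in each case no carry occurs.

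Since $\beta=\gamma$, Lemma 11.6 lets us subtract a multiple of the standard triple from an arbitrary coherent triple for $\lambda$ to obtain a coherent triple $(x_i),(y_j),(z_k)$ with $x_i=0$ for $i\ne p^v$, $y_j=0$ for $j\ne p^\beta$ and $z_k=0$ for $k\ne p^\beta$; in particular $z_{p^v}=0$ and $x_{p^\beta}=0$, because $v<\beta$ (as $(a,b)$ is pointed). It then suffices to show $x_{p^v}=y_{p^\beta}=z_{p^\beta}=0$, for then the triple is zero and $\lambda$ is split. All indices used below lie in the admissible ranges, since $p^\beta\le b$ and $p^\beta\le c$ (the latter because $\gamma=\beta$). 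Taking $i=k=p^\beta$ in (T1) and using $x_{p^\beta}=0$ gives $0={a+2p^\beta\choose p^\beta}z_{p^\beta}=(a_\beta+2)\,z_{p^\beta}$, whence $z_{p^\beta}=0$. Taking $i=p^v$, $k=p^\beta$ in (T1) then gives $(a_\beta+1)\,x_{p^v}={a+p^v+p^\beta\choose p^\beta}x_{p^v}={a+p^v+p^\beta\choose p^v}z_{p^\beta}=0$, so $x_{p^v}=0$. Finally, taking $i=p^v$, $j=p^\beta$ in (T3a): the right hand side vanishes, its only potentially nonzero contributions being the term involving $x_{p^v}$ and the term in $z_{p^v}$, both of which are now $0$, while the left hand side equals ${a+p^v\choose p^v}y_{p^\beta}=(p-1)\,y_{p^\beta}$; hence $y_{p^\beta}=0$. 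This completes the contrapositive.

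There is no serious obstacle here once one spots the right instances of the coherence relations; the only points needing care are the bookkeeping in the first paragraph — correctly translating the two $p$-adic valuation hypotheses into $a_\beta\le p-3$ and reading off that the binomial coefficients ${a+2p^\beta\choose p^\beta}$, ${a+p^v+p^\beta\choose p^\beta}$ and ${a+p^v\choose p^v}$ are units in $K$ — together with checking that the instances of (T1) and (T3a) invoked genuinely satisfy their index constraints, which is exactly where the hypothesis $\beta=\gamma$ (forcing $c\ge p^\beta$) is used.
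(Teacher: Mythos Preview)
Your proof is correct and follows essentially the same route as the paper's: translate the hypothesis to $a_\beta\notin\{p-1,p-2\}$, use Lemma 11.6 to reduce to a triple supported at $p^v$ and $p^\beta$, then apply (T1) with $i=k=p^\beta$ and with $i=p^v,\,k=p^\beta$ to kill $z_{p^\beta}$ and $x_{p^v}$, and finally (T3a) with $i=p^v,\,j=p^\beta$ to kill $y_{p^\beta}$. Your first paragraph spells out more carefully than the paper does why the two valuation conditions are equivalent to $a_\beta\le p-3$, but the argument is otherwise identical.
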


\begin{proof}  By Lemma 11.3 we have that if $(a,b,c)$ is non-split then we may write  $a=(p^{\beta}-1)-p^\v+p^{\beta}a'$. We shall assume that  $a_\beta\neq p-1,p-2$ and show that  $\lambda=(a,b,c)$ is split.  By Lemma 11.6 it is enough to prove if  $(x_i),(y_j),(z_k)$ is a coherent triple for $\lambda$ in which  $x_i=0$ for $i\neq p^\v$, $y_j=0$ for $j\neq p^\beta$  and  $z_k=0$ for $k\neq p^\beta$ then this triple is $0$. 

\q Taking $i=k=p^\beta$ is (T1) we get 
 $$0={{a+p^\beta+p^\beta}\choose{p^\beta}}z_{p^\beta}=(a_\beta+2)z_{p^\beta}$$

 and since $a_\beta\neq p-2$ we obtain  $z_{p^\beta}=0$.
 
 \q  Now by  (T1) again, this time with  $i=p^\v$ and $k=p^\beta$,  we have 

  $${{a+p^\v+p^\beta}\choose{p^\beta}}x_{p^\v}=0$$
  
  and so $(a_\beta+1)x_{p^\v}=0$. Since  $a_\beta\neq p-1$ we have $x_{p^\v}=0$.
  
  \q   Finally, taking  $i=p^\v$ and $j=p^\beta$ in (T3a) we get that $(a_\v+1)y_{p^\beta}=0$  so  that $y_{p^\beta}=0$ and we are done.

\end{proof}

\q We now describe  the non-split triples for the case $\beta=\gamma$.

\begin{lemma}

Suppose  $\beta=\gamma$. Then $\lambda=(a,b,c)$ is non-split triple if and only if $\v=w$,  so $b=p^\v-1+p^\beta$, $c=\chat+p^\beta$ with $\chat<p^\v$,  and  one of the following holds.

(i)  $\len_p(b+p^\beta)<\val_p(a+p^v+1)$. 

 \q In that case $E(\lambda)$ is spanned by the standard triple and the triple $(x_i),(y_j),(z_k)$, with $x_{p^v}=1$, $x_i=0$ for $i\neq p^v$, with $y_{p^\beta}=-1$, $y_j=0$ for $j\neq p^\beta$, and $(z_k)=0$.

(ii) $\len_p(b+p^\beta)<\val_p(a+p^v+p^\beta+1)$.  

\q In that  case $E(\lambda)$ is spanned by the standard triple and the triple $(x_i),(y_j),(z_k)$ with $x_{p^v}=1$, $x_i=0$ for $i\neq p^\beta$, with $y_{p^\beta}=-1$, $y_j=0$ for $j\neq p^\beta$ and $z_{p^\beta}=1$, $z_k=0$ for $k\neq p^\beta$.

\end{lemma}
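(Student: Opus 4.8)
The plan is to combine the two structural reductions already in hand — Lemma 11.6, which strips an arbitrary coherent triple down (after subtracting a multiple of the standard triple) to the three scalars $x_{p^v},y_{p^\beta},z_{p^\beta}$, and Lemma 11.8, which forces the non-split case into one of two valuation regimes — with a short computation that pins those scalars down, and finally a verification that the resulting explicit triple is coherent.

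I would first treat the ``only if'' direction. Suppose $\lambda=(a,b,c)$ is non-split. Since $(a,b)$ is pointed, $b_v=0$, so $w=\val_p(b+1)\le v$; by Lemma 11.7 the case $w<v$ is split, so $w=v$. Then the base-$p$ digits of $b$ in positions $0,\dots,v-1$ are all $p-1$, and together with $b=\hatb+p^\beta$, $\hatb<p^v$, this forces $\hatb=p^v-1$, i.e. $b=p^v-1+p^\beta$. As $c\le b<2p^\beta$ and $\len_p(c)=\gamma=\beta$, the $\beta$-th base-$p$ digit of $c$ equals $1$, so $c=\chat+p^\beta$ with $\chat=c-p^\beta\le p^v-1<p^v$. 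Lemma 11.8 then gives $\val_p(a+p^v+1)>\beta$ or $\val_p(a+p^v+p^\beta+1)>\beta$; using the digit pattern of Remark 11.4 ($a_t=p-1$ for $0\le t<\beta$, $t\ne v$, and $a_v=p-2$) together with $b=p^v-1+p^\beta$, a short carry analysis identifies the first alternative with condition (i) and the second with condition (ii), and shows that they are mutually exclusive.

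For the converse, assume $w=v$ (so $b$ and $c$ have the shapes above) and that (i) or (ii) holds. By Lemma 11.6, $E(\lambda)$ is spanned by the standard triple and the coherent triples $(x_i),(y_j),(z_k)$ with $x_i=0$ for $i\ne p^v$, $y_j=0$ for $j\ne p^\beta$, $z_k=0$ for $k\ne p^\beta$, so it suffices to determine which scalars $(x_{p^v},y_{p^\beta},z_{p^\beta})$ occur. Substituting $i=p^v$, $k=p^\beta$ into (T1) and evaluating both binomial coefficients by Lucas's formula (using $a_v=p-2$) gives $z_{p^\beta}=0$ in case (i) and $z_{p^\beta}=x_{p^v}$ in case (ii). Substituting $i=p^v$, $j=p^\beta$ into (T3a), observing that only the $s=0$ summand survives (the sequence $(x_i)$ is supported at $p^v$) and $z_{p^v}=0$, and computing ${b+p^\beta-p^v\choose p^\beta}=1$ from $b+p^\beta-p^v=2p^\beta-1$, gives ${a+p^v\choose p^v}y_{p^\beta}=x_{p^v}$, i.e. $y_{p^\beta}=-x_{p^v}$. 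Hence the space of these triples is at most one-dimensional, $\dim E(\lambda)\le 2$, and any non-standard triple is, up to a scalar, exactly the one displayed in (i), resp. (ii).

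It remains to check that the explicit triple with $x_{p^v}=1$ is coherent; this is the substantive point, and where the precise hypotheses (i), (ii) are used. I would run through (T1), (T2), (T3a), (T3b) over all admissible index ranges, using Remark 3.1 and Lucas's formula. Most instances are automatic: those with an index outside $\{p^v,p^\beta\}$ reduce to vanishing statements such as ${a+p^v+k\choose k}=0$ for $k\le c$ with $k\ne p^\beta$, and ${a+k\choose k}=0$ for $1\le k\le\chat$, which hold because $a$ carries a block of $(p-1)$-digits below position $\beta$. The delicate case is (T3b) for $p^\beta<i\le b+p^\beta$: one must show ${a+i\choose i}=0$ for every such $i$ except $i=p^v$, and this is exactly where condition (i) (forcing $a_\beta=p-1$) resp. (ii) (forcing $a_\beta=p-2$) makes the one potentially surviving binomial coefficient vanish, by the same digit-carrying argument as in the proof of Lemma 10.13(ii). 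Once coherence is confirmed, $E(\lambda)$ has dimension exactly $2$ with the asserted basis, so $\lambda$ is non-split, and the equivalence is complete. The main obstacle throughout is this last coherence check: it is the one step that genuinely needs the full strength of the hypotheses and the precise base-$p$ shape of $a$, $b$, $c$.
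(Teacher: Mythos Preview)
Your approach matches the paper's: reduce via Lemmas 11.6, 11.7, 11.8, pin down the three free scalars using (T1) with $i=p^v,k=p^\beta$ and (T3a) with $i=p^v,j=p^\beta$, and then verify coherence, with (T3b) as the crux. One step is not quite right, however. You assert that a carry analysis identifies the two alternatives of Lemma 11.8 (namely $\val_p(a+p^v+1)>\beta$ and $\val_p(a+p^v+p^\beta+1)>\beta$) directly with conditions (i) and (ii). This is immediate when $p>2$, since then $b+p^\beta=p^v-1+2p^\beta$ has $\len_p(b+p^\beta)=\beta$; but for $p=2$ one has $b+p^\beta=2^v-1+2^{\beta+1}$, so $\len_2(b+p^\beta)=\beta+1$, and conditions (i), (ii) are strictly stronger than the alternatives of Lemma 11.8 (which for $p=2$ merely say $a_\beta\in\{1,0\}$, hence nothing). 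Your ``only if'' direction, as written, is therefore incomplete for $p=2$.

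The paper avoids this by not attempting the identification. It uses Lemma 11.8 only to split into the two cases $a_\beta=p-1$ and $a_\beta=p-2$, determines the candidate triple in each case just as you do, and then shows that coherence of that candidate --- specifically the (T3b) relation for $j=p^\beta$ and $p^\beta<i\le b+p^\beta$ --- is \emph{equivalent} to condition (i), respectively (ii). Since this (T3b) check is both necessary and sufficient, it simultaneously furnishes the ``only if'' direction (non-split forces the unique candidate to be coherent, hence forces (i) or (ii)) and the ``if'' direction. You already flag (T3b) as the substantive step, so the repair is purely organizational: drop the claimed identification, let Lemma 11.8 supply only the dichotomy on $a_\beta$, and let your (T3b) analysis do the work of producing (i) and (ii) in both directions. (Incidentally, in your description of the (T3b) range $p^\beta<i\le b+p^\beta$ the phrase ``except $i=p^v$'' is misplaced, since $p^v<p^\beta$; in case (ii) the exceptional index is $i=p^v+p^\beta$, as the paper's treatment of that case makes explicit.)
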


\begin{proof} By Lemma 11.7 we have that if $\lambda$ is not split then $v=w$ and since $(a,b)$ is pointed we have $b=p^v-1+p^\beta$. By Lemma 11.6 we can subtract  a multiple of a standard triple from a coherent triple to obtain a triple $(x_i),(y_j),(z_k)$ in which  $x_i=0$ for $i\neq p^\v$, $y_j=0$ for $j\neq p^\beta$ and  $z_k=0$ for $k\neq p^\beta$. 

\q By Lemma 11.8 we have either either $\val_p(a+p^v+1)>\beta$ or\\
 $\val_p(a+p^v+p^\beta+1)>\beta$.
 
 \bs

 Case (i) :   $\val_p(a+p^v+1)>\beta$.

\q We note that equations (T3a) and (T3b) in this case take a simplified form. If a term ${a+i\choose s}x_{i-s}\neq 0$ appearing in the right hand side is non-zero then we have $i-s=p^v$ so that ${a+i\choose s}={a+p^v+s\choose s}\neq 0$. Since $s\leq c$ and $(a+p^v,  c)$ is James (as $\val_p(a+p^v+1)>\beta$) the only possibility is $s=0$. Hence the equations take the following forms

$${a+i\choose i}y_j={b+j-i\choose j}x_i+{b+j-i\choose j-i}z_i\eqno{(*)}$$
for $1\leq i\leq j\leq c$, and 
$${a+i\choose i}y_j={b+j-i\choose j}x_i\eqno{(**)}$$
for $1\leq j\leq c$, $j<i\leq b+j$.

\q We  have $a_\beta=p-1$ and so for  $i=k=p^\beta$ in relation (T1) we get 
$$0={{a+p^\beta+p^\beta}\choose{p^\beta}}z_{p^\beta}=z_{p^\beta}.$$

\q Now taking $i=p^\v$ and $j=p^\beta$  in (T3a) we obtain 
$$-y_{p^\beta}={{b+p^\beta-p^\v}\choose{p^\beta}}x_{p^\v}$$
and since  $b-p^\v=p^{\beta}-1$ we have that ${{b+p^\beta-p^\v}\choose{p^\beta}}=1$ and so we get that $-y_{p^\beta}=x_{p^\v}$. Thus our analysis of case (i) will be complete if we check the conditions for the triple   $(x_i),(y_j),(z_k)$,  with  $x_{p^v}=1$, $x_i=0$ for $i\neq p^v$, and  $y_{p^\beta}=-1$, $y_j=0$ for $j\neq p^\beta$, and  $(z_k)=0$ to be  coherent. 
 
\q Equation  (T1) says that ${a+p^v+k\choose k}=0$,  for $1\leq k\leq c$, and this is true since $v_p(a+p^v+1)> \beta=\gamma$ so that $(a+p^v,c)$ is James.

\q Equation (T2) says that ${a+k\choose k}=0$ for $k\geq 1$ and $p^\beta+k\leq c$. For such $k$ we have $p^\beta+k\leq b=\hatb +p^\beta$, hence $k\leq \hatb<p^v$ and therefore ${a+k\choose k}=0$.

\q Now (T3a), in the form (*), says 
$${a+i\choose i}y_j={b+j-i\choose j}x_i$$
for $1\leq i\leq j\leq c$. We may assume that $i=p^v$ or $j=p^\beta$. For $i=p^v$, writing $b=p^v-1+p^\beta$ this becomes
$$-y_j={p^\beta-1+j\choose j}$$
for $p^v\leq j\leq c$. Both sides are $0$ for $j<p^\beta$ and $1$ for $j=p^\beta$.  For $j>p^\beta$ we may write $j=t+p^\beta$, with $0<t<p^v$ and then 
$${p^\beta-1+j\choose j}={p^\beta-1+t+p^\beta\choose t+p^\beta}={t-1+p^\beta+p^\beta\choose t+p^\beta}$$
which is $0$. Hence (T3a) holds in case $i=p^v$. Now suppose $j=p^\beta$. The condition is then
$$-{a+i\choose i}={b+p^\beta-i\choose p^\beta}x_i$$
for $1\leq i\leq p^\beta$. We have already checked this for $i=p^v$ so the condition is that ${a+i\choose i}=0$ for $1\leq i\leq p^\beta$, $i\neq p^v$. This holds for $i\neq p^\beta$ by applying Lemma 11.5 (i) to the standard triple.   For $i=p^\beta$ the condition is that ${a+p^\beta\choose p^\beta}=0$ and this is true as $a_\beta=p-1$. Hence (T3a) holds.

\q We now consider (T3b) in the form (**).  We may assume $i=p^v$ or $j=p^\beta$.  For $i=p^v$ the condition is 

$${a+p^v\choose p^v}y_j={b+j-p^v\choose j}$$
for $1\leq j<p^v$ or, since $y_j=0$, that ${b+j-p^v\choose j}=0$, for $1\leq j<p^v$. But ${b+j-p^v\choose j}={p^\beta-1+j\choose j}$ so this is certainly true. 

\q For $j=p^\beta$ the condition is 
$$-{a+i\choose i}={b+p^\beta-i\choose p^\beta}x_i$$
for $p^\beta<i\leq b+p^\beta$. For such $i$ we have $x_i=0$ so the condition is 
$${a+i\choose i}=0$$
for $p^\beta<i\leq b+p^\beta$. Now if this condition holds then, for $p^v<p^t\leq b+p^\beta$ we have ${a+p^v+p^t\choose p^t}={a+p^t\choose p^t}=0$. But we also have ${a+p^v+p^t\choose p^t}=0$ for $p^t\leq p^v$ so that $(a+p^v,b+p^\beta)$ is James.  Suppose conversely that $(a+p^v,b+p^\beta)$ is James and that ${a+i\choose i}\neq 0$ for some $p^v<i\leq b+p^\beta$.  Then we have $a_h+i_h\leq p-1$ for all $h\geq 0$ and so $i_v=0$ or $1$ and $i_h=0$ for $h\neq v$.  But then $i$ is $0$ or $p^v$, neither of which is possible since $p^v<i$. Hence (T3b) holds if and only if $(a+p^v,b+p^\beta)$ is James, i.e. $\len_p(b+p^\beta)<\val_p(a+p^v+1)$.

\bs

Case (ii): $\val_p(a+p^v+p^\beta+1)>\beta$.

\q In this case we have $a_\beta=p-2$. Applying (T1) with $i=p^v$, $k=p^\beta$ gives
$${a+p^v+p^\beta\choose p^\beta}x_{p^v}={a+p^v+p^\beta\choose p^v}z_{p^\beta}$$
i.e., $-x_{p^v}=-z_{p^\beta}$ so that $x_{p^v}=z_{p^\beta}$.  Taking $i=p^v$, $j=p^\beta$ in (T3a) we get
$${a+p^v\choose p^v}y_{p^\beta}={b+p^\beta-p^v\choose p^\beta}x_{p^v}$$
i.e., $-y_{p^\beta}={p^\beta+p^\beta-1\choose p^\beta}x_{p^v}$ and hence $-y_{p^\beta}=x_{p^v}$.  Thus our analysis of Case (ii) will be complete if we check  the conditions for the triple   $(x_i),(y_j),(z_k)$,  with  $x_{p^v}=1$, $x_i=0$ for $i\neq p^v$, and  $y_{p^\beta}=-1$, $y_j=0$ for $j\neq p^\beta$, and  $z_{p^\beta}=1$, $z_k=0$ for $k\neq p^\beta$,  to be  coherent. 

\q In checking (T1) we may assume that $i=p^v$ or $j=p^\beta$.  For $i=p^v$ the condition is that ${a+p^v+k\choose k}=0$ for all $1\leq k\leq c$, $k\neq p^\beta$ (we have already checked the case $i=p^v$, $k=p^\beta$).  However, if ${a+p^v+k\choose k}\neq 0$ then we have $(a+p^v)_h+k_h\leq p-1$ for all $h\geq 0$. This gives $k_h=0$ for all $h \neq \beta$ and $k=p^\beta$, which is not the case. It remains to check $k=p^\beta$, $1\leq i\leq c$, $i\neq p^v$.  The condition is 
$$0={a+i+p^\beta\choose i}.$$
If ${a+i+p^\beta\choose i}\neq0$ then $(a+p^\beta)_h+i_h\leq p-1$ for all $h$. This implies $i_h=0$ for $h\neq v$ and so $i=p^v$ which is not the case.

\q We now consider (T2).  Suppose $1\leq j,k\leq c$ and $j+k\leq c$.  If ${a+k\choose k}y_j\neq 0$ then $j=p^\beta$, but then $p^\beta+k\leq c\leq b$ gives $k<p^v$ so that ${a+k\choose k}=0$. Thus the left hand side of (T2) is $0$ in all cases and, similarly so is the right hand side.

\q We now consider (T3a). We notice again that if a term ${a+i\choose s}x_{i-s}$ appearing in the right hand side is non-zero then we have $i-s=p^v$ so that ${a+i\choose s}={a+p^v+s\choose s}\neq 0$. Since $\val_p(a+p^v+p^\beta+1)>\beta$ this can only happens if $s=0$ or $s=p^\beta$. However for the latter case this would force $i=p^v+p^\beta$ and as $i\leq j\leq c <p^v+p^\beta$ this is illegal. Hence the only possible case is when $s=0$ and so (T3a) takes again the form 

$${a+i\choose i}y_j={b+j-i\choose j}x_i+{b+j-i\choose j-i}z_i$$
for $1\leq i\leq j\leq c$, and 

\q Suppose $i=p^v$. Then the condition is 

$${a+p^v\choose p^v}y_j={b+j-p^v\choose j}$$
i.e., 
$$-y_j={p^\beta-1 +j\choose j}$$
for $p^v\leq j\leq c$.  This is clear for $j<p^\beta$ and for $j=p^\beta$. For $p^\beta<j\leq c$ we require ${p^\beta-1+j\choose j}=0$.  This is clear by writing $j=p^\beta+t$, with $1\leq t<p^v$, and appealing to Lucas's Formula.

\q Assume now that $i=p^\beta$. Then the condition is ${a+p^\beta\choose p^\beta}y_j={b+j-p^\beta\choose j-p^\beta}$, or $-y_j={p^v-1+j\choose j-p^\beta}$ for $p^\beta\leq j\leq c$. This is certainly true for $j=p^\beta$ and so we require ${p^v-1+j\choose 
j-p^\beta}=0$ for $p^\beta<j\leq c$. Again this becomes  clear on writing $j=p^\beta+t$, $1\leq t < p^v$, and using Lucas's Formula.

\q Now suppose $j=p^\beta$. The condition is 

$$-{a+i\choose i}={b+p^\beta -i\choose p^\beta}x_i+{b+p^\beta-i\choose p^\beta-i}z_i$$
for $1\leq i\leq p^\beta$.  If $i <p^v$ then both sides are $0$. If $i=p^v$ the equation is  $1={p^\beta-1+p^\beta\choose p^\beta}$, which is true.  For $p^v<i<p^\beta$ both sides are $0$. For $i=p^\beta$ both sides of the equation are $1$.

\q  We now consider (T3b).  

\q Let $j<p^\beta$. Then the left hand side in this relation is $0$. Now, we have that if a term ${a+i\choose s}x_{i-s}$ appearing in the right hand side is non-zero then we have $i-s=p^v$ so that ${a+i\choose s}={a+p^v+s\choose s}\neq 0$. Since $\val_p(a+p^v+p^\beta+1)>\beta$ this can only happen for $s=0$, $s=p^\beta$ or $s=p^v+p^\beta$. However, we also have that $s\leq j<p^\beta$ and so this leaves $s=0$ as the only option and so for $j<p^\beta$, (T3b) takes  the form 

$$0={b+j-i\choose j}x_i$$
for $1\leq j< p^\beta$ and $j<i\leq b+j$. Now for $i\neq p^v$ the right hand side is $0$ and so it remains to check this relation for $i=p^v$ where we get

$$0={b+j-p^v\choose j}={p^\beta-1+j\choose j}$$

for $1\leq j< p^\beta$, which is clearly true.

\q Assume now that $j>p^\beta$. Then the left hand side again is $0$ and if a term ${a+i\choose s}x_{i-s}$ appearing in the right hand side is non-zero then we have $i-s=p^v$ so that ${a+i\choose s}={a+p^v+s\choose s}\neq 0$. Since $\val_p(a+p^v+p^\beta+1)>\beta$ this can only happen for $s=0$, $s=p^\beta$ or $s=p^v+p^\beta$. Now since $s\leq c<p^v+p^\beta$ we have that the last case is impossible. Moreover, if $s=0$ that would force $i=p^v$ which is illegal since $p^\beta<j<i$. Hence this leaves $s=p^\beta$ as the only option and so for $j>p^\beta$, (T3b) takes  the form 

$$0={b+j-i\choose j-p^\beta}x_{i-p^\beta}$$
for $1<p^\beta< j\leq c$ and $j<i\leq b+j$. Now for $i\neq p^v+p^\beta$ the right hand side is $0$ and so it remains to check this relation for $i=p^v+p^\beta$ where we get

$$0={b+j-p^v-p^\beta\choose j-p^\beta}$$

for $p^\beta< j\leq c$. But such $j$ has the form $j=p^\beta+t$ with $0<t<p^v$ and so we have 

$$0={b+t-p^v\choose t}={p^\beta-1+t\choose t}$$

and since $0<t<p^v$ this is clearly true.

\q Hence, it remains to consider the case $j=p^\beta$. Making the same remarks as before for the right hand side of the relation we have that the relation takes the form

$$-{a+i\choose i}=x_{i-p^\beta}$$ 

for  $p^\beta<i\leq b+p^\beta$. This is true for $i=p^v+p^\beta$ and so the condition remaining   to be checked  is ${a+i\choose i}=0$ for $p^\beta<i\leq b+p^\beta$. One easily check (cf. the final step of Case (i)) that this is exactly the condition for $(a+p^v+p^\beta,b+p^\beta)$ to be James, i.e. $\len_p(b+p^\beta)<\val_p(a+p^v+p^\beta+1)$.
 
   \end{proof}


We continue  with the cases in which  $\beta>\gamma$.

\begin{lemma}

Assume that $v<\gamma<\beta$.  Then $E(\lambda)$ is spanned by standard triples and triples $(x_i),(y_j),(z_k)$ in which $x_i=0$ for $i\neq p^v$, $(y_j)=0$ and $(z_k)=0$.
\end{lemma}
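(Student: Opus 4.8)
The plan is to reduce to the classification of the two-part extension sequences for $(b,c)$ together with Lemma 11.5, and then to eliminate the few surviving scalars by Lucas's Formula. First I would record that, since $(a,b)$ is pointed, we may write $b=\hatb+p^\beta$ with $\hatb<p^v<p^\beta$; in particular $b_v=0$. Under the standing assumptions of Remark 11.4 ($w\le v\le\gamma\le\beta$) together with the hypothesis $v<\gamma<\beta$, the pair $(b,c)$ is not James, so by Lemma 5.11 it is split or pointed, and every extension sequence for $(b,c)$ is a linear combination of the standard sequence $\binom{b+j}{j}$ and (in the pointed case) the point sequence, which is supported only in degree $p^\gamma$. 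Given an arbitrary coherent triple for $\lambda$, I would subtract the appropriate multiple of the standard triple, obtaining a coherent triple --- still written $(x_i),(y_j),(z_k)$ --- with $y_j=0$ for $j\neq p^\gamma$. Since $\gamma<\beta$, Lemma 11.5 applies to this triple and gives $x_i=0$ for $i\neq p^v,p^\beta$ and $z_k=0$ for $k\neq p^v$, so it remains only to show $z_{p^v}=x_{p^\beta}=y_{p^\gamma}=0$; the triple will then have the asserted shape.

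For $z_{p^v}$ I would apply (T2) with $j=k=p^v$ (admissible since $2p^v\le p^\gamma\le c$): the left side vanishes because $y_{p^v}=0$, while $\binom{b+p^v}{p^v}=b_v+1=1$ by Lucas's Formula and $b_v=0$, so $z_{p^v}=0$. For $x_{p^\beta}$ I would apply (T1) with $i=p^\beta$, $k=p^v$: the right side is a multiple of $z_{p^v}=0$, and since adding $p^\beta$ does not touch the digit in position $v<\beta$ and $a_v=p-2$, the digit of $a+p^\beta+p^v$ in position $v$ is $p-1$, so $\binom{a+p^\beta+p^v}{p^v}=p-1\neq 0$ in $K$ and hence $x_{p^\beta}=0$. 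For $y_{p^\gamma}$ I would apply (T3a) with $i=p^v\le j=p^\gamma\le c$: in the right-hand side the term with $z_{p^v}$ vanishes, and in the sum $\sum_{s=0}^{p^v-1}\binom{b+p^\gamma-p^v}{p^\gamma-s}\binom{a+p^v}{s}x_{p^v-s}$ only $s=0$ survives, because $x_{p^v-s}=0$ for $1\le p^v-s<p^v$ by Lemma 11.5(i), leaving $\binom{a+p^v}{p^v}y_{p^\gamma}=\binom{b+p^\gamma-p^v}{p^\gamma}x_{p^v}$. Writing $b+p^\gamma-p^v=\hatb+p^\beta+(p^\gamma-p^v)$, the summand $p^\gamma-p^v$ has base-$p$ digits equal to $p-1$ only in positions $v,\dots,\gamma-1$, $\hatb$ occupies positions below $v$, and $\beta>\gamma$; hence the base-$p$ digit of $b+p^\gamma-p^v$ in position $\gamma$ is $0$ and $\binom{b+p^\gamma-p^v}{p^\gamma}=0$ by Lucas's Formula. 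Since $\binom{a+p^v}{p^v}=a_v+1=p-1\neq 0$, this forces $y_{p^\gamma}=0$, so $(y_j)=0$, $(z_k)=0$ and $x_i=0$ for $i\neq p^v$, which is exactly the form claimed.

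There is no serious obstacle: once Lemma 11.5 has pinned down the support of $(x_i)$ and $(z_k)$, and the two-part classification of Lemma 5.11 has pinned down the support of $(y_j)$, everything reduces to three applications of Lucas's Formula. The only point demanding real care is the base-$p$ digit bookkeeping, and in particular the vanishing $\binom{b+p^\gamma-p^v}{p^\gamma}=0$, which uses exactly the pointed shape $b=\hatb+p^\beta$ with $\hatb<p^v<p^\beta$ and the strict inequalities $v<\gamma<\beta$ supplied by the hypothesis.
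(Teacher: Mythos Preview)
Your proof is correct and follows essentially the same route as the paper's: subtract a standard triple to force $y_j=0$ for $j\neq p^\gamma$, apply Lemma 11.5(iii) to cut the support of $(x_i)$ and $(z_k)$, then kill $z_{p^v}$, $x_{p^\beta}$, $y_{p^\gamma}$ via (T2), (T1), (T3a) with the same choices of indices. Your digit-by-digit verification of $\binom{b+p^\gamma-p^v}{p^\gamma}=0$ is in fact slightly more explicit than the paper's, which simply identifies this coefficient with $b_\gamma=0$.
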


\begin{proof}   Since $(a,b)$ is pointed we may write $b=\hatb + p^\beta$ with $\hatb<p^v<p^\beta$, in particular we have $b_v=0$ and $b_\gamma=0$. Since $\gamma>v$ we can subtract a multiple of a standard triple from an arbitrary coherent triple to obtain a coherent triple $(x_i),(y_j),(z_k)$ in which $y_j=0$ for $j\neq p^\gamma$.  By Lemma 11.5 we have $x_i=0$ for $i\neq p^v,p^\beta$ and $z_k=0$ for $k\neq p^v$. Since $\v< \gamma$ we have  $p^v+p^v\leq c$ and so (T2) with  $j=k=p^v$ gives 
 ${{b+p^v}\choose{p^v}}z_{p^v}=0$, i.e. , $z_{p^v}=0$ so that the sequence $(z_k)$ is $0$. Now taking $i=p^\beta$, $k=p^v$ in (T1) gives   ${{a+p^\beta+p^v}\choose{p^v}}x_{p^\beta}=0$ i.e., $(a_v+1)x_{p^\beta}=0$ so that $x_{p^\beta}=0$.  Finally, taking $i=p^v$ and $j=p^\gamma$ in (T3a) we get 
 $$-y_{p^\gamma}={{b+p^\gamma-p^v}\choose{p^\gamma}}x_{p^v}$$
 i.e., $-y_{p^\gamma}=b_\gamma x_{p^v}$  and since $b_\gamma=0$ we get $y_{p^\gamma}=0$ and therefore the sequence $(y_j)$ is $0$.

\end{proof}

 \begin{lemma} If $w<v<\gamma<\beta$ then   $\lambda=(a,b,c)$ is split.
 
 \end{lemma}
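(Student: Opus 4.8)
The plan is to reduce the assertion to the vanishing of a single scalar by quoting the preceding lemma, and then to kill that scalar with one instance of relation (T3b). Since the hypothesis $w<v<\gamma<\beta$ in particular gives $v<\gamma<\beta$, the preceding lemma applies and tells us that $E(\lambda)$ is spanned by the standard triple together with coherent triples $(x_i),(y_j),(z_k)$ for which $x_i=0$ whenever $i\neq p^v$ and $(y_j)=(z_k)=0$. So I would take an arbitrary coherent triple for $\lambda$, subtract a suitable multiple of the standard triple, and thereby reduce to showing that every coherent triple $(x_i),(y_j),(z_k)$ of this special shape is identically zero. For this it suffices to prove $x_{p^v}=0$: then the triple we subtracted off is the only contribution, so every coherent triple is a multiple of the standard one, which is precisely the statement that $\lambda$ is split.

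To force $x_{p^v}=0$ I would use relation (T3b) with $i=p^v$ and $j=p^w$. This is a legitimate choice of indices: since $w<v$ we have $1\le p^w<p^v$, so $j<i$; since $(a,b)$ is pointed we may write $b=\hat{b}+p^\beta$ with $\hat{b}<p^v<p^\beta$, so $b>p^v$ and hence $i=p^v\le b+j$; and $p^w<p^\gamma\le c$, so $1\le j\le c$. The left-hand side of (T3b) is ${a+p^v\choose p^v}y_{p^w}$, which is $0$ because $(y_j)=0$. On the right-hand side the index $s$ runs from $0$ to $p^w$, so $i-s=p^v-s$ lies between $p^v-p^w$ and $p^v$, and in that range $x_{p^v-s}$ is nonzero only for $s=0$ (as $x_i=0$ for $i\neq p^v$). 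Hence (T3b) collapses to ${b+p^w-p^v\choose p^w}\,x_{p^v}=0$.

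It then remains to observe that ${b+p^w-p^v\choose p^w}$ is a unit in $K$, which is the same short digit computation already carried out in the proof of Lemma 11.7. Writing $b$ in base $p$: from $w=\val_p(b+1)$ one has $b_0=\dots=b_{w-1}=p-1$ and $b_w\neq p-1$, while $b=\hat{b}+p^\beta$ with $\hat{b}<p^v$ gives $b_v=\dots=b_{\beta-1}=0$ and $b_\beta=1$. Subtracting $p^v$ from $b$ alters only the digits in positions $v,\dots,\beta$, leaving the digit in position $w<v$ still equal to $b_w$; adding $p^w$ then raises that digit to $b_w+1$ with no carry, since $b_w\neq p-1$. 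By Lucas's Formula ${b+p^w-p^v\choose p^w}=b_w+1$, which is nonzero modulo $p$ because $b_w\in\{0,\dots,p-2\}$. Hence $x_{p^v}=0$ and $\lambda$ is split. I expect the only point requiring any care to be the base-$p$ bookkeeping for $b-p^v+p^w$ needed for the last equality, and even that is entirely routine; the rest is an immediate application of the preceding lemma and of (T3b).
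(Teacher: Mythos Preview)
Your proof is correct and follows essentially the same approach as the paper: invoke Lemma 11.10 to reduce to a triple with $(y_j)=(z_k)=0$ and $x_i=0$ for $i\neq p^v$, then apply (T3b) with $i=p^v$, $j=p^w$ to obtain $(b_w+1)x_{p^v}=0$. The paper's proof is just the same argument written more tersely, without the explicit verification of the index constraints or the digit bookkeeping that you spell out.
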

 
 \begin{proof} By Lemma 11.10 we can subtract a standard triple to obtain a coherent triple $(x_i),(y_j),(z_k)$ with $x_i=0$ for $i\neq p^v$, $(y_j)=0$ and $(z_k)=0$. 
 
 Taking $i=p^v$ and $j=p^w$ in (T3b)  gives   $0={{b+p^w-p^v}\choose{p^w}}x_{p^v}$ i.e., $0=x_{p^v}(b_w+1)$ and so $x_{p^v}=0$. Hence $(x_i),(y_j),(z_k)$ is the zero triple and $E(\lambda)$ contains only standard triples, i.e.,  $\lambda$ is split.

 \end{proof}

 \begin{lemma} If $w=v<\gamma<\beta$ then $\lambda=(a,b,c)$ is non-split and $E(\lambda)$ is spanned by standard triples  and the triple $(x_i),(y_j),(z_k)$ in which, $x_{p^v}=1, x_i=0$ for $i\neq p^v$, $(y_j)=0$ and $(z_k)=0$
 
 \end{lemma}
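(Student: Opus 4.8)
The plan is to reduce everything to a single coherence check by invoking Lemma 11.10. Since $w=v<\gamma<\beta$, that lemma applies and tells us that $E(\lambda)$ is spanned by the standard triple together with triples $(x_i),(y_j),(z_k)$ in which $(y_j)=0$, $(z_k)=0$ and $x_i=0$ for $i\neq p^v$; any such triple is determined by the single scalar $x_{p^v}$, so $\dim E(\lambda)\le 2$. Because $\gamma>v=w$, the pair $(b,c)$ is not James, so the standard triple has $(y_j^\st)\neq 0$ and is not of the displayed special shape. Hence it suffices to produce one non-standard coherent triple, for which the natural candidate is the one with $x_{p^v}=1$ and all other entries $0$; once this is shown coherent we get $\dim E(\lambda)=2$, which both proves $\lambda$ is non-split and gives the asserted spanning set.

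First I would record the arithmetic shapes forced on $a$, $b$, $c$. As $(a,b)$ is pointed, $b=\hat b+p^\beta$ with $\hat b<p^v<p^\beta$ and $\beta=\len_p(b)$; combining this with $w=\val_p(b+1)=v$ forces $\hat b=p^v-1$, so $b=p^v-1+p^\beta$ and $b-p^v=p^\beta-1$. By Remark 11.4 we have $a_t=p-1$ for $0\le t<\beta$ with $t\neq v$ and $a_v=p-2$, so $a+p^v$ has every base-$p$ digit in positions $0,\dots,\beta-1$ equal to $p-1$ and produces no carry below position $\beta$; moreover $\gamma<\beta$ gives $c<p^\beta$, so every index $\le c$ appearing below is $<p^\beta$. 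Then I would check the relations (T1)--(T3b) for the candidate triple, the relations (E) for the individual sequences being immediate since $(x_i)$ is supported only at $p^v$. Relation (T2) is trivial as $(y_j)=(z_k)=0$. For (T1) only $i=p^v$ can give a nonzero left side, yielding the requirement ${a+p^v+k\choose k}=0$ for $1\le k\le c$; this holds by Remark 3.1 because $a+p^v$ has digit $p-1$ in every position $<\beta$ while $0<k<p^\beta$, i.e. $(a+p^v,c)$ is James. For (T3a) and (T3b) the $z_i$-term vanishes, and a summand ${b+j-i\choose j-s}{a+i\choose s}x_{i-s}$ on the right survives only when $i-s=p^v$; then ${a+i\choose s}={a+p^v+s\choose s}$ is zero unless $s=0$, again by the all-$(p-1)$ digit pattern of $a+p^v$ below $\beta$ together with $s<p^\beta$. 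Thus $i=p^v$ and the surviving term equals ${b+j-p^v\choose j}={p^\beta-1+j\choose j}$, which vanishes in $K$ for $1\le j<p^\beta$ by Lucas's Formula. Since every relevant $j$ is $\le c<p^\beta$, both (T3a) and (T3b) hold, so the candidate triple is coherent.

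The step I expect to be the main obstacle is the bookkeeping in (T3b): there $i$ ranges up to $b+j$, so a priori the contributing index $s=i-p^v$ could be as large as $p^\beta-1+j$, and it is precisely the summation constraint $s\le j<p^\beta$, combined with the digit pattern of $a+p^v$, that collapses everything to $s=0$ and $i=p^v$. Keeping these inequalities and the Lucas conditions straight is where care is needed, though the computation itself is routine once the shapes of $a$ and $b$ from the previous paragraph are in hand.
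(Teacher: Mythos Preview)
Your proof is correct and follows essentially the same approach as the paper: invoke Lemma 11.10 to reduce to checking coherence of the single candidate triple, use $w=v$ together with pointedness to get $b=p^v-1+p^\beta$, and then verify (T1)--(T3b) by exploiting that $(a+p^v,c)$ and $(p^\beta-1,c)$ are James. Your explicit justification that the candidate triple is non-standard (via $(b,c)$ not James, so $(y_j^{\rm st})\neq 0$) is a useful addition that the paper leaves implicit.
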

 
 \begin{proof}   Since $(a,b)$ is pointed and $w=v$ we have $b=p^v-1+p^\beta$.  This implies that $(b-p^v,c)$ is James and that $b_v=0$.
By Lemma 11.10 it is enough to prove that the given triple is coherent.  Condition (T1) reduces to the statement that ${a+p^v+k\choose k}=0$ for $1\leq k\leq c$ and this is true since $(a+p^v,c)$ is James.   Condition (T2) is satisfied since all terms are $0$.   

\q Condition (T3a) holds provided  that 
 $$0={b+j-p^v-s\choose j-s}{a+p^v+s\choose s}$$
 for $i=p^v+s\leq j\leq c$. If $s\neq 0$ then ${a+p^v+s\choose s}=0$ since $(a+p^v,c)$ is James and so (T3a) reduces to the condition that  
 $$0={b+j-p^v\choose j}$$
  i.e., ${p^\beta-1+j\choose j}=0$,   for $p^v\leq j\leq c$, which holds since $(p^\beta-1,c)$ is James.

 \q Finally, for (T3b) we require
  $$0={b+j-p^v-s\choose j-s}{a+p^v+s\choose s}$$
  for $1\leq j\leq c$, $j<p^v+s\leq b+j$, and with $s\leq j$.  Again this reduces to $0={b+j-p^v\choose j}$ i.e., ${p^\beta-1+j\choose j}=0$, and this holds since $(p^\beta-1,c)$ is James.

\end{proof}

We finish off with the last cases where $\gamma=\v\geq w$.

\begin{lemma} If $w=v=\gamma$ then $\lambda=(a,b,c)$ is non-split and $E(\lambda)$ is spanned by standard triple  and the triple $(x_i),(y_j),(z_k)$ in which $x_{p^v}=1$, $x_i=0$ for $i\neq p^v$, $(y_j)=0$ and $(z_k)=0$.

\end{lemma}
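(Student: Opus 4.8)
The plan is to read off the shapes of $a,b,c$ from the hypotheses, verify directly that the triple $T_{0}$ named in the statement is coherent (which already yields non-splitness and $\dim E(\lambda)\geq 2$), and then show every coherent triple is a combination of the standard triple and $T_{0}$. First I would record the normal forms. Since $(a,b)$ is pointed, $b=\hat b+p^{\beta}$ with $\hat b<p^{v}<p^{\beta}$ and $\beta>v$; as $w=\val_{p}(b+1)=\val_{p}(\hat b+1)$ and $0<\hat b+1\leq p^{v}$, the hypothesis $w=v$ forces $\hat b=p^{v}-1$, so $b=p^{v}-1+p^{\beta}$, whence $b-p^{v}=p^{\beta}-1$ and the digit $b_{v}=0$. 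By Remark 11.4 we have $a_{t}=p-1$ for $0\leq t<\beta$, $t\neq v$, and $a_{v}=p-2$, so $\val_{p}(a+p^{v}+1)\geq\beta$. Since $\gamma=v<\beta$ and $c<p^{v+1}\leq p^{\beta}$, both $(a+p^{v},c)$ and $(p^{\beta}-1,c)$ are James pairs. Finally $(b,c)$ is not James (as $c\geq p^{\gamma}$) and not pointed (its $\len_{p}(c)=v$ is not $>\val_{p}(b+1)=v$), so by Lemma 5.11 $(b,c)$ is split.

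Next I would check that $T_{0}=((x_{i}),(y_{j}),(z_{k}))$ with $x_{p^{v}}=1$ and all other $x_{i},y_{j},z_{k}$ equal to $0$ satisfies (T1)--(T3b). Relations (T1) and (T2) reduce (after dropping the terms carrying $z_{k}=0$) to the single requirement $\binom{a+p^{v}+k}{k}=0$ for $1\leq k\leq c$, i.e.\ to the James property of $(a+p^{v},c)$ from the first step. In (T3a) and (T3b) the left side is $\binom{a+i}{i}y_{j}=0$; on the right the $\binom{b+j-i}{j-i}z_{i}$-term vanishes and the only summand that can survive is the one with $x_{i-s}=x_{p^{v}}$, i.e.\ $s=i-p^{v}$. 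If $s\geq 1$ then $\binom{a+i}{s}=\binom{a+p^{v}+s}{s}=0$ because $(a+p^{v},c)$ is James and $s\leq c$; if $s=0$ (so $i=p^{v}$) the summand equals $\binom{b+j-p^{v}}{j}=\binom{p^{\beta}-1+j}{j}=0$ because $(p^{\beta}-1,c)$ is James and $j\leq c$. Hence all four relations hold and $T_{0}$ is coherent; since $(b,c)$ is not James the standard triple has $(y^{\st}_{j})\neq 0$, so $T_{0}$ is not a multiple of it, and $\lambda$ is non-split with $\dim E(\lambda)\geq 2$.

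Finally I would bound $\dim E(\lambda)$ above by $2$. Given a coherent triple, its $(y_{j})$-component is an extension sequence for the split pair $(b,c)$, hence a multiple of $(y^{\st}_{j})$; subtracting that multiple of the standard triple we may assume $(y_{j})=0$. By Lemma 11.5 then $x_{i}=0$ for $i\neq p^{v},p^{\beta}$ and, since $\gamma<\beta$, $z_{k}=0$ for $k\neq p^{v}$. Taking $i=j=p^{v}$ in (T3a): the $x$-sum collapses to $\binom{b}{p^{v}}x_{p^{v}}=0$ since $b_{v}=0$, and the rest reads $\binom{a+p^{v}}{p^{v}}y_{p^{v}}=z_{p^{v}}$, i.e.\ $-y_{p^{v}}=z_{p^{v}}$; with $y_{p^{v}}=0$ this gives $z_{p^{v}}=0$, so $(z_{k})=0$. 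Taking $i=p^{\beta}$, $k=p^{v}$ in (T1): $\binom{a+p^{\beta}+p^{v}}{p^{v}}x_{p^{\beta}}=\binom{a+p^{\beta}+p^{v}}{p^{\beta}}z_{p^{v}}=0$, and $\binom{a+p^{\beta}+p^{v}}{p^{v}}=p-1\neq 0$ forces $x_{p^{\beta}}=0$. Thus the reduced triple equals $x_{p^{v}}T_{0}$, and $E(\lambda)$ is spanned by the standard triple and $T_{0}$, as claimed.

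The main obstacle is the coherence verification of $T_{0}$ in the middle step: one has to run through the index ranges of (T3a) and (T3b), confirm in each that the unique non-vanishing $x$-contribution sits at $i-s=p^{v}$, and then kill the resulting binomial coefficients using the two James facts of the first step together with Lucas's formula; the bookkeeping near $i=p^{v}$ and for $j>p^{v}$ is where care is required, though none of it is deep.
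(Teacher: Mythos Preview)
Your proof is correct and follows essentially the same route as the paper's: both establish $b=p^{v}-1+p^{\beta}$ from $w=v$, note that $(b,c)$ is split, use Lemma 11.5 together with (T3a) at $i=j=p^{v}$ and (T1) at $i=p^{\beta},\,k=p^{v}$ to reduce an arbitrary coherent triple to a multiple of $T_{0}$ modulo the standard triple, and verify the coherence of $T_{0}$ via the James properties of $(a+p^{v},c)$ and $(p^{\beta}-1,c)$. The only difference is cosmetic: you check coherence of $T_{0}$ before the dimension bound, while the paper does the reverse.
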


\begin{proof}  Since $\gamma=w$ the partition $(b,c)$ is split.  Subtracting a standard triple from an arbitrary coherent triple we can obtain a triple $(x_i),(y_j),(z_k)$ in which $(y_j)=0$.  Further, by Lemma 11.5, we have $x_i=0$ for $i\neq p^v,p^\beta$ and $z_k=0$ for $k\neq p^v$.

\q Taking $i=j=p^v$ in (T3a) we have ${b\choose p^v}x_{p^v}+z_{p^v}=0$ and $b_v=b_w=0$ (since $(a,b)$ is pointed) and hence $z_{p^v}=0$.  Now taking $i=p^\beta$, $k=p^v$ in (T1) we get ${a+p^\beta+p^v\choose p^v}x_{p^\beta}=0$, i.e. $(a_v+1)x_{p^\beta}=0$ and so $x_{p^\beta}=0$.

\q It remains to check that the triple $(x_i),(y_j),(z_k)$ described in the statement of the lemma is coherent. The condition (T1) reduces to the statement that ${a+p^v+k\choose k}=0$ for all $1\leq k\leq c$, and this is true since $(a+p^v,c)$ is James.  Condition (T2) is satisfied trivially.

\q We now consider (T3a). We check that $0=\sum_{s=0}^{i-1} {b+j-i\choose j-s}{a+i\choose s}x_{i-s}$, for $1\leq i\leq j\leq c$. Now $x_{i-s}$ is zero unless $i-s=p^v$ and then ${a+i\choose s}={a+p^v+s\choose s}$ is zero for $s\neq 0$ (as $(a+p^v,c)$ is James). Hence the condition to be checked is ${b+j-p^v\choose j}=0$, for $p^v\leq j\leq c$. But, since $v=\gamma$ we have $b=p^v-1+p^\beta$ and $(b-p^v,c)$ is James so that ${b+j-p^v\choose j}=0$, for $p^v\leq j\leq c$, as required.

\q Similarly, condition (T3b) reduces to the condition that ${b+j-p^v\choose j}=0$  for all $1\leq j<p^v$  and this is also guaranteed by the fact that $(b-p^v,c)$ is James.

\end{proof}

\begin{lemma} If  $w<v=\gamma$ and $(b,c)$ is split then $\lambda=(a,b,c)$ is split.
\end{lemma}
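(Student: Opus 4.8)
We are in the situation $\lambda = (a,b,c)$ with $(a,b)$ pointed, so $b = \hat b + p^\beta$ with $\hat b < p^v < p^\beta$, and by the standing assumptions of Remark 11.4 we have $\val_p(a+p^v+1) \geq \beta$, i.e. $a_t = p-1$ for $0 \leq t < \beta$, $t \neq v$, and $a_v = p-2$; moreover $w \leq v \leq \gamma \leq \beta$, and here $v = \gamma$ with $w < v$ and $(b,c)$ split. The plan is to show every coherent triple is a multiple of the standard triple. First I would take an arbitrary coherent triple and subtract a suitable multiple of the standard triple; since $(b,c)$ is split, the sequence $(y_j)$ is a multiple of the standard sequence, so after the subtraction we may assume $(y_j) = 0$. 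By Lemma 11.5(i),(iii) (applicable since $\gamma < \beta$) we then have $x_i = 0$ for $i \neq p^v, p^\beta$ and $z_k = 0$ for $k \neq p^v$. So it remains to prove $x_{p^v} = x_{p^\beta} = z_{p^v} = 0$.

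The three vanishing statements should each come from a single well-chosen instance of one of the relations (T1)--(T3b), exactly as in the proofs of Lemmas 11.7, 11.11, 11.13. For $z_{p^v}$: since $v < \gamma$ (wait — here $v = \gamma$, so instead use $v = \gamma$ and note $p^v + p^w \leq c$ need not hold; one should instead use (T3a) with $i = j = p^v$, giving $\binom{b}{p^v} x_{p^v} + z_{p^v} = 0$, and since $b_v = 0$, reduce to a relation between $x_{p^v}$ and $z_{p^v}$). For $x_{p^v}$: take $i = p^v$, $j = p^w$ in (T3b); the only surviving term on the right is $\binom{b+p^w-p^v}{p^w} x_{p^v} = (b_w+1)x_{p^v}$ (using $\val_p(a+p^v+1)\geq\beta$ to kill the $s \neq 0$ terms, and Lucas's Formula for the binomial), and since $w = \val_p(b+1)$ we have $b_w \neq p-1$, so $b_w + 1 \neq 0$ and $x_{p^v} = 0$. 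Then $z_{p^v} = 0$ follows from the (T3a) relation above. For $x_{p^\beta}$: take $i = p^\beta$, $k = p^v$ in (T1), getting $\binom{a+p^\beta+p^v}{p^v} x_{p^\beta} = 0$; since $a_v = p-2$ this is $(a_v+1)x_{p^\beta} = (p-1)x_{p^\beta} = 0$, so $x_{p^\beta} = 0$.

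Having shown $(x_i) = (y_j) = (z_k) = 0$, the original triple was a multiple of the standard triple, so $E(\lambda)$ consists only of standard triples and $\lambda$ is split, as claimed. I would also remark (as the paper consistently does) that when simplifying the right-hand sides of (T3a) and (T3b) one uses that $(a+p^v, c)$ is James — which holds since $\val_p(a+p^v+1) \geq \beta > \gamma$ — so that $\binom{a+p^v+s}{s} = 0$ for $1 \leq s \leq c$, leaving only the $s = 0$ term; this is the device recorded in Remark 11.9 Case (i).

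The main obstacle I anticipate is purely bookkeeping: getting the right instance of (T2) versus (T3a) to produce the $x_{p^v}$--$z_{p^v}$ relation, since the naive choice $j = k = p^v$ in (T2) requires $p^v + p^v \leq c$, which may fail when $\gamma = v$ and $c_\gamma = 1$ (indeed $c < 2p^v$ is possible). Using (T3a) with $i = j = p^v$ instead is always legitimate since it only needs $p^v \leq c$, i.e. $\gamma \geq v$, which holds. No genuinely new idea is needed beyond the standard repertoire of this section; the verification is a routine application of Lucas's Formula together with the standing parity constraints on the base-$p$ digits of $a$ and $b$.
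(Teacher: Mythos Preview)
Your proposal is correct and follows essentially the same approach as the paper: subtract a standard triple to make $(y_j)=0$, invoke Lemma 11.5 to reduce to $x_{p^v},x_{p^\beta},z_{p^v}$, and then kill each via a single instance of the relations. The only (harmless) difference is that you obtain $x_{p^\beta}=0$ from (T1) with $i=p^\beta$, $k=p^v$ (after already knowing $z_{p^v}=0$), whereas the paper uses (T3b) with $i=p^\beta$, $j=p^w$ directly; both routes are valid, and your justification that only the $s=0$ term survives on the right of (T3b) should really be phrased via $x_{p^v-s}=0$ for $s\geq 1$ rather than via $(a+p^v,c)$ being James, but the conclusion stands.
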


\begin{proof} Subtracting a standard triple from an arbitrary coherent triple for $\lambda$ we can obtain a coherent triple $(x_i),(y_j),(z_k)$ in which $(y_j)=0$. By Lemma 11.5 we have $x_i=0$ for $i\neq p^v, p^\beta$ and $z_k=0$ for $k\neq p^v$.  

\q Taking  $i=p^v$ and $j=p^w$ in (T3b)  we have 

$$0={{b+p^w-p^v}\choose{p^w}}x_{p^v}=(b_w+1)x_{p^\v}$$
and hence $x_{p^v}=0$. 

\q Taking  $i=p^\beta$ and $j=p^w$  in (T3b)  we have 

$$0={{b+p^w-p^\beta}\choose{p^w}}x_{p^\beta}=(b_w+1)x_{p^\beta}$$

and so $x_{p^\beta}=0$.

\q Taking $1\leq i=j\leq c$ in (T3a) we now obtain $z_i=0$, for all $1\leq i\leq c$. Hence $(x_i),(y_j),(z_k)$ is the $0$ triple and every coherent triple is standard, i.e., $\lambda$ is split.

\end{proof}

\begin{lemma} If  $w<v=\gamma$ and $(b,c)$ is pointed then $\lambda=(a,b,c)$ is non-split if and only if $\beta<\val_p(a+p^v+1)$.

In that case $E(\lambda)$ is spanned by the standard solutions and the triple\\
 $(x_i),(y_j),(z_k)$ with $(x_i)=0$, $y_{p^v}=1$, $y_j=0$ for $j\neq p^v$, $z_{p^v}=-1$ and $z_k=0$ for $k\neq p^v$.


\end{lemma}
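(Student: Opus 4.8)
The plan is to mimic the proofs of Lemmas 11.14 and 11.15: bring an arbitrary coherent triple to a normal form, isolate the one‑parameter family of non‑standard triples it can contribute, and then decide exactly when the distinguished member of that family is coherent. Throughout I use the standing hypotheses recorded in Remark 11.4: since $(a,b)$ is pointed, $b=\hat{b}+p^\beta$ with $\hat{b}<p^v<p^\beta$ (so $b_v=0$ and $v<\beta$), and $a_t=p-1$ for $0\le t<\beta$, $t\ne v$, while $a_v=p-2$; since $(b,c)$ is pointed and $\gamma=v$, we have $c=\hat{c}+p^v$ with $\hat{c}<p^w<p^v$.

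First I would reduce. Given a coherent triple $(x_i),(y_j),(z_k)$, the sequence $(y_j)$ is an extension sequence for the pointed pair $(b,c)$, hence by Lemma 5.11 a combination of the standard and point sequences; subtracting the matching multiple of the standard triple for $\lambda$ I may assume $y_j=0$ for $j\ne p^v$. By Lemma 11.5, $x_i=0$ for $i\ne p^v,p^\beta$ and, since $\gamma=v<\beta$, $z_k=0$ for $k\ne p^v$. Next I kill $(x_i)$: relation (T3b) with $i=p^\beta$, $j=p^w$ is legal ($p^w<p^\beta\le b+p^w$) and has $y_{p^w}=0$, so only the $s=0$ term survives on the right, with coefficient ${\hat{b}+p^w \choose p^w}=b_w+1\ne 0$ by Lucas's Formula (as $b_w\ne p-1$), whence $x_{p^\beta}=0$; then (T3b) with $i=p^v$, $j=p^w$ gives ${b+p^w-p^v \choose p^w}x_{p^v}=0$, and a carry computation using $b_v=0$ shows this coefficient is again $b_w+1\ne 0$, so $x_{p^v}=0$ and $(x_i)=0$. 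Now (T3a) with $i=j=p^v$ collapses to ${a+p^v \choose p^v}y_{p^v}=z_{p^v}$, i.e. $y_{p^v}+z_{p^v}=0$ since $a_v=p-2$. Thus every coherent triple equals a multiple of the standard triple plus one of the shape $(x_i)=0$, $y_{p^v}=t$, $z_{p^v}=-t$, all other entries zero; in particular $\dim E(\lambda)\le 2$, and $\lambda$ is non‑split exactly when the $t=1$ member, call it $T_0$, is coherent.

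Finally I would test coherence of $T_0$. Relation (T1) reduces, the $x_i$ being zero, to ${a+i+p^v \choose i}=0$ for $1\le i\le b$, i.e. to $(a+p^v,b)$ being James, which holds iff $\val_p(a+p^v+1)>\len_p(b)=\beta$. Relations (T2) and (T3a) hold unconditionally: in (T2) the left side forces $j=p^v$, hence $k\le\hat{c}<p^v$ and ${a+k \choose k}=0$, while the right side forces $k=p^v$, hence $j\le\hat{c}<p^w$ and ${b+j \choose j}=0$; in (T3a), beyond the already verified $i=j=p^v$, the only live instance is $j=p^v+t$ with $1\le t\le\hat{c}<p^w$, where ${b+t \choose t}=0$. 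Relation (T3b) reduces to ${a+i \choose i}=0$ for $p^v<i\le b+p^v$; here I would combine the digit pattern of $a$ with the bound $b+p^v<2p^\beta$ to see that the only values of $i$ in this window that could give a nonzero binomial coefficient are $i=p^\beta$ and $i=p^\beta+p^v$, and that ${a+p^\beta \choose p^\beta}=0$ (equivalently ${a+p^\beta+p^v \choose p^\beta+p^v}=0$) iff $a_\beta=p-1$ iff $\val_p(a+p^v+1)>\beta$; so (T3b) holds iff $\beta<\val_p(a+p^v+1)$. Conversely, if $\lambda$ is non‑split the distinguished nonzero triple exists, and (T3b) with $j=p^v$, $i=p^\beta$ applied to it forces ${a+p^\beta \choose p^\beta}=0$, i.e. $\beta<\val_p(a+p^v+1)$. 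The main obstacle is precisely this last verification of (T3b): the Lucas's‑Formula bookkeeping needed to pin down which $i$ in the window $p^v<i\le b+p^v$ can have ${a+i \choose i}\ne 0$ and to tie the outcome to the single digit $a_\beta$, together with the several carry computations (for $b-p^v$, $\hat{b}+p^w$ and $a+p^v$) that make the auxiliary coefficients come out as $b_w+1$ or $a_v+1$.
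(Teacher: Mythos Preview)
Your proposal is correct and follows essentially the same approach as the paper's proof: reduce via Lemma 11.5 and subtraction of a standard triple, kill $x_{p^\beta}$ and $x_{p^v}$ using (T3b) with $j=p^w$, obtain $y_{p^v}+z_{p^v}=0$ from (T3a) with $i=j=p^v$, and then test coherence of the distinguished triple $T_0$. The only cosmetic differences are that the paper kills $x_{p^v}$ before $x_{p^\beta}$, and the paper first extracts the condition $a_\beta=p-1$ directly from (T1) with $i=p^\beta$, $k=p^v$ applied to the nonzero triple, whereas you discover the same condition by checking (T1) and (T3b) for $T_0$; your treatment of (T3a) should also briefly note the case $1\le i<p^v$, $j=p^v$, where ${a+i\choose i}=0$ since $a_h=p-1$ for $h<v$, so both sides vanish.
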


\begin{proof} Assume that $\lambda$ is non-split.  By Lemma 11.5 we may subtract, from an arbitrary coherent triple, a standard triple to obtain a coherent  triple $(x_i)$, $(y_j)$, $(z_k)$ in which 
$x_i=0$ for $i\neq p^v,p^\beta$, $y_j=0$ for $j\neq p^\beta$ and $z_k=0$ for $k\neq p^v$. 

\q Taking  $i=p^v$ and $j=p^w$ in (T3b) we obtain 
$$0={{b+p^w-p^\v}\choose{p^w}}x_{p^\beta}=(b_w+1)x_{p^v}$$
and so $x_{p^v}=0$.

\q Again by  (T3b), with $i=p^\beta$ and $j=p^w$ we have 

$$0={b+p^w-p^\beta\choose{p^w}}x_{p^\beta}=(b_w+1)x_{p^\beta},$$

so that  $x_{p^\beta}=0$ and hence $(x_i)=0$.

\q Now by (T3a) with $i=j=p^v$ we get $-y_{p^v}=z_{p^v}$.  

\q By Remark 11.4 we may write $a=(p^\beta-1)-p^v+p^\beta a'$. We show that $a_\beta=p-1$.  If $a_\beta\neq p-1$ then for $i=p^\beta$ and $k=p^v$ in relation (T1) we get

$$0={a+p^v+p^\beta\choose{p^\beta}}z_{p^v}=(a_\beta+1)z_{p^v}$$

i.e. $z_{p^v}=0$ and so $\lambda$ is split. Hence we have that $a_\beta=p-1$ and so $(a+p^v,b)$ is James, i.e. $\beta<\val_p(a+p^v+1)$.

\q It remains to prove  that the triple specified in the statement of the Lemma is coherent.

\q For (T1) we require ${a+i+p^v\choose i}=0$, for $1\leq i\leq c$, and this is true since $(a+p^v,b)$ is James.

\q We now consider   (T2).  If ${a+k\choose k}y_j\neq 0$, with $1\leq j,k\leq c$, $j+k\leq c$, then we must have $j=p^v=p^\gamma$ and so $p^\gamma+k <p^w+p^\gamma$ and 
$k<p^w<p^v$ so that ${a+k\choose k}=0$. Hence the left hand side of (T2) is zero for all relevant values of $j$ and $k$.  Similarly, if ${b+j\choose j}z_k\neq 0$  with $1\leq j,k\leq c$, $j+k\leq c$, then $k=p^v=p^\gamma$ and so $j<p^w$ and ${b+j\choose j}=0$. Hence the right hand side of (T2) is also zero for  all relevant values of $j$ and $k$.

\q We now consider (T3a).  If $i=j=p^v$ then the equation reads ${a+p^v\choose p^v}=-1$, which is true. If $i=p^v$ and $p^v<j\leq c$ then the condition is ${b+j-p^v\choose j-p^v}=0$. Writing $t=j-p^v=j-p^\gamma$  we have $t<p^w$ and so ${b+j-p^v\choose j-p^v}={b+t\choose t}=0$. For $i\neq p^v$ then condition is ${a+i\choose i}=0$, and this is true.

\q Finally we consider the condition (T3b). The condition is  that ${a+i\choose i}=0$ for $p^v<i  \leq b+p^v$.   But if ${a+i\choose i}\neq 0$ then $a_h+i_n\leq p-1$ for all $h\geq 0$. But then $i_h=0$ for $h\neq v$ and $i_v=1$. Hence $i=p^v$. But $p^v<i$ so this is not possible.

\end{proof}
 
We gather together the main results of this section  in the following result.

\begin{proposition}
Let $\lambda=(a,b,c)$ be a partition with $(a,b)$ pointed, so that $b$ has the form $b=\hat{b}+p^\beta$ with $\hat{b}<p^\v<p^\beta$. Then $(a,b,c)$ is non split if and only if it has one of the following forms: 

(i) $\beta>\gamma\geq \v=w$ and $\val_p(a+p^v+1)\geq \beta$. In this case $E(\lambda)$ is spanned by the standard triple and the coherent triple $(x_i),(y_j),(z_k)$ with $x_{p^v}=1, x_i=0$ for $i\neq p^v$ and $(y_j)=0$, $(z_k)=0$.

(ii) $\beta=\gamma>\v=w$ and  $\len_p(b+p^\beta)<\val_p(a+p^v+1)$.  In this case $E(\lambda)$ is spanned by the standard triple and the coherent triple $(x_i),(y_j),(z_k)$ with $x_{p^v}=1, x_i=0$ for $i\neq p^v$, with $y_{p^\beta}=-1, y_j=0$ for $j\neq p^\beta$ and with $(z_k)=0$.

(iii) $\beta=\gamma>\v=w$ and $\len_p(b+p^\beta)<\val_p(a+p^v+p^\beta+1)$. In this case $E(\lambda)$ is spanned by the standard triple and the coherent triple $(x_i),(y_j),(z_k)$ with $x_{p^v}=1, x_i=0$ for $i\neq p^v$, with $y_{p^\beta}=-1, y_j=0$ for $j\neq p^\beta$ and with $z_{p^\beta}=1, z_k=0$ for $k\neq p^\beta$.

(iv) $v\geq w>\gamma$.  In this case $E(\lambda)$ is spanned by the standard triple and the coherent triple $(x_i),(y_j),(z_k)$ with $x_{p^\beta}=1, x_i=0$ for $i\neq p^\beta$ and $(y_j)=0$, $(z_k)=0$.

(v) $\gamma=\v >w$, $\val_p(a+p^v+1)> \beta$ and $c=\chat+p^\v$ with $\chat<p^w$. In this case $E(\lambda)$ is spanned by the standard triple and the coherent triple $(x_i),(y_j),(z_k)$ with $(x_i)=0$, with $y_{p^v}=-1, y_j=0$ for $j\neq p^v$ and with $z_{p^v}=-1,z_k=0$ for $k\neq p^v$.

\end{proposition}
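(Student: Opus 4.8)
The plan is to read off Proposition 11.17 from the lemmas of this section by a case analysis governed by the position of $\gamma=\len_p(c)$ relative to $w=\val_p(b+1)$, $\v=\val_p(a+1)$ and $\beta=\len_p(b)$. Since $(a,b)$ is pointed we write $b=\hatb+p^\beta$ with $\hatb<p^\v<p^\beta$; in particular $b_\v=0$, so $w\le\v$, and pointedness forces $\v<\beta$, giving the chain $w\le\v<\beta$ throughout. The five forms in the statement will turn out to be exactly the non-split possibilities, and in each case the description of $E(\lambda)$ is the one furnished by the relevant lemma.

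First I would handle $\gamma<\v$. If $\gamma<w$, Lemma 11.1 gives that $\lambda$ is non-split with $E(\lambda)$ spanned by the standard triple and the triple supported at $x_{p^\beta}$; since $w\le\v$ this situation is precisely form (iv). If $w\le\gamma<\v$, Lemma 11.2 gives that $\lambda$ is split, so nothing is recorded. This leaves $\gamma\ge\v$, so that $w\le\v\le\gamma\le\beta$ for the remainder of the argument. In this range Lemma 11.3 shows $\lambda$ is split whenever $\val_p(a+p^\v+1)<\beta$, so I may assume $\val_p(a+p^\v+1)\ge\beta$, which is the standing hypothesis of Remark 11.4.

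Next I would split on $\beta=\gamma$ versus $\beta>\gamma$. If $\beta=\gamma$: the structural Lemma 11.6 reduces matters to triples supported in a short list of coordinates, Lemma 11.7 forces $w=\v$ in the non-split case, and Lemma 11.9 (which uses Lemma 11.8 to restrict to $\val_p(a+p^\v+1)>\beta$ or $\val_p(a+p^\v+p^\beta+1)>\beta$) yields exactly that $\lambda$ is non-split if and only if $\len_p(b+p^\beta)<\val_p(a+p^\v+1)$ (form (ii)) or $\len_p(b+p^\beta)<\val_p(a+p^\v+p^\beta+1)$ (form (iii)), with the stated generators. If $\beta>\gamma$: when $\v<\gamma$ the structural Lemma 11.10 applies, Lemma 11.11 gives split for $w<\v$, and Lemma 11.12 gives non-split for $w=\v$ with the triple supported at $x_{p^\v}$; when $\v=\gamma$, Lemma 11.13 gives non-split for $w=\v$ (same triple), Lemma 11.15 gives split for $w<\v$ with $(b,c)$ split, and Lemma 11.16 gives non-split for $w<\v$ with $(b,c)$ pointed, precisely when $\beta<\val_p(a+p^\v+1)$ (form (v)). Since $\gamma\ge w$ the pair $(b,c)$ is never James, so these subcases are exhaustive, and the non-split outcomes $w=\v<\gamma<\beta$ and $w=\v=\gamma$ together constitute exactly form (i).

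The argument is essentially bookkeeping, so the point needing care — and hence the ``hard part'' — is verifying that the case split is genuinely exhaustive and internally consistent: that the standing reductions (in particular Remark 11.4) are legitimately in force in each branch from which a lemma is quoted, that the dichotomy ``$(b,c)$ split versus pointed'' may be invoked whenever $\gamma\ge w$, and that the non-standard coherent triples produced by Lemmas 11.12 and 11.13 coincide (up to the obvious rescaling of the generator) so that they merge into the single description recorded in form (i). Once these compatibilities are confirmed the Proposition follows immediately.
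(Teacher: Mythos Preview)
Your proposal is correct and follows essentially the same route as the paper: both arguments simply collate the lemmas of Section~11 according to the position of $\gamma$ relative to $w$, $v$ and $\beta$, and your case breakdown matches the paper's terse citations (Lemma~11.1 for (iv), Lemma~11.9(i)--(ii) for (ii)--(iii), Lemma~11.15 for (v), and Lemmas~11.3, 11.12, 11.13 for (i)). Note a minor numbering slip: where you write ``Lemma~11.15 gives split \ldots\ and Lemma~11.16 gives non-split'' you mean Lemmas~11.14 and~11.15 respectively (11.16 is the Proposition itself).
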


\begin{proof}

Part (i) follows by Lemmas 11.3, 11.12 and 11.13. Part (ii) is Lemma 11.9 (i) and part (iii) is Lemma 11.9 (ii). Now (iv) is Lemma 11.1 and (v) is Lemma 11.15.

\end{proof}

\bs\bs\bs\bs


\section{Non-split partitions}

\subsection{A partition is either James or constrained.}

\q Recall that a partition $\lambda$ is constrained if 
$ \Ext^1_{B(N)} (S^d E, K_\lambda)$,  has dimension at most  $1$ where $d=|\lambda|$, $N$ is at least the number of parts of $\lambda$ and $E$ is the natural $G(N)$-module.   Equivalently $\lambda$ is constrained if $E(\lambda)$ is spanned by the standard multi-sequence and at most one other multi-sequence, so if $\lambda$ is not James,  the condition is that $E(\lambda)$ has dimension at most $2$. 

\q We have already dealt with James partitions, in Section 7, so the following,  which is a consequence of Propositions 10.14 and  11.16,  is very reassuring. 

\begin{theorem}

A partition is either James or constrained. 

\end{theorem}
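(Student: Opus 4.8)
The plan is to reduce the general statement to the already-established cases for short partitions, via the structural results of Sections 7 and 9. First I would dispose of James partitions: by Corollary 7.14 the dimension of $E(\lambda)$ for a James partition need not be bounded by $1$, so James partitions are genuinely excluded by the statement, and there is nothing to prove for them. So I may assume $\lambda=(\lambda_1,\ldots,\lambda_n)$ is not James, and I must show $\dim E(\lambda)\leq 2$, equivalently (by Proposition 4.5) $\dim\Ext^1_{B(N)}(S^dE,K_\lambda)\leq 1$.

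Next I would let $r$ be minimal such that $(\lambda_r,\lambda_{r+1})$ is not a James pair, and invoke Proposition 9.4 together with Lemma 9.2 (Weber's theorem) to localize the non-splitness. Concretely: if $\lambda$ has a second non-James pair $(\lambda_s,\lambda_{s+1})$ with $r+2<s$, Lemma 9.2 gives that $\lambda$ is split, hence $\dim E(\lambda)=1$ and $\lambda$ is constrained. Otherwise Remark 9.3 tells me that either $(\lambda_1,\ldots,\lambda_r)$ and $(\lambda_{r+3},\ldots,\lambda_n)$ are James, or $(\lambda_1,\ldots,\lambda_{n-1})$ is James and $(\lambda_{n-1},\lambda_n)$ is not. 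In the second subcase Proposition 9.5 gives $\dim E(\lambda)\leq 2$ directly. In the first subcase, Proposition 9.4(i) (if $n\geq r+2$) or Proposition 9.4(ii) (if $r=n-1$) supplies an injective linear map from $E(\lambda)$ into $E(\mu)$, where $\mu$ is the $3$-part partition $(\lambda_r,\lambda_{r+1},\lambda_{r+2})$ (or the $2$-part partition $(\lambda_{n-1},\lambda_n)$, already covered by Section 5). So it remains only to bound $\dim E(\mu)$ for a $3$-part $\mu$.

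Then I would invoke the $3$-part classification. Write $\mu=(a,b,c)$; since $(\lambda_r,\lambda_{r+1})$ is not James, $(a,b)$ is not James, hence by the trichotomy of Section 5 it is either split or pointed. If $(a,b)$ is split, Proposition 10.14 shows that $E(\mu)$ is spanned by the standard triple and at most one further triple, so $\dim E(\mu)\leq 2$. If $(a,b)$ is pointed, Proposition 11.16 gives the same conclusion. In either case $\dim E(\mu)\leq 2$, and the injection of Proposition 9.4 gives $\dim E(\lambda)\leq 2$, i.e.\ $\lambda$ is constrained. This completes the proof: every partition is James or constrained.

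The main obstacle is not in this assembly — which is a short bookkeeping argument over the cases produced by Sections 9, 10 and 11 — but in the fact that it relies on the heavy case analysis of Propositions 10.14 and 11.16, whose proofs occupy Sections 10 and 11. The only subtlety in the synthesis itself is making sure the minimal-$r$ choice interacts correctly with Remark 9.3 and that the edge cases ($r=n-1$, $n=r+2$, $n\leq 3$) are each routed to the right earlier result; for $n\leq 3$ the statement is literally one of Lemma 5.11, Proposition 10.14, Proposition 11.16 or the James analysis, so no genuine recursion is needed.
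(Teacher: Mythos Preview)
Your proof is correct and follows essentially the same route as the paper: reduce to a short partition via Proposition 9.4, then invoke Propositions 10.14 and 11.16. The paper's argument is slightly more economical: it observes directly that Proposition 9.4(i) applies whenever $r\leq n-2$ with no further hypotheses on $(\lambda_{r+3},\ldots,\lambda_n)$, so the preliminary appeal to Lemma 9.2 and Remark 9.3 is unnecessary (those results are about when $\lambda$ is split, whereas here you only need the injection of $E(\lambda)$ into $E(\lambda_r,\lambda_{r+1},\lambda_{r+2})$, which holds regardless). Your citation of ``Corollary 7.14'' should be Corollary 7.13.
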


\begin{proof} 

 We may assume that $\lambda$ is not James. Let $1\leq r<n$ be minimal such that $(\lambda_r,\lambda_{r+1})$ is not James. If $r=n-1$ then the result follows by Proposition 9.5. Hence we may assume that $r<n-1$ and then by Proposition 9.4(i) we have that the restriction map $E(\lambda)\to E(\lambda_r,\lambda_{r+1},\lambda_{r+2})$ is injective. Now by Propositions 10.14 and 11.16 we have that the partition $(\lambda_r,\lambda_{r+1},\lambda_{r+2})$ is constrained and we are done.

\end{proof}

\begin{remark} {\rm{From now on  $\lambda=(\lambda_1,\ldots,\lambda_n)$ denotes a partition of length $n\geq 4$  which is  not James  and  non-split. We let $1\leq r<n$ be minimal such that $(\lambda_r,\lambda_{r+1})$ is not James. Since we have done already the case in which  $r=n-1$,  in Proposition 9.5,  we also assume that $r<n-1$. Moreover, by Lemma 9.2 and Proposition 9.4(i) we know that $(\lambda_r,\lambda_{r+1},\lambda_{r+2})$ is non-split  and that (if $n\geq r+3$)  $(\lambda_{r+3},\dots,\lambda_n)$ is James. These properties will be our standard assumptions in this section and from now on we will assume that $\lambda$ has always such a form without further reference. In this section we give a complete  description of the non-split, non-James partitions}}.
\end{remark}

\q We note  that our results so far do not  give any information for the pair $(\lambda_{r+2},\lambda_{r+3})$ (for $n\geq r+3$). This pair may or may not be James.  We will investigate these two possibilities separately, starting with the case in which  $(\lambda_{r+2},\lambda_{r+3})$ is not a James partition.

\subsection{The case in which $(\lambda_{r+2},\lambda_{r+3})$ is not  James.}

\q We state  now the general result for the case in which  $(\lambda_{r+2},\lambda_{r+3})$ is not a James partition.  As usual we set  $v_i=\val_p(\lambda_i+1)$, $1\leq i\leq n-1$, and $l_i=\len_p(\lambda_i)$, for $2\leq i\leq n$. 

\q We assume throughout this subsection that $n> r+2$ and that $(\lambda_{r+2},\lambda_{r+3})$ is not James.

\newpage

\begin{proposition} Suppose that $\lambda$ is not James and $r<n-2$ is minimal such that $(\lambda_{r},\lambda_{r+1})$ is not a James partition. Assume further that $(\lambda_{r+2},\lambda_{r+3})$ is not James and that $(\lambda_{r+3},\dots,\lambda_n)$ is James. Then the partition  $\lambda$ is non-split if and only if the quadruple
$(\lambda_r,\lambda_{r+1},\lambda_{r+2}, \lambda_{r+3})$ satisfies the following conditions:

(i)  $\lambda_r=(p^{l_{r+1}+1}-1)-p^{v_r}+p^{l_{r+1}+1}\lambda_r'$, for some $\lambda_r'\geq 0$; and 

(ii) $\lambda_{r+1}=(p^{v_r+1}-1)-p^{v_r}+p^{v_r+1}\lambda_{r+1}'$, $(\lambda_{r+1})_{\v_r}\neq 0$ and  $\lambda_{r+1}'\geq 0$; and 

(iii) $\lambda_{r+2}=p^{v_r}-1+p^{v_r}$; and 

(iv)  $\lambda_{r+3}=\hat{\lambda}_{r+3}+p^{v_r}$, with $0\leq \hat{\lambda}_{r+3}<p^{v_r}$.

\q  If these conditions hold then $E(\lambda)$ is spanned by the standard multi-sequence and a non-zero  multi-sequence $(y(t,u)_i)$ satisfying 
$$y(r,r+2)_{p^{v_r}}=y(r+1,r+3)_{p^{v_r}}=-y(r+1,r+2)_{p^{v_r}}=-y(r,r+3)_{p^{v_r}}$$
and $y(s,t)_i=0$ for all the other choices  of $s,t$ and $i$.

\end{proposition}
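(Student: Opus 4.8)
The statement is an equivalence together with a basis description, and I would establish it in two halves: that (i)--(iv) are \emph{sufficient}, by exhibiting the stated multi-sequence; and that they are \emph{necessary}, by restricting a spanning non-split multi-sequence to the two overlapping triples $(\lambda_r,\lambda_{r+1},\lambda_{r+2})$ and $(\lambda_{r+1},\lambda_{r+2},\lambda_{r+3})$ and confronting the classifications of non-split $3$-part partitions (Propositions 10.14 and 11.16).

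\textbf{Sufficiency.} Assume (i)--(iv) and set $v=v_r$. By (iii) and (iv) one has $p^v\le\lambda_{r+2}$ and $p^v\le\lambda_{r+3}$, so the multi-sequence $(y(t,u)_i)$ with $y(r,r+2)_{p^v}=y(r+1,r+3)_{p^v}=1$, $y(r+1,r+2)_{p^v}=y(r,r+3)_{p^v}=-1$ and all other entries zero is well defined. I would check coherence by running through the relations of Section 4. Every nonzero entry lies in degree $p^v$ and is attached to one of the pairs $\{r,r+2\},\{r+1,r+3\},\{r+1,r+2\},\{r,r+3\}$, so (E) and (T1)--(T3b) are nontrivial only for the four triples inside $\{r,\dots,r+3\}$, and (C) only for the two relations pairing $\{r,r+2\}$ with $\{r+1,r+3\}$ and $\{r+1,r+2\}$ with $\{r,r+3\}$, and for those pairing one of these four pairs with a disjoint pair carrying only zeros; for the last kind the nonobvious side vanishes because $(\lambda_1,\dots,\lambda_r)$ and $(\lambda_{r+3},\dots,\lambda_n)$ are James. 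For the four triples and the two remaining commuting relations, a Lucas's-formula computation closes the matter: (i)--(iv) force $(\lambda_r+p^v,p^v-1)$, $(\lambda_{r+1}+p^v,p^v-1)$, $(\lambda_r+p^v,\lambda_{r+2})$ and $(\lambda_{r+1}+p^v,\lambda_{r+2})$ to be James and force $\lambda_r+p^v$ and $\lambda_{r+1}+p^v$ to have digit $p-1$ in position $v$, and these facts make each binomial in the relations either vanish or cancel in a pair. This multi-sequence is nonzero while $y(r,r+1)_i=0$ for all $i$, whereas the standard multi-sequence has $y(r,r+1)_{p^v}=\binom{\lambda_r+p^v}{p^v}=p-1\ne0$, so it is not a scalar multiple of the standard one; hence $\dim E(\lambda)\ge2$, and since $\lambda$ is not James it is constrained (Theorem 12.1), so $\dim E(\lambda)=2$, $\lambda$ is non-split (Proposition 4.5), and $E(\lambda)$ is spanned by the standard multi-sequence and $(y(t,u)_i)$, which has the displayed relations among its $p^v$-entries.

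\textbf{Necessity: reduction.} Suppose $\lambda$ is non-split; as it is not James, $\dim E(\lambda)=2$ by Proposition 4.5 and Theorem 12.1, and by Proposition 9.4(i) restriction $E(\lambda)\to E(\lambda_r,\lambda_{r+1},\lambda_{r+2})$ is an isomorphism. Let $\eta=(y(t,u)_i)$ span $E(\lambda)$ together with the standard multi-sequence. Since $(\lambda_r,\lambda_{r+1})$ is not James, $E(\lambda_r,\lambda_{r+1})$ is spanned by the standard extension sequence alone or, if $(\lambda_r,\lambda_{r+1})$ is pointed, by that and the point sequence; subtracting a suitable multiple of the standard multi-sequence I may assume $(y(r,r+1)_i)$ is $0$ or the point extension sequence of $(\lambda_r,\lambda_{r+1})$. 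Then Lemma 9.1 gives $(y(t,t+1)_i)=0$ for $t\ne r-1,r,r+1$, and (for $r\ge2$) Lemma 8.1 applied to $(\lambda_{r-1},\lambda_r,\lambda_{r+1})$ gives $(y(r-1,r)_i)=(y(r-1,r+1)_i)=0$. Since a coherent multi-sequence is determined by its consecutive entries (Remark 6.2, by induction on $u-t$) and $\eta$ is not a multiple of the standard multi-sequence, one of $(y(r,r+1)_i),(y(r+1,r+2)_i)$ is nonzero; Lemma 8.1 applied to $(\lambda_{r+1},\lambda_{r+2},\lambda_{r+3})$ shows $(\lambda_{r+1},\lambda_{r+2})$ is not James. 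Moreover $(y(r+1,r+2)_i)\ne0$: if $(\lambda_r,\lambda_{r+1})$ is split the reduction leaves no other nonzero consecutive entry, while if $(\lambda_r,\lambda_{r+1})$ is pointed the only form in Proposition 11.16 compatible with $(y(r,r+1)_i)$ being the point sequence and $(y(r+1,r+2)_i)=(y(r+2,r+3)_i)=0$ is case (iv), which forces $(\lambda_{r+1},\lambda_{r+2})$ James. Consequently the restrictions of $\eta$ to $\{r,r+1,r+2\}$ and $\{r+1,r+2,r+3\}$ are non-standard coherent triples, so both $(\lambda_r,\lambda_{r+1},\lambda_{r+2})$ and $(\lambda_{r+1},\lambda_{r+2},\lambda_{r+3})$ are non-split.

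\textbf{Necessity: matching, and the main obstacle.} It remains to intersect the two classifications. By Propositions 10.14 and 11.16 each triple has one of a short list of explicit forms, and each form pins down the corresponding non-standard coherent triple up to scalar; via the reduction and the isomorphism above this yields two descriptions of $(y(r+1,r+2)_i)$ up to scalar — once as the middle sequence of $(\lambda_r,\lambda_{r+1},\lambda_{r+2})$, once as the first sequence of $(\lambda_{r+1},\lambda_{r+2},\lambda_{r+3})$ (the latter subject to $(y(r+2,r+3)_i)=0$). Comparing the degrees and supports of $(y(r+1,r+2)_i)$ demanded by the various combinations, all collapse except the one in which $(\lambda_r,\lambda_{r+1})$ and $(\lambda_{r+1},\lambda_{r+2})$ are split and both triples are non-split via case (ii) of Proposition 10.14; writing out and combining the conditions of that case for the two triples yields exactly $v_r=v_{r+1}=\len_p(\lambda_{r+2})=\val_p(\lambda_{r+2}+1)=\len_p(\lambda_{r+3})$, $(\lambda_{r+2})_{v_r}=(\lambda_{r+3})_{v_r}=1$, and $(\lambda_r+p^{v_r},\lambda_{r+1})$ and $(\lambda_{r+1}+p^{v_r},\lambda_{r+2})$ James, which is (i)--(iv). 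Finally the explicit case-(ii) triple, propagated through $\eta$ by Remark 6.2, forces all entries of $\eta$ and the displayed relations among the $y(\cdot,\cdot)_{p^{v_r}}$. The main obstacle is precisely this matching step: carefully tracking the degree and support of $(y(r+1,r+2)_i)$ through the several combinations of cases in the two $3$-part classifications, and checking that exactly one survives, is where essentially all the work lies.
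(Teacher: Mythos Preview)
Your overall architecture is sound and close to the paper's, but there is a genuine gap in the necessity reduction.  You write that ``Lemma~8.1 applied to $(\lambda_{r+1},\lambda_{r+2},\lambda_{r+3})$ shows $(\lambda_{r+1},\lambda_{r+2})$ is not James''.  That is not what Lemma~8.1 gives: under the hypothesis that $(\lambda_{r+1},\lambda_{r+2})$ is James and $(\lambda_{r+2},\lambda_{r+3})$ is not, it only forces $(y(r+1,r+2)_i)=0$ in any coherent triple.  Your subsequent ``Moreover'' argument then ends with ``case~(iv), which forces $(\lambda_{r+1},\lambda_{r+2})$ James'' --- but since you never actually established that $(\lambda_{r+1},\lambda_{r+2})$ is not James, this is not a contradiction and the circle does not close.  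Concretely, you have not excluded the scenario in which $(\lambda_r,\lambda_{r+1})$ is pointed, $(y(r,r+1)_i)$ is the point sequence, $(\lambda_{r+1},\lambda_{r+2})$ is James, and all other consecutive entries of $\eta$ vanish.

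The missing ingredient is the commuting relation~(C) for the disjoint pairs $(r,r+1)$ and $(r+2,r+3)$: since $(\lambda_{r+2},\lambda_{r+3})$ is assumed \emph{not} James, there exists $j\le\lambda_{r+3}$ with $\binom{\lambda_{r+2}+j}{j}\ne 0$, and then $(y(r+2,r+3)_j)=0$ forces $y(r,r+1)_i=0$ for all $i$.  Combined with Proposition~9.4(i) this yields that restriction $E(\lambda)\to E(\lambda_{r+1},\lambda_{r+2},\lambda_{r+3})$ is injective (this is the paper's Lemma~12.4), and hence both triples $(\lambda_r,\lambda_{r+1},\lambda_{r+2})$ and $(\lambda_{r+1},\lambda_{r+2},\lambda_{r+3})$ are non-split.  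With that in hand your matching plan works, though be aware that after your normalisation the restriction of $\eta$ to each triple is in general a specific linear combination of the standard and the listed non-standard triple, not the non-standard triple on the nose; the paper handles the matching by a case-by-case run through Propositions~10.14 and~11.16 (Lemmas~12.5--12.14) rather than a single support comparison, and some care of that kind is unavoidable.
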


\smallskip

\q Note that since we require in condition (ii) that $\lambda_{r+1}=(p^{v_r+1}-1)-p^{v_r}+p^{v_r+1}\lambda_{r+1}'$ with $(\lambda_{r+1})_{\v_r}\neq 0$, this case does not appear in characteristic $2$. In particular, in characteristic $2$ the assumption that $(\lambda_{r+2},\lambda_{r+3})$ is not James forces $\lambda$ to split. 

\q We give a proof of this proposition in a series of Lemmas.  Since \\
$(\lambda_r,\lambda_{r+1},\lambda_{r+2})$ is non-split and $(\lambda_r,\lambda_{r+1})$ is not James,  the three part partition must have one of the forms described in Proposition 10.14 or Proposition 11.16. Though there are many possibilities the remarkable fact is that only in a single case, namely that described in Proposition 10.14(ii), is it  possible for $\lambda$ to be non-split. 

\q Our method is to work through all these possibilities in turn, but leaving the case described in Proposition 10.14(ii) until last. The consideration of all these split cases occupies Lemmas 12.5 to 12.13. The final exceptional case is dealt with in Lemma 12.14.

\q We assume until further notice that $n\geq r+3$. Before we embark on  Lemmas 12.5  to  12.14, we prove the following useful result that we will use repeatedly.

\begin{lemma} The restriction map $E(\lambda)\to E(\lambda_{r+1},\lambda_{r+2},\lambda_{r+3})$ is injective, equivalently if $(y(t,u)_i)$ is a coherent multi-sequence in which the extension sequences $(y(r+1,r+2)_i)$ and $(y(r+2,r+3)_i)$ are zero then $(y(t,u)_i)$ is identically zero.
In  particular if $(\lambda_{r+1},\lambda_{r+2},\lambda_{r+3})$ is split  then $\lambda$ is split.
\end{lemma}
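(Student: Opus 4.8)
The plan is to prove the equivalent formulation directly: any coherent multi-sequence $(y(t,u)_i)$ for $\lambda$ with $(y(r+1,r+2)_i)=0$ and $(y(r+2,r+3)_i)=0$ vanishes identically. This is indeed equivalent to injectivity of the restriction map, because the remaining component $(y(r+1,r+3)_k)$ of the restricted triple is determined by $(y(r+1,r+2)_i)$ and $(y(r+2,r+3)_j)$ via Remark 6.2, hence is already $0$; so the kernel of restriction is exactly the set of such multi-sequences.

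Step 1: I would apply Lemma 9.1 with the index $r+2$. Its hypotheses hold, since $(\lambda_{r+2},\lambda_{r+3})$ is not James (standing assumption of this subsection) and $(y(r+2,r+3)_i)=0$. The conclusion is that $(y(s,s+1)_i)=0$ for every $1\le s<n$ with $s\neq r+1,r+2,r+3$. Adjoining the two hypotheses $(y(r+1,r+2)_i)=0$ and $(y(r+2,r+3)_i)=0$, I obtain $(y(s,s+1)_i)=0$ for all $1\le s<n$ with $s\neq r+3$; in particular $(y(r,r+1)_i)=0$.

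Step 2: It remains to handle the single adjacent sequence $(y(r+3,r+4)_i)$, which only occurs when $n\geq r+4$ (if $n=r+3$, Step 1 has already killed every adjacent sequence). Here I would use the commuting relation $(C')$ of Section 9 with first index $r$ and second index $r+3$ (legitimate since $r+3\neq r+1$ and $r+3<n$): it reads ${\lambda_{r+3}+j\choose j}\,y(r,r+1)_i={\lambda_r+i\choose i}\,y(r+3,r+4)_j$. Because $(\lambda_r,\lambda_{r+1})$ is not a James pair — this is the minimality of $r$ — there is some $1\le i_0\le\lambda_{r+1}$ with ${\lambda_r+i_0\choose i_0}\neq0$ in $K$; taking $i=i_0$ and using $(y(r,r+1)_i)=0$ forces $y(r+3,r+4)_j=0$ for all $j$. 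The key point, and the only step I would call subtle, is that one must feed $(C')$ with the pair $(\lambda_r,\lambda_{r+1})$, the one pair known to be non-James, rather than with $(\lambda_{r+1},\lambda_{r+2})$ or $(\lambda_{r+2},\lambda_{r+3})$; I expect this bookkeeping to be the main (small) obstacle.

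Step 3: Now every adjacent sequence $(y(s,s+1)_i)$ vanishes, and I would conclude $(y(s,u)_i)=0$ for all $1\le s<u\le n$ by induction on $u-s$. For $u>s+1$, the coherent triple extracted at indices $(s,s+1,u)$ has its two short sequences $(y(s,s+1)_i)$ and $(y(s+1,u)_j)$ equal to $0$ (the second by the inductive hypothesis), so Remark 6.2 gives $(y(s,u)_k)=0$. Hence $(y(t,u)_i)\equiv0$ and restriction is injective. Finally, if $(\lambda_{r+1},\lambda_{r+2},\lambda_{r+3})$ is split then $E(\lambda_{r+1},\lambda_{r+2},\lambda_{r+3})$ is one-dimensional, so the injection from $E(\lambda)$ — which already contains the nonzero standard multi-sequence — forces $\dim E(\lambda)=1$, i.e. $\lambda$ is split.
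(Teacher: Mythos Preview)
Your proof is correct and rests on the same ideas as the paper's, but the paper is considerably more economical. The paper argues in two lines: from the commuting relation $(C)$ for the pairs $(r,r+1)$ and $(r+2,r+3)$, together with $(y(r+2,r+3)_j)=0$ and the fact that $(\lambda_{r+2},\lambda_{r+3})$ is not James, it gets $(y(r,r+1)_i)=0$; then, since $(y(r,r+1)_i)=0$ and $(y(r+1,r+2)_j)=0$, it simply invokes Proposition~9.4(i) to conclude. Your Step~1 via Lemma~9.1 is doing the same commuting-relation trick (just packaged differently, and yielding more vanishing than you need), and your Steps~2--3 essentially reprove Proposition~9.4(i) by hand. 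So nothing is wrong, but you could compress your argument to the paper's by quoting Proposition~9.4(i) once you know $(y(r,r+1)_i)=0$.
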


\begin{proof}  Let $(y(t,u)_i)$ be a coherent multi-sequence in which $(y(r+1,r+2)_i)$ and $(y(r+2,r+3)_i)$ are zero. By  the commuting relation (C) for the pairs $(r,r+1)$ and $(r+2,r+3)$, and   using the facts that $(y(r+2,r+3)_j)=0$ and $(\lambda_{r+2},\lambda_{r+3})$ is not James we deduce that 
 $(y(r,r+1)_i)=0$. Hence  we have that $(y(r,r+1)_i)=0$ and  $(y(r+1,r+2)_j)=0$ and so by Proposition 9.4 (i) $\lambda$ splits.
\end{proof}

\q The  Lemmas 12.5 to 12.8 which follow correspond to the cases in which $(\lambda_r,\lambda_{r+1},\lambda_{r+2})$ has the form described in Proposition 10.14 (i), (iii), (iv),(v). We therefore adopt as  standing assumptions that $(\lambda_r,\lambda_{r+1})$ is split and $l_{r+1}< \val_p(\lambda_r+p^{v_r}+1)$.

\q Our strategy is the same in all cases, namely we show that by subtracting a standard multi-sequence from an arbitrary coherent multi-sequence for $\lambda$ we can obtain a coherent multi-sequence $(y(t,u)_i)$ in which the extension sequences $(y(r+1,r+2)_i$ and $(y(r+2,r+3)_j)$ are zero. Thus, by Lemma 12.4, $(y(t,u)_i)$ is identically zero and hence any coherent multi-sequence is standard.

\q Our detailed analysis begins with the case in which $(\lambda_r,\lambda_{r+1},\lambda_{r+2})$ has the form described in Proposition 10.14(i).

\begin{lemma} Suppose  $l_{r+2}\geq v_r=v_{r+1}$, with $l_{r+1}<\val_p(\lambda_r+p^{v_r}+1)$ and $\val_p(\lambda_{r+1}-p^{v_r}+1)>l_{r+2}$.   Then $\lambda$ is split.

\end{lemma}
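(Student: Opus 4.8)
The plan is to recognise $(\lambda_r,\lambda_{r+1},\lambda_{r+2})$ as the three part partition of Proposition 10.14(i), exploit the two dimensional description of its space of coherent triples, and then propagate information along $\lambda$ until Lemma 12.4 applies. Writing $a=\lambda_r$, $b=\lambda_{r+1}$, $c=\lambda_{r+2}$ one has $v=v_r$, $w=v_{r+1}$, $\gamma=\len_p(\lambda_{r+2})=l_{r+2}$, so the hypotheses of the Lemma, together with the standing assumption that $(\lambda_r,\lambda_{r+1})$ is split, say precisely that $(a,b)$ is split, $(a+p^v,b)$ is James, $\gamma\geq v=w$ and $\val_p(b-p^v+1)>\gamma$; this is case (i) of Proposition 10.14. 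Hence $E(\lambda_r,\lambda_{r+1},\lambda_{r+2})$ is spanned by the standard triple and the ``special'' coherent triple $(x_i),(y_j),(z_k)$ with $x_{p^{v_r}}=1$ and all other entries zero.

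First I would take an arbitrary coherent multi-sequence $(y(t,u)_i)$ for $\lambda$ and restrict it to rows $r,r+1,r+2$, getting $\alpha\cdot(\hbox{standard})+c\cdot(\hbox{special})$ for scalars $\alpha,c$. The standard triple has $(r+1,r+2)$-component the standard extension sequence of $(\lambda_{r+1},\lambda_{r+2})$, which is non-zero since that pair is not James ($l_{r+2}\geq v_{r+1}$), while the special triple has zero $(r+1,r+2)$-component; so, subtracting $\alpha$ times the standard multi-sequence for $\lambda$, I may assume $\alpha=0$. Thus I am reduced to the case $(y(r+1,r+2)_j)=0$, $(y(r,r+2)_k)=0$ and $y(r,r+1)_i=c\,\delta_{i,p^{v_r}}$, and I must show the whole multi-sequence is zero.

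The key step is to prove $(y(r+2,r+3)_j)=0$, equivalently $c=0$. Applying the commuting relation (C') to the non-adjacent pairs $(r,r+1)$ and $(r+2,r+3)$ at the index $i=p^{v_r}$ (legitimate, as $(\lambda_r,\lambda_{r+1})$ not being James forces $\lambda_{r+1}\geq p^{v_r}$) yields $\binom{\lambda_{r+2}+j}{j}\,c=\binom{\lambda_r+p^{v_r}}{p^{v_r}}\,y(r+2,r+3)_j$, and $\binom{\lambda_r+p^{v_r}}{p^{v_r}}=(\lambda_r)_{v_r}+1\neq0$ in $K$ because $\val_p(\lambda_r+1)=v_r$; so $(y(r+2,r+3)_j)$ is $c/((\lambda_r)_{v_r}+1)$ times the standard extension sequence of $(\lambda_{r+2},\lambda_{r+3})$. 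To force $c=0$ I would extract the coherent triple on rows $r+1,r+2,r+3$, whose $x$-component $(y(r+1,r+2)_i)$ vanishes; relation (T3a) at $i=j$ then gives $y(r+1,r+3)_k=\binom{\lambda_{r+1}+k}{k}\,y(r+2,r+3)_k$, and (T1) and (T3b) become $\binom{\lambda_{r+1}+i+k}{i}\binom{\lambda_{r+1}+k}{k}\binom{\lambda_{r+2}+k}{k}\,c=0$ and $\binom{\lambda_{r+1}+i}{i}\binom{\lambda_{r+2}+j}{j}\,c=0$ on their admissible ranges. A Lucas's Formula computation closes the argument: the hypotheses force the digit pattern $(\lambda_{r+1})_{v_r}=\cdots=(\lambda_{r+1})_{l_{r+2}}=0$ (from $\val_p(\lambda_{r+1}-p^{v_r}+1)>l_{r+2}$), and $(\lambda_{r+2},\lambda_{r+3})$ not being James gives $\lambda_{r+3}\geq p^{w'}$ with $w'=\val_p(\lambda_{r+2}+1)$ and $(\lambda_{r+2})_{w'}\neq p-1$; distinguishing the subcases $w'<v_r$ (where one uses the second relation with $i=p^{v_r}$, $j=p^{w'}$) and $w'\geq v_r$ (where one uses the first relation with $i=p^{v_r}$, $k=p^{w'}$) there are admissible indices making the relevant product of binomials non-zero modulo $p$, whence $c=0$.

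Finally, with $(y(r+1,r+2)_i)=0$ and $(y(r+2,r+3)_i)=0$ in hand, Lemma 12.4 forces $(y(t,u)_i)$ to be identically zero, so $E(\lambda)$ is spanned by the standard multi-sequence and $\lambda$ is split. The only genuine obstacle is the final Lucas's Formula step: one has to check that the very rigid digit patterns imposed by the three valuation/length hypotheses still leave room for the indices needed to make a product of binomial coefficients non-zero mod $p$, and that the two subcases on $\val_p(\lambda_{r+2}+1)$ are exhaustive; the remaining verifications are routine.
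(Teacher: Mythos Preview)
Your overall plan---normalise via Proposition 10.14(i), kill $(y(r+2,r+3)_j)$, then invoke Lemma 12.4---is exactly the paper's strategy.  The difference lies in \emph{which} commuting relation you exploit, and your choice makes the endgame unnecessarily hard.  The paper applies (C) to the pairs $(r,r+2)$ and $(r+1,r+3)$: since $(y(r,r+2)_k)=0$ already, and one may take $k=p^{v_r}\leq\lambda_{r+2}$ with $\binom{\lambda_r+p^{v_r}}{p^{v_r}}\neq 0$, this gives $(y(r+1,r+3)_i)=0$ in one line.  Now in the coherent triple for $(\lambda_{r+1},\lambda_{r+2},\lambda_{r+3})$ both the $x$- and $z$-components vanish, so (T3a) and (T3b) collapse to $\binom{\lambda_{r+1}+i}{i}y_j=0$; taking $i=p^{v_r}=p^{v_{r+1}}$ kills every $y_j$ with no further Lucas work.

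Your route---using (C) on $(r,r+1)$ and $(r+2,r+3)$ instead---does not kill the $z$-component, and this is what forces your final case split.  There the stated index choice has a genuine gap: when $p=2$ and $w'=v_r$, the factor $\binom{\lambda_{r+1}+2p^{v_r}}{p^{v_r}}$ in your (T1) relation vanishes, since $(\lambda_{r+1})_{v_r}=0$ and adding $2^{v_r+1}$ does not touch digit $v_r$.  A concrete instance is $p=2$, $(\lambda_r,\lambda_{r+1},\lambda_{r+2},\lambda_{r+3})=(6,4,2,2)$, where your product of binomials is zero for $i=k=1$.  The gap is repairable (for instance $i=2^{v_r+1}$ works, noting that $w'=v_r$ forces $l_{r+2}>v_r$ so this $i$ is admissible), but the paper's choice of commuting relation sidesteps the issue entirely and is the cleaner argument.
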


\begin{proof}  By Proposition 10.4(i), by subtracting a standard multi-sequence from an arbitrary multi-sequence for $\lambda$ we can obtain a multi-sequence $(y(t,u)_i)$ in which the sequences $(y(r+1,r+2)_j)=0$ and $(y(r,r+2)_k)=0$ and that  $y(r,r+1)_i=0$ for $i\neq p^{v_r}$. It remains  to prove that \\
$(y(r+2,r+3)_i)=0$. 

\q We first show  $(y(r+1,r+3)_k)=0$.  The pairs $(r,r+2)$ and $(r+1,r+3)$ are  related via the commuting relation (C), and so we have

$${{\lambda_r+k}\choose{k}}y(r+1,r+3)_i={{\lambda_{r+1}+i}\choose{i}}y(r,r+2)_k.$$

Since $(y(r,r+2)_k)=0$ we have 

$${{\lambda_r+k}\choose{k}}y(r+1,r+3)_i=0$$
for $1\leq k \leq \lambda_{r+2}$. Now as $l_{r+2}\geq v_r$ we can choose $k=p^{v_r}$ and we have that ${{\lambda_r+p^{v_r}}\choose{p^{v_r}}}\neq 0$. Therefore, we deduce that    $(y(r+1,r+3)_i)=0$.

\q We consider  $(\lambda_{r+1},\lambda_{r+2},\lambda_{r+3})$. We have the coherent triple $(x_i)$, $(y_j)$, $(z_k)$ with $x_i=y(r+1,r+2)_i$, $y_j=y(r+2,r+3)_j$, $z_k=y(r+1,r+3)_k$, for $1\leq i\leq \lambda_{r+2}$,   $1\leq j,k \leq \lambda_{r+3}$

\q Since 
$y((r+1,r+3)_k)=0$ and  $(y(r+1,r+2)_j)=0$ we have  $(x_i)=0$ and $(z_k)=0$.  Since   $v_r=v_{r+1}$, by setting $i=p^{v_r}$ in relations (T3a) and (T3b) we deduce  that $(y_j)=0$, i.e, $(y(r+2,r+3)_j)=0$ and we are done.

\end{proof}

\q We now treat the case in which $(\lambda_{r+1},\lambda_{r+2},\lambda_{r+3})$ has the form described in Proposition 10.14(iii).

\begin{lemma}  Suppose $l_{r+2}>v_r=v_{r+1}$, with $\len_p(\lambda_{r+1}+p^{l_{r+2}})<\val_p(\lambda_r+p^{v_r}+1)$,  $\val_p(\lambda_{r+1}-p^{v_r}+1)=l_{r+2}$ and $\lambda_{r+2}=\hat{\lambda}_{r+2}+p^{l_{r+2}}$, with $\hat{\lambda}_{r+2}<p^{v_r}$. Then $\lambda$ is split. 

\end{lemma}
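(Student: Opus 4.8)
The plan is to run the same argument as in Lemma~12.5. Under the standing assumptions of this subsection — $(\lambda_r,\lambda_{r+1})$ is split and $l_{r+1}<\val_p(\lambda_r+p^{v_r}+1)$, so that $(\lambda_r+p^{v_r},\lambda_{r+1})$ is James — the hypotheses of the Lemma say exactly that the three part partition $(\lambda_r,\lambda_{r+1},\lambda_{r+2})$ has the form described in Proposition 10.14(iii), with $a=\lambda_r$, $b=\lambda_{r+1}$, $c=\lambda_{r+2}$, $v=v_r$, $w=v_{r+1}$ (so $v=w$) and $\gamma=l_{r+2}$. Hence, given a coherent multi-sequence $(y(t,u)_i)$ for $\lambda$, its restriction to $(\lambda_r,\lambda_{r+1},\lambda_{r+2})$ is a linear combination of the standard triple and the distinguished triple of Proposition 10.14(iii); subtracting a suitable multiple of the standard multi-sequence for $\lambda$ we may assume this restriction equals a scalar $c$ times the distinguished triple, that is $y(r,r+1)_i=0$ for $i\neq p^{v_r}$ with $y(r,r+1)_{p^{v_r}}=c$; $y(r+1,r+2)_j=0$ for $j\neq p^{l_{r+2}}$ with $y(r+1,r+2)_{p^{l_{r+2}}}=-c\,(\lambda_{r+1})_{l_{r+2}}$; and $y(r,r+2)=0$. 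Everything reduces to showing $c=0$: then the restriction to $(\lambda_r,\lambda_{r+1},\lambda_{r+2})$ vanishes, so by Proposition 9.4(i) the multi-sequence $(y(t,u)_i)$ vanishes, every coherent multi-sequence is standard, and $\lambda$ is split.

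First I would record the base-$p$ data forced by the hypotheses. From $v_{r+1}=v_r$ and $\val_p(\lambda_{r+1}-p^{v_r}+1)=l_{r+2}$, Lucas's Formula gives $(\lambda_{r+1})_i=p-1$ for $i<v_r$, $(\lambda_{r+1})_i=0$ for $v_r\le i<l_{r+2}$, and $(\lambda_{r+1})_{l_{r+2}}\neq 0$; in particular $(\lambda_{r+1})_{v_r}=0$, so $\binom{\lambda_{r+1}+p^{l_{r+2}}+p^{v_r}}{p^{v_r}}=1$, and as always $\binom{\lambda_r+p^{v_r}}{p^{v_r}}=(\lambda_r)_{v_r}+1\neq 0$. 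Then I would apply the commuting relation (C) of Section~4 to the pair $(r,r+2)$ and the pair $(r+1,r+3)$ (four distinct consecutive indices, so (C) applies, and $n\ge r+3$ here): using $y(r,r+2)=0$ together with $\binom{\lambda_r+p^{v_r}}{p^{v_r}}\neq 0$, one gets $y(r+1,r+3)_i=0$ for all $i$, exactly as in Lemma~12.5.

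Next I would work inside the triple $(\lambda_{r+1},\lambda_{r+2},\lambda_{r+3})$, with $x_i=y(r+1,r+2)_i$ (supported only at $p^{l_{r+2}}$), $y_j=y(r+2,r+3)_j$, $z_k=y(r+1,r+3)_k=0$, and deduce $x_{p^{l_{r+2}}}=0$, treating separately $\lambda_{r+3}\ge p^{v_r}$ and $\lambda_{r+3}<p^{v_r}$. If $\lambda_{r+3}\ge p^{v_r}$, relation (T1) with $i=p^{l_{r+2}}$, $k=p^{v_r}$ reads $\binom{\lambda_{r+1}+p^{l_{r+2}}+p^{v_r}}{p^{v_r}}x_{p^{l_{r+2}}}=\binom{\lambda_{r+1}+p^{l_{r+2}}+p^{v_r}}{p^{l_{r+2}}}z_{p^{v_r}}=0$, with left coefficient $1$, so $x_{p^{l_{r+2}}}=0$. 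If $\lambda_{r+3}<p^{v_r}$, then every $j\le\lambda_{r+3}$ satisfies $j<p^{v_r}<p^{l_{r+2}}$, so (T3b) with $i=p^{v_r}$ has vanishing right-hand side (a nonzero term would require $p^{v_r}-s=p^{l_{r+2}}$) and left coefficient $\binom{\lambda_{r+1}+p^{v_r}}{p^{v_r}}=1$, forcing $y_j=0$ for all $j$; substituting this into (T3b) with $i=p^{l_{r+2}}$ leaves only the $s=0$ term, namely $\binom{\hat\lambda_{r+2}+j}{j}\,x_{p^{l_{r+2}}}=0$ for $1\le j\le\lambda_{r+3}$ with $\hat\lambda_{r+2}=\lambda_{r+2}-p^{l_{r+2}}$. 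In this subcase $\hat\lambda_{r+2}\neq p^{v_r}-1$ (otherwise $\val_p(\lambda_{r+2}+1)=v_r$ and, $(\lambda_{r+2},\lambda_{r+3})$ being not James, $\lambda_{r+3}\ge p^{v_r}$, a contradiction), whence $\val_p(\hat\lambda_{r+2}+1)=\val_p(\lambda_{r+2}+1)\le\len_p(\lambda_{r+3})$, so $(\hat\lambda_{r+2},\lambda_{r+3})$ is not James and some $\binom{\hat\lambda_{r+2}+j}{j}$ with $1\le j\le\lambda_{r+3}$ is nonzero; hence again $x_{p^{l_{r+2}}}=0$. In both cases $c\,(\lambda_{r+1})_{l_{r+2}}=0$, and since $(\lambda_{r+1})_{l_{r+2}}\neq0$ we conclude $c=0$, as required.

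The routine binomial manipulations aside, I expect the only delicate points to be: keeping the indices used in (C), (T1) and (T3b) inside their prescribed ranges (which is what forces the case division according to the size of $\lambda_{r+3}$); getting the Lucas's-Formula digit bookkeeping for $\lambda_r$ and $\lambda_{r+1}$ exactly right; and the verification that $(\hat\lambda_{r+2},\lambda_{r+3})$ is not a James pair in the subcase $\lambda_{r+3}<p^{v_r}$.
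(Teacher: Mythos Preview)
Your proof is correct, but it takes a somewhat different route from the paper's. After subtracting the standard multi-sequence, both arguments begin the same way (this is indeed Lemma~12.5's opening): apply (C) to the pairs $(r,r+2)$ and $(r+1,r+3)$, using $y(r,r+2)=0$ and $\binom{\lambda_r+p^{v_r}}{p^{v_r}}\neq 0$, to conclude $y(r+1,r+3)=0$. From here the paper diverges from you and from Lemma~12.5. Rather than working inside the triple $(\lambda_{r+1},\lambda_{r+2},\lambda_{r+3})$, where $x=y(r+1,r+2)$ is still nonzero, the paper first applies (C) to $(r+1,r+2)$ and $(r,r+3)$ at $j=p^{v_r}$ (using $y(r+1,r+2)_{p^{v_r}}=0$ and $\binom{\lambda_{r+1}+p^{v_r}}{p^{v_r}}\neq 0$) to get $y(r,r+3)=0$, and then works inside the triple $(\lambda_r,\lambda_{r+2},\lambda_{r+3})$, where both $x=y(r,r+2)$ and $z=y(r,r+3)$ already vanish; setting $i=p^{v_r}$ in (T3a)/(T3b) gives $y(r+2,r+3)=0$ uniformly. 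A final application of (C) to $(r,r+1)$ and $(r+2,r+3)$ (together with $(\lambda_{r+2},\lambda_{r+3})$ not James) kills $y(r,r+1)$, and hence $y(r+1,r+2)$. The paper's route thereby avoids your case split on the size of $\lambda_{r+3}$ and the auxiliary non-James check for $(\hat\lambda_{r+2},\lambda_{r+3})$; your route stays closer to Lemma~12.5 and kills $y(r+1,r+2)$ directly, at the cost of that extra case analysis.
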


\begin{proof}   By Proposition 10.4(iii), by subtracting a standard multi-sequence from an arbitrary multi-sequence for $\lambda$ we can obtain a multi-sequence $(y(t,u)_i)$ in which $(y(r,r+2)_k)=0$, $y(r,r+1)_i=0$ for $i\neq p^{v_r}$, $y(r+1,r+2)_j=0$ for $j\neq p^{l_{r+2}}$ and 

 $$y(r+1,r+2)_{p^{l_{r+2}}}+(\lambda_{r+1})_{l_{r+2}}y(r,r+1)_{p^{v_r}}=0. \eqno{(*)}$$ 
 
 Hence, by Lemma 12.4, it suffices to show that $y(r+1,r+2)_{p^{l_{r+2}}}=0$ and $(y(r+2,r+3)_j)=0$.  Notice though that if $(y(r+2,r+3)_j)=0$, then by the commuting relation for the pairs $(r,r+1)$ and $(r+2,r+3)$ (and the fact that $(\lambda_{r+2},\lambda_{r+3})$ is not James) we get that $(y(r,r+1)_i)=0$,  in particular $y(r,r+1)_{p^{v_r}}=0$. Then from (*)  we deduce directly that  $y(r+1,r+2)_{p^{l_{r+2}}}=0$. Therefore, it is enough to show that $(y(r+2,r+3)_j)=0$. We show first that $(y(r,r+3)_k)=0$. The  pairs $(r+1,r+2)$ and $(r,r+3)$ are  related via the commuting relation (C), and
 
$${{\lambda_{r+1}+j}\choose{j}}y(r,r+3)_k={{\lambda_{r}+k}\choose{k}}y(r+1,r+2)_j.$$

Now since $l_{r+2}>v_r=v_{r+1}$ and $y(r+1,r+2)_{p^{v_r}}=0$, setting $j=p^{v_r}$ we get directly that $y(r,r+3)_k=0$.

\q We consider now  $(\lambda_r,\lambda_{r+2},\lambda_{r+3})$.  We have the coherent triple $(x_i)$, $(y_j)$, $(z_k)$ with $x_i=y(r,r+2)_i$, $y_j=y(r+2,r+3)_j$, $z_k=y(r,r+3)_k$, for $1\leq i\leq \lambda_{r+2}$,   $1\leq j,k \leq \lambda_{r+3}$. Moreover, we have $(x_i)=0$, $(z_k)=0$. 
 Setting $i=p^{v_r}$ in relations (T3a) and (T3b) we obtain $(y_j)=0$, i.e.,  $y(r+2,r+3)_j=0$ for all $1 \leq j \leq \lambda_{r+3}$,  and we are done.  

\end{proof}

\q We now treat the case in which $(\lambda_{r+1},\lambda_{r+2},\lambda_{r+3})$ has the form described in Proposition 10.14(iv).

\begin{lemma}   Suppose $l_{r+2}=v_r<v_{r+1}$, with $\len_p(\lambda_{r+1}+p^{l_{r+2}})<\val_p(\lambda_r+p^{v_r}+1)$ and $\lambda_{r+2}=\hat{\lambda}_{r+2}+p^{l_{r+2}}$ with $\hat{\lambda}_{r+2}<p^{v_r}$. Then  $\lambda$ is split.
\end{lemma}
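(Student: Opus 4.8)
The plan is to follow the template already established in Lemmas 12.5 to 12.7: we are in the situation of Proposition 10.14(iv) applied to the three-part partition $(\lambda_r,\lambda_{r+1},\lambda_{r+2})$, and our goal is to subtract a standard multi-sequence and then force everything to vanish. First I would invoke Proposition 10.14(iv): since $(\lambda_r,\lambda_{r+1})$ is split, $l_{r+2}=v_r<v_{r+1}$, $\len_p(\lambda_{r+1}+p^{l_{r+2}})<\val_p(\lambda_r+p^{v_r}+1)$ and $\lambda_{r+2}=\hat\lambda_{r+2}+p^{l_{r+2}}$ with $\hat\lambda_{r+2}<p^{v_r}$, we are in case (iv) of that Proposition, so after subtracting a multiple of the standard multi-sequence from an arbitrary coherent multi-sequence we obtain $(y(t,u)_i)$ with $(y(r,r+1)_i)=0$, with $y(r+1,r+2)_j=0$ for $j\neq p^{v_r}$ (note $v=\gamma$ in the notation there, where $\gamma=l_{r+2}=v_r$), with $y(r+1,r+2)_{p^{v_r}}=1$ possibly nonzero, and $y(r,r+2)_k=-1\cdot(\text{scalar})$ at $k=p^{v_r}$ and zero elsewhere. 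By Lemma 12.4 it then suffices to show $y(r+1,r+2)_{p^{v_r}}=0$ and $(y(r+2,r+3)_j)=0$; and as in Lemma 12.7, once $(y(r+2,r+3)_j)=0$ the commuting relation (C) for the pairs $(r,r+1)$ and $(r+2,r+3)$ together with $(\lambda_{r+2},\lambda_{r+3})$ not being James forces $(y(r,r+1)_i)=0$, hence — tracing back through the defining relation linking $y(r+1,r+2)_{p^{v_r}}$ to $y(r,r+1)_{p^{v_r}}$ and $y(r,r+2)_{p^{v_r}}$ in Lemma 10.7 — also $y(r+1,r+2)_{p^{v_r}}=0$. So the whole problem reduces to proving $(y(r+2,r+3)_j)=0$.

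To kill $(y(r+2,r+3)_j)$ I would first dispose of $(y(r,r+3)_k)$ and $(y(r+1,r+3)_k)$ using the commuting relations, then apply a coherent-triple argument to a suitable three-part sub-partition containing $\lambda_{r+3}$. Concretely: the pairs $(r,r+2)$ and $(r+1,r+3)$ are related via (C), giving $\binom{\lambda_r+k}{k}y(r+1,r+3)_i=\binom{\lambda_{r+1}+i}{i}y(r,r+2)_k$. Since $(y(r,r+2)_k)$ is supported only at $k=p^{v_r}$, I need $\binom{\lambda_r+p^{v_r}}{p^{v_r}}$; by the structure of $\lambda_r$ coming from the case-(iv) hypothesis ($\val_p(\lambda_r+p^{v_r}+1)>l_{r+2}=v_r$, so $(\lambda_r)_{v_r}$ is not $p-1$ — in fact one should check it is $p-2$, exactly as in Remark 10.3), this binomial is nonzero, and evaluating at other values of $k$ gives that $(y(r+1,r+3)_i)$ is a multiple of the point-type sequence supported at $p^{v_r}$ — but then testing the equation at $i$ not a power of $p$ via an extension-sequence argument, or at a second value of $k$, should force $(y(r+1,r+3)_i)=0$ outright. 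Similarly the pairs $(r+1,r+2)$ and $(r,r+3)$ are related by (C): $\binom{\lambda_{r+1}+j}{j}y(r,r+3)_k=\binom{\lambda_r+k}{k}y(r+1,r+2)_j$, and since $y(r+1,r+2)_j$ is supported only at $j=p^{v_r}$ while $l_{r+2}=v_r<v_{r+1}$ means the relevant binomial $\binom{\lambda_{r+1}+p^{v_r}}{p^{v_r}}$ is nonzero (because $(\lambda_{r+1})_{v_r}$ must be examined — here I would use $\val_p(\lambda_{r+1}+1)=v_{r+1}>v_r$ to conclude $(\lambda_{r+1})_{v_r}=p-1$, hmm, which would make it zero) — so more care is needed here; if that binomial vanishes one instead uses it to conclude $y(r,r+3)_k=0$ directly. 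Either way one arrives at $(y(r,r+3)_k)=0$ and $(y(r+1,r+3)_k)=0$.

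Finally, with $(y(r,r+3)_k)=0$ (and $(y(r,r+2)_i)$ known, being a multiple of the point sequence), I would apply the coherent-triple relations to $(\lambda_r,\lambda_{r+2},\lambda_{r+3})$ — or to $(\lambda_{r+1},\lambda_{r+2},\lambda_{r+3})$, whichever is cleaner — with $x_i=y(r,r+2)_i$, $y_j=y(r+2,r+3)_j$, $z_k=y(r,r+3)_k$: since $x_i$ is supported only at $i=p^{v_r}$ and $(z_k)=0$, setting $i=p^{v_r}$ in (T3a) and (T3b) (exactly as done at the end of Lemmas 12.5 and 12.7) forces each $y(r+2,r+3)_j=0$, using that $\binom{\lambda_r+p^{v_r}}{p^{v_r}}\neq 0$. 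This gives $(y(r+2,r+3)_j)=0$, hence by the reduction above every coherent multi-sequence is standard, i.e. $\lambda$ is split. The main obstacle I anticipate is bookkeeping with Lucas's formula for the two commuting relations — specifically determining the $v_r$-th and $v_{r+1}$-th base-$p$ digits of $\lambda_r$ and $\lambda_{r+1}$ under the case-(iv) hypotheses, so as to know which binomials are nonzero; depending on those digits one either divides by the binomial (killing the $(r+1,r+3)$ or $(r,r+3)$ sequence) or reads off a vanishing directly, and one must make sure at least one of these routes always succeeds. Everything else is the same machinery already used verbatim in Lemmas 12.5--12.8.
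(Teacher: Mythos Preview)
Your plan has the right scaffolding but contains two genuine gaps, and the paper's proof bypasses both with much shorter arguments.

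\textbf{First gap: killing $(y(r+2,r+3)_j)$.} You propose to first kill $(y(r,r+3)_k)$ and $(y(r+1,r+3)_k)$ via commuting relations and then run a coherent-triple argument on $(\lambda_r,\lambda_{r+2},\lambda_{r+3})$. But your own parenthetical worry is fatal: since $l_{r+2}=v_r<v_{r+1}$, the pair $(\lambda_{r+1},\lambda_{r+2})$ is \emph{James}, so $\binom{\lambda_{r+1}+j}{j}=0$ for every $1\leq j\leq \lambda_{r+2}$, and the commuting relation for $(r+1,r+2)$ and $(r,r+3)$ reads $0=0$ --- it gives no information about $(y(r,r+3)_k)$. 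The paper avoids this entirely by running the commuting relation in the opposite direction: you already \emph{have} $(y(r,r+1)_i)=0$ from Proposition~10.14(iv), and since $(\lambda_r,\lambda_{r+1})$ is not James there exists $i\leq\lambda_{r+1}$ with $\binom{\lambda_r+i}{i}\neq 0$; the relation $(C')$ for $(r,r+1)$ and $(r+2,r+3)$ then gives $(y(r+2,r+3)_j)=0$ in one line. Your detour through $(r,r+3)$ and $(r+1,r+3)$ is unnecessary and, as written, does not close.

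\textbf{Second gap: killing $y(r+1,r+2)_{p^{v_r}}$.} You write that one can ``trace back through the defining relation linking $y(r+1,r+2)_{p^{v_r}}$ to $y(r,r+1)_{p^{v_r}}$ and $y(r,r+2)_{p^{v_r}}$ in Lemma~10.7''. But the relation coming from Proposition~10.14(iv) (equivalently Lemma~10.7) is
\[
y(r+1,r+2)_{p^{v_r}}+y(r,r+2)_{p^{v_r}}=0,
\]
with $(y(r,r+1)_i)=0$ identically; there is no link to $y(r,r+1)_{p^{v_r}}$ that would let you conclude $y(r+1,r+2)_{p^{v_r}}=0$. The paper's key observation here is again that $(\lambda_{r+1},\lambda_{r+2})$ is James (because $l_{r+2}<v_{r+1}$) while $(\lambda_{r+2},\lambda_{r+3})$ is not James; applying the claim at the start of the proof of Lemma~8.1 to the coherent triple for $(\lambda_{r+1},\lambda_{r+2},\lambda_{r+3})$ forces the ``$x$''-part, namely $(y(r+1,r+2)_i)$, to vanish outright. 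With $(y(r+1,r+2)_i)=0$ and $(y(r+2,r+3)_j)=0$ established, Lemma~12.4 finishes the proof. You should replace your roundabout plan with these two direct steps.
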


\begin{proof}  By Proposition 10.4(iv), by subtracting a standard multi-sequence from an arbitrary multi-sequence for $\lambda$ we can obtain a multi-sequence $(y(t,u)_i)$ in which $(y(r,r+1)_i)=0$ and $y(r+1,r+2)_j=y(r,r+2)_k=0$ for $j,k\neq p^{v_r}$, with

$$y(r+1,r+2)_{p^{v_r}}+y(r,r+2)_{p^{v_r}}=0.$$ 

Since  $(y(r,r+1)_i)=0$ (and $(\lambda_{r},\lambda_{r+1})$ is not James) from  the commuting relation (C) for the pairs $(r,r+1)$ and $(r+2,r+3)$ we get directly that 
 $(y(r+2,r+3)_j)=0$.  Hence it remains  to prove that $(y(r+1,r+2)_i)=0$. We consider  $(\lambda_{r+1},\lambda_{r+2},\lambda_{r+3})$. Since $l_{r+2}<v_{r+1}$, we have that $(\lambda_{r+1},\lambda_{r+2})$ is a James pair. Since $(\lambda_{r+2},\lambda_{r+3})$ is not a James partition we get by Lemma 8.1 that in any coherent triple  for  $(\lambda_{r+1},\lambda_{r+2},\lambda_{r+3})$ we have  $(y(r+1,r+2)_i)=0$,  and we are done.

\end{proof}

\q We now treat the case in which $(\lambda_{r+1},\lambda_{r+2},\lambda_{r+3})$ has the form described in Proposition 10.14(v).

\begin{lemma} Suppose $l_{r+2}=v_r>v_{r+1}$, with $\len_p(\lambda_{r+1}+p^{l_{r+2}})<\val_p(\lambda_r+p^{v_r}+1)$ and $\lambda_{r+2}=\hat{\lambda}_{r+2}+p^{l_{r+2}}$, with $\hat{\lambda}_{r+2}<p^{v_{r+1}}$. Then  $\lambda$ is split.

\end{lemma}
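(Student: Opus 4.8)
The plan is to mimic the proof strategy used in the immediately preceding Lemmas 12.5 through 12.7, which handle the cases where $(\lambda_r,\lambda_{r+1},\lambda_{r+2})$ has the form of Proposition 10.14(i), (iii) and (iv) respectively. The standing assumptions are that $(\lambda_r,\lambda_{r+1})$ is split with $l_{r+1}<\val_p(\lambda_r+p^{v_r}+1)$, and we are now in the case $l_{r+2}=v_r>v_{r+1}$ with $\len_p(\lambda_{r+1}+p^{l_{r+2}})<\val_p(\lambda_r+p^{v_r}+1)$ and $\lambda_{r+2}=\hat\lambda_{r+2}+p^{l_{r+2}}$ with $\hat\lambda_{r+2}<p^{v_{r+1}}$, which is exactly Proposition 10.14(v) applied to $(\lambda_r,\lambda_{r+1},\lambda_{r+2})$. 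First I would invoke Proposition 10.14(v): by subtracting a suitable multiple of the standard multi-sequence from an arbitrary coherent multi-sequence for $\lambda$, we may assume we have a coherent multi-sequence $(y(t,u)_i)$ in which $(y(r,r+1)_i)=0$ (note in case (v) of 10.14 the nonstandard triple has $(x_i)=0$, i.e.\ the $(\lambda_r,\lambda_{r+1})$-component vanishes), with $y(r+1,r+2)_j=y(r,r+2)_k=0$ for $j,k\neq p^{v_r}$ and the single nonzero relation $y(r+1,r+2)_{p^{v_r}}=y(r,r+2)_{p^{v_r}}$ (up to sign, as recorded in 10.14(v)).

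The key step then exploits the fact that $(y(r,r+1)_i)=0$ while $(\lambda_r,\lambda_{r+1})$ is not James. Applying the commuting relation (C) for the disjoint pairs $(r,r+1)$ and $(r+2,r+3)$ — in the special form $(C')$ of Section 9 — gives $\binom{\lambda_{r+2}+j}{j}\,y(r,r+1)_i=\binom{\lambda_r+i}{i}\,y(r+2,r+3)_j$ for appropriate $i,j$. Since the left-hand side vanishes and, choosing $i=p^{v_r}$, the coefficient $\binom{\lambda_r+p^{v_r}}{p^{v_r}}$ is nonzero by Lucas's Formula (because $(\lambda_r)_{v_r}=p-2\neq p-1$ under the standing assumption, cf.\ Remark 11.4-type reasoning), we deduce $(y(r+2,r+3)_j)=0$. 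It then remains to show $(y(r+1,r+2)_i)=0$, for which I would examine the triple $(\lambda_{r+1},\lambda_{r+2},\lambda_{r+3})$: here $l_{r+2}=v_r>v_{r+1}$ forces $\len_p(\lambda_{r+2})>v_{r+1}$... more precisely since $\hat\lambda_{r+2}<p^{v_{r+1}}$ and $l_{r+2}=v_r>v_{r+1}$, the pair $(\lambda_{r+1},\lambda_{r+2})$ need not be James, so I instead argue directly. We have the coherent triple with first component $(y(r+1,r+2)_i)$, second $(y(r+2,r+3)_j)=0$; combining $(y(r,r+1)_i)=0$ with the relation $y(r+1,r+2)_{p^{v_r}}=y(r,r+2)_{p^{v_r}}$ from 10.14(v) and then using a relation like (T1) or (T3a) in the triple $(\lambda_r,\lambda_{r+1},\lambda_{r+2})$ with $i=j=p^{v_r}$ — precisely the computation that produced that relation — forces $y(r,r+2)_{p^{v_r}}=0$ hence $y(r+1,r+2)_{p^{v_r}}=0$, so $(y(t,u)_i)$ is identically zero.

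Concretely, having killed $(y(r+2,r+3)_j)$, I would then re-examine the coherence equations for the triple $(\lambda_r,\lambda_{r+1},\lambda_{r+2})$: with $(y(r,r+1)_i)=0$ already, Proposition 10.14(v) tells us the only possibly-nonzero entries are $y(r+1,r+2)_{p^{v_r}}$ and $y(r,r+2)_{p^{v_r}}$ linked by one linear relation, so one further scalar relation pins both to zero. The cleanest route is via the commuting relation (C) for pairs $(r,r+1)$ and $(r+1,r+2)$ — no wait, those share an index, so instead I use (C) for $(r+1,r+2)$ against some pair outside, or simply note that once $(y(r+2,r+3)_j)=0$, Lemma 12.4 (the injectivity of restriction $E(\lambda)\to E(\lambda_{r+1},\lambda_{r+2},\lambda_{r+3})$) applies provided we also know $(y(r+1,r+2)_i)=0$; alternatively Proposition 9.4(i) reduces everything to the triple $(\lambda_r,\lambda_{r+1},\lambda_{r+2})$ and since in that triple we have extracted a multi-sequence with vanishing $(\lambda_r,\lambda_{r+1})$-part and the 10.14(v)-relation already forces the remaining part to be a fixed multiple of $(x_i)=0$, we conclude. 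So the conclusion is that $E(\lambda)$ consists only of standard multi-sequences, i.e.\ $\lambda$ is split.

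The main obstacle I anticipate is bookkeeping the sign/scalar in the relation $y(r+1,r+2)_{p^{v_r}}=\pm y(r,r+2)_{p^{v_r}}$ correctly from Proposition 10.14(v) and making sure the chosen coherence equation (T1), (T3a), (T3b) or (C) in the relevant triple genuinely has a nonzero binomial coefficient in front of the term we wish to annihilate — this is where Lucas's Formula together with the standing hypotheses $(\lambda_r)_{v_r}=p-2$, $(\lambda_{r+1})_{v_{r+1}}\neq p-1$ and $\hat\lambda_{r+2}<p^{v_{r+1}}$ must be invoked with care, exactly as in the final-step computations of Lemmas 12.5–12.7. Once the arithmetic is set up the argument is short; I expect it to occupy only a few lines, parallel to Lemma 12.8.
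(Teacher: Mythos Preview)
Your opening moves are correct and match the paper: invoke Proposition 10.14(v) to reduce to a coherent multi-sequence with $(y(r,r+1)_i)=0$, $y(r+1,r+2)_j=y(r,r+2)_k=0$ for $j,k\neq p^{v_r}$, and $y(r+1,r+2)_{p^{v_r}}+y(r,r+2)_{p^{v_r}}=0$; then use the commuting relation (C) for $(r,r+1)$ and $(r+2,r+3)$ to get $(y(r+2,r+3)_j)=0$.

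The gap is in your final step. You propose to return to the triple $(\lambda_r,\lambda_{r+1},\lambda_{r+2})$ and ``re-examine'' its coherence equations to force $y(r,r+2)_{p^{v_r}}=0$. This is circular: the coherence equations for that triple are exactly what produced the single relation $y(r+1,r+2)_{p^{v_r}}+y(r,r+2)_{p^{v_r}}=0$ in the first place, and they impose no further independent constraint --- the nonstandard coherent triple of 10.14(v) lives precisely in the one-dimensional space left over. Your appeals to Lemma 12.4 or Proposition 9.4(i) also presuppose what you are trying to prove. You need a genuinely new equation coming from the row $\lambda_{r+3}$.

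The paper obtains this as follows. First it shows $(y(r,r+3)_k)=0$ via the commuting relation (C) for the disjoint pairs $(r,r+3)$ and $(r+1,r+2)$, taking $j=p^{v_{r+1}}$: since $v_{r+1}<v_r$ one has $y(r+1,r+2)_{p^{v_{r+1}}}=0$, while $\binom{\lambda_{r+1}+p^{v_{r+1}}}{p^{v_{r+1}}}\neq 0$. Then it works in the triple $(\lambda_r,\lambda_{r+2},\lambda_{r+3})$, where now $(y(r+2,r+3)_j)=(y(r,r+3)_k)=0$, and applies (T3b) with $i=p^{v_r}$ and $j=p^{v_{r+2}}$. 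Since $\lambda_{r+2}-p^{v_r}=\hat\lambda_{r+2}$ and $v_{r+2}=\val_p(\hat\lambda_{r+2}+1)$, the resulting coefficient $\binom{\hat\lambda_{r+2}+p^{v_{r+2}}}{p^{v_{r+2}}}$ is nonzero, forcing $y(r,r+2)_{p^{v_r}}=0$. The key quantity $v_{r+2}$ never appears in your outline; it is what makes the standing hypothesis that $(\lambda_{r+2},\lambda_{r+3})$ is not James (so $p^{v_{r+2}}\leq\lambda_{r+3}$) do actual work.
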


\begin{proof}  By Proposition 10.4(v), by subtracting a standard multi-sequence from an arbitrary multi-sequence for $\lambda$ we can obtain a multi-sequence $(y(t,u)_i)$ in which $(y(r,r+1)_i)=0$,  and  $y(r+1,r+2)_j=y(r,r+2)_k=0$ for $j,k\neq p^{v_r}$ and 

$$y(r+1,r+2)_{p^{v_r}}+y(r,r+2)_{p^{v_r}}=0.$$ 

Since  $(y(r,r+1)_i)=0$ (and $(\lambda_{r},\lambda_{r+1})$ is not James)  the commuting relation (C) for the pairs $(r,r+1)$ and $(r+2,r+3)$ gives directly that \\
$(y(r+2,r+3)_j)=0$.  Hence it remains  to prove that $y(r+1,r+2)_{p^{v_r}}=0$, or equivalently by the equation above, that $y(r,r+2)_{p^{v_r}}=0$.

\q We show first that $(y(r,r+3)_k)=0$. By the commuting relation (C) for the pairs $(r,r+3)$ and $(r+1,r+2)$  we have,
 
$${{\lambda_{r+1}+j}\choose{j}}y(r,r+3)_k={{\lambda_{r}+k}\choose{k}}y(r+1,r+2)_j.$$
Setting $j=p^{v_{r+1}}$ and using the fact that $y(r+1,r+2)_{p^{v_{r+1}}}=0$, as $v_r>v_{r+1}$, we get that $(y(r,r+3)_k)=0$.

\q We consider now  $(\lambda_r,\lambda_{r+2},\lambda_{r+3})$ and the corresponding coherent triple $(y(r,r+2)_i)$, $(y(r+2,r+3)_j)$, $(y(r,r+3)_k)$. Setting $i=p^{v_r}$ and $j=p^{v_{r+2}}$ in relation (T3b) we get

$$0={{\hat{\lambda}_{r+2}+p^{v_{r+2}}}\choose{p^{v_{r+2}}}} y(r,r+2)_{p^{v_r}}$$

where  $\lambda_{r+2}=\hat{\lambda}_{r+2}+p^{v_r}$ with $\hat{\lambda}_{r+2}<p^{v_{r+1}}$.  Now  $v_{r+1}<v_r=l_{r+2}$ and  we have  $v_{r+2}=\val_p(\hat{\lambda}_{r+2}+1)$ so that   ${{\hat{\lambda}_{r+2}+p^{v_{r+2}}}\choose{p^{v_{r+2}}}}\neq 0$ and therefore  \\
$y(r,r+2)_{p^{v_r}}=0$. So we are done.

\end{proof}

\q The  Lemmas 12.9 to 12.13  which follow correspond to the cases in which $(\lambda_r,\lambda_{r+1},\lambda_{r+2})$ has the form described in Proposition 11.16 (i)-(v). We therefore adopt as  standing assumption that $(\lambda_r,\lambda_{r+1})$ is pointed.

\q We use  the same strategy as in the above cases, namely we show that by subtracting a standard multi-sequence from an arbitrary coherent multi-sequence for $\lambda$ we can obtains a coherent multi-sequence $(y(t,u)_i)$ is which the extension sequences $(y(r+1,r+2)_i$ and $(y(r+2,r+3)_j)$ are zero and hence, by Lemma 12.4, $(y(t,u)_i)$ is identically zero and hence any coherent multi-sequence is standard.

\q The first case is that in which  $(\lambda_{r+1},\lambda_{r+2},\lambda_{r+3})$ has the form described in Proposition 11.16(i).

\begin{lemma}

Suppose that $l_{r+1}>l_{r+2}\geq v_r=v_{r+1}$ with $\val_p(\lambda_r+p^{v_r}+1)\geq l_{r+1}$  and  $\lambda_{r+1}=p^{v_r}-1+p^{l_{r+1}}$.  Then  $\lambda$ is split.

\end{lemma}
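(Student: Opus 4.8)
The plan is to follow the strategy already established in the preceding lemmas of this subsection, namely: starting from an arbitrary coherent multi-sequence for $\lambda$, subtract a suitable standard multi-sequence and then exploit the structure of the three-part partition $(\lambda_r,\lambda_{r+1},\lambda_{r+2})$ (which here is in the form of Proposition 11.16(i)) to show that the resulting multi-sequence has $(y(r+1,r+2)_i)=0$ and $(y(r+2,r+3)_j)=0$, so that Lemma 12.4 forces it to vanish identically. Since in the Proposition 11.16(i) case $(\lambda_r,\lambda_{r+1})$ is pointed and $(\lambda_{r+1},\lambda_{r+2},\lambda_{r+3})$ is here forced into the Proposition 11.16(i) shape again (with indices shifted up by one), the key input is that after subtracting a standard multi-sequence we may assume $y(r,r+1)_i=0$ for $i\neq p^{v_r}$, $(y(r,r+2)_k)=0$, and $(y(r+1,r+2)_j)=0$, with the sequence $(y(r,r+1)_i)$ possibly supported only at $i=p^{v_r}$.

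First I would record the concrete consequences of the hypotheses: $\lambda_{r+1}=p^{v_r}-1+p^{l_{r+1}}$, so $w_{r+1}:=\val_p(\lambda_{r+1}+1)=v_r$ and $l_{r+1}=\len_p(\lambda_{r+1})$; also $\val_p(\lambda_r+p^{v_r}+1)\ge l_{r+1}>l_{r+2}\ge v_r=v_{r+1}$. Then by Proposition 11.16(i), applied to the triple $(\lambda_r,\lambda_{r+1},\lambda_{r+2})$, after subtracting a multiple of the standard multi-sequence from an arbitrary coherent one we obtain a coherent multi-sequence $(y(t,u)_i)$ with $y(r,r+1)_i=0$ for $i\neq p^{v_r}$, $(y(r+1,r+2)_j)=0$ and $(y(r,r+2)_k)=0$. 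Next I would show $(y(r+1,r+3)_k)=0$: the pairs $(r,r+2)$ and $(r+1,r+3)$ are related by the commuting relation $(C)$, which reads
$${\lambda_r+k\choose k}y(r+1,r+3)_i={\lambda_{r+1}+i\choose i}y(r,r+2)_k,$$
and since $l_{r+2}\ge v_r$ we may take $k=p^{v_r}$, where ${\lambda_r+p^{v_r}\choose p^{v_r}}\neq 0$ (because $(\lambda_r)_{v_r}\neq p-1$, as $\val_p(\lambda_r+1)=v_r<l_{r+1}\le$ the relevant valuation — more precisely $(\lambda_r)_{v_r}\neq p-1$ since $v_r=\val_p(\lambda_r+1)$), forcing $(y(r+1,r+3)_i)=0$.

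Then I would turn to the three-part partition $(\lambda_{r+1},\lambda_{r+2},\lambda_{r+3})$ with its coherent triple $(x_i),(y_j),(z_k)$ given by $x_i=y(r+1,r+2)_i$, $y_j=y(r+2,r+3)_j$, $z_k=y(r+1,r+3)_k$. We already have $(x_i)=0$ and $(z_k)=0$, and since $v_{r+1}=v_r$ I would set $i=p^{v_{r+1}}=p^{v_r}$ in relations (T3a) and (T3b): because $(\lambda_{r+1})_{v_{r+1}}=p-2$ (as $\lambda_{r+1}=p^{v_r}-1+p^{l_{r+1}}$ gives $(\lambda_{r+1})_{v_r}=p-1$ — wait, I must double-check this parity point) the coefficient ${\lambda_{r+1}+p^{v_{r+1}}\choose p^{v_{r+1}}}$ is nonzero, and with $(x_i)=0$ these relations collapse to ${\lambda_{r+1}+p^{v_{r+1}}\choose p^{v_{r+1}}}y_j=0$, hence $(y_j)=(y(r+2,r+3)_j)=0$. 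With both $(y(r+1,r+2)_i)=0$ and $(y(r+2,r+3)_j)=0$, Lemma 12.4 gives that $(y(t,u)_i)$ is identically zero, so every coherent multi-sequence for $\lambda$ is standard, i.e. $\lambda$ is split.

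The main obstacle I anticipate is the bookkeeping of base-$p$ digit conditions needed to certify that the binomial coefficients invoked at the crucial substitutions ($k=p^{v_r}$ in the commuting relation, $i=p^{v_r}$ in (T3a)/(T3b)) are nonzero in $K$; in particular one must carefully verify, using $v_r=\val_p(\lambda_r+1)$ together with $\lambda_{r+1}=p^{v_r}-1+p^{l_{r+1}}$ and the standing assumption $\val_p(\lambda_r+p^{v_r}+1)\ge l_{r+1}$ from Remark 11.4, exactly which digit of $\lambda_r$ and of $\lambda_{r+1}$ at position $v_r$ occurs, so that Lucas's Formula yields a unit rather than zero. Everything else is a direct application of the commuting relation $(C)$, the three-part relations (T3a), (T3b), Lemma 8.1, and Lemma 12.4, exactly parallel to Lemmas 12.5--12.8.
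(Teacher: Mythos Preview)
Your approach is correct and is exactly the argument the paper intends: it simply says ``The proof for this case is completely analogous to the proof of Lemma 12.5,'' and your outline reproduces that argument step for step (Proposition 11.16(i) in place of 10.14(i), then the commuting relation for $(r,r+2)$ and $(r+1,r+3)$, then (T3a)/(T3b) with $i=p^{v_r}$ in the triple $(\lambda_{r+1},\lambda_{r+2},\lambda_{r+3})$, then Lemma 12.4).

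On the digit point you flagged: from $\lambda_{r+1}=(p^{v_r}-1)+p^{l_{r+1}}$ with $l_{r+1}>v_r$ one has $(\lambda_{r+1})_{v_r}=0$ (not $p-1$ or $p-2$), so ${\lambda_{r+1}+p^{v_r}\choose p^{v_r}}=(\lambda_{r+1})_{v_r}+1=1\neq 0$ in $K$, which is exactly what you need.
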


\begin{proof}

The proof for this case is completely analogous to the proof of  Lemma 12.5.

\end{proof}

\q In the next case   $(\lambda_{r+1},\lambda_{r+2},\lambda_{r+3})$ has the form described in Proposition 11.16(ii).

\begin{lemma}

Suppose that  $l_{r+1}=l_{r+2}> v_r=v_{r+1}$ with $\len_p(\lambda_{r+1}+p^{l_{r+2}})<\val_p(\lambda_r+p^{v_r}+1)$ and  $\lambda_{r+1}=p^{v_r}-1+p^{l_{r+1}}$. Then  $\lambda$ is split.

\end{lemma}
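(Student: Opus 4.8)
The plan is to follow verbatim the strategy of Lemma 12.6: the hypotheses say exactly that the non-split triple $(\lambda_r,\lambda_{r+1},\lambda_{r+2})$ is of the shape listed in Proposition 11.16(ii), so I would start by subtracting a suitable multiple of the standard multi-sequence from an arbitrary coherent multi-sequence for $\lambda$ to reach a coherent $(y(t,u)_i)$ whose restriction to $(\lambda_r,\lambda_{r+1},\lambda_{r+2})$ is a multiple of the non-standard triple produced by Proposition 11.16(ii). Concretely this means $y(r,r+2)_k=0$ for all $k$, $y(r,r+1)_i=0$ for $i\neq p^{v_r}$, $y(r+1,r+2)_j=0$ for $j\neq p^{l_{r+2}}$, and
$$y(r+1,r+2)_{p^{l_{r+2}}}+y(r,r+1)_{p^{v_r}}=0.\eqno{(*)}$$
By Lemma 12.4 it then suffices to prove $(y(r+1,r+2)_i)=0$ and $(y(r+2,r+3)_j)=0$; and once the latter is known, the commuting relation (C) for the pairs $(r,r+1)$ and $(r+2,r+3)$, i.e. $\binom{\lambda_{r+2}+i}{i}y(r,r+1)_j=\binom{\lambda_r+j}{j}y(r+2,r+3)_i$, together with the standing hypothesis that $(\lambda_{r+2},\lambda_{r+3})$ is not James, forces $(y(r,r+1)_i)=0$, hence $y(r,r+1)_{p^{v_r}}=0$, hence by $(*)$ also $y(r+1,r+2)_{p^{l_{r+2}}}=0$. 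So everything reduces to showing $(y(r+2,r+3)_j)=0$.

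For that I would first kill $(y(r,r+3)_k)$. Apply (C) to the pairs $(r+1,r+2)$ and $(r,r+3)$, that is $\binom{\lambda_{r+1}+j}{j}y(r,r+3)_k=\binom{\lambda_r+k}{k}y(r+1,r+2)_j$, and put $j=p^{v_r}$: since $l_{r+2}>v_r$ the right-hand side vanishes, while $\binom{\lambda_{r+1}+p^{v_r}}{p^{v_r}}=(\lambda_{r+1})_{v_r}+1=1$ by Lucas's Formula, because $\lambda_{r+1}=p^{v_r}-1+p^{l_{r+1}}$ with $l_{r+1}>v_r$ has $v_r$-th digit $0$; so $y(r,r+3)_k=0$ for all $k$. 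Next I would pass to the three-part subpartition $(\lambda_r,\lambda_{r+2},\lambda_{r+3})$, whose associated coherent triple is $(x_i)=(y(r,r+2)_i)$, $(y_j)=(y(r+2,r+3)_j)$, $(z_k)=(y(r,r+3)_k)$, and in which $(x_i)=0$ and $(z_k)=0$. Substituting $i=p^{v_r}$ into relation (T3a) (for $p^{v_r}\leq j\leq\lambda_{r+3}$) and into (T3b) (for $1\leq j<p^{v_r}$) makes every right-hand side term vanish, while $\binom{\lambda_r+p^{v_r}}{p^{v_r}}=(\lambda_r)_{v_r}+1$ is nonzero mod $p$ since $(\lambda_r,\lambda_{r+1})$ pointed gives $\val_p(\lambda_r+1)=v_r$; hence $y(r+2,r+3)_j=0$ for every $j$. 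Together with the previous step this makes $(y(t,u)_i)$ identically zero, so the original multi-sequence was standard and $\lambda$ is split.

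I expect nothing delicate here beyond careful index bookkeeping: the hypothesis $l_{r+1}=l_{r+2}>v_r$ must be used to see that $\lambda_{r+1}$ and $\lambda_{r+2}$ are both at least $p^{l_{r+2}}>p^{v_r}$, legitimising the substitutions $j=p^{v_r}$ and $i=p^{v_r}$ (and for (T3b) one also checks $p^{v_r}\leq\lambda_{r+2}+j$), and the Lucas computations of the single digits $(\lambda_r)_{v_r}$ and $(\lambda_{r+1})_{v_r}$ rest precisely on $(\lambda_r,\lambda_{r+1})$ being pointed with $\val_p(\lambda_{r+1}+1)=v_r$. The only genuine conceptual point, common to all of Lemmas 12.5--12.13, is identifying (ii) of Proposition 11.16 as the operative case and observing that the lone surviving scalar in $(*)$ is annihilated by the non-James hypothesis on $(\lambda_{r+2},\lambda_{r+3})$.
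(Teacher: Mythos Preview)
Your proposal is correct and follows precisely the route the paper intends: since the paper's own proof simply reads ``The proof is completely analogous to the proof of Lemma 12.6'', your verbatim adaptation of that argument --- normalising via Proposition 11.16(ii), reducing via Lemma 12.4 to killing $(y(r+2,r+3)_j)$, using (C) for $(r+1,r+2),(r,r+3)$ with $j=p^{v_r}$ to annihilate $(y(r,r+3)_k)$, then (T3a)/(T3b) with $i=p^{v_r}$ in the triple $(\lambda_r,\lambda_{r+2},\lambda_{r+3})$ --- is exactly what is meant. The index checks and Lucas computations you flag are the only details, and you have them right.
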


\begin{proof}

The proof is completely analogous to the proof of  Lemma 12.6.

\end{proof}

\q The next case, in which    $(\lambda_{r+1},\lambda_{r+2}.\lambda_{r+3})$ has the form described in Proposition 11.16(iii), requires some additional argument.

\begin{lemma} Suppose that  $l_{r+1}=l_{r+2}> v_r=v_{r+1}$ with $\len_p(\lambda_{r+1}+p^{l_{r+2}})<\val_p(\lambda_r+p^{v_r}+p^{l_{r+1}}+1)$ and $\lambda_{r+1}=p^{v_r}-1+p^{l_{r+1}}$. Then $\lambda$ is split.

\end{lemma}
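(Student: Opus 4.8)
The plan is to show that $\lambda$ is split, following the strategy of Lemmas 12.9--12.11. Since $(\lambda_r,\lambda_{r+1},\lambda_{r+2})$ is non-split with $(\lambda_r,\lambda_{r+1})$ pointed and $\len_p(\lambda_{r+1})=\len_p(\lambda_{r+2})$, it has the form described in Proposition 11.16(iii). In particular, being non-split it forces $\val_p(\lambda_r+p^{v_r}+1)\geq l_{r+1}$ (cf.\ Remark 11.4), so $(\lambda_r)_{v_r}=p-2$; since $\lambda_{r+1}=p^{v_r}-1+p^{l_{r+1}}$ we have $(\lambda_{r+1})_{v_r}=0$; and since $\lambda_{r+2}=\hat\lambda_{r+2}+p^{l_{r+1}}$ with $\hat\lambda_{r+2}<p^{v_r}$ we have $(\lambda_{r+2})_{v_r}=0$. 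Hence, by Proposition 11.16(iii), subtracting a multiple of the standard multi-sequence from an arbitrary coherent multi-sequence we may assume we have a coherent multi-sequence $(y(t,u)_i)$ in which $y(r,r+1)_i=0$ for $i\neq p^{v_r}$, $y(r+1,r+2)_j=0$ for $j\neq p^{l_{r+1}}$, $y(r,r+2)_k=0$ for $k\neq p^{l_{r+1}}$, with $y(r+1,r+2)_{p^{l_{r+1}}}=-\,y(r,r+1)_{p^{v_r}}$ and $y(r,r+2)_{p^{l_{r+1}}}=y(r,r+1)_{p^{v_r}}$. By Lemma 12.4 it is enough to prove that $(y(r+1,r+2)_i)=0$ and $(y(r+2,r+3)_i)=0$.

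First I would kill $(y(r+1,r+3)_k)$: the commuting relation (C) for the pairs $(r,r+2)$ and $(r+1,r+3)$ reads ${\lambda_{r+1}+i\choose i}\,y(r,r+2)_j={\lambda_r+j\choose j}\,y(r+1,r+3)_i$, and evaluating it at $j=p^{v_r}$ (where $y(r,r+2)_{p^{v_r}}=0$ because $v_r<l_{r+1}$, while ${\lambda_r+p^{v_r}\choose p^{v_r}}=p-1\neq 0$ by Lucas and $(\lambda_r)_{v_r}=p-2$) gives $(y(r+1,r+3)_k)=0$. Next I would work inside the triple $(\lambda_{r+1},\lambda_{r+2},\lambda_{r+3})$, i.e.\ with $(x_i)=(y(r+1,r+2)_i)$, $(y_j)=(y(r+2,r+3)_j)$, $(z_k)=(y(r+1,r+3)_k)=0$. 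In relation (T3b) taken at $i=p^{v_r}$ and $1\leq j<p^{v_r}$ every term $x_{i-s}=y(r+1,r+2)_{p^{v_r}-s}$ vanishes (its argument lies in $[1,p^{v_r}]$, which does not contain $p^{l_{r+1}}$), the $z$-term vanishes, and the coefficient ${\lambda_{r+1}+p^{v_r}\choose p^{v_r}}$ equals $1$ (as $(\lambda_{r+1})_{v_r}=0$); this yields $y(r+2,r+3)_j=0$ for $1\leq j<p^{v_r}$. Similarly (T3a) taken at $i=p^{v_r}$ and $p^{v_r}\leq j\leq\lambda_{r+3}$, using in addition $y(r+1,r+3)_{p^{v_r}}=0$, yields $y(r+2,r+3)_j=0$ for the remaining $j$. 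Hence $(y(r+2,r+3)_i)=0$. Finally, with $(y(r+2,r+3)_i)=0$, the commuting relation (C) for the pairs $(r,r+1)$ and $(r+2,r+3)$ together with the fact that $(\lambda_{r+2},\lambda_{r+3})$ is not James forces $(y(r,r+1)_i)=0$, whence $y(r+1,r+2)_{p^{l_{r+1}}}=-y(r,r+1)_{p^{v_r}}=0$ and so $(y(r+1,r+2)_i)=0$. By Lemma 12.4, $(y(t,u)_i)$ is the zero multi-sequence, so $\lambda$ is split.

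The delicate point, and the main difference from the split cases handled by Lemmas 12.9--12.11, is that here the sequence $y(r,r+2)$ is not identically zero after the reduction: Proposition 11.16(iii) leaves it nonzero at $p^{l_{r+1}}$. Consequently one cannot simply invoke ``$y(r,r+2)=0$'' in the commuting relations; instead one must arrange all arguments to equal $p^{v_r}$, which lies below the common support $p^{l_{r+1}}$ of the surviving sequences (this uses $v_r<l_{r+1}$), and one must verify via Lucas that the relevant binomial coefficients ${\lambda_r+p^{v_r}\choose p^{v_r}}$ and ${\lambda_{r+1}+p^{v_r}\choose p^{v_r}}$ are nonzero, which rests on the precise base-$p$ digit identities $(\lambda_r)_{v_r}=p-2$ and $(\lambda_{r+1})_{v_r}=0$ coming from the non-split hypothesis and the form $\lambda_{r+1}=p^{v_r}-1+p^{l_{r+1}}$. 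A minor additional care is needed in the last step to cover all $j\leq\lambda_{r+3}$, splitting the range at $p^{v_r}$ so as to invoke (T3b) for $j<p^{v_r}$ and (T3a) for $j\geq p^{v_r}$.
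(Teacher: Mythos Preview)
Your proof is correct, but takes a somewhat different route from the paper's. The paper's argument is shorter: it applies the commuting relation (C) for the pairs $(r,r+1)$ and $(r+2,r+3)$ directly at $i=p^{l_{r+1}}$, using that $y(r,r+1)_{p^{l_{r+1}}}=0$ (since $l_{r+1}\neq v_r$) and that ${\lambda_r+p^{l_{r+1}}\choose p^{l_{r+1}}}=(\lambda_r)_{l_{r+1}}+1=p-1\neq 0$. The digit identity $(\lambda_r)_{l_{r+1}}=p-2$ is exactly what distinguishes case (iii) of Proposition 11.16 from case (ii), so the paper exploits the specific hypothesis of this lemma to kill $(y(r+2,r+3)_k)$ in one stroke. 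The remainder (deducing $(y(r,r+1)_i)=0$ from $(\lambda_{r+2},\lambda_{r+3})$ not James, then $(y(r+1,r+2)_j)=0$) is the same as yours.

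Your approach instead evaluates at $p^{v_r}$ throughout: you first kill $(y(r+1,r+3)_k)$ via (C) for $(r,r+2)$ and $(r+1,r+3)$ at $j=p^{v_r}$, then work inside the triple $(\lambda_{r+1},\lambda_{r+2},\lambda_{r+3})$ with (T3a)/(T3b) at $i=p^{v_r}$. This relies only on $(\lambda_r)_{v_r}=p-2$ and $(\lambda_{r+1})_{v_r}=0$, which hold in both cases (ii) and (iii) of Proposition 11.16; in particular your argument never uses the distinguishing hypothesis $(\lambda_r)_{l_{r+1}}=p-2$. So what you lose in directness you gain in uniformity: essentially the same argument handles Lemma 12.10 as well (where the paper instead refers back to the proof of Lemma 12.6).
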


\begin{proof} By Proposition 11.16(iii),  by subtracting a standard multi-sequence from an arbitrary multi-sequence for $\lambda$ we can obtain a multi-sequence $(y(t,u)_i)$ in which $y(r,r+1)_i=0$ for $i\neq p^{v_r}$,  and $y(r+1,r+2)_j=0$ for $ j\neq p^{l_{r+1}}$, and $y(r,r+2)_k=0$ for $k\neq p^{l_{r+1}}$ and that 

$$y(r,r+1)_{p^{v_r}}=-y(r+1,r+2)_{p^{l_{r+1}}}=y(r,r+2)_{p^{l_{r+1}}}.$$

As usual we have to prove that $(y(r+1,r+2)_i)=0$ and  $(y(r+2,r+3)_j)=0$.

\q We show first that $(y(r+2,r+3)_j)=0$. By the commuting relation (C) for the pairs $(r,r+1)$ and $(r+2,r+3)$, we get  that 

$${{\lambda_{r}+i}\choose{i}}y(r+2,r+3)_k={{\lambda_{r+2}+k}\choose{k}}y(r,r+1)_i.$$

Since $(\lambda_r)_{l_{r+1}}=p-2$ and $y(r,r+1)_{p^{l_{r+1}}}=0$, setting $i=p^{l_{r+1}}$ in the relation above we get that $y(r+2,r+3)_k=0$.

\q Now  by the same relation and the fact that  $(\lambda_{r+2},\lambda_{r+3})$ is not James we also get that $(y(r,r+1)_i)=0$. Since now  $y(r,r+1)_{p^{v_r}}=-y(r+1,r+2)_{p^{l_{r+1}}}$ and $y(r+1,r+2)_j=0$ for $j\neq p^{l_{r+1}}$ we get directly that  the extension sequence $(y(r+1,r+2)_j)$ is $0$ and we are done.

\end{proof}

\q We next consider the case in which     $(\lambda_{r+1},\lambda_{r+2},\lambda_{r+3})$ has the form described in Proposition 11.16(iv).

\begin{lemma}

Suppose that $l_{r+2}<v_{r+1}\leq v_r<l_{r+1}$ and $\lambda_{r+1}=\hat{\lambda}_{r+1}+p^{l_{r+1}}$ with $0\leq \hat{\lambda}_{r+1}<p^{v_r}$. Then  $\lambda$ is split.

\end{lemma}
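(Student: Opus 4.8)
The plan is to run the same reduction used throughout Section~12: strip off a suitable multiple of the standard multi-sequence, use the three-part description of $(\lambda_r,\lambda_{r+1},\lambda_{r+2})$ to pin down what remains, and then exhibit enough vanishing of neighbouring extension sequences to invoke Lemma~12.4.

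First I would record what the hypotheses say about the three-part partition $(\lambda_r,\lambda_{r+1},\lambda_{r+2})$. The conditions $v_r<l_{r+1}$ and $\hat\lambda_{r+1}<p^{v_r}$ say that $(\lambda_r,\lambda_{r+1})$ is pointed, while $l_{r+2}<v_{r+1}$ gives $\lambda_{r+2}<p^{v_{r+1}}=p^{\val_p(\lambda_{r+1}+1)}$, i.e. $(\lambda_{r+1},\lambda_{r+2})$ is James; together with $v_{r+1}\leq v_r$ this is precisely case (iv) of Proposition~11.16 (the case $v\geq w>\gamma$). By our standing assumption $(\lambda_r,\lambda_{r+1},\lambda_{r+2})$ is non-split, so $E(\lambda_r,\lambda_{r+1},\lambda_{r+2})$ is spanned by the standard triple and the triple with $y(r,r+1)_{p^{l_{r+1}}}=1$, all other entries zero. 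Hence, after subtracting an appropriate multiple of the standard multi-sequence from an arbitrary coherent multi-sequence $(y(t,u)_i)$ for $\lambda$, I may assume its restriction to $(\lambda_r,\lambda_{r+1},\lambda_{r+2})$ is a multiple of that exceptional triple; that is, $(y(r+1,r+2)_j)=0$, $(y(r,r+2)_k)=0$, and $y(r,r+1)_i=0$ for every $i\neq p^{l_{r+1}}$.

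The decisive step is to show $(y(r+2,r+3)_j)=0$. For this I would feed the distant pairs $(r,r+1)$ and $(r+2,r+3)$ into the commuting relation $(C')$, taking $i=p^{v_r}$ (note $p^{v_r}<p^{l_{r+1}}\leq\lambda_{r+1}$):
$$
{\lambda_{r+2}+j\choose j}\,y(r,r+1)_{p^{v_r}}={\lambda_r+p^{v_r}\choose p^{v_r}}\,y(r+2,r+3)_j,\qquad 1\leq j\leq\lambda_{r+3}.
$$
Since $v_r<l_{r+1}$, the left-hand side vanishes; and since $\val_p(\lambda_r+1)=v_r$, the $v_r$-th base-$p$ digit of $\lambda_r$ is not $p-1$, so ${\lambda_r+p^{v_r}\choose p^{v_r}}\neq 0$ by Lucas's formula. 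Therefore $y(r+2,r+3)_j=0$ for all $j$. Now both $(y(r+1,r+2)_j)=0$ and $(y(r+2,r+3)_j)=0$, so Lemma~12.4 forces $(y(t,u)_i)$ to be identically zero; thus every coherent multi-sequence for $\lambda$ is a multiple of the standard one, i.e. $\lambda$ is split.

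There is no serious obstacle beyond bookkeeping; the one point requiring care — where a careless choice would fail — is to select $i=p^{v_r}$ in $(C')$ rather than the index $p^{l_{r+1}}$ at which $y(r,r+1)$ may be nonzero, so that the left side dies while the binomial coefficient on the right stays invertible. In this respect the argument is the exact analogue of the proof of Lemma~12.8 (the Proposition~10.14(iv) case), the only difference being which exceptional triple the reduction produces.
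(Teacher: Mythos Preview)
Your proof is correct and follows essentially the same route as the paper's: reduce via Proposition~11.16(iv) so that only $y(r,r+1)_{p^{l_{r+1}}}$ can be nonzero, then apply the commuting relation $(C')$ with $i=p^{v_r}$ to kill $(y(r+2,r+3)_j)$ and invoke Lemma~12.4. One small inaccuracy in your closing remark: Lemma~12.8 treats the Proposition~10.14(v) case, not (iv), and its proof actually proceeds via $(y(r,r+3)_k)$ and the triple $(\lambda_r,\lambda_{r+2},\lambda_{r+3})$ rather than directly via $(C')$ for $(r,r+1)$ and $(r+2,r+3)$; but this does not affect the validity of your argument here.
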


\begin{proof}  By Proposition 11.16(iv), by subtracting a standard multi-sequence from an arbitrary multi-sequence for $\lambda$ we can obtain multi-sequence $(y(t,u)_i)$ in which $(y(r+1,r+2)_i)=0$ and $(y(r,r+2)_k)=0$ and that $y(r,r+1)_i=0$ for $i\neq p^{l_{r+1}}$. Hence it remains  to prove that $(y(r+2,r+3)_j)=0$. This is straightforward. Considering the commuting relation (C) for $(r,r+1)$ and $(r+2,r+3)$ we have 

$${{\lambda_{r}+i}\choose{i}}y(r+2,r+3)_j={{\lambda_{r+2}+j}\choose{j}}y(r,r+1)_i.$$

Now setting $i=p^{v_r}$, and since $y(r,r+1)_{p^{v_r}}=0$, we deduce that  the extension sequence 
$(y(r+2,r+3)_j)$ is zero and we are done.

\end{proof}

\q In the next case $(\lambda_{r+1},\lambda_{r+2},\lambda_{r+3})$ has the form described in Proposition 11.16(v).

\begin{lemma}

Suppose $l_{r+1}>l_{r+2}=v_r>v_{r+1}$ with $l_{r+1}< \val_p(\lambda_r+p^{v_r}+1)$, $\lambda_{r+1}=\hat{\lambda}_{r+1}+p^{l_{r+1}}$ with $0\leq \hat{\lambda}_{r+1}<p^{v_r}$, and $\lambda_{r+2}=\hat{\lambda}_{r+2}+p^{v_r}$ with  $0\leq \hat{\lambda}_{r+2}<p^{v_{r+1}}$. Then  $\lambda$ is split.

\end{lemma}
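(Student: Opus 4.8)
The plan is to run the same argument as in the preceding split cases (Lemmas 12.5--12.12); in fact the reasoning is almost identical to that of Lemma 12.9, the split hypothesis on $(\lambda_r,\lambda_{r+1})$ there being replaced here by the pointed one. Note first that the hypotheses say precisely that the non-split triple $(\lambda_r,\lambda_{r+1},\lambda_{r+2})$ has the shape of Proposition 11.16(v). So, subtracting a suitable multiple of the standard multi-sequence from an arbitrary coherent multi-sequence for $\lambda$, we may assume we have a coherent multi-sequence $(y(t,u)_i)$ whose restriction to $(\lambda_r,\lambda_{r+1},\lambda_{r+2})$ is a scalar multiple of the non-standard triple of Proposition 11.16(v); that is, $(y(r,r+1)_i)=0$, $y(r+1,r+2)_j=0$ for $j\neq p^{v_r}$, $y(r,r+2)_k=0$ for $k\neq p^{v_r}$, and $y(r+1,r+2)_{p^{v_r}}=y(r,r+2)_{p^{v_r}}$. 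By Lemma 12.4 it then suffices to show this multi-sequence is zero, i.e.\ that $(y(r+2,r+3)_j)=0$ and $y(r+1,r+2)_{p^{v_r}}=0$.

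The first step is to kill $(y(r+2,r+3)_j)$ using the commuting relation (C) for the pairs $(r,r+1)$ and $(r+2,r+3)$, namely ${\lambda_{r+2}+j\choose j}y(r,r+1)_i={\lambda_r+i\choose i}y(r+2,r+3)_j$: taking $i=p^{v_r}$ makes the left side vanish, while ${\lambda_r+p^{v_r}\choose p^{v_r}}=p-1\neq 0$ by Lucas's Formula (by Remark 11.4 we have $(\lambda_r)_{v_r}=p-2$, so no carry occurs), forcing $y(r+2,r+3)_j=0$ for all $j$. Next, applying (C) to the pairs $(r,r+3)$ and $(r+1,r+2)$, i.e.\ ${\lambda_{r+1}+j\choose j}y(r,r+3)_k={\lambda_r+k\choose k}y(r+1,r+2)_j$, and setting $j=p^{v_{r+1}}$ (which is $\neq p^{v_r}$ since $v_{r+1}<v_r$, so the right side vanishes while ${\lambda_{r+1}+p^{v_{r+1}}\choose p^{v_{r+1}}}\neq 0$ because $(\lambda_{r+1})_{v_{r+1}}\neq p-1$), one gets $(y(r,r+3)_k)=0$.

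Finally I would feed this into the coherent triple $(y(r,r+2)_i),(y(r+2,r+3)_j),(y(r,r+3)_k)$ for the three-part partition $(\lambda_r,\lambda_{r+2},\lambda_{r+3})$. Writing $\lambda_{r+2}=\hat{\lambda}_{r+2}+p^{v_r}$ with $\hat{\lambda}_{r+2}<p^{v_{r+1}}$ and putting $v_{r+2}=\val_p(\hat{\lambda}_{r+2}+1)$, one checks $v_{r+2}<v_r$ and $p^{v_{r+2}}\leq\lambda_{r+3}$ (the latter since $(\lambda_{r+2},\lambda_{r+3})$ is not James), so relation (T3b) applies with $i=p^{v_r}$ and $j=p^{v_{r+2}}$; its left side is $0$ (as $y(r+2,r+3)=0$), and on the right only the $s=0$ term survives (as $y(r,r+2)_k=0$ for $k\neq p^{v_r}$), leaving
$$0={\hat{\lambda}_{r+2}+p^{v_{r+2}}\choose p^{v_{r+2}}}\,y(r,r+2)_{p^{v_r}}.$$
Since $(\hat{\lambda}_{r+2})_{v_{r+2}}\neq p-1$, Lucas's Formula gives ${\hat{\lambda}_{r+2}+p^{v_{r+2}}\choose p^{v_{r+2}}}\neq 0$, so $y(r,r+2)_{p^{v_r}}=0$, whence $y(r+1,r+2)_{p^{v_r}}=0$, and Lemma 12.4 completes the proof. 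The bulk of the work is the base-$p$ digit bookkeeping needed to keep all the relevant binomial coefficients non-zero and to legitimise the index ranges in (C) and (T3b); the only place the pointed hypothesis is really used is Remark 11.4 (which supplies $(\lambda_r)_{v_r}=p-2$), so none of this is harder than in the split analogue Lemma 12.9.
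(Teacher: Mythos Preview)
Your proof is correct and follows essentially the same approach as the paper, which simply states that the argument is ``entirely analogous to the proof of Lemma 12.8'' (not Lemma 12.9 as you cite; Lemma 12.8 is the split analogue treating the case of Proposition 10.14(v), and your argument mirrors it step for step). The only cosmetic difference is the sign in the relation between $y(r+1,r+2)_{p^{v_r}}$ and $y(r,r+2)_{p^{v_r}}$: Proposition 11.16(v) gives $y_{p^v}=z_{p^v}=-1$, so they are equal (as you write), whereas in the split case of Lemma 12.8 they sum to zero; this has no effect on the argument.
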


\begin{proof}

The proof for this case is entirely analogous to the   proof of Lemma 12.8.

\end{proof}

\q  Thus we are left with the case in which  the non-split triple $(\lambda_r,\lambda_{r+1},\lambda_{r+2})$ corresponds to the case of Proposition 10.14(ii). We show here that under certain assumptions this triple provides a non-split partition $\lambda$ and this finishes  the proof of Proposition 12.3. More precisely, we prove the following.

\begin{lemma} Suppose that   $l_{r+2}=v_r=v_{r+1}$, with \\
$l_{r+1}<\val_p(\lambda_r+p^{v_r}+1)$,  $(\lambda_{r+1})_{v_r}\neq 0$ and $\lambda_{r+2}=\hat{\lambda}_{r+2}+p^{v_r}$ with \\
$0\leq \hat{\lambda}_{r+2}<p^{v_r}$. Then $\lambda$ is non-split if and only if: 

(i) $\lambda_{r+1}=(p^{v_r+1}-1)-p^{v_r}+p^{v_r+1}\lambda_{r+1}'$, $(\lambda_{r+1})_{\v_r}\neq 0$ and  $\lambda_{r+1}'\geq 0$; and 

(ii) $\lambda_{r+2}=(p^{v_r}-1)+p^{v_r}$; and 

(ii) $\lambda_{r+3}=\hat{\lambda}_{r+3}+p^{v_r}$ with $0\leq \hat{\lambda}_{r+3}<p^{v_{r}}$.

\q In that case $E(\lambda)$ is spanned by the standard multi-sequence and a non-zero  multi-sequence $(y(t,u)_i)$ in which 
$$y(r,r+2)_{p^{v_r}}=y(r+1,r+3)_{p^{v_r}}= - y(r+1,r+2)_{p^{v_r}}= - y(r,r+3)_{p^{v_r}}.$$
and $y(t,u)_i=0$ for all  other choices  of  $t,u$ and $i$.
\end{lemma}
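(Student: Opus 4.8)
The plan is to follow the strategy that has been used throughout Lemmas 12.5--12.13, but now with the goal of establishing \emph{non}-splitness rather than splitness. We are in the situation of Proposition 10.14(ii), so $(\lambda_r,\lambda_{r+1})$ is split, $(\lambda_{r+1},\lambda_{r+2})$ is pointed with $\gamma=v_r$, $(\lambda_{r+1})_{v_r}\neq 0$, $c_{v_r}=1$, and $(\lambda_r+p^{v_r},\lambda_{r+1})$ is James. By Proposition 10.14(ii), after subtracting a standard multi-sequence we may assume we have a coherent multi-sequence $(y(t,u)_i)$ in which $y(r,r+1)_i=0$ for $i\neq p^{v_r}$, $y(r+1,r+2)_j=0$ for $j\neq p^{v_r}$, $y(r,r+2)_k=0$ for $k\neq p^{v_r}$, with $y(r,r+1)_{p^{v_r}}=1$ and $y(r+1,r+2)_{p^{v_r}}=-(\lambda_{r+1})_{v_r}$ (up to scaling). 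First I would run the same ``propagation'' arguments as before: use the commuting relation (C) for the pairs $(r,r+1)$ and $(s,s+1)$ with $s\neq r-1,r,r+1$, together with Lemma 9.1 and Lemma 8.1 applied at the James pair $(\lambda_{r-1},\lambda_r)$, to conclude that all extension sequences $(y(t,t+1)_i)$ with $t\neq r,r+1$ vanish identically; in particular $(y(r+2,r+3)_j)$ need not vanish a priori, so I must treat the triple $(\lambda_{r+1},\lambda_{r+2},\lambda_{r+3})$ carefully.

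The key point is that here $(y(r,r+1)_{p^{v_r}})\neq 0$ is \emph{forced} to be nonzero in a non-split multi-sequence (that is the content of Proposition 10.14(ii)), and then the commuting relation (C) for $(r,r+1)$ and $(r+2,r+3)$, namely
$$
{\lambda_{r+2}+k\choose k}\,y(r,r+1)_i = {\lambda_r+i\choose i}\,y(r+2,r+3)_k,
$$
together with the fact that $(\lambda_{r+2},\lambda_{r+3})$ is \emph{not} James, ties $(y(r+2,r+3)_k)$ to $y(r,r+1)_{p^{v_r}}$. I would extract from this that $(y(r+2,r+3)_k)$ is supported at $k=p^{v_r}$ (using $c_{v_r}=1$ and the base-$p$ arithmetic forced by conditions (ii),(iii) on $\lambda_{r+2}$ and $\lambda_{r+3}$), and that its value is a definite scalar multiple of $y(r,r+1)_{p^{v_r}}$. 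Then I would use the commuting relations (C) for $(r,r+1)$--$(r+2,r+3)$ and for $(r+1,r+2)$--$(r,r+3)$, plus the triple relations (T1),(T3a),(T3b) for the triples $(\lambda_r,\lambda_{r+1},\lambda_{r+2})$, $(\lambda_{r+1},\lambda_{r+2},\lambda_{r+3})$, $(\lambda_r,\lambda_{r+2},\lambda_{r+3})$, $(\lambda_r,\lambda_{r+1},\lambda_{r+3})$, to pin down $y(r,r+2)_{p^{v_r}}$, $y(r,r+3)_{p^{v_r}}$, $y(r+1,r+3)_{p^{v_r}}$ in terms of $y(r,r+1)_{p^{v_r}}$, obtaining the claimed relations
$$
y(r,r+2)_{p^{v_r}}=y(r+1,r+3)_{p^{v_r}}=-y(r+1,r+2)_{p^{v_r}}=-y(r,r+3)_{p^{v_r}},
$$
and showing everything else vanishes. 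Conversely, I would verify that the multi-sequence so described genuinely satisfies all of (E), (T1)--(T3b), (C): for most quadruples and triples both sides are zero by the James conditions on the surrounding pairs, and the finitely many nontrivial relations reduce, via Lucas's Formula, to the numerical hypotheses (i)--(iv) exactly (this is where conditions (i)--(iv) get \emph{used} rather than merely assumed). The necessity direction --- that these numerical conditions must hold for a non-split $\lambda$ --- would come out of the same relations by running them in reverse: if, say, $\lambda_{r+3}$ had a base-$p$ digit below $p^{v_r}$ other than the allowed one, or if $\val_p(\lambda_r+p^{v_r}+1)$ were not large enough, then one of (T3a)/(T3b)/(C) forces the relevant $y$-value to be zero, collapsing the whole multi-sequence to the standard one.

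The main obstacle I anticipate is the bookkeeping in the ``necessity'' half: there are several triples to inspect and for each one I must identify precisely which digit conditions on $\lambda_{r+2},\lambda_{r+3}$ (and which valuation condition on $\lambda_r$, $\lambda_{r+1}$) are exactly what is needed to avoid a forced vanishing, and confirm these are jointly equivalent to (i)--(iv) --- no more, no less. The cleanest route is probably to first establish, using the pointed structure and $c_{v_r}=1$, that any coherent $(y(t,u)_i)$ here is supported only at index $p^{v_r}$ on the six pairs among $\{r,r+1,r+2,r+3\}$ and zero elsewhere; this reduces the entire problem to a finite linear system in the six scalars $y(r,r+1)_{p^{v_r}},\dots,y(r+2,r+3)_{p^{v_r}}$, whose coefficient matrix entries are binomial coefficients modulo $p$ computed by Lucas, and then the lemma becomes the statement that this system has a one-dimensional solution space (beyond the standard sequence) precisely under conditions (i)--(iv). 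Given the preparatory results available (Proposition 10.14(ii), Lemmas 8.1, 9.1, 9.4), the argument is essentially a careful finite computation rather than anything structurally new.
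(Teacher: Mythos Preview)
Your overall plan --- reduce to a finite linear system in the six scalars at index $p^{v_r}$ and solve --- is a legitimate route, but you have misidentified the situation in a way that would derail the execution, and you have missed the structural shortcut the paper takes.

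First, two factual slips. In Proposition 10.14(ii) the pair $(b,c)=(\lambda_{r+1},\lambda_{r+2})$ is \emph{split}, not pointed: we have $l_{r+2}=\gamma=v=w=v_{r+1}$, and a pointed pair would require $\gamma>w$. Correspondingly, the non-standard triple for $(\lambda_r,\lambda_{r+1},\lambda_{r+2})$ has $(y(r+1,r+2)_j)=0$ identically, $y(r,r+1)_{p^{v_r}}=1$, and $y(r,r+2)_{p^{v_r}}=-(\lambda_{r+1})_{v_r}$; you wrote the last value against $y(r+1,r+2)$. These errors matter because your propagation argument via (C) then starts from the wrong support set.

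Second, and more substantively: your normalisation is taken on the triple $(\lambda_r,\lambda_{r+1},\lambda_{r+2})$, giving $y(r,r+1)_{p^{v_r}}\neq 0$, whereas the multi-sequence claimed in the Lemma has $(y(r,r+1)_i)=0$ and $(y(r+2,r+3)_j)=0$. The paper gets this directly by normalising instead on the \emph{shifted} triple $(\lambda_{r+1},\lambda_{r+2},\lambda_{r+3})$. The point is that $(\lambda_{r+1},\lambda_{r+2})$ is split (by the observation above), so Proposition 10.14 applies verbatim to the shifted triple; combined with Lemma 12.4 (which you helped prove), this gives necessity of (i)--(iii) in one stroke: if the shifted triple is split, $\lambda$ is split, and the shifted triple is non-split precisely under the digit conditions listed. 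One then rules out the Proposition 10.14(v) subcase for the shifted triple (where $v_{r+2}<v_{r+1}$) by a short (C)-argument, leaving only the 10.14(ii) subcase, which forces $\lambda_{r+2}=(p^{v_r}-1)+p^{v_r}$. With this normalisation the non-standard solution on the shifted triple already has $(y(r+2,r+3)_j)=0$ and $y(r+1,r+2)_{p^{v_r}}=-y(r+1,r+3)_{p^{v_r}}$, and the remaining verifications (for $(r,r+2)$ and $(r,r+3)$ via (C) and via the triples $(\lambda_r,\lambda_{r+2},\lambda_{r+3})$, $(\lambda_r,\lambda_{r+1},\lambda_{r+3})$, both again of 10.14(ii) type) fall out with no further case analysis. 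Your linear-system approach would eventually reach the same answer after a change of basis, but the necessity half would be much heavier than the single appeal to Lemma 12.4 on the shifted triple.
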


\begin{proof}

We first emphasise that since $v_r=v_{r+1}$ and $\lambda_{r+1}=p^{v_r}-1+p^{v_r}\lambda_{r+1}'$ with $(\lambda_{r+1})_{v_r}\neq 0$, the characteristic of $K$ is odd.  We show now that if $\lambda_{r+1}$ or $\lambda_{r+3}$ does  not  have the form described in (i) and (iii) then $(\lambda_{r+1},\lambda_{r+2},\lambda_{r+3})$ is split  and so by Lemma 12.4, we get that $\lambda$ is split.

\q First notice that since $l_{r+2}=v_{r+1}$ we have that $(\lambda_{r+1},\lambda_{r+2})$ is  split.  Now by Remark 10.3 (i) and (ii), or directly by Proposition 10.14, we deduce that if $\lambda$ is non-split then we may write  $\lambda_{r+1}=(p^{v_r+1}-1)-p^{v_r}+p^{v_r+1}\lambda_{r+1}'$ and $l_{r+3}\geq v_r$. Since now $\lambda_{r+2}=\hat{\lambda}_{r+2}+p^{v_r}$ we get that  $\lambda_{r+3}=\hat{\lambda}_{r+3}+p^{v_r}$ with $\hat{\lambda}_{r+3}<p^{v_r}$. So we have that if $\lambda$ is non-split then $\lambda_{r+1}$ and $\lambda_{r+3}$ must have the form of (i) and (iii) described in the statement.

\q We show now that if $\lambda_{r+2}$ is not as in (ii) of the statement then $\lambda$ is split. 

By the description of its parts so far, we have that if $(\lambda_{r+1},\lambda_{r+2},\lambda_{r+3})$ is a non-split triple then it corresponds to one of the cases  (ii) or (v) of Proposition 10.14, where $v_{r+2}=v_{r+1}$ or $v_{r+2}<v_{r+1}$ respectively. We show that if $v_{r+2}<v_{r+1}$ then $\lambda$ is split.  Indeed, if $v_{r+2}<v_{r+1}$ then the triple $(\lambda_{r+1},\lambda_{r+2},\lambda_{r+3})$ corresponds to  case of Proposition 10.14(v). Thus we can  subtract a multiple of the standard multi-sequence to obtain one in which $(y(r+1,r+2)_i)=0$, and $y(r+1,r+3)_k=y(r+2,r+3)_j=0$ for $j,k\neq p^{v_r}$, and

 $$y(r+2,r+3)_{p^{v_r}}=-y(r+1,r+3)_{p^{v_r}}.$$

 Considering the commuting relation (C) for the pairs $(r,r+1)$ and \\
 $(r+2,r+3)$ we have
 
 $${{\lambda_{r+2}+j}\choose{j}}y(r,r+1)_i={{\lambda_{r}+i}\choose{i}}y(r+2,r+3)_j.$$
 
 Setting $j=p^{v_{r+2}}$ we have  ${{\lambda_{r+2}+j}\choose{j}}\neq 0$ and  $y(r+2,r+3)_{p^{v_{r+2}}}=0$, since $v_{r+2}<v_r$. Therefore, we have $(y(r,r+1)_i)=0$.   We  have deduced that  $(y(r,r+1)_i)=0$ and $(y(r+1,r+2)_j)=0$ and so by Proposition 9.4(i) we get that  $\lambda$ is split.
 
 \q Hence, from now on we can assume  that $(\lambda_{r+1},\lambda_{r+2},\lambda_{r+3})$ is a non-split triple, corresponding to Proposition 10.14(ii), where $v_{r+2}=v_{r+1}$ and since $v_r=v_{r+1}$ we have that  $\lambda_{r+2}=(p^{v_r}-1)+p^{v_r}$,  as  in the statement. 
  
\q We prove that in this case, $\lambda$ is a non-split partition and we have a non-zero coherent multi-sequence  of the given form. 

\q We consider first the triple $(\lambda_{r+1},\lambda_{r+2},\lambda_{r+3})$. By Proposition 10.14(ii) we can assume that after subtracting  a multiple of the  standard multi-sequence  we have  $(y(r+2,r+3)_j)=0$, $y(r+1,r+3)_k=y(r+1,r+2)_i=0$ for $i,k\neq p^{v_r}$, and

 $$y(r+1,r+3)_{p^{v_r}}=-y(r+1,r+2)_{p^{v_r}}.$$

We make the following simple observation. Since the pairs $(\lambda_{r+3},\lambda_t)$ with $t>r+3$ are James partitions we have  

$$l_t<v_{r+3}\leq v_r=v_{r+1}=v_{r+2}$$ 

and so all the pairs $(\lambda_s,\lambda_t)$ with $t>r+3$ are James. Moreover,  all the pairs $(\lambda_s,\lambda_t)$ with $s<r$ are James.  Therefore, the only pairs of rows of $\lambda$ which are not James are those in the quadruple $(\lambda_r,\lambda_{r+1},\lambda_{r+2},\lambda_{r+3})$. 

\q We deal first with the coherent conditions involving $y(r,r+2)_{p^{v_r}}$. For $s,t$ both different from  $r,r+2$ we have that the pairs $(r,r+2)$ and $(s,t)$ are related via the commuting relation (C). If we assume further that $(s,t)\neq(r+1,r+3)$ we have that for such a pair $(\lambda_s,\lambda_t)$ is James and $(y(s,t)_i)=0$. This gives directly that both sides in the commuting relation are zero and so this relation is satisfied. Hence, it remains to investigate the relations between the pairs $(\lambda_r,\lambda_{r+2})$ and $(\lambda_{r+1},\lambda_{r+3})$ and those relations coming from the triples $(\lambda_q,\lambda_r,\lambda_{r+2})$ with $q<r$ and  $(\lambda_r,\lambda_{r+2},\lambda_t)$ with $t>r+2$. 

\q The pairs $(r,r+2)$ and $(r+1,r+3)$ are related via the commuting relation (C) and we have,

$${{\lambda_{r+1}+i}\choose{i}}y(r,r+2)_{p^{v_r}}={{\lambda_{r}+p^{v_r}}\choose{p^{v_r}}}y(r+1,r+3)_i=-y(r+1,r+3)_i$$
where the last equality follows from  the fact that $(\lambda_r+p^{v_r},\lambda_{r+1})$ is James. Now notice that since  $\lambda_{r+1}=(p^{v_r+1}-1)-p^{v_r}+p^{v_r+1}\lambda_{r+1}'$ and $l_{r+3}=v_r$, we also have that for $i\leq l_{r+3}$,  ${{\lambda_{r+1}+i}\choose{i}}\neq 0$ if and only if $i=p^{v_r}$, in which case ${{\lambda_{r+1}+p^{v_r}}\choose{p^{v_r}}}=-1$. Hence, here we recover the relation  $y(r,r+2)_{p^{v_r}}=y(r+1,r+3)_{p^{v_r}}$ that we have in the multi-sequence.

\q We move on to the triples $(\lambda_q,\lambda_r,\lambda_{r+2})$ with $q<r$ and  $(\lambda_r,\lambda_{r+2},\lambda_t)$ with $t>r+2$. For the triple  $(\lambda_q,\lambda_r,\lambda_{r+2})$ we have that $y(r,r+2)_j$ is identified with $y_j$ in the relations (T1) -(T3b). Notice that for this triple we have that $\x=0$, $\z=0$ and $\y=0$ for $j\neq p^{v_r}$ with $y^{{\rm{st}}}_{p^{v_r}}=-1$. Therefore, the relations associated with  this triple are satisfied by our multi-sequence.

\q For the triple $(\lambda_r,\lambda_{r+2},\lambda_t)$ we have that  $y(r,r+2)_i$ is identified with $x_i$ in the relations (T1)-(T3b). If $t>r+3$ we have that since $(\lambda_r,\lambda_t)$ and $(\lambda_{r+2},\lambda_t)$ are James, the standard multi-sequence for this triple is $\y=0$, $\z=0$ and $\x=0$ for $i\neq p^{v_r}$ with $x^{{\rm{st}}}_{p^{v_r}}=-1$. So again, the relations associated with  this triple are satisfied by our multi-sequence.

\q If $t=r+3$ then we have the triple $(\lambda_r,\lambda_{r+2},\lambda_{r+3})$. This triple corresponds to  the case (ii) of Proposition 10.14. Here $y(r,r+2)_{p^{v_r}}$ and $y(r,r+3)_{p^{v_r}}$ are both involved in the relations of this triple. Also, $y(r,r+2)_i$ is identified with $x_i$ and $y(r,r+3)_k$ is identified with $z_k$ in the relations (T1)-(T3b). Now by Proposition 10.14(ii) we get  that $y(r,r+2)_{p^{v_r}}=-y(r,r+3)_{p^{v_r}}$ is a solution for the relations of this triple and this is exactly what we have in our multi-sequence.

\q We now deal  with $y(r+1,r+3)_{p^{v_r}}$. For $s,t$ both different from  $r+1,r+3$ we have that the pairs $(r+1,r+3)$ and $(s,t)$ are related via the commuting relation (C). If we assume further that $(s,t)\neq(r,r+2)$ then  for such a pair $(\lambda_s,\lambda_t)$ is James and $(y(s,t)_i)=0$. This gives directly that both sides in the commuting relation are zero and so this relation is satisfied. Hence, it remains to investigate the relations between the pairs $(\lambda_{r+1},\lambda_{r+3})$ and $(\lambda_r,\lambda_{r+2})$ and those relations coming from the triples $(\lambda_q,\lambda_{r+1},\lambda_{r+3})$ with $q\leq r$ and  $(\lambda_{r+1},\lambda_{r+3},\lambda_t)$ with $t>r+3$.

\q For the pairs $(\lambda_{r+1},\lambda_{r+3})$ and $(\lambda_r,\lambda_{r+2})$ we have already seen that our multi-sequence  satisfies the commuting relation associated with  these pairs in our previous consideration.  Hence, we can move on to the triples \\
$(\lambda_q,\lambda_{r+1},\lambda_{r+3})$ with $q\leq r$ and  $(\lambda_{r+1},\lambda_{r+3},\lambda_t)$ with $t>r+3$.

\q For the triple  $(\lambda_q,\lambda_{r+1},\lambda_{r+3})$ we have that $y(r+1,r+3)_j$ is identified with $y_j$ in relations (T1)-(T3b). Notice that if $q<r$, then    $\x=0$, $\z=0$ and $\y=0$ for $j\neq p^{v_r}$ with $y^{{\rm{st}}}_{p^{v_r}}=-1$. Therefore, the relations associated with  this triple are satisfied by our multi-sequence. If $q=r$ then  $(\lambda_r,\lambda_{r+1},\lambda_{r+3})$ corresponds to  the case of Proposition 10.14(ii). By Proposition 10.14(ii) we have that any solution for this triple has the form $x_i=g\x+ h x'_i$, for some scalars $g,h$, with $x'_i=0$ for $i\neq p^{v_r}$ and $x'_{p^{v_r}}=1$,  $y_j=g \y$ and $z_k=g\z+ h z'_k$, with $z'_k=0$ for $k\neq p^{v_r}$  and $z'_{p^{v_r}}=2$. Also, notice that $\x=\y=\z=0$ for $i,j,k\neq p^{v_r}$ and $x^{\rm{st}}_{p^{v_r}}=y^\st_{p^{v_r}}=z^{\rm{st}}_{p^{v_r}}=-1$. Therefore setting $h=g$ we obtain a solution of the form $x_i=0$, $y_j=z_k=0$ for $j,k\neq p^{v_r}$ and $y_{p^{v_r}}=-z_{p^{v_r}}$. But this is exactly the solution in our multi-sequence, since here $y_{p^{v_r}}$ is $y(r+1,r+3)_{p^{v_r}}$ and $z_{p^{v_r}}$ is  $y(r,r+3)_{p^{v_r}}$. Therefore, we are done.

\q For the triple $(\lambda_{r+1},\lambda_{r+3},\lambda_t)$ now we have that $y(r+1,r+3)_i$ is identified with $x_i$ in the relations (T1)-(T3b). The standard coherent triple satisfies $\y=0$, $\z=0$ and $\x=0$ for $i\neq p^{v_r}$ with $x^{{\rm{st}}}_{p^{v_r}}=-1$. Therefore, the relations associated with  this triple are satisfied by our multi-sequence.

\q The considerations for  $y(r+1,r+2)_{p^{v_r}},$ and $ y(r,r+3)_{p^{v_r}}$ are identical with those of the  last two cases and  we leave them to the reader to check.

\end{proof}

\begin{example} One can easily verify that for $p=3$ the partition $\lambda=(1,1,1,1)$ satisfies the conditions of Lemma 12.14 and that for $n\geq 4$ we have  $\Ext^1_{B(n)}(S^4E,K_\lambda)=H^1(\Sigma_4,\Sp(1,1,1,1))=K$, where $\Sp(1,1,1,1)$ is the sign representation for $\Sigma_4$. 

\end{example}

\newpage

\subsection{The case in which $(\lambda_{r+2},\lambda_{r+3})$ is James.}

\q We consider now the case in which  the pair $(\lambda_{r+2},\lambda_{r+3})$ is a James partition. We have the following result.

\begin{proposition} Suppose  $\lambda$ is  a partition which is not James and $r<n-1$ minimal such that $(\lambda_r,\lambda_{r+1})$ is not James. Assume further that $(\lambda_{r+2},\dots,\lambda_n)$ is a James partition. Then $\lambda$ is non-split if and only if one of the following holds. 

\smallskip

(i) $(\lambda_{r+1},\lambda_{r+2})$ is not James and $(\lambda_r,\lambda_{r+1},\lambda_{r+2})$ is a non-split triple.

(ii) $(\lambda_{r+1},\lambda_{r+2})$ is James, $(\lambda_r,\lambda_{r+1})$ splits and $(\lambda_r,\lambda_{r+1},\lambda_{r+2})$ is a non-split triple, with the extra condition that $l_{r+3}<l_{r+2}$ if $p=2$ and $n\geq r+3$.

(iii) $(\lambda_{r+1},\lambda_{r+2})$ is James, $(\lambda_r,\lambda_{r+1})$ is pointed and there is no $q<r$ with $v_q=\len_p(\lambda_r+p^{l_{r+1}})$.

\q In all these cases a non-standard multi-sequence $(y(t,u)_i)$ is obtained from a non-standard coherent triple for  $(\lambda_r,\lambda_{r+1},\lambda_{r+2})$
 with $(y(t,u)_i)=0$ for  $(t,u)\neq (r,r+1), (r+1,r+2)$ and $(r,r+2)$.  

\end{proposition}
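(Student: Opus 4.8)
The starting point is the embedding of $E(\lambda)$ into the space of coherent triples of the three-part partition $\mu:=(\lambda_r,\lambda_{r+1},\lambda_{r+2})$. By Theorem 12.1, since $(\lambda_r,\lambda_{r+1})$ is not James, $\mu$ is constrained, so $\dim E(\mu)\le 2$, with equality exactly when $\mu$ is non-split; and by Proposition 9.4(i) restriction gives an injection $E(\lambda)\hookrightarrow E(\mu)$. Hence $\lambda$ is non-split if and only if $\mu$ is non-split and the distinguished non-standard coherent triple $\tau$ for $\mu$ — read off, according to whether $(\lambda_r,\lambda_{r+1})$ is split or pointed, from Lemma 10.7 / Proposition 10.14 or Lemma 11.1 / Proposition 11.16 — extends to a coherent multi-sequence for $\lambda$.

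The plan is then to show that the only possible such extension, up to a standard summand, is the multi-sequence $\widetilde\tau$ whose $(r,r+1)$-, $(r+1,r+2)$- and $(r,r+2)$-components agree with $\tau$ and which is zero on every other pair, and afterwards to decide in each case whether $\widetilde\tau$ is coherent. For the first part I would take any non-standard coherent multi-sequence and subtract a multiple of the standard one so that $(y(r,r+1)_i)$ is zero (if $(\lambda_r,\lambda_{r+1})$ is split) or a scalar multiple of the point sequence (if pointed); in either event $y(r,r+1)_{p^{v_r}}=0$. Then Lemma 9.1 kills $(y(s,s+1)_j)$ for $s\ne r-1,r,r+1$, and Lemma 8.1 applied to the triple $(\lambda_{r-1},\lambda_r,\lambda_{r+1})$ kills $(y(r-1,r)_j)$. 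Using that $(\lambda_s,\lambda_t)$ is James whenever $s<t$ with $s\le r-1$ or $r+2\le s$, one propagates vanishing to all long-range pairs: for a pair $(s,t)$ with $t\ge s+2$, the triple $(\lambda_s,\lambda_{t-1},\lambda_t)$ together with the determinacy of the $z$-component of a coherent triple (Remark 6.2, i.e. relation (T3a) with $i=j$) reduces $y(s,t)$ to $y(s,t-1)$; induction on $t-s$, starting from the vanishing consecutive sequences and using that the only index carrying $\tau$ outside the pair $(r,r+1)$, namely $p^{v_r}$ (resp. $p^{l_{r+1}}$), exceeds $\lambda_{r+3}\ge\lambda_t$, gives $y(s,t)=0$ for every pair other than the three designated ones. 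Since $\widetilde\tau$ is patently non-standard, $\lambda$ is non-split if and only if $\widetilde\tau$ is coherent.

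It then remains to test the coherence of $\widetilde\tau$ for each admissible shape of $\mu$. As every sequence outside the three designated pairs vanishes, the only relations to verify are: the triple $\mu$ (coherent by construction); the triples $(\lambda_q,\lambda_r,\lambda_{r+1})$, $(\lambda_q,\lambda_{r+1},\lambda_{r+2})$, $(\lambda_q,\lambda_r,\lambda_{r+2})$ for $q<r$ and $(\lambda_r,\lambda_{r+1},\lambda_t)$, $(\lambda_{r+1},\lambda_{r+2},\lambda_t)$, $(\lambda_r,\lambda_{r+2},\lambda_t)$ for $t>r+2$; and the commuting relations (C) between a designated pair and a James pair. Via Lucas's formula and the digit patterns of split and pointed pairs (Remark 10.3, Remark 11.4) these all reduce to James-type vanishing coming from $(\lambda_1,\dots,\lambda_r)$ and $(\lambda_{r+2},\dots,\lambda_n)$ being James. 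When $(\lambda_{r+1},\lambda_{r+2})$ is not James, $\tau$ is concentrated appropriately and every such relation holds, giving (i). When $(\lambda_{r+1},\lambda_{r+2})$ is James and $(\lambda_r,\lambda_{r+1})$ is split, $\mu$ has the shape of Lemma 10.7 and the only obstruction is relation (T3b) for the James triple $(\lambda_{r+1},\lambda_{r+2},\lambda_{r+3})$, equivalently the relation of Lemma 7.7(i); this is vacuous unless $\lambda_{r+2}$ has the special $p$-adic form which forces $p=2$, where it reads $l_{r+3}<l_{r+2}$, yielding (ii). When $(\lambda_{r+1},\lambda_{r+2})$ is James and $(\lambda_r,\lambda_{r+1})$ is pointed, $\mu$ is always non-split (Lemma 11.1) with $\tau$ supported at $p^{l_{r+1}}$ on the pair $(r,r+1)$, and the relations of the triples $(\lambda_q,\lambda_r,\lambda_{r+1})$ for $q<r$ reduce — since one checks $v_q\ge\len_p(\lambda_r+p^{l_{r+1}})$ always holds here — to the condition that no $q<r$ has $v_q=\len_p(\lambda_r+p^{l_{r+1}})$, yielding (iii). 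In each non-split case the surviving multi-sequence is $\widetilde\tau$, exactly as displayed.

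The main obstacle is this last step: the case-by-case verification, using Lucas's formula, that $\widetilde\tau$ satisfies every (C)- and (T)-relation when $\mu$ has an admissible shape, together with the identification of the single failing relation otherwise. A secondary difficulty is the long-range vanishing of the second paragraph: unlike the previous subsection (Lemma 12.4), here $(\lambda_{r+2},\lambda_{r+3})$ is James, so the quick commuting-relation argument is unavailable and one must instead propagate vanishing inward from both ends of $\{1,\dots,n\}$ through the determinacy of the $z$-component of a coherent triple, keeping careful track of the fact that the distinguished index $p^{v_r}$ (resp. $p^{l_{r+1}}$) is too large to be reached by $\lambda_{r+3}$.
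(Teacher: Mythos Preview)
Your approach is essentially the paper's: identify $\widetilde\tau$ (the extension by zero of the distinguished non-standard triple $\tau$ for $\mu$) as the candidate lift and check its coherence against the relevant triples and commuting relations. The paper organises this as ten separate lemmas (12.18--12.28), one for each shape of $\mu$ in Propositions 10.14 and 11.16, whereas you group the work into a uniqueness step (paragraph 2) followed by a coherence check (paragraph 3).

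There is one logical wrinkle. Your paragraph-2 uniqueness argument rests on the claim that the support of $\tau$ on $(r+1,r+2)$ and $(r,r+2)$ lies strictly above $\lambda_{r+3}$, and this fails precisely in the subcase of (ii) with $p=2$ and $l_{r+3}=l_{r+2}$: there the support index is $p^{v_r}=p^{l_{r+2}}\le\lambda_{r+3}$. In that subcase, merely showing $\widetilde\tau$ is not coherent does not yet prove $\lambda$ is split, since you have not ruled out a lift with nonzero $(r+1,r+3)$-component. Your Lemma 7.7(i) reference in paragraph 3 does close this gap, but only if applied to an \emph{arbitrary} coherent lift $\sigma$ rather than to $\widetilde\tau$: one has $\sigma(r+2,r+3)=0$ unconditionally by Lemma 9.1, and then the James triple $(\lambda_{r+1},\lambda_{r+2},\lambda_{r+3})$ with $l_{r+2}=l_{r+3}$ is constrained with canonical triple having nonzero middle component, forcing $\sigma(r+1,r+2)=0$, a contradiction. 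The paper handles this subcase differently, via the commuting relation (C) for the pairs $(r,r+3)$ and $(r+1,r+2)$ (see the end of Lemma 12.21). Either route works, but you should make explicit that the constraint from Lemma 7.7(i) applies to every lift, not only to $\widetilde\tau$.
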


\q We note that for $p\geq 3$ the condition $l_{r+3}<l_{r+2}$ in case (ii) of Proposition 12.16 is automatic,  coming from the fact that $(\lambda_{r+2},\lambda_{r+3})$ is James and the form of the non split triple $(\lambda_r,\lambda_{r+1},\lambda_{r+2})$ with $(\lambda_r,\lambda_{r+1})$ split and $(\lambda_{r+1},\lambda_{r+2})$ James, which is the case (iv) in Proposition 10.14.

\medskip

\q We give a proof of Proposition 12.16 in a series of Lemmas corresponding to the forms  of the non-split triples of Propositions 10.14 and 11.16. Before we embark on the proof of   Lemmas 12.18 to 12.28 we make the following remark that will be used repeatedly.

\begin{remark}
{\rm{Let $\lambda$ be a partition which is not James and assume that  $r$ is minimal such that $(\lambda_r,\lambda_{r+1})$  not  James.  Assume further that $(\lambda_{r+2},\dots,\lambda_n)$ is a James partition. Then for every pair $(\lambda_s,\lambda_t)$ with $s\neq r,r+1$ the partition  $(\lambda_s,\lambda_t)$ is  James.

\q Indeed, since $(\lambda_1,\dots,\lambda_r)$ and $(\lambda_{r+2},\dots,\lambda_n)$ are James partitions, we have   $l_{q+1}<v_q$ for $q\leq r-1$ and $q\geq r+2$. Now, since $s\neq r,r+1$ we have that  $s\leq r-1$ or $s\geq r+2$. Also, for $t>s$ we have that $l_t\leq l_{s+1}<v_s$ and so $(\lambda_s,\lambda_t)$  is James.}}

\end{remark}

\q We state and prove now the following ten Lemmas. Recall that $\lambda$ is a partition which is not James, $1\leq r<n-1$ is minimal such that $(\lambda_r,\lambda_{r+1})$  is not  James and $(\lambda_{r+2},\dots,\lambda_n)$ is  James. 

\q The Lemmas 12.18 to 12.23 which follow correspond to the cases in which $(\lambda_r,\lambda_{r+1},\lambda_{r+2})$ is as described in Proposition 10.14, (i)-(v). We therefore adopt as  standing assumptions that $(\lambda_r,\lambda_{r+1})$ is split and \\
$l_{r+1}< \val_p(\lambda_r+p^{v_r}+1)$.

\q We first  treat the case in which $(\lambda_{r+1},\lambda_{r+2},\lambda_{r+3})$ has the form described in Proposition 10.14(i).

\begin{lemma} 
Suppose $l_{r+2}\geq v_r=v_{r+1}$, with $l_{r+1}<\val_p(\lambda_r+p^{v_r}+1)$ and $\val_p(\lambda_{r+1}-p^{v_r}+1)>l_{r+2}$.  Then  $\lambda$ is non-split and $E(\lambda)$ is spanned by the standard multi-sequence and a multi-sequence $(y(t,u)_i)$ with $y(r,r+1)_{p^{v_r}}\neq 0$ and $y(t,u)_i=0$ for all other choices of $t,u$ and $i$.
\end{lemma}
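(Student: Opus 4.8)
The strategy is to write down an explicit non-standard coherent multi-sequence for $\lambda$ and then read off $\dim E(\lambda)$ from the injectivity statement of Proposition 9.4(i).

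Put $a=\lambda_r$, $b=\lambda_{r+1}$, $c=\lambda_{r+2}$. Together with the standing assumption that $(\lambda_r,\lambda_{r+1})$ is split, the hypotheses $l_{r+2}\geq v_r=v_{r+1}$, $l_{r+1}<\val_p(\lambda_r+p^{v_r}+1)$ and $\val_p(\lambda_{r+1}-p^{v_r}+1)>l_{r+2}$ are exactly those placing the triple $(\lambda_r,\lambda_{r+1},\lambda_{r+2})$ in case (i) of Proposition 10.14; in particular $(\lambda_{r+1},\lambda_{r+2})$ is not James, $(\lambda_r+p^{v_r},\lambda_{r+1})$ is James, and (as in Remark 10.3(i)) $(\lambda_r)_h=p-1$ for all $h<\val_p(\lambda_r+p^{v_r}+1)$ with $h\neq v_r$, while $(\lambda_r)_{v_r}=p-2$. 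Proposition 10.14(i) then shows that $E(\lambda_r,\lambda_{r+1},\lambda_{r+2})$ is $2$-dimensional, spanned by the standard triple and the triple $(x_i),(y_j),(z_k)$ with $x_{p^{v_r}}=1$, $x_i=0$ for $i\neq p^{v_r}$, and $(y_j)=(z_k)=0$. I then define a multi-sequence $(y(t,u)_i)$ for $\lambda$ by setting $y(r,r+1)_{p^{v_r}}=1$ and $y(t,u)_i=0$ for every other choice of $(t,u)$ and $i$.

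The substance of the proof is the verification that $(y(t,u)_i)$ is coherent, i.e.\ satisfies (E), (T1)--(T3b) and (C) of Section 4 (Proposition 4.3). The key structural input is Remark 12.17: every pair $(\lambda_s,\lambda_t)$ with $s\neq r,r+1$ is James. Using this, (E) is trivial for all pairs except $(r,r+1)$, for which it reduces by Lucas's Formula to the identities $\binom{p^{v_r}}{j}=0$ for $0<j<p^{v_r}$ and to $(\lambda_r+p^{v_r},\lambda_{r+1})$ being James; every commuting relation (C) has both sides zero, since in any relevant quadruple the pair other than $(r,r+1)$ is James, so the coefficient $\binom{\lambda_s+i}{i}$ multiplying $y(r,r+1)_j$ vanishes on the relevant range while the opposite side vanishes anyway; and any triple $(q,s,t)$ none of whose three index-pairs equals $(r,r+1)$ gives the all-zero triple. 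This leaves the triples $(q,r,r+1)$ with $q<r$ and the triples $(r,r+1,t)$ with $t>r+1$. For $(r,r+1,t)$ one tests $((x_i),0,0)$ against (T1)--(T3b): since $\len_p(\lambda_t)\leq l_{r+1}<\val_p(\lambda_r+p^{v_r}+1)$ the partition $(\lambda_r+p^{v_r},\lambda_t)$ is James, and since $\len_p(\lambda_t)\leq l_{r+2}<\val_p(\lambda_{r+1}-p^{v_r}+1)$ the partition $(\lambda_{r+1}-p^{v_r},\lambda_t)$ is James, and these two facts kill every surviving summand. For $(q,r,r+1)$ one tests $(0,(y_j),0)$: since $(\lambda_1,\dots,\lambda_r)$ is James, both $(\lambda_q,\lambda_r)$ and $(\lambda_q,\lambda_{r+1})$ are James, so (T1), (T2), (T3a) collapse to $\binom{\lambda_q+i}{i}=0$ for $i\leq\lambda_{r+1}$, while (T3b) requires $\binom{\lambda_q+i}{i}=0$ for $p^{v_r}<i\leq\lambda_r+p^{v_r}$; this holds because $\val_p(\lambda_q+1)>\len_p(\lambda_r)$ and, by the digit description of $\lambda_r$ above, $\len_p(\lambda_r+p^{v_r})=\len_p(\lambda_r)$, so in fact $\binom{\lambda_q+i}{i}=0$ for every $1\leq i<p^{\len_p(\lambda_r)+1}$.

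With coherence in hand, $(y(t,u)_i)$ restricts, under $E(\lambda)\to E(\lambda_r,\lambda_{r+1},\lambda_{r+2})$, to the non-standard triple of Proposition 10.14(i), so it is not a scalar multiple of the standard multi-sequence; hence $\lambda$ is non-split. Since that restriction map is injective (Proposition 9.4(i)) into a $2$-dimensional space, we get $\dim E(\lambda)=2$, so $E(\lambda)$ is spanned by the standard multi-sequence and $(y(t,u)_i)$, which has exactly the asserted shape. I expect the main obstacle to be the careful bookkeeping of the $p$-adic digit patterns of $\lambda_r$, $\lambda_r+p^{v_r}$ and $\lambda_{r+1}-p^{v_r}$ through Lucas's Formula when checking (T3a) and (T3b) for the triples $(q,r,r+1)$ and $(r,r+1,t)$; the remaining relations are straightforward once Remark 12.17 is invoked.
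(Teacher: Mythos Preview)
Your proof is correct and follows essentially the same strategy as the paper: exhibit the explicit multi-sequence, verify coherence by reducing to the triples $(q,r,r+1)$ and $(r,r+1,t)$ (all other triples and all commuting relations being trivial via Remark~12.17), and then invoke the injectivity of Proposition~9.4(i) into the $2$-dimensional $E(\lambda_r,\lambda_{r+1},\lambda_{r+2})$.

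The only difference is stylistic. For the triple $(q,r,r+1)$ the paper observes that, since $(\lambda_q,\lambda_r)$ and $(\lambda_q,\lambda_{r+1})$ are James and $(\lambda_r+p^{v_r},\lambda_{r+1})$ is James, the \emph{standard} triple for $(\lambda_q,\lambda_r,\lambda_{r+1})$ already has $\x=\z=0$ and $\y=0$ except $y^{\st}_{p^{v_r}}=-1$; hence your triple $(0,(y_j),0)$ is $-1$ times the standard and is therefore coherent without further checking. For the triple $(r,r+1,t)$ the paper observes that since $\lambda_t\leq\lambda_{r+2}$, the relations (T1)--(T3b) for $(\lambda_r,\lambda_{r+1},\lambda_t)$ are a subset of those for $(\lambda_r,\lambda_{r+1},\lambda_{r+2})$, which were already verified in Proposition~10.14(i). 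Your direct digit-chasing via Lucas's formula (showing $(\lambda_r+p^{v_r},\lambda_t)$ and $(\lambda_{r+1}-p^{v_r},\lambda_t)$ are James, and that $\len_p(\lambda_r+p^{v_r})=\len_p(\lambda_r)$ so that $(\lambda_q,\lambda_r+p^{v_r})$ is James) reaches the same conclusion but with more bookkeeping; the paper's shortcuts make those checks one-liners.
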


\begin{proof} Let $(y(t,u)_i)$ be a multi-sequence as in the statement. It suffices to prove that this is coherent. By  Proposition 10.14(i) the sequence satisfies the coherence  conditions for the triple $(\lambda_r,\lambda_{r+1},\lambda_{r+2})$. 

\q Consider $(s,t)$ with $s<t$, and   $s$ and  $t$   different from  $r$ and $r+1$. Then, $(r,r+1)$ and $(s,t)$ are related  via the commuting relation (C). Since $(y(s,t)_j)=0$  the condition is 

 $${{\lambda_s+j}\choose{j}}y(r,r+1)_i=0.$$
 
 However, since $s\neq r,r+1$ we have that $(\lambda_s,\lambda_t)$ is a James partition by Remark 12.17 and so ${{\lambda_s+j}\choose{j}}=0$ for  $1\leq j\leq \lambda_t$. Hence, this relation is satisfied.  
 
 \q Therefore we can assume that $s$ or $t$ are equal to $r$ or $r+1$ and consider the triples $(\lambda_q,\lambda_r,\lambda_{r+1})$ and  $(\lambda_r,\lambda_{r+1},\lambda_t)$.
 
 \q  We consider the triple $(\lambda_q,\lambda_r,\lambda_{r+1})$ first. Here, $y(r,r+1)_j$ is identified with $y_j$ in the relations (T1)-(T3b). Since $(\lambda_q,\lambda_r)$ and $(\lambda_r+p^{\v_r},\lambda_{r+1})$ are James partitions we have that the standard solution for this triple is  $\x=\z=0$ and $\y=0$ for $j\neq p^{v_r}$ with $y^\st_{p^{v_r}}=-1$. Therefore the coherence  conditions are satisfied for this triple.

\q We consider now  the triple $(\lambda_r,\lambda_{r+1},\lambda_t)$. Here $y(r,r+1)_i$ is identified with $x_i$ in the relations (T1)-(T3b). But as $l_t\leq l_{r+2}$ and any value of  $y(r,r+1)_{p^{v_r}}$  (and all other values  $0$) is coherent for  $(\lambda_r,\lambda_{r+1},\lambda_{r+2})$ so we  get directly that the coherence conditions are satisfied also for  $(\lambda_r,\lambda_{r+1},\lambda_t)$.
\end{proof}

\q We next consider the case in which  $(\lambda_{r+1},\lambda_{r+2},\lambda_{r+3})$ has the form described in Proposition 10.14(ii).

\begin{lemma}

Suppose that $l_{r+2}=v_r=v_{r+1}$, with \\
$l_{r+1}<\val_p(\lambda_r+p^{v_r}+1)$,  $(\lambda_{r+1})_{v_r}\neq 0$ and $\lambda_{r+2}=\hat{\lambda}_{r+2}+p^{v_r}$ with \\
$0\leq \hat{\lambda}_{r+2}<p^{v_r}$. Then $\lambda$ is non-split.

\q Moreover  $E(\lambda)$ is spanned by the standard multi-sequence and a non-zero multi-sequence $(y(t,u)_i)$ such that 
 $y(r,r+1)_i=y(r,r+2)_k=0$ for $i,k\neq p^{v_r}$, with
  $$y(r,r+2)_{p^{v_r}}+(\lambda_{r+1})_{v_r} y(r,r+1)_{p^{v_r}}=0$$ 
  and $(y(s,t)_j)=0$ for   $(s,t)\neq (r,r+1), (r,r+2)$.
 
 \end{lemma}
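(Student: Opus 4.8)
The plan is to exhibit a single coherent multi-sequence for $\lambda$ that is not a multiple of the standard one; since $\lambda$ is not James, Theorem 12.1 gives $\dim E(\lambda)\le 2$, so this at once yields that $\lambda$ is non-split, that $E(\lambda)$ is spanned by the standard multi-sequence and the new one, and (the new one having the stated shape) that every multi-sequence of the shape in the statement is a multiple of it. First I would record the base-$p$ data forced by the hypotheses. Since $v_{r+1}=v_r$, we have $(\lambda_{r+1})_h=p-1$ for $h<v_r$ and $(\lambda_{r+1})_{v_r}\ne p-1$, so $(\lambda_{r+1})_{v_r}\ne 0$ forces $1\le(\lambda_{r+1})_{v_r}\le p-2$ and in particular $p>2$. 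Since $l_{r+1}<\val_p(\lambda_r+p^{v_r}+1)$, Remark 10.3(i) gives $(\lambda_r)_h=p-1$ for $0\le h\le l_{r+1}$ with $h\ne v_r$ and $(\lambda_r)_{v_r}=p-2$, so $\lambda_r+p^{v_r}$ differs from $\lambda_r$ only by changing the $v_r$-th digit from $p-2$ to $p-1$; hence $\lambda_r+p^{v_r}<p^{l_r+1}$. Also $\val_p(\lambda_{r+2}+1)=\val_p(\hat\lambda_{r+2}+1)\le v_r$, so if $n\ge r+3$ then $\lambda_t<p^{\val_p(\lambda_{r+2}+1)}\le p^{v_r}$ for $t\ge r+3$, because $(\lambda_{r+2},\lambda_{r+3})$ is James. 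By Remark 12.17 every pair $(\lambda_s,\lambda_t)$ with $s\ne r,r+1$ is James; combining, the only non-James pairs of rows of $\lambda$ are $(\lambda_r,\lambda_{r+1})$, $(\lambda_r,\lambda_{r+2})$, $(\lambda_{r+1},\lambda_{r+2})$, and for $q<r$ the pairs $(\lambda_q,\lambda_r)$, $(\lambda_q,\lambda_{r+1})$, $(\lambda_q,\lambda_{r+2})$ are James, whence $\lambda_r<p^{v_q}$ and so $\lambda_r+p^{v_r}<p^{l_r+1}\le p^{v_q}$.

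Next I would take the multi-sequence $(y(t,u)_i)$ with $y(r,r+1)_{p^{v_r}}=1$, $y(r,r+2)_{p^{v_r}}=-(\lambda_{r+1})_{v_r}$, and all remaining entries $0$. Its $(r,r+1)$-component is $(p-1)^{-1}$ times the standard extension sequence of $(\lambda_r,\lambda_{r+1})$, which by the digit data above and Lucas's formula vanishes off $p^{v_r}$ and equals $(\lambda_r)_{v_r}+1=p-1$ at $p^{v_r}$; its $(r,r+2)$-component is similarly a scalar multiple of the standard extension sequence of $(\lambda_r,\lambda_{r+2})$. Thus relation (E) holds for both. The triple $(x_i),(y_j),(z_k)$ extracted for $(\lambda_r,\lambda_{r+1},\lambda_{r+2})$ is exactly the non-standard coherent triple of Proposition 10.14(ii) (with $b_v=(\lambda_{r+1})_{v_r}$), so (T1)--(T3b) hold for it. And $(y(t,u)_i)$ is not a multiple of the standard multi-sequence, since $y^\st(r+1,r+2)_{p^{v_r}}=(\lambda_{r+1})_{v_r}+1\ne 0$ while our $(r+1,r+2)$-component is identically $0$.

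It then remains to check the coherence relations for all other triples and for the commuting relations. A triple $(\lambda_q,\lambda_s,\lambda_t)$ or a commuting quadruple none of whose pairs is $(r,r+1)$ or $(r,r+2)$ involves only zero extension sequences, so its relations hold trivially. If a commuting relation involves $(r,r+1)$ or $(r,r+2)$, its other pair is James --- its first index is never $r$, and if that index is $r+1$ then its second index is $\ge r+3$ with $\lambda_t<p^{v_{r+1}}$ --- so one side is $0$ and the other, a multiple of $\binom{\lambda_s+i}{i}$ with $1\le i\le\lambda_t$, is $0$ too. For the triples $(\lambda_q,\lambda_r,\lambda_{r+1})$ and $(\lambda_q,\lambda_r,\lambda_{r+2})$ with $q<r$, the $(\lambda_q,-)$-components vanish, so once the right-hand sides of (T1)--(T3b) drop out these relations reduce to $\binom{\lambda_q+i}{i}=0$ on the range $1\le i\le\lambda_r+p^{v_r}$; since $i<p^{v_q}$ there, Lucas's formula gives this. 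For the triples $(\lambda_r,\lambda_{r+1},\lambda_t)$ and $(\lambda_r,\lambda_{r+2},\lambda_t)$ with $t\ge r+3$, the $(-,\lambda_t)$-components vanish and $\lambda_t<p^{v_r}$; using $\binom{\lambda_r+p^{v_r}+s}{s}=0$ for $1\le s\le\lambda_{r+2}$ (as $(\lambda_r+p^{v_r},\lambda_{r+2})$ is James) the sums collapse to a single term, and one is left with $\binom{\lambda_r+p^{v_r}+k}{k}=0$ for $1\le k\le\lambda_t$ (same reason) and $\binom{\hat\lambda_{r+2}+j}{j}=0$ for $1\le j\le\lambda_t$ (since $(\hat\lambda_{r+2},\lambda_t)$ is James because $\lambda_t<p^{\val_p(\hat\lambda_{r+2}+1)}$). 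This exhausts all cases, so $(y(t,u)_i)$ is coherent, and the conclusions stated above follow.

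The step I expect to be the main obstacle is the last one: the case analysis over triples is lengthy, and in several of them one must first reduce the sums on the right of (T3a) and (T3b) to a single surviving summand and then verify the vanishing of the corresponding binomial coefficient via Lucas's formula. The arithmetical facts driving every cancellation are the inequalities $\lambda_r+p^{v_r}<p^{v_q}$ for $q<r$ and $\lambda_t<p^{v_r}$ for $t\ge r+3$, which must be read off carefully from the exact base-$p$ expansions imposed by the hypotheses; everything else is bookkeeping.
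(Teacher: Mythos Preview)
Your proposal is correct and follows essentially the same approach as the paper: you exhibit the explicit non-standard multi-sequence supported on $(r,r+1)$ and $(r,r+2)$, invoke Proposition 10.14(ii) for the main triple, and then verify coherence for all remaining triples and commuting relations by case analysis. The only stylistic difference is that where you verify relations such as (T3b) for the triples $(\lambda_q,\lambda_r,\lambda_{r+1})$, $(\lambda_q,\lambda_r,\lambda_{r+2})$, $(\lambda_r,\lambda_{r+1},\lambda_t)$, $(\lambda_r,\lambda_{r+2},\lambda_t)$ by directly computing binomial coefficients via Lucas's formula, the paper observes more succinctly that in each of these cases the restricted triple is a scalar multiple of the \emph{standard} triple for that three-part partition (since the relevant two-part sub-partitions are James), and hence automatically coherent. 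Your direct verification is equivalent, just slightly more laborious; note that for $(\lambda_r,\lambda_{r+1},\lambda_t)$ the surviving $s=0$ term in (T3b) involves $\binom{\lambda_{r+1}-p^{v_r}+j}{j}$ rather than $\binom{\hat\lambda_{r+2}+j}{j}$, but it vanishes for the same reason since $\val_p(\lambda_{r+1}-p^{v_r}+1)\ge v_r$.
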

 
 \begin{proof}
 
 By Remark 12.17, $(\lambda_s,\lambda_t)$ is a James partition for $s\neq r,r+1$. We show that in this case we also have that for $t\geq r+3$ the pairs $(\lambda_r,\lambda_t)$ and $(\lambda_{r+1},\lambda_t)$ are James.

\q First notice that since $(\lambda_{r+1})_{v_r}\neq 0$ and $v_r=v_{r+1}$, the characteristic of $K$ is odd.  Now since $(\lambda_{r+2},\lambda_{r+3})$ is James and $\lambda_{r+2}= \hat{\lambda}_{r+2}+p^{v_r}$ with $\hat{\lambda}_{r+2}<p^{v_r}$, (and since $\char(K)\neq 2$), we have that $\lambda_{r+3}\leq \hat{\lambda}_{r+2}<p^{v_r}$. Therefore, $l_{r+3}<v_r=v_{r+1}$ and so $(\lambda_r,\lambda_{r+3})$ and $(\lambda_{r+1},\lambda_{r+3})$ are James partitions. Moreover for $t>r+3$ we have   $l_t\leq l_{r+3}$ and so $(\lambda_r,\lambda_t)$ and $(\lambda_{r+1},\lambda_t)$ are also James partitions. Therefore, we deduce that the only pairs which are not James come from $(\lambda_r,\lambda_{r+1}), (\lambda_{r+1},\lambda_{r+2})$ and $(\lambda_r,\lambda_{r+2})$.

\q Let $(y(t,u)_i)$ be a multi-sequence as in the statement of the Lemma.  By Proposition 10.14(ii) the coherence condition is satisfied for $(\lambda_r,\lambda_{r+1},\lambda_{r+2})$.

\q We deal first with coherence conditions involving $y(r,r+1)_{p^{v_r}}$. For a pair  $(s,t)$, with $s<t$, and both $s$ and $t$  different from  $r$ and $r+1$ we have that  $(r,r+1)$ and $(s,t)$ are  related via the commuting relation (C). Since $(y(s,t)_j)=0$ and $(\lambda_s,\lambda_t)$ is a James partition we have though that this relation becomes a tautology.  

\q Therefore, it remains to consider the triples of the form $(\lambda_q,\lambda_r,\lambda_{r+1})$ with $q<r$ and  $(\lambda_r,\lambda_{r+1},\lambda_t)$ with $t>r+2$.

\q For the triple  $(\lambda_q,\lambda_r,\lambda_{r+1})$ with $q<r$ we have that $y(r,r+1)_j$ is identified with $y_j$ in the relations (T1)-(T3b). Here the standard solution  for this triple has the form $\x=\z=0$ and $\y=0$ for $j\neq p^{v_r}$ with $y^\st_{p^{v_r}}=-1$. Therefore, the relations associated with  this triple are satisfied by our multi-sequence.

\q For the triple $(\lambda_r,\lambda_{r+1},\lambda_t)$ with $t>r+2$ we have that $y(r,r+1)_i$ is identified with $x_i$ in the relations (T1)-(T3b). Here the standard solution  for this triple has the form $\x=0$ for $i\neq p^{v_r}$, $x^\st_{p^{v_r}}=-1$ and $\y=\z=0$. Therefore, the relations associated with  this triple are satisfied by our multi-sequence.

\q We now deal with coherence conditions involving  $y(r,r+2)_{p^{v_r}}$.  For a pair $(s,t)$ with $s<t$ and both $s$ and $t$ different from  $r$ and $r+2$ we have that $(r,r+2)$ and $(s,t)$ are related via the commuting relation (C). As $(\lambda_s,\lambda_t)$ is James and $(y(s,t)_j)=0$ for these values of $(s,t)$ we have directly that both sides in the commuting relation are $0$.

\q Therefore it remains to consider the triples $(\lambda_q,\lambda_r,\lambda_{r+2})$ with $q<r$ and $(\lambda_r,\lambda_{r+2},\lambda_t)$ with $t>r+2$. 

\q For the triple  $(\lambda_q,\lambda_r,\lambda_{r+2})$ with $q<r$ we have that $y(r,r+2)_j$ is identified with  $y_j$ in the relations (T1)-(T3b). Here, the standard solution  for this triple is $\x=\z=0$ and $\y=0$ for $j\neq p^{v_r}$ with $y^\st_{p^{v_r}}=-1$. Therefore, the relations associated with  this triple are satisfied by our multi-sequence.

\q For the triple $(\lambda_r,\lambda_{r+2},\lambda_t)$ we have that $y(r,r+2)_i$ is identified with $x_i$ in the relations (T1)-(T3b). Notice that since $l_t<v_r$  we have that the standard solution for this triple is $\x=0$ for $i\neq p^{v_r}$ and $x^{\rm{st}}_{p^{v_r}}=-1$ and $\y=\z=0$. Hence, again the relations associated with  this triple are satisfied by our multi-sequence.

\end{proof}

\q We next consider the case in which  $(\lambda_{r+1},\lambda_{r+2},\lambda_{r+3})$ has the form described in Proposition 10.14(iii).

\begin{lemma}

Suppose $l_{r+2}>v_r=v_{r+1}$, with $\len_p(\lambda_{r+1}+p^{l_{r+2}})<\val_p(\lambda_r+p^{v_r}+1)$,  $\val_p(\lambda_{r+1}-p^{v_r}+1)=l_{r+2}$ and $\lambda_{r+2}=\hat{\lambda}_{r+2}+p^{l_{r+2}}$, with $\hat{\lambda}_{r+2}<p^{v_r}$. Then  $\lambda$ is non-split.

\q The space $E(\lambda)$ is spanned by the standard multi-sequence and a non-zero multi-sequence $(y(t,u)_i)$ such that 
 $y(r,r+1)_i=0$ for $i\neq p^{v_r}$, and \\
 $y(r+1,r+2)_j=0$ for $j\neq p^{l_{r+2}}$ with\\
  $$y(r+1,r+2)_{p^{l_{r+2}}}+(\lambda_{r+1})_{l_{r+2}} y(r,r+1)_{p^{v_r}}=0$$
   and $(y(s,t)_j)=0$ for $(s,t)\neq (r,r+1), (r+1,r+2)$.

\end{lemma}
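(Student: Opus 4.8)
The plan is to follow exactly the pattern used in Lemmas 12.5--12.8 and especially the analogous Lemma 12.6, replacing the split triple there by the pointed-type triple appearing in Proposition 11.16(ii). First I would invoke the form of the non-split triple $(\lambda_r,\lambda_{r+1},\lambda_{r+2})$: under the standing hypotheses of this subsection $(\lambda_r,\lambda_{r+1})$ is pointed and the triple has the shape of Proposition 11.16(ii), so after subtracting a multiple of the standard multi-sequence from an arbitrary coherent multi-sequence for $\lambda$ we may assume $y(r,r+1)_i=0$ for $i\neq p^{v_r}$, $y(r+1,r+2)_j=0$ for $j\neq p^{l_{r+2}}$, $y(r,r+2)_k=0$ for $k\neq p^{l_{r+2}}$, together with the single linear relation coming from Proposition 11.16(ii) (after the change of notation $\beta\mapsto l_{r+1}$, $\gamma\mapsto l_{r+2}$). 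Then the goal is to show such a multi-sequence is identically zero, which by Lemma 12.4 reduces to showing $y(r+1,r+2)_{p^{l_{r+2}}}=0$ and $(y(r+2,r+3)_j)=0$.

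The key steps are then as follows. First, reduce to showing $(y(r+2,r+3)_j)=0$: if $(y(r+2,r+3)_j)=0$ then, using the commuting relation (C) for the pairs $(r,r+1)$ and $(r+2,r+3)$ together with the fact that $(\lambda_{r+2},\lambda_{r+3})$ is not James (so the coefficient $\binom{\lambda_{r+2}+j}{j}$ on that side is nonzero for a suitable $j$), we get $(y(r,r+1)_i)=0$, in particular $y(r,r+1)_{p^{v_r}}=0$, and then the relation from Proposition 11.16(ii) forces $y(r+1,r+2)_{p^{l_{r+2}}}=0$ as well. Second, show $(y(r,r+3)_k)=0$: apply the commuting relation (C) to the pairs $(r,r+3)$ and $(r+1,r+2)$, namely
$$\binom{\lambda_{r+1}+j}{j}y(r,r+3)_k=\binom{\lambda_{r}+k}{k}y(r+1,r+2)_j,$$
and set $j=p^{v_r}$; since $l_{r+2}>v_r$ we have $y(r+1,r+2)_{p^{v_r}}=0$, while $\binom{\lambda_{r+1}+p^{v_r}}{p^{v_r}}\neq 0$ because $(\lambda_{r+1})_{v_r}$ is not $p-1$ (this is forced by the pointed shape, $b=\hat b+p^\beta$ with $\hat b<p^{v}$, exactly as in Lemma 11.10), hence $(y(r,r+3)_k)=0$. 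Third, pass to the three-part partition $(\lambda_r,\lambda_{r+2},\lambda_{r+3})$: with $x_i=y(r,r+2)_i$, $y_j=y(r+2,r+3)_j$, $z_k=y(r,r+3)_k$ we already have $(x_i)=0$ (since $y(r,r+2)_k=0$ for $k\neq p^{l_{r+2}}$ and actually the relevant term vanishes — here one needs the extra care that $y(r,r+2)_{p^{l_{r+2}}}$ is tied to $y(r,r+1)_{p^{v_r}}$, which we will have shown is zero) and $(z_k)=0$; then setting $i=p^{v_r}$ in relations (T3a) and (T3b) and using that the binomial coefficients coming from $\lambda_r$ are nonzero at $p^{v_r}$ (again the pointed shape of $\lambda_r$) yields $(y_j)=0$, i.e. $(y(r+2,r+3)_j)=0$, completing the argument.

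I expect the main obstacle to be bookkeeping the interaction between the two entries $y(r,r+1)_{p^{v_r}}$ and $y(r+1,r+2)_{p^{l_{r+2}}}$, which are not independently zero after the first reduction but are linked by the single relation from Proposition 11.16(ii); one must carefully order the deductions so that the chain ``$(y(r+2,r+3)_j)=0\Rightarrow (y(r,r+1)_i)=0\Rightarrow y(r+1,r+2)_{p^{l_{r+2}}}=0$'' is not circular with the step that uses $y(r,r+1)_{p^{v_r}}=0$ to conclude $(x_i)=0$ in the $(\lambda_r,\lambda_{r+2},\lambda_{r+3})$-triple. The cleanest route is the one mirroring Lemma 12.6 verbatim: prove $(y(r,r+3)_k)=0$ first (this needs only $l_{r+2}>v_r$ and the fact that $(\lambda_{r+1})_{v_r}\neq p-1$), then deduce $(y(r+2,r+3)_j)=0$ from the $(\lambda_r,\lambda_{r+2},\lambda_{r+3})$-triple, and only at the very end feed this back through (C) and the Proposition 11.16(ii) relation to kill $y(r,r+1)_{p^{v_r}}$ and $y(r+1,r+2)_{p^{l_{r+2}}}$. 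The remaining verifications are routine applications of Lucas's Formula to the explicit base-$p$ digit patterns of $\lambda_r,\lambda_{r+1},\lambda_{r+2}$ dictated by the pointed/James conditions, and I would state that the coherence of the exhibited non-zero multi-sequence (as in Lemma 12.6, which this case claims to split) is not needed since the conclusion here is that $\lambda$ splits; actually, re-examining, the statement to prove is that $\lambda$ is non-split with the described $E(\lambda)$, so the final step must instead be a verification — along the lines of Lemma 12.19 and Lemma 12.20 — that the multi-sequence with $y(r,r+1)_i=0$ for $i\neq p^{v_r}$, $y(r+1,r+2)_j=0$ for $j\neq p^{l_{r+2}}$ and the stated linear relation, and all other entries zero, actually satisfies every coherence condition; this is checked pair-by-pair and triple-by-triple using Remark 12.17 (all pairs $(\lambda_s,\lambda_t)$ with $s\neq r,r+1$ are James, so commuting relations (C) for such pairs are tautologies) and Proposition 11.16(ii) for the triples $(\lambda_q,\lambda_r,\lambda_{r+1})$, $(\lambda_r,\lambda_{r+1},\lambda_t)$, $(\lambda_q,\lambda_{r+1},\lambda_{r+2})$, $(\lambda_{r+1},\lambda_{r+2},\lambda_t)$, with the digit computations via Lucas's Formula being the only real labour.
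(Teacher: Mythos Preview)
Your proposal is confused about the setting in two fundamental ways, and as a result the bulk of it proves the wrong statement by the wrong method.

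First, this lemma sits in Section~12.3, where the standing assumption is that $(\lambda_{r+2},\lambda_{r+3})$ \emph{is} James, not Section~12.2 where it is not. Your entire first strategy---invoking Lemma~12.4, the commuting relation with the non-James pair $(\lambda_{r+2},\lambda_{r+3})$, and passing to the triple $(\lambda_r,\lambda_{r+2},\lambda_{r+3})$ to kill $(y(r+2,r+3)_j)$---is the Section~12.2 machinery and simply does not apply here. You notice partway through that the conclusion is ``non-split'' rather than ``split'', but by then most of the proposal is aimed at the wrong target.

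Second, you repeatedly say $(\lambda_r,\lambda_{r+1})$ is pointed and that the triple $(\lambda_r,\lambda_{r+1},\lambda_{r+2})$ has the shape of Proposition~11.16(ii). It does not. The hypotheses $l_{r+2}>v_r=v_{r+1}$, $\val_p(\lambda_{r+1}-p^{v_r}+1)=l_{r+2}$, $\lambda_{r+2}=\hat\lambda_{r+2}+p^{l_{r+2}}$ with $\hat\lambda_{r+2}<p^{v_r}$, and $\len_p(\lambda_{r+1}+p^{l_{r+2}})<\val_p(\lambda_r+p^{v_r}+1)$ are exactly those of Proposition~10.14(iii), i.e.\ $(\lambda_r,\lambda_{r+1})$ is \emph{split}. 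Consequently the digit arguments you sketch (``$(\lambda_{r+1})_{v_r}\neq p-1$ because of the pointed shape'') are unfounded, and the linear relation you quote should come from Proposition~10.14(iii), not~11.16(ii).

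Your final paragraph does land on the correct overall plan: one must exhibit the described non-standard multi-sequence and verify coherence triple-by-triple and pair-by-pair, using Remark~12.17 to reduce most commuting relations to tautologies. This is precisely what the paper does. But the paper's verification uses different auxiliary results than you name: for $(\lambda_q,\lambda_r,\lambda_{r+1})$ and $(\lambda_r,\lambda_{r+1},\lambda_t)$ one reads off the standard solution directly; for $(\lambda_q,\lambda_{r+1},\lambda_{r+2})$ one invokes Lemma~8.1 (since $(\lambda_q,\lambda_{r+1})$ is James and $(\lambda_{r+1},\lambda_{r+2})$ is pointed); and for $(\lambda_{r+1},\lambda_{r+2},\lambda_t)$ with $t>r+2$ one invokes Proposition~11.16(iv). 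A crucial preliminary step you omit is to check that for $t\geq r+3$ the pairs $(\lambda_r,\lambda_t)$ and $(\lambda_{r+1},\lambda_t)$ are also James---this follows from $\hat\lambda_{r+2}<p^{v_r}$, $l_{r+2}>v_r$, and $(\lambda_{r+2},\lambda_{r+3})$ James, forcing $l_{r+3}<v_r=v_{r+1}$---and is what makes the standard-solution arguments for those triples go through.
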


\begin{proof}

By Remark 12.17 we have that $(\lambda_s,\lambda_t)$ is a James partition for $s\neq r,r+1$. We show that in this case we also have that for $t\geq r+3$, the pairs $(\lambda_r,\lambda_t)$ and $(\lambda_{r+1},\lambda_t)$ are James.

\q First notice that since $l_{r+2}>v_r$ and $\lambda_{r+2}= \hat{\lambda}_{r+2}+p^{l_{r+2}}$ with $\hat{\lambda}_{r+2}<p^{v_r}$ our assumption that $(\lambda_{r+2},\lambda_{r+3})$ is James gives directly  that $l_{r+3}<v_r=v_r+1$ and so we have that $(\lambda_r,\lambda_{r+3})$ and $(\lambda_{r+1},\lambda_{r+3})$ are James partitions. Now, for $t\geq r+3$ we have $l_t\leq l_{r+3}$ and so $(\lambda_r,\lambda_t)$ and $(\lambda_{r+1},\lambda_t)$ are also James partitions. Therefore, we deduce that the only pairs which are not James come from $(\lambda_r,\lambda_{r+1}), (\lambda_{r+1},\lambda_{r+2})$ and $(\lambda_r,\lambda_{r+2})$.

\q Let $(y(t,u)_i)$ be a multi-sequence as in the statement of the Lemma. By Proposition 10.14(iii) the coherence condition is satisfied for $(\lambda_r,\lambda_{r+1},\lambda_{r+2})$. 

\q We deal first with coherence conditions involving $y(r,r+1)_{p^{v_r}}$.  For a pair  $(s,t)$, with $s<t$ and both $s$ and $t$  different from  $r$ and $r+1$ we have $(r,r+1)$ and $(s,t)$ are related via the commuting relation (C). Since $(y(s,t)_j)=0$ and $(\lambda_s,\lambda_t)$ is a James partition we get directly that this relation becomes a tautology.  

\q Therefore, it remains to consider the triples of the form $(\lambda_q,\lambda_r,\lambda_{r+1})$ with $q<r$ and  $(\lambda_r,\lambda_{r+1},\lambda_t)$ with $t>r+2$.

\q For the triple  $(\lambda_q,\lambda_r,\lambda_{r+1})$ with $q<r$ we have that $y(r,r+1)_j$ is identified with  $y_j$ in the relations (T1)-(T3b). Here the standard solution  for this triple has the form $\x=\z=0$ and $\y=0$ for $j\neq p^{v_r}$ with $y^\st_{p^{v_r}}=-1$. Therefore the coherence  conditions are satisfied for this triple.

\q For the triple $(\lambda_r,\lambda_{r+1},\lambda_t)$ with $t>r+2$ we have that  $y(r,r+1)_i$ is identified with $x_i$ in the relations (T1)-(T3b). The standard solution for this triple is $\x=0$ for $i\neq p^{v_r}$ and $x^{\rm{st}}_{p^{v_r}}=-1$ and $\y=\z=0$. So, again the coherence  conditions are satisfied for this triple.

\q We now  deal with coherence conditions involving $y(r+1,r+2)_{p^{l_{r+2}}}$.   For a pair $(s,t)$ with $s<t$ and $s$ and $t$ both different from  $r+1$ and $r+2$ we have that $(r+1,r+2)$ and $(s,t)$ are  related  via the commuting relation (C). As $(\lambda_s,\lambda_t)$ is a James partition and $(y(s,t)_j)=0$ for such $(s,t)$, this relation becomes a tautology.  

\q Therefore, it remains to consider the triples $(\lambda_q,\lambda_{r+1},\lambda_{r+2})$ with $q<r$ and $(\lambda_{r+1},\lambda_{r+2},\lambda_t)$ with $t>r+2$.

\q For the triple  $(\lambda_q,\lambda_{r+1},\lambda_{r+2})$ with $q<r$ we have that $y(r+1,r+2)_j$ is identified with  $y_j$ in the relations (T1)-(T3b). Notice now that the partitions $(\lambda_q,\lambda_{r+1},\lambda_{r+2})$ satisfies the properties of the case of Lemma 8.1 and so we get that the coherence  conditions are satisfied for this triple.

\q For the triple $(\lambda_{r+1},\lambda_{r+2},\lambda_t)$ we have that $y(r+1,r+2)_i$ is identified with $x_i$ in the relations (T1)-(T3b). This triple now satisfies the properties of the case of Proposition 11.16(iv) and we have that $y(r+1,r+2)_{p^{l_{r+2}}}\neq0$, with all the other values  $0$, is coherent  for this triple.

\end{proof}

\q We next consider the case in which  $(\lambda_{r+1},\lambda_{r+2},\lambda_{r+3})$ has the form described in Proposition 10.14 (iv).

\begin{lemma} Suppose $l_{r+2}=v_r<v_{r+1}$, with $\len_p(\lambda_{r+1}+p^{l_{r+2}})<\val_p(\lambda_r+p^{v_r}+1)$ and $\lambda_{r+2}=\hat{\lambda}_{r+2}+p^{l_{r+2}}$ with $\hat{\lambda}_{r+2}<p^{v_r}$. Then

(i) For $p\geq 3$ the partition $\lambda$ is non-split.

(ii) For $p=2$ the partition $\lambda$ is non-split if and only if $l_{r+3}<l_{r+2}$.

 \q In the  non split case $E(\lambda)$ is spanned by the standard multi-sequence and a non-zero multi-sequence $(y(t,u)_i)$ satisfying 
  $y(r+1,r+2)_j=y(r,r+2)_k=0$ for $j,k\neq p^{v_r}$ with 
  
  $$y(r+1,r+2)_{p^{v_r}}+ y(r,r+2)_{p^{v_r}}=0$$
  
 and $(y(s,t)_j)=0$ for  $(s,t)\neq (r+1,r+2), (r,r+2)$.

\end{lemma}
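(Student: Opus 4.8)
The plan is to mirror the strategy already established in Lemmas 12.18 through 12.21: the triple $(\lambda_r,\lambda_{r+1},\lambda_{r+2})$ here is exactly the configuration of Proposition 10.14(iv), so I already know that after subtracting a standard multi-sequence from an arbitrary coherent multi-sequence I can arrange $(y(r,r+1)_i)=0$, with $y(r+1,r+2)_j=y(r,r+2)_k=0$ for $j,k\neq p^{v_r}$ and $y(r+1,r+2)_{p^{v_r}}+y(r,r+2)_{p^{v_r}}=0$. The task is therefore purely to decide when such a non-zero truncated multi-sequence is coherent as a multi-sequence for the whole of $\lambda$, and to identify the obstruction in characteristic $2$. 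First I would record, via Remark 12.17, that $(\lambda_s,\lambda_t)$ is James whenever $s\neq r,r+1$; then I would analyse the pairs $(\lambda_r,\lambda_t)$ and $(\lambda_{r+1},\lambda_t)$ for $t\geq r+3$, using the hypothesis $\lambda_{r+2}=\hat\lambda_{r+2}+p^{v_r}$ with $\hat\lambda_{r+2}<p^{v_r}$ together with the fact that $(\lambda_{r+2},\lambda_{r+3})$ is James. This is where the characteristic splits: for $p\geq 3$ one gets $\lambda_{r+3}<p^{v_r}$, hence $l_{r+3}<v_r$ automatically, so $(\lambda_r,\lambda_t)$ and $(\lambda_{r+1},\lambda_t)$ are James for all $t>r+2$; for $p=2$ the inequality $\hat\lambda_{r+2}<p^{v_r}$ only forces $\lambda_{r+3}\leq p^{v_r}$, so one must impose $l_{r+3}<l_{r+2}=v_r$ separately to keep those pairs James.

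Once the only non-James pairs are confined to $(\lambda_r,\lambda_{r+1})$, $(\lambda_{r+1},\lambda_{r+2})$, $(\lambda_r,\lambda_{r+2})$, the verification of coherence is routine and proceeds by the same bookkeeping as in Lemma 12.20: the commuting relations (C) for pairs $(s,t)$ disjoint from $\{r+1,r+2\}$ (resp.\ disjoint from $\{r,r+2\}$) become tautologies because both sides vanish — the $(\lambda_s,\lambda_t)$ is James so the binomial coefficient prefactor on one side vanishes and $(y(s,t)_j)=0$ on the other. Then I would run through the triples $(\lambda_q,\lambda_{r+1},\lambda_{r+2})$ and $(\lambda_{r+1},\lambda_{r+2},\lambda_t)$ (where $y(r+1,r+2)_i$ plays the role of $y_j$, resp.\ $x_i$ in (T1)--(T3b)), and similarly the triples involving $(\lambda_r,\lambda_{r+2})$. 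For $(\lambda_q,\lambda_{r+1},\lambda_{r+2})$ with $q<r$ the pair $(\lambda_q,\lambda_{r+1})$ is James and $(\lambda_{r+1},\lambda_{r+2})$ is James, so this falls under Lemma 8.1 and one checks the values match the standard solution there; for $(\lambda_{r+1},\lambda_{r+2},\lambda_t)$ with $t>r+2$ both $(\lambda_{r+1},\lambda_t)$ and $(\lambda_{r+2},\lambda_t)$ are James, so again the standard solution has the required shape and the non-standard value $y(r+1,r+2)_{p^{v_r}}$ is compatible. The triples $(\lambda_q,\lambda_r,\lambda_{r+2})$ with $q<r$ and $(\lambda_r,\lambda_{r+2},\lambda_t)$ with $t>r+2$ are handled identically, the key point being $l_t<v_r$ so that the relevant standard sequence is concentrated at $p^{v_r}$.

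Conversely, for the ``only if'' in characteristic $2$, suppose $p=2$ and $l_{r+3}\geq l_{r+2}=v_r$. Then $(\lambda_{r+2},\lambda_{r+3})$ being James together with $\lambda_{r+2}=\hat\lambda_{r+2}+2^{v_r}$ forces $\lambda_{r+3}$ to have length exactly $v_r$, so $v_{r+2}\leq l_{r+2}\leq l_{r+3}$ fails the James condition unless one is in a degenerate situation; more precisely I would show that the commuting relation (C) for the pairs $(r,r+1)$ and $(r+2,r+3)$, applied with the appropriate power of $p$, forces $(y(r,r+1)_i)=0$, and then combined with $(y(r+1,r+2)_j)=0$ Proposition 9.4(i) gives that $\lambda$ splits. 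I expect the main obstacle to be precisely this reverse direction — pinning down exactly why $l_{r+3}\geq l_{r+2}$ kills the extension in characteristic $2$, since one must argue that no non-standard coherent value can survive the coherence conditions for the triple $(\lambda_{r+1},\lambda_{r+2},\lambda_{r+3})$ or for a commuting quadruple; all the forward coherence verifications, while lengthy, are of the same mechanical character as Lemmas 12.18--12.21 and present no conceptual difficulty.
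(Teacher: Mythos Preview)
Your overall strategy matches the paper's, but there are two slips worth correcting.

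First, a minor misreference in the forward direction: for the triples $(\lambda_q,\lambda_{r+1},\lambda_{r+2})$ with $q<r$ and $(\lambda_{r+1},\lambda_{r+2},\lambda_t)$ with $t>r+2$, you correctly observe that both constituent pairs are James (since $l_{r+2}=v_r<v_{r+1}$), but then you invoke Lemma~8.1, which concerns triples with $(a,b)$ James and $(b,c)$ \emph{not} James. These are genuinely James triples, so the relevant tool is Lemma~7.7 (which is exactly what the paper uses, distinguishing case (i) when $l_{r+1}=l_{r+2}$ from case (iii) when $l_{r+1}>l_{r+2}$, and using $l_t<l_{r+2}$ for the second triple).

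Second, and more substantively, your converse argument in characteristic $2$ has the roles reversed. After subtracting the standard multi-sequence you \emph{already have} $(y(r,r+1)_i)=0$ from Proposition~10.14(iv); there is nothing to prove there. What must be killed is $y(r+1,r+2)_{p^{v_r}}$ (equivalently $y(r,r+2)_{p^{v_r}}$). The commuting relation you want is not the one between $(r,r+1)$ and $(r+2,r+3)$ but the one between $(r,r+3)$ and $(r+1,r+2)$:
\[
{\lambda_r+i\choose i}\,y(r+1,r+2)_j \;=\; {\lambda_{r+1}+j\choose j}\,y(r,r+3)_i.
\]
Since $(\lambda_{r+1},\lambda_{r+2})$ is James, the coefficient ${\lambda_{r+1}+j\choose j}$ vanishes for all $1\leq j\leq\lambda_{r+2}$, so the right-hand side is zero. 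Now the hypothesis $l_{r+3}=l_{r+2}=v_r$ is precisely what allows you to set $i=2^{v_r}\leq\lambda_{r+3}$, making ${\lambda_r+2^{v_r}\choose 2^{v_r}}\neq 0$ and hence $y(r+1,r+2)_j=0$ for all $j$. Then Proposition~9.4(i) finishes. This is the paper's argument, and it is the cleanest route.
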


\begin{proof}

Assume first that $l_{r+3}<l_{r+2}$. Notice that this is always the case if $\char(K)\geq 3$, since $(\lambda_{r+2},\lambda_{r+3})$ is James and $\lambda_{r+2}=\hat{\lambda}_{r+2}+p^{{v_r}}$, with $\hat{\lambda}_{r+2}<p^{v_r}$.  

\q By Remark 12.17, $(\lambda_s,\lambda_t)$ is a James partition for $s\neq r,r+1$. Now, for $t\geq r+3$ we have  $l_t\leq l_{r+3}<v_r<v_{r+1}$, and so we get that $(\lambda_r,\lambda_t)$ and $(\lambda_{r+1},\lambda_t)$ are also James pairs for $t\geq r+3$. Therefore, the only pairs which are not James come from $(\lambda_r,\lambda_{r+1})$ and $(\lambda_r,\lambda_{r+2})$.

 \q Let $(y(t,u)_i)$ be a multi-sequence as in the statement of the Lemma. By Proposition 10.14 (iv) the coherence condition is satisfied for $(\lambda_r,\lambda_{r+1},\lambda_{r+2})$. 
 
 \q We deal first with coherence conditions involving $y(r+1,r+2)_{p^{v_r}}$. For a pair  $(s,t)$, with $s<t$, and both $s$ and $t$  different from  $r+1$ and $r+2$ we have $(r+1,r+2)$ and $(s,t)$ are related via the commuting relation (C). Since $(y(s,t)_j)=0$ and $(\lambda_s,\lambda_t)$ is a James partition for such $(s,t)$ we get directly that this relation becomes a tautology. 
 
 \q Therefore, it remains to consider the triples of the form  $(\lambda_{r+1},\lambda_{r+2},\lambda_t)$ with $t>r+2$ and $(\lambda_q,\lambda_{r+1},\lambda_{r+2})$ with $q<r$.

\q For the triple $(\lambda_{r+1},\lambda_{r+2},\lambda_t)$ with $t>r+2$ we have that $y(r+1,r+2)_i$ is identified with $x_i$ in the relations (T1)-(T3b). Now, since $l_{r+2}=v_r<v_{r+1}$  and $(\lambda_{r+2},\lambda_t)$ is a James partitions we get that the partition  $(\lambda_{r+1},\lambda_{r+2},\lambda_t)$ is James. Moreover, as $l_t\leq l_{r+3}<l_{r+2}$ we have by Lemma 7.7 that the coherence  conditions are satisfied for this triple.

\q For the triple  $(\lambda_q,\lambda_{r+1},\lambda_{r+2})$ with $q<r$ we have that $y(r+1,r+2)_j$ is identified with  $y_j$ in the relations (T1)-(T3b). Again we have that this partition is James. Also, since $\len_p(\lambda_{r+1}+p^{l_{r+2}})<\val_p(\lambda_r+p^{v_r}+1)$ we have that $\len_p(\lambda_{r+1}+p^{l_{r+2}})<v_q$. Therefore, if $l_{r+2}=l_{r+3}$ we get that the coherence  conditions are satisfied by Lemma 7.7(i) and if $l_{r+2}>l_{r+3}$ the conditions are satisfied by Lemma 7.7(iii).

 \q We deal now with the  coherence conditions involving $y(r,r+2)_{p^{v_r}}$. For a pair $(s,t)$ with $s<t$, and both $s$ and $t$  different from  $r$ and $r+2$ we have that $(r,r+2)$ and $(s,t)$ are  related only via the commuting relation (C).  Since $(y(s,t)_j)=0$ and $(\lambda_s,\lambda_t)$ is a James partition for such $(s,t)$ we get directly that this relation becomes a tautology.

 \q Therefore it remains to consider the triples $(\lambda_{r},\lambda_{r+2},\lambda_t)$ with $t>r+2$ and $(\lambda_q,\lambda_{r},\lambda_{r+2})$ with $q<r$. 
 
 \q For the triple $(\lambda_{r},\lambda_{r+2},\lambda_t)$ we have that $y(r,r+2)_i$ is identified with $x_i$ in the relations (T1)-(T3b). The standard solution for this triple is $\x=0$ for $i\neq p^{v_r}$ and $x^{\rm{st}}_{p^{v_r}}=-1$ and $\y=\z=0$. So the coherence  conditions are satisfied.

\q For the triple  $(\lambda_q,\lambda_{r},\lambda_{r+2})$ with $q<r$ we have that $y(r,r+2)_j$ is identified with  $y_j$ in the relations (T1)-(T3b).
Now here we have that the standard solution for this triple has the form $\x=\z=0$ and $\y=0$ for $j\neq p^{v_r}$ with $y^\st_{p^{v_r}}=-1$ and so again we are fine.

\q We show now that if $l_{r+3}=l_{r+2}$ then $\lambda$ is split.  Notice that since $(\lambda_{r+2},\lambda_{r+3})$ is James and $\lambda_{r+2}=\hat{\lambda}_{r+2}+p^{l_{r+2}}$, this is valid only in characteristic $2$.

\q We consider an arbitrary coherent multi-sequence for $\lambda$.  By Proposition 10.14(iv) we have that after subtracting a multiple of the  standard solution we can assume that $(y(r,r+1)_i)=0$ , $y(r+1,r+2)_j=0$ for $j\neq 2^{v_r}$ and $y(r,r+2)_k=0$ for $k\neq 2^{v_r}$, with $y(r+1,r+2)_{2^{v_r}}+y(r,r+2)_{2^{v_r}}=0$.  By Proposition 9.4(i) it is enough to show that   $(y(r,r+1)_i)$ and $(y(r+1,r+2)_j)$ are zero. Hence it remains  to prove that $y(r+1,r+2)_{2^{v_r}}=0$. Now the pairs $(r,r+3)$ and $(r+1,r+2)$ are related via the commuting relation (C) and since $(\lambda_{r+1},\lambda_{r+2})$ is James we have that

$${{\lambda_r+i}\choose{i}}y(r+1,r+2)_j=0, {\rm{with}}\ i\leq \lambda_{r+3}.$$

As $l_{r+3}=l_{r+2}=v_r$ we can set $i=2^{v_r}$ and we have ${{\lambda_r+2^{v_r}}\choose{2^{v_r}}}\neq 0$. Therefore, $y(r+1,r+2)_j=0$ and we are done.

\end{proof}

\begin{example}

\rm We give a concrete example to describe the failure of the above result for the situation when $l_{r+3}=l_{r+2}=v_r<v_{r-1}$ in characteristic $2$. It is easy to see that the partition $(2,1,1,1)$ satisfies all the assumptions of Lemma 12.20 with $v_1=l_3=l_4=0$ and $v_2=1$. By Proposition 10.14(iv) we have that  $\Ext^1_{B(n)}(S^4E, K_{(2,1,1)})=K$. However  $\Ext^1_{B(n)}(S^5E, K_{(2,1,1,1)})=0$, for $n\geq 5$, since the partition $(2,1,1,1)$ has 2-core $(2,1)$ and so $\Ext^1_{B(n)}(S^5E, K_{(2,1,1,1)})=\Ext^1_{G(n)}(S^5E, \nabla(2,1,1,1))=0$.  

\end{example}

\q We next consider the case in which  $(\lambda_{r+1},\lambda_{r+2},\lambda_{r+3})$ has the form described in Proposition 10.14(v).

\begin{lemma}

 Suppose $l_{r+2}=v_r>v_{r+1}$, with $\len_p(\lambda_{r+1}+p^{l_{r+2}})<\val_p(\lambda_r+p^{v_r}+1)$ and $\lambda_{r+2}=\hat{\lambda}_{r+2}+p^{l_{r+2}}$, with $\hat{\lambda}_{r+2}<p^{v_{r+1}}$. Then  $\lambda$ is non-split. 

 \q Moreover $E(\lambda)$ is spanned by the standard multi-sequence and a non-zero multi-sequence $(y(t,u)_i)$ satisfying $y(r+1,r+2)_j=y(r,r+2)_k=0$ for $j,k\neq p^{v_r}$, with 
 
 $$y(r+1,r+2)_{p^{v_r}}+ y(r,r+2)_{p^{v_r}}=0$$
 
  and $(y(s,t)_j)=0$ for $(s,t)\neq (r+1,r+2), (r,r+2)$.

\end{lemma}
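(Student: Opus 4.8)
The plan is to proceed exactly as in the earlier lemmas of this subsection: write down the candidate coherent multi-sequence, verify that it satisfies all the relations of Section~4, and then read off $\dim E(\lambda)$ by comparison with the three-part partition $(\lambda_r,\lambda_{r+1},\lambda_{r+2})$.

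First I would pin down which pairs of rows of $\lambda$ are non-James. By Remark 12.17 every pair $(\lambda_s,\lambda_t)$ with $s\neq r,r+1$ is James; and for $t\geq r+3$, since $\lambda_{r+2}=\hat{\lambda}_{r+2}+p^{l_{r+2}}$ with $\hat{\lambda}_{r+2}<p^{v_{r+1}}$ and $(\lambda_{r+2},\lambda_{r+3})$ is James, one gets $\val_p(\lambda_{r+2}+1)\leq v_{r+1}$, hence $l_{r+3}\leq \val_p(\lambda_{r+2}+1)-1<v_{r+1}<v_r=l_{r+2}$, so $l_t\leq l_{r+3}<v_r$ and $(\lambda_r,\lambda_t),(\lambda_{r+1},\lambda_t)$ are James. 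Thus the only non-James pairs are $(\lambda_r,\lambda_{r+1})$, $(\lambda_{r+1},\lambda_{r+2})$ and $(\lambda_r,\lambda_{r+2})$; note $l_{r+3}<l_{r+2}$ is automatic, so (unlike in the Proposition 10.14(iv) case) there is no exceptional $p=2$ subcase and the conclusion is unconditional. By the standing assumptions of Section~10 we are in the situation of Lemma 10.12: $(\lambda_{r+1},\lambda_{r+2})$ is pointed and $(\lambda_r,\lambda_{r+2})$ is split, and by Remark 10.3 the base-$p$ digits of $\lambda_r$ are $(\lambda_r)_t=p-1$ for $0\leq t\leq l_{r+1}$ with $t\neq v_r$ and $(\lambda_r)_{v_r}=p-2$. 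Hence, using Remark 3.1 and $\len_p(\lambda_{r+2})=l_{r+2}=v_r\leq l_{r+1}$, the standard extension sequence $i\mapsto {\lambda_r+i\choose i}$ of $(\lambda_r,\lambda_{r+2})$ vanishes for $1\leq i\leq\lambda_{r+2}$ except at $i=p^{v_r}$, where it equals $(\lambda_r)_{v_r}+1=p-1$.

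Now let $(y(t,u)_i)$ be the multi-sequence of the statement. To show it is coherent I would check the relations of Section~4 in turn. For the commuting relations~(C): any pair disjoint from one of the three non-James pairs is itself a James pair carrying the zero sequence, so both sides vanish (the side with a divided-power coefficient because the James condition forces that binomial to be $0$). For the triple relations~(T1)--(T3b) it then suffices to treat triples $(q,s,t)$ meeting $\{r,r+1,r+2\}$; all but four families restrict to the zero triple and are coherent for free. The triple $(\lambda_r,\lambda_{r+1},\lambda_{r+2})$ is handled by Proposition 10.14(v). The triples $(\lambda_q,\lambda_{r+1},\lambda_{r+2})$ with $q<r$ are of the Section~8 type with $(\lambda_q,\lambda_{r+1})$ James and $(\lambda_{r+1},\lambda_{r+2})$ pointed, so by Lemma 8.1 they are non-split with extension space spanned by the standard triple and the point solution concentrated at $j=p^{l_{r+2}}=p^{v_r}$, provided $v_q>\len_p(\lambda_{r+1}+p^{l_{r+2}})$; this follows from the hypothesis $\len_p(\lambda_{r+1}+p^{l_{r+2}})<\val_p(\lambda_r+p^{v_r}+1)$ since $\lambda_r+p^{v_r}+1\leq 2\lambda_r+1<2p^{v_q}$ (as $(\lambda_q,\lambda_r)$ is James and $p^{v_r}\mid\lambda_r+1$), which forces $\val_p(\lambda_r+p^{v_r}+1)\leq v_q$. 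The triples $(\lambda_{r+1},\lambda_{r+2},\lambda_t)$ with $t>r+2$ fall under case~(iv) of Proposition 11.16 (here $v_{r+1}\geq\val_p(\lambda_{r+2}+1)>l_t$), whose non-standard solution is concentrated at $i=p^{l_{r+2}}=p^{v_r}$. Finally the triples $(\lambda_q,\lambda_r,\lambda_{r+2})$ with $q<r$ and $(\lambda_r,\lambda_{r+2},\lambda_t)$ with $t>r+2$ are split --- the former by Lemma 8.1 (since $(\lambda_r,\lambda_{r+2})$ is split, not pointed), the latter by Lemma 10.1 (since $(\lambda_r,\lambda_t)$ is James) --- so coherence means being a multiple of the standard triple; by the concentration statement of the previous paragraph the restricted sequence is supported only at $p^{v_r}$ and therefore equals $\bigl(y(r,r+2)_{p^{v_r}}/(p-1)\bigr)$ times the standard triple. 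In every family the relevant solution is supported exactly at $p^{v_r}$, which is precisely why the restricted sequence is always a scalar multiple of it.

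It then remains to record that $(y(t,u)_i)$ restricts on $(\lambda_r,\lambda_{r+1},\lambda_{r+2})$ to the non-standard solution of Proposition 10.14(v), so it is non-zero and not a multiple of the standard multi-sequence (for which $y^\st(r,r+1)_{p^{v_r}}=(\lambda_r)_{v_r}+1=p-1\neq 0$); hence $\dim E(\lambda)\geq 2$, while Proposition 9.4(i) gives an injection $E(\lambda)\hookrightarrow E(\lambda_r,\lambda_{r+1},\lambda_{r+2})$ whose target is two-dimensional, so $\dim E(\lambda)=2$ and $E(\lambda)$ is spanned by the standard multi-sequence and $(y(t,u)_i)$. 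I expect the main obstacle to be the bookkeeping in the coherence check --- correctly placing each three-part restriction among the many shapes of Propositions 10.14 and 11.16 --- and, within that, the arithmetic verification that the standard extension sequence of $(\lambda_r,\lambda_{r+2})$ and the relevant solutions for $(\lambda_{r+1},\lambda_{r+2})$ are all concentrated at the single exponent $p^{v_r}$, a fact forced by the digit pattern of $\lambda_r$ from Remark 10.3; once that is secured, the remaining checks are routine applications of Lucas's formula.
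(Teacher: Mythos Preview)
Your proof is correct and follows essentially the same approach as the paper. The paper's own proof of this lemma is the one-line ``The considerations here are identical with those of Lemma 12.23 [\emph{sic}] and the proof follows in the same way'' --- evidently a typo for an earlier lemma in the subsection (Lemma~12.21 is the natural analogue, being the Proposition~10.14(iv) case with the same shape of non-standard solution) --- so your write-up is the detailed version of what the paper defers. Your identification of the non-James pairs, the placement of each restricted triple among the cases of Lemma~8.1 and Propositions~10.14 and~11.16, and the verification that the relevant standard sequence of $(\lambda_r,\lambda_{r+2})$ is concentrated at $p^{v_r}$ via the digit pattern of Remark~10.3, all match the pattern of the surrounding lemmas; the bound $\val_p(\lambda_r+p^{v_r}+1)\leq v_q$ that you extract from $\lambda_r+p^{v_r}+1\leq 2\lambda_r+1<2p^{v_q}$ is the one point where the arithmetic deserves a moment's care, but it goes through as written.
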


\begin{proof}

 By Remark 12.17, $(\lambda_s,\lambda_t)$ is a James partition for $s\neq r,r+1$. We show that in this case we also have that $(\lambda_r,\lambda_t)$ and $(\lambda_{r+1},\lambda_t)$, with $t\geq r+3$ are James.

\q Since $(\lambda_{r+2},\lambda_{r+3})$ is James, $\lambda_{r+2}= \hat{\lambda}_{r+2}+p^{l_{r+2}}$, with $\hat{\lambda}_{r+2}<p^{v_{r+1}}$, and $v_r>v_{r+1}$  we must have that $\lambda_{r+3}<p^{v_{r+1}}$. Hence $l_t\leq l_{r+3}<v_{r+1}<v_r$ and so $(\lambda_r,\lambda_t)$ and $(\lambda_{r+1},\lambda_t)$ are James partitions. Therefore, the only pairs which are not James come from $(\lambda_r,\lambda_{r+1}), (\lambda_{r+1},\lambda_{r+2})$ and $(\lambda_r,\lambda_{r+2})$.

 \q Let $(y(t,u)_i)$ be a multi-sequence as in the statement of the Lemma. By Proposition 10.14(v) the coherence condition is satisfied for $(\lambda_r,\lambda_{r+1},\lambda_{r+2})$.     
 
 \q We first  deal  with the  coherence conditions involving $y(r+1,r+2)_{p^{v_r}}$.   For a pair $(s,t)$, with $s<t$, and both $s$ and $t$  different from  $r+1$ and $r+2$ we have that $(r+1,r+2)$ and $(s,t)$ are  related via the commuting relation (C). Since $(y(s,t)_j)=0$ and $(\lambda_s,\lambda_t)$ is a James partition  for such $(s,t)$ we have that this relation becomes a tautology.
 
 \q   Therefore, it remains to consider the triples of the form  $(\lambda_{r+1},\lambda_{r+2},\lambda_t)$ with $t>r+2$ and $(\lambda_q,\lambda_{r+1},\lambda_{r+2})$ with $q<r$.

 \q For a  triple $(\lambda_{r+1},\lambda_{r+2},\lambda_t)$ with $t>r+2$ we have that  $y(r+1,r+2)_i$ is identified with $x_i$ in the relations (T1)-(T3b). Notice that $(\lambda_{r+1},\lambda_{r+2})$ is a pointed partition and that $l_t<v_{r+1}$. Hence this triple has the form described in Proposition 11.16(iv) and the coherence condition holds for our multi-sequence.

 \q For the triple  $(\lambda_q,\lambda_{r+1},\lambda_{r+2})$ with $q<r$ we have that $y(r+1,r+2)_j$ is identified with  $y_j$ in the relations (T1)-(T3b). Now, since $(\lambda_q,\lambda_{r+1})$ is James and $(\lambda_{r+1},\lambda_{r+2})$ is a pointed partition with $\len_p(\lambda_{r+1}+p^{v_r})<\val_p(\lambda_r+p^{v_r}+1)\leq v_s$ we have that this is the case appearing in Lemma 8.1 and this gives directly that the coherence conditions hold.

  \q We now deal  with the  coherence conditions involving  $y(r,r+2)_{p^{v_r}}$.
  
  \q For a pair $(s,t)$, with $s<t$, and both $s$ and $t$  different from $r$ and $r+2$  we have that $(r,r+2)$ and $(s,t)$ are  related via the commuting relation (C). As $(\lambda_s,\lambda_t)$ is a James partition and $(y(s,t)_j)=0$ for these values of $(s,t)$ we have directly that both sides in the commuting relation are $0$.
  
  \q  Therefore it remains to consider the triples $(\lambda_r,\lambda_{r+2},\lambda_t)$ with $t>r+2$ and $(\lambda_q,\lambda_r,\lambda_{r+2})$ with $q<r$.
  
  \q For the triple $(\lambda_r,\lambda_{r+2},\lambda_t)$ we have that $y(r,r+2)_i$ is identified with $x_i$ in the relations (T1)-(T3b). Notice that since $l_t<v_r$  and $(\lambda_r+p^{v_r},\lambda_{r+1})$ is James, the standard solution for this triple is $\x=0$ for $i\neq p^{v_r}$ and $x^{\rm{st}}_{p^{v_r}}=-1$ and $\y=\z=0$. So, our multi-sequence satisfies the conditions for this triple.
  
\q For the triple  $(\lambda_q,\lambda_r,\lambda_{r+2})$ with $q<r$ we have that $y(r,r+2)_j$ is identified with  $y_j$ in the relations (T1)-(T3b). Here, the standard solution for this triple is $\x=\z=0$ and $\y=0$ for $j\neq p^{v_r}$ with $y^\st_{p^{v_r}}=-1$ and so again the coherence conditions  for our multi-sequence are satisfied.

\end{proof}

\q The remaining five Lemmas correspond  to the cases in which $(\lambda_r,\lambda_{r+1},\lambda_{r+2})$ has the form described in Proposition 11.16 (i)-(v). We therefore adopt as  the standing assumption  that $(\lambda_r,\lambda_{r+1})$ is pointed.

\q We begin with the case in which  $(\lambda_r,\lambda_{r+1},\lambda_{r+2})$ has the form described in Proposition 11.16(i).

\begin{lemma}

Suppose $l_{r+1}>l_{r+2}\geq v_r=v_{r+1}$, with $\val_p(\lambda_r+p^{v_r}+1)\geq l_{r+1}$  and  $\lambda_{r+2}=p^{v_r}-1+p^{l_{r+1}}$. Then  $\lambda$ is non-split. 

\q The space $E(\lambda)$ is spanned by the standard multi-sequence and a non-zero multi-sequence  $(y(t,u)_i)$ satisfying   $y(r,r+1)_i=0$ for $i\neq p^{v_r}$ and the extension sequence  $(y(t,u)_j)=0$  for $(t,u) \neq (r,r+1)$.

\end{lemma}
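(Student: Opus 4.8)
The plan is to write down the multi-sequence $(y(t,u)_i)$ named in the statement, observe that it is non-standard, and verify directly that it is coherent. Its restriction to the triple $(\lambda_r,\lambda_{r+1},\lambda_{r+2})$ is (a scalar multiple of) the non-standard coherent triple produced by Proposition 11.16(i), so it is certainly non-standard; and by Theorem 12.1 the non-James partition $\lambda$ is constrained, so $\dim E(\lambda)\leq 2$. Hence, once coherence is established, $\dim E(\lambda)=2$, so $\lambda$ is non-split and $E(\lambda)$ is as described. The whole proof therefore reduces to the coherence check, which I would organise exactly as in the proofs of Lemmas 12.18--12.23.

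First I would invoke Remark 12.17: every pair $(\lambda_s,\lambda_t)$ with $s\neq r,r+1$ is James. This has two consequences. In any commuting relation (C) in which the pair $(r,r+1)$ occurs, the companion pair is disjoint from $\{r,r+1\}$, hence James, so the term $\binom{\lambda_s+j}{j}y(r,r+1)_i$ vanishes for all relevant indices and both sides of (C) are zero; every remaining (C) involves only $y$-values that are zero. Moreover the candidate restricts to the zero triple on every triple of rows apart from $(\lambda_q,\lambda_r,\lambda_{r+1})$ with $q<r$ and $(\lambda_r,\lambda_{r+1},\lambda_t)$ with $t>r+1$, and the zero triple is coherent. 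For $(\lambda_r,\lambda_{r+1},\lambda_t)$ with $t>r+2$ I would note that $\lambda_t\leq\lambda_{r+2}$, that the binomial coefficients occurring in (T1)--(T3b) do not involve the last part of the triple (only the index ranges do), and that the candidate has $(y_j)=(z_k)=0$ there; so the relations for $(\lambda_r,\lambda_{r+1},\lambda_t)$ form a sub-collection of those for $(\lambda_r,\lambda_{r+1},\lambda_{r+2})$, which hold by Proposition 11.16(i) (the case $t=r+2$ being Proposition 11.16(i) itself).

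The one case that needs real work is $(\lambda_q,\lambda_r,\lambda_{r+1})$ with $q<r$: here $(\lambda_q,\lambda_r)$ is James and $(\lambda_r,\lambda_{r+1})$ is pointed, so this is precisely the Section 8 situation, and the candidate restricts to $(x_i)=0$, $(z_k)=0$, $(y_j)=c\,e_{p^{v_r}}$ (the sequence with value $c$ at $p^{v_r}$ and $0$ elsewhere). Using Remark 11.4, so that $(\lambda_r)_t=p-1$ for $0\leq t<l_{r+1}$ with $t\neq v_r$ and $(\lambda_r)_{v_r}=p-2$, together with Lucas's Formula and the bound $\lambda_{r+1}=p^{v_r}-1+p^{l_{r+1}}<2p^{l_{r+1}}$, I would show that the standard extension sequence $\bigl(\binom{\lambda_r+j}{j}\bigr)_{1\leq j\leq\lambda_{r+1}}$ of the pointed pair $(\lambda_r,\lambda_{r+1})$ is supported on $\{p^{v_r},p^{l_{r+1}}\}$, with value $(\lambda_r)_{v_r}+1\neq 0$ at $p^{v_r}$ (Remark 5.10). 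Since the point sequence is supported at $p^{l_{r+1}}$, the sequence $e_{p^{v_r}}$ lies in the two-dimensional space of extension sequences of $(\lambda_r,\lambda_{r+1})$. By Lemma 8.1 the triple $(\lambda_q,\lambda_r,\lambda_{r+1})$ is either non-split, in which case its coherent triples realise every extension sequence of $(\lambda_r,\lambda_{r+1})$ in the $(y_j)$-slot with $(x_i)=(z_k)=0$, or split, in which case $v_q\leq\len_p(\lambda_r+p^{l_{r+1}})$ while $v_q>l_r$, forcing a carry and hence $\binom{\lambda_r+p^{l_{r+1}}}{p^{l_{r+1}}}=0$, so that the standard sequence is already concentrated at $p^{v_r}$; in both cases $(x_i)=0$, $(y_j)=c\,e_{p^{v_r}}$, $(z_k)=0$ is coherent for $(\lambda_q,\lambda_r,\lambda_{r+1})$. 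I expect this matching of the support of the standard sequence of $(\lambda_r,\lambda_{r+1})$ with the James/split dichotomy of Lemma 8.1 to be the main obstacle; once it is in place the pieces assemble to give coherence of $(y(t,u)_i)$, and the lemma follows.
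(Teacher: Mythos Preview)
Your approach is correct and follows essentially the same structure as the paper: write down the candidate multi-sequence, note it is non-standard, invoke constrainedness to get $\dim E(\lambda)\leq 2$, and verify coherence by treating the commuting relations, the triples $(\lambda_r,\lambda_{r+1},\lambda_t)$, and the triples $(\lambda_q,\lambda_r,\lambda_{r+1})$ separately.

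The one place where you work harder than necessary is the triple $(\lambda_q,\lambda_r,\lambda_{r+1})$ with $q<r$. Your route via Lemma 8.1, splitting into the non-split and split cases and analysing the support of the standard sequence of $(\lambda_r,\lambda_{r+1})$, is correct, but the paper disposes of this triple by a direct check. With $(x_i)=(z_k)=0$ and $(y_j)$ supported at $p^{v_r}$, relations (T1), (T2) are trivial, (T3a) holds because $(\lambda_q,\lambda_{r+1})$ is James, and (T3b) reduces to $\binom{\lambda_q+i}{i}=0$ for $p^{v_r}<i\leq \lambda_r+p^{v_r}$. Since $(\lambda_r)_{v_r}=p-2$ (Remark 11.4), adding $p^{v_r}$ causes no carry, so $\len_p(\lambda_r+p^{v_r})=l_r<v_q$, i.e.\ $(\lambda_q,\lambda_r+p^{v_r})$ is James, and (T3b) follows at once. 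This bypasses the dichotomy you set up.
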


\begin{proof} Let $(y(t,u)_i)$ be a multi-sequence as in the statement of the Lemma. By Proposition 11.16(i),  the coherence conditions are  satisfied for \\
 $(\lambda_r,\lambda_{r+1},\lambda_{r+2})$.     
 
 \q For $(s,t)$ with $s<t$, and both $s$ and $t$ different from  $r$ and $r+1$, we have that $(r,r+1)$ and $(s,t)$ are related via the commuting relation (C). Now, since $(\lambda_s,\lambda_t)$ is James by Remark 12.17 and $(y(s,t)_j)=0$ for such $(s,t)$ we have  that both sides  are $0$.

\q  Hence it remains  to consider the triples $(\lambda_r,\lambda_{r+1},\lambda_t)$ with $t>r+2$ and $(\lambda_q,\lambda_r,\lambda_{r+1})$ with $s<r$.  

\q For the triple $(\lambda_r,\lambda_{r+1},\lambda_t)$, we have that  $y(r,r+1)_i$ is identified with $x_i$ in the relations (T1)-(T3b). 
But as $l_t\leq l_{r+2}$ and any value of  $y(r,r+1)_{p^{v_r}}$  (and all other values  $0$) is coherent for  $(\lambda_r,\lambda_{r+1},\lambda_{r+2})$ then we get directly that the coherence conditions are satisfied also for  $(\lambda_r,\lambda_{r+1},\lambda_t)$.

\q For the triple $(\lambda_q,\lambda_r,\lambda_{r+1})$, we have that  $y(r,r+1)_i$ is identified with  $y_j$ in the relations (T1)-(T3b). Since $(\lambda_q,\lambda_r)$ is James and $\len_p(\lambda_r+p^{v_r})=l_r$ It is a trivial check to see that our solution here satisfies the relations (T1)-(T3b) for this triple. 

\end{proof}

\q We next consider  the case in which  $(\lambda_r,\lambda_{r+1},\lambda_{r+2})$ has the form described in Proposition 11.16(ii).

\begin{lemma}

Suppose $l_{r+1}=l_{r+2}> v_r=v_{r+1}$, with $\len_p(\lambda_{r+1}+p^{l_{r+2}})<\val_p(\lambda_r+p^{v_r}+1)$ and  $\lambda_{r+1}=p^{v_r}-1+p^{l_{r+1}}$. Then $\lambda$ is non-split. 

\q The space $E(\lambda)$ is spanned by the standard multi-sequence and a non-zero multi-sequence $(y(t,u)_i)$ satisfying  $y(r,r+1)_i=0$ for $i\neq p^{v_r}$ and $y(r+1,r+2)_j=0$ for $j\neq p^{l_{r+1}}$ with $$y(r,r+1)_{p^{v_r}}+ y(r+1,r+2)_{p^{l_{r+1}}}=0$$
 and $(y(t,u)_j)=0$ for $(t,u)\neq (r+1,r+2), (r,r+1)$.
\end{lemma}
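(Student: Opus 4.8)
Looking at this lemma, it asserts non-splitness of the $n$-part partition $\lambda$ together with an explicit description of $E(\lambda)$ in the case where the non-split triple $(\lambda_r,\lambda_{r+1},\lambda_{r+2})$ has the shape of Proposition 11.16(ii), with $(\lambda_{r+2},\dots,\lambda_n)$ James. The pattern of the preceding Lemmas 12.18--12.23 makes the strategy clear.

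The plan is first to reduce the problem to pairs of rows by exploiting the hypothesis that $(\lambda_{r+2},\dots,\lambda_n)$ and $(\lambda_1,\dots,\lambda_r)$ are James. Using Remark 12.17, every pair $(\lambda_s,\lambda_t)$ with $s\neq r,r+1$ is James; but here one must also check that $(\lambda_r,\lambda_t)$ and $(\lambda_{r+1},\lambda_t)$ are James for $t\geq r+3$. This follows from the shape of $\lambda_{r+1}=p^{v_r}-1+p^{l_{r+1}}$ (so $v_{r+1}=v_r$) and $\lambda_{r+2}$: since $(\lambda_{r+2},\lambda_{r+3})$ is James and $l_{r+1}=l_{r+2}$, one gets $l_{r+3}<l_{r+2}$, whence $l_t\leq l_{r+3}$ and $v_r\geq$ these lengths force the James condition on $(\lambda_r,\lambda_t)$ and $(\lambda_{r+1},\lambda_t)$ for $t\geq r+3$ (here the relevant $p$-adic length of $\lambda_{r+1}$ to control is $l_{r+1}$, but $i\leq\lambda_t$ makes the binomial vanish). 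Hence the only non-James pairs of rows of $\lambda$ are those inside the triple $(\lambda_r,\lambda_{r+1},\lambda_{r+2})$, exactly as in Lemma 12.22.

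Next I would verify that the multi-sequence $(y(t,u)_i)$ given in the statement — supported on $(r,r+1)$ at $p^{v_r}$ and on $(r+1,r+2)$ at $p^{l_{r+1}}$ with $y(r,r+1)_{p^{v_r}}+y(r+1,r+2)_{p^{l_{r+1}}}=0$, all else zero — is coherent. By Proposition 11.16(ii) the coherence conditions for the triple $(\lambda_r,\lambda_{r+1},\lambda_{r+2})$ itself hold. It remains to check: (a) the commuting relation (C) for a pair $(r,r+1)$ or $(r+1,r+2)$ against any other pair $(s,t)$ — these become tautologies since the other pair is James and carries the zero sequence; and (b) the triples $(\lambda_q,\lambda_r,\lambda_{r+1})$, $(\lambda_r,\lambda_{r+1},\lambda_t)$, $(\lambda_q,\lambda_{r+1},\lambda_{r+2})$, $(\lambda_{r+1},\lambda_{r+2},\lambda_t)$ for $q<r$, $t>r+2$. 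For the triples with extra row $q<r$ the standard sequence on that triple has $x^{\rm st}=z^{\rm st}=0$, $y^{\rm st}$ supported at the right power with value $-1$, so the associated relations reduce to exactly what our sequence provides. For the triples with extra row $t>r+2$: for $(\lambda_r,\lambda_{r+1},\lambda_t)$ we use $l_t\leq l_{r+2}$ so coherence of the $(\lambda_r,\lambda_{r+1},\lambda_{r+2})$-value propagates; for $(\lambda_{r+1},\lambda_{r+2},\lambda_t)$ one identifies it with a case of Lemma 8.1 or Proposition 11.16(iv) (since $(\lambda_{r+1},\lambda_{r+2})$ is pointed here with $l_t<l_{r+2}$) and the single nonzero value $y(r+1,r+2)_{p^{l_{r+1}}}$ is coherent there. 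This establishes $\dim E(\lambda)\geq 2$, hence $\lambda$ is non-split.

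Finally, for the upper bound $\dim E(\lambda)\leq 2$ and the precise description, I would invoke Proposition 9.4(i): restriction $E(\lambda)\to E(\lambda_r,\lambda_{r+1},\lambda_{r+2})$ is injective, and the latter is $2$-dimensional by Theorem 12.1 / Proposition 11.16. Combined with the explicit coherent sequence constructed above (which maps to the non-standard generator of $E(\lambda_r,\lambda_{r+1},\lambda_{r+2})$), this pins down $E(\lambda)$ exactly as stated, and also forces $(y(t,u)_i)=0$ for $(t,u)\neq(r,r+1),(r+1,r+2)$ in any element of $E(\lambda)$: a coherent multi-sequence restricting to the zero triple on $(\lambda_r,\lambda_{r+1},\lambda_{r+2})$ is zero. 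The main obstacle, as in Lemmas 12.22--12.23, is the careful bookkeeping in step (b) — matching each auxiliary triple to the correct earlier classification result and confirming the standard sequences vanish where needed — rather than any single hard computation; the characteristic is automatically odd here since $v_r=v_{r+1}$ with $\lambda_{r+1}=p^{v_r}-1+p^{l_{r+1}}$ and $l_{r+1}>v_r$, which streamlines the $\char K=2$ considerations that complicated Lemma 12.20.
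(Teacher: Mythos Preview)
Your approach is essentially the paper's: first identify that the only non-James pairs of rows lie in the triple $(\lambda_r,\lambda_{r+1},\lambda_{r+2})$, then verify that the candidate multi-sequence from Proposition 11.16(ii) satisfies all coherence conditions by running through the commuting relations (C) and the auxiliary triples $(\lambda_q,\lambda_r,\lambda_{r+1})$, $(\lambda_r,\lambda_{r+1},\lambda_t)$, $(\lambda_q,\lambda_{r+1},\lambda_{r+2})$, $(\lambda_{r+1},\lambda_{r+2},\lambda_t)$, matching each to an earlier classification (Lemma 8.1, Proposition 11.16(iv), or a multiple of the standard triple). The upper-bound step via Proposition 9.4(i) is exactly right.

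One point needs tightening. In the first reduction you assert $l_{r+3}<l_{r+2}$ and then that $v_r$ dominates these lengths, but $l_{r+3}<l_{r+2}$ alone does not give $l_{r+3}<v_r$ (recall $v_r<l_{r+2}$). The correct argument, which the paper uses, goes through the shape of $\lambda_{r+2}$: since $l_{r+2}=l_{r+1}$ and $(\lambda_{r+1},\lambda_{r+2})$ is the case of Proposition 11.16(ii), one has $\lambda_{r+2}=\hat\lambda_{r+2}+p^{l_{r+1}}$ with $\hat\lambda_{r+2}<p^{v_r}$. Then $v_{r+2}=\val_p(\hat\lambda_{r+2}+1)\leq v_r$, and since $(\lambda_{r+2},\lambda_{r+3})$ is James we get $l_{r+3}<v_{r+2}\leq v_r$, hence $l_t<v_r=v_{r+1}$ for all $t\geq r+3$, making $(\lambda_r,\lambda_t)$ and $(\lambda_{r+1},\lambda_t)$ James as needed.

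A second small point: for the triple $(\lambda_r,\lambda_{r+1},\lambda_t)$ with $t>r+2$, saying that coherence ``propagates'' from the $t=r+2$ case is misleading, because the $(y_j)$-component of that larger triple is $y(r+1,r+2)$, which is not zero, whereas here $y(r+1,t)=0$. The paper's argument is cleaner: since $l_t<v_r$ and $\val_p(\lambda_r+p^{v_r}+1)>l_{r+1}$, the \emph{standard} triple for $(\lambda_r,\lambda_{r+1},\lambda_t)$ already has $y^{\rm st}=z^{\rm st}=0$ and $x^{\rm st}$ supported only at $p^{v_r}$ with value $-1$; hence our sequence is a scalar multiple of it. Finally, your remark that the characteristic is automatically odd is incorrect (the identity $v_{r+1}=v_r$ follows from the form of $\lambda_{r+1}$ in any characteristic and imposes no constraint on $p$), but no oddness assumption is needed in this lemma anyway.
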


\begin{proof} By Remark 12.17 we have that $(\lambda_s,\lambda_t)$ is a James partition for $s\neq r,r+1$. We show that in this case we also have that $(\lambda_r,\lambda_t)$ and $(\lambda_{r+1},\lambda_t)$, with $t\geq r+3$, are also James. Notice that since $\lambda_{r+2}=\hat{\lambda}_{r+2}+p^{l_{r+1}}$ with $\hat{\lambda}_{r+2}<p^{v_r}$, $l_{r+1}> v_r$ and $(\lambda_{r+2},\lambda_{r+3})$ is James we get that $\lambda_{r+3}\leq \hat{\lambda}_{r+2}<p^{v_r}$. Therefore we have that the pairs $(\lambda_r,\lambda_t)$ and $(\lambda_{r+1},\lambda_t)$ with $t\geq r+3$ are also James partitions. Hence, the only pairs of rows of $\lambda$ which are not James  come from $(\lambda_r,\lambda_{r+1})$,   $(\lambda_{r+1},\lambda_{r+2})$,   and $(\lambda_r,\lambda_{r+2})$. 

\q Let $(y(t,u)_i)$ be a multi-sequence as in the statement of the Lemma. By Proposition 11.16(ii),  the coherence conditions are  satisfied for \\
 $(\lambda_r,\lambda_{r+1},\lambda_{r+2})$.   
 
  \q We first  deal  with the  coherence conditions involving $y(r,r+1)_{p^{v_r}}$.  For $(s,t)$ with $s<t$, and both $s$ and $t$ different from  $r$ and $r+1$, we have that $(r,r+1)$ and $(s,t)$ are related via the commuting relation (C). Now, since $(\lambda_s,\lambda_t)$ is James and $(y(s,t)_j)=0$ for such $(s,t)$ we have both sides of the relation are zero. 
  
  \q   Hence it remains  to consider the triples $(\lambda_r,\lambda_{r+1},\lambda_t)$ with $t>r+2$ and $(\lambda_q,\lambda_r,\lambda_{r+1})$ with $q<r$.  
  
  \q For the triple $(\lambda_r,\lambda_{r+1},\lambda_t)$, we have that  $y(r,r+1)_i$ is identified with $x_i$ in the relations (T1)-(T3b). Since $\len_p(\lambda_{r+1}+p^{l_{r+2}})<\val_p(\lambda_r+p^{v_r}+1)$   and $l_t<v_r$ we have that the standard solution for this triple is $\y=\z=0$ and $\x=0$ for $i\neq p^{v_r}$ with $x^{\rm{st}}_{p^{v_r}}=-1$. Hence, the coherence conditions  for our multi-sequence are satisfied.
  
  \q For the triple $(\lambda_q,\lambda_r,\lambda_{r+1})$, we have that  $y(r,r+1)_i$ is identified with  $y_j$ in the relations (T1)-(T3b). Here the standard solution for this triple is $\x=\z=0$ and $\y=0$ for $i\neq p^{v_r}$ with $y^\st_{p^{v_r}}=-1$. So, the relations associated with  this triple are satisfied by our multi-sequence.

    \q We now   deal  with the  coherence conditions involving $y(r+1,r+2)_{p^{l_{r+1}}}$.   Again for For $(s,t)$ with $s<t$, and both $s$ and $t$ different from  $r+1$ and $r+2$, we have that $(r,r+1)$ and $(s,t)$ are related only via the commuting relation (C). Now, since $(\lambda_s,\lambda_t)$ is James and $(y(s,t)_j)=0$ for such $(s,t)$, both sides in this relation are $0$.
    
   \q Hence it remains  to consider the triples $(\lambda_r,\lambda_{r+1},\lambda_t)$ with $t>r+2$ and $(\lambda_q,\lambda_r,\lambda_{r+1})$ with $q<r$.
   
   \q For the triple $(\lambda_{r+1},\lambda_{r+2},\lambda_t)$, we have that  $y(r+1,r+2)_i$ is identified with $x_i$ in the relations (T1)-(T3b). Now $l_t<v_r=v_{r+1}$ and so this triple is of the form described in Proposition 11.16(iv). Thus we get directly that the coherence conditions  for our multi-sequence are satisfied.   
   
\q For the triple $(\lambda_q,\lambda_{r+1},\lambda_{r+2})$, we have that  $y(r+1,r+2)_j$ is identified with  $y_j$ in the relations (T1)-(T3b). Here this triple has the form appearing in Lemma 8.1 and so our multi-sequence satisfies the conditions for triple.  
 
\end{proof}

\q We next consider  the case in which  $(\lambda_r,\lambda_{r+1},\lambda_{r+2})$ has the form described in Proposition 11.16(iii).

\begin{lemma}

 Suppose $l_{r+1}=l_{r+2}> v_r=v_{r+1}$, with $\len_p(\lambda_{r+1}+p^{l_{r+2}})<\val_p(\lambda_r+p^{v_r}+p^{l_{r+1}}+1)$ and $\lambda_{r+1}=p^{v_r}-1+p^{l_{r+1}}$. Then $\lambda$ is non-split. 

\q The space $E(\lambda)$ is spanned by the standard multi-sequence and a non-zero  multi-sequence $(y(t,u)_i)$ satisfying $y(r,r+1)_i=0$ for $i\neq p^{v_r}$,  \\ 
$y(r+1,r+2)_j=y(r,r+2)_k=0$ for $j,k\neq p^{l_{r+1}}$  with  
$$y(r,r+1)_{p^{v_r}}=-y(r+1,r+2)_{p^{l_{r+1}}}=y(r,r+2)_{p^{l_{r+1}}}$$
 and $y(t,u)_j=0$ for all  other choices of $t,u$ and $j$.

\end{lemma}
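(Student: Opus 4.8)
The plan is to follow the same strategy used in the proofs of Lemmas 12.24 and 12.25. The first step is to determine which pairs of rows of $\lambda$ fail to be James: I claim these are exactly the three pairs $(\lambda_r,\lambda_{r+1})$, $(\lambda_{r+1},\lambda_{r+2})$, $(\lambda_r,\lambda_{r+2})$ inside the exceptional triple. By Remark 12.17 every pair $(\lambda_s,\lambda_t)$ with $s\neq r,r+1$ is already James, so only $(\lambda_r,\lambda_t)$ and $(\lambda_{r+1},\lambda_t)$ for $t\geq r+3$ need attention. Here I would use that in the form of Proposition 11.16(iii) one has $\lambda_{r+2}=\hat{\lambda}_{r+2}+p^{l_{r+1}}$ with $\hat{\lambda}_{r+2}<p^{v_r}$; combined with the hypothesis that $(\lambda_{r+2},\lambda_{r+3})$ is James this gives $\lambda_{r+3}\leq\hat{\lambda}_{r+2}<p^{v_r}$, hence $l_t\leq l_{r+3}<v_r=v_{r+1}$ for all $t\geq r+3$, so that $(\lambda_r,\lambda_t)$ and $(\lambda_{r+1},\lambda_t)$ are James. (That the three pairs of the triple genuinely fail to be James is immediate from $l_{r+1}=l_{r+2}>v_r=\val_p(\lambda_r+1)=\val_p(\lambda_{r+1}+1)$.)

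Next, let $(y(t,u)_i)$ be the multi-sequence described in the statement. Its restriction to $(\lambda_r,\lambda_{r+1},\lambda_{r+2})$ is precisely the non-standard coherent triple produced by Proposition 11.16(iii) (with $x$ carried by $y(r,r+1)$, $y$ by $y(r+1,r+2)$ and $z$ by $y(r,r+2)$), so the relations (T1)--(T3b) for that triple hold. Granting that $(y(t,u)_i)$ is a coherent multi-sequence it is non-standard, so $\lambda$ is non-split; and since $r<n-1$ the restriction map $E(\lambda)\to E(\lambda_r,\lambda_{r+1},\lambda_{r+2})$ is injective by Proposition 9.4(i), so $E(\lambda)$ is two-dimensional, spanned by the standard multi-sequence and $(y(t,u)_i)$, which is the asserted description. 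Everything therefore reduces to checking the remaining coherence conditions for $(y(t,u)_i)$.

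For a commuting relation (C) between one of $(r,r+1)$, $(r+1,r+2)$, $(r,r+2)$ and a pair $(\lambda_s,\lambda_t)$ disjoint from it, the pair lying outside the exceptional triple is James by the first step --- note that the cases $s=r$, $s=r+1$, $s=r+2$ force $t\geq r+3$, which is covered --- so the binomial coefficient on that side vanishes while $y(s,t)=0$ on the other side, and both sides are zero. For the triple relations I would run through the triples $(\lambda_q,\lambda_r,\lambda_{r+1})$, $(\lambda_q,\lambda_{r+1},\lambda_{r+2})$, $(\lambda_q,\lambda_r,\lambda_{r+2})$ with $q<r$ and $(\lambda_r,\lambda_{r+1},\lambda_t)$, $(\lambda_{r+1},\lambda_{r+2},\lambda_t)$, $(\lambda_r,\lambda_{r+2},\lambda_t)$ with $t>r+2$. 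In each of these at most the pairs inside the exceptional triple can be non-James (by the first step), so the sub-triple either is James, or has one of $(\lambda_q,\lambda_r)$, $(\lambda_{r+1},\lambda_t)$, etc. James and is hence of one of the shapes already classified in Sections 8, 10 and 11; for each one I would check that the restriction of $(y(t,u)_i)$ lies in the explicitly known space of coherent triples.

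The step that requires genuine care is this last point for $(\lambda_q,\lambda_r,\lambda_{r+1})$ ($q<r$) and for $(\lambda_r,\lambda_{r+1},\lambda_t)$ ($t>r+2$), the triples in which the nonzero sequence $(y(r,r+1)_i)$ --- supported only at $p^{v_r}$ --- occurs. The hypothesis $\len_p(\lambda_{r+1}+p^{l_{r+2}})<\val_p(\lambda_r+p^{v_r}+p^{l_{r+1}}+1)$ forces the base-$p$ digits of $\lambda_r$ in positions $0,\dots,l_{r+1}$ all to equal $p-1$ except at $v_r$ and at $l_{r+1}$, where they equal $p-2$; consequently the standard extension sequence $\bigl({\lambda_r+i\choose i}\bigr)_{1\leq i\leq\lambda_{r+1}}$ is carried by exactly the two indices $p^{v_r}$ and $p^{l_{r+1}}$, with equal nonzero entries. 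Since $(\lambda_q,\lambda_r)$ is James, $(\lambda_q,\lambda_r,\lambda_{r+1})$ is automatically a non-split triple of the Lemma 8.1 type, whose space of coherent triples is spanned by the standard triple and a point triple carried by $p^{l_{r+1}}$; likewise $(\lambda_r,\lambda_{r+1},\lambda_t)$ is of the Lemma 11.1 type with the same feature. Subtracting a suitable multiple of the point triple from the standard one cancels the $p^{l_{r+1}}$-entry and leaves a coherent triple carried by $p^{v_r}$ alone, which is (a scalar multiple of) the restriction of $(y(t,u)_i)$. Once this bookkeeping of the $p^{v_r}$- and $p^{l_{r+1}}$-entries of the relevant standard sequences is set up, the remaining checks are routine applications of Lemmas 7.7, 8.1, 11.1 and Propositions 10.14, 11.16, and the argument concludes.
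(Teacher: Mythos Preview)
Your approach is correct and follows essentially the same route as the paper's proof: establish that the only non-James pairs lie inside the exceptional triple, then verify coherence of the proposed multi-sequence by checking the commuting relations (C) and the six families of triples containing one or two of the indices $r,r+1,r+2$, appealing in each case to the relevant classification result (Lemma 7.7, Lemma 8.1, or Proposition 11.16(iv)).

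One small point deserves to be made explicit. When you say that $(\lambda_q,\lambda_r,\lambda_{r+1})$ is \emph{automatically} a non-split triple of the Lemma 8.1 type, this is not immediate: Lemma 8.1 requires $v_q>\len_p(\lambda_r+p^{l_{r+1}})$. This does follow from what you have set up --- your digit analysis gives $(\lambda_r)_{l_{r+1}}=p-2$, so adding $p^{l_{r+1}}$ produces no carry and $\len_p(\lambda_r+p^{l_{r+1}})=l_r$, which is $<v_q$ since $(\lambda_q,\lambda_r)$ is James --- but the word ``automatically'' hides this. The paper handles this triple slightly differently, observing instead that $\len_p(\lambda_r+p^{v_r})=l_r$ and verifying (T3b) directly; your route through the Lemma 8.1 classification is equally valid once the missing inequality is supplied. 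The same remark applies to $(\lambda_r,\lambda_{r+1},\lambda_t)$ via Lemma 11.1/Proposition 11.16(iv), where the paper likewise forms the combination $h\cdot\text{standard}+g\cdot\text{point}$ with $h=g$ to isolate the $p^{v_r}$-entry, exactly as you describe.
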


\begin{proof}  By Remark 12.17 we have that $(\lambda_s,\lambda_t)$ is a James partition for $s\neq r,r+1$. We show that in this case we also have that $(\lambda_r,\lambda_t)$ and $(\lambda_{r+1},\lambda_t)$, with $t\geq r+3$, are James. We have that since $\lambda_{r+2}=\hat{\lambda}_{r+2}+p^{l_{r+1}}$ with $\hat{\lambda}_{r+2}<p^{v_r}$, $l_{r+1}> v_r$ and $(\lambda_{r+2},\lambda_{r+3})$ is James then $\lambda_{r+3}\leq \hat{\lambda}_{r+2}<p^{v_r}$. Therefore all the pairs $(\lambda_r,\lambda_t)$ and $(\lambda_{r+1},\lambda_t)$ with $t\geq r+3$ are James. Hence the only pairs of rows of $\lambda$ which are not James partitions come from $(\lambda_r,\lambda_{r+1})$,   $(\lambda_{r+1},\lambda_{r+2})$,   and $(\lambda_r,\lambda_{r+2})$.

\q Let $(y(t,u)_i)$ be a multi-sequence as in the statement of the Lemma. By By Proposition 11.16(iii),  the coherence conditions are  satisfied for \\
 $(\lambda_r,\lambda_{r+1},\lambda_{r+2})$.   

  \q We first  deal  with the  coherence conditions involving  $y(r,r+1)_{p^{v_r}}$.  For $(s,t)$ with $s<t$, and both $s$ and $t$  different from  $r$ and $r+1$, we have that $(r,r+1)$ and $(\lambda_s,\lambda_t)$ are related via the commuting relation (C). Now, since $(\lambda_s,\lambda_t)$ is James and $(y(s,t)_j)=0$ for such $(s,t)$, both sides of the relation are $0$.
  
  \q  Hence it remains  to consider the triples $(\lambda_r,\lambda_{r+1},\lambda_t)$ with $t>r+2$ and $(\lambda_q,\lambda_r,\lambda_{r+1})$ with $q<r$.  
  
 \q For the triple $(\lambda_r,\lambda_{r+1},\lambda_t)$, we have that  $y(r,r+1)_i$ is identified with $x_i$ in the relations (T1)-(T3b). The standard solution for this triple is $\y=\z=0$ and $\x=0$ for $i\neq p^{v_r}, p^{l_{r+1}}, $ with $x^{\rm{st}}_{p^{v_r}}=x^{\rm{st}}_{p^{l_{r+1}}}=-1$. \q Also $(\lambda_r,\lambda_{r+1},\lambda_t)$ satisfies the properties of the triple of Proposition 11.16(iv)  and so we have that in a coherent triple, $x_i=h\x+g \px, \ {\rm{with}}\ h,g \in K$.  Setting $h=g$ we have that any value of $x_{p^{v^r}}=y(r,r+1)_{p^{v_r}}$ (with all the other values  being  $0$) is a solution for this triple and we are done.
  
  \q For the triple $(\lambda_q,\lambda_r,\lambda_{r+1})$, we have that  $y(r,r+1)_i$ is identified with  $y_j$ in the relations (T1)-(T3b). Since $(\lambda_q,\lambda_r)$ is James and $\len_p(\lambda_r+p^{v_r})=l_r$  it is a routine to check that our multi-sequence satisfies the coherence conditions for this triple.

    \q We now deal with the   coherence conditions involving  $y(r+1,r+2)_{p^{l_{r+1}}}$.  Again for $(s,t)$ with $s<t$, and both $s$ and $t$ different from  $r+1$ and $r+2$, we have that $(r,r+1)$ and $(s,t)$ are related only via the commuting relation (C). Now, since $(\lambda_s,\lambda_t)$ is James and $(y(s,t)_j)=0$ for such $(s,t)$, both sides of the relation are zero.
    
    \q Hence it remains  to consider the triples $(\lambda_{r+1},\lambda_{r+2},\lambda_t)$ with $t>r+2$ and $(\lambda_q,\lambda_{r+1},\lambda_{r+2})$ with $q<r$.    
    
    \q For the triple $(\lambda_{r+1},\lambda_{r+2},\lambda_t)$, we have that  $y(r+1,r+2)_i$ is identified with $x_i$ in the relations (T1)-(T3b). Notice that $l_t<v_r=v_{r+1}$ (since $(\lambda_r,\lambda_t)$ is James) and so we are in the case described in Proposition 11.16(iv) and we get directly that any value of $y(r+1,r+2)_{p^{l_{r+1}}}$ (with all the other values  being  $0$) satisfies the coherence relations.
    
     \q Also the triple  $(\lambda_q,\lambda_r,\lambda_{r+2})$ corresponds to the  case appearing in Lemma 8.1 and so again we get directly that the coherence conditions are satisfied for this triple.
     
     \q We leave to the reader to check that the same considerations take care the case of $y(r,r+2)_{p^{l_{r+1}}}$.

\end{proof}

\q We now consider  the case in which  $(\lambda_r,\lambda_{r+1},\lambda_{r+2})$ has the form described in Proposition 11.16(iv).

\begin{lemma}

Suppose $l_{r+2}<v_{r+1}\leq v_r$, and $\lambda_{r+1}=\hat{\lambda}_{r+1}+p^{l_{r+1}}$ with $0\leq \hat{\lambda}_{r+1}<p^{v_r}$. Then $\lambda$ is non-split if and only if there is no $q<r$ with $v_q=\len_p(\lambda_r+p^{l_{r+1}})$. 

\q In that case $E(\lambda)$ is spanned by the standard multi-sequence and a non-zero multi-sequence $(y(t,u)_i)$ satisfying 
 $y(r,r+1)_i=0$ for $i\neq p^{l_{r+1}}$ and $(y(t,u)_j)=0$ for $(t,u)\neq (r,r+1)$.

\end{lemma}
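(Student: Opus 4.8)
The plan is first to pin down the non-James pairs of $\lambda$, then to show there is at most one candidate non-standard coherent multi-sequence, and finally to decide exactly when it is coherent. Throughout put $\beta=l_{r+1}=\len_p(\lambda_{r+1})$ (so $\beta>v_r$ since $(\lambda_r,\lambda_{r+1})$ is pointed) and $L=\len_p(\lambda_r+p^{\beta})$. Since $(\lambda_1,\dots,\lambda_r)$ and $(\lambda_{r+2},\dots,\lambda_n)$ are James and $l_t\le l_{r+2}<v_{r+1}\le v_r<\beta$ for every $t>r+1$, the only non-James pair of rows of $\lambda$ is $(\lambda_r,\lambda_{r+1})$: indeed $\lambda_t<p^{v_{r+1}}\le p^{v_r}$ gives that $(\lambda_r,\lambda_t)$ and $(\lambda_{r+1},\lambda_t)$ are James for $t>r+1$, and $\lambda_{r+1}\le\lambda_r\le\lambda_{q+1}<p^{v_q}$ gives that $(\lambda_q,\lambda_r)$ and $(\lambda_q,\lambda_{r+1})$ are James for $q<r$. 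From the pointed decomposition $\lambda_{r+1}=\hat\lambda_{r+1}+p^{\beta}$ with $\hat\lambda_{r+1}<p^{v_r}$ I would record that $\val_p(\lambda_r+p^{\beta}+1)=v_r$ (adding $p^{\beta}$, $\beta>v_r$, leaves the base-$p$ digits of $\lambda_r$ in positions $<\beta$ untouched) and, since $\lambda_r<p^{v_q}$ and $p^{\beta}<p^{v_q}$ for $q<r$, that $\lambda_r+p^{\beta}<2p^{v_q}\le p^{v_q+1}$, hence $L\le v_q$ for every $q<r$.

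Let $(y(t,u)_i)$ be the multi-sequence with $(y(r,r+1)_i)$ the point extension sequence of the pointed pair $(\lambda_r,\lambda_{r+1})$ (i.e. $y(r,r+1)_{p^{\beta}}=1$, all other entries $0$) and $(y(t,u)_i)=0$ for $(t,u)\ne(r,r+1)$. Because it is supported on the single pair $(r,r+1)$ and every pair other than $(\lambda_r,\lambda_{r+1})$ is James, all commuting relations (C) and all coherence relations for triples other than $(\lambda_r,\lambda_{r+1},\lambda_t)$ ($t>r+1$) and $(\lambda_q,\lambda_r,\lambda_{r+1})$ ($q<r$) hold automatically. For $t=r+2$ the triple is coherent by Proposition 11.16(iv). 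For $t>r+2$ the relations (T1)--(T3b), with $y(r+1,t)=y(r,t)=0$, reduce to the assertions that $(\lambda_r+p^{\beta},\lambda_t)$ and $(\hat\lambda_{r+1},\lambda_t)$ are James, which hold because $l_t<v_r=\val_p(\lambda_r+p^{\beta}+1)$ and $l_t<v_{r+1}=\val_p(\hat\lambda_{r+1}+1)$.

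The crux is the triples $(\lambda_q,\lambda_r,\lambda_{r+1})$, $q<r$: here the extension triple carried by $(y(t,u)_i)$ is $\bigl(0,\ (y(r,r+1)_j),\ 0\bigr)$, namely the zero sequence, the point sequence for $(\lambda_r,\lambda_{r+1})$, and the zero sequence. Since $(\lambda_q,\lambda_r)$ is James and $(\lambda_r,\lambda_{r+1})$ is pointed, Lemma 8.1 identifies $E(\lambda_q,\lambda_r,\lambda_{r+1})$ and shows that this particular triple is coherent if and only if $v_q>\len_p(\lambda_r+p^{\beta})=L$; combined with $v_q\ge L$ this says $v_q\ne L$. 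Hence $(y(t,u)_i)$ is a non-standard coherent multi-sequence for $\lambda$ if and only if there is no $q<r$ with $v_q=L$. When that condition holds, Proposition 9.4(i) together with $\dim E(\lambda_r,\lambda_{r+1},\lambda_{r+2})=2$ forces $\dim E(\lambda)=2$, and $E(\lambda)$ is spanned by the standard multi-sequence and $(y(t,u)_i)$, exactly as in the statement.

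For the converse suppose $\lambda$ is non-split and let $(y(t,u)_i)$ be a non-standard coherent multi-sequence. By Proposition 9.4(i) its restriction to $E(\lambda_r,\lambda_{r+1},\lambda_{r+2})$ is injective, so after subtracting a multiple of the standard multi-sequence and rescaling it restricts to the non-standard triple of Proposition 11.16(iv); in particular $(y(r,r+1)_i)$ is the point sequence and $(y(r+1,r+2)_i)=(y(r,r+2)_i)=0$. Lemma 9.1 kills $(y(s,s+1)_i)$ for $s\ne r-1,r,r+1$, and Lemma 8.1 applied to the coherent triple $(\lambda_{r-1},\lambda_r,\lambda_{r+1})$ kills $(y(r-1,r)_i)$; since each off-diagonal sequence $(y(a,c)_i)$ is recovered from $(y(a,c-1)_i)$ and $(y(c-1,c)_i)$ via relation (T3a) with $i=j$ (Remark 6.2), an induction on $c-a$, using Proposition 11.16(iv) and Lemma 8.1 to see the newly produced sequences vanish, shows $(y(t,u)_i)$ is the candidate above. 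That candidate is coherent only when no $q<r$ has $v_q=L$, which is what we wanted. The main obstacle is precisely this last reduction -- showing that a non-standard coherent multi-sequence must be supported on $(\lambda_r,\lambda_{r+1})$ alone -- which needs Lemma 9.1, Lemma 8.1 and the ``recovered from consecutive sequences'' principle; the rest is routine bookkeeping once the list of non-James pairs and the identity $\val_p(\lambda_r+p^{\beta}+1)=v_r$ are in place.
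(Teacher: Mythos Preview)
Your proof is correct and follows essentially the same route as the paper's: identify the only non-James pair, propose the candidate multi-sequence supported on $(r,r+1)$ by the point sequence, verify its coherence reduces (via Lemma 8.1 on the triples $(\lambda_q,\lambda_r,\lambda_{r+1})$) to the condition $v_q\neq L$ for all $q<r$, and use the injectivity of Proposition 9.4(i) to bound $\dim E(\lambda)$.

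The only substantive difference is in the converse direction. The paper argues directly: if some $v_q=L$, then $(\lambda_q,\lambda_r,\lambda_{r+1})$ is split by Lemma 8.1, so one may subtract a standard multi-sequence to make $(y(r,r+1)_i)=0$; since $(y(r+1,r+2)_i)=0$ automatically (Proposition 11.16(iv) forces the $(y_j)$-component to vanish in every coherent triple), Proposition 9.4(i) finishes. You instead normalise first, then use Lemma 9.1, Lemma 8.1, and the Remark 6.2 recursion to show any normalised non-standard multi-sequence must equal the candidate, and only then invoke the coherence criterion. Your route is a little longer but has the advantage of simultaneously pinning down the spanning set of $E(\lambda)$ without a separate appeal to Theorem 12.1; the paper's route is cleaner but leans implicitly on the ``constrained'' bound already established.
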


\begin{proof}

We show first that if there exists a $q<r$ with $v_q= \len_p(\lambda_r+p^{l_{r+1}})$ then $\lambda$ splits. Assume that such a $q$ exists.

\q Let $(y(t,u))_i)$ be a coherent multi-sequence. Considering $(\lambda_r,\lambda_{r+1},\lambda_{r+2})$ we obtain by Proposition 11.16(iv) that $(y(r+1,r+2)_i)=0$. Also for such $q$ we have by Lemma 8.1 that  $(\lambda_q,\lambda_r,\lambda_{r+1})$ splits. Hence we can subtract a standard multi-sequence to get a coherent multi-sequence (also denoted $(y(t,u)_i)$) in which  $(y(r,r+1)_i)=0$. Now by Proposition 9.4(i) we get directly that $\lambda$ is split.

\q We now assume that  $v_q>\len_p(\lambda_r+p^{l_{r+1}})$ for every $1\leq q<r$.   Let $(y(t,u)_i)$ be a multi-sequence as in the statement of the Lemma. By Proposition 11.16(iv), the multi-sequence satisfies the coherence conditions for the triple $(\lambda_r,\lambda_{r+1},\lambda_{r+2})$.  By our usual arguments for the pairs related via the commuting relation (C) we obtain an equation in which both sides are zero and the coherence conditions are satisfied. 

 \q Hence it remains to consider  the triples  $(\lambda_q,\lambda_r,\lambda_{r+1})$ with $q<r$ and $(\lambda_{r+1},\lambda_{r+2},\lambda_t)$ with $t>r+2$.

\q The triple $(\lambda_q,\lambda_r,\lambda_{r+1})$ corresponds to the case appearing in Lemma 8.1 and since $v_q>\len_p(\lambda_r+p^{l_{r+1}})$ for every $q<r$ we have by Lemma 8.1 that the coherence conditions are satisfied. 

\q Also for the triple $(\lambda_r,\lambda_{r+1},\lambda_t)$, $y(r,r+1)_i$ is identified with $x_i$ in the relations (T1)-(T3b). But as $l_t\leq l_{r+2}$ and any value of  $y(r,r+1)_{p^{v_r}}$  (and all other values  $0$) is coherent for  $(\lambda_r,\lambda_{r+1},\lambda_{r+2})$ then we get directly that the coherence conditions are satisfied also for  $(\lambda_r,\lambda_{r+1},\lambda_t)$.

 \end{proof}
 
 \q The final case corresponds to the situation in which   $(\lambda_r,\lambda_{r+1},\lambda_{r+2})$ has the form described in Proposition 11.16(v). 

\begin{lemma}

Suppose $l_{r+2}=v_r>v_{r+1}$ with $l_{r+1}< \val_p(\lambda_r+p^{v_r}+1)$, $\lambda_{r+1}=\hat{\lambda}_{r+1}+p^{l_{r+1}}$ with $0\leq \hat{\lambda}_{r+1}<p^{v_r}$, and $\lambda_{r+2}=\hat{\lambda}_{r+2}+p^{v_r}$ with  $0\leq \hat{\lambda}_{r+2}<p^{v_{r+1}}$. Then $\lambda$ is non-split. 

\q The space $E(\lambda)$ is spanned by the standard multi-sequence and a non-zero multi-sequence $(y(t,u)_i)$  satisfying 
 $y(r+1,r+2)_j=y(r,r+2)_k=0$ for $j,k\neq p^{v_r}$ with 
 
 $$y(r+1,r+2)_{p^{v_r}}+y(r,r+2)_{p^{v_r}}=0$$
 
and $(y(t,u)_j)=0$ for $(t,u)\neq (r+1,r+2),(r,r+2)$.

\end{lemma}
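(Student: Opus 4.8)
The plan is to run the same argument as in Lemma 12.22, which treats the structurally identical situation under Proposition 10.14(v); the only change is that $(\lambda_r,\lambda_{r+1})$ is now pointed, so the coherent triple on $(\lambda_r,\lambda_{r+1},\lambda_{r+2})$ that we feed in comes from Proposition 11.16(v) in place of Proposition 10.14(v). First I would pin down which pairs of rows of $\lambda$ fail to be James. By Remark 12.17 the pair $(\lambda_s,\lambda_t)$ is James whenever $s\neq r,r+1$. If $n\geq r+3$ then, since $(\lambda_{r+2},\lambda_{r+3})$ is James, $\lambda_{r+2}=\hat\lambda_{r+2}+p^{v_r}$ with $\hat\lambda_{r+2}<p^{v_{r+1}}$, and $v_r>v_{r+1}$, one gets $\lambda_{r+3}<p^{v_{r+1}}$, hence $l_t\leq l_{r+3}<v_{r+1}<v_r$ for every $t\geq r+3$, so $(\lambda_r,\lambda_t)$ and $(\lambda_{r+1},\lambda_t)$ are James for $t\geq r+3$. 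Thus the only non-James pairs of rows of $\lambda$ are $(\lambda_r,\lambda_{r+1})$, $(\lambda_{r+1},\lambda_{r+2})$ and $(\lambda_r,\lambda_{r+2})$.

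Next, let $(y(t,u)_i)$ be the multi-sequence described in the statement. By Proposition 11.16(v) it satisfies the coherence conditions for the triple $(\lambda_r,\lambda_{r+1},\lambda_{r+2})$, and there the relation $y(r+1,r+2)_{p^{v_r}}+y(r,r+2)_{p^{v_r}}=0$ is exactly (T3a) with $i=j=p^{v_r}$, using $(\lambda_r)_{v_r}=p-2$. It remains to check all the other coherence conditions. For every disjoint pair $(s,t)$ the commuting relation (C) relating $(s,t)$ to $(r+1,r+2)$, or to $(r,r+2)$, is a tautology: $(y(s,t)_i)=0$ and, by the first step, $(\lambda_s,\lambda_t)$ is James (this includes the case $s=r$, $t\geq r+3$), so both sides vanish. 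Hence only triples having one of the two active pairs as a consecutive pair remain, namely $(\lambda_q,\lambda_{r+1},\lambda_{r+2})$ and $(\lambda_q,\lambda_r,\lambda_{r+2})$ with $q<r$, and $(\lambda_{r+1},\lambda_{r+2},\lambda_t)$ and $(\lambda_r,\lambda_{r+2},\lambda_t)$ with $t>r+2$. In $(\lambda_{r+1},\lambda_{r+2},\lambda_t)$ the pair $(\lambda_{r+1},\lambda_{r+2})$ is pointed and $l_t<v_{r+1}$, so this triple has the shape of Proposition 11.16(iv); since $y(r+1,r+2)$ is the point sequence of $(\lambda_{r+1},\lambda_{r+2})$, supported at $p^{\len_p(\lambda_{r+2})}=p^{v_r}$, and all other entries are zero, the triple is coherent. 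In $(\lambda_q,\lambda_{r+1},\lambda_{r+2})$ the pair $(\lambda_q,\lambda_{r+1})$ is James and $(\lambda_{r+1},\lambda_{r+2})$ is pointed with $\len_p(\lambda_{r+1}+p^{v_r})=l_{r+1}<v_q$ (because $v_q>l_{q+1}\geq l_{r+1}$, using that $(\lambda_1,\dots,\lambda_r)$ is James and $\lambda$ is a partition), so Lemma 8.1 applies and the triple is coherent. For $(\lambda_r,\lambda_{r+2},\lambda_t)$ and $(\lambda_q,\lambda_r,\lambda_{r+2})$ the two unused slots are James pairs, while on the relevant range ${\lambda_r+i\choose i}=0$ for $1\leq i\leq\lambda_{r+2}$ with $i\neq p^{v_r}$ (this follows from $\val_p(\lambda_r+p^{v_r}+1)>l_{r+1}\geq\len_p(\lambda_{r+2})=v_r$ together with Remark 3.1), so the standard triple is supported only at $p^{v_r}$ in the slot corresponding to $(\lambda_r,\lambda_{r+2})$; hence the restriction of $(y(t,u)_i)$ to either of these triples is a scalar multiple of the standard triple, and so is coherent.

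This shows the multi-sequence is coherent and non-zero, so $\lambda$ is non-split. For the upper bound, Proposition 9.4(i) gives that the restriction map $E(\lambda)\to E(\lambda_r,\lambda_{r+1},\lambda_{r+2})$ is injective, and the right-hand space is two-dimensional by Proposition 11.16(v); therefore $E(\lambda)$ is spanned by the standard multi-sequence and the one exhibited, which is the asserted description. The step I expect to be the main obstacle is the book-keeping in the second paragraph: checking that the row-pair analysis of the first step is genuinely exhaustive, and matching the auxiliary triples $(\lambda_{r+1},\lambda_{r+2},\lambda_t)$ and $(\lambda_q,\lambda_{r+1},\lambda_{r+2})$ precisely to the cases of Proposition 11.16(iv) and Lemma 8.1, and $(\lambda_r,\lambda_{r+2},\lambda_t)$, $(\lambda_q,\lambda_r,\lambda_{r+2})$ to multiples of the standard triple — these are the Lucas-formula digit computations on which each verification turns.
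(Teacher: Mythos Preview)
Your proposal is correct and follows exactly the approach the paper takes: its proof of this lemma simply says the considerations are identical with those of Lemma 12.23 (the Proposition 10.14(v) case), and you have carried out precisely that transplant, substituting Proposition 11.16(v) for 10.14(v) at the main triple. Your identification of the auxiliary triples with the cases of Proposition 11.16(iv), Lemma 8.1, and scalar multiples of the standard triple matches the paper's pattern, and your explicit verification that $\len_p(\lambda_{r+1}+p^{v_r})=l_{r+1}<v_q$ (using that $(\lambda_1,\dots,\lambda_r)$ is James) is a clean replacement for the corresponding step in Lemma 12.23. One minor slip: the lemma you cite as 12.22 is in fact numbered 12.23 in the paper (12.22 is an Example), but since you correctly identify it as the Proposition 10.14(v) case this causes no confusion.
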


\begin{proof}

The considerations here are identical with those of Lemma 12.23 and the proof follows in the same way.

\end{proof}

\subsection{Summary}

\q We gather together now our results from Corollary 7.13 and Propositions 9.5, 12.3 and 12.16 in the following theorems to describe all the non-split partitions. We remind to the reader that for a partition $\lambda=(\lambda_1,\dots,\lambda_n)$ we have $v_i=\val_p(\lambda_i+1)$ and $l_i=\len(\lambda_i)$ for $1\leq i\leq n$. Also, a partition $\lambda$ is James if and only if $v_i>l_{i+1}$ for $1\leq i\leq n-1$.

\begin{theorem} A James partition is non-split if and only if it has length $n\geq 2$. The dimension of the extension group is bounded by $n-1$, and is described explicitly in Corollary 7.13.
\end{theorem}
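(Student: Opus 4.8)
The plan is to read the statement off from Corollary 7.13 together with Proposition 4.5; the proof is accordingly short, since the substantive work is already contained in the chain of results Lemmas 7.7--7.10 and Proposition 7.11 that culminates in Corollary 7.13.

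First I would recall that for a James partition $\lambda=(\lambda_1,\dots,\lambda_n)$ the standard multi-sequence is identically zero: if $1\le r<s\le n$ then $\lambda_s\le\lambda_{r+1}<p^{v_r}$, so $(\lambda_r,\lambda_s)$ is James and hence ${\lambda_r+i\choose i}=0$ for $1\le i\le\lambda_s$. Consequently, by Proposition 4.5 the surjection $E(\lambda)\to\Ext^1_{B(N)}(S^dE,K_\lambda)$ has trivial kernel, so $\dim\Ext^1_{B(N)}(S^dE,K_\lambda)=\dim E(\lambda)=e(\lambda)$, and by Definition 4.6 the partition $\lambda$ is split if and only if $e(\lambda)=0$. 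This reduces the theorem to showing that $e(\lambda)=0$ when $n\le 1$, that $e(\lambda)\ge 1$ when $n\ge 2$, and that $e(\lambda)\le n-1$ in all cases; the explicit description asserted is then exactly the content of Corollary 7.13.

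For $n\le 1$ there are no indices $1\le r<s\le n$, hence $E(\lambda)=0$ and $\lambda$ is split. For $n\ge 2$ I would use the elementary monotonicity of $\len_p$ (if $a\ge b>0$ then $\len_p(a)\ge\len_p(b)$), which gives $l_1\ge l_2\ge\cdots\ge l_n$, so that the segments, and therefore the $p$-segments, of $\lambda$ are intervals of $\{1,\dots,n\}$. If $l_1=l_2$, then by Corollary 7.13 $e(\lambda)$ is the number of $p$-segments, which is at least $1$; and since $1$ and $2$ lie in a common segment, hence in a common $p$-segment, there are at most $n-1$ $p$-segments, so $e(\lambda)\le n-1$. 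If $l_1>l_2$, then $\{1\}$ is a full segment, and the adjacency relation of Definition 7.12 cannot join $1$ to $2$ (the clause permitting a singleton segment $\{r+1\}$ to be linked to $r$ requires $1<r$), so $\{1\}$ is a $p$-segment on its own and there is at least one further $p$-segment; thus the number of $p$-segments lies between $2$ and $n$, and Corollary 7.13 gives $1\le e(\lambda)=(\#\,p\text{-segments})-1\le n-1$. In every case $0\le e(\lambda)\le n-1$, with $e(\lambda)\ge 1$ precisely when $n\ge 2$, which is the assertion.

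There is no genuine obstacle: once Corollary 7.13 is available, all that remains is the bookkeeping with $p$-segments above together with the remark that the vanishing of the standard multi-sequence identifies $\Ext^1_{B(N)}(S^dE,K_\lambda)$ with $E(\lambda)$. The only point meriting care is the treatment of the left-hand boundary (the case $l_1>l_2$ and the singleton-segment adjacency clause), since it is this that produces the ``$-1$'' in case (ii) and hence pins down the exact bound.
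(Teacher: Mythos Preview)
Your proposal is correct and follows exactly the approach the paper intends: Theorem 12.29 is a summary statement with no separate proof in the paper, and your derivation from Corollary 7.13 together with Proposition 4.5 (noting that the standard multi-sequence vanishes for James partitions, so $\dim\Ext^1_{B(N)}(S^dE,K_\lambda)=e(\lambda)$) is precisely the intended argument. Your bookkeeping with $p$-segments---in particular the observation that in case $l_1=l_2$ the indices $1,2$ share a $p$-segment, and that in case $l_1>l_2$ the singleton segment $\{1\}$ cannot be joined to $2$ by the adjacency clause (which requires $1<r$)---correctly yields both $e(\lambda)\ge 1$ for $n\ge 2$ and the bound $e(\lambda)\le n-1$.
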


\begin{theorem}  Suppose that $p\geq 3$ and $\lambda=(\lambda_1,\ldots,\lambda_n)$ is a non-James partition. Let $1\leq r<n$ be minimal such that $(\lambda_r,\lambda_{r+1})$ is not James. Then $\lambda$ is non-split if and only if one of the following holds.

(i) We have $r<n-1$, $(\lambda_{r+1},\lambda_{r+2})$ is not James, $(\lambda_r,\lambda_{r+1})$  and $(\lambda_{r+1},\lambda_{r+2})$ are the only non-James consecutive pairs of rows of $\lambda$ and $(\lambda_r,\lambda_{r+1},\lambda_{r+2})$ is non-split.

(iii) We have $r<n-1$, $(\lambda_r,\lambda_{r+1})$ is the only non-James consecutive pair of rows of $\lambda$,  $(\lambda_r,\lambda_{r+1})$ is split and $(\lambda_r,\lambda_{r+1},\lambda_{r+2})$ is non-split.

(iii) We have $r<n$, $(\lambda_r,\lambda_{r+1})$ is the only non-James consecutive pair of rows of $\lambda$,  $(\lambda_r,\lambda_{r+1})$ is pointed and there is no $1\leq q<r$ with 
$v_q=\len_p(\lambda_r+p^{l_{r+1}})$.

(iv) We have $r<n-2$, $(\lambda_{r+3},\ldots,\lambda_n)$ is James and the quadruple \\
$(\lambda_r,\lambda_{r+1},\lambda_{r+2},\lambda_{r+3})$ satisfies the following conditions:

 $\lambda_r=(p^{l_{r+1}+1}-1)-p^{v_r}+p^{l_{r+1}+1}\lambda_r'$, for some $\lambda_r'\geq 0$; and 

 $\lambda_{r+1}=(p^{v_r+1}-1)-p^{v_r}+p^{v_r+1}\lambda_{r+1}'$, $(\lambda_{r+1})_{\v_r}\neq 0$ and  $\lambda_{r+1}'\geq 0$; and 

$\lambda_{r+2}=p^{v_r}-1+p^{v_r}$; and 

  $\lambda_{r+3}=\hat{\lambda}_{r+3}+p^{v_r}$, with $0\leq \hat{\lambda}_{r+3}<p^{v_r}$.
  
  \smallskip

 \q  In all cases the  dimension of the space of extensions  is $1$.

\end{theorem}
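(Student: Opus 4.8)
The plan is to deduce the theorem by assembling the local classifications already established. For a non-James $\lambda$, writing $r$ for the least index with $(\lambda_r,\lambda_{r+1})$ not James, the three regimes $r=n-1$, $r\le n-2$ with $(\lambda_{r+2},\lambda_{r+3})$ not James, and $(\lambda_{r+2},\dots,\lambda_n)$ James are governed by Proposition 9.5, Proposition 12.3 and Proposition 12.16 respectively, while Theorem 12.1 supplies the dimension. I would settle the dimension claim first: since $\lambda$ is not James, Theorem 12.1 gives that it is constrained, so $\dim E(\lambda)\le 2$, equivalently $\dim\Ext^1_{B(N)}(S^dE,K_\lambda)\le 1$ by Proposition 4.5. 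In each of the cases (i)--(iv) the relevant one of Propositions 9.5, 12.3, 12.16 exhibits a coherent multi-sequence that is not a scalar multiple of the standard one, so $\dim E(\lambda)=2$ there and hence $\dim\Ext^1_{B(N)}(S^dE,K_\lambda)=1$; conversely those same propositions show $\lambda$ is split in every other configuration, so $\Ext^1$ vanishes there.

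For the classification I would open with the structural reduction of Remark 12.2 (which rests on Lemma 9.2 and Proposition 9.4(i)): if $\lambda$ is non-James and non-split, then either $r=n-1$, or $r\le n-2$ with $(\lambda_1,\dots,\lambda_r)$ and $(\lambda_{r+3},\dots,\lambda_n)$ James and $(\lambda_r,\lambda_{r+1},\lambda_{r+2})$ non-split; in particular the only consecutive pairs of rows that can fail to be James are $(\lambda_r,\lambda_{r+1})$, $(\lambda_{r+1},\lambda_{r+2})$ and, when $n\ge r+3$, $(\lambda_{r+2},\lambda_{r+3})$. Then I would run through the regimes. When $n\ge r+3$ and $(\lambda_{r+2},\lambda_{r+3})$ is not James, Proposition 12.3 applies verbatim and produces exactly case (iv); here the requirement $(\lambda_{r+1})_{v_r}\ne 0$ of that proposition is automatic for $p\ge 3$ from the displayed shape of $\lambda_{r+1}$, and the remark after Proposition 12.3 confirms this regime is empty when $p=2$. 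When instead $(\lambda_{r+2},\dots,\lambda_n)$ is James with $r\le n-2$, Proposition 12.16 applies and its alternatives (i), (ii), (iii) become cases (i), (ii), (iii): the hypothesis that $(\lambda_{r+2},\dots,\lambda_n)$ is James is precisely what reduces the conditions in (i)--(iii) about which consecutive pairs are non-James to statements involving only $(\lambda_r,\lambda_{r+1})$ and $(\lambda_{r+1},\lambda_{r+2})$, and for $p\ge 3$ the auxiliary condition $l_{r+3}<l_{r+2}$ in Proposition 12.16(ii) is automatic by the remark following that proposition, hence drops out. A short inspection (of how many consecutive pairs are non-James, and of whether $(\lambda_r,\lambda_{r+1})$ is split or pointed) shows the four cases are pairwise disjoint.

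The one genuinely non-formal point is the regime $r=n-1$, handled by Proposition 9.5, where I must match its criterion to case (iii) specialised at $r=n-1$: Proposition 9.5 declares $\lambda$ non-split exactly when $(\lambda_{n-1},\lambda_n)$ is pointed and $v_{n-2}>\len_p(\lambda_{n-1}+p^{l_n})$, whereas case (iii) asks that $(\lambda_{n-1},\lambda_n)$ be pointed and that no $q<n-1$ satisfy $v_q=\len_p(\lambda_{n-1}+p^{l_n})$. I would prove the equivalence using that $(\lambda_1,\dots,\lambda_{n-1})$ is James, so $v_q>l_{q+1}\ge l_{n-1}$ for all $q\le n-2$, together with a direct computation via Lucas's formula of $m:=\len_p(\lambda_{n-1}+p^{l_n})$: writing $\lambda_{n-1}=(p^{v_{n-1}}-1)+p^{v_{n-1}}A$ and adding $p^{l_n}$ with $l_n>v_{n-1}$, one tracks the resulting carries to see that $v_q=m$ for some $q\le n-2$ can occur only for $q=n-2$, and then only when $v_{n-2}\le m$. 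This verification, being the only place where anything beyond bookkeeping is needed, is where I expect the main obstacle to sit; granting it, the cases (i)--(iv) are exhaustive, the non-standard multi-sequence in each is the one constructed in the corresponding proposition, and the proof is complete.
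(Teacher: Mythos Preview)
Your proposal is correct and matches the paper's approach: Theorem 12.30 is presented there purely as a summary statement assembled from Propositions 9.5, 12.3 and 12.16 (together with Theorem 12.1 for the dimension), with no separate proof given, and your assembly is precisely this. The one point you flag as non-formal---matching Proposition 9.5 to case (iii) at $r=n-1$---is handled correctly in spirit, though the argument is cleaner than you indicate: since $(\lambda_1,\dots,\lambda_{n-1})$ is James the $v_q$ are weakly decreasing in $q$ and $v_{n-2}\ge l_{n-1}+1\ge m:=\len_p(\lambda_{n-1}+p^{l_n})$, so every $v_q\ge m$ and hence ``no $q<n-1$ with $v_q=m$'' is equivalent to $v_{n-2}\ne m$, i.e.\ to $v_{n-2}>m$ (your claim that equality can occur only at $q=n-2$ is not literally true, but is not needed).
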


\begin{theorem}  Suppose that $p=2$ and $\lambda=(\lambda_1,\ldots,\lambda_n)$ is a non-James partition. Let $1\leq r<n$ be minimal such that $(\lambda_r,\lambda_{r+1})$ is not James. Then $\lambda$ is non-split if and only if one of the following holds.

(i) We have $r<n-1$, $(\lambda_{r+1},\lambda_{r+2})$ is not James, $(\lambda_r,\lambda_{r+1})$  and $(\lambda_{r+1},\lambda_{r+2})$ are the only non-James consecutive pairs of rows of $\lambda$ and $(\lambda_r,\lambda_{r+1},\lambda_{r+2})$ is non-split.

(ii)  We have  $r<n-1$,  $(\lambda_r,\lambda_{r+1})$ is the only non-James consecutive pair of rows of $\lambda$,    $(\lambda_r,\lambda_{r+1})$ is a split pair, $(\lambda_r,\lambda_{r+1}, \lambda_{r+2})$ is non-split and if $r<n-2$ then $l_{r+3}<l_{r+2}$.

(iii) We have  $r<n$  $(\lambda_r,\lambda_{r+1})$ is the only non-James consecutive pair of rows of $\lambda$,  $(\lambda_r,\lambda_{r+1})$ is pointed and there is no $q<r$ with $v_q=\len_p(\lambda_r+p^{l_{r+1}})$.

  \smallskip

 \q  In all cases the  dimension of the space of extensions  is $1$.
\end{theorem}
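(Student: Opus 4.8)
The plan is to mirror the proof of Theorem 12.30 (the odd characteristic case) but to track carefully where characteristic $2$ forces extra conditions or, conversely, removes possibilities. By Theorem 12.1 the dimension is always at most $1$, so the content is exactly the combinatorial characterisation of the non-split partitions. By Corollary 7.13 and Proposition 9.5 we may assume $\lambda$ is non-James and that $r<n-1$, so that $(\lambda_r,\lambda_{r+1},\lambda_{r+2})$ is a genuine $3$-part subpartition; by Lemma 9.2 and Proposition 9.4(i) we know $(\lambda_r,\lambda_{r+1},\lambda_{r+2})$ is non-split and $(\lambda_{r+3},\dots,\lambda_n)$ is James. The case analysis then splits according to whether $(\lambda_{r+1},\lambda_{r+2})$ is James or not.

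First I would treat the case $(\lambda_{r+1},\lambda_{r+2})$ not James. The decisive observation is the sentence in the excerpt just after Proposition 12.3: condition (ii) there requires $\lambda_{r+1}=(p^{v_r+1}-1)-p^{v_r}+p^{v_r+1}\lambda_{r+1}'$ with $(\lambda_{r+1})_{v_r}\neq 0$, and since $v_r=v_{r+1}$ this is impossible when $p=2$. Concretely, Lemma 12.14 is the only route to a non-split $\lambda$ in this branch (Lemmas 12.5--12.13 all conclude $\lambda$ is split), and Lemma 12.14's hypotheses cannot be met in characteristic $2$ because they force $(\lambda_{r+1})_{v_r}$ to be a nonzero residue strictly less than $p-1$, which does not exist mod $2$. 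Hence in characteristic $2$, if $(\lambda_{r+1},\lambda_{r+2})$ is not James then $\lambda$ splits; equivalently, a non-split non-James $\lambda$ with $p=2$ must have $(\lambda_r,\lambda_{r+1})$ as its only non-James consecutive pair unless $(\lambda_{r+1},\lambda_{r+2})$ is itself non-James and $(\lambda_r,\lambda_{r+1},\lambda_{r+2})$ is non-split — which gives case (i).

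Next I would treat the case $(\lambda_{r+1},\lambda_{r+2})$ James, invoking Proposition 12.16. Its three cases (i)--(iii) translate directly into the statement: case (i) of 12.16 becomes case (ii) of the theorem, and case (iii) of 12.16 becomes case (iii) of the theorem. The one place where $p=2$ genuinely differs from $p\geq 3$ is Lemma 12.20 (corresponding to Proposition 10.14(iv), i.e. $(\lambda_r,\lambda_{r+1})$ split and $(\lambda_{r+1},\lambda_{r+2})$ James): there, for $p=2$, $\lambda$ is non-split if and only if $l_{r+3}<l_{r+2}$, whereas for $p\geq 3$ the inequality is automatic. Example 12.21, the partition $(2,1,1,1)$, shows the condition is not vacuous. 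This is precisely the extra clause ``if $r<n-2$ then $l_{r+3}<l_{r+2}$'' appended to case (ii) of the theorem. All other subcases of Proposition 12.16 go through verbatim, since the proofs of Lemmas 12.18--12.28 that apply (namely those not requiring $(\lambda_{r+1})_{v_r}\neq p-1$) never used $p$ odd.

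The main obstacle, and the place requiring genuine care rather than bookkeeping, is verifying that the characteristic-$2$ restriction in case (ii) is both necessary and sufficient and is correctly localised: one must check that when $(\lambda_r,\lambda_{r+1},\lambda_{r+2})$ is non-split of the type in Proposition 10.14(iv) and $n\geq r+3$, the James pair $(\lambda_{r+2},\lambda_{r+3})$ together with $\lambda_{r+2}=\hat\lambda_{r+2}+p^{l_{r+2}}$ with $\hat\lambda_{r+2}<p^{v_r}$ forces $l_{r+3}\le l_{r+2}$, and that the boundary equality $l_{r+3}=l_{r+2}$ (possible only for $p=2$) indeed makes $\lambda$ split, exactly as in the second half of the proof of Lemma 12.20. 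One also needs to confirm that in characteristic $2$ no new non-split family appears from the ``not James'' branch — i.e. that the elimination of Lemma 12.14 is complete and that none of Lemmas 12.5--12.13 secretly require $p$ odd. Granting all of this, assembling the four (here three) cases and citing Theorem 12.1 for the dimension bound completes the proof.
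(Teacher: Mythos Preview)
Your overall strategy matches the paper's exactly: the theorem is a summary of Proposition 9.5, Proposition 12.3, Proposition 12.16, and Theorem 12.1, and the only $p=2$ specificities are that Proposition 12.3 (via Lemma 12.14) never applies and that Lemma 12.21 forces the extra clause $l_{r+3}<l_{r+2}$ in case (ii). That part is right.

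However, your execution has a consistent indexing slip that makes the argument as written incoherent. The dichotomy governing Propositions 12.3 and 12.16 is whether $(\lambda_{r+2},\lambda_{r+3})$ is James, not whether $(\lambda_{r+1},\lambda_{r+2})$ is James. Proposition 12.3 assumes $(\lambda_{r+2},\lambda_{r+3})$ is \emph{not} James (and, for $p=2$, then forces $\lambda$ split); Proposition 12.16 assumes $(\lambda_{r+2},\ldots,\lambda_n)$ \emph{is} James and then, within that, splits on $(\lambda_{r+1},\lambda_{r+2})$. Because you collapse these two levels, you end up with the self-contradictory sentence ``if $(\lambda_{r+1},\lambda_{r+2})$ is not James then $\lambda$ splits \ldots\ unless $(\lambda_{r+1},\lambda_{r+2})$ is itself non-James \ldots\ which gives case (i)'', and with the wrong correspondence ``case (i) of 12.16 becomes case (ii) of the theorem''. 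In fact 12.16(i) gives theorem case (i), 12.16(ii) gives theorem case (ii), and 12.16(iii) together with Proposition 9.5 (for $r=n-1$) gives theorem case (iii). You also need to note that Proposition 9.5's condition $v_{n-2}>\len_p(\lambda_{n-1}+p^{l_n})$ is equivalent, for a James prefix $(\lambda_1,\ldots,\lambda_{n-1})$, to ``no $q<r$ with $v_q=\len_p(\lambda_r+p^{l_{r+1}})$'', so that the $r=n-1$ situation really does sit inside case (iii). Once the indices are straightened out, your sketch is the paper's own argument.
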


\bs\bs\bs\bs


\section{Further Remarks}

\q In this section we highlight some consequences of our main result and give counterexamples to a conjecture of D. Hemmer in each characteristic and answer a problem that he states in the last section of  \cite{Hem}.

\begin{corollary}

Let $p\geq 3$ and $\lambda=(\lambda_1,\dots,\lambda_n)$ a partition of degree $d$ with at least three parts. Then $H^1(\Sigma_{pd}, \Sp(p\lambda))=0$.

\end{corollary}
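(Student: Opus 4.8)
The plan is to reduce the cohomology statement to the assertion that the partition $p\lambda=(p\lambda_1,\dots,p\lambda_n)$ is \emph{split}, and then to extract this from the three part analysis of Sections~10 and~11, exploiting that every part of $p\lambda$ is divisible by $p$. First I would invoke Proposition~0.2 (in its equality case, valid since $p\neq 2$) together with Theorem~0.1 to identify, for $N$ at least the number of parts of $p\lambda$,
$$\dim H^1(\Sigma_{pd},\Sp(p\lambda))=\dim\Ext^1_{G(N)}(S^{pd}E,\nabla(p\lambda))=\dim\Ext^1_{B(N)}(S^{pd}E,K_{p\lambda}),$$
so that it suffices to prove $p\lambda$ is split. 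The key elementary observation is that for each $i$ with $\lambda_i>0$ one has $\val_p(p\lambda_i+1)=0$, so a two part partition $(p\lambda_i,p\lambda_{i+1})$ is James if and only if $p\lambda_{i+1}<p^{0}=1$, i.e.\ if and only if $\lambda_{i+1}=0$. Since $\lambda$ has at least three parts, both $(p\lambda_1,p\lambda_2)$ and $(p\lambda_2,p\lambda_3)$ are non-James; hence $p\lambda$ is not James and the smallest $r$ with $(p\lambda_r,p\lambda_{r+1})$ not James is $r=1$.

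Next I would apply Proposition~9.4(i): since $p\lambda$ has at least three parts we have $n\geq r+2$, so $\Ext^1_{B(N)}(S^{pd}E,K_{p\lambda})$ embeds in $\Ext^1_{B(N)}(S^{d_0}E,K_\mu)$ with $\mu=(p\lambda_1,p\lambda_2,p\lambda_3)$ and $d_0=p(\lambda_1+\lambda_2+\lambda_3)$. Thus the whole problem reduces to showing that this three part partition $\mu$ is split. Here $(p\lambda_1,p\lambda_2)$ is not James, and by Lemma~5.11 (using $\val_p(p\lambda_1+1)=0$, which forces $\hat b=0$ in the pointed criterion) it is split unless $p\lambda_2$ is a power of $p$, in which case it is pointed. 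If $(p\lambda_1,p\lambda_2)$ is split I would invoke Proposition~10.14, all of whose non-split conclusions require $(p\lambda_1+p^v,p\lambda_2)=(p\lambda_1+1,p\lambda_2)$ to be James; but $\val_p(p\lambda_1+2)=0$ because $p\geq 3$, so this forces $\lambda_2=0$, a contradiction, and $\mu$ is split. If $(p\lambda_1,p\lambda_2)$ is pointed I would run through the five forms of Proposition~11.16, noting that in our situation $v=w=\val_p(p\lambda_1+1)=\val_p(p\lambda_2+1)=0$, $\beta=\len_p(p\lambda_2)\geq 1$, $\gamma=\len_p(p\lambda_3)\geq 1$ (as $\lambda_3>0$), while $\val_p(p\lambda_1+p^v+1)=\val_p(p\lambda_1+p^v+p^\beta+1)=0$ (again using $p\geq 3$): forms (i), (ii), (iii) each demand one of the inequalities $\val_p(p\lambda_1+p^v+1)\geq\beta$, $\len_p(p\lambda_2+p^\beta)<\val_p(p\lambda_1+p^v+1)$, $\len_p(p\lambda_2+p^\beta)<\val_p(p\lambda_1+p^v+p^\beta+1)$, all impossible since the right sides are $0$ and $\beta\geq 1$; form (iv) demands $w>\gamma$, impossible since $w=0<1\leq\gamma$; form (v) demands $\gamma=v=0$, impossible. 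So $\mu$ is split in this case as well, whence $\Ext^1_{B(N)}(S^{pd}E,K_{p\lambda})=0$ and $H^1(\Sigma_{pd},\Sp(p\lambda))=0$.

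The step I expect to be the main obstacle is the last one: matching the computed valuations and $p$-lengths against the numerically delicate list (i)--(v) of Proposition~11.16 and verifying each case is excluded. This is precisely where the hypothesis $p\geq 3$ enters essentially (both in preventing $(p\lambda_i,p\lambda_{i+1})$ from being James and in the identity $\val_p(p\lambda_1+2)=0$), consistent with the expectation that the statement fails at $p=2$. A secondary point requiring care is the hypothesis ``at least three parts'': it is exactly what makes $(p\lambda_2,p\lambda_3)$ non-James, and this is what stops $\mu$ from falling into the genuinely non-split situation of Lemma~11.1, where $(b,c)$ is James.
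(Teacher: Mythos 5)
Your proposal is correct and essentially matches the paper's own (alternative) proof: the key moves are the same reduction via Proposition~9.4(i) to the three-part partition $\mu=(p\lambda_1,p\lambda_2,p\lambda_3)$, and the observation that $p$-divisibility of the parts forces $\val_p(p\lambda_1+1)=0$ and hence $\val_p(p\lambda_1+2)=0$ for $p\geq 3$, which kills every non-split three-part case. The only difference is that where you work through the full case lists of Propositions~10.14 and~11.16, the paper short-circuits by citing Lemmas~10.2 and~11.3 directly (``$(a+p^v,b)$ not James $\Rightarrow$ split'' and ``$\val_p(a+p^v+1)<\beta\Rightarrow$ split''), which are exactly the obstructions your case analysis ultimately reduces to; both routes are sound.
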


\begin{proof}

This follows immediately by our main results, Theorem 12.29-31. However, we give here an alternative proof using some of our results from the earlier sections. First notice that since $(p\lambda_1,p\lambda_2)$ is not James we get by Proposition 9.4(i) that it is enough to show that $\mu=(p\lambda_1,p\lambda_2,p\lambda_3)$ splits.

If $(p\lambda_1,p\lambda_2)$ is split then by Lemma 10.2,  $\mu$ splits, since $(p\lambda_1+1,p\lambda_2)$ is not James.

If $(p\lambda_1,p\lambda_2)$ is pointed then by Lemma 11.3, $\mu$ splits, since $\val_p((p\lambda_1+1)+1)=0$ and $\lambda_2>0$. So we are done.

\end{proof}

\begin{remark} \rm 

The above result is not true for $n=2$. More precisely, by Lemma 5.12 we have that for a partition $\lambda=(\lambda_1,\lambda_2)$ of degree $d$ with $\lambda_2>0$, the first cohomology group $H^1(\Sigma_{pd}, \Sp(p\lambda))\neq0$ if and only if $p\lambda$ is pointed, i.e. $\lambda_2=p^t$ for some $t> 0$. In this case we have that the first cohomology group is 1 dimensional. Of course this also follows from the work of Erdmann \cite{Erdmann}.

\end{remark}

\begin{remark} \rm 

In  \cite{Hem}, Hemmer  states  that for a partition $\lambda=(\lambda_1,\dots,\lambda_n)$ of degree $d$ there is an isomorphism of vector spaces $H^1(\Sigma_{p^cd}, \Sp(p^c\lambda))\cong H^1(\Sigma_{p^{c+1}d}, \Sp(p^{c+1}\lambda))$ for $c\geq1$, \cite{Hem} Theorem 6.5.8. 

\q If $\lambda$ has length $2$  this follows by our remark above, observing first that for $c\geq 1$ the partition $p^c\lambda$ is pointed if and only if $p\lambda$ is pointed.  For $n\geq 3$, Corollary 13.1 gives a much stronger result, namely that  \\
$H^1(\Sigma_{pd}, \Sp(p\lambda))=0$ for all the partitions $\lambda$ with at at least three parts.

\q We note  that though Hemmer's statement is true his argument  is  problematic. More precisely, \cite{Hem} Corollary 6.4 is not correct and it is heavily used in what follows.  This applies also to other parts of the paper  (see for example Lemma 6.5.2). The source of these difficulties is his statement that for two partitions $\lambda$ and $\mu$  with length at most $n$ and  $\lambda>\mu$ one has that $\lambda_i-\lambda_{i+1} \geq \mu_i-\mu_{i+1} $ for every $1\leq i\leq n-1$. This is not true as one may see in simple examples.  This  statement  is used in the argument given for  Corollary 6.4. The reader can also check that for $p=3$ and $G=\GL_5(K)$ the partitions $\lambda=(23,11,2)$ and $\mu=(18,9,9)$ satisfy the conditions of Corollary 6.4 and that $\lambda-\mu-2\rho$ is a weight of the Steinberg module $\St_1$ contrary to the statement of Corollary 6.4.

\end{remark}

{\bf{13.4 Counterexamples to Hemmer's Conjecture}}. In the same paper, the author conjectures, \cite{Hem}, Conjecture 8.2.1, that if  $p\geq 3$ and $\lambda$ is a partition of degree $d$ with $\lambda\neq (d)$ such that the simple module $L(\lambda)$ for $\GL_N(K)$ (with $N\geq d$)  appears as a composition factor of  $S^d(E)$, then  
 $$H^1(\Sigma_{d}, \Sp(\lambda))\neq0.$$

Here we give a counterexample   to this conjecture in each characteristic  for   $n\geq 2$. By Krop's description of the composition factors of the symmetric powers in \cite{K} (see also \cite{DG3}, Remark 4.13) we have that  for \\
$p\lambda=p(p-1,\dots,p-1)$, the partition with all  $n$ parts equal to $p(p-1)$,  the simple module $L(\lambda)$ is  a composition factor of $S^d(E)$, where \\$d=(p^2-p)(n-1)$. 

\q Now by Corollary 13.1 we have that $H^1(\Sigma_{pd}, \Sp(p\lambda))=0$ for $n\geq3$.  Moreover for $n=2$,  since $p\geq 3$, we have  $\lambda_2\neq p^t$ and so by Remark 13.2 we have again that $H^1(\Sigma_{pd}, \Sp(p\lambda))=0$. Thus  $p\lambda=p(p-1,\dots,p-1)$ is a counterexample to this conjecture for   $n\geq 2$ and $p\geq3$.

\bigskip

{\bf {13.5 Another Problem}}. We conclude  by answering a question  raised in  the last section of Hemmer's paper.  In \cite{Hem}, Lemma 8.2.3, the author makes the following observation, via  James's description of the fixed points  of the Specht modules. Given a partition $\lambda=(\lambda_1,\dots,\lambda_n)$ of degree $d$ and $a\geq\lambda_1$ with $\len_p(\lambda_1)<\val_p(a+1)$ (i.e. $(a,\lambda_1)$ is James) one has that $H^0(\Sigma_{d}, \Sp(\lambda))\cong H^0(\Sigma_{a+d}, \Sp(\mu))$, where $\mu=(a,\lambda_1,\dots,\lambda_n)$. It is then asked,  \cite{Hem}, Problem 8.2.4, whether  one has such an isomorphism for cohomology in higher degree.

\q Note that in the case of $i=1$, using Lemma 7.7 and Proposition 7.11 (or directly by Corollary 7.13), one can verify that  
$$\dim H^1(\Sigma_d, \Sp(p^n-1,p^{n-1}-1,\dots,p^2-1,1))=n-1$$
and
 $$\dim H^1(\Sigma_{d'}, \Sp(p^{n+1}-1,p^n-1,\dots,p^2-1,1))=n$$
(where $d$ is the degree of $(p^n-1,p^{n-1}-1,\dots,p^2-1,1)$  and $d'$ is the degree of $(p^{n+1}-1,p^n-1,\dots,p^2-1,1)$).

\newpage

\centerline{\bf Appendix I: Comparing Cohomology}

\bs

\q We fix integers $n>0$, $d \geq 0$, with $n\geq d$. We shall use the notation of \cite{qSchur}, taking $q=1$ where appropriate.

\q We have the Schur algebra $S(n,d)$ and an idempotent $e\in S=S(n,d)$ with $eSe$ identified with the group algebra $K \Sigma_d$, as in \cite{EGS}, and \cite{qSchur}. For a partition $\lambda$ of $d$ we write $M(\lambda)$ for the corresponding permutation module and $Y(\lambda)$ for the corresponding Young module. The argument comparing extensions for  $G(n)$-modules and $K \Sigma_d$-modules is based on Section 10 of \cite{HTT}.

\q For a finite dimensional algebra $A$, we write $\mod(A)$ for the category of finite dimensional left $A$-modules.  

\q We have the Schur functor $f:\mod(S)\to \mod(eSe)$, given on objects by $fV=eV$, $V\in \mod(S)$,  as in \cite{EGS}, Chapter 6.  Then $f$ is exact and we have the (right exact) functor 
 $g:\mod(eSe)\to \mod(S)$, given on objects by  $gX=Se\otimes_{eSe} X$, as in \cite{HTT}, Chapter 6.  We have $\Hom_S(gX,Y)=\Hom_{eSe}(X,fY)$, for $X\in \mod(eSe)$, $Y\in \mod(S)$, by Frobenius reciprocity.  We write $L^i g:\mod(eSe)\to \mod(S)$ for the left derived functors and simply $Lg$ for the first derived functor of $g$   Now $g$ takes projective modules to projective modules so the factorisation $\Hom_{eSe}(-,  fY)=\Hom_S(-,Y)\circ g$ gives rise to a Grothendieck spectral sequence, with second page $\Ext^i_S(L^jg X,Y)$,  converging to  $\Ext^*_{eSe}(X,fY)$, for $X\in \mod(eSe)$, $Y\in \mod(S)$.  In particular we have the  $5$-term exact sequence
 \begin{align*}
\Ext^2_{eSe}(X,fY)&\to \Ext^2_S(gX,Y)\to \Hom_S(Lg X,Y)\cr
&\to \Ext^1_{eSe}(X,fY)\to \Ext^1_S(gX,Y)\to 0.
\end{align*}

\q For $\alpha=(\alpha_1,\ldots,\alpha_n)\in \Lambda(n,d)$ we write $S^\alpha E$ for $S^{\alpha_1} E\otimes \cdots \otimes S^{\alpha_n} E$, the tensor product of symmetric powers of the natural module $E$. For $\lambda\in \Lambda^+(n,d)$ we write  $I(\lambda)$ for the injective hull of the simple module $L(\lambda)$ with highest weight $\lambda$,  in the polynomial category.

\q We shall need  the following.

\bs

\bf Lemma\q\sl    For $\lambda\in \Lambda^+(n,r)$ we have  $g Y(\lambda) = I(\lambda)$ and $g M(\lambda)= S^\lambda E$.

\rm
\bs

\begin{proof}      By unitriangularity of decompositions of the modules $S^\lambda E$ as a direct sum of $I(\mu)$s (see e.g. \cite{qSchur}, 2.1 (8))  and the Krull-Schimdt Theorem it is enough to show that $g M(\lambda) = S^\lambda E$.   For this we argue as follows.   For $\alpha\in \Lambda(n,d)$   we have $fS^\alpha E = M(\alpha)$  (see e.g., Section 9, (1)(i) of \cite{HTT}) and hence  
$$\Hom_S(g M(\lambda), S^\alpha  E)= \Hom_{eSe}( M(\lambda), fS^\alpha E)=\Hom_{eSe}(M(\lambda), M(\alpha))$$
 by Frobenius reciprocity. But this is independent of characteristic (since $M(\lambda)$ and $M(\alpha)$ are permutation modules). Hence, by \cite{qSchur}, 2.1 (8) the weight space dimension  $\dim (g M(\lambda))^\alpha$ is independent of characteristic. 
But in characteristic $0$ the functors $f$ and $g$ are inverse equivalences of categories  and $f S^\lambda E = M(\lambda)$ so we have $\dim g M(\lambda) = \dim S^\lambda E$ (in all characteristics). 

\q Now for $\lambda= (1^d)$ we have $M(1^d)=eSe$ so that 
$$g M(1^d) = Se \otimes_{eSe} eSe=Se=E^{\otimes d}.$$
   Now for arbitary $\lambda$ we have the natural epimorphism $E^{\otimes d} \to S^\lambda E$ and so the epimorphism $eSe\to M(\lambda)$ and so the epimorphism $E^{\otimes d} \to g M(\lambda)$. Now we have a commutative diagram 

$$
\begin{matrix}
Se\otimes_{eSe} M(1^d)&\to & E^{\otimes d}\cr
\downarrow& & \downarrow\cr
Se\otimes_{eSe} M(\lambda) & \to & S^\lambda E

\end{matrix}
$$
where the horizontal maps are induced from the inclusions $M(1^d) \to E^{\otimes d}$ and $M(\lambda) \to S^\lambda E$.  The top map is an isomorphism and the right map is surjective, hence the bottom map is surjective. Since $\dim Se\otimes_{eSe} M(\lambda) = \dim S^\lambda E$ we have an isomorphism $g M(\lambda) \to S^\lambda E$.

\end{proof}

\q For  $X=M(\mu)$ and  $Y=\nabla(\lambda)$  the  $5$-term exact sequence  takes the form
 \begin{align*}
\Ext^2_{\Sigma_d}(M(\mu),\Sp(\lambda))&\to \Ext^2_{G(n)}(S^\mu E,\nabla(\lambda))\to \Hom_{G(n)}(Lg M(\mu),\nabla(\lambda))\cr
&\to \Ext^1_{\Sigma_d}(M(\mu),\Sp(\lambda))\to \Ext^1_{G(n)}(S^\mu E,\nabla(\lambda))\to 0.
\end{align*}

\q On the other hand if we take $X\in \mod(eSe)$ and $Y$ an injective $S$-module then  the spectral sequence degenerates and we have \\
$\Ext^i_{eSe}( X,fY)=\Hom_S(L^i gX,Y)$. Hence we have $LgX=0$ provided that $\Ext^1_{eSe}( X,fY)=0$, for all injective modules $Y$. Now an injective $S$-module is a direct summand of a direct sum of copies of the symmetric powers $S^\alpha E$, $\alpha\in \Lambda^+(n,d)$ and so we have $LgX=0$ provided that $\Ext^1_{\Sigma_d}(X,M(\tau))=0$ for all $\tau\in \Lambda^+(n,d)$. For $X=M(d)$, the trivial module, we have  $\Ext^1_{\Sigma_d}(X,M(\tau))=H^1(\Sigma_d,M(\tau))$, which by Shapiro's Lemma, is $H^1(\Sigma_\tau,K)$, where $\Sigma_\tau=\Sigma_{\tau_1}\times \Sigma_{\tau_2}\times\cdots$ (and $\tau=(\tau_1,\tau_2,\ldots)$).  Now if $p>2$ we have $H^1(\Sigma_\tau,K)=0$ and hence $Lg M(d)=0$. Hence, from the above $5$-term exact sequence we have $\Ext^1_{\Sigma_d}(M(d),\Sp(\lambda))=\Ext^1_{G(n)}(S^d E,\nabla(\lambda))$.  Of course in all characteristics we have a surjection $\Ext^1_{\Sigma_d}(M(d),\Sp(\lambda))\to \Ext^1_{G(n)}(S^d E,\nabla(\lambda))$, from the $5$-term exact sequence.

\bs

\bf Proposition \q\sl For $\lambda\in \Lambda^+(n,d)$ we have:

(i) $\Hom_{G(n)}(S^d E,\nabla(\lambda))=H^0(\Sigma_d,\Sp(\lambda))$; and 

(ii) $\dim H^1(\Sigma_d,\Sp(\lambda))\geq \dim \Ext^1_{G(n)}(S^d E,\nabla(\lambda))$, with equality if $p\neq 2$.

\rm\bs

\q The first assertion comes directly from the Lemma  and Frobenius reciprocity and the second from the discussion above.

\newpage

\centerline{\bf Appendix II: The Relations}

\bs

\q For the convenience of the reader, we collect together the relations that are used a great deal  throughout the paper.

\bs

\bf Definition \sl \,  Let $(a,b)$ be a two part partition. By an {\em extension sequence} for $(a,b)$ we mean a sequence $(x_i)=(x_1,\ldots,x_b)$ of elements of $K$ such that 
$${a+i+j\choose j}x_i={i+j\choose j}x_{i+j}.$$

\rm

\bs

\bf Definition \sl \,   We fix a three part partition $\lambda=(a,b,c)$. By a   {\em coherent triple}  of extension sequences for $\lambda$ we mean a triple  \\
 $((x_i)_{1\leq i\leq b}, (y_j)_{1\leq j\leq c}, (z_k)_{1\leq i\leq c})$ of extension sequences satisfying the following relations:

\begin{align*}
&{\rm (T1)}\     {{a+i+k}\choose{k}}x_i={{a+i+k}\choose{i}}z_k,  \hskip 85pt 1\leq i\leq b, 1\leq k\leq c; \\ \cr
&{\rm (T2)}\    {{a+k}\choose{k}}y_j={{b+j}\choose{j}}z_k, \hskip 110pt 1\leq j,k\leq c,   j+k\leq c; \\ \cr
&{\rm (T3a)}\    {{a+i}\choose{i}}y_j=\sum_{s=0}^{i-1}{{b+j-i}\choose{j-s}}{{a+i}\choose{s}}x_{i-s}+{{b+j-i}\choose{j-i}}z_i,  \hskip 0pt 1\leq  i\leq j\leq c;\\ \cr
&{\rm (T3b)}\   {{a+i}\choose{i}}y_j=\sum_{s=0}^{j}{{b+j-i}\choose{j-s}}{{a+i}\choose{s}}x_{i-s},  \hskip 30pt 1\leq j\leq c,  j<i\leq b+j. \\ \cr
\end{align*}

\rm

In any coherent multi-sequence $(y(t,u)_i)$ for a partition $\lambda=(\lambda_1,\ldots,\lambda_n)$ we have the following commuting relations:

\begin{align*} {\rm (C)} \hskip 10pt {\lambda_s+i \choose i} y(q,r)_j&={\lambda_q+j\choose j} y(s,t)_i, \hskip 170pt \cr
&\hbox{ for } 1\leq q,r,s,t\leq n \hbox{ distinct with } q<r, s<t.
\end{align*}

\q In particular for  $1\leq  r< s< n$ with $s>r+1$ we have

$$
(C')  \   {\lambda_s+j\choose j} y(r,r+1)_i={{\lambda_r+i}\choose{i}}y(s,s+1)_j,\  1\leq  i\leq \lambda_{r+1}, 1\leq j\leq \lambda_{s+1}.
$$

\bs\bs\bs\bs


\end{document}